\newcolumntype{L}{>{$}l<{$}} 
\newcolumntype{C}{>{$}c<{$}}
\newcolumntype{Y}{>{\centering\arraybackslash}X}
\newcolumntype{R}{>{$}r<{$}}
\title{The Network Satisfaction Problem for Relation Algebras with at most 4 Atoms}
\author{Manuel Bodirsky\thanks{Manuel Bodirsky, Mat\v{e}j Kone\v{c}n\'y, and Paul Winkler received funding from the ERC (Grant Agreement no. 101071674, POCOCOP). Views and opinions expressed are however those of the authors only and do not necessarily reflect those of the European Union or the European Research Council Executive Agency.}, Moritz Jahn, Simon Kn\"auer, Mat\v{e}j Kone\v{c}n\'y, Paul Winkler}
\newcommand{\ralabel}[2]{\phantomsection\label{#1_#2} #1_{#2}}
\newcommand{\ra}[2]{\hyperref[#1_#2]{#1_{#2}}}
\def\N{\mathbb{N}}
\def\Z{\mathbb{Z}}
\def\Q{\mathbb{Q}}
\def\H{\mathbb{H}}
\def\R{\mathbb{R}}
\def\T{\mathbb{T}}
\def\fA{\mathfrak{A}}
\def\fAo{\mathfrak{A}_0}
\def\fB{\mathfrak{B}}
\def\fC{\mathfrak{C}}
\def\fD{\mathfrak{D}}
\def\fR{\mathfrak{R}}
\def\bA{\mathbf{A}}
\def\bB{\mathbf{B}}
\def\bC{\mathbf{C}}
\DeclareMathOperator {\Ptime}{\mathbf{P}}
\DeclareMathOperator {\NP}{\mathbf{NP}}
\DeclareMathOperator {\NPc}{\mathbf{NPc}}
\DeclareMathOperator {\Betw}{Betw}
\DeclareMathOperator {\NAE}{NAE}
\DeclareMathOperator {\Aut}{Aut}
\DeclareMathOperator {\CSP}{CSP}
\DeclareMathOperator {\NCP}{NCP}
\DeclareMathOperator {\NSP}{NSP}
\DeclareMathOperator {\Pol}{Pol}
\DeclareMathOperator {\AP}{AP}
\DeclareMathOperator {\Cy}{Cy}
\DeclareMathOperator {\RA}{RA}
\DeclareMathOperator {\RRA}{RRA}
\DeclareMathOperator {\id}{Id}
\newcommand{\ignore}[1]{}
\def\widebreve#1{\mathop{\vbox{\m@th\ialign{##\crcr\noalign{\kern\p@}
  \brevefill\crcr\noalign{\kern0.1\p@\nointerlineskip}
  $\hfil\displaystyle{#1}\hfil$\crcr}}}\limits}
\def\brevefill{$\scriptscriptstyle\m@th \setbox\z@\hbox{$\scriptscriptstyle\braceld$}
  \bracelu\leaders\vrule \@height\ht\z@ \@depth\z@\hfill\braceru$}
\numberwithin{equation}{section}
\newtheoremstyle{tightbelow}
  {15pt plus 2pt minus 1pt} 
  {10pt plus 2pt minus 1pt}                    
  {\normalfont}
  {}
  {\bfseries}
  {.}
  { }
  {}
\theoremstyle{tightbelow}
\newtheorem{definition}{Definition}[section]
\newtheorem{example}[definition]{Example}
\newtheorem{remark}[definition]{Remark}
\newtheorem{thm}[definition]{Theorem}
\newtheorem{cor}[definition]{Corollary}
\newtheorem{prop}[definition]{Proposition}
\newtheorem{conj}[definition]{Conjecture}
\newtheorem{lemma}[definition]{Lemma}
\newtheorem{observation}[definition]{Observation}
\newtheorem{question}[definition]{Question}
\newtheorem{problem}[definition]{Problem}
\preto{\proof}{\vspace{-\dimexpr\parskip+10pt\relax}}
\setlist[itemize]{topsep=0pt, partopsep=0pt}
\setlist[enumerate]{topsep=0pt, partopsep=0pt}
\begin{document}

\maketitle

\begin{abstract}
Andréka and Maddux classified the relation algebras with at most 3 atoms, 
and in particular they showed that all of them are representable~\cite{AndrekaMaddux}. Hirsch and Cristiani showed that the network satisfaction problem (NSP) for each of these algebras is in $\Ptime$ or $\NP$-hard~\cite{HirschCristiani}. 
The literature contains many results on representations of relation algebras; in particular, some relation algebras with four atoms are not representable. We extend the result of Cristiani and Hirsch to relation algebras with at most 4 atoms: the NSP is always either in $\Ptime$ or $\NP$-hard. To this end, we construct universal, fully universal, or even normal representations for these algebras, whenever possible. 
\end{abstract}

\tableofcontents

\ignore{
\subsection{TODOs}
Suggestion for colour policy: authors use different colours. If other authors approve some coloured text, they can remove the colour. Manuel: \blue{Blue}. Paul: \textcolor{magenta}{Magenta}. Moritz: \red{Red}. Matej: \textcolor{olive}{Green}. 

Questions and comments to co-authors: 
\begin{itemize}
    \item Unify British vs. American (color vs. colour, what else?). This makes sense to do just before submitting. \magenta{I suggest American, since we used serial commas everywhere, comma after i.e. etc., which is only done in American English apparently.}
    \blue{Good points, I agree.} \magenta{I forgot that American English endings are -ize while British are -ise, so I change everything to -ize.}
\end{itemize}
}

\clearpage
\section{Introduction}
Relation algebras are algebraic structures that can be used to abstractly reason about binary relations.
They are certain expansions of Boolean algebras;
besides the operations for union, intersection, and complement of binary relations, we also have a binary operation $\circ$ for composition of relations, and a unary operation $\breve{\phantom{}}$ for reversing the arguments of a binary relation. These operations need to satisfy certain natural identities; e.g., we require that  $(a \circ b)\,\breve{\phantom{}} = \breve{b} \circ \breve{a}$ holds for all elements $a$ and $b$ of the relation algebra. 

Similarly to the fundamental facts that every group is isomorphic to a permutation group, and every Boolean algebra is isomorphic to an algebra of sets, the initial hope was that every relation algebra has a \emph{representation}, which consists of a set of binary relations on some set that behave as prescribed by the relation algebra \cite{Tarski41}. 

\begin{example}\label{ex:point-algebra}
We first look at a concrete structure from which a small relation algebra can
be obtained. Consider the structure $(\mathbb{Q};<)$, where ${\mathbb Q}$ is the set of rational numbers and $<$ is the usual strict linear order on ${\mathbb Q}$. For any two rational
numbers $x,y$, exactly one of the following holds: $x<y$, $x=y$, or $x>y$.
Thus, the relations $<$, $=$, and $>$ form a partition of $\Q^2$. Moreover,
$\breve{<} = {>}$, where $\breve{~}$ denotes the usual converse operation on
binary relations, defined by $\breve{R} = \{(y,x) \mid (x,y) \in R\}$.

This partition gives rise to a relation algebra $\bA$ with three atoms,
denoted by $r$, $\id$, and $\breve{r}$. Its domain $A$ consists of all
formal unions of these atoms, i.e.,
$A = \{0,\allowbreak r,\allowbreak \breve{r},\allowbreak \id,\allowbreak
r \cup \id,\allowbreak \breve{r} \cup \id,\allowbreak r \cup \breve{r},
\allowbreak r \cup \breve{r} \cup \id\}$. The other Boolean operations are
defined in the usual way.

Let $\circ$ denote the usual 
composition of binary relations, i.e., $$R \circ S = \{(x,z) \mid \exists y\colon (x,y) \in R
\text{ and } (y,z) \in S\}.$$ In $(\Q;<)$, for example, we have ${< \circ <} = {<}$ and
${< \circ >} = {(< \cup > \cup =)}$. Accordingly, $\bA$ has a binary operation,
also denoted by $\circ$, satisfying, for example,
$r \circ r = r$ and $r \circ \breve{r} = (r \cup \breve{r} \cup \id)$.

Note that $\bA$ is a Boolean algebra with $8$ elements, together with a
binary operation $\circ$ and a unary operation $\breve{~}$. Instead of obtaining $\bA$ from concrete relations on $\mathbb{Q}$, one may
also start with the abstract algebra $\bA$ and ask whether it has a
`representation', similarly as above. Formally, we ask 
whether there exists a set $D$ and an injective
homomorphism $h \colon A \to \mathcal{P}(D^2)$ preserving the Boolean
operations, converse, and composition. In particular, distinct atoms are
mapped to disjoint binary relations, while formal unions such as $r \cup \id$ are
mapped to the corresponding unions. In the present example, the ordered set
$(\mathbb{Q};<)$ provides such a representation: take $D=\mathbb{Q}$ and map
$r$, $\id$, and $\breve r$ to the relations $<$, $=$, and $>$, respectively. The value of $h$ on all other elements of $A$ is then determined by preservation of the Boolean operations; for example, $h(r \cup \id)= h(r)\cup h(\id)= ({<}\cup {=})$.
\end{example}

Indeed, all 
relation algebras with at most $2^3 = 8$ elements have a representation~\cite{AndrekaMaddux}.
However, Lyndon~\cite{LyndonRelationAlgebras} found a relation algebra with $2^{56}$ elements which is not representable. Monk~\cite{Monk} showed that this problem has no simple fix, because he proved that the class of representable relation algebras cannot be axiomatized by a finite set of identities, and not even by a finite set of first-order sentences, answering a question of Tarski. 
However, it follows from results of Tarski~\cite{TarskiRRA} that there is an \emph{infinite} set of identities that axiomatizes the representable relation algebras. 
We mention that (integral) relation algebras have also been discovered and studied with a different formalism and under the different name of \emph{hypergroups}~\cite{Hypergroups} (also called \emph{polygroups}~\cite{ComerChromaticPolygroups}).

\subsection{Network Satisfaction Problems and CSP Complexity}
Starting in the 90s, another generation of researchers investigated relation algebras motivated by applications in theoretical computer science, particularly for the theory of constraint satisfaction. In a constraint satisfaction problem, we are given a finite set of variables and a finite set of constraints, and the task is to determine whether there exists an assignment to the variables that satisfies all the constraints. Constraint satisfaction problems have been studied intensively if the variables take values from a fixed finite domain. Relation algebras allow us to formalize interesting constraint satisfaction problems where the variables may take values from an infinite domain.  

Concretely, to every finite relation algebra $\bA$, we may associate the so-called \emph{network satisfaction problem}, which is a computational problem where the input consists of 
a \emph{`constraint network'}, i.e., a finite set of nodes $V$ and a function labeling pairs of nodes with elements from the relation algebra, and the task is to decide whether there exists a representation of $\bA$ where this network is satisfiable. 
Under certain representability conditions, e.g., if $\bA$ has a square representation
(\cite[Section\,1.5]{Book}; definitions may be found in~Section~\ref{sect:prelims}), the network satisfaction problem for $\bA$ is a constraint satisfaction problem as described above, but not necessarily for values from a finite set (the values rather come from the elements of the representation, which might be infinite).
If there is a fully universal representation, then the network satisfaction problem is in $\NP$. However, the complexity sometimes even drops to $\Ptime$ (polynomial time). The computational complexity of the NSP has been shown to be $\NPc$ ($\NP$-complete) or $\Ptime$ for all relation algebras with at most $8=2^3$ elements (called `small relation algebras') by Cristiani and Hirsch~\cite{HirschCristiani}. 

\begin{example}
Let $\bA$ be the relation algebra from Example \ref{ex:point-algebra}. An \textit{$\bA$-network} consists of finitely many nodes and a function $f\colon V^2 \rightarrow A$ mapping each ordered pair to some element of the relation algebra. Under the representation over $(\Q;<)$, the label $f(x,y)$ specifies the allowed relative position of the values assigned to $x$ and $y$.

For instance, consider a network with nodes $x,y,z$ and function $f$ such that
$f(x,y)=r$, $f(y,z)=r \cup \id$, and $f(z,x)=\breve{r} \cup \id$, and such that all
other pairs are labeled by ${r \cup \breve{r}\cup \id}$. Under the representation over $(\Q;<)$, these constraints mean $x<y$, $y\leq z$, and
$z\geq x$, respectively. This network is satisfiable, for example by assigning
$x=0$, $y=1$, and $z=2$.

On the other hand, if $f(x,y)=r$, $f(y,z)=r$, and $f(z,x)=r$, then the network
is not satisfiable in this representation, since it would require
$x<y<z<x$.

In fact, the representation over $(\Q;<)$ is \textit{normal}, in particular \textit{fully universal}; see Section~\ref{sect:prelims} for the definitions. 
The fact that the representation is fully universal implies that the network satisfaction problem for $\bA$ amounts to deciding whether a finite system of constraints over the signature $\{\varnothing, <, \leq, >, \geq, =, \neq, \Q^2\}$ can be satisfied over the rational numbers.
\end{example}

Hirsch~\cite{Hirsch} asked whether the computational complexity of the network satisfaction problem can be classified for all finite relation algebras. Bodirsky and Kn\"auer~\cite{BodirskyKnaeuerDatalog23} proved that the problem of deciding whether a given finite relation algebra has an NSP which is in $\Ptime$ is undecidable, which can be interpreted as a negative answer to Hirsch's question.
The proof of the undecidability result relies on two earlier results, namely that there are finite relation algebras whose NSP is undecidable~\cite{Hirsch-Undecidable}, and that the question whether a finite relation algebra has a representation is undecidable~\cite{HirschHodkinsonRepresentability}. 
The classification for relation algebras 
with a fully universal representation, however, remains wide open. 

A particularly interesting subclass are the relation algebras that have a \emph{normal representation}, i.e., a representation which is fully universal, square, and homogeneous~\cite{Hirsch}. 
While this combination of assumptions still captures a rich class of relation algebras (including standard structures like $\Q$, the random graph, the Henson graph, and the random tournament), it has a wide range of strong and useful consequences, as we will see below.
A relational structure is \emph{homogeneous} if every isomorphism between finite substructures can be extended to an automorphism of the structure. 
If a relation algebra $\bA$ has a normal representation, then $\bA$ has a countable normal representation $\fB$, and this representation is unique up to isomorphism. Moreover, one can express important properties of the automorphism group of $\fB$ using $\bA$, and conversely, properties of $\bA$ are reflected in this group. It is known that the question whether a given finite relation algebra has a normal representation is decidable~\cite{BodirskyRamics}. 

The NSPs for relation algebras with a normal representation fall into the scope of the  so-called \emph{Bodirsky-Pinsker tractability conjecture} in constraint satisfaction, namely the conjecture that CSPs of \emph{reducts of finitely bounded homogeneous structures} are in $\Ptime$ or $\NP$-complete~\cite{BPP-projective-homomorphisms}. Specifically, the conjecture states that the subclasses of $\NP$ that properly lie between $\Ptime$ and $\NP$ and were constructed by Ladner~\cite{Ladner} (conditionally on $\Ptime \neq \NP$) do not show up when classifying such NSPs.
Such a dichotomy statement has been established for CSPs of finite structures~\cite{Zhuk20} (announced independently by Bulatov~\cite{BulatovFVConjecture} and by Zhuk~\cite{ZhukFVConjecture}). 
One of the motivations for the more general conjecture from~\cite{BPP-projective-homomorphisms} is that many of the universal-algebraic methods used by both Bulatov and Zhuk can be applied for the more general class. And indeed, there exists a universal-algebraic condition which is a candidate for separating polynomial-time tractable from $\NP$-complete CSPs for reducts of finitely bounded homogeneous structures, and this condition has numerous equivalent characterisations~\cite{BPP-projective-homomorphisms,wonderland,BKOPP,BartoPinskerDichotomy,Book}. If this condition applies, the CSP is provably $\NP$-hard; if not, then the conjecture states that the CSP is in $\Ptime$.
In our case, the tractability conjecture can be formulated as follows (for the definition of pp-constructability, see~Section~\ref{sect:prelims}). Let $\NAE := \{0,1\}^3 \setminus \{(0,0,0),(1,1,1)\}$ be the ternary Boolean not-all-equal relation.
\begin{conj}\label{conj:tractability}
    Let $\fB$ be a normal representation of a finite relation algebra $\bA$. If $\fB$ does \emph{not} pp-construct $(\{0,1\}; \NAE)$, then $\CSP(\fB)$ and $\NSP(\bA)$
    are in $\Ptime$.\footnote{In the case where $\fB$ is a normal representation of $\bA$, $\CSP(\fB)$ and $\NSP(\bA)$ are the same computational problem up to the translation between $\bA$-networks and $A$-structures.}
\end{conj}

We mention that the scope of the Bodirsky-Pinsker conjecture for relation algebras is larger than just the relation algebras with a normal representation: e.g., if a relation algebra has a representation which is \emph{homogenizable} in the sense that we can add finitely many first-order definable relations to make it homogeneous, then its NSP is covered as well. 

Homogeneous structures are also of central interest in model theory; via Fra\"{i}ssé's theorem, they are particularly easy to construct and provide a rich source of examples and counterexamples.  
It is conceivable that the class of homogeneous structures with a finite relational signature can be classified in some sense.  
There are uncountably many~\cite{Henson}, but the uncountable families usually have a particularly simple shape~\cite{Cherlin}. One of the approaches here is to fix some (typically binary) relational signature (e.g., focusing on graphs~\cite{LachlanWoodrow}, digraphs~\cite{Cherlin}, permutations~\cite{Cameron}, metric spaces~\cite{CherlinMetrically}, etc.) and to then classify the homogeneous structures with this signature. One of the motivations for this research program is its success story of producing beautiful and interesting mathematical structures that have inspired research in the areas mentioned above. Systematically listing finite relation algebras and then testing for the existence of a normal representation is another approach that can produce interesting new examples of homogeneous (or homogeneizable) structures.

There are potentially further well-behaved subclasses of representable relation algebras beyond those admitting homogenizable or even normal representations. To identify such classes, it is natural to consider families of relation algebras that are both sufficiently representative and sufficiently small to allow for a detailed analysis. Restricting the number of atoms provides such a family.

\subsection{Contributions}
We extend the mentioned result of Hirsch and Cristiani on small relation algebras, and systematically study all relation algebras with at most $16=2^4$ elements. An exhaustive list of the building blocks for such relation algebras has been computed by Maddux~\cite{Maddux2006-dp}, confirming results of Comer~\cite{ComerChromaticPolygroups}, namely a list for the integral symmetric relation algebras with four atoms (numbered $\ra{1}{65}$--$\ra{65}{65}$ by Maddux) and a list for the integral relation algebras with an asymmetric atom among its four atoms (numbered $\ra{1}{37}$--$\ra{37}{37}$ by Maddux). 

We prove that the NSPs for these not so small relation algebras are in $\Ptime$ or $\NP$-hard.
In the case where the algebra has a normal representation, our results confirm the mentioned tractability conjecture from~\cite{BPP-projective-homomorphisms}. More formally, we obtain the following results. 


\begin{thm}
    Let $\bA$ be a finite relation algebra with at most four atoms. Then $\NSP(\bA)$ is in $\Ptime$ or $\NP$-hard.
\end{thm}

\begin{thm}
    Let $\fB$ be a normal representation of a finite relation algebra $\bA$ with at most four atoms. If $\fB$ does \emph{not} pp-construct $(\{0,1\}; \NAE)$, then $\CSP(\fB)$ and $\NSP(\bA)$
    are in $\Ptime$.
\end{thm}

For all but one of these algebras we also prove the containment of the NSP in $\NP$; the one exceptional case is $\ra{56}{65}$. Actually, its NSP is also contained in $\NP$, but the proof requires substantially different techniques and was published separately (see \cite{56paper}).

In addition to our classification of NSP complexity, we revisit representability for all finite relation algebras with at most 4 atoms. Several of the algebras in the list do not have a representation (in which case the NSP is trivial and in $\Ptime$). We determine for each of the representable algebras $\bA$ whether $\bA$ has a normal representation, and if not, whether at least it has a fully universal (or even fully universal square) representation. 
This result is of independent interest in the theory of relation algebras; representations for all relation algebras with at most 3 atoms have been found in 1994~\cite{AndrekaMaddux}.
For this purpose, we present a combinatorial characterization for the existence of a fully universal square representation.

\medskip
We think of this paper as exploratory work towards more general classifications of the complexity of NSPs, or representability questions, of suitable subclasses of relation algebras. Maddux's list of relation algebras with at most four atoms is at the same time rich enough to display diverse behaviour, while remaining small enough for detailed analysis, including computer-assisted inspection.

We verified that in many cases, the existing methods apply (e.g.~\cite{BodirskyKnaeuerDatalog23,BodirskyKnaeJAIR,BodirskyKnaeRamics}). At the same time, we also needed to develop new methods (such as proving $\NP$-hardness of the NSPs by reductions from the approximate graph coloring problem $\CSP(K_3,K_5)$~\cite{PCSP}; a so-called \emph{promise constraint satisfaction problem}, see Section~\ref{sect:PCSPs}), as well as several ad hoc arguments for concrete cases. Several of these deserve explicit mention. 

\begin{itemize}
    \item $\ra{24}{65}$ (Section~\ref{sect:24-alg}); this algebra is related to the class of 3-edge-coloured cliques with no rainbow triangle, which exhibits some nice combinatorics, and has a fully universal representation. Our structural insights are then the basis for a polynomial-time algorithm for its NSP.
    \item 
    $\ra{17}{37}$ (Sections~\ref{sect:17_37-rep} and~\ref{sect:17-alg}); this algebra is related to the class of quasi-transitive oriented graphs, which has been studied in graph theory~\cite{BJJ95}. 
    There is no fully universal representation and we identify an atomic network with 4 vertices which is consistent but unsatisfiable. By working with the class of consistent atomic networks that exclude this 4-element network, we can construct a universal representation for $\ra{17}{37}$. Our structural insights are then also useful for the polynomial-time algorithm for the NSP of $\ra{17}{37}$ mentioned above.    
    \item $\ra{51}{65}$ 
    (Section~\ref{sect:51_65}); this algebra contains exactly one non-trivial equivalence relation, and in every representation, the equivalence relation can have at most $3$ classes. Our representability results for this algebra follow a similar pattern as our results for $\ra{17}{37}$: while there is no fully universal representation, we can identify four atomic networks that are consistent but unsatisfiable. Excluding those, we can construct a universal representation; it follows that the NSP is in $\NP$, and in fact it is $\NP$-complete (Section~\ref{sect:PCSPs}). 
    \item $\ra{56}{65}$ (Section~\ref{sect:56_65}); this relation algebra is similar to the previous one, but allows one extra triple compared to $\ra{51}{65}$. It has a representation, but not a fully universal one. Its NSP is $\NP$-hard (Section \ref{sect:PCSPs}).
\end{itemize}

There are already several thousand relation algebras with five atoms~\cite{Maddux2006-dp}, making a direct extension of the present classification substantially more challenging. The four-atom examples are expected to correspond to large subclasses of the five-atom list, and analysing these subclasses will help to identify useful generalisations, by methods developed in this article; see Section~\ref{sect:cycle-prod-hardness}.

\subsection{Significance for Constraint Satisfaction}
We believe that network satisfaction problems for relation algebras with a normal representation are an important class 
to test the mentioned tractability conjecture. 
The reason is that this approach can lead to CSPs with different behaviour, challenging the current methods. Previously, the Bodirsky--Pinsker conjecture has mainly been tested on  structures obtained as follows: fix a homogeneous finitely bounded `base' structure of fundamental interest (such as $({\mathbb Q};<)$ or the Rado graph), and study the CSP of all first-order reducts of it (see, e.g.,~\cite{tcsps-journal,BodPin-Schaefer-both}). The advantage of this approach is that the model-theoretic properties of the entire class of structures obtained like this are inherited from the 
base structure, and the remaining task is mostly universal-algebraic. This advantage is at the same time a disadvantage, because by fixing a base structure we might overlook some phenomena that only appear for other base structures. The approach of the present article avoids this; and indeed, our study shows that new structures with different behaviour and interesting polynomial-time CSPs may show up.

\section{Preliminaries}
\label{sect:prelims}
In this section we present the background from the theory of relation algebras that we need to state our results. Then we introduce some concepts and results from model theory and universal algebra that we need to prove our results. 

Let $\N = \{0, 1, \dots\}$ denote the natural numbers and $\N^+ = \N \setminus \{0\}$ the positive integers.

\subsection{Structures} 
\label{sect:structs}
A \emph{signature} $\tau$ is a (possibly infinite) set of function and relation symbols; each symbol has an \emph{arity} $k \in \N$. A function symbol $f \in \tau$ of arity $0$ is called a \emph{constant}. 

\begin{definition}
    A \emph{$\tau$-structure} $\fA$ is a set $A$ (the \emph{domain} of $\fA$)\footnote{In this article it will be convenient and without harm to allow the empty set.} together with
    \begin{itemize}
        \item a relation $R^\fA \subseteq A^k$ for every relation symbol $R \in \tau$ with arity $k$,  and 
        \item a function $f^\fA \colon A^k \to A$ for every function symbol $f \in \tau$ with arity $k$.
    \end{itemize}
\end{definition}

If the signature $\tau$ only contains function symbols, then a $\tau$-structure is called an \emph{algebra}. 
If $\tau$ is just made up of relation symbols, then a $\tau$-structure is called a \emph{relational structure}. 
We will use $\bA,\bB,\dots$ to denote algebras,
and $\fA, \fB, \dots$ to denote relational structures, and structures in general. If not stated otherwise, the underlying domains are $A, B, \dots$ respectively. 
If the structure $\fA$ is clear from the context, we will disregard the superscript and just write $f$ for the function $f^\fA$ and $R$ for the relation $R^\fA$.

\begin{definition}
    A $\tau$-structure $\fA$ is a \emph{substructure} of a $\tau$-structure $\fB$ if $A \subseteq B$ and 
    \begin{itemize}
        \item for every $R\in \tau$ of arity $k$ and $a \in A^k$: $a \in R^\fA \text{ if and only if } a \in R^\fB$, and
        \item for every $f \in \tau$ of arity $k$ and $a \in A^k$: $f^\fA(a) = f^\fB(a)$.
    \end{itemize}
\end{definition}
The (inclusion-wise) smallest substructure of $\fB$ that contains $S \subseteq B$ is called the \emph{substructure of $\fB$ generated by $S$}. 

\begin{definition}\label{def:product} 
    Let $\fA$ and $\fB$ be $\tau$-structures. The \emph{(direct) product} $\fC = \fA \times \fB$ is the $\tau$-structure where
    \begin{itemize}
        \item $A \times B$ is the domain of $\fC$,
        \item for every relation symbol $R$ of arity $n \in \N$ and every tuple $((a_1, b_1), \dots, (a_n, b_n)) \in (A \times B)^n$, we have that $((a_1, b_1), \dots, (a_n,b_n)) \in R^\fC$ if and only if $(a_1, \dots, a_n) \in R^\fA$ and $(b_1, \dots, b_n) \in R^\fB$, and
        \item for every function symbol $f$ of arity $n \in \N$ and every tuple $((a_1, b_1), \dots, (a_n, b_n)) \in (A\times B)^n$, we have that
        \[f^\fC((a_1, b_1), \dots, (a_n,b_n)) := (f^\fA(a_1, \dots, a_n), f^\fB(b_1, \dots, b_n)).\]
    \end{itemize}
    We write $\fA^2$ for $\fA\times \fA$ and inductively define $\fA^{k+1} := \fA \times \fA^k$ for $k \geq 2$. 
\end{definition}

\begin{definition}
    Let $\fA$ and $\fB$ be $\tau$-structures. A \emph{homomorphism} from $\fA$ to $\fB$ is a mapping $\varphi \colon A \to B$ such that for all $a_1, \dots, a_k \in A$ and
    \begin{itemize}
        \item for every $R\in \tau$ with arity $k$: if $(a_1, \dots, a_k) \in R^\fA \text{ then } (\varphi(a_1), \dots, \varphi(a_k)) \in R^\fB$ and 
        \item for every $f \in\tau$ with arity $k$: $\varphi(f^\fA(a_1, \dots, a_k)) = f^\fB(\varphi(a_1), \dots, \varphi(a_k))$. 
    \end{itemize}
\end{definition}

Two structures $\fA$ and $\fB$ are called \emph{homomorphically equivalent} if there is a homomorphism $\varphi$ from $\fA$ to $\fB$ and from $\fB$ to $\fA$.
If $\varphi$ is bijective and $\varphi^{-1}$ is a homomorphism from $\fB$ to $\fA$, then $\varphi$ is called an \emph{isomorphism}. In this case we say that $\fA$ and $\fB$ are \emph{isomorphic} ($\fA \cong \fB$). 
The \emph{age} of a relational structure $\fB$ is the class of all finite structures $\fA$ that are isomorphic to a substructure of $\fB$.

An isomorphism from $\fA$ to $\fA$ is called an \emph{automorphism}. A relational $\tau$-structure $\fA$ is \emph{homogeneous}, if for every isomorphism between finite substructures of $\fA$ there exists an automorphism of $\fA$ that extends this isomorphism.
Homogeneous structures with a finite relational signature are \emph{$\omega$-categorical} (see, e.g.,~\cite{Hodges}), i.e., all countable models of the first-order theory of the structure are isomorphic.

If $R_1,R_2 \subseteq B^2$ are two binary relations, then
$R_1 \circ R_2$ denotes the \emph{composition} 
\begin{align*}
    R_1 \circ R_2 := \{(x,z) \in B^2 \mid \text{ there exists } y \in B \text{ with } (x,y) \in R_1 \text{ and } (y,z) \in R_2 \}.
\end{align*}

\subsection{Constraint Satisfaction Problems and Universal Algebra}
\label{sect:CSPS-and-universal-algebra}
In this section, we introduce several basic concepts from universal algebra which we will later need for analysing the computational complexity of network satisfaction problems.

Let $\fB$ be a structure with a finite relational signature $\tau$. 
Then the \emph{constraint satisfaction problem} $\CSP(\fB)$ is the problem of deciding for a given 
finite $\tau$-structure $\fA$ whether there exists a homomorphism to $\fB$. Note that a class of finite $\tau$-structures is of the form 
$\CSP(\fB)$ if and only if it is closed under disjoint unions and its complement is closed under homomorphisms.

\begin{example}\label{expl:hard}
    Let $\NAE$ be the Boolean relation 
    $\{0,1\}^3 \setminus \{(0,0,0),(1,1,1)\}$. Then 
    $\CSP(\{0,1\};\NAE)$ is the positive not-all-equal 3-SAT problem and $\NP$-complete~\cite{GareyJohnson}. 
\end{example}

We work with classical first-order logic, using $\top$, $\bot$, the Boolean operations $\land$, $\lor$, $\rightarrow$, and $\neg$, the quantifiers $\forall$, $\exists$, and equality. A formula is called \emph{primitive positive} (shortly pp) if it is of the form
\begin{align*}
    \exists x_1 \cdots \exists x_k (\psi_1 \land \cdots \land \psi_n),
\end{align*}
where each $\psi_i$ is atomic, i.e., of the form $\top$, $\bot$, $x = y$, or $R(x_{i1}, \hdots, x_{il})$ for some $l$-ary relation $R$ in the signature.

\begin{definition}\label{def:pp-interpretation}
Let $\fA$ and $\fB$ be structures with the relational signatures $\tau$ and $\sigma$, respectively. A \emph{primitive positive interpretation of dimension $d$} of $\fA$ in $\fB$ is a partial surjection $h\colon \mathrm{dom}(h) \subseteq B^d \rightarrow A$ (called the \emph{coordinate map}) such that for every relation $R$, say of arity $k$, which can be defined by an atomic formula over $\fA$, the $(k \cdot d)$-ary relation
\begin{align*}
    h^{-1}(R) := \{(b_{11}, \hdots, b_{1d}, \hdots, b_{k1}, \hdots, b_{kd}) \in B^{k\cdot d} \mid (h(b_{11}, \hdots, b_{1d}), \hdots, h(b_{k1}, \hdots, b_{kd})) \in R^\fA\}
\end{align*}
can be defined in $\fB$ by a primitive positive formula $\varphi_R$.
Since equality and $\top$ are always allowed as atomic formulas, there must in particular exist
\begin{itemize}
    \item a $\sigma$-formula $\varphi_\top$ (called the \emph{domain formula}) such that $\varphi_\top(b_1, \hdots, b_d)$ holds if and only if $(b_1, \hdots, b_d)$ is in the domain of $h$;
    \item a $\sigma$-formula $\varphi_=$ such that $\varphi_=(b_1, \hdots, b_d, c_1, \hdots, c_d)$ holds if and only if $h(b_1, \hdots, b_d) = h(c_1, \hdots, c_d)$.
\end{itemize}
In this case we also say that $\fB$ \emph{pp-interprets} $\fA$.\
We say that $\fB$ \emph{pp-constructs} $\fA$ if $\fB$ pp-interprets a structure that is homomorphically equivalent to $\fA$.
\end{definition}

The following is well-known. 

\begin{prop}[see, e.g., {\cite[Theorem 3.1.4]{Book}}]\label{prop:pp-interpretation-implies-logspace-reducible}
If $\fB$ pp-constructs $\fA$, then $\CSP(\fA)$ is log-space reducible to $\CSP(\fB)$. In particular, if $\fB$ pp-constructs $(\{0,1\};\NAE)$, then $\CSP(\fB)$ is $\NP$-hard.  
\end{prop}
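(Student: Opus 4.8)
The plan is to establish the two assertions in turn: first that a pp-interpretation gives a log-space reduction between the associated CSPs, then to lift this to pp-constructions using homomorphic equivalence, and finally to read off the $\NP$-hardness consequence from Example~\ref{expl:hard}.

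\emph{Core reduction.} Suppose $\fB$ pp-interprets $\fA$ with coordinate map $h \colon \mathrm{dom}(h) \subseteq B^d \to A$, and let $\varphi_\top$ and $\varphi_R$ (for each relation symbol $R$ of the signature $\tau$ of $\fA$) be the pp-formulas over $\sigma$ from Definition~\ref{def:pp-interpretation}. Given an instance $\fC$ of $\CSP(\fA)$ with domain $C$, I would build an instance $\fC'$ of $\CSP(\fB)$ as follows. For each $c \in C$ introduce $d$ fresh coordinate variables $c^1, \dots, c^d$. Writing each pp-formula as $\exists \bar y\, \psi$ with $\psi$ a conjunction of atomic $\sigma$-formulas, I add (i) for every $c \in C$ a copy of the constraints of $\varphi_\top(c^1, \dots, c^d)$, with the existentially quantified variables replaced by fresh variables local to this copy, and (ii) for every $R \in \tau$ of arity $k$ and every tuple $(c_1, \dots, c_k) \in R^\fC$ a copy of the constraints of $\varphi_R(c_1^1, \dots, c_1^d, \dots, c_k^1, \dots, c_k^d)$, again with fresh existential variables. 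The domain of $\fC'$ consists of all coordinate and fresh variables, and its relations collect the emitted atomic $\sigma$-constraints.

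\emph{Correctness.} The key claim is that $\fC \to \fA$ if and only if $\fC' \to \fB$, which I verify in both directions. For the forward direction, from a homomorphism $g \colon \fC \to \fA$ I use surjectivity of $h$ to pick for each $c$ a preimage $(b_c^1, \dots, b_c^d) \in \mathrm{dom}(h)$ with $h(b_c^1, \dots, b_c^d) = g(c)$, and set $c^j \mapsto b_c^j$; since each preimage lies in $\mathrm{dom}(h)$ the $\varphi_\top$-constraints are satisfiable, and since $g$ preserves every relation each tuple $(g(c_1), \dots, g(c_k)) \in R^\fA$ lifts to membership in $h^{-1}(R)$, so the $\varphi_R$-constraints are satisfiable, and choosing witnesses for all existential variables yields $\fC' \to \fB$. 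For the backward direction, from $g' \colon \fC' \to \fB$ each tuple $(g'(c^1), \dots, g'(c^d))$ satisfies $\varphi_\top$ and hence lies in $\mathrm{dom}(h)$, so I set $g(c) := h(g'(c^1), \dots, g'(c^d))$; satisfaction of the $\varphi_R$-constraints forces $(g(c_1), \dots, g(c_k)) \in R^\fA$ for each tuple of $R^\fC$, making $g$ a homomorphism $\fC \to \fA$.

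\emph{Log-space bound and the remaining assertions.} Since $d$ and the formulas $\varphi_\top, \varphi_R$ depend only on the fixed interpretation and not on $\fC$, producing $\fC'$ amounts to scanning $\fC$ once and emitting, per element and per tuple, a constant-size block of constraints using only pointers into the input and a bounded counter, which is computable in logarithmic space; this settles the first claim for pp-interpretations. To reach pp-constructions I observe that homomorphically equivalent structures define the same CSP: if $\fA$ and $\fA'$ are homomorphically equivalent then $\fC \to \fA$ iff $\fC \to \fA'$ by composing with the two witnessing homomorphisms, so $\CSP(\fA)$ and $\CSP(\fA')$ are the same decision problem. Hence if $\fB$ pp-constructs $\fA$, i.e.\ pp-interprets some $\fA'$ homomorphically equivalent to $\fA$, then $\CSP(\fA) = \CSP(\fA')$ reduces in log-space to $\CSP(\fB)$ by the core reduction. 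For the final assertion, if $\fB$ pp-constructs $(\{0,1\};\NAE)$ the reduction sends the $\NP$-hard problem $\CSP(\{0,1\};\NAE)$ of Example~\ref{expl:hard} to $\CSP(\fB)$, so $\CSP(\fB)$ is $\NP$-hard. This result is standard, and no step is deep; the only point demanding genuine care is the variable bookkeeping in the construction together with the two-directional verification of the homomorphism equivalence, while the log-space estimate is routine once the locality of the construction is noted.
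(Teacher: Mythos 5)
The paper itself gives no proof of this proposition (it is cited as well known from Theorem~3.1.4 of the book of Bodirsky), and your argument is precisely the standard proof given there: replace each variable of an instance of $\CSP(\fA)$ by a block of $d$ coordinate variables, replace each constraint by the atomic constraints of the corresponding pp-formula with fresh existential witnesses, verify both directions using the coordinate map $h$ and the definition of $h^{-1}(R)$, observe that the construction is local and hence log-space, and then pass to pp-constructions via the fact that homomorphically equivalent structures have the same CSP, concluding $\NP$-hardness from $\CSP(\{0,1\};\NAE)$. The proof is correct; the single technicality you gloss over is that the fixed pp-formulas may contain equality atoms between free variables, so that the emitted ``constraints'' are not all relations of the signature of $\fB$ --- one must contract variables linked by chains of equality atoms (still log-space computable, e.g.\ since undirected connectivity is in logarithmic space, and in any case harmless for the $\NP$-hardness conclusion, where a polynomial-time reduction suffices).
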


An important tool to study primitive positive definability in finite and $\omega$-categorical structures is the concept of a \emph{polymorphism}. 

\begin{definition}
Let $\tau$ be finite. A \emph{polymorphism} of a $\tau$-structure $\fB$ is a homomorphism $f$ from $B^k$ to $B$. We write $\Pol(\fB)$ for the set of all polymorphisms of $\fB$.
\end{definition}

\begin{definition}
    Let $\fB$ be a relational $\tau$-structure. An operation $f \colon B^n \to B$ is called \emph{conservative} if for all $x_1, \dots, x_n \in B$ it holds that $f(x_1, \dots, x_n) \in \{x_1, \dots, x_n\}$. The set $\Pol(\fB)$ is \emph{conservative} if every $f \in \Pol(\fB)$ is conservative. In this case, $\fB$ is called \emph{conservative} as well. 
\end{definition}

We will use the following important types of operations.

\begin{definition}
    An operation $f \colon B^k \to B$ is called
    \begin{itemize}
        \item a \emph{binary symmetric} operation if $k=2$ and 
        $f(x,y) = f(y,x)$ for all $x,y \in B$;  
        \item a \emph{majority} operation if $k=3$ and for all $x,y \in B$ 
        \begin{align*}
            f(x,x,y) = f(x,y,x) = f(y,x,x) = x;
        \end{align*}
        \item a \emph{minority} operation if $k=3$ and for all $x,y \in B$ 
        \begin{align*}
            f(x,x,y) = f(x,y,x) = f(y,x,x) = y;
        \end{align*}
        \item a \emph{weak near unanimity (WNU)} operation if $k \geq 2$ and for all $x,y \in B$ 
        \begin{align*}
            f(y, x, \hdots, x) = f(x, y, x, \hdots, x) = \cdots = f(x, \hdots, x, y).
        \end{align*}
    \end{itemize}
\end{definition}

The complexity of the CSP for finite conservative structures has been classified by Bulatov~\cite{Conservative} long before the resolution of the finite domain CSP dichotomy conjecture. 

\begin{thm}[Bulatov~\cite{Conservative}]
\label{thm:dichotomyConservative}
    Let $\fB$ be a finite conservative structure with a finite relational  signature. 
    If for every $a,b \in B$ there exists a polymorphism $f \in \Pol(\fB)$ such that the restriction of $f$ to $\{a,b\}$ is a binary symmetric, a majority, or a minority operation, 
    then $\CSP(\fB)$ is in $\Ptime$. Otherwise, $(\{0,1\};\NAE)$ has a pp-interpretation in $\fB$, and $\CSP(\fB)$ is $\NPc$.
\end{thm}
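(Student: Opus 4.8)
The plan is to treat the two directions separately: the $\NP$-hardness claim is short, while the tractability claim is the deep part (this is, after all, Bulatov's theorem).

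\textbf{Hardness.} Suppose the pairwise condition fails for some $\{a,b\}\subseteq B$. First note that conservativity forces idempotence, since $f(a,\dots,a)\in\{a\}$, so the restrictions of the polymorphisms of $\fB$ to the set $\{a,b\}$ form a clone $C$ of idempotent operations on the two-element set $\{a,b\}$. By Post's classification of clones on a two-element set, restricted to the idempotent ones, the atoms strictly above the clone of projections are exactly the clones generated by $\min$, by $\max$, by the majority operation, and by the minority operation $x\oplus y\oplus z$. Since $\min$ and $\max$ are the only binary symmetric idempotent operations here, the hypothesis that no restriction to $\{a,b\}$ is binary symmetric, a majority, or a minority forces $C$ to consist of projections only. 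Because $\fB$ is finite, the $\mathrm{Inv}$--$\Pol$ Galois connection says that a relation is pp-definable in $\fB$ iff it is preserved by $\Pol(\fB)$; conservativity makes the unary relation $\{a,b\}$ pp-definable, and triviality of $C$ then makes every relation on $\{a,b\}$ pp-definable relative to it. Identifying $\{a,b\}$ with $\{0,1\}$ yields a one-dimensional pp-interpretation of $(\{0,1\};\NAE)$ in $\fB$, so by Proposition~\ref{prop:pp-interpretation-implies-logspace-reducible} $\CSP(\fB)$ is $\NP$-hard, and hence $\NPc$ since it is a finite-domain CSP.

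\textbf{Tractability.} Now assume the pairwise condition holds for every $\{a,b\}$. I would record, for each two-element subalgebra, a witnessing color from $\{\text{semilattice},\text{majority},\text{minority}\}$, obtaining an edge-colored graph on $B$ (Bulatov's colored graph). The three colors match the three classical \emph{local} reasons for tractability: a semilattice operation gives an absorbing element on that pair and makes arc consistency effective; a majority operation yields bounded width, so that $(2,3)$-consistency decides solvability on the majority part; and a minority (Maltsev) operation makes the relevant part affine and solvable by Gaussian elimination. The algorithm then enforces a strong form of local consistency to exploit the semilattice and majority edges, propagating and fixing forced values and shrinking domains, while applying Gaussian elimination on the portions governed by minority edges, interleaving the two in a recursion over the variable domains.

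The main obstacle is proving that this combination is \emph{correct}, and this is exactly the content of Bulatov's theorem rather than any single algorithmic ingredient. One must show that the colored graph is coherent across the whole structure, i.e., that the condition on each isolated two-element subalgebra propagates to genuine global structure on $\fB$, so that the three colors cannot interact in ways that defeat consistency. Concretely, the heart of the proof is a structural analysis of conservative algebras showing that the coloring can be chosen canonically and is compatible with subalgebras and congruences, that majority components really do have bounded width, that minority components behave affinely, and that the semilattice color provides absorption licensing safe domain reductions; only then is the consistency-plus-Gaussian-elimination procedure complete. I expect essentially all the difficulty to reside in this structural step.

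Finally, I would note a modern shortcut that makes the dichotomy transparent even if not self-contained: the pairwise condition is precisely the statement that $\Pol(\fB)$ has no two-element quotient on which it acts as projections, i.e., it satisfies a Taylor condition on every two-element subalgebra. For conservative algebras this local Taylor condition is known to yield a global cyclic (Siggers) polymorphism, so tractability also follows from the general finite-domain dichotomy theorem mentioned in the introduction. Since the intended statement here is Bulatov's earlier and independent result, the self-contained route above is the primary one, with the structural theory of conservative algebras as the decisive obstacle.
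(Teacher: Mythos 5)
This statement is not proved in the paper at all: it is imported verbatim from Bulatov's work (cited as~\cite{Conservative}), so there is no internal proof to compare your attempt against. Judged on its own terms, your hardness direction is correct and essentially the standard argument, using exactly the ingredients the paper already provides: conservativity makes the restriction of $\Pol(\fB)$ to any pair $\{a,b\}$ a well-defined clone on a two-element set and rules out the negation operation, so Post's theorem (Theorem~\ref{thm:post41}) forces that clone to be the projections when the pairwise condition fails; since $\fB$ is finite, the $\mathrm{Inv}$--$\Pol$ Galois connection then makes $\{a,b\}$ and every relation on it (in particular $\NAE$) pp-definable, and Proposition~\ref{prop:pp-interpretation-implies-logspace-reducible} gives $\NP$-hardness, with membership in $\NP$ trivial for a finite-domain CSP.

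The tractability direction, however, is not proved in your proposal: you give an accurate outline of Bulatov's colored-graph program and correctly locate the difficulty in the structural coherence of the three colors, but the outline is a description of a proof, not a proof. Your ``modern shortcut'' is also substantively correct — conservativity does push the pairwise condition from two-element subalgebras to all two-element quotients of subalgebras (the induced operation on a quotient of a pair inherits the symmetric/majority/minority behaviour), so the algebra is Taylor and the general finite-domain dichotomy applies — but this leans on a theorem that is both later and harder than the one being proved, so it cannot serve as an independent derivation. In short: the easy half of your argument is complete and correct; the hard half remains a citation, which is exactly how the paper itself treats the entire statement, so nothing is lost relative to the paper, but one should be clear that no self-contained proof of the tractability claim has been given.
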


\begin{definition}[see, e.g., {\cite[Section 6.1]{Book}}]
    Let $S$ be an arbitrary set. An \emph{(operation) clone} (over $S$) is a subset $\mathcal C$ of $\bigcup_{i \in \N} S^{S^i}$, i.e., of functions from finite powers of $S$ to $S$, satisfying the following two properties:
    \begin{itemize}
        \item $\mathcal C$ contains all projections, i.e., for all $i, k \in \N$ with $1 \leq i \leq k$, the operation $\pi(i, k)\colon S^k \rightarrow S$ defined by $\pi(i, k)(x_1, \hdots, x_k) = x_i$ is contained in $\mathcal C$.
        \item $\mathcal C$ is closed under composition, i.e., for every $n$-ary operation $f \in \mathcal C$ and all $m$-ary functions $g_1, \hdots, g_n \in \mathcal C$, the operation $f(g_1, \hdots, g_n)$ defined by
        \begin{align*}
            f(g_1, \hdots, g_n)(x_1, \hdots, x_m) = f(g_1(x_1, \hdots, x_m), \hdots, g_n(x_1, \hdots, x_n))
        \end{align*}
        is contained in $\mathcal C$.
    \end{itemize}
\end{definition}

Note that for every relational structure $\fB$, the set $\Pol(\fB)$ is a clone. Moreover, the set of projections on an arbitrary set $S$ is a clone.

We give modern formulations of the following two classical results:

\begin{thm}[Post \cite{Post1941}]\label{thm:post41}
Let $\mathcal C$ be a clone over a two-element set $\{a, b\}$. Then one of the following holds:
\begin{itemize}
    \item $\mathcal C$ contains a binary symmetric, the ternary majority, the ternary minority operation, or the unary function $f$ defined by $f(a) = b$ and $f(b) = a$.
    \item $\mathcal C$ contains only the projections.
\end{itemize}
\end{thm}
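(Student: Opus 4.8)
The plan is to reduce to the idempotent case and then read off the conclusion from Post's complete description of the lattice of clones on a two-element set. Write the domain as $\{a,b\}$ and let $\nu$ denote the transposition $a \mapsto b$, $b \mapsto a$, which is the negation appearing in the statement. Throughout, as soon as we exhibit a binary symmetric operation, a majority operation, a minority operation, or $\nu$ inside $\mathcal C$, we are in the first alternative; the aim is to show that otherwise $\mathcal C$ contains only projections.

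First I would analyze the unary operations of $\mathcal C$. There are only four unary operations on $\{a,b\}$: the identity $\pi(1,1)$, the transposition $\nu$, and the two constants. If $\nu \in \mathcal C$ we are done. If a constant unary operation $c$ (say with image $\{a\}$) lies in $\mathcal C$, then $(x,y) \mapsto c(\pi(1,2)(x,y)) = a$ is a constant, hence binary symmetric, operation of $\mathcal C$, and we are again done. In the remaining case the only unary operation in $\mathcal C$ is the identity, and then for every $f \in \mathcal C$ of arity $n$ the unary operation $x \mapsto f(x,\dots,x)$, obtained by composing $f$ with $n$ copies of $\pi(1,1)$, must equal the identity, so $f(x,\dots,x) = x$. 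Thus $\mathcal C$ is idempotent, and it remains to treat this case.

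For an idempotent $\mathcal C$ I would next inspect the binary operations: every binary idempotent operation on $\{a,b\}$ is a projection or one of the two semilattice operations $\wedge,\vee$ (the symmetric operations with $f(a,b)=f(b,a) \in \{a,b\}$), and both semilattice operations are binary symmetric. So if $\mathcal C$ has a binary non-projection we are done, and we may assume its only binary operations are the two projections. The remaining claim is that an idempotent clone with trivial binary part that still contains a non-projection must contain the ternary majority or the ternary minority operation. Here I would appeal to Post's classification \cite{Post1941}: the minimal clones properly above the clone of projections are exactly the seven generated by the two constants, by $\nu$, by $\wedge$, by $\vee$, by the majority, and by the minority operation, and every clone strictly above the projections contains one of these seven minimal clones. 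The hypotheses of idempotence and triviality of the binary part rule out all of these generators except the majority and the minority, so one of the latter two lies in $\mathcal C$, completing the case analysis (and if no non-projection exists at all, $\mathcal C$ consists of projections, which is the second alternative).

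The step I expect to be most delicate is this last one, and the point I would be careful not to shortcut is the tempting but false assertion that a non-projection of minimal arity is already a majority, a minority, or a projection. On $\{a,b\}$ there exist ternary non-projections all of whose two-variable identifications are projections and which are nonetheless neither majority nor minority; the "twisted majority" $\mathrm{maj}(x,y,\nu z)$ is such an operation. What rescues the argument is that these operations still generate a target operation by composition: for $f = \mathrm{maj}(x,y,\nu z)$ one checks $f(x,y,f(x,y,z)) = \mathrm{maj}(x,y,z)$, recovering the genuine majority. This is precisely the phenomenon encoded in Post's lattice, which is why I would lean on that classification for the final step rather than on a naive minimal-arity analysis.
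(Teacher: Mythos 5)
You should first note that the paper contains no proof of this theorem: it is stated with the attribution to Post~\cite{Post1941} and used as a black box, so there is no internal argument to compare against. The steps you carry out yourself are all correct: the unary analysis (negation, constant, or else idempotency), the observation that every idempotent binary operation on a two-element set is a projection or one of the two semilattice operations (hence symmetric), and, importantly, the warning that a ternary non-projection all of whose identifications are projections need not be a majority or a minority --- your example $f(x,y,z)=\mathrm{maj}(x,y,\nu z)$ and the identity $f(x,y,f(x,y,z))=\mathrm{maj}(x,y,z)$ both check out, and this is exactly the pitfall that breaks a naive minimal-arity argument. The one thing to be clear-eyed about is that your final step, quoting the seven minimal clones and the atomicity of Post's lattice, is not weaker than the statement being proved: the theorem follows immediately from that classification alone (each atom is generated by a constant, $\nu$, $\wedge$, $\vee$, the majority, or the minority, and each of these generators yields one of the listed operations, constants giving constant binary operations, which are symmetric), so your elementary reductions are not actually needed for that deduction. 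What you have is therefore a correct derivation of the theorem from the cited classification --- perfectly consistent with how the paper itself treats the result --- rather than a self-contained proof. If you want the latter, replace the appeal to Post's lattice by the following: take a non-projection $f$ of minimal arity in your idempotent clone with trivial binary part; \'Swierczkowski's lemma excludes arity at least $4$; and in arity $3$, idempotency together with the requirement that all identifications be projections leaves exactly eight operations, namely the three projections, the majority, the minority, and the three twisted majorities, each of which generates the genuine majority by the composition you already wrote down.
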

%

\begin{thm}[Schaefer \cite{Schaefer1978}]
\label{thm:schaefer78}
    Let $\fB$ be a structure over a two-element domain. Then one of the following holds:
    \begin{itemize}
        \item $\Pol(\fB)$
        contains a binary symmetric,
        the ternary majority, or the ternary minority operation, and $\CSP(\fB)$ is contained in $\Ptime$.
        \item $\fB$ pp-interprets $(\{0, 1\}; \mathrm{NAE})$; in this case, $\CSP(\fB)$ is $\NP$-hard.
    \end{itemize}
\end{thm}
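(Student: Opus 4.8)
The plan is to reduce the analysis of $\CSP(\fB)$ entirely to the clone $\Pol(\fB)$ and then to apply Post's classification (Theorem~\ref{thm:post41}). The bridge between the two sides of the dichotomy is the classical Galois connection between operations and primitive positive definability over a finite domain (Geiger; Bodnarchuk--Kaluzhnin--Kotov--Romov): a relation $R$ on $\{0,1\}$ is primitive positive definable in $\fB$ if and only if $R$ is preserved by every operation in $\Pol(\fB)$. Applying Theorem~\ref{thm:post41} to the clone $\Pol(\fB)$ over the two-element domain yields the coarse trichotomy that $\Pol(\fB)$ either contains a binary symmetric, a majority, or a minority operation (possibly alongside the swap), or it contains only the swap together with the projections, or only the projections. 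The first alternative will be the tractable side; the other two will be the hard side.

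For the tractable side I would assume that $\Pol(\fB)$ contains a binary symmetric, a majority, or a minority operation, and invoke the classical polynomial-time algorithms for the corresponding Boolean classes: $\min$ or $\max$ give Horn (respectively dual-Horn) instances solvable by unit propagation, a majority operation gives bijunctive instances solvable by establishing $(2,3)$-consistency (equivalently $2$-SAT), and a minority operation gives affine instances solvable by Gaussian elimination over the two-element field. For the binary symmetric case I would first observe that over $\{0,1\}$ one may, after composing the operation with itself and with the swap if necessary, reduce to the idempotent case, where the only binary symmetric operations are $\min$ and $\max$; alternatively one adds the two unary singleton relations and appeals to Bulatov's conservative dichotomy (Theorem~\ref{thm:dichotomyConservative}), whose tractability criterion on the two-element set is met precisely by the presence of one of these three operations. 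Either route gives $\CSP(\fB)\in\Ptime$.

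For the hard side I would assume that $\Pol(\fB)$ contains none of the three designated operations. I first dispose of the degenerate case: if $\fB$ admits a constant polymorphism then its core is a single point and $\CSP(\fB)$ is trivially solvable, so I may reduce to a core of $\fB$ without constant polymorphisms. The key step is then to upgrade the coarse conclusion of Post's theorem to the sharp statement that $\Pol(\fB)$ is contained in the clone of projections and negated projections. Indeed, if $\Pol(\fB)$ contained an operation depending essentially on at least two arguments, a short analysis---identifying arguments to reduce to the binary case and composing with the swap, or inspecting the relevant part of the full Post lattice---would produce a binary symmetric, majority, or minority operation, contradicting the assumption; hence every polymorphism of $\fB$ has the form $x_i$ or $\neg x_i$, that is, $\Pol(\fB)\subseteq\Pol(\{0,1\};\NAE)$. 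By the Galois connection, $\NAE$ is then primitive positive definable in $\fB$, giving a one-dimensional primitive positive interpretation of $(\{0,1\};\NAE)$ in $\fB$ with the identity coordinate map. By Proposition~\ref{prop:pp-interpretation-implies-logspace-reducible} together with Example~\ref{expl:hard}, $\CSP(\fB)$ is $\NP$-hard.

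The main obstacle is precisely the containment $\Pol(\fB)\subseteq\{x_i,\neg x_i\}$ in the hard case. The form of Post's theorem recorded in Theorem~\ref{thm:post41} only separates clones possessing a nice operation or the swap from the pure projection clone, so converting the hypothesis ``no binary symmetric, majority, or minority operation'' into ``essentially unary up to negation'' requires the finer structure of Post's lattice, or the variable-identification argument sketched above. Care is also needed with the constant-valid cases, which must be routed through the core reduction rather than through $\NAE$; once these are handled, the remainder is a routine assembly of the Galois connection, the reduction of Proposition~\ref{prop:pp-interpretation-implies-logspace-reducible}, and the hardness of $\NAE$ from Example~\ref{expl:hard}.
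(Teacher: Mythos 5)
The paper does not prove this theorem at all: it is imported from Schaefer~\cite{Schaefer1978} as a black box (just as Theorem~\ref{thm:post41} is imported from Post), so there is no internal proof to compare yours against. Your proposal is the standard modern proof---the Inv--Pol Galois connection over a finite domain combined with an inspection of Post's lattice---and its architecture is sound. In particular, the step you flag as the main obstacle is correct and is settled exactly by the lattice inspection you mention: the clones on $\{0,1\}$ containing the swap are precisely the clone of projections and negated projections, the clone of all essentially unary operations, the affine clone, the self-dual clone, the intersection of the latter two, and the clone of all operations. The last four contain the minority or the majority operation, and the clone of all essentially unary operations contains the binary constants, which \emph{are} binary symmetric operations in the sense of the statement. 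Hence in your hard case $\Pol(\fB)$ is exactly the clone of projections and negated projections, so $\Pol(\fB) \subseteq \Pol(\{0,1\};\NAE)$, and the Galois connection, Proposition~\ref{prop:pp-interpretation-implies-logspace-reducible}, and Example~\ref{expl:hard} finish the hard side as you describe.

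Two local repairs are needed. First, your reduction of an arbitrary binary symmetric polymorphism to an idempotent one (hence to $\min$ or $\max$) fails for exclusive-or: the clone generated by $x \oplus y$ consists of the constant $0$ and sums of variables, so no composition with itself, with projections, or even with the swap yields $\min$ or $\max$. The correct case analysis is that a binary symmetric operation on $\{0,1\}$ either is a constant ($\CSP(\fB)$ trivially in $\Ptime$), or generates $\min$ or $\max$ (Horn/dual-Horn; the two Sheffer-type operations land here after composing with the negation $f(x,x)$ they generate), or generates the minority $x \oplus y \oplus z$ (this is where $\oplus$ and its complement land, and Gaussian elimination applies). So your conclusion stands, but some symmetric cases must be routed through the minority algorithm rather than through $\min/\max$; the same idempotent witness $x \oplus y \oplus z$ is also what makes your fallback via Theorem~\ref{thm:dichotomyConservative} work, since after adding the singleton relations only idempotent polymorphisms survive and $\oplus$ itself does not. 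Second, your core-reduction detour for constant polymorphisms in the hard case is vacuous, and the worry behind it points at a misreading: a constant binary operation satisfies $f(x,y)=f(y,x)$, so constant-valid structures belong to the first bullet and cannot occur in your hard case. This inclusive reading is in fact forced: if $\Pol(\fB)$ is the clone of all essentially unary operations, then $\CSP(\fB)$ is in $\Ptime$ but $\fB$ does \emph{not} pp-interpret $(\{0,1\};\NAE)$ (a constant polymorphism rules out any clone homomorphism to the projections, cf.\ Theorem~\ref{thm:clone-hom-to-proj-implies-NP-hard}), so under a reading of ``binary symmetric'' that excludes constants neither bullet would hold and the theorem would be false. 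With the inclusive reading, your detour can simply be deleted.
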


Sometimes, it will be convenient to prove the existence of pp-interpretations algebraically; we then need the following definitions and Theorem~\ref{thm:clone-hom-to-proj-implies-NP-hard}.

\begin{definition}[see, e.g., {\cite[Definition 6.5.3.]{Book}}]
Let $\mathcal C$ and $\mathcal D$ be clones. A function $\xi\colon \mathcal C \rightarrow \mathcal D$ is called a \emph{clone homomorphism} if
\begin{itemize}
    \item $\xi$ preserves arities, i.e., for every $n$-ary function $f \in \mathcal C$, the function $\xi(f)$ is also $n$-ary;
    \item $\xi$ preserves the projections, i.e., $\xi(\pi(i, k)^\mathcal C) = \pi(i, k)^\mathcal D$ for all $k \in \N$ and all $i \in \{1, \hdots, k\}$;
    \item for every $n$-ary function $f \in \mathcal C$ and all $m$-ary functions $g_1, \hdots, g_n \in \mathcal D$,
    \begin{align*}
        \xi(f(g_1, \hdots, g_n)) = \xi(f)(\xi(g_1), \hdots, \xi(g_n)).
    \end{align*}
\end{itemize}
\end{definition}

\begin{definition}[see, e.g., {\cite[Section 9.5.1]{Book}}]\label{def:uniformly-continuous-clone-hom}
Let $\mathcal C_1$ and $\mathcal C_2$ be clones over countable sets $C_1$ and $C_2$. A function $\xi\colon C_1 \rightarrow C_2$ is called \emph{uniformly continuous} if for every finite $G \subseteq C_2$ there is a finite set $F \subseteq C_1$ such that for every $k \in \N$ and all operations $f, g \in \mathcal C_1$ with equal arity,
$f\vert_{F^k} = g\vert_{F^k}$ implies $\xi(f)\vert_{G^k} = \xi(g)\vert_{G^k}$.
\end{definition}

\begin{thm}[{\cite[Theorem 28]{TopoBirkhoff}}; see also {\cite[Corollary 9.5.21]{Book}}]\label{thm:clone-hom-to-proj-implies-NP-hard} Let $\fB$ be a countable $\omega$-categorical structure. Then the following are equivalent:
\begin{itemize}
    \item There is a uniformly continuous clone homomorphism from $\Pol(\fB)$ to the clone of projections on a two-element set.
    \item $\fB$ pp-interprets $(\{0, 1\}; \mathrm{NAE})$; in this case, $\CSP(\fB)$ is $\NP$-hard.
\end{itemize}
\end{thm}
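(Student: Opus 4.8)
The plan is to reduce the statement to the topological Birkhoff correspondence between primitive positive interpretability and uniformly continuous clone homomorphisms, together with a finite Boolean clone computation that links the projection clone to $(\{0,1\};\NAE)$. Throughout, let $\mathcal{J}$ denote the clone of projections on the two-element set $\{0,1\}$, and let $T := \{(1,0,0),(0,1,0),(0,0,1)\}$ be the $1$-in-$3$ relation. The hardness clause (``$\CSP(\fB)$ is $\NP$-hard'') is not really part of the equivalence: once the second item holds, it follows at once from Example~\ref{expl:hard}, where $\CSP(\{0,1\};\NAE)$ is shown to be $\NP$-complete, together with Proposition~\ref{prop:pp-interpretation-implies-logspace-reducible}. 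So it suffices to prove the equivalence of the first two items.

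For the direction from pp-interpretation to clone homomorphism, I would use the standard fact that a pp-interpretation induces a uniformly continuous clone homomorphism. If $\fB$ pp-interprets $(\{0,1\};\NAE)$ with coordinate map $h$ of dimension $d$, then every $k$-ary $f \in \Pol(\fB)$ is transported through $h$ to a $k$-ary operation $\bar f$ on $\{0,1\}$ by choosing $h$-preimages, applying $f$ coordinatewise, and mapping back; this $\bar f$ is well-defined and lies in $\Pol(\{0,1\};\NAE)$ because the domain formula $\varphi_\top$, the equality formula $\varphi_=$, and the defining formula of $\NAE$ are all primitive positive and hence preserved by $f$. The assignment $f \mapsto \bar f$ is a uniformly continuous clone homomorphism $\Pol(\fB)\to\Pol(\{0,1\};\NAE)$. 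It then remains to compose with a clone homomorphism $\Pol(\{0,1\};\NAE)\to\mathcal{J}$. Here I would invoke Theorem~\ref{thm:schaefer78} and Theorem~\ref{thm:post41}: since $\CSP(\{0,1\};\NAE)$ is $\NP$-complete, $\Pol(\{0,1\};\NAE)$ contains no binary symmetric, majority, or minority operation; as it does contain the negation $x\mapsto 1-x$, a standard analysis of the Boolean clone lattice refining Theorem~\ref{thm:post41} shows that $\Pol(\{0,1\};\NAE)$ consists exactly of the projections and their negations $(x_1,\dots,x_n)\mapsto 1-x_i$. The map forgetting the sign (sending both $\pi(i,n)$ and its negation to $\pi(i,n)$) is then a clone homomorphism onto $\mathcal{J}$, and it is trivially uniformly continuous; composing yields the desired homomorphism.

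For the converse, suppose there is a uniformly continuous clone homomorphism $\Pol(\fB)\to\mathcal{J}$. I would first observe that $\mathcal{J} = \Pol(\{0,1\};T)$: the relation $T$ is preserved by no binary symmetric, majority, or minority operation and not by the negation either, so by Theorem~\ref{thm:post41} its polymorphism clone is exactly the projections. Thus we have a uniformly continuous clone homomorphism $\Pol(\fB)\to\Pol(\{0,1\};T)$. Applying the hard direction of the topological Birkhoff theorem, the content quoted from~\cite{TopoBirkhoff}, yields that $\fB$ pp-interprets $(\{0,1\};T)$. Finally, since $\Pol(\{0,1\};T)=\mathcal{J}$ and every Boolean relation, in particular $\NAE$, is preserved by all projections, the $\mathrm{Inv}$--$\mathrm{Pol}$ Galois correspondence for finite structures shows that $\NAE$ is primitive positive definable in $(\{0,1\};T)$; hence $(\{0,1\};T)$ pp-interprets $(\{0,1\};\NAE)$ in dimension $1$. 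As pp-interpretations compose, $\fB$ pp-interprets $(\{0,1\};\NAE)$, as required.

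The main obstacle is this converse direction, and specifically the hard direction of topological Birkhoff: that a uniformly continuous clone homomorphism into a finite polymorphism clone forces a pp-interpretation. This is where the $\omega$-categoricity of $\fB$ is essential, since the proof uses that $\Aut(\fB)$ is oligomorphic so that orbits of tuples can play the role of the ``points'' of the interpretation, and it extracts the interpreting primitive positive formulas by a compactness (König's-lemma) argument from the finitely many constraints that the clone homomorphism imposes on larger and larger finite subsets. Everything else — the induced-homomorphism construction, the identification of $\Pol(\{0,1\};\NAE)$ and of $\mathcal{J}=\Pol(\{0,1\};T)$ via Theorem~\ref{thm:post41}, and the composition of interpretations — is routine finite combinatorics and bookkeeping.
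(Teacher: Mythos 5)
The paper never proves this statement: it is imported by citation from \cite{TopoBirkhoff} (see also \cite[Corollary 9.5.21]{Book}), so there is no internal proof to compare yours against. What you propose is essentially the standard derivation of this corollary from the general topological Birkhoff correspondence (for countable $\omega$-categorical $\fB$ and finite $\fA$, a uniformly continuous clone homomorphism $\Pol(\fB)\to\Pol(\fA)$ exists if and only if $\fA$ has a pp-interpretation in $\fB$), with the rest being finite Boolean bookkeeping: $\Pol(\{0,1\};T)$ is the projection clone for the $1$-in-$3$ relation $T$ (this does follow from Theorem~\ref{thm:post41} as stated, since $T$ is preserved by no binary symmetric operation, no majority, no minority, and not by negation), the hard direction of topological Birkhoff converts the hypothesis of the first item into a pp-interpretation of $(\{0,1\};T)$, and since every Boolean relation is invariant under all projections, the finite $\mathrm{Inv}$--$\Pol$ Galois correspondence gives a pp-definition of $\NAE$ over $(\{0,1\};T)$; composing interpretations finishes that direction, and the $\NP$-hardness rider is exactly Example~\ref{expl:hard} plus Proposition~\ref{prop:pp-interpretation-implies-logspace-reducible}. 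You also correctly locate where $\omega$-categoricity is indispensable.

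Two steps need repair, though neither is fatal. First, you infer that $\Pol(\{0,1\};\NAE)$ contains no binary symmetric, majority, or minority operation ``since $\CSP(\{0,1\};\NAE)$ is $\NP$-complete'', via Theorem~\ref{thm:schaefer78}. That inference is only valid under $\Ptime\neq\NP$, because the two alternatives of Schaefer's theorem are not (known to be) mutually exclusive; replace it by the direct check that applying a majority or a minority to the rows $(0,0,1)$, $(0,1,0)$, $(1,0,0)$ yields a constant tuple, and that a binary symmetric operation fails similarly. Second, the exact identification of $\Pol(\{0,1\};\NAE)$ with the projections and negated projections does not follow from Theorem~\ref{thm:post41} as stated (that theorem gives no information once a clone contains negation); it is a genuinely finer, though standard, fact about Post's lattice which you assert rather than prove. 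You can sidestep it entirely in the forward direction: it is a standard fact that $(\{0,1\};\NAE)$ pp-interprets every finite structure, in particular $(\{0,1\};T)$, so a pp-interpretation of $\NAE$ in $\fB$ composes to one of $T$, which induces a uniformly continuous clone homomorphism into $\Pol(\{0,1\};T)$, i.e., into the projection clone; then only the computation of $\Pol(\{0,1\};T)$ is needed, and for that the paper's version of Post's theorem suffices. With these two adjustments your argument is correct.
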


\subsection{Relation Algebras}\label{sect:relationalgebras_def}
\begin{definition}\label{def:relation_algebra}
    A \emph{relation algebra} is an algebra $\bA$ with domain $A$ and signature $\{\cup, \bar{\phantom{o}}, 0,1,\id, \breve{\phantom{o}}, \circ \}$ such that
	\begin{enumerate}
		\item the structure $(A; \cup, \cap, \bar{\phantom{o}}, 0,1)$, with $\cap$ defined by $x\cap y := \overline{(\bar{x}\cup \bar{y})}$, is a Boolean algebra;
		\item $\circ$ is an associative binary operation on A, called \emph{composition};
		\item for all $a,b,c \in A$: $(a\cup b)\circ c= (a\circ c) \cup (b\circ c)$;
		\item for all $a\in A$: $a\circ \id =a$;
		\item for all $a\in A$: $\Breve{\Breve{a}} = a$;
		\item for all $a,b\in A$: $(a \cup b){\Breve{\phantom{o}}} =\Breve{a}\cup \Breve {b}$;
        \item for all $a,b\in A$: $(a \circ b){\Breve{\phantom{o}}}=\Breve{b} \circ \Breve{a}$;
        \item for all $a,b \in A$: $\bar{b} \cup \left(\Breve{a} \circ  \overline{(a\circ b)  }\right) =\bar{b}$. 
	\end{enumerate}
 \end{definition}

\begin{remark}\label{rem:consequences-of-axioms-for-relation-algebra}
    It is easy to see that the axioms in Definition \ref{def:relation_algebra} imply $\breve{0} = 0$, $\breve{\id} = \id$, $\id \circ\,a = a$, $c\,\circ\,(a \cup b) = (c\,\circ\,a) \cup (c\,\circ\,b)$ for all $a, b, c \in A$ and similar statements, which we will freely use in the following. Formal proofs can be found in \cite[Chapter 6]{Maddux2006-dp}.
\end{remark}

We denote the class of all relation algebras by RA. 
Since all the given axioms are universal, RA is closed under the formation of direct products (Definition~\ref{def:product}). A relation algebra is called \emph{trivial} if $0=1$.

If $\bA$ is a relation algebra with elements $a$ and $b$, then we write $a \leq b$ if $a \cap b = a$ holds in $\bA$. 
 Clearly, $\leq$ defines a partial order on $A$. An element $b \in A \setminus \{0^\bA\}$ is called an \emph{atom} if there is no element $a \in A \setminus \{0^\bA, b\}$ with $a \leq b$. The set of all atoms is denoted by $A_0$. If $a \in A_0$ satisfies $a \leq \id$, it is called an \emph{identity atom} and otherwise a \emph{diversity atom}.
We also write $a^2$ for $a \circ a$.

An element $a \in A$ is called
\begin{itemize}
    \item \emph{symmetric} if $\breve{a} = a$;
    \item \emph{transitive} if $a^2 \leq a$;
    \item \emph{reflexive} if $\id \leq a$;
    \item an \emph{equivalence relation} if $a$ is reflexive, symmetric and transitive.
\end{itemize} 

A relation algebra $\bA$ is called 
\begin{itemize}
    \item \emph{symmetric} if all $a \in A$ are symmetric;
    \item \emph{integral} if $0 \neq 1$ and $x \circ y = 0$ implies $x=0$ or $y=0$ (see~\cite{Maddux2006}); 
    \item \emph{simple} if $1 \circ x \circ 1 = 1$ for all $x \neq 0$.
\end{itemize}

\begin{remark}\label{rem:simple} 
    We mention that a relation algebra is simple in the sense given above if and only if it is simple in the sense of universal algebra (see, e.g.,~\cite{BS}); moreover, a relation algebra is simple if and only if it is not the direct product of two non-trivial relation algebras~\cite[Theorem 4.14]{TarskiJonnsonOperators}. 
\end{remark}

\begin{example}\label{expl:1}
    There is an up to isomorphism unique relation algebra with the elements $\{0,1\}$ (with $\id=1$). 
    This algebra is called $1_1$ in \cite{Maddux2006-dp}. Then $(1_1)^2$ has two atoms and is the smallest non-simple relation algebra. 
\end{example}

Every integral relation algebra is simple~\cite[Theorem 4.18\,(ii)]{TarskiJonnsonOperators}. 
The converse is not true as shown in Example~\ref{expl:non-int}.
In fact, a simple relation algebra is integral if and only if $\id$ is an atom~\cite[Theorem 353]{Maddux2006}.

\begin{example}\label{expl:non-int}
    The following relation algebra with four atoms is simple but not integral.
    It has atoms $a,b,c,d$, where $a$ and $b$ are identity atoms, 
    and is uniquely given by requiring $\breve{c}=d$ and the multiplication table given in Figure~\ref{fig:non-int}.
    We will later see that this is the only non-integral simple relation algebra with at most four atoms (Lemma~\ref{prop:non-int-unique}). 
\end{example}

\begin{figure} 
    \centering
    \begin{tabular}{|L|LLLL|}\hline
        \circ & a & b & c & \breve{c} \\ \hline
        a & a & 0 & 0 & \breve{c} \\
        b & 0 & b & c & 0 \\
        c & c & 0 & 0 & b \\
        \breve{c} & 0 & \breve{c} & a & 0 \\ \hline
    \end{tabular}
    \caption{Multiplication table of the simple non-integral relation algebra from Example~\ref{expl:non-int}.}
    \label{fig:non-int}
\end{figure} 
    
For a finite relation algebra $\bA$, the operation $\circ$ is completely determined by its restriction to the atoms. A tuple $(x,y,z) \in (A_0)^3$ is called an \emph{allowed triple} if $z \leq x \circ y$. Otherwise, $(x,y,z)$ is called a \emph{forbidden triple}. We denote by 
 \[ \Cy(\bA) := \{ (x,y,z) \in A_0 \mid z \leq x \circ y \}\]
the set of all allowed triples. The following proposition shows that even less information is needed. 
\begin{prop}[cycle law; {\cite[Theorem 294]{Maddux2006-dp}}]\label{prop:cyclelaw}
    Let $\bA$ be a relation algebra. For $x,y,z \in A$ it holds that
    \[ z \leq x \circ y \iff y \leq \breve{x} \circ z \iff x \leq z \circ \breve{y} \iff \breve{y} \leq \breve{z} \circ x \iff \breve{x} \leq y \circ \breve{z} \iff \breve{z} \leq \breve{y} \circ \breve{x}.\]
    Moreover, for $x,y,z \in A_0$ and
    \[ [x,y,z] := \{ (x,y,z), (\breve{x},z,y), (z,\breve{y},x), (\breve{z},x,\breve{y}), (y, \breve{z}, \breve{x}), (\breve{y}, \breve{x}, \breve{z}) \},\]
    either $[x,y,z] \cap \Cy(\bA) = \varnothing$ or $[x,y,z] \subseteq \Cy(\bA)$.
\end{prop}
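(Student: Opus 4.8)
The plan is to reduce the whole statement to a single \emph{Schröder equivalence} for arbitrary elements, extracted from the last axiom of Definition~\ref{def:relation_algebra}, and then to pass to the displayed (uncomplemented) cycle law by exploiting that atoms are join-irreducible. Before that I would record the order-theoretic facts used silently: from the right distributivity $(a\cup b)\circ c=(a\circ c)\cup(b\circ c)$ of Definition~\ref{def:relation_algebra} and its left-hand companion in Remark~\ref{rem:consequences-of-axioms-for-relation-algebra}, the operation $\circ$ is monotone in each argument; from the axioms $(a\cup b)\breve{\phantom{o}}=\breve a\cup\breve b$ and $\breve{\breve a}=a$, the converse $\breve{\phantom{o}}$ is an order-automorphism of the Boolean reduct, so it fixes $0$ and $1$, satisfies $\overline{\breve a}=\breve{\bar a}$, and maps atoms to atoms. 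Finally I would rewrite the last axiom in the form $\breve a\circ\overline{a\circ b}\le\bar b$, which is immediate from $p\cup q=q\Leftrightarrow p\le q$.

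\emph{The Schröder equivalence.} I would first prove, for all $x,y,z\in A$, that
\[ x\circ y\le\bar z\ \Longleftrightarrow\ \breve x\circ z\le\bar y. \]
For the forward direction, $x\circ y\le\bar z$ gives $z\le\overline{x\circ y}$, hence $\breve x\circ z\le\breve x\circ\overline{x\circ y}$ by monotonicity, and $\breve x\circ\overline{x\circ y}\le\bar y$ is exactly the rewritten last axiom; transitivity finishes it. The backward direction is the forward implication applied to the triple $(\breve x,z,y)$ together with $\breve{\breve x}=x$. Thus this equivalence holds for \emph{arbitrary} elements.

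\emph{From Schröder to the cycle law.} The six triples of $[x,y,z]$ form a single orbit under the maps $P\colon(x,y,z)\mapsto(\breve x,z,y)$ and $Q\colon(x,y,z)\mapsto(\breve y,\breve x,\breve z)$, so it suffices to show that $P$ and $Q$ preserve membership in $\Cy(\bA)$ on triples of atoms. For $Q$ this needs no atomicity: since the converse is order-preserving and $(x\circ y)\breve{\phantom{o}}=\breve y\circ\breve x$, we get $z\le x\circ y\iff\breve z\le\breve y\circ\breve x$. For $P$, i.e. $z\le x\circ y\iff y\le\breve x\circ z$, I would use the atom dichotomy: for an atom $w$ and any $v$ one has $w\le v\iff w\cap v\ne 0\iff\lnot(v\le\bar w)$. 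Hence $z\le x\circ y\iff\lnot(x\circ y\le\bar z)$, which by the Schröder equivalence equals $\lnot(\breve x\circ z\le\bar y)$, which by the dichotomy applied to the atom $y$ equals $y\le\breve x\circ z$. Because the converse preserves atoms, $P$ and $Q$ keep us inside triples of atoms, so iterating these two equivalences around the orbit shows that all six conditions in the display are equivalent; this is exactly the assertion that $[x,y,z]$ is either disjoint from or contained in $\Cy(\bA)$.

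\emph{Main obstacle.} The only delicate point, and the single place where atomicity is essential, is the $P$-step: the passage from the complemented Schröder form to the uncomplemented form $z\le x\circ y\iff y\le\breve x\circ z$ rests entirely on the dichotomy $w\le v\iff\lnot(v\le\bar w)$, which is special to atoms; for arbitrary elements only the complemented Schröder equivalence and the converse symmetry $Q$ survive. Getting the directions and substitutions right in the Schröder derivation (and routing the remaining transforms through $P$ and $Q$) is then routine once the rewriting of the last axiom and the monotonicity lemmas are in place.
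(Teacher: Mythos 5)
Your proof is correct, and it follows essentially the same route as the source the paper defers to: the paper gives no proof of this proposition (it is quoted from Maddux, Theorem 294), and the standard derivation is precisely yours --- extract the Schr\"oder equivalence $x\circ y\le\bar{z}\iff\breve{x}\circ z\le\bar{y}$ from the last axiom of Definition~\ref{def:relation_algebra} together with monotonicity of $\circ$, observe that the transformation $Q\colon(x,y,z)\mapsto(\breve{y},\breve{x},\breve{z})$ costs nothing because of $(x\circ y)\breve{\phantom{o}}=\breve{y}\circ\breve{x}$, and convert to the uncomplemented form via the atom dichotomy $w\le v\iff\lnot(v\le\bar{w})$. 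Your check that $P$ and $Q$ generate all six triples of $[x,y,z]$, and that the converse operation keeps you inside triples of atoms so that the $P$-step may be iterated, closes the orbit argument properly.

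One remark: your closing caveat is actually a stronger observation than you present it as. The restriction to atoms in the $P$-step is not a limitation of your method --- the first sentence of the proposition, read literally for arbitrary $x,y,z\in A$, is false. In any relation algebra with $\id\neq 1$, take $x=z=\id$ and $y=1$: then $z\le x\circ y$ holds (since $\id\circ 1=1$), while $y\le\breve{x}\circ z$ fails (since $\breve{\id}\circ\id=\id\neq 1$). The statement that is valid for arbitrary elements is the complemented (Peircean) form, i.e., exactly your Schr\"oder equivalence, and the $\le$-form of the display genuinely requires atoms. So your proof establishes precisely the part of the proposition that is true; this is also the part the paper actually relies on, since its later invocations of the cycle law concern triples of atoms.
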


\subsection{Representations}
\begin{definition}\label{defn:repr}
Let $\bA = (A; \cup, \bar{\phantom{o}}, 0,1,\id,  \Breve{\phantom{o}}, \circ)$ be a relation algebra. A structure $\fB$ with signature $A$
is called a \emph{representation of $\bA$} if  
\begin{enumerate}
    \item\label{ax:repr:1} $0^{\fB} = \varnothing$; 
    \item $1^{\fB} = \bigcup_{a \in A} a^{\fB}$; 
    \item $\id^{\fB} = \{(u,u) \mid u \in B\}$;
    \item for all $a \in A$ we have $({\overline{a}})^{\fB} = 1^{\fB} \setminus a^{\fB}$;
    \item for all $a \in A$ we have $(\breve{a})^{\fB} = \{(u,v) \mid (v,u) \in a^{\fB} \}$;
    \item\label{ax:repr:6} for all $a,b \in A$ we have $a^{\fB} \cup b^{\fB} = (a \cup b)^{\fB}$; 
    \item\label{ax:repr:7} for all $a,b \in A$ we have $a^{\fB} \circ b^{\fB} = (a \circ b)^{\fB}$. 
\end{enumerate}
Relation algebras that have a representation are called \emph{representable}.
\end{definition}
Note that when constructing a representation of $\bA$, it is enough to describe how the atoms of $\bA$ are interpreted in $\fB$, the other relations are then uniquely determined.

\begin{example}
    Trivial relation algebras can be represented by structures with an empty domain. The relation algebra from Example~\ref{expl:1} has a representation by a structure with one element.
\end{example}

\begin{lemma}[see, e.g.,~\cite{BodirskyKnaeuerDatalog23}]\label{lem:union}
    Let $\fB_1$ be a representation of $\bA_1$ and $\fB_2$ a representation of $\bA_2$, and suppose that $A_1 \cap A_2 = \varnothing$ and $B_1 \cap B_2 = \varnothing$. Then $\bA_1 \times \bA_2$ has a \emph{union representation}, 
    which is the $(A_1 \times A_2)$-structure $\fB_1 \uplus \fB_2$ with domain $B_1 \cup B_2$ defined by $(a_1,a_2)^{\fB_1 \uplus \fB_2} := a_1^{\fB_1} \cup a_2^{\fB_2}$. 
\end{lemma}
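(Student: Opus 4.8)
The plan is to verify directly that the candidate $\fB := \fB_1 \uplus \fB_2$, with domain $B := B_1 \cup B_2$ and $(a_1,a_2)^{\fB} := a_1^{\fB_1} \cup a_2^{\fB_2}$, satisfies all seven conditions of Definition~\ref{defn:repr} for the algebra $\bA := \bA_1 \times \bA_2$. First I would note that $\fB$ is a well-defined $A$-structure, since $a_1^{\fB_1} \cup a_2^{\fB_2} \subseteq B_1^2 \cup B_2^2 \subseteq B^2$, and I would record the componentwise form of the operations of $\bA$ coming from Definition~\ref{def:product}: writing elements of $\bA$ as pairs $(a_1,a_2)$ with $a_i \in A_i$, the union of $(a_1,a_2)$ and $(b_1,b_2)$ is $(a_1 \cup b_1, a_2 \cup b_2)$, the complement of $(a_1,a_2)$ is $(\overline{a_1},\overline{a_2})$, its converse is $(\breve{a_1},\breve{a_2})$, the composition of $(a_1,a_2)$ and $(b_1,b_2)$ is $(a_1 \circ b_1, a_2 \circ b_2)$, and $0=(0,0)$, $1=(1,1)$, $\id=(\id,\id)$, each operation on the right taken inside the corresponding factor. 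The single structural fact driving everything is that $B_1 \cap B_2 = \varnothing$ forces $B_1^2$ and $B_2^2$ to be disjoint subsets of $B^2$, so that every $(a_1,a_2)^{\fB}$ is a disjoint union of a relation living entirely in $B_1^2$ and one living entirely in $B_2^2$, with no pair straddling the two blocks.

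Next I would dispatch the six Boolean-and-converse conditions (items 1--6 of Definition~\ref{defn:repr}). Using that $\fB_1$ and $\fB_2$ are themselves representations and that a union over all of $A_1 \times A_2$ restricts in each coordinate to a union over $A_1$ and over $A_2$ (so that $\bigcup_{(a_1,a_2)} a_i^{\fB_i} = 1^{\fB_i}$), items 1, 2, 3, 5, and 6 reduce to the corresponding identities for $\fB_1$ and $\fB_2$ essentially by inspection. Item 4 (complementation) is the only one among these that uses disjointness: for $a=(a_1,a_2)$ one has $1^{\fB} \setminus a^{\fB} = (1^{\fB_1} \cup 1^{\fB_2}) \setminus (a_1^{\fB_1} \cup a_2^{\fB_2})$, and since $a_2^{\fB_2} \subseteq B_2^2$ is disjoint from $1^{\fB_1} \subseteq B_1^2$ (and symmetrically), deleting the ``wrong-block'' part has no effect, leaving $(1^{\fB_1} \setminus a_1^{\fB_1}) \cup (1^{\fB_2} \setminus a_2^{\fB_2}) = \overline{a_1}^{\fB_1} \cup \overline{a_2}^{\fB_2} = \overline{a}^{\fB}$.

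The only genuinely interesting step, and the one that most crucially needs the disjointness hypothesis, is the composition axiom (item 7). Here I would expand $a^{\fB} \circ b^{\fB} = (a_1^{\fB_1} \cup a_2^{\fB_2}) \circ (b_1^{\fB_1} \cup b_2^{\fB_2})$ via distributivity of relational composition over union into four terms. The two mixed terms $a_1^{\fB_1} \circ b_2^{\fB_2}$ and $a_2^{\fB_2} \circ b_1^{\fB_1}$ are empty, because any witness $y$ for such a composition would have to lie simultaneously in $B_1$ (from the first relation) and in $B_2$ (from the second), contradicting $B_1 \cap B_2 = \varnothing$. The two pure terms agree with the compositions computed inside the respective factors, since any intermediate point stays within one block, so by item 7 for $\fB_1$ and $\fB_2$ they equal $(a_1 \circ b_1)^{\fB_1}$ and $(a_2 \circ b_2)^{\fB_2}$, whose union is exactly $(a \circ b)^{\fB}$. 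Once all seven conditions are checked, $\fB_1 \uplus \fB_2$ is a representation of $\bA_1 \times \bA_2$. I do not expect a real obstacle beyond bookkeeping; the entire content is the observation that the two blocks never interact, which makes every relation-algebra operation act blockwise on the interpretations.
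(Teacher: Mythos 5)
Your proof is correct. The paper does not include its own proof of this lemma (it is cited from~\cite{BodirskyKnaeuerDatalog23}), and your direct blockwise verification of the seven axioms of Definition~\ref{defn:repr} for $\fB_1 \uplus \fB_2$ — with the disjointness $B_1 \cap B_2 = \varnothing$ carrying the load exactly where it must, namely in the complementation axiom and in killing the mixed terms of the distributed composition — is precisely the standard argument one expects behind this statement.
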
 

This lemma also has a converse.

\begin{lemma}[see, e.g.,~\cite{BodirskyKnaeuerDatalog23}]\label{lem:factor} 
    If $\fB$ is a representation of $\bA_1 \times \bA_2$, then there exist
    a representation $\fB_1$ of $\bA_1$ and a representation $\fB_2$ of $\bA_2$
    such that $\fB$ is isomorphic to $\fB_1 \uplus \fB_2$.
\end{lemma}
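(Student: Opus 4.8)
The plan is to reconstruct the two blocks of the union representation from inside $\fB$ by isolating the two ``halves'' of the identity. Recall that in the product algebra the identity element is $\id = (\id^{\bA_1}, \id^{\bA_2})$, the zero is $0 = (0^{\bA_1}, 0^{\bA_2})$, and operations act coordinatewise. I would introduce the two \emph{partial identities}
\[
e_1 := (\id^{\bA_1}, 0^{\bA_2}), \qquad e_2 := (0^{\bA_1}, \id^{\bA_2}),
\]
which satisfy $e_1 \cup e_2 = \id$ and $e_1 \cap e_2 = 0$. First I would record the standing fact that $a \mapsto a^{\fB}$ restricted to the Boolean part is a homomorphism onto a field of sets with top $1^{\fB}$: axioms \ref{ax:repr:1}, \ref{ax:repr:6} of Definition~\ref{defn:repr} together with complementation give $(x \cap y)^{\fB} = x^{\fB} \cap y^{\fB}$ and $x \cap y = 0 \Rightarrow x^{\fB} \cap y^{\fB} = \varnothing$. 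Hence $e_1^{\fB}$ and $e_2^{\fB}$ partition $\id^{\fB} = \{(u,u) \mid u \in B\}$, and this induces a partition $B = B_1 \sqcup B_2$, where $B_i := \{u \in B \mid (u,u) \in e_i^{\fB}\}$. The intended factors are the structures $\fB_i$ with domain $B_i$ and signature $A_i$, where $a^{\fB_1} := (a, 0^{\bA_2})^{\fB}$ and $a^{\fB_2} := (0^{\bA_1}, a)^{\fB}$.

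The heart of the argument, and the step I expect to be the main obstacle, is to show that $\fB$ has no relations connecting the two blocks, i.e.\ that $(a_1, 0^{\bA_2})^{\fB} \subseteq B_1 \times B_1$ and $(0^{\bA_1}, a_2)^{\fB} \subseteq B_2 \times B_2$. I would prove this using composition (axiom~\ref{ax:repr:7}) together with the standard relation-algebra identities $x \circ 0 = 0 = 0 \circ x$ and $\id \circ x = x$ from Remark~\ref{rem:consequences-of-axioms-for-relation-algebra}. Concretely, suppose $(u,v) \in (a_1, 0^{\bA_2})^{\fB}$. If its right endpoint were in $B_2$, then $(v,v) \in e_2^{\fB}$, so
\[
(u,v) \in \bigl((a_1, 0^{\bA_2}) \circ e_2\bigr)^{\fB} = (a_1 \circ 0^{\bA_1},\; 0^{\bA_2} \circ \id^{\bA_2})^{\fB} = (0^{\bA_1}, 0^{\bA_2})^{\fB} = 0^{\fB} = \varnothing,
\]
a contradiction; the symmetric computation with $e_2 \circ (a_1, 0^{\bA_2}) = 0$ on the left rules out $u \in B_2$. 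The delicate point is that the representation need not be injective and $1^{\fB}$ need not be all of $B^2$, so one cannot argue by faithfulness; the composition computation sidesteps both issues because it lands directly in $0^{\fB} = \varnothing$. The analogous statement for $(0^{\bA_1}, a_2)^{\fB}$ follows by symmetry.

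With the block structure established, I would verify that $\fB_1$ is a representation of $\bA_1$ (and $\fB_2$ of $\bA_2$ symmetrically) by pushing each clause of Definition~\ref{defn:repr} through the embedding $a_1 \mapsto (a_1, 0^{\bA_2})$ of $\bA_1$ into the product. Thus $0^{\fB_1} = (0^{\bA_1},0^{\bA_2})^{\fB} = \varnothing$; $\id^{\fB_1} = e_1^{\fB} = \{(u,u) \mid u \in B_1\}$; and $1^{\fB_1} = \bigcup_{a_1} a_1^{\fB_1} = (1^{\bA_1}, 0^{\bA_2})^{\fB}$ by monotonicity of the representation. For the complement I would use that, within $\bA_1 \times \bA_2$, the elements $(a_1, 0^{\bA_2})$ and $(\overline{a_1}, 0^{\bA_2})$ partition $(1^{\bA_1}, 0^{\bA_2})$, so the Boolean homomorphism property gives $(\overline{a_1})^{\fB_1} = 1^{\fB_1} \setminus a_1^{\fB_1}$; the converse clause uses $\breve{0} = 0$; and the union and composition clauses follow from axioms~\ref{ax:repr:6} and~\ref{ax:repr:7} together with $0^{\bA_2} \circ 0^{\bA_2} = 0^{\bA_2}$. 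Each is a one-line coordinatewise expansion.

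Finally I would produce the isomorphism. After relabelling so that the signatures $A_1$ and $A_2$ are disjoint (as required by Lemma~\ref{lem:union}), the identity map on $B = B_1 \sqcup B_2$ is a bijection onto the domain of $\fB_1 \uplus \fB_2$. Since $(a_1, a_2) = (a_1, 0^{\bA_2}) \cup (0^{\bA_1}, a_2)$ in the product, axiom~\ref{ax:repr:6} yields
\[
(a_1, a_2)^{\fB} = (a_1, 0^{\bA_2})^{\fB} \cup (0^{\bA_1}, a_2)^{\fB} = a_1^{\fB_1} \cup a_2^{\fB_2},
\]
and by the previous paragraph the two sets on the right lie in $B_1 \times B_1$ and $B_2 \times B_2$ respectively; this is exactly the interpretation of $(a_1,a_2)$ in $\fB_1 \uplus \fB_2$. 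Hence the identity map is an isomorphism $\fB \cong \fB_1 \uplus \fB_2$, completing the proof.
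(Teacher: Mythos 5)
Your proof is correct. Note that the paper itself gives no proof of Lemma~\ref{lem:factor} (it is cited to the literature), so there is nothing in-paper to compare against; your argument is the standard one, decomposing the representation via the partial identities $e_1 = (\id^{\bA_1},0^{\bA_2})$ and $e_2 = (0^{\bA_1},\id^{\bA_2})$, and all the steps check out: the Boolean-homomorphism facts follow from axioms~\ref{ax:repr:1}, \ref{ax:repr:6} and complementation, the block-separation argument via $x \circ 0 = 0 = 0 \circ x$ and axiom~\ref{ax:repr:7} is sound (and correctly avoids any appeal to faithfulness or squareness, neither of which Definition~\ref{defn:repr} provides), and the verification that each $\fB_i$ satisfies all seven representation axioms, including the slightly delicate complement clause via the relative complement inside $(1^{\bA_1},0^{\bA_2})$, is complete. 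The only cosmetic remark is that signature disjointness in Lemma~\ref{lem:union} is harmless here, exactly as you say, since the union structure is indexed by pairs from $A_1 \times A_2$; your relabelling remark disposes of it.
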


It is known that the question whether a given finite relation algebra is representable is undecidable~\cite{HirschHodkinsonRepresentability}. 
A representation $\fB$ is called \emph{square} if $1^{\fB} = B^2$. 

\begin{example}\label{expl:non-int-rep}
    The non-integral relation algebra from Example~\ref{expl:non-int} has a square representation $\fB$ 
    with the domain $\{0,1\}$ 
    which is given by 
    $a^{\fB} = \{(0,0)\}$, $b^{\fB} = \{(1,1)\}$, $c^{\fB} = \{(0,1)\}$, and  $d^{\fB} = \{(1,0)\}$. 
\end{example}

\begin{example}[$\ra{5}{7}$]\label{ex:5_7Part1}
    The relation algebra $\ra{5}{7}$ is described in Table~\ref{fig:3SymAtoms} by its allowed triples. It is a well-known combinatorial fact that every representation  has at most 
    five elements. 
    In fact, up to isomorphism 
    $\ra{5}{7}$ has only one representation, namely the representation $\fB$ 
    with domain $B = \{0, \dots, 4\}$, 
    where
    \begin{align*}
        \id^\fB := \{(x,x) \mid x \in B\}, \quad a^\fB := \{(x,x \pm_5 1) \mid x \in B^2\}, \quad b^\fB := B^2\setminus (\id^\fB \cup a^\fB).
    \end{align*}
\end{example}

Besides the union representation, there is another construction of representations that will be relevant. 

\begin{definition}\label{def:prod}
    Let $(\bA_i)_{i \in I}$ be a sequence of relation algebras
    with representations $(\fB_i)_{i \in I}$ with disjoint domains. Define 
    $\fB := \bigotimes_{i \in I} \fB_i$ to be the $\bigcup_{i \in I} A_i$-structure with domain 
    $\bigcup_{i \in I} B_i$ where $a^{\fB} := \bigcup_{\{i \in I \mid a \in A_i\}} a^{\fB_i}$. 
\end{definition}
If $\bA$ is a relation algebra with representation $\fB$, 
then we can write $1^{\fB} = \bigcup_{i \in I} E_i^2$,
and for every $i \in I$ call $\fB|_{E_i}$ a \emph{square component} of $\fB$.

\begin{lemma}[{\cite[Lemma 3.7]{HirschHodkinson}}]
\label{lem:decomp} 
Let $\bA$ be a relation algebra with representation $\fB$, 
and let $\{\fB_i \mid i \in I\}$ be the set of square components of $\fB$. For $i \in I$, let $\bA_i$ be the relation algebra with the representation $\fB_i$. 
Then $\fB = \bigotimes_{i \in I} \fB_i$, 
and there is a surjective homomorphism mapping $\bA$ to 
$\bA_i$. 
\end{lemma}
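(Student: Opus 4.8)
The plan is to identify the square components with the equivalence classes of the top relation $1^{\fB}$, and then to read off both assertions from the behaviour of composition inside a single class.

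First I would show that $1^{\fB}$ is an equivalence relation on $B$. In any relation algebra one has $\breve{1}=1$ and $1\circ 1 = 1$ (these follow from the axioms, as recorded in Remark~\ref{rem:consequences-of-axioms-for-relation-algebra}), and $\id \leq 1$; translating these facts through the representation axioms of Definition~\ref{defn:repr} shows that $1^{\fB}$ is symmetric, transitive, and contains the full diagonal, hence is an equivalence relation. Its classes are exactly the sets $E_i$ with $1^{\fB} = \bigcup_{i\in I} E_i^2$, so the square components $\fB_i = \fB|_{E_i}$ are the restrictions to these classes. Because the $E_i$ form a partition of $B$, the domain of $\bigotimes_{i\in I}\fB_i$ is $\bigcup_i E_i = B$, matching the domain of $\fB$.

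The key observation, and the step I expect to carry the weight, is that composition localizes to each class. Since every $a\in A$ satisfies $a\leq 1$, we have $a^{\fB}\subseteq 1^{\fB}$; hence whenever $(x,y)\in a^{\fB}$ and $x\in E_i$, the pair $(x,y)$ lies in $1^{\fB}$ and so $y\in E_i$ as well. Consequently, for $x,z\in E_i$ any witness $y$ for $(x,z)\in (a\circ b)^{\fB} = a^{\fB}\circ b^{\fB}$ automatically lies in $E_i$, which gives
\[(a\circ b)^{\fB}\cap E_i^2 = \bigl(a^{\fB}\cap E_i^2\bigr)\circ\bigl(b^{\fB}\cap E_i^2\bigr).\]
The remaining operations restrict to $E_i^2$ without difficulty: $E_i^2$ is symmetric, so converse restricts; complementation is taken relative to $1^{\fB_i}=1^{\fB}\cap E_i^2 = E_i^2$; and $\id^{\fB}\cap E_i^2$ is the diagonal on $E_i$. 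Thus the set of relations $\{a^{\fB}\cap E_i^2 \mid a\in A\}$ is closed under all relation-algebra operations, forming a (square-representable) relation algebra $\bA_i$ of which $\fB_i$ is a square representation, and the map $h_i\colon a\mapsto a^{\fB}\cap E_i^2$ preserves $0,1,\id,\cup,\bar{\phantom{o}},\breve{\phantom{o}}$ and, by the displayed identity, also $\circ$. Hence $h_i$ is a relation-algebra homomorphism $\bA\to\bA_i$; it is surjective by the very definition of $\bA_i$, giving the surjective homomorphism claimed.

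Finally, for the product decomposition I would verify the defining equation of $\bigotimes_{i\in I}\fB_i$ from Definition~\ref{def:prod}. For each $a\in A$, using $a^{\fB}\subseteq 1^{\fB}=\bigcup_i E_i^2$,
\[a^{\fB} = a^{\fB}\cap\bigcup_{i\in I}E_i^2 = \bigcup_{i\in I}\bigl(a^{\fB}\cap E_i^2\bigr) = \bigcup_{i\in I} a^{\fB_i},\]
which is precisely how $a^{\fB}$ is defined in the tensor product, once each $a\in A$ is read as the name of the element $h_i(a)$ in the component $\bA_i$. Hence $\fB=\bigotimes_{i\in I}\fB_i$. The only genuinely delicate point is the localization of composition above; everything else is bookkeeping with the representation axioms and the partition $\{E_i \mid i \in I\}$.
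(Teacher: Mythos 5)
You cannot be compared against an in-paper argument here, because the paper does not prove this lemma at all: it is quoted verbatim from Hirsch and Hodkinson (their Lemma~3.7), and the paper uses it as a black box. Judged on its own, your proof is correct and is essentially the standard argument for this result: $1^{\fB}$ is an equivalence relation (symmetry from $\breve{1}=1$, transitivity from $1\circ 1=1$, reflexivity from $\id\leq 1$, all pushed through the representation axioms of Definition~\ref{defn:repr}); every $a^{\fB}$ lies inside $1^{\fB}$ by axiom~\ref{ax:repr:6}, so composition witnesses cannot leave an equivalence class, which yields your localization identity $(a\circ b)^{\fB}\cap E_i^2=(a^{\fB}\cap E_i^2)\circ(b^{\fB}\cap E_i^2)$ and makes $h_i\colon a\mapsto a^{\fB}\cap E_i^2$ a surjective homomorphism onto the concrete algebra $\bA_i$ of relations of $\fB_i$; the decomposition $a^{\fB}=\bigcup_{i\in I}a^{\fB_i}$ is then partition bookkeeping. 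Two small points deserve the explicitness you partly give them: first, the signature mismatch (the restriction $\fB_i$ is literally an $A$-structure, whereas a representation of $\bA_i$ should be an $A_i$-structure) is resolved exactly as you say, by reading each relation of $\bA_i$ as naming itself, an identification that is also implicit in the statement of the lemma and in Definition~\ref{def:prod}; second, your claim that the image of $h_i$ "forms a relation algebra" needs one sentence of justification — either because a field of binary relations with square unit, closed under the set-theoretic operations, satisfies the axioms of Definition~\ref{def:relation_algebra} (it is a proper relation algebra), or because those axioms are identities, so RA is a variety and the image of a homomorphism out of $\bA$ is again an RA. With either sentence added, the proof is complete.
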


\begin{cor}\label{cor:simple-implies-square-rep}
    Let $\bA$ be a simple representable relation algebra. Then $\bA$ has a square representation.
\end{cor}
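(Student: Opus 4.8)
The plan is to start from any representation of $\bA$, break it into its square components via Lemma~\ref{lem:decomp}, and then use simplicity to pick out one component that is a square representation of $\bA$ itself. Two degenerate cases can be disposed of first. If $\bA$ is trivial, then the empty structure is already a square representation and there is nothing to do. Otherwise $\bA$ is non-trivial; since $\bA$ is representable, I fix a representation $\fB$. If its domain $B$ is empty, then $1^{\fB} = \varnothing = B^2$, so $\fB$ is already square and we are done; hence I may assume $B \neq \varnothing$.

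With $B \neq \varnothing$, I would write $1^{\fB} = \bigcup_{i \in I} E_i^2$ as in the setup preceding Lemma~\ref{lem:decomp}. The relation $1^{\fB}$ is an equivalence relation on $B$ (reflexive since $\id^{\fB} \subseteq 1^{\fB}$, symmetric since $\breve{1} = 1$, transitive since $1 \circ 1 = 1$ in $\bA$ and so $1^{\fB} \circ 1^{\fB} = (1 \circ 1)^{\fB} = 1^{\fB}$ by axiom~(\ref{ax:repr:7}) of Definition~\ref{defn:repr}), and the sets $E_i$ are exactly its classes; in particular each $E_i$ is non-empty and $I \neq \varnothing$. Now Lemma~\ref{lem:decomp} gives $\fB = \bigotimes_{i \in I} \fB_i$, where each square component $\fB_i = \fB|_{E_i}$ is a square representation of a relation algebra $\bA_i$, together with a surjective homomorphism $\bA \to \bA_i$. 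Fix any $i \in I$. Since $E_i \neq \varnothing$, in $\fB_i$ we have $\id^{\fB_i} = \{(u,u) \mid u \in E_i\} \neq \varnothing = 0^{\fB_i}$, so $\id \neq 0$ in $\bA_i$ and therefore $\bA_i$ is non-trivial.

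The key step is then to invoke simplicity. A surjective homomorphism out of $\bA$ is determined by its kernel, which is a congruence of $\bA$; as $\bA$ is simple (Remark~\ref{rem:simple}), its only congruences are the diagonal and the full congruence, so the homomorphism $\bA \to \bA_i$ is either an isomorphism or collapses $\bA$ onto a trivial algebra. Because $\bA_i$ is non-trivial, the latter cannot occur, whence $\bA \cong \bA_i$. Transporting $\fB_i$ along this isomorphism turns the square representation $\fB_i$ of $\bA_i$ into a square representation of $\bA$, which is what we wanted.

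The main obstacle is exactly this last connection between the algebraic notion of simplicity and the structure of homomorphic images: one must be confident that ``simple'' in the sense used here forces every proper homomorphic image to be trivial, and that the chosen square component genuinely corresponds to a non-trivial $\bA_i$ rather than to a vacuous piece of the decomposition. Both points are secured by the two observations above, namely that the classes $E_i$ of the equivalence relation $1^{\fB}$ are non-empty (so $\id \neq 0$ in each $\bA_i$) and that a simple algebra admits only the two trivial congruences.
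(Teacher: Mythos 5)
Your proof is correct and takes essentially the same route as the paper: decompose an arbitrary representation into its square components via Lemma~\ref{lem:decomp}, then use simplicity to upgrade the surjective homomorphism $\bA \to \bA_i$ to an isomorphism and transport the square representation $\fB_i$ back to $\bA$. Your additional care in excluding the full congruence (by verifying that each $\bA_i$ is non-trivial, since the classes $E_i$ are non-empty) patches a point the paper's own proof glosses over, but it does not change the substance of the argument.
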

\begin{proof}
    Let $\fB$ be a representation of $\bA$ with square components $\{\fB_i \mid i \in I\}$.
    For $i \in I$, let $\bA_i$ be the relation algebra with representation $\fB_i$. By Lemma \ref{lem:decomp}  there is a surjective 
    homomorphism $h_i$ from $\bA$ to $\bA_i$ for each $i \in I$. If $h_i$ were not injective, then its kernel would be a non-trivial congruence relation on $\bA$, in contradiction to $\bA$ being simple (see Remark \ref{rem:simple}). Hence, $h_i$ is an isomorphism and $\fB_i$ is a square representation of $\bA$.
\end{proof}

A representation $\fB$ of $\bA$ is \emph{finitely bounded} if there is a finite family $\mathcal F$ of 
finite structures with signature $A$ such that a finite structure $\mathfrak C$ with signature $A$ embeds to $\fB$ if and only if no member of $\mathcal F$ embeds to $\mathfrak C$.

\subsection{Networks} 
If $\bA$ is a relation algebra, then an \emph{$\bA$-network} $(V,f)$ consists of a finite set of variables $V$ and a function $f \colon V^2 \to A$  (see, e.g., \cite[Section 1.5.3]{Book}). If $\fB$ is a representation of $\bA$, then $(V,f)$ is called \emph{satisfiable in $\fB$} if there exists a function $s \colon V \to B$ such that for all $x,y \in V$ we have 
$(s(x),s(y)) \in f(x,y)^{\fB}$. 
An $\bA$-network $(V,f)$ is called 
\begin{itemize}
    \item \emph{atomic} if for all $x,y \in V$ we have that $f(x,y)$ is an atom in $\bA$;
    \item \emph{consistent} (also: \emph{path-consistent}, \emph{3-consistent}, or \emph{closed}) if 
     for all $x,y,z \in V$ we have
     \begin{align}
         f(x,y) \leq f(x,z) \circ f(z,y) \text{ and } f(x,x) \leq \id .
     \end{align}
\end{itemize}

A representation $\fB$ of $\bA$ is called \emph{universal} if every satisfiable 
$\bA$-network is satisfiable in $\fB$, and 
\emph{fully universal}
if every consistent atomic $\bA$-network is satisfiable in $\fB$.
Clearly, every fully universal representation is also universal.

\begin{example}[$\ra{5}{7}$ (cont.)]\label{example:5_7_not_fu}
    As pointed out in Example~\ref{ex:5_7Part1}, every square representation $\fB$ of $\ra{5}{7}$ has exactly five elements and is unique up to isomorphism; therefore, it is in particular universal. Clearly, the consistent atomic $\ra{5}{7}$-network depicted in Figure~\ref{fig:evil_square} is unsatisfiable in this representation (this network appears for example in~\cite{Hirsch-Undecidable}). 
    Hence, there is no fully universal representation of $\ra{5}{7}$. 
\end{example}

If $(V,f)$ and $(W,g)$ are $\bA$-networks such that $W\subseteq V$ and $g = f|_{W^2}$, we say that $(W,g)$ is a \emph{subnetwork} of $(V,f)$. A consistent atomic network $(V,f)$ is called \emph{reduced} if for all $x,y \in V$, the atom $f(x,y)$ is an identity atom if and only if $x=y$. 

\begin{figure}
    \centering
    \[\begin{tikzcd}
	\bullet && \bullet \\
	\\
	\bullet && \bullet
	\arrow["a", no head, from=1-1, to=1-3]
	\arrow["a"', no head, from=1-1, to=3-1]
	\arrow["b"{pos=0.7}, no head, from=1-1, to=3-3]
	\arrow["a", no head, from=1-3, to=3-3]
	\arrow["b"{pos=0.7}, no head, from=3-1, to=1-3]
	\arrow["a"', no head, from=3-1, to=3-3]
\end{tikzcd}\]
    \caption{The \emph{evil square}; it plays an important role in various proofs that certain relation algebras, e.g., $\ra{5}{7}$, do not have a fully universal square representation. (Throughout the paper, we will represent symmetric relations by undirected edges.)}
    \label{fig:evil_square}
\end{figure}

\begin{observation}\label{obs:fin_bounded_networks}
    Let $\fB$ be a representation of a finite relation algebra $\bA$. Then $\fB$ is finitely bounded if and only if there is a finite set $\mathcal F'$ of consistent atomic $\bA$-networks such that a consistent atomic $\bA$-network $(V,f)$ is satisfiable in $\fB$ if and only if no member of $\mathcal F'$ is isomorphic to a subnetwork of $(V,f)$. 
\end{observation}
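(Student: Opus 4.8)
The plan is to reformulate each side of the equivalence as a statement about the sizes of \emph{minimal obstructions}, and then to set up a size-preserving dictionary between the two kinds of obstruction. The starting observation is that satisfiability of consistent atomic networks in $\fB$ is hereditary: if $s$ satisfies $(V,f)$ and $W \subseteq V$, then $s|_W$ satisfies the subnetwork $(W,f|_{W^2})$. Thus the satisfiable consistent atomic networks are closed under subnetworks, so a finite set $\mathcal F'$ with the stated property exists if and only if there is a uniform bound on the number of vertices of the minimal unsatisfiable consistent atomic networks (those all of whose proper subnetworks are satisfiable); when such a bound exists one simply takes $\mathcal F'$ to be a set of representatives. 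Dually, $\fB$ is finitely bounded if and only if the minimal finite $A$-structures not embedding into $\fB$ have boundedly many points. The whole proof therefore reduces to matching these two bounds.

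The bridge is the $A$-structure $\mathfrak C_f$ attached to a consistent atomic network $(V,f)$: its domain is $V$ and $a^{\mathfrak C_f} := \{(x,y) \in V^2 : f(x,y) \leq a\}$ for each $a \in A$. Because $a^\fB = \bigcup_{b \in A_0,\, b \leq a} b^\fB$ and the atom relations partition $1^\fB$, a map $s \colon V \to B$ satisfies $(V,f)$ exactly when it is a homomorphism $\mathfrak C_f \to \fB$. I would then prove the key lemma: if $(V,f)$ is \emph{reduced}, any satisfying $s$ is forced to be injective (distinct vertices carry a diversity atom, which is disjoint from $\id^\fB$) and in fact an embedding, so that for reduced networks \emph{$(V,f)$ is satisfiable if and only if $\mathfrak C_f$ embeds into $\fB$}. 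Two clean-up facts remove the non-reduced case: first, the relation $x \sim y \iff f(x,y) \leq \id$ is an equivalence relation on any consistent atomic network (by path-consistency); second, if $f(x,y)$ is an identity atom then $x$ and $y$ are \emph{twins}, i.e. $f(x,z)=f(y,z)$ and $f(z,x)=f(z,y)$ for all $z$, so deleting $y$ cannot turn an unsatisfiable network into a satisfiable one. Consequently every minimal unsatisfiable consistent atomic network is reduced, and the correspondence above applies to it verbatim.

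With these in place both implications are short. If $\fB$ is finitely bounded by a family $\mathcal F$ with at most $N$ points per member, let $(V,f)$ be a minimal unsatisfiable consistent atomic network; it is reduced, so $\mathfrak C_f$ does not embed, whence some $F \in \mathcal F$ embeds as $\mathfrak C_{f|_{W^2}}$ on a subset $W$ with $|W| \leq N$; this subnetwork is again reduced and unsatisfiable, so minimality forces $W=V$ and $|V| \leq N$. Hence the minimal obstructions are bounded and $\mathcal F'$ exists. Conversely, given $\mathcal F'$ (taken to consist of minimal unsatisfiable, hence reduced, networks), I would set $\mathcal F := \{\mathfrak C_g : (W,g) \in \mathcal F'\}$ together with all $A$-structures on at most three points that do not embed into $\fB$; the latter finitely many ``local'' structures enforce that an $\mathcal F$-avoiding $\mathfrak C$ is well-formed, has no identity atom off the diagonal, and is path-consistent, so that $\mathfrak C \cong \mathfrak C_f$ for a reduced consistent atomic network, and avoiding the $\mathfrak C_g$ then yields satisfiability and thus embeddability of $\mathfrak C$.

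The main obstacle is the bookkeeping forced by non-square representations, where $1^\fB \neq B^2$ and the age of $\fB$ contains $A$-structures with \emph{empty} pairs (pairs lying in no relation $a^\fB$), coming from points in different square components; such structures are not atomic networks, so the naive dictionary does not see them. I would handle this through Lemma~\ref{lem:decomp}, writing $\fB = \bigotimes_i \fB_i$: an empty pair is itself a local feature (a two-point structure not in the age of any single square component), and embeddability of a general finite $A$-structure reduces to embeddability of each maximal ``non-empty part'' into some component, with the finitely many component types controlling when distinct parts can be co-located. Thus empty-pair and cross-component phenomena are again captured by finitely many bounded-size local obstructions, leaving the genuine, size-unbounded content exactly on the single-clique (atomic-network) part, which the preceding paragraphs control. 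For a square representation this obstacle evaporates, since then an empty pair is already a two-point non-embeddable structure.
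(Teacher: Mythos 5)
Your proposal is, in substance, the paper's own proof. The dictionary $\mathfrak C_f$ between reduced consistent atomic networks and $A$-structures, the fact that for reduced networks satisfying assignments are exactly embeddings, the reduction to reduced networks via the $\id$-classes (your twins argument), and the two translations of obstruction sets --- in one direction taking the networks whose corresponding structures lie in $\mathcal F$, in the other taking the structures of the members of $\mathcal F'$ padded with all structures on at most three vertices that do not embed into $\fB$ --- are precisely the steps of the paper's argument; your minimal-obstruction framing and your explicit key lemma merely make explicit what the paper leaves implicit. Both of your directions are correct as written when $\fB$ is square, and your direction from finite boundedness to $\mathcal F'$ is correct for arbitrary representations (as is the paper's).

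Where you depart from the paper is your last paragraph, and there the proposal stops being a proof. You are right to flag the issue: for non-square $\fB$, a structure all of whose substructures on at most three vertices embed into $\fB$ may contain empty pairs, hence is a disjoint union of several network-structures rather than one; the paper passes over this silently (strictly read, its claim that every structure locally satisfying axioms 1--6 comes from a reduced atomic network is false in that situation). But your repair rests on a false premise: there need not be finitely many component types. Already for $\ra{2}{2}$, the representation $\bigotimes_{n \geq 3} K_n$ has square components $K_3, K_4, K_5, \dots$ that are pairwise non-isomorphic with pairwise distinct ages. The genuine difficulty is exactly the injective assignment of the 1-components of $\mathfrak C$ to distinct square components of $\fB$: nothing in your sketch excludes arbitrarily large minimal obstructions consisting of two 1-components each of which is satisfiable only in one and the same square component, a scenario that threatens precisely when some component has a non-finitely-bounded age while another component absorbs all single networks. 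So the co-location bookkeeping you delegate to ``finitely many component types'' is the real work, and it is not done --- neither by you nor by the paper; the honest alternative is to restrict the problematic direction to square representations, which covers every use of the observation in the paper, since Lemma~\ref{lem:fin_bounded_np} uses only the unproblematic direction and Proposition~\ref{prop:51rep} applies the other direction to a square representation.
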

\begin{proof}
    First note that every reduced atomic $\bA$-network has a corresponding structure with signature $A$ on the same vertex set (simply define relations based on the network) satisfying axioms~\ref{ax:repr:1}--\ref{ax:repr:6} from Definition~\ref{defn:repr}, and conversely, every such structure comes from a reduced atomic $\bA$-network. Moreover, a reduced atomic $\bA$-network is consistent if and only if the corresponding structure satisfies the ``$\subseteq$'' inclusion of axiom~\ref{ax:repr:7} of Definition~\ref{defn:repr} (and a failure of this is witnessed on three vertices).

    Consequently, a finite structure with signature $A$ corresponds to a reduced consistent atomic $\bA$-network if and only if all its substructures on at most three vertices do, which happens if and only if all its substructures on at most three vertices satisfy axioms~\ref{ax:repr:1}--~\ref{ax:repr:6} and the ``$\subseteq$'' inclusion of axiom~\ref{ax:repr:7} of Definition~\ref{defn:repr}. Since $\bA$ is finite, there are only finitely many structures with signature $A$ on at most three vertices up to isomorphism. 

    Finally, if $(V,f)$ is a consistent atomic $\bA$-network and $(V',f\vert_{V'})$ is its (unique) reduced subnetwork, then $(V,f)$ is satisfiable in $\fB$ if and only if $(V',f')$ is.

    Consequently, if there is a finite set $\mathcal F'$ as in the statement, then we can assume that it consists of reduced networks and we can put $\mathcal F$ to be the set consisting of all structures corresponding to members of $\mathcal F'$ together with all structures with signature $A$ on at most three vertices which do not embed into $\fB$, and $\mathcal F$ witnesses that $\fB$ is finitely bounded. Conversely, if $\fB$ is finitely bounded witnessed by $\mathcal F$, we can simply put $\mathcal F'$ to be the set of all reduced consistent atomic $\bA$-networks whose corresponding structure is in $\mathcal F$.
\end{proof}

\subsection{Normal Representations} 
\label{sect:normal}
A representation $\fB$ of $\bA$ is called \emph{normal} if it is square, fully universal, and homogeneous. 
Note that in normal representations $\fB$ of $\bA$, the \emph{orbits} of $\Aut(\fB)$ are in one-to-one correspondence with the identity atoms of $\bA$, and the orbits of the component-wise action of $\Aut(\fB)$ on $B^2$ are in one-to-one correspondence with 
the atoms of $\bA$. It follows from Theorem~\ref{thm:two-point-amalgamation} below that the question whether a given finite relation algebra has a normal representation is decidable.

\begin{thm}
    All countable normal representations of a relation algebra $\bA$ are isomorphic. 
\end{thm}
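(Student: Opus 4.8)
The plan is to reduce the statement to the standard uniqueness of countable homogeneous structures with a prescribed age, established by a back-and-forth argument (Fraïssé's theorem). Since normal representations are homogeneous by definition, the only real content is to show that the \emph{age} of a countable normal representation of $\bA$ depends on $\bA$ alone and not on the particular representation chosen.

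First I would identify the finite substructures of a normal representation $\fB$. Because $\fB$ is square we have $1^\fB = B^2$, and because the atoms of $\bA$ are interpreted as pairwise disjoint relations whose union is $1^\fB$ (distinct atoms $a,a'$ satisfy $a \cap a' = 0$, and intersection is preserved since complement and union are, by axiom~\ref{ax:repr:6} and the clause $\overline{a}^\fB = 1^\fB \setminus a^\fB$), every pair $(u,v) \in B^2$ lies in $a^\fB$ for a \emph{unique} atom $a$. Hence the substructure of $\fB$ induced on a finite set $V \subseteq B$ is exactly the reduced atomic $\bA$-network $f$ with $f(x,y)$ equal to this unique atom. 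The representation axioms force consistency: axiom~\ref{ax:repr:7} gives $f(x,y) \leq f(x,z) \circ f(z,y)$, the identity clause $\id^\fB = \{(u,u) \mid u \in B\}$ gives $f(x,x) \leq \id$, and $f(x,y)$ is a diversity atom whenever $x \neq y$. So every member of $\Age(\fB)$ corresponds to a reduced consistent atomic $\bA$-network (cf.\ Observation~\ref{obs:fin_bounded_networks}).

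Conversely, full universality shows that every reduced consistent atomic $\bA$-network is realized in $\fB$: such a network is satisfiable by definition, and a satisfying assignment $s \colon V \to B$ is injective because for $x \neq y$ the label $f(x,y)$ is a diversity atom, so $(s(x),s(y)) \notin \id^\fB$ and thus $s(x) \neq s(y)$; hence $s$ is an embedding. Therefore $\Age(\fB)$ equals precisely the class of finite structures corresponding to reduced consistent atomic $\bA$-networks, a class that is manifestly determined by $\bA$ alone.

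Finally I would invoke the uniqueness half of Fraïssé's theorem: if $\fB_1$ and $\fB_2$ are countable homogeneous structures with $\Age(\fB_1) = \Age(\fB_2)$, then $\fB_1 \cong \fB_2$. This is the usual back-and-forth construction: enumerate $B_1$ and $B_2$ and extend a finite partial isomorphism one point at a time, using that a finite configuration appearing on one side embeds into the other (equal ages) together with homogeneity of the target (to reposition the new image so that it extends the current partial isomorphism). Applying this to two countable normal representations of $\bA$, which are homogeneous by definition and share the age computed above, yields the required isomorphism. The only step demanding care is the passage between substructures of the square representation and reduced consistent atomic networks — in particular the observation that distinct vertices always carry a diversity atom, which is exactly what makes full universality capture the age on the nose; the back-and-forth argument itself is entirely routine.
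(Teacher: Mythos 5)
Your proposal is correct and follows essentially the same route as the paper's proof: identify the age of a countable normal representation with the (reduced) consistent atomic $\bA$-networks, using squareness for one inclusion and full universality for the other, and then invoke the back-and-forth uniqueness of countable homogeneous structures with a given age. Your additional care about reduced networks and the injectivity of satisfying assignments only fills in details the paper treats as clear.
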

\begin{proof}
    Let $\fB$ be a countable normal representation. 
    Clearly, every finite substructure of $\fB$ describes a
    consistent atomic $\bA$-network. Conversely, 
    the assumption that the representation is fully universal implies
    that every consistent atomic $\bA$-network describes a finite substructure of $\fB$. 
    Hence, the age of $\fB$ 
    is fully specified by $\bA$. The statement now follows from the well-known fact in model theory that a countable homogeneous structure is up to isomorphism uniquely given by its age (this can be shown by a back-and-forth argument; see, e.g.,~\cite{Hodges}). 
\end{proof}

\begin{observation}\label{obs:hom-equiv}
    Two countable $\omega$-categorical structures $\fA$ and $\fB$ are homomorphically equivalent if and only if $\CSP(\fA) = \CSP(\fB)$ (see, e.g.,~\cite[Lemma 2]{BodDalJournal}). 
\end{observation}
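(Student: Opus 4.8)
The plan is to prove the two implications separately, observing that only the backward one needs $\omega$-categoricity. First I would dispatch the forward direction by pure composition of homomorphisms. Suppose $\fA$ and $\fB$ are homomorphically equivalent, witnessed by homomorphisms $g \colon \fA \to \fB$ and $h \colon \fB \to \fA$. If a finite $\tau$-structure $\fC$ lies in $\CSP(\fA)$, witnessed by a homomorphism $\varphi \colon \fC \to \fA$, then $g \circ \varphi$ is a homomorphism $\fC \to \fB$, so $\fC \in \CSP(\fB)$; the reverse inclusion is symmetric using $h$. Hence $\CSP(\fA) = \CSP(\fB)$, and this direction uses neither countability nor $\omega$-categoricity.

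For the converse, assume $\CSP(\fA) = \CSP(\fB)$. It suffices to produce a homomorphism $\fA \to \fB$, since the homomorphism $\fB \to \fA$ then follows by the symmetric argument. Every finite substructure $\fC$ of $\fA$ embeds into $\fA$, hence lies in $\CSP(\fA) = \CSP(\fB)$, so it maps homomorphically into $\fB$. Thus the task reduces to lifting this family of homomorphisms from all finite substructures of $\fA$ to a single homomorphism defined on all of $\fA$, and this infinitary lift is the heart of the matter.

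To carry out the lift I would run a compactness (König's lemma) argument, using $\omega$-categoricity to keep the search tree finitely branching despite $B$ being potentially infinite. Enumerate $A = \{a_1, a_2, \dots\}$ and let $\fA_n$ be the substructure of $\fA$ induced on $\{a_1, \dots, a_n\}$. Build a rooted tree whose nodes at level $n$ are the orbits under $\Aut(\fB)$ of those tuples $(b_1, \dots, b_n) \in B^n$ for which $a_i \mapsto b_i$ is a homomorphism $\fA_n \to \fB$, declaring the parent of the orbit of $(b_1, \dots, b_{n+1})$ to be the orbit of $(b_1, \dots, b_n)$; this parent map is well defined because $\Aut(\fB)$ acts coordinatewise. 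By the Ryll--Nardzewski characterization of $\omega$-categoricity, $\Aut(\fB)$ has only finitely many orbits of $n$-tuples for each $n$, so each level is finite, while the fact that every $\fA_n$ maps into $\fB$ guarantees a node at every level; the tree is therefore infinite and finitely branching, and König's lemma yields an infinite branch $o_0, o_1, o_2, \dots$.

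Finally I would align representatives along this branch into a coherent chain: given a representative $\bar b^{(n)}$ of $o_n$, the truncation of any representative of $o_{n+1}$ to its first $n$ coordinates represents $o_n$, so some automorphism of $\fB$ moves it onto $\bar b^{(n)}$; applying that automorphism yields a representative of $o_{n+1}$ extending $\bar b^{(n)}$. Iterating produces nested tuples whose union defines a map $\varphi \colon \fA \to \fB$ via $\varphi(a_i) := \bar b^{(i)}_i$, which is a homomorphism because each restriction $\varphi|_{\fA_n}$ is one by construction. The main obstacle is precisely this infinitary step: without $\omega$-categoricity the tree of partial homomorphisms could branch infinitely and König's lemma would fail, so the crux is to index the tree by automorphism orbits and invoke Ryll--Nardzewski to recover finite branching.
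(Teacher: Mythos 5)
Your proof is correct. The paper gives no argument for this observation---it only cites Lemma~2 of the Bodirsky--Dalmau reference---and your K\"onig's lemma argument over $\Aut(\fB)$-orbits of tuples (finitely many per arity by Ryll--Nardzewski, which is where $\omega$-categoricity and countability enter) is exactly the standard proof of that cited lemma, so your proposal matches the intended approach.
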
 

The following definition is standard in model theory when phrased for structures; we present here a relation algebra version. 

\begin{definition}[The Amalgamation Property]
\label{def:AP}
    Let $\bA$ be a relation algebra and let 
    $\mathcal C$ be a class of consistent atomic $\bA$-networks. 
    We say that $\mathcal C$ has
    \begin{itemize}
        \item $\AP(k,l,m)$ (the \emph{$(k,l,m)$-amalgamation property})
        if for any two networks $(V_1,f_1), (V_2,f_2) \in {\mathcal C}$
        with $|V_1|=k, |V_1 \cap V_2| =l$, and $|V_2|=m$, if $f_1(a,b)=f_2(a,b)$ for all $a,b \in V_1 \cap V_2$, then there exists a network 
        $(V_1 \cup V_2,f) \in {\mathcal C}$ with 
        $f(a,b) = f_i(a,b)$ for all $i \in \{1,2\}$ and $a,b \in V_i$; 
        \item $\AP(n)$ if 
        the property $\AP(k,l,m)$ holds for all 
        $k,l,m$ with $k+l+m=n$; 
        \item $\AP$ if the property $\AP(n)$ holds for all $n$; 
        \item JEP (the \emph{Joint Embedding Property}) if 
        $\AP(k,0,m)$ holds for all $k,m \in \mathbb N$.
    \end{itemize}
\end{definition}

\begin{remark}
    Each of the properties from Definition~\ref{def:AP} 
    can be verified by considering only 
    \emph{reduced} consistent atomic $\bA$-networks
    instead of all 
    consistent atomic $\bA$-networks.
\end{remark}

\begin{thm}[{\cite[Theorem 8]{BodirskyRamics}}]
\label{thm:two-point-amalgamation}
    Let $\bA$ be a relation algebra with $k$ atoms and let ${\mathcal C}$ be the class of all consistent atomic $\bA$-networks. Then the following are equivalent.
    \begin{itemize}
        \item $\mathcal C$ has $\AP$;
        \item $\mathcal C$ has $\AP(k+1,k,k+1)$;
        \item $\bA$ has a normal representation.
    \end{itemize} 
\end{thm}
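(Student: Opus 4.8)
The plan is to prove the cycle of implications $\text{(AP)} \Rightarrow \AP(k+1,k,k+1) \Rightarrow \text{(normal representation)} \Rightarrow \text{(AP)}$. The first implication is immediate, since $\AP(k+1,k,k+1)$ is by definition one of the instances subsumed by the full amalgamation property (it is part of $\AP(3k+2)$). The last implication is the standard direction of Fra\"iss\'e theory: a normal representation $\fB$ is homogeneous and fully universal, so by Observation~\ref{obs:fin_bounded_networks} and the remark following Definition~\ref{def:AP}, the finite substructures of $\fB$ are, up to the duplication of $\id$-classes, exactly the reduced members of $\mathcal C$; a homogeneous structure has the amalgamation property for its age, and translating this back to networks yields that $\mathcal C$ has $\AP$. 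The substance of the theorem therefore lies in the middle implication, which I would split into a purely combinatorial \emph{bootstrapping} step, upgrading the localized condition to full $\AP$, followed by a Fra\"iss\'e construction.

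For the bootstrapping I would first reduce arbitrary amalgamation to the \emph{symmetric one-point} case $\AP(m+1,m,m+1)$. Given two networks $(V_1,f_1),(V_2,f_2)$ to be amalgamated over their common part $V_0$, I would add the vertices of $V_2\setminus V_0$ to $V_1$ one at a time; adding a single vertex $b$ in turn amounts to connecting $b$ to the vertices of $V_1\setminus V_0$ one at a time, and each such connection is an instance of $\AP(m+1,m,m+1)$ over a base $W$ whose size $m$ grows across the construction. Here consistency is a purely triangular condition (Prop.~\ref{prop:cyclelaw}): choosing the atom $f(a,b)$ over a base $W$ means choosing an atom of $\bigcap_{w\in W} T_w$, where $T_w\subseteq A_0$ is the set of atoms $t$ for which the triangle on $\{a,b,w\}$ is consistent.

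The key observation, and the reason the magic number is $k=|A_0|$, is a Helly-type fact: since each $T_w$ is a subset of the $k$-element set $A_0$, there is $W'\subseteq W$ with $|W'|\le k$ and $\bigcap_{w\in W'}T_w=\bigcap_{w\in W}T_w$ (greedily add vertices that strictly shrink the running intersection; this can happen at most $k$ times). When $|W|\ge k$ I would pad $W'$ up to size exactly $k$ and apply $\AP(k+1,k,k+1)$ to the induced one-point networks on $W'\cup\{a\}$ and $W'\cup\{b\}$; the atom it returns lies in $\bigcap_{w\in W'}T_w=\bigcap_{w\in W}T_w$, hence is a valid label for $(a,b)$ over all of $W$. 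The residual small bases $|W|<k$ I would handle by first enlarging the common base to size $k$ using one-point extensions of consistent networks (or by a separate easy induction), so that $\AP(k+1,k,k+1)$ becomes applicable; together with the degenerate cases (e.g.\ when $\bA$ has no diversity atom, where the only consistent atomic networks are single points) this yields full $\AP$, and in particular JEP.

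Finally, with $\mathcal C$ shown to have $\AP$ and JEP, the class of reduced consistent atomic $\bA$-networks is a Fra\"iss\'e class (closed under isomorphism and substructures, with only countably many members since $\bA$ and its signature $A$ are finite), so it admits a countable homogeneous Fra\"iss\'e limit $\fB$; it remains to verify that $\fB$ is a \emph{normal} representation of $\bA$. Axioms~\ref{ax:repr:1}--\ref{ax:repr:6} of Definition~\ref{defn:repr} are local and hold because they hold in every consistent atomic network, squareness is automatic because distinct points always carry a diversity atom, and full universality is exactly the statement that every consistent atomic network embeds into $\fB$. \textbf{The one genuinely non-formal point} is the ``$\supseteq$'' inclusion of the composition axiom~\ref{ax:repr:7}: given $(x,z)\in c^\fB$ with $c\le a\circ b$, one must produce a witness $y$ with $(x,y)\in a^\fB$ and $(y,z)\in b^\fB$; by the cycle law the triangle on $\{x,y,z\}$ with labels $a,b,c$ is consistent, hence lies in the age of $\fB$, and homogeneity embeds it extending the given pair $x,z$, which is precisely where full universality and homogeneity enter. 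I expect the main obstacle to be the bootstrapping step, and within it the careful control of the base sizes across the incremental construction, so that the Helly reduction can always deliver a base of size exactly $k$ on which $\AP(k+1,k,k+1)$ can be invoked.
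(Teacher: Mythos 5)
The paper does not actually prove this theorem: it is imported from the literature as \cite[Theorem 8]{BodirskyRamics}, so there is no internal proof to compare against, and I can only assess your argument on its own merits. Your architecture is the natural one and almost certainly matches the intended proof: the implication from $\AP$ to $\AP(k+1,k,k+1)$ is trivial, the equivalence of $\AP$ with the existence of a normal representation is Fra\"iss\'e theory (and you correctly isolate the $\supseteq$-inclusion of axiom~\ref{ax:repr:7} of Definition~\ref{defn:repr} as the one point where the extension property is really used), and the Helly-type bootstrapping is exactly the reason the parameters $(k+1,k,k+1)$ suffice: each triangle constraint $T_w$ is a subset of the $k$-element set $A_0$, so at most $k$ of them already determine the intersection. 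For one-point amalgamation over a base of size at least $k$, your argument is complete and correct.

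The gaps are in the two cases you dismiss in one sentence, and one of them is fatal at the stated level of generality. First, the small nonempty bases: ``enlarging the common base using one-point extensions'' is circular as written, since attaching a new base vertex to \emph{both} sides is itself a one-point amalgamation over the same small base. The fix is easy but must be said: duplicate an existing base vertex $w$ by a new vertex $w'$ with $f(w,w') := f(w,w)$ and $f(w',x) := f(w,x)$ for all other $x$; this preserves consistency, is legitimate because $\mathcal C$ contains all (not only reduced) networks, and does not change the constraint intersection, so any nonempty base can be padded to size exactly $k$. Second, and more seriously, the empty-base instances (JEP) simply do not follow from $\AP(k+1,k,k+1)$, and your parenthetical about degenerate cases does not cover them. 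Concretely, let $\bA = (1_1)^2$ be the algebra of Example~\ref{expl:1}: its two atoms $a,b$ are both identity atoms with $a \circ b = 0$, so every consistent atomic $\bA$-network is monochromatic. Hence $\AP(3,2,3)$ holds trivially (amalgamate monochromatically; here $k=2$), while JEP fails (a single $a$-point and a single $b$-point have no common extension, since any cross edge $t$ would need $t \leq a \circ t$ and $t \leq t \circ b$), and no square fully universal representation exists (points $u,v$ with $(u,u) \in a^\fB$, $(v,v) \in b^\fB$ would force $(u,v) \in (a\circ b)^\fB = \varnothing$). So the implication you are proving is false under this paper's exact definitions, and no proof can close your gap without an extra ingredient: either restrict to simple (in particular integral) algebras, where $e \circ 1 \circ e' \neq 0$ for identity atoms $e,e'$ always supplies a first cross edge (this is the only setting in which the present paper ever applies the theorem), or read $\AP(k+1,k,k+1)$ as quantifying over all instances of size \emph{at most} $(k+1,k,k+1)$, which would subsume JEP. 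Your write-up should flag this; as it stands, the bootstrapping step breaks exactly at the JEP case.
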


In some cases we will have to describe the normal representation of relation algebra concretely. We introduce the following notation, 
which is used for example in Tables~\ref{tab:overview_asymmetric} and \ref{tab:overview_symmetric}.
\begin{itemize} 
    \item $K_2^a$: the normal representation of $\ra{1}{2}$  
    with the elements $0$ and $1$ where $a$ denotes the relation $\{(0,1),(1,0)\}$. 
    \item $K_{\omega}^a$: the normal representation of $\ra{2}{2}$  
    with domain $\N$ where $a$ denotes the relation
    $\{(x,y) \in \N^2 \mid x \neq y\}$. 
    \item ${\mathbb Z}^{a,b}_5$: the up to isomorphism unique representation of $\ra{5}{7}$ 
    with the elements $\{0,1,2,3,4\}$ where $a$ denotes the undirected edge relation $\{(a,b)\colon |a-b|=1 \mod 5\}$,
    and $b$ denotes the edge relation of the complement graph. 
    \item $\R^{a,b}$: the normal representation of $\ra{7}{7}$, 
    which is obtained from the \emph{Rado graph}, i.e., the countable homogeneous graph whose age is the class of all finite undirected graphs, and where
    $a$ denotes the edge relation
    and $b$ denotes the edge relation of the complement graph. 
    \item $\Q$: the normal representation of $1_3$ whose elements are the rational numbers and where $r$ denotes the strict linear order of the rationals. 
    \item $C_3$: the normal representation of $\ra{2}{3}$ with three elements $0,1,2$ and where $r$ denotes the directed cyclic relation $\{(0,1),(1,2),(2,0)\}$. 
    \item $C_4$: the normal representation of $\ra{18}{37}$ with four elements $0,1,2,3$, where $r$ denotes the directed cyclic relation $\{(0,1),(1,2),(2,3),(3,0)\}$ and $a$ denotes the relation $\{(0,2),(2,0),\allowbreak (1,3),(3,1)\}$.
    \item $\T$: the normal representation of $\ra{3}{3}$, 
    which is obtained from the 
    homogeneous tournament whose age is the class of all finite tournaments, and where $r$ denotes the edge relation of the tournament. 
    \item $\hat{\T}$: the normal representation of $\ra{19}{37}$ whose elements are $\T \times \{0,1\}$ such that $a$ is the relation $\{((x,0),(x,1)), ((x,1),(x,0))\colon x\in \T\}$, and $((x,i),(y,j))$ is in the relation $r$ if and only if either $i=j$ and $(x,y)$ is in the relation $r$ of $\T$, or $i\neq j$ and $(y,x)$ is in the relation $r$ of $\T$.
    \item $P_n$ for $n \in \N^+$: the directed graph with vertices $\{0, \dots, n\}$ and edges $E = \{(k,k+1) \mid 0 \leq k \leq n-1\}$.
    \item $K_{2}^c \boxtimes_a K_{2}^b\colon$ the normal representation of $\ra{25}{65}$ with domain $\{0,1,2,3\}$ where $c = \{(0,1),\allowbreak(1,0),\allowbreak(2,3),\allowbreak(3,2)\}$, $b = \{(0,2),(2,0),(1,3),(3,1)\}$, and $a = \{(0,3),(3,0),(1,2),(2,1)\}$.
    \item $K_{\omega}^c \boxtimes_a K_{\omega}^b\colon$ the normal representation of $\ra{29}{65}$ with domain $\N \times \N$ where $c = \{((u, v), (u^\prime, v^\prime)) \mid u = u^\prime\}$, $b = \{((u, v), (u^\prime, v^\prime)) \mid v = v^\prime\}$, and $a = \N^2 \backslash (\id \cup\, c \cup b)$.
    \item $\mathbb P$: the normal representation of $\ra{15}{37}$, which is obtained from the homogeneous poset whose age is the class of all finite posets, and where $r$ denotes the strict partial order relation in the poset. 
    \item $\mathbb D_2$ and $\mathbb D_\omega$: the normal representations of $\ra{20}{37}$ and $\ra{22}{37}$ which are obtained from the homogeneous bipartite tournament
    and the homogeneous $\omega$-partite tournament, respectively. (These are the Fra\"{i}ssé limits of orientations of complete bipartite resp. multipartite graphs.) In both cases, $a \cup \id$ is an equivalence relation. In $\mathbb D_2$ it has 2 equivalence classes, while in $\mathbb D_\omega$ it has infinitely many, and in both structures pairs of non-equivalent points lie in $r \cup \breve{r}$.
    \item $S(3)$: the normal representation of $\ra{23}{37}$, whose vertices are the points on the unit circle with rational argument and where $(x,y)$ is in the relation $r$ if and only if the angle from $x$ to $y$ is in the interval $(0, 2\pi/3)$.
\end{itemize} 

Representations as well as reduced atomic networks of the relation algebras with an asymmetric atom $r$ and a symmetric atom $a$, i.e., $\ra{1}{37}$--$\ra{37}{37}$, can be viewed as oriented graphs (i.e., directed graphs with no directed 2-cycles), where $r$ represents the edge relation and $a$ represents the non-edge relation. This notation will be repeatedly used throughout the paper.

\subsection{Fully Universal Representations} 
Some relation algebras do not have a normal representation, but still have a fully universal representation. Such representations can be conveniently constructed with the following theorem, which is essentially due to Comer~\cite{ComerExtensionSchemes}, but phrased there in a different formalism, and so we also sketch the proof.

Let $\mathcal C$ be a class of consistent atomic $\bA$-networks. 
We say that a representation $\fB$ of $\bA$ is
\emph{${\mathcal C}$-universal} if an atomic $\bA$-network is satisfiable in $\fB$ if and only if it belongs to ${\mathcal C}$.

\begin{thm}\label{thm:fu}
    Let $\bA$ be a finite relation algebra
    and let ${\mathcal C}$ be a class of consistent atomic $\bA$-networks.     
    Then $\bA$ has 
    \begin{itemize}
        \item a ${\mathcal C}$-universal representation 
        if and only if $\mathcal C$ has AP$(3,2,n)$ for all $n$; 
        \item a ${\mathcal C}$-universal square representation if and only if
        $\mathcal C$ has the JEP and 
        AP$(3,2,n)$ for all $n$. 
    \end{itemize}
\end{thm}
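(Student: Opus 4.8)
The plan is to prove both implications by a relation-algebraic version of Fraïssé's construction, which is essentially Comer's extension-scheme argument. Throughout I assume, as is forced by the very existence of a $\mathcal C$-universal representation, that $\mathcal C$ is closed under isomorphism and under passing to subnetworks; both closure properties hold automatically for the class of atomic networks satisfiable in any fixed representation, since a subnetwork of a satisfiable network is satisfiable by restriction of the assignment.

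For the ``only if'' direction, suppose $\fB$ is a $\mathcal C$-universal representation and let $(V_1,f_1),(V_2,f_2)\in\mathcal C$ be an instance of $\AP(3,2,n)$, with $V_1\cap V_2=\{u,v\}$, $V_1=\{u,v,p\}$, and $|V_2|=n$. Since $(V_2,f_2)\in\mathcal C$ it is satisfiable; fix an assignment $s\colon V_2\to B$. Writing $\gamma:=f_1(u,v)=f_2(u,v)$, $\alpha:=f_1(u,p)$, $\beta:=f_1(p,v)$, consistency of the triangle $(V_1,f_1)$ gives $\gamma\le\alpha\circ\beta$ by the cycle law (Proposition~\ref{prop:cyclelaw}). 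The composition axiom~\ref{ax:repr:7} of Definition~\ref{defn:repr} yields $\gamma^\fB\subseteq(\alpha\circ\beta)^\fB=\alpha^\fB\circ\beta^\fB$, so from $(s(u),s(v))\in\gamma^\fB$ I obtain $q\in B$ with $(s(u),q)\in\alpha^\fB$ and $(q,s(v))\in\beta^\fB$. Extending $s$ by $s(p):=q$ and reading off, for each pair of vertices, the unique atom whose interpretation contains the corresponding pair in $\fB$ (all such pairs lie in $1^\fB$, since $s(V_2)$ and $q$ lie in a single square component and the atoms partition $1^\fB$), produces an atomic network on $V_1\cup V_2$ that restricts to $f_1$ and $f_2$, is consistent, and is satisfiable in $\fB$, hence lies in $\mathcal C$; this is the required amalgam. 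In the square case, the same reading-off argument applied to two embeddings $s_1,s_2$ of disjoint networks produces the JEP amalgam, using that a square representation has $1^\fB=B^2$ and thus a single square component, so that the cross pairs $(s_1(x),s_2(y))$ all carry atoms.

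For the ``if'' direction I would build a countable $\fB$ as an increasing union $M_0\subseteq M_1\subseteq\cdots$ of networks in $\mathcal C$, dovetailing through two kinds of tasks. The \emph{saturation tasks} enforce the composition-witness property: for vertices $x,z$ already present, atoms $\alpha,\beta$ such that the atom $c$ labelling $(x,z)$ satisfies $c\le\alpha\circ\beta$, and the triangle $T\in\mathcal C$ on $\{x,z,y\}$ carrying $\alpha,\beta,c$, one application of $\AP(3,2,|M_i|)$ amalgamates $T$ with $M_i$ over the edge $\{x,z\}$, adjoining a witness $y$ with $(x,y)\in\alpha$ and $(y,z)\in\beta$. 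The \emph{universality tasks} guarantee that every $N\in\mathcal C$ embeds: in the square case one first uses JEP, i.e.\ $\AP(|M_i|,0,|N|)$, to join a disjoint copy of $N$ to $M_i$ inside $\mathcal C$, after which $N$ is a subnetwork of $\fB$ permanently; in the non-square case one instead opens a fresh connected component seeded by $N$ itself and saturates it separately, assembling $\fB$ as the disjoint union $\bigotimes$ of these components in the sense of Definition~\ref{def:prod} (a single chain is always connected, hence square, so genuine non-squareness can only arise through $\bigotimes$). In the limit, every finite substructure of $\fB$ is a subnetwork of some $M_i\in\mathcal C$ and thus lies in $\mathcal C$, giving satisfiability-implies-$\mathcal C$; conversely every $N\in\mathcal C$ has been planted, giving full universality, and in the JEP case $1^\fB=B^2$ because everything stays in one component.

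The main obstacle is verifying that the limit $\fB$ is genuinely a representation of $\bA$, i.e.\ satisfies all axioms of Definition~\ref{defn:repr}, and in particular the composition axiom~\ref{ax:repr:7}. Its forward inclusion $\alpha^\fB\circ\beta^\fB\subseteq(\alpha\circ\beta)^\fB$ is immediate from consistency of the triangles occurring in $\fB$ (any three vertices induce a subnetwork of some $M_i\in\mathcal C$), whereas the backward inclusion is exactly what the saturation tasks are designed to force, provided one checks that each triangle whose witness is demanded indeed belongs to $\mathcal C$ and sets up the bookkeeping so that every such request is eventually served; this is where the subnetwork-closure of $\mathcal C$ is essential. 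The remaining delicate point is the non-square case: one must confirm, via the decomposition of representations into square components (Lemma~\ref{lem:decomp}) together with the matching of atoms to orbits, that $\bigotimes$ of the separately saturated components represents $\bA$ itself rather than merely its quotients, and that its satisfiable networks are precisely $\mathcal C$. Once these verifications are in place, the construction delivers the desired $\mathcal C$-universal representation, which is square exactly when JEP is assumed.
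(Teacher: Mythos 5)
Your overall route is the same as the paper's (sketched) proof: a Fra\"iss\'e-style chain that uses $\AP(3,2,n)$ to serve one-point saturation requests, with interleaved JEP steps producing a square limit, and disjoint unions of separately saturated components in the non-square case. Your ``only if'' half, which the paper does not even sketch, is correct: reading off atoms from a satisfying assignment, together with axiom~\ref{ax:repr:7} of Definition~\ref{defn:repr} and the fact that $1^\fB$ is an equivalence relation whose class contains the image of the assignment, produces the required amalgam inside $\mathcal C$.

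There is, however, a genuine gap in the ``if'' half, at exactly the step you flag and then dismiss: you claim that subnetwork-closure of $\mathcal C$ guarantees that ``each triangle whose witness is demanded indeed belongs to $\mathcal C$''. It does not. $\AP(3,2,n)$ as defined in Definition~\ref{def:AP} quantifies only over pairs of networks that already lie in $\mathcal C$, so it is silent whenever the triangle you want to amalgamate is outside $\mathcal C$; and subnetwork-closure only passes down from a network to its subnetworks, never up from an edge to a triangle extending it. Concretely, let $\bA = \ra{2}{2}$ and let $\mathcal C$ be the class of all atomic $\bA$-networks in which every pair of vertices is labelled $\id$. This class is closed under isomorphism and subnetworks and has the JEP and $\AP(3,2,n)$ for all $n$ (every amalgam is again an all-$\id$ network), so your construction --- and the theorem as literally stated --- would deliver a $\mathcal C$-universal square representation $\fB$. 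But none exists: $\mathcal C$-universality forces $B \neq \varnothing$ (the one-vertex network lies in $\mathcal C$) and $a^\fB = \varnothing$ (any pair in $a^\fB$ would satisfy the two-vertex $a$-network, which is not in $\mathcal C$), while $a \circ a = 1$ in $\ra{2}{2}$ (we have $a \leq a\circ a$ since $(a,a,a)$ is allowed, and $\id \leq a \circ a$ by Proposition~\ref{prop:cyclelaw}), so axiom~\ref{ax:repr:7} yields $1^\fB = a^\fB \circ a^\fB = \varnothing$ and hence $B = \varnothing$, a contradiction. In this example your saturation stage stalls precisely where you predicted: an edge labelled $\id$ has $\id \leq a \circ a$, but the witness triangle with labels $a,a,\id$ is not in $\mathcal C$, and no bookkeeping can repair that.

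What the construction actually needs --- and what is also necessary for the existence of a $\mathcal C$-universal representation, by your own read-off argument --- is the further hypothesis that for every atom $c$ labelling an edge of some network in $\mathcal C$ and all atoms $\alpha,\beta$ with $c \leq \alpha \circ \beta$, the consistent triangle with labels $\alpha,\beta,c$ belongs to $\mathcal C$ (degenerate witnesses involving identity atoms aside). To be fair, this imprecision is inherited from the paper: its sketch ``realizes all 1-point extensions'' under the same unstated assumption, and in every application in the paper ($\mathcal C$ being all consistent atomic networks, or all such networks omitting configurations on at least four vertices) the assumption holds automatically because $\mathcal C$ contains every consistent triangle. Under that additional hypothesis your argument, like the paper's, goes through.
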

\begin{proof}[Proof sketch]
    For both statements, the proof can be obtained by adapting the proof of Fra\"{i}ssé's theorem (see, e.g.,~\cite{Hodges}). Starting from 
    some consistent atomic network, we 
    use $\AP(3,2,n)$ to form a chain of larger and larger networks that eventually `realizes all 1-point extensions'. Finally, we take an infinite disjoint union over the resulting network for all possible starting networks, and obtain a fully universal representation. 
    For the second statement, in our chain we interleave extension steps and joint embedding steps; in this case, we do not have to take a disjoint union, and obtain a fully universal square representation. 
\end{proof}

\begin{cor}\label{cor:fu}
    Let $\bA$ be a finite relation algebra.
    Then $\bA$ has a 
    \begin{itemize}
        \item fully universal representation if and only if the class of all consistent atomic $\bA$-networks has AP$(3,2,n)$ for all $n$; 
        \item fully universal square representation if and only if the class of all consistent atomic $\bA$-networks has the JEP and 
        AP$(3,2,n)$ for all $n$. 
    \end{itemize}
\end{cor}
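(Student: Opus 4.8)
The plan is to obtain Corollary~\ref{cor:fu} as the special case of Theorem~\ref{thm:fu} in which $\mathcal{C}$ is taken to be the class of \emph{all} consistent atomic $\bA$-networks. The only work is to check that, for this maximal choice of $\mathcal{C}$, being $\mathcal{C}$-universal is the same as being fully universal; granting this, each of the two bullet points transfers verbatim from the corresponding bullet point of Theorem~\ref{thm:fu}.

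First I would record the elementary fact that, in any representation, satisfiability of an atomic network entails its consistency. So suppose $(V,f)$ is satisfiable in a representation $\fB$ via $s\colon V\to B$. For $x,y,z\in V$ we have $(s(x),s(z))\in f(x,z)^\fB$ and $(s(z),s(y))\in f(z,y)^\fB$, hence $(s(x),s(y))\in f(x,z)^\fB\circ f(z,y)^\fB=(f(x,z)\circ f(z,y))^\fB$ by axiom~\ref{ax:repr:7} of Definition~\ref{defn:repr}. Because a representation preserves $0$ and the Boolean operations, distinct atoms receive disjoint interpretations, so the union $(f(x,z)\circ f(z,y))^\fB=\bigcup_{c\leq f(x,z)\circ f(z,y)} c^\fB$ can contain the pair $(s(x),s(y))\in f(x,y)^\fB$ only if the atom $f(x,y)$ occurs among the atoms $c$; that is, $f(x,y)\leq f(x,z)\circ f(z,y)$. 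The same disjointness argument applied to $(s(x),s(x))\in f(x,x)^\fB\cap\id^\fB$ forces $f(x,x)\leq\id$. Hence $(V,f)$ is consistent, and so belongs to $\mathcal{C}$.

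With this observation, $\mathcal{C}$-universality and full universality coincide for this $\mathcal{C}$: a representation $\fB$ is $\mathcal{C}$-universal exactly when the atomic networks satisfiable in $\fB$ are precisely the consistent ones. The implication ``satisfiable $\Rightarrow$ consistent'' is what I just proved and holds for every $\fB$, so $\mathcal{C}$-universality reduces to the reverse implication ``every consistent atomic network is satisfiable in $\fB$'', which is exactly the definition of full universality. Therefore $\bA$ has a fully universal (respectively fully universal square) representation if and only if it has a $\mathcal{C}$-universal (respectively $\mathcal{C}$-universal square) representation, and applying Theorem~\ref{thm:fu} to this $\mathcal{C}$ gives both statements. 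I expect no real obstacle here: the combinatorial content lives entirely in Theorem~\ref{thm:fu}, and the corollary is just the recognition that ``fully universal'' is the instance of ``$\mathcal{C}$-universal'' with $\mathcal{C}$ as large as it can possibly be. The single point needing care is the routine verification that satisfiability implies consistency, which uses only the preservation of composition and of the Boolean structure in a representation.
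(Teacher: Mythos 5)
Your proposal is correct and follows exactly the route the paper intends: the corollary is the instantiation of Theorem~\ref{thm:fu} with $\mathcal{C}$ taken to be the class of all consistent atomic $\bA$-networks, which the paper treats as immediate and leaves without a separate proof. The one detail you spell out --- that satisfiability in any representation implies consistency (via disjointness of the interpretations of distinct atoms and preservation of $\circ$, $\cup$, and $\id$), so that ``fully universal'' coincides with ``$\mathcal{C}$-universal'' for this maximal $\mathcal{C}$ --- is exactly the routine verification the paper implicitly relies on, and your argument for it is sound.
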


Hirsch and Hodkinson~\cite[Problem 9.3]{HirschHodkinson} ask whether the class of all relation algebras with a homogeneous representation is an \emph{elementary class}, i.e., a class that can be described by first-order sentences in the language of relation algebras. 
The following remark shows that several related classes of relation algebras, all of which are of central importance in this article, are indeed elementary.

\begin{remark}
    The following classes of finite relation algebras are elementary:
    \begin{itemize} 
    \item those with a normal representation;
    \item those with a fully universal square representation;
    \item those with a fully universal representation.
    \end{itemize}
\end{remark}
\begin{proof}
    We give an axiomatisation of the relation algebras with a fully universal representation; the other classes can be axiomatized similarly (using Theorem~\ref{thm:two-point-amalgamation} and Theorem~\ref{thm:fu}). 
    First note that there exists a first-order formula $\varphi$ in the language of Boolean algebras that expresses that an element is an atom. 
    Besides the axioms of relation algebras, 
    we add for every $n \in {\mathbb N}$ 
    the 
    first-order axiom
    \begin{align*}
        \forall x_{1,1},x_{1,2},\dots,x_{n,n},x_{1,0},x_{0,1},x_{0,2},x_{2,0} \biggl( & \bigwedge_{(i,j) \in \{1,\dots,n\}^2 \cup \{(0,1),(0,2)\}} (x_{i,j} = \breve{x_{j,i}} \wedge \varphi(x_{i,j})) \ \\
        & \wedge \bigwedge_{i,j,k} x_{i,j} \leq x_{i,k} \circ x_{k,j} \wedge x_{1,2} \leq x_{1,0} \circ x_{0,2} \\
        \rightarrow~ & \exists x_{0,3},\dots,x_{0,n} (\bigwedge_{3 \leq i \leq n} \varphi(x_{0,i}) \wedge \bigwedge_{1 \leq i,j \leq n} x_{i,j} \leq \breve{x_{0,i}} \circ x_{0,j} )\biggr). 
    \end{align*}
A relation algebra $\bA$ that satisfies this sentence has the property that the class of consistent atomic $\bA$-networks has $\AP(3,2,n)$. 
\end{proof}
    
\subsection{The 2-Cycle Product}
\label{sect:2cycle}

The 2-cycle product is a useful tool to construct more complex relation algebras from simpler ones; the construction preserves representability (and, as we will see, even fully universal representability, normal representability, etc.) and it will also be useful in complexity considerations later. 
In the dream situation that $\bA$ and $\bB$ are 
finite relation algebras with normal representations $\fC$ and $\fD$, respectively, 
then the representation $\fR$ for the cycle product $\bA[\bB]$ that we construct will also be normal; moreover, the automorphism group of
$\fR$ can be described as a wreath product of
$\Aut(\fC)$ and $\Aut(\fD)$.

\begin{remark}\label{rem:clash}
    We note here an unfortunate clash of notation:
    while the relation algebra notation would suggest
    $\fC[\fD]$ as a name for $\fR$, 
    it is standard in model theory to write $\fD[\fC]$
    for this structure (see, e.g., Cherlin~\cite{CherlinImPrim}), and indeed this notation is more suggestive, since we can imagine this structure as the structure obtained from $\fD$ by replacing each element by a copy of $\fC$.
    We will follow the model-theoretic convention. 
\end{remark}

\begin{definition}\label{def:2cycl}
    Let $\bA$ and $\bB$ be two finite relation algebras such that $A_0 \cap B_0 = \{\id\}$. The \emph{2-cycle product} $\bA[\bB]$ is defined to be the up to isomorphism unique relation algebra $\bC$ such that $C_0 = A_0 \cup B_0$ and
    \[ \Cy(\bC) = \Cy(\bA) \cup \Cy(\bB) \cup \big \{(a,b,b) \mid a \in A_0 \setminus \{\id\}, b \in B_0 \setminus \{\id\} \big \}.\]
\end{definition}

\begin{remark}\label{rem:equ_relation}
    If $\bC = \bA[\bB]$ is a 2-cycle product of relation algebras $\bA$ and $\bB$, then $1_\bA$ is an equivalence relation of $\bC$. Moreover, for any representation $\fC$ of $\bC$ and two distinct equivalence classes $C_1, C_2$ of $1_\bA^\fC$ with $x,y \in C_1$ and $z \in C_2$ we have $b^\fC(x,z)$ if and only if $b^\fC(y,z)$ for all $b \in B$. 

    If $N$ is an atomic $\bC$-network, we then write $\faktor{N}{1_\bA}$ for the atomic $\bB$-network $(V,g)$ whose variables $V$ are the equivalence classes of $1_\bA^N$ and where $g(C_1,C_2) := f(x_1,x_2)$ for some (equivalently, for any) $x_1 \in C_1$ and $x_2 \in C_2$. 
\end{remark}

\begin{thm}[{\cite[Theorem 5]{Maddux2006}}, see also~\cite{ComerExtensionSchemes}]
\label{thm:2-cylce-repr}
    Let $\bA$ and $\bB$ be two finite representable integral relation algebras such that $A_0 \cap B_0 = \{\id\}$ and $|A_0|, |B_0| \geq 2$. Let $\fC$ and $\fD$ be square representations of $\bA$ and $\bB$, respectively.\footnote{It follows from Corollary~\ref{cor:simple-implies-square-rep} that every simple representable relation algebra has a square representation.}
    Then $\bA[\bB]$ is representable and has the representation $\fD[\fC]$ on the domain $D \times C$ defined as follows.\footnote{Warning: we intentionally write $\fD[\fC]$ rather than $\fC[\fD]$: see Remark~\ref{rem:clash}.} For all $a \in A_0 \setminus \{\id\}, b \in B_0 \setminus \{\id\}$:
    \begin{align*}
        \id^{\fD[\fC]} & := \{((u_0, v_0), (u_1, v_1)) \mid u_0 = u_1, v_0 = v_1 \} \\
        a^{\fD[\fC]} & := \{((u_0, v_0), (u_1, v_1)) \mid u_0 = u_1, (v_0, v_1) \in a^{\fC} \big \} \\
        b^{\fD[\fC]} & :=  \big \{((u_0, v_0), (u_1, v_1)) \mid (u_0, u_1) \in b^{\fD}\}
    \end{align*}
\end{thm}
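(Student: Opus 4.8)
The plan is to verify directly that $\fD[\fC]$ satisfies the axioms of Definition~\ref{defn:repr}. Since a representation is determined by the interpretation of the atoms, and since the three families $\id^{\fD[\fC]}$, $\{a^{\fD[\fC]} \mid a \in A_0\setminus\{\id\}\}$ and $\{b^{\fD[\fC]} \mid b \in B_0\setminus\{\id\}\}$ visibly partition $(D\times C)^2$ into nonempty relations (two distinct points $(u_0,v_0),(u_1,v_1)$ lie in a unique $B$-atom if $u_0\ne u_1$, and in a unique $A$-atom if $u_0=u_1$ and $v_0\ne v_1$, using that $\fC,\fD$ are square representations of integral algebras), axioms~\ref{ax:repr:1}--\ref{ax:repr:6} are immediate; in particular the converse axiom follows from the corresponding property of $\fC$ and $\fD$. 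All the content therefore lies in the composition axiom~\ref{ax:repr:7}, which I would phrase atomwise: for all atoms $x,y$ of $\bC$ one must show $x^{\fD[\fC]}\circ y^{\fD[\fC]} = \bigcup_{z\le x\circ y} z^{\fD[\fC]}$, where $x\circ y$ is computed in $\bC$ via $\Cy(\bC)$ (Definition~\ref{def:2cycl}) and its closure under the cycle law (Proposition~\ref{prop:cyclelaw}).

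First I would isolate the key technical lemma: in an integral relation algebra every diversity atom $a$ satisfies $\id\le a\circ\breve a$ and $\id\le\breve a\circ a$. This holds because $a\circ\breve a\cap\id$ lies below the atom $\id$ and is nonzero by integrality, so it equals $\id$. Interpreted in a representation, this says that each diversity atom is \emph{serial} and \emph{cototal}, i.e., realized as an out-edge from every point and as an in-edge into every point. Seriality in $\fC$ and in $\fD$ is exactly what is needed to produce the intermediate vertex $r$ witnessing a composition.

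Then I would prove the composition identity by a case distinction on the types of $x$ and $y$. When one of them is $\id$ the claim is trivial (using $a\circ\id=a$, Remark~\ref{rem:consequences-of-axioms-for-relation-algebra}). When $x,y\in A_0\setminus\{\id\}$, any witness forces a common first coordinate $u_0=u_1=u_2$, so the identity reduces to composition inside a single fibre, i.e.\ to axiom~\ref{ax:repr:7} for $\fC$; from $\Cy(\bC)$ one checks that $x\circ y$ then involves only $A$-atoms. When $x\in A_0\setminus\{\id\}$ and $y\in B_0\setminus\{\id\}$ (and symmetrically), the mixed triples $(a,b,b)$ together with their cyclic variants give $x\circ y=y$, and seriality supplies the intermediate vertex inside the starting fibre. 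The main case is $x,y\in B_0\setminus\{\id\}$: here a witness lives across fibres, so the identity reduces to composition in $\fD$, with the subtlety that when $y=\breve x$ the pullback of $\id^\fD$ is an entire fibre; correspondingly $\Cy(\bC)$ must contribute all of $1_{\bA}$ below $x\circ\breve x$, which is precisely what the cyclic variants $(b,\breve b,a),(b,\breve b,\breve a)$ of the mixed triples provide (cf.\ Remark~\ref{rem:equ_relation}).

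I expect this last subcase to be the main obstacle. One has to match, on one side, the pairs $(p,q)$ with equal first coordinate that become reachable by an $x$-edge followed by an $\breve x$-edge through a common $\fD$-neighbour (which exists by seriality), and on the other side the fact that $x\circ\breve x$ in $\bC$ equals $\id$ together with all $A$-diversity atoms and the appropriate $B$-atoms, so that every such $(p,q)$, whether on the diagonal or joined by an arbitrary $A$-atom, is correctly accounted for. Verifying that the six cyclic images of the mixed triples contribute exactly the needed allowed triples and no spurious ones is the crux; once this bookkeeping is in place, assembling the cases yields axiom~\ref{ax:repr:7}, hence that $\fD[\fC]$ is a representation, and therefore that $\bA[\bB]$ is representable.
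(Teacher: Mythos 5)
Your proposal is correct, but there is nothing in the paper to compare it against: the paper does not prove this theorem, it imports it from the literature (Maddux's Theorem~5, see also Comer's extension schemes), so your blind attempt is in effect supplying the missing argument. Your two pillars are exactly the right ones. First, the partition of $(D\times C)^2$ by the atom relations (squareness of $\fC$ and $\fD$ is what makes the union exhaust everything) settles axioms~\ref{ax:repr:1}--\ref{ax:repr:6}. Second, the seriality lemma is sound: for a diversity atom $a$ of an integral algebra, $\id\cdot(a\circ\breve a)\neq 0$ by the cycle law, and since $\id$ is an atom in an integral algebra this forces $\id\leq a\circ\breve a$; this is precisely what produces the middle vertex inside a fibre in the mixed cases. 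Your bookkeeping in the crux case also checks out: the cyclic closure of the triples $(a,b,b)$ consists exactly of the shapes $(a,b,b)$, $(b,\breve b,a)$, and $(b,a,b)$, so for $x,y\in B_0\setminus\{\id\}$ one gets $x\circ_{\bC}y=x\circ_{\bB}y$ when $y\neq\breve x$ (and the cycle law for atoms gives $\id\not\leq x\circ_{\bB}y$, so no diagonal pairs arise on either side), while for $y=\breve x$ one gets $x\circ_{\bC}\breve x=x\circ_{\bB}\breve x+1_{\bA}$, which matches the representation side because the pairs reachable through a common $\fD$-neighbour with equal first coordinate form the whole fibre, and by squareness of $\fC$ that fibre is $\id^{\fD[\fC]}\cup\bigcup_{a\in A_0\setminus\{\id\}}a^{\fD[\fC]}$. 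Two micro-points to spell out in a full write-up: the middle vertex in the $B$--$B$ case needs an arbitrary second coordinate, i.e., $C\neq\varnothing$ (which holds since $0\neq 1$ in the integral algebra $\bA$ and $\fC$ is square), and the composition axiom for arbitrary elements follows from the atomwise identity by distributivity of $\circ$ over unions. With those remarks added, your outline assembles into a complete and correct proof.
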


\begin{example}[$\ra{2}{2}{[\ra{5}{7}]}$]
    Denote the non-identity atom of $\ra{2}{2}$ by $b$ and the non-identity atoms of $\ra{5}{7}$ by $a$ and $c$.
    Then the algebras $\ra{2}{2}$ and $\ra{5}{7}$ have the representations $K^b_\omega$ and $\Z_5^{a, c}$, respectively.
    The cycle product $\ra{2}{2}{[\ra{5}{7}]}$ is the algebra $\ra{17}{65}$, which has the representation $\Z_5^{a,c}[K^b_\omega]$ (depicted in Figure \ref{fig:1765_rep}).

\begin{figure}
\begin{center}
    \hspace{3cm}
    \begin{tikzpicture}
        \foreach \i in {1,2,3,4,5} {
            \coordinate (P\i) at ({90 + 72*(\i-1)}:3);
        }
        \draw (P1) -- (P2) node[midway, above] {$a$};
        \draw (P2) -- (P3) node[midway, left] {$a$};
        \draw (P3) -- (P4) node[midway, below] {$a$};
        \draw (P4) -- (P5) node[midway, right] {$a$};
        \draw (P5) -- (P1) node[midway, above] {$a$};
        \draw (P1) -- (P3) node[midway, right] {$c$};
        \draw (P1) -- (P4) node[midway, left] {$c$};
        \draw (P2) -- (P4) node[midway, below] {$c$};
        \draw (P2) -- (P5) node[midway, above] {$c$};
        \draw (P3) -- (P5) node[midway, below] {$c$};
        \foreach \i in {1,2,3,4,5} {
            \draw[thick, fill=white] (P\i) circle (0.9);
            \node at (P\i) { };
            \foreach \j in {1,2,3,4} {
                \coordinate (P\i\j) at ($ (P\i) + ({90*(\j-1)-45}:0.5) $);
            }
            \foreach \j in {9,10,11,12,13,14,15,16,17,18,19,20,21,22} {
                \coordinate (P\i\j) at ($ (P\i) + ({25.714*(\j-9)}:0.9) $);
            }
           
            \foreach \j in {9,10,11,12,13,14,15,16,17,18,19,20,21,22} {
              \foreach \k in {9,10,11,12,13,14,15,16,17,18,19,20,21,22} {
                    \ifnum\j<\k
                        \draw (P\i\j) -- (P\i\k) node[midway, above] { };
                    \fi
                }
            }
        }
        \coordinate (L1) at ($ (P5) + ({15}:0.9) $);
        \coordinate (L) at ($ (P5) + ({15}:3) $);
        \node[right] at (L) {$K^b_\omega$};
        \draw (L1) -- (L) node[midway, above] { };
    \end{tikzpicture}
    \caption{A representation of $\ra{17}{65} = \ra{2}{2}{[\ra{5}{7}]}$ is given by $\Z_5^{a,c}[K^b_\omega]$.}
    \label{fig:1765_rep}
\end{center}
\end{figure}
\end{example}

\section{The Network Satisfaction Problem} 
\label{sect:NSP}
The network satisfaction problem for a fixed finite relation algebra $\bA$ is the following computational problem, denoted by $\NSP(\bA)\colon$ 
The input consists of an $\bA$-network $(V,f)$. The task is to decide whether $\bA$ has a representation $\fB$
such that there exists a function $s \colon V \to B$ which satisfies
$(s(x),s(y)) \in f(x,y)^{\fB}$ for all $x,y \in V$; in this case the
$\bA$-network is called \emph{satisfiable}. 

We can reduce the complexity classification for network satisfaction problems to the case where the relation algebra is simple.

\begin{lemma}[{\cite[Lemma 2.10]{BodirskyKnaeuerDatalog23}}]
\label{lem:prod1}
    Let $\bA$ and $\bB$ be finite representable relation algebras. 
    Then there exists a polynomial-time reduction from $\NSP(\bA)$ to $\NSP(\bA \times \bB)$. 
\end{lemma}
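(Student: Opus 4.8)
The plan is to reduce $\NSP(\bA)$ to $\NSP(\bA \times \bB)$ by showing that a given $\bA$-network is satisfiable if and only if a suitably constructed $(\bA \times \bB)$-network is satisfiable. The natural idea is to exploit the product structure of representations guaranteed by Lemmas~\ref{lem:union} and~\ref{lem:factor}: a representation of $\bA \times \bB$ decomposes as a union representation $\fB_1 \uplus \fB_2$ where $\fB_1$ represents $\bA$ and $\fB_2$ represents $\bB$. Since $\bB$ is representable (this is where the hypothesis is used), we may fix some representation $\fB_2$ of $\bB$, and in particular fix some element $e$ of $A \times B$ whose interpretation we understand in the union representation.

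First I would set up the translation of networks. Given an input $\bA$-network $(V,f)$, I would construct an $(\bA \times \bB)$-network $(V,g)$ on the \emph{same} vertex set by replacing each label $f(x,y) \in A$ with a corresponding label in $A \times B$. The most natural choice is $g(x,y) := (f(x,y), 1^{\bB})$ for the off-diagonal pairs and $g(x,x) := (f(x,x), \id^\bB)$ on the diagonal, so that the $\bA$-component records the original constraint while the $\bB$-component is left maximally unconstrained (the top element $1^{\bB}$, or $\id^\bB$ on the diagonal to respect the identity). This map is clearly computable in polynomial time, since it acts label-by-label and $\bA$, $\bB$ are fixed finite algebras.

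Next I would verify the correctness of the reduction in both directions. For the forward direction, suppose $(V,f)$ is satisfiable, witnessed by $s\colon V \to B_1$ into a representation $\fB_1$ of $\bA$. I would produce a representation of $\bA \times \bB$ as a union representation $\fB_1 \uplus \fB_2$ (Lemma~\ref{lem:union}), and argue that $s$, viewed as a map into the domain $B_1 \cup B_2$, satisfies $(V,g)$: for each pair $(x,y)$, the pair $(s(x),s(y))$ lies in $f(x,y)^{\fB_1} \subseteq g(x,y)^{\fB_1 \uplus \fB_2}$ by the definition of the union representation, since the second component is the unconstraining top element. For the converse direction, suppose $(V,g)$ is satisfiable in some representation $\fC$ of $\bA \times \bB$; by Lemma~\ref{lem:factor}, $\fC \cong \fB_1 \uplus \fB_2$ with $\fB_1$ a representation of $\bA$ and $\fB_2$ a representation of $\bB$. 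The satisfying assignment $s'\colon V \to B_1 \cup B_2$ must in fact land entirely in $B_1$: since $g(x,y)$ has first component $f(x,y) \neq 0^\bA$ (assuming an atomic or nonzero label; one reduces to this case), and since in the union representation the pairs in $B_1 \times B_2$ and $B_2 \times B_1$ carry only labels with zero $\bA$-component, no satisfying pair can cross between the two domains. Hence $s'$ is a satisfying assignment for $(V,f)$ in $\fB_1$.

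The main obstacle I anticipate is handling the degenerate labels carefully. If some label $f(x,y)$ equals $0^\bA$, or if the vertices can legitimately be mapped into the $\fB_2$-part, the clean separation above can fail; one must ensure the $\bA$-component of every label is genuinely nonzero to force the assignment into $B_1$, and deal with the identity/diagonal labels so that the constructed $g$ is a well-formed $(\bA \times \bB)$-network consistent with the Boolean structure of the product algebra. A secondary subtlety is confirming that the top element $1^{\bB}$ used in the second component is interpreted correctly in the union representation so that it imposes no constraint; this follows from axiom~\ref{ax:repr:6} and the definition of $\fB_1 \uplus \fB_2$, but should be checked explicitly rather than asserted.
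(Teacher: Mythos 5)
Your reduction breaks in the backward direction, and the flaw lies in the choice of the second component of the labels. Setting $g(x,y) := (f(x,y), 1^{\bB})$ makes the constructed network satisfiable \emph{regardless} of whether $(V,f)$ is. Your non-crossing argument is correct as far as it goes: in a union representation $\fB_1 \uplus \fB_2$ every relation is contained in $B_1^2 \cup B_2^2$, so a satisfying assignment maps all of $V$ into a single part. But that part can be $B_2$ rather than $B_1$; nothing forces it into $B_1$. If all vertices are sent to one point $p \in B_2$ (and $B_2 \neq \varnothing$ as soon as $\bB$ is non-trivial and representable), then every pair $(p,p)$ lies in $(\id^{\bB})^{\fB_2} \subseteq (1^{\bB})^{\fB_2}$, so all constraints of $(V,g)$ hold. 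Concretely, take $\bA = \ra{1}{2}$ and $(V,f)$ the triangle with all three edges labelled $a$: this network is unsatisfiable (the triple $(a,a,a)$ is forbidden) and has no zero labels, yet your $(V,g)$ is satisfied by collapsing the whole triangle to a single point of the $\fB_2$-part. Note also that your proposed safeguard --- ensuring the $\bA$-components are nonzero --- guards the wrong component: the escape route through $B_2$ is opened by the nonzero $\bB$-component $1^{\bB}$, not by any degeneracy in the $\bA$-components.

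The repair is a one-symbol change, and it is what the proof of the cited result \cite[Lemma 2.10]{BodirskyKnaeuerDatalog23} amounts to (the present paper only cites the lemma and gives no proof of its own): define $g(x,y) := (f(x,y), 0^{\bB})$ for \emph{all} pairs, including the diagonal. Then in any representation of $\bA \times \bB$, which by Lemma~\ref{lem:factor} is isomorphic to some $\fB_1 \uplus \fB_2$, one has $g(x,y)^{\fB_1 \uplus \fB_2} = f(x,y)^{\fB_1} \cup 0^{\fB_2} = f(x,y)^{\fB_1} \subseteq B_1^2$, so any satisfying assignment is forced into $B_1$ and is literally a satisfying assignment of $(V,f)$ in the representation $\fB_1$ of $\bA$. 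Conversely, if $s$ satisfies $(V,f)$ in some representation $\fB_1$ of $\bA$, then $s$ also satisfies $(V,g)$ in $\fB_1 \uplus \fB_2$ (Lemma~\ref{lem:union}), where $\fB_2$ is any representation of $\bB$ --- this is the only place where representability of $\bB$ is needed. With $0^{\bB}$ in the second slot, the zero-label and diagonal subtleties you worry about disappear: $g(x,y)$ equals $0^{\bA \times \bB}$ exactly when $f(x,y) = 0^{\bA}$, and no case distinction is required.
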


This lemma has a converse; we are not aware of any reference for this lemma, but it will be highly useful in our classification project. 

\begin{lemma}
\label{lem:prod2}
    Let $\bA_1$ and $\bA_2$ be finite representable relation algebras  
    such that $\NSP(\bA_1)$ is in $\Ptime$ and $\NSP(\bA_2)$ is in $\Ptime$. Then $\NSP(\bA_1 \times \bA_2)$ is in $\Ptime$. The same statement holds if we replace $\Ptime$ by $\NP$. 
\end{lemma}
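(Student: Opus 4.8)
The plan is to show that every representation of $\bA_1 \times \bA_2$ is so rigidly structured that the satisfiability of a network splits cleanly along the two factors; a decision procedure for each $\NSP(\bA_i)$ then assembles into one for $\NSP(\bA_1 \times \bA_2)$. The starting point is Lemma~\ref{lem:factor}: every representation $\fB$ of $\bA_1 \times \bA_2$ is isomorphic to a union representation $\fB_1 \uplus \fB_2$, where $\fB_i$ is a representation of $\bA_i$ and $B_1 \cap B_2 = \varnothing$. The key structural observation I would record is that such a union representation has \emph{no cross pairs}: for every element $(a_1,a_2)$ of $\bA_1 \times \bA_2$ we have $(a_1,a_2)^{\fB_1 \uplus \fB_2} = a_1^{\fB_1} \cup a_2^{\fB_2} \subseteq B_1^2 \cup B_2^2$, so no interpreted relation ever relates a point of $B_1$ to a point of $B_2$.

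From this I would derive the central characterization. Writing $f(x,y) = (f_1(x,y), f_2(x,y))$ for the two coordinate projections of the labels, the $(\bA_1 \times \bA_2)$-network $(V,f)$ is satisfiable if and only if the $\bA_1$-network $(V,f_1)$ is satisfiable \emph{or} the $\bA_2$-network $(V,f_2)$ is satisfiable. For the forward direction, suppose $s\colon V \to B_1 \cup B_2$ satisfies $(V,f)$ in $\fB_1 \uplus \fB_2$. For every pair $x,y$ the constraint $(s(x),s(y)) \in f(x,y)^{\fB_1 \uplus \fB_2} \subseteq B_1^2 \cup B_2^2$ forces $s(x)$ and $s(y)$ into the same part; since $f$ is defined on all of $V^2$, fixing any variable and propagating shows that all of $V$ maps into a single part, say $B_1$. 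Then for all $x,y$ we have $(s(x),s(y)) \in B_1^2 \cap f(x,y)^{\fB_1 \uplus \fB_2} = f_1(x,y)^{\fB_1}$, using $f_1(x,y)^{\fB_1}\subseteq B_1^2$, $f_2(x,y)^{\fB_2}\subseteq B_2^2$, and $B_1^2 \cap B_2^2 = \varnothing$; hence $s$ witnesses satisfiability of $(V,f_1)$ in $\fB_1$. For the backward direction, if $(V,f_1)$ is satisfiable in some representation $\fB_1$ of $\bA_1$, I would use representability of $\bA_2$ to pick a representation $\fB_2$ of $\bA_2$ with domain disjoint from $B_1$; then $\fB_1 \uplus \fB_2$ is a representation of $\bA_1 \times \bA_2$ by Lemma~\ref{lem:union}, and the same assignment into $B_1$ satisfies $(V,f)$. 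The case where $(V,f_2)$ is satisfiable is symmetric, and empty or one-variable networks are handled trivially.

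Finally I would turn the characterization into the stated complexity bounds. The projections $\pi_1,\pi_2$ sending a label $(a_1,a_2)$ to its coordinates are fixed finite maps, so from the input $(V,f)$ one computes $(V,f_1)$ and $(V,f_2)$ in time linear in the input size. For the $\Ptime$ statement, run the assumed polynomial-time algorithms for $\NSP(\bA_1)$ and $\NSP(\bA_2)$ on $(V,f_1)$ and $(V,f_2)$ and accept exactly when at least one accepts; the total running time is polynomial. For the $\NP$ statement, a nondeterministic polynomial-time machine guesses an index $i \in \{1,2\}$ and then runs the $\NP$ procedure for $\NSP(\bA_i)$ on $(V,f_i)$; by the characterization this accepts $(V,f)$ exactly when it is satisfiable.

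I do not expect a serious obstacle: the real content sits in Lemma~\ref{lem:factor} and the no-cross-pairs observation. The one point needing a little care is the forward direction, where one must argue that the complete constraint graph on $V$ cannot straddle the two components; and the only way representability is genuinely used is to guarantee, in the backward direction, that the other factor admits \emph{some} representation so that the disjoint union exists.
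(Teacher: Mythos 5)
Your proposal is correct and takes essentially the same approach as the paper: project the labels to the two coordinates, show via Lemma~\ref{lem:union} and Lemma~\ref{lem:factor} that $(V,f)$ is satisfiable if and only if $(V,f_1)$ or $(V,f_2)$ is, and then run the two assumed algorithms (or, for $\NP$, guess the index and run the corresponding procedure). Your explicit ``no cross pairs'' and propagation argument simply makes precise the paper's brief remark that, since $f$ is total on $V^2$, the image of the satisfying assignment must lie entirely in $B_1$ or in $B_2$.
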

\begin{proof}
    Let $(V,f)$ be an $(\bA_1 \times \bA_2)$-network. For $i \in \{1,2\}$, we compute an $\bA_i$-network $(V,f_i)$ as follows. Define $f_i(x,y) := f(x,y)_i$. We run our polynomial-time algorithm for $\NSP(\bA_i)$ on $(V,f_i)$. 
    We claim that $(V,f)$ is satisfiable if and only if one of $(V,f_1)$ and  $(V,f_2)$ is satisfiable. 

    For $i \in \{1,2\}$, let $\fB_i$ be a representation of $\bA_i$. 
    Suppose that $\bA_1$ has some representation $\fB_1'$ such that there exists a function $s_1 \colon V \to B_1'$ which satisfies $(s_1(x),s_1(y)) \in f_1(x,y)^{\fB_1'}$ for all $x,y \in V$. We may suppose without loss of generality that $B_1' \cap B_2 = \varnothing$. 
    From Lemma~\ref{lem:union} 
    we know that $\fB_1' \uplus \fB_2$ is a representation of $\bA_1 \times \bA_2$. 
    Then $(s_1(x),s_1(y)) \in f(x,y)^{\fB_1' \uplus \fB_2}$, 
    and hence $(V,f)$ is satisfiable. If $(V,f_2)$ is satisfiable, the proof 
    that $(V,f)$ is satisfiable is analogous. 

    Now suppose that conversely $(V,f)$ is satisfiable, i.e., 
    there exists a representation $\fB$ of $\bA_1 \times \bA_2$ 
    and a function $s \colon V \to B$ with $(s(x),s(y)) \in f(x,y)^{\fB}$ for all $x,y \in V$. By Lemma~\ref{lem:factor}, there are 
    representations $\fB_1$ of $\bA_1$ and $\fB_2$ of $\bA_2$
    such that $\fB$ is isomorphic to $\fB_1 \uplus \fB_2$. 
    Since $f$ is a total function on $V^2$, the image of $s$ must fully lie in $B_1$
    or in $B_2$; suppose without loss of generality that the image is in $B_1$. 
    Then the representation $\fB_1$ together with $s$ shows that $(V,f_1)$ is satisfiable. 
\end{proof}

\begin{remark}\label{rem:red-to-simple}
In order to show that the network satisfaction problem for relation algebras $\bA$ with at most four atoms are in $\Ptime$ or $\NPc$, it suffices to prove this for simple relation algebras $\bA$, 
since by Remark~\ref{rem:simple}, 
the result then follows by Lemma~\ref{lem:prod1} and Lemma~\ref{lem:prod2}.
\end{remark} 

The remarkable aspect of the following lemma is that it also applies to relation algebras that do not have a fully universal representation. Also, for this lemma, we are not aware of any reference.

\begin{lemma}\label{lem:bounded} 
Let $\bA$ be a finite simple relation algebra such that the maximum cardinality of all square representations of $\bA$ equals $n \in \N$. Then $\NSP(\bA)$ is in $\NP$. 
If $\bA$ additionally has a square representation with at least three elements, then $\NSP(\bA)$ is $\NP$-complete.
\end{lemma}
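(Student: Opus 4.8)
The plan is to establish the two assertions separately, in both cases reducing the network satisfaction problem to a statement about the finitely many square representations of $\bA$. The key structural fact driving everything is that, for a \emph{simple} algebra, a single square representation of bounded size always suffices as a witness of satisfiability.

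For membership in $\NP$, I would first show that an $\bA$-network $(V,f)$ is satisfiable if and only if it is satisfiable in a \emph{single} square representation of $\bA$ of cardinality at most $n$. Indeed, suppose $(V,f)$ is satisfiable via a representation $\fB$ and an assignment $s \colon V \to B$. Every pair $(s(x),s(y))$ then lies in $f(x,y)^\fB \subseteq 1^\fB$. Writing $1^\fB = \bigcup_{i \in I} E_i^2$ as the union of its square components as before Lemma~\ref{lem:decomp} (for a simple algebra $1^\fB$ is an equivalence relation, so the $E_i$ are its classes and in particular pairwise disjoint), the condition $(s(x),s(y)) \in 1^\fB$ for all $x,y$ forces all values $s(x)$ into a single block $E_{i_0}$. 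By Lemma~\ref{lem:decomp} together with the argument in the proof of Corollary~\ref{cor:simple-implies-square-rep} — this is exactly where simplicity is used — the component $\fB|_{E_{i_0}}$ is itself a square representation of $\bA$, hence has at most $n$ elements, and $s$ witnesses satisfiability there. The converse is immediate. The resulting $\NP$ algorithm guesses a structure $\fC$ on a domain of size at most $n$, checks in constant time that it satisfies the axioms of Definition~\ref{defn:repr} (so that it is a square representation of $\bA$), then guesses $s \colon V \to C$ and verifies $(s(x),s(y)) \in f(x,y)^\fC$ for all $x,y$ in polynomial time. Equivalently, $\NSP(\bA)$ is the finite union of the problems $\CSP(\fC)$ over the finitely many square representations $\fC$ of $\bA$ (up to isomorphism), and a finite union of $\NP$ problems is in $\NP$.

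For $\NP$-hardness under the extra hypothesis, let $\fC_{\max}$ be a square representation of maximal cardinality $n \geq 3$, and recall that in any square representation $\overline{\id}$ is interpreted as the inequality relation $\{(u,v) \mid u \neq v\}$, i.e.\ as the edge relation of a complete graph. I would reduce from $\CSP(K_n)$, graph $n$-colorability, which is $\NP$-hard for $n \geq 3$~\cite{GareyJohnson}. Given a graph $G = (W,E)$, build the $\bA$-network $(V,f)$ with $V = W$ by setting $f(x,x) := \id$, $f(x,y) := \overline{\id}$ whenever $\{x,y\} \in E$, and $f(x,y) := 1$ for the remaining pairs with $x \neq y$. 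If $G$ is $n$-colorable, a proper coloring is precisely an assignment $s \colon V \to C_{\max}$ with $s(x) \neq s(y)$ along edges, and this satisfies $(V,f)$ in $\fC_{\max}$. Conversely, by the structural reduction a satisfying assignment lands in some square representation $\fC'$ with $|C'| \leq n$ and sends edges to distinct elements, yielding a homomorphism $G \to K_{|C'|}$ and hence $G \to K_n$. Thus $(V,f)$ is satisfiable if and only if $G$ is $n$-colorable, establishing $\NP$-hardness; combined with the first part, $\NSP(\bA)$ is $\NPc$.

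I expect the main obstacle to be the structural step in the first paragraph: one must know that for a \emph{simple} relation algebra every representation splits into square components, each again a square representation of $\bA$, so that the uniform bound $n$ on square representations truly bounds the size of a needed witness. This is precisely where simplicity enters (it forces the surjective homomorphisms onto the component algebras of Lemma~\ref{lem:decomp} to be isomorphisms), and it is also what makes the clean equivalence between satisfiability and $n$-colorability hold in the hardness reduction.
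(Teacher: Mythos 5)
Your proof is correct and follows essentially the same route as the paper: $\NP$-membership by guessing one of the finitely many square representations of size at most $n$ together with an assignment, justified by decomposing an arbitrary representation into square components (Lemma~\ref{lem:decomp}) and using simplicity to see that each component is again a square representation of $\bA$; and $\NP$-hardness by the same reduction from $n$-colourability, labelling edges with $\overline{\id}$ and all other pairs with $1$. Your write-up is only slightly more explicit than the paper's (e.g., noting that a satisfying assignment must land in a single square component, and phrasing the algorithm as a finite union of finite-domain CSPs), but these are elaborations of the same argument rather than a different approach.
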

\begin{proof}
    Containment in $\NP$: for a given $\bA$-network $(V,f)$, non-deterministically guess a square representation $\fB$ of $\bA$ (we have to choose from finitely many finite options) and then guess the mapping $s \colon V \to B$; whether the mapping satisfies the required properties can be verified in polynomial time.
    We claim that $(V,f)$ is satisfiable if and only if for some non-deterministic choice the function $s$ has the required properties. 
    Indeed, if $\bA$ has such a square representation $\fB$ and a map then clearly $(V,f)$ is satisfiable. Conversely, if there exists some representation $\fB$ of $\bA$ where $(V,f)$ is satisfiable, 
    then $1^{\fB}$ is an equivalence relation $1^{\fB} = \bigcup_{i \in I} E_i^2$; the structure $\fB$ 
    induces on each equivalence class 
    $E_i$ a square representation of some relation algebra $\bA_i$.
    By Lemma~\ref{lem:decomp} we have a homomorphism from $\bA$ to $\bA_i$, and by the simplicity of $\bA$ the homomorphism is in fact an isomorphism. Therefore, $(V,f)$ is also satisfiable in some square representation of $\fA$.

    $\NP$-hardness: by reduction from $n$-colouring, which is $\NP$-hard for $n \geq 3$. A given undirected graph $G$ is reduced to the network $(V(G),f)$ where $f(x,y) = \bigcup_{a \in A \setminus \id} a$ 
    if $(x,y) \in E(G)$, and $f(x,y) = 1$ otherwise. 
 \end{proof}


\begin{lemma}\label{lem:fin_bounded_np}
    Let $\bA$ be a finite relation algebra with a finitely bounded universal representation $\fB$. Then $\NSP(\bA)$ is in $\NP$.
\end{lemma}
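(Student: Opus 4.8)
The plan is to use the universality of $\fB$ to reduce the abstract satisfiability in $\NSP(\bA)$ to satisfiability \emph{in the single fixed structure $\fB$}, and then to exploit finite boundedness, in the form of Observation~\ref{obs:fin_bounded_networks}, to certify that latter condition in polynomial time. Concretely, the $\NP$-algorithm on input an $\bA$-network $(V,f)$ proceeds as follows: nondeterministically guess an \emph{atomic refinement} $(V,g)$, i.e.\ for each pair $x,y \in V$ an atom $g(x,y) \leq f(x,y)$ with $g(y,x) = \breve{g(x,y)}$; then deterministically check (i) that $(V,g)$ is consistent, and (ii) that no member of a fixed finite set $\mathcal F'$ of consistent atomic $\bA$-networks is isomorphic to a subnetwork of $(V,g)$. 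The algorithm accepts if and only if some guess passes both checks. The guess has size $O(|V|^2)$ (one atom per ordered pair, each encoded by a constant number of bits since $\bA$ is fixed), so it remains to argue correctness and polynomial-time verification.

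First I would record that, since $\fB$ is universal, an $\bA$-network is satisfiable (in the sense of $\NSP$) if and only if it is satisfiable in $\fB$: the forward direction is exactly universality, and the converse holds because $\fB$ is a representation. The crucial structural claim is then that $(V,f)$ is satisfiable in $\fB$ if and only if some consistent atomic refinement $(V,g)$ of $(V,f)$ (with $g(y,x)=\breve{g(x,y)}$) is satisfiable in $\fB$. For the easy direction, $g(x,y) \leq f(x,y)$ gives $g(x,y)^{\fB} \subseteq f(x,y)^{\fB}$, so any satisfying assignment for $(V,g)$ also satisfies $(V,f)$. For the converse, given a satisfying assignment $s \colon V \to B$ for $(V,f)$ in $\fB$, I would define $g(x,y)$ to be the unique atom with $(s(x),s(y)) \in g(x,y)^{\fB}$; this is well-defined because the interpretations of the atoms partition $1^{\fB}$ in a representation, and it satisfies $g(x,y) \leq f(x,y)$ and $g(y,x) = \breve{g(x,y)}$. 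The network $(V,g)$ is then satisfied by $s$, and hence consistent: axiom~\ref{ax:repr:7} of Definition~\ref{defn:repr} yields $g(x,y) \leq g(x,z) \circ g(z,y)$ for every triple, and $g(x,x) \leq \id$ since $(s(x),s(x)) \in \id^{\fB}$.

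With the claim established, deciding whether the guessed $(V,g)$ is satisfiable in $\fB$ becomes a forbidden-subnetwork test. Finite boundedness of $\fB$, via Observation~\ref{obs:fin_bounded_networks}, supplies a finite set $\mathcal F'$ of consistent atomic $\bA$-networks such that a consistent atomic network is satisfiable in $\fB$ precisely when none of its subnetworks is isomorphic to a member of $\mathcal F'$. As $\mathcal F'$ depends only on $\bA$ and $\fB$, each member has a constant number of vertices, say at most $c$, so checking condition~(ii) amounts to inspecting the $O(|V|^{c})$ subnetworks of size at most $c$, which is polynomial; consistency in~(i) is verified over all triples of $V$, also polynomial. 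Combining the structural claim with Observation~\ref{obs:fin_bounded_networks} then gives: $(V,f)$ is satisfiable if and only if some consistent atomic refinement of $(V,f)$ avoids all forbidden subnetworks, which is exactly what the algorithm tests.

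I expect the main conceptual point to be the interplay of the two hypotheses, and this is the step most easily gotten wrong: universality is what lets us test satisfiability in the single structure $\fB$ rather than quantifying over \emph{all} representations, while finite boundedness is what turns that test into a local, polynomially checkable condition. The remaining work is routine — verifying that the atomic refinement preserves satisfiability (using that the atoms partition $1^{\fB}$) and that a satisfiable atomic network is automatically consistent — and the heavy lifting of the forbidden-subnetwork characterization is already done by Observation~\ref{obs:fin_bounded_networks}.
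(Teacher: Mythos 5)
Your proposal is correct and follows essentially the same route as the paper's proof: nondeterministically guess an atomic refinement $(V,g)$ of the input network, then verify in polynomial time that $(V,g)$ is consistent and contains no subnetwork isomorphic to a member of the finite set $\mathcal F'$ supplied by Observation~\ref{obs:fin_bounded_networks}. The paper states this very tersely; the additional details you supply (universality reducing satisfiability to satisfiability in $\fB$, the atoms partitioning $1^{\fB}$ to justify the refinement, and the polynomial bound on the checks) are exactly the justifications left implicit there.
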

\begin{proof}
    By Observation~\ref{obs:fin_bounded_networks} there is a finite set $\mathcal F'$ of consistent atomic $\bA$-networks such that a consistent atomic $\bA$-network $(V,f)$ is satisfiable in $\fB$ if and only if no member of $\mathcal F'$ is isomorphic to a subnetwork of $(V,f)$.

    For a given $\bA$-network $(V,f)$, non-deterministically guess an atomic network $(V,g)$ with $g(x,y) \leq f(x,y)$ for all $x,y \in V$ and verify, in polynomial time, that $(V,g)$ is consistent and that no subnetwork of $(V,g)$ is isomorphic to a member of $\mathcal F'$.
\end{proof}

\subsection{The Network Consistency Problem}
Unlike the network satisfaction problem of a finite relation algebra $\bA$, which can be undecidable~\cite{Hirsch-Undecidable}, the
\emph{network consistency problem} for $\bA$, denoted by $\NCP(\bA)$, which we introduce below, is always in $\NP$ (in fact, it is expressible in the logic monotone SNP; see~\cite{Book,HerFO}). 

The input of the network consistency problem consists of an $\bA$-network $(V,f)$. The task is to decide whether there exists a consistent atomic network $(V,s)$ (which will also be called the \emph{solution} to the given problem instance) on the same set of vertices $V$ such that 
for all $x,y \in V$ we have $s(x,y) \leq f(x,y)$. In this case, we say that $(V,s)$ is \emph{solvable}, and \emph{unsolvable} otherwise.
Note that if an $\bA$-network $(V,f)$ is unsolvable, then it is also unsatisfiable as an instance of the network satisfaction problem.

\begin{lemma}\label{lem:nsp-ncp} 
    If $\bA$ is a finite relation algebra with a fully universal representation, then the network satisfaction problem for $\bA$ equals the network consistency problem for $\bA$. 
\end{lemma}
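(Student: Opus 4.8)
The plan is to show that the two problems accept exactly the same inputs, i.e., that an $\bA$-network $(V,f)$ is satisfiable if and only if it is solvable; since $\NSP(\bA)$ and $\NCP(\bA)$ take $\bA$-networks as their common input, this establishes that the problems are equal.

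For the direction ``satisfiable implies solvable'' (which is the observation already noted right after the definition of $\NCP$, and which does \emph{not} use full universality), I would argue as follows. Suppose $(V,f)$ is satisfiable, witnessed by a representation $\fB$ and a map $s\colon V \to B$ with $(s(x),s(y)) \in f(x,y)^\fB$ for all $x,y \in V$. In any representation the interpretations of the atoms partition $1^\fB$: they are pairwise disjoint because $(a \cap b)^\fB = a^\fB \cap b^\fB = 0^\fB = \varnothing$ for distinct atoms $a,b$ (using axioms~\ref{ax:repr:1},~\ref{ax:repr:6} of Definition~\ref{defn:repr} and the definition of $\cap$), and their union is $1^\fB$ because $\bigcup_{a \in A_0} a = 1$. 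Hence for each pair $(x,y)$ there is a unique atom $s'(x,y)$ with $(s(x),s(y)) \in s'(x,y)^\fB$, and since $(s(x),s(y)) \in f(x,y)^\fB = \bigcup_{a \leq f(x,y)} a^\fB$ we obtain $s'(x,y) \leq f(x,y)$. It then remains to check that $(V,s')$ is consistent: $s'(x,x) \leq \id$ holds because $(s(x),s(x)) \in \id^\fB$, and for $x,y,z \in V$ the composition axiom~\ref{ax:repr:7}, $a^\fB \circ b^\fB = (a \circ b)^\fB$, gives $(s(x),s(y)) \in (s'(x,z) \circ s'(z,y))^\fB$, so the unique atom $s'(x,y)$ below which this pair lies satisfies $s'(x,y) \leq s'(x,z) \circ s'(z,y)$.

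For the converse ``solvable implies satisfiable'' I would invoke the fully universal representation $\fB$ of $\bA$. If $(V,f)$ is solvable, there is a consistent atomic network $(V,s')$ with $s'(x,y) \leq f(x,y)$ for all $x,y$. By full universality, $(V,s')$ is satisfiable in $\fB$, i.e., there is $s\colon V \to B$ with $(s(x),s(y)) \in s'(x,y)^\fB$ for all $x,y$. Since $s'(x,y) \leq f(x,y)$ implies $s'(x,y)^\fB \subseteq f(x,y)^\fB$, the same map $s$ witnesses that $(V,f)$ is satisfiable in $\fB$, and hence satisfiable.

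The only direction carrying real content is this converse, whose entire weight rests on the defining property of a fully universal representation (every consistent atomic network is satisfiable in it); the forward direction is essentially bookkeeping with the representation axioms. The single point that deserves care is the claim that the atoms partition $1^\fB$, which is exactly what makes the assignment $s'(x,y)$ well-defined as a \emph{single} atom per pair and thus turns the given solution into a genuine consistent atomic network; everything else is a direct unwinding of Definition~\ref{defn:repr}.
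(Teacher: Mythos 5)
Your proof is correct and follows essentially the same route as the paper: the substantive direction (solvable implies satisfiable) invokes full universality exactly as the paper does, while the converse is the routine observation the paper dismisses as clear and you simply spell out via the atom-partition argument. No gaps; your extra care in showing that the atoms' interpretations partition $1^{\fB}$ (making $s'$ well-defined and consistent) is a legitimate elaboration of what the paper leaves implicit.
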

\begin{proof} 
    Let $\fB$ be the fully universal representation of $\bA$. 
    Let $N$ be an $\bA$-network. 
    If $N$ is satisfiable in $\fB$, then 
    the network consistency problem clearly has a solution. 
    Otherwise, if the network consistency problem has a solution, then the solution is satisfiable in $\fB$, since $\fB$ is fully universal, and hence $N$ is a positive instance to $\NSP(\bA)$. 
\end{proof}

\begin{cor}
\label{cor:fully-univ-NP}
    Let $\bA$ be a finite relation algebra with a fully universal representation $\fB$. Then $\NSP(\bA)$ is in $\NP$.
\end{cor}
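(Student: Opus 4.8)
The plan is to derive this directly by composing two facts already established: the coincidence of the two problems under the fully-universal hypothesis, and the unconditional $\NP$-membership of the consistency problem. Concretely, I would first invoke Lemma~\ref{lem:nsp-ncp}, whose hypothesis is exactly that $\bA$ has a fully universal representation, to conclude that $\NSP(\bA)$ and $\NCP(\bA)$ have the same yes-instances, hence are the same decision problem. It then suffices to argue that $\NCP(\bA)$ is in $\NP$.

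For the latter I would exhibit the standard guess-and-verify algorithm. Given an $\bA$-network $(V,f)$, non-deterministically guess an atomic network $(V,s)$ by choosing, for each pair $(x,y) \in V^2$, an atom $s(x,y) \leq f(x,y)$. This certificate has size polynomial in the input, since there are $|V|^2$ pairs and only finitely many atoms (the algebra $\bA$ is fixed). One then verifies in polynomial time that $(V,s)$ is a valid solution: check $s(x,x) \leq \id$ for all $x \in V$, and $s(x,y) \leq s(x,z) \circ s(z,y)$ for all triples $x,y,z \in V$. Because $\bA$ is fixed and finite, each such check is a constant-time table lookup, so the whole verification runs in time $O(|V|^3)$. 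By the definition of $\NCP(\bA)$, some guess succeeds precisely when $(V,f)$ is solvable, which places $\NCP(\bA)$ in $\NP$; combined with the first step this gives $\NSP(\bA) \in \NP$.

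There is essentially no obstacle here, since the substantive work has already been done in Lemma~\ref{lem:nsp-ncp}: full universality is precisely what lets us replace quantification over (possibly infinite) representations by the purely finite, local condition of consistency, so that a polynomial-size certificate exists at all. The only point meriting care is the claim that consistency is verifiable in polynomial time, and this rests on $\bA$ being a fixed finite algebra, so that $\circ$, $\leq$, and $\id$ can be precomputed into constant-size tables; the range over triples is then the dominant cost, giving the cubic bound.
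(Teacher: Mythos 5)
Your proposal is correct and is essentially the paper's own argument: the corollary follows immediately from Lemma~\ref{lem:nsp-ncp} together with the fact (stated at the start of the subsection on the network consistency problem) that $\NCP(\bA)$ is always in $\NP$, which is precisely the guess-and-verify argument you spell out. The only difference is that the paper leaves the $\NP$-membership of $\NCP(\bA)$ as a known fact, whereas you fill in the (routine) certificate and cubic-time verification details.
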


\begin{remark}
    The converse of Lemma~\ref{lem:nsp-ncp} is true as well:
    If $\NSP(\bA) = \NCP(\bA)$, then there exists a fully universal representation (which in general might not be square). 
    To see this, we may pick by assumption for every isomorphism type of a consistent atomic $\bA$-network a representation of $\bA$ where the network is satisfiable.
    Let $(\fB_i)_{i \in I}$ be the (countable) collection of structures obtained in this way. 
    Then the disjoint union over all the $\fB_i$ (a special case of Definition~\ref{def:prod}) is a representation of $\bA$
    and clearly this representation is fully universal.
\end{remark}

\begin{lemma}\label{lem:col} 
Let $\bA$ be a relation algebra with a fully universal square representation $\fB$ having $n \in \{3,4,\dots\}$ elements. Then $\NSP(\bA)$ is $\NP$-complete.
\end{lemma}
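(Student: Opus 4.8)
The plan is to combine the membership statement we already have with a colouring-style hardness reduction, closely mirroring the proof of Lemma~\ref{lem:bounded}. Membership in $\NP$ is immediate: since $\bA$ has a fully universal representation, Corollary~\ref{cor:fully-univ-NP} already gives $\NSP(\bA) \in \NP$. So the whole task reduces to proving $\NP$-hardness, and because graph $n$-colourability is $\NP$-hard exactly for $n \geq 3$, this is precisely where the hypothesis $n \in \{3,4,\dots\}$ is used.

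For the reduction I would start from an instance $G$ of $n$-colourability and build the $\bA$-network $(V(G),f)$ on the vertex set of $G$ by setting $f(x,x) := \id$, letting $f(x,y) := \overline{\id}$ (the join of all diversity atoms) whenever $xy \in E(G)$, and $f(x,y) := 1$ for distinct non-adjacent $x,y$. This is the same gadget as in Lemma~\ref{lem:bounded} and is clearly computable in polynomial time; it then remains to show that $(V(G),f)$ is satisfiable if and only if $G$ is $n$-colourable.

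The two directions I would argue as follows. For ``colourable $\Rightarrow$ satisfiable'', I fix a proper $n$-colouring and identify the $n$ colours with the $n$ elements $b_1,\dots,b_n$ of $\fB$, defining $s\colon V(G)\to B$ accordingly. Adjacent vertices receive distinct colours, hence distinct elements; any two distinct elements lie outside $\id^{\fB}$, so the pair lies in $\overline{\id}^{\fB} = 1^{\fB}\setminus \id^{\fB} = f(x,y)^{\fB}$, while non-adjacent pairs land in $1^{\fB}=B^2$ and are unconstrained, and the diagonal is handled by $\id^{\fB}$. Hence $(V(G),f)$ is satisfiable in $\fB$. For ``satisfiable $\Rightarrow$ colourable'', I use that $\NSP(\bA)=\NCP(\bA)$ (Lemma~\ref{lem:nsp-ncp}), so a satisfiable network is solvable and admits a consistent atomic refinement $(V(G),g)$ with $g\leq f$; by full universality this refinement is satisfiable in $\fB$ via some $s\colon V(G)\to B$. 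For every edge $xy$ we have $g(x,y)\leq f(x,y)=\overline{\id}$, so $g(x,y)$ is a diversity atom and $s(x)\neq s(y)$; reading off the at most $n$ values of $s$ as colours thus yields a proper $n$-colouring of $G$.

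The step I expect to require the most care is the treatment of non-edges in the ``colourable $\Rightarrow$ satisfiable'' direction: it relies on $1^{\fB}=B^2$, i.e.\ on the $n$ elements of $\fB$ forming a single pairwise-diverse set, so that the $n$ available elements really behave as $n$ freely choosable colours. This is automatic once $\fB$ is square, and squareness is the only place where this enters the argument; without it the governing parameter would be the size of the largest square component (equivalently, the largest pairwise-diverse set) rather than $n$ itself. The reverse direction, by contrast, is robust and needs only full universality together with the identity $\NSP(\bA)=\NCP(\bA)$.
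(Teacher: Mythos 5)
Your proposal is correct and takes essentially the same route as the paper: containment in $\NP$ via Corollary~\ref{cor:fully-univ-NP}, and $\NP$-hardness by the same reduction from $n$-colouring used in Lemma~\ref{lem:bounded}, which is exactly what the paper's two-line proof does. Your closing observation is also apt: the ``colourable $\Rightarrow$ satisfiable'' direction genuinely requires $1^{\fB}=B^2$, a hypothesis the lemma does not state but on which the paper's proof implicitly relies as well (the lemma is only ever applied to normal, hence square, representations).
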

\begin{proof}
    We have already established containment in $\NP$. $\NP$-hardness can be shown by reduction from $n$-colouring as in the proof of Lemma~\ref{lem:bounded}. 
\end{proof}
\begin{remark}\label{rem:tractability_conj_1_7_and_2_3}
    If the square representation $\fB$ of some relation algebra $\bA$ has $n \in \{3,4,\dots\}$ elements, then we obtain a one-dimensional pp-interpretation of $K_n$, the complete graph on $n$ vertices, in $\fB$ by defining the interpreting formula for the edge relation as $\varphi(x, y) := 1^\fB(x, y)$. It is well-known that $K_n$ for $n \geq 3$ in turn pp-constructs $(\{0, 1\}; \NAE)$, which verifies the tractability conjecture (Conjecture \ref{conj:tractability}) for the algebras $\ra{2}{3}$ and $\ra{1}{7}$.
\end{remark}

\begin{remark}\label{rem:reduce} It is well-known that if $\bA$ is a relation algebra and 
$S \subseteq A$ is such that every element of $\bA$ 
can be obtained from $S$ using repeated applications of $\circ$, ${\Breve{\phantom{o}}}$, and intersection, then 
the network satisfaction problem
can be reduced to the special case that in the input network $(V,f)$ the image of $f$ is restricted to $S \cup \{1,\id\}$:
for instance, if $a,b \in S$, and the input network $(V,f)$ is such that 
\begin{itemize}
    \item 
$f(x,y) = a \circ b$, then we define a new network 
$(V \cup \{z\},g)$ where $z$ is a new variable and $g$ is defined as follows: $g(u,v) := f(u,v)$ for all $(u,v) \in V^2 \setminus \{(x,y), (y, x)\}$, $g(x,z) := a$, $g(z,y) := b$, 
and $g(u,v) := 1$ otherwise. 
\item $f(x,y) = a \cap b$, then we define a new network 
$(V \cup \{z\},g)$ where $z$ is a new variable and $g$ is defined as follows: $g(u,v) := f(u,v)$ for all $(u,v) \in V^2 \setminus \{(x,y)\}$, $g(x,z) := \id$, $g(x,y) := a$, $g(z,y) := b$, and $g(u,v) := 1$ otherwise.
\end{itemize} 
It is then easy to see that $(V,f)$ is satisfiable in some representation if and only if $(V \cup \{z\},g)$ is satisfiable in some representation. 
\end{remark} 

\subsection{The Path Consistency Algorithm} 
\label{sect:PC}
The path consistency algorithm is a particularly natural polynomial-time algorithm in the context of the network consistency problem and the network satisfaction problem for a fixed relation algebra $\bA$. 
The input to the algorithm is an $\bA$-network $(V,f)$;
the algorithm repeatedly considers three variables $u,v,w$ and improves $f(u,v)$ by intersecting it with $f(u,w) \circ f(w,v)$, see Algorithm~\ref{alg:pc}.

\begin{algorithm}
\caption{Path Consistency (PC)}
\label{alg:pc}
\begin{algorithmic}[1]
    \Statex \textbf{Input:} Relation algebra $\bA$ and $\bA$-network
    $N = (V,f)$
    \State $f(x,x) \gets f(x,x) \cap \id$
    \Repeat
        \For{$(x,y,z) \in V^3$}
            \State $f(x,y) \gets f(x, y) \cap (f(x, z) \circ f(z, y))$
        \EndFor
    \Until{$N$ does not change}
    \If{$f(x,y) = 0$ for some $x,y \in V$}
        \State \textbf{return} $N$ is unsolvable
    \Else
        \State \textbf{return} $N$
    \EndIf
\end{algorithmic}
\end{algorithm}

\begin{remark}\label{rem:PC-sound}
If the algorithm returns `unsolvable' on a given $\bA$-network $N$, then $N$ does not have a solution when viewed 
as an instance of $\NCP(\bA)$. Therefore, $N$ is an unsatisfiable instance of $\NSP(\bA)$.
\end{remark} 

The output network $N$ of the algorithm is called \emph{path consistent}.
We say that path consistency \emph{solves} $\NSP(\bA)$ if  $(V,f)$ is satisfiable if and only if the algorithm does not return `unsolvable'. 

\begin{example}\label{expl:non-int-P}
    It is easy to see that if $\bA$ is the relation algebra from Example~\ref{expl:non-int}, then path consistency solves $\NSP(\bA)$. 
    Since Example~\ref{expl:non-int} is the only simple non-integral relation algebra with at most four atoms (Lemma~\ref{prop:non-int-unique}), we can later focus on integral relation algebras (see Remark~\ref{rem:red-to-simple}).
\end{example}

\subsection{Network Satisfaction Problems and CSPs}
If $\fB$ is a fully universal representation of a finite relation algebra $\bA$, then it is easy to see that $\CSP(\fB)$ is essentially the same problem as $\NSP(\bA)$ (which then equals $\NCP(\bA)$). 

However, sometimes a different constraint satisfaction problem can be used to obtain polynomial-time tractability results for $\NSP(\bA)$. 

\begin{definition}\label{def:atom-struct}
    Let $\bA$ be a relation algebra. The \emph{atom structure} of $\bA$ is the finite relational structure $\fAo$ with domain $A_0$ and the following relations:
    \begin{itemize}
        \item for every $b \in A$ the unary relation $b^{\fAo}:= \{a \in A_0 \mid a \leq b\}$,
        \item the binary relation $E^{\fAo} := \{(a_1, a_2) \in A_0^2 
        \mid \Breve{a_1} = a_2\}$, and 
        \item the ternary relation $R^{\fAo} := \Cy(\bA)$.
    \end{itemize}
\end{definition}

Clearly, the atom structure $\fA_0$ of a relation algebra $\fA$ is conservative: if $f \in \Pol(\fA_0)$ has arity $k$ and $a_1,\dots,a_k \in A_0$, then $\fA_0$ contains the relation $b^{\fA_0}$ for $b = a_1 \cup \cdots \cup a_k$.
So $f$ must preserve this relation, which means that $f(a_1,\dots,a_k) \in b^{\fA_0} = \{a_1,\dots,a_k\}$.
The utility of atom structures is explained by the following:

\begin{prop}[{\cite[Proposition 2.16]{BodirskyKnaeuerDatalog23}}]
\label{prop:reductionAtomStructure}
    Let $\bA$ be a finite relation algebra with a fully universal representation $\fB$. Then there is a polynomial-time reduction from $\NSP(\bA)$ to $\CSP(\fAo)$.
\end{prop}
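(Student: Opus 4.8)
The plan is to exploit the hypothesis that $\bA$ has a fully universal representation via Lemma~\ref{lem:nsp-ncp}, which tells us that $\NSP(\bA) = \NCP(\bA)$; thus an $\bA$-network $(V,f)$ is satisfiable if and only if it admits a consistent atomic refinement, i.e.\ an atomic network $(V,s)$ with $s(x,y) \leq f(x,y)$ for all $x,y$. The whole task is then to encode the existence of such a refinement as a homomorphism problem into the atom structure $\fAo$. Since a consistent atomic network assigns an atom to each \emph{ordered pair} of vertices, the natural domain for the instance $\fC$ of $\CSP(\fAo)$ is $V^2$, and a homomorphism $h \colon \fC \to \fAo$ should be read as the assignment $s(x,y) := h(x,y)$.

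Concretely, given $(V,f)$ I would build the $\fAo$-signature structure $\fC$ on domain $V^2$ as follows. For each off-diagonal pair I place $(x,y)$ in the unary relation $(f(x,y))^{\fC}$, and each diagonal pair $(x,x)$ in $(f(x,x) \cap \id)^{\fC}$ (note $f(x,x) \cap \id \in A$, so this unary relation is available in $\fAo$); by the homomorphism condition this forces $h(x,y) \leq f(x,y)$ and $h(x,x) \leq \id$. For the converse I put $((x,y),(y,x)) \in E^{\fC}$ for all $x,y$, forcing $\breve{h(x,y)} = h(y,x)$. Finally, for every triple $x,y,z \in V$ I put $((x,z),(z,y),(x,y)) \in R^{\fC}$, so that a homomorphism must satisfy $(h(x,z),h(z,y),h(x,y)) \in \Cy(\bA)$, i.e.\ $h(x,y) \leq h(x,z) \circ h(z,y)$; quantified over all triples this is exactly the path-consistency inequality. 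Unwinding the definitions, homomorphisms $h \colon \fC \to \fAo$ are then in bijection with consistent atomic refinements $(V,s)$ of $(V,f)$.

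It remains to verify the two directions of the resulting equivalence ``$\fC \to \fAo$ iff $(V,f)$ is satisfiable''. For the backward direction, if $(V,f)$ is satisfiable in some representation $\fB$ via $g \colon V \to B$, then assigning to each pair the unique atom $a$ with $(g(x),g(y)) \in a^{\fB}$ yields a homomorphism into $\fAo$: the unary conditions hold because an atom meeting $f(x,y)^{\fB}$ lies below $f(x,y)$, while $E$ and $R$ are respected because converse and composition are computed set-theoretically in a representation. The forward direction is where the hypothesis genuinely enters: a homomorphism gives a consistent atomic network $(V,s)$, and full universality (through Lemma~\ref{lem:nsp-ncp}) guarantees that this network is satisfiable, hence so is the coarser $(V,f)$. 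The reduction runs in polynomial time, since $\fC$ has $|V|^2$ elements and its relations are listed by iterating over pairs and triples, while the signature of $\fAo$ is fixed once $\bA$ is fixed.

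The main obstacle is conceptual rather than computational: it lies in the forward direction, where one must turn an \emph{abstract} consistent atomic network into an actual satisfying assignment. This step is impossible without full universality --- for relation algebras lacking such a representation there are consistent atomic networks with no solution (e.g.\ the evil square of Figure~\ref{fig:evil_square}) --- so the proof must route this implication through Lemma~\ref{lem:nsp-ncp}. A secondary point requiring care is the bookkeeping on the diagonal and under converse: one should check that consistency already forces $s(y,x) = \breve{s(x,y)}$ and $s(x,x) \leq \id$, so that the encoding via $E^{\fC}$ and the intersection with $\id$ is faithful and rules out no genuine refinement.
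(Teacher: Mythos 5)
Your reduction is correct and is essentially the standard one: the paper does not prove this proposition inline but cites \cite[Proposition 2.16]{BodirskyKnaeuerDatalog23}, whose argument encodes the network as a CSP instance over the atom structure on domain $V^2$ exactly as you do, with the unary relations of $\fAo$ bounding each pair by $f(x,y)$ (intersected with $\id$ on the diagonal), the relation $E$ enforcing converses, the relation $R$ enforcing allowed triples, and full universality (via Lemma~\ref{lem:nsp-ncp}) closing the gap between consistent atomic refinements and genuine satisfiability. The one step you flag but do not carry out --- that consistency of an atomic network already forces $s(y,x) = \breve{s(x,y)}$ --- is a one-line consequence of the cycle law (Proposition~\ref{prop:cyclelaw}): taking $y = x$ in the consistency condition yields an identity atom below $s(x,z) \circ s(z,x)$, whence $s(x,z) \leq \breve{s(z,x)}$, and equality follows since both sides are atoms.
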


Hence, if $\CSP(\fA_0)$ is in $\Ptime$, then so is $\NSP(\bA)$. 
The good news here is that there is a complete complexity classification of finite domain CSPs, as we mentioned in the introduction; since $\fA_0$ is conservative, we can even apply Theorem \ref{thm:dichotomyConservative}. The bad news is that 
$\CSP(\fA_0)$ might be $\NPc$, while $\NSP(\bA)$ is still in $\Ptime$; this happens for instance for the point algebra $\ra{1}{3}$~\cite{PointAlgebra}. So we need more tools to prove polynomial-time tractability.

Viewing network satisfaction problems of relation algebras with fully universal representations as CSPs is not only useful for proving polynomial-time tractability, but also $\NP$-hardness in a number of cases. This will mainly be done in Section~\ref{sec:pp_reductions}, but we introduce a key concept here:

\begin{definition}\label{def:xy-canonical}
    Let $\fB$ be a normal representation of a finite relation algebra $\bA$ and $X \subseteq A_0$. An operation $f \colon B^n \to B$ is called \emph{$X$-canonical (with respect to $\fB$)} if there exists a function $\overline{f} \colon X^n \to A_0$
    such that for all $x, y \in B^n$ and $a_1, \dots, a_n \in X$, if $(x_i, y_i) \in a_i^\fB$ for all $i \in \{1, \dots, n\}$, then $(f(x), f(y)) \in \overline{f}(a_1,\dots, a_n)^\fB$. The function $\overline{f}$ is called the \emph{behaviour of $f$ on $X$.} An operation $f$ is called \emph{canonical (with respect to $\fB$)} if it is $A_0$-canonical.

    A binary $X$-canonical function $f$ is called \emph{$X$-symmetric} if the behaviour of $f$ on $X$ is symmetric. A ternary $X$-canonical function $f$ is called an \emph{$X$-majority} if the behaviour of $f$ on $X$ is a majority operation, and it is called an \emph{$X$-minority} if the behaviour of $f$ on $X$ is a minority operation.
\end{definition}

\section{Representations for Algebras with at most 4 Atoms} 

In this section we revisit the representability for relation algebras with at most 4 atoms. By Lemma~\ref{lem:union} (also see Remark~\ref{rem:simple}) we can focus on simple relation algebras.
In Section~\ref{sect:int} we will show that 
we may even focus on integral relation algebras. 
Exhaustive lists of these algebras have been computed by Maddux~\cite{Maddux2006-dp}. There are 
\begin{itemize}
    \item 2 integral relation algebras with two atoms (Table~\ref{fig:2Atoms}),
    \item 3 integral asymmetric relation algebras with three atoms (Table~\ref{fig:3AsyAtoms}), 
    \item 7 integral symmetric relation algebras with three atoms (Table~\ref{fig:3SymAtoms}), 
    \item 37 integral asymmetric relation algebras with four atoms (Table~\ref{tab:overview_asymmetric}), and 
    \item 65 integral symmetric relation algebras with four atoms (Table~\ref{tab:overview_symmetric}). 
\end{itemize}

Since the identity atom behaves the same in all the algebras, it is not needed to distinguish the algebras and it is therefore not shown in the lists. The fact that there are exactly 102=37+65 integral relation algebras with four atoms was already known to Comer~\cite{ComerChromaticPolygroups} (discovered in a different formalism with different terminology, but equivalent by a result from~\cite{Comer1984}; also see~\cite{Hypergroups}).

We will first recall results of Maddux
which show that 31 of these relation algebras do not have a representation; the smallest algebras that are not representable have 4 atoms. We will then see that all the remaining 71 integral relation algebras do have a representation; this fact should be considered to be known and we do not claim originality. 
Several authors contributed representations, including S. Comer, P. Hertzel, P. Jipsen, R. Kramer, E. Lukács, and R. Maddux,\footnote{We are grateful for helpful email exchange with P. Jipsen and R. Maddux.} and to the best of our knowledge not all of the respective work has been published.
For us it will also be important whether there exists a fully universal representation, or even a normal representation, because this is relevant for analyzing the complexity of the NSP, as we have explained earlier.
We test which of the relation algebras with at most 4 atoms have a normal representation, using Theorem~\ref{thm:two-point-amalgamation}.
If they do not have a normal representation, we search for a fully universal representation using Theorem~\ref{thm:fu}. 
It turns out that there are only 9 integral representable relation algebras that do not have a fully universal square representation. Some of them have the property that the cardinality of all square representations is bounded by some finite $n \in {\mathbb N}$, which is useful because of Lemma~\ref{lem:bounded}. 
The remaining algebras will have to be inspected closely in a case-by-case fashion. See Figure~\ref{fig:numbers}. 

\begin{figure}
    \begin{center}
        \begin{tabular}{|llllllllll|}\hline
            Atoms & Elements & RAs & Simple & Int. & Sym./Asym. & 
            Repr. & F.u.s. & Norm. & Flex. \\
            \hline
            0 & 1 & 1 & 1 & 1 & 1/0 & 1 & 1 & 1 & 0 \\
            1 & 2 & 1 & 1 & 1 & 1/0 & 1 & 1 & 1 & 1 \\
            2 & 4 & 3 & 2 & 2 & 2/0 & 2 & 2 & 2 & 1 \\
            3 & 8 & 13 & 10 & 10 & 7/3 & 7/3 & 6/3 & 6/3 & 2/1 \\
            4 & 16 & 119 & 103 & 102 & 65/37 & 45/26 & 37/25 & 34/23 & 10/5 \\ \hline
        \end{tabular}
    \end{center}
    \caption{The number of (simple, integral) relation algebras with 0--4 atoms and 1--16 elements. For the integral relation algebras, we specify how many of them have a (fully universal square) representation, which of them are normal and which have a flexible atom (see Definition~\ref{def:flex}), each time distinguishing the number of symmetric and the number of asymmetric relation algebras.}
    \label{fig:numbers}
\end{figure}

\subsection{Reduction to the Integral Case}
\label{sect:int}
In this section we show that there is up to isomorphism only one simple non-integral relation algebra with at most four atoms. This fact might be known, but we are not aware of a reference in the literature.

\begin{prop}\label{prop:non-int-unique} 
    The algebra from Example~\ref{expl:non-int} is up to isomorphism the only simple non-integral relation algebra with at most four atoms.
\end{prop}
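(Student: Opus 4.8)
The plan is to exploit the structure theory of identity atoms. Since $\bA$ is simple and non-integral, the cited criterion that a simple relation algebra is integral if and only if $\id$ is an atom~\cite[Theorem 353]{Maddux2006} tells us that $\id$ is \emph{not} an atom; hence $\id = e_1 \cup \dots \cup e_k$ is the join of $k \geq 2$ distinct identity atoms, and the remaining atoms are diversity atoms. As $\bA$ has at most four atoms, $k \in \{2,3,4\}$ and the number of diversity atoms is $4-k$ or fewer.

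First I would record the elementary facts about how diversity atoms interact with the identity atoms. For subidentities one has $x \circ y = x \cap y$, so $e_i \circ e_j = 0$ for $i \neq j$ and $e_i \circ e_i = e_i$. Using $\id \circ r = r$ together with $e_i \circ e_j = 0$, a short argument shows that every diversity atom $r$ has a \emph{unique} identity atom $e_{d(r)}$ with $e_{d(r)} \circ r = r$ (its \emph{domain sort}) and a unique $e_{s(r)}$ with $r \circ e_{s(r)} = r$ (its \emph{range sort}); the cycle law (Proposition~\ref{prop:cyclelaw}) shows that $\breve{r}$ has domain sort $e_{s(r)}$ and range sort $e_{d(r)}$. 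Consequently a symmetric diversity atom (one with $r = \breve{r}$) must be a \emph{loop}, i.e.\ $d(r) = s(r)$, while asymmetric diversity atoms occur in converse pairs $\{r, \breve{r}\}$ joining two \emph{distinct} sorts.

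Next I would convert simplicity into a connectivity condition on the sorts. If some identity atom $e_j$ is joined to no other sort by a diversity atom, then $e_j \circ 1 = e_j$ and $1 \circ e_j = e_j$, whence $1 \circ e_j \circ 1 = e_j \neq 1$, contradicting simplicity. Thus simplicity forces the graph on $\{e_1, \dots, e_k\}$ whose edges are the cross-sort diversity atoms to be connected. The enumeration is then immediate: $k=4$ leaves no diversity atom and $1 = \id$; $k=3$ leaves at most a single diversity atom, necessarily a symmetric loop, so two sorts remain unjoined; and for $k=2$ the two sorts must be joined, and since cross-sort atoms come in converse pairs this requires exactly the two diversity atoms $r, \breve{r}$ joining $e_1$ and $e_2$. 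Hence the only surviving configuration has two identity atoms $e_1, e_2$ and a converse pair $r, \breve{r}$ with $r$ going from $e_1$ to $e_2$.

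Finally I would check that this configuration determines $\circ$ completely. The identity-atom products and the products $e_i \circ r$ and $r \circ e_i$ are forced by the sort data; $r \circ r = r \circ e_2 \circ r = r \circ (e_2 \circ r) = 0$ since $e_2 \circ r = 0$; and $r \circ \breve{r}$ has domain and range sort $e_1$, so it lies below $e_1$ (there are no loop atoms), while the cycle law gives $e_1 \leq r \circ \breve{r}$, forcing $r \circ \breve{r} = e_1$, and symmetrically $\breve{r} \circ r = e_2$. This reproduces exactly the table of Figure~\ref{fig:non-int}, so the algebra is unique up to isomorphism; it is a genuine relation algebra because Example~\ref{expl:non-int-rep} exhibits a representation. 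The main obstacle is the careful bookkeeping in this forcing step, where the usual integral-domain shortcuts are unavailable and one must lean on the cycle law and on the absence of loop atoms to pin down products such as $r \circ \breve{r}$.
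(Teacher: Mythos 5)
Your proposal is correct in substance but follows a genuinely different route from the paper's. The paper disposes of the case of at most three atoms by citing the Andr\'eka--Maddux classification (all such simple algebras are integral), and then, for four atoms, fills in the multiplication table entry by entry: it derives $a \circ b = 0$, $a \circ a = a$, $\breve{a} = a$ for the two identity atoms, uses simplicity and associativity to pin down the products with the diversity atoms $c,d$, splits into the cases $d = \breve{d}$ versus $c = \breve{d}$, kills the first case by a simplicity/cycle-law contradiction, and finally forces the remaining entries, again invoking simplicity. You instead develop the sort structure of a non-integral algebra (unique domain and range identity atom for each diversity atom, symmetric diversity atoms are loops, cross-sort atoms come in distinct converse pairs), convert simplicity into connectivity of the sort graph, enumerate the possible configurations by the number $k$ of identity atoms, and then force the table from the sort data and the cycle law. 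Your argument is more conceptual, treats the algebras with $2$ or $3$ atoms uniformly rather than citing~\cite{AndrekaMaddux}, and isolates a reusable principle (simplicity as connectivity of the identity atoms); the paper's computation is, by contrast, entirely self-contained and needs no structural preliminaries.

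Two slips in your write-up need repair, though neither invalidates the approach. First, the claim that asymmetric diversity atoms always join two \emph{distinct} sorts is false: in the point algebra $\ra{1}{3}$ the atom $r$ is asymmetric yet has equal domain and range sort. This is harmless, since your enumeration only uses the correct statements (a symmetric diversity atom is a loop; hence a cross-sort atom is asymmetric and brings a distinct converse with it). Second, and more seriously, your connectivity argument is stated too strongly: if $e_j$ carries a loop atom $\ell$, then $e_j \circ 1 \geq \ell$, so the intermediate claim ``$e_j \circ 1 = e_j$'' fails. The one configuration where this matters is $k = 2$ with a symmetric loop at each of the two sorts; this algebra is a direct product of two two-atom integral algebras (e.g.\ $\ra{2}{2} \times \ra{2}{2}$), hence indeed not simple, but your written argument does not exclude it. The fix is one line: if no diversity atom joins $e_j$ to another sort, then every atom below $1 \circ e_j \circ 1$ has both sorts equal to $e_j$, so $1 \circ e_j \circ 1 \leq e_j \cup (\text{loops at } e_j) \neq 1$, contradicting simplicity. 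With that patch, and with the standard facts you left implicit made explicit ($\breve{e_i} = e_i$, and the uniqueness of sorts via $e_i \circ e_j = 0$ for $i \neq j$), your proof is complete and reproduces exactly the table of Figure~\ref{fig:non-int}.
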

\begin{proof}
    The simple relation algebras with at most three atoms have been determined by Andréka and Maddux \cite{AndrekaMaddux}, and all of them are integral.
    So let $\bA$ be a simple non-integral relation algebra with four atoms. We already mentioned that a simple relation algebra is integral if and only if $\id$ is an atom~\cite[Theorem 353]{Maddux2006}. Hence, $\bA$ contains at least two identity atoms $a$ and $b$. Denote the other two atoms of $\bA$ by $c$ and $d$. For two distinct identity atoms $x$ and $y$, we have $x \circ y \leq x \circ \id = x$ and $x \circ y \leq \id \circ y = y$, so $x \circ y \leq x \cap y = 0$. In particular, $a \circ b = b \circ a = 0$.
    Moreover, observe that 
    \[ a = a \circ \id = a \circ \bigcup_{\substack{x \leq \id \\ x \text{ atom}}} x = \bigcup_{\substack{x \leq \id \\ x \text{ atom}}} a \circ x. \]
    Since $a \circ x = 0$ for all identity atoms $x \neq a$, we conclude that $a = a \circ a$. Analogously, we obtain $b \circ b = b$.
    \begin{figure}[H]
    \centering
    \begin{tabular}{|L|LLLL|}\hline
        \circ & a & b & c & d \\ \hline
        a & a & 0 & &  \\
        b & 0 & b & &  \\
        c & & & & \\
        d & & & & \\ \hline
    \end{tabular}
\end{figure} 

It follows from $a \leq a \circ \id$ and the cycle law that $\id \leq \breve{a} \circ a$, particularly $\breve{a} \circ a \neq 0$. It is easy to show that $\breve{a}$ has to be an identity atom too. Since $x \circ a = 0$ for all identity atoms $x \neq a$, we conclude that $\breve{a} = a$. Analogously, we get $\breve{b} = b$.

We know that $a \circ c \leq \id \circ c = c$ and $a \circ d \leq \id \circ d = d$. Assume towards a contradiction that $a \circ c = 0$ and $a \circ d = 0$. It follows that $\breve{c} \circ a = 0$ and $\breve{d} \circ a = 0$, which implies $1 \circ a \circ 1 = a$, a contradiction to $\bA$ being simple. Since $c$ and $d$ are indistinguishable up to this point, we can assume without loss of generality that $a \circ d = d$.

As above, we also know that $b \circ c = c$ or $b \circ d = d$. Assume the latter holds. Then $0 = (a \circ b) \circ d = a \circ (b \circ d) = a \circ d = d$, contradicting the associativity of $\circ$; hence, $b \circ c = c$ and $b \circ d = 0$.
Suppose that $a \circ c = c$ holds, then $0 = (a \circ b) \circ c = a \circ (b \circ c) = a \circ c = c$, which is again a contradiction. 
\begin{figure}[H]
    \centering
    \begin{tabular}{|L|LLLL|}\hline
        \circ & a & b & c & d \\ \hline
        a & a & 0 & 0 & d \\
        b & 0 & b & c & 0 \\
        c & & & & \\
        d & & & & \\ \hline
    \end{tabular}
\end{figure} 

Next, suppose to the contrary that $d = \breve{d}$ (and thus also $c = \breve{c}$). Then $d \circ a = (a \circ d){\Breve{\phantom{o}}} = \breve{d} = d$. Similarly, we get $c \circ a = 0$, $c \circ b = c$, and $d \circ b = 0$. It follows that $1 \circ a \circ 1 = (a \cup d) \circ 1 = (a \circ 1) \cup (d \circ 1) = (a \cup d) \cup (d \circ c) \cup (d \circ d)$. Since this expression has to equal $1$ due to $\bA$ being simple, we infer that $b \leq d \circ c$ or $b \leq d \circ d$. By applying the cycle law, this implies $c \leq b \circ d = 0$ or $d \leq b \circ d = 0$, which are both contradictions. Hence, $c = \breve{d}$.

It follows that
$c \circ a = (a \circ d){\Breve{\phantom{o}}} = c$,
$d \circ a = (a \circ c){\Breve{\phantom{o}}} = 0$, 
$c \circ b = (b \circ d){\Breve{\phantom{o}}} = 0$, and 
$d \circ b = (b \circ c){\Breve{\phantom{o}}} = d$.
We can now compute $c \circ c = c \circ (b \circ c) = (c \circ b) \circ c = 0 \circ c = 0$. Hence, also $d \circ d = (c \circ c){\Breve{\phantom{o}}} = 0$.

For the last two missing entries in our table, observe that
$d \circ c = d \circ (c \circ a) = (d \circ c) \circ a \leq 1 \circ a = a \cup c$
and
$d \circ c = (a \circ d) \circ c = a \circ (d \circ c) \leq a \circ 1 = a \cup d$; hence, $d \circ c \leq a$. Assume towards a contradiction that $d \circ c = 0$. Then $1 \circ d \circ 1 = 1 \circ d = d \cup (c \circ d)$, and this union has to equal $1$ by the simplicity of $\bA$. In particular, $a \leq c \circ d$. By the cycle law, this implies $\breve{d} \leq a \circ c = 0$, a contradiction. Hence, $d \circ c = a$. With an analogous argument, we obtain $c \circ d = b\colon$
\begin{figure}[H]
    \centering
    \begin{tabular}{|L|LLLL|}\hline
        \circ & a & b & c & d \\ \hline
        a & a & 0 & 0 & d \\
        b & 0 & b & c & 0 \\
        c & c & 0 & 0 & b \\
        d & 0 & d & a & 0 \\ \hline
    \end{tabular}
\end{figure}
Hence, $\bA$ is isomorphic to the relation algebra in Example~\ref{expl:non-int}.
\end{proof}

\subsection{Without Representation}
\label{sec:no_repr} 
Comer~\cite{ComerChromaticPolygroups} states that at least 28 of the 102 integral relation algebras with four atoms are not representable. Results of Maddux~\cite{Maddux2006-dp} imply that in fact 31 do not have a representation. (And, as we have mentioned earlier, all others turn out to have a representation.)

\begin{thm}
\label{thm:non-repr}
    The following 31 integral relation algebras have no representation: 
    \begin{itemize}
        \item 
        the 11 asymmetric relation algebras $\ra{14}{37}$, $\ra{16}{37}$, $\ra{21}{37}$, 
        $\ra{24}{37}$--$\ra{29}{37}$, $\ra{32}{37}$, $\ra{34}{37}$; 
        \item 
        the 20 symmetric relation algebras $\ra{21}{65}$--$\ra{23}{65}$, $\ra{35}{65}$--$\ra{38}{65}$, 
        $\ra{40}{65}$--$\ra{45}{65}$, $\ra{47}{65}$--$\ra{50}{65}$, $\ra{54}{65}$, $\ra{58}{65}$, 
        $\ra{60}{65}$. 
    \end{itemize}
    Therefore, their $\NSP$s are trivial and in $\Ptime$.
\end{thm}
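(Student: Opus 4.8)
The plan is to split the statement into its two independent assertions—the non-representability of the 31 listed algebras, and the consequence that their NSP is trivial—and to dispatch the second part first, since it is immediate. By the definition of satisfiability for the network satisfaction problem, an $\bA$-network $(V,f)$ is satisfiable only if $\bA$ admits \emph{some} representation $\fB$ carrying a satisfying assignment. Hence, if $\bA$ has no representation at all, then \emph{no} input network is satisfiable, and the algorithm that rejects every instance decides $\NSP(\bA)$ in constant time. Thus $\NSP(\bA) \in \Ptime$ for each of the 31 algebras, and the only real work lies in establishing non-representability.

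For the non-representability itself, I would resist the temptation to derive it from the amalgamation criteria available earlier in the paper: failure of $\AP(3,2,n)$ or of $\AP(k+1,k,k+1)$ (Theorems~\ref{thm:fu} and~\ref{thm:two-point-amalgamation}) only rules out a fully universal or a normal representation, not a representation in general—witness $\ra{5}{7}$, which has no fully universal representation yet is representable. The correct tool is instead an obstruction that excludes \emph{every} representation, finite or infinite. The natural framework is the combinatorial one underlying Comer's chromatic-polygroup formalism: a representation of an integral relation algebra amounts to an edge-colouring of a complete graph (on the domain) by the diversity atoms such that (i) no triangle realizes a forbidden triple, and (ii) for every allowed triple $z \leq x \circ y$ and every edge coloured $z$, a witness vertex completing the composition exists. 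Non-representability is then shown by exhibiting, for each algebra, a \emph{finite} sequence of composition demands which, given that algebra's particular forbidden triples, cannot all be met: iterating requirement (ii) is eventually forced to produce a triangle violating requirement (i). Equivalently, one exhibits the failure of a finite Lyndon condition.

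Concretely, I would recall these obstructions from Maddux~\cite{Maddux2006-dp} (who confirms and sharpens the count of Comer~\cite{ComerChromaticPolygroups} from at least $28$ to exactly $31$), grouping the algebras by the shape of their obstruction wherever possible rather than reproving each from scratch. For the symmetric cases one typically isolates a small uncompletable partial colouring; for the asymmetric cases the analogous argument is carried out on oriented graphs, under the convention that the asymmetric atom $r$ is the edge relation and $a$ the non-edge relation.

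The main obstacle I anticipate is that there is no single uniform argument: each algebra, or each small cluster sharing a forbidden-triple pattern, requires identifying its own inextensible configuration and then verifying that \emph{no} assignment of atoms to the forced new edges avoids a forbidden triple. This finite but intricate case analysis is precisely what Maddux's computations settle, which is why I would present the result as a recollection of his work rather than as $31$ independent hand computations; the genuinely new content of the theorem is the (easy) complexity consequence established in the first paragraph.
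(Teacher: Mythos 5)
Your proposal is correct and matches the paper's approach: the paper's proof consists precisely of a citation to Maddux~\cite{Maddux2006-dp} (Chapters 6.64, 6.66, and 6.55 for $\ra{32}{37}$ and $\ra{60}{65}$) for the non-representability, with the complexity consequence left implicit for exactly the reason you give — an algebra with no representation has no satisfiable networks, so the reject-everything algorithm decides its NSP. Your additional remarks (that amalgamation failure would not suffice, and that the count sharpens Comer's 28 to 31) are accurate but not part of the paper's proof, which is just the reference.
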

\begin{proof}
    See Chapters 6.64, 6.66, and (for the cases $\ra{32}{37}$ and $\ra{60}{65}$) 6.55 in \cite{Maddux2006-dp}.
\end{proof}

\subsection{Normal Representations}
\label{sec:normal_repr}

Examples of finite relation algebras with a normal representation include those with a \emph{flexible atom} (see, e.g.,~\cite{BodirskyKnaeJAIR}).

\begin{definition}\label{def:flex}
    Let $\bA$ be a relation algebra. An atom $a \in A_0$ is called a \emph{flexible atom} if $a \leq x \circ y$ holds for all $x,y \in A_0\backslash\{\id\}$. 
\end{definition}

The relation algebras with a flexible atom are listed in Figure~\ref{fig:flex}. Note that if a relation algebra has a flexible atom then it has a normal representation, as one can, when applying Theorem~\ref{thm:two-point-amalgamation}, always use a flexible atom for all undetermined values.

\begin{figure}
    \begin{center} 
\begin{tabular}{|L|L||L|L|}\hline
    \text{Name} & \text{Flexible Atoms} & \text{Name} & \text{Flexible Atoms} \\ \hline
    \ra{2}{2}   & a                 & \ra{32}{65} & a \\ \hline 
    \ra{3}{3}   & r, \breve{r}      & \ra{33}{65} & a \\ \hline
    \ra{6}{7}   & a                 & \ra{34}{65} & a \\ \hline
    \ra{7}{7}   & a, b              & \ra{55}{65} & a \\ \hline
    \ra{31}{37} & a                 & \ra{57}{65} & a \\ \hline
    \ra{33}{37} & a                 & \ra{59}{65} & a \\ \hline
    \ra{35}{37} & a                 & \ra{61}{65} & a \\ \hline
    \ra{36}{37} & r, \breve{r}      & \ra{63}{65} & a \\ \hline
    \ra{37}{37} & a, r, \breve{r}   & \ra{64}{65} & a, b \\ \hline
                &                   & \ra{65}{65} & a, b, c \\ \hline
\end{tabular}
\end{center} 
    \caption{Integral relation algebras with at most  four atoms and at least one flexible atom.}
    \label{fig:flex}
\end{figure}

With the criterion presented in Theorem~\ref{thm:two-point-amalgamation} it is easy to see (by hand; we verified the condition in all cases by a computer program) that
the following relation algebras with at most four atoms have a normal representation:

\begin{align}
\begin{split}
    \ra{1}{2},\ra{2}{2}, \ra{1}{3}, \ra{2}{3}, \ra{3}{3}, \ra{1}{7}, \ra{2}{7}, \ra{3}{7}, \ra{4}{7}, \ra{6}{7}, \ra{7}{7}, \\
    \ra{1}{37}\text{--}\ra{12}{37}, \ra{15}{37}, \ra{18}{37}\text{--}\ra{20}{37}, \ra{22}{37}\text{--}\ra{23}{37}, \ra{31}{37},\ra{33}{37}, \ra{35}{37}\text{--}\ra{37}{37}, \\
    \ra{1}{65}\text{--}\ra{8}{65}, \ra{10}{65}, \ra{11}{65}, \ra{13}{65}, \ra{14}{65}, \ra{16}{65}, \ra{18}{65}\text{--}\ra{20}{65}, \ra{25}{65}\text{--}\ra{29}{65}, \\
    \ra{32}{65}\text{--}\ra{34}{65}, \ra{46}{65}, \ra{52}{65}, \ra{53}{65}, \ra{55}{65}, \ra{57}{65}, \ra{59}{65}, \ra{61}{65}\text{--}\ra{65}{65}. \label{eq:norm_all}
\end{split}
\end{align}

In Figure~\ref{tab:amalgamation-failure}, we give for every representable algebra which is not in \eqref{eq:norm_all} a counterexample for the condition in Theorem~\ref{thm:two-point-amalgamation} using the notation of Figure~\ref{fig:2-pt-amal-size-5}.

\begin{figure}
    \centering
    \[\begin{tikzcd}[row sep=2.5em, column sep=3.5em]
	&& \bullet \\
	\bullet && \bullet && \bullet \\
	&& \bullet
	\arrow["{b_1}"', from=1-3, to=2-1]
	\arrow["{a_1}", swap, from=1-3, to=2-3]
	\arrow["{c_1}", from=1-3, to=2-5]
	\arrow["{a_3}"{pos=0.4}, curve={height=-18pt}, from=1-3, to=3-3]
	\arrow["{b_2}"', from=2-3, to=2-1]
	\arrow["{c_2}", from=2-3, to=2-5]
	\arrow["{a_2}", swap, from=2-3, to=3-3]
	\arrow["{b_3}", from=3-3, to=2-1]
	\arrow["{c_3}", swap, from=3-3, to=2-5]
\end{tikzcd}\]
    \caption{A 2-point-amalgamation diagram of size 5.}
    \label{fig:2-pt-amal-size-5}
\end{figure}

\begin{figure} 
\begin{center} 
\begin{tabular}{|L|L|l|} \hline
    \text{Relation Algebra} & (a_1, a_2, a_3), (b_1, b_2, b_3), (c_1, c_2, c_3) & \text{Fully Universal}\\ \hline
    \ra{5}{7} &  (a,a,b), (a,b,a), (a,b,b) & No, Figure~\ref{fig:evil_square}\\ \hline
    \ra{13}{37} & (a,a,a), (r,a,r), (a,r,r) & Yes, Prop.~\ref{prop:13_37-repr}\\ \hline
    \ra{17}{37} & (r,r,r), (r,\breve{{r}},r), (a,a,\breve{{r}}) & No, Lemma~\ref{lem:nfu-37} \\ \hline
    \ra{30}{37} & (a,a,a), (r,\breve{{r}},a), (r,a,\breve{{r}}) & Yes, Prop.~\ref{prop:30_37-repr} \\ \hline
    \ra{9}{65} & (a,b,b), (b,b,a), (a,b,a) & No, Prop.~\ref{prop:2-cycle-no-fu} \\ \hline
    \ra{12}{65} & (a,b,b), (b,b,a), (a,b,a) & No, Prop.~\ref{prop:2-cycle-no-fu} \\ \hline
    \ra{15}{65} & (a,c,c), (c,c,a), (a,c,a) & No, Prop.~\ref{prop:2-cycle-no-fu} \\ \hline
    \ra{17}{65} & (a,c,c), (c,c,a), (a,c,a) & No, Prop.~\ref{prop:2-cycle-no-fu} \\ \hline
    \ra{24}{65} & (a,a,a), (b,a,b), (a,c,c) & Yes, Prop.~\ref{prop:24_65-repr} \\ \hline
    \ra{30}{65} & (a,a,a), (b,b,c), (a,b,c) & Yes, Prop.~\ref{prop:30_65-31_65-repr} \\ \hline
    \ra{31}{65} & (a,a,a), (b,b,c), (a,b,c) & Yes, Prop.~\ref{prop:30_65-31_65-repr} \\ \hline
    \ra{39}{65} & (a,b,b), (b,b,a), (a,c,c) & No, Prop.~\ref{prop:39_65-repr} \\ \hline
    \ra{51}{65} & (b,a,a), (a,c,c), (a,c,b) & No, Lemma~\ref{lem:5165_unsat} \\ \hline
    \ra{56}{65} & (a,b,b), (b,c,a), (b,b,a) & No, Prop.~\ref{prop:56_65-not-fu} \\ \hline
    \ra{62}{65} & \text{Remark~\ref{rem:62}} & No, Prop.~\ref{prop:62_65-repr} \\ \hline
    \end{tabular}
    \end{center} 
    \caption{Integral relation algebras with at most four atoms which do not have a normal representation; 
    the second row shows a counterexample for the 
    condition in Theorem~\ref{thm:two-point-amalgamation}, using the notation of Figure~\ref{fig:2-pt-amal-size-5} (except for the last row for $\ra{62}{65}$, where we need a separate argument). In the third column we give (forward) references concerning the existence of a fully universal representation.}
    \label{tab:amalgamation-failure}
\end{figure} 

\begin{remark}\label{rem:62} The algebra $\ra{62}{65}$ has the AP$(5)$, but not the AP$(6)$. This proves that the bound in Theorem~\ref{thm:two-point-amalgamation} is optimal in the case of $k = 4$. 
The algebra $\ra{62}{65}$ 
is the only relation algebra with four atoms which admits this behavior. It is the unique relation algebra in which every triple which is not a 1-cycle is allowed. In some way it is the four-atom extension of $\ra{5}{7}$. We note a further similarity between the two algebras: $\ra{5}{7}$ is the only relation algebra with three atoms in which AP$(4)$ holds but AP$(5)$ does not, showing that the bound is optimal in the case of $k = 3$. The optimality of the bound in Theorem~\ref{thm:two-point-amalgamation} for all $k$ can be shown using 
relation algebras that are analogous to $\ra{62}{65}$.

We present the counterexample for AP$(6)$ in $\ra{62}{65}$ by a matrix $M \in (A_0)^{6 \times 6}$, where we choose $V = \{0, \dots, 5\}$ and denote $m_{ij} = f(i,j)$. The claim is that entries marked by a question mark $?$ cannot be filled by an atom of 
$\ra{62}{65}$ such that the resulting atomic network is consistent.

\begin{equation}\label{eq:amalg-62_65}
    M = \begin{pmatrix}
    \id & a & b & c & a & a \\
    a & \id & a & a & b & b \\
    b & a & \id & b & c & c \\
    c & a & b & \id & a & b \\
    a & b & c & a & \id & ? \\
    a & b & c & b & ? & \id
    \end{pmatrix}
\end{equation}
\end{remark}

\subsection{2-Cycle Product}
\label{sect:cycle-prod-rep}

In this section we discuss how representability properties 
of relation algebras translate to their cycle products, and apply our results to the case of integral relation algebras with four atoms.

\begin{lemma}
\label{lem:2-cycle-properties}
    Let $\bA, \bB$ be finite integral relation algebras with $A_0 \cap B_0 = \{\id\}$ and
    representations $\fC$ and $\fD$. Then 
    \begin{enumerate}[label=(\roman*)]
        \item if $\fC$ and $\fD$ are fully universal, then $\fD[\fC]$ is a fully universal representation of $\bA[\bB]$.
        \item if $\fC$ and $\fD$ are square, then $\fD[\fC]$ is a square representation of $\bA[\bB]$.
        \item if $\fC$ and $\fD$ are normal, then $\fD[\fC]$ is a normal representation of $\bA[\bB]$. 
    \end{enumerate}
\end{lemma}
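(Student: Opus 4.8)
The plan is to establish the three properties in the order (ii), (i), (iii). Throughout I use Remark~\ref{rem:equ_relation}: in $\bC := \bA[\bB]$ the element $1_\bA$ is an equivalence relation, and in $\fD[\fC]$ its classes are exactly the fibers $\{u\} \times C$ for $u \in D$, so that quotienting by $1_\bA$ recovers the $\fD$-part while each fiber carries a copy of $\fC$. Part (ii) is immediate from the defining formulas of Theorem~\ref{thm:2-cylce-repr}: I would compute $1^{\fD[\fC]} = \bigcup_{c \in C_0} c^{\fD[\fC]}$ and observe that the pairs with equal first coordinate are exhausted by $\id^{\fD[\fC]}$ together with the $a^{\fD[\fC]}$ (for $a \in A_0 \setminus \{\id\}$), using that $\fC$ is square, while the pairs with distinct first coordinate are exhausted by the $b^{\fD[\fC]}$ (for $b \in B_0 \setminus \{\id\}$), using that $\fD$ is square; hence $1^{\fD[\fC]} = (D \times C)^2$.

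Part (i) is the heart of the lemma. Given a consistent atomic $\bC$-network $N = (V,f)$, I would first show that the relation $\sim$ on $V$ defined by $x \sim y$ iff $f(x,y) \in A_0$ is an equivalence relation; reflexivity and symmetry are clear, and transitivity follows from consistency together with the fact that a $\bC$-composite of two $\bA$-atoms lies below $1_\bA$, which I read off from $\Cy(\bC) = \Cy(\bA) \cup \Cy(\bB) \cup \{(a,b,b)\}$. The plan is then to split $N$ along $\sim$. Each restriction $N|_{C_i}$ to a $\sim$-class is a consistent atomic $\bA$-network, since its labels are $\bA$-atoms and a $\bC$-composite of two $\bA$-atoms equals their $\bA$-composite; by full universality of $\fC$ it has a solution $s_i \colon C_i \to C$. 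Meanwhile $\faktor{N}{1_\bA}$ is a well-defined atomic $\bB$-network (well-definedness uses $\breve a \circ b = b$ for diversity atoms $a \in A_0$, $b \in B_0$), and it is consistent: although a $\bC$-composite of two $\bB$-atoms may ``leak'' $\bA$-atoms, any $\bB$-atom lying below it already lies below the corresponding $\bB$-composite, which is all that is needed since the labels of $\faktor{N}{1_\bA}$ are $\bB$-atoms. By full universality of $\fD$ it has a solution $t$ on the classes. Finally I would set $s(x) := (t(C_{[x]}), s_{[x]}(x))$ and verify, in the two cases $x \sim y$ and $x \not\sim y$, that $(s(x),s(y)) \in f(x,y)^{\fD[\fC]}$ directly from the formulas for $a^{\fD[\fC]}$ and $b^{\fD[\fC]}$. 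For this to even make sense when $\fC, \fD$ are not square, I would note that the formulas define a representation for arbitrary representations by the same verification as in Theorem~\ref{thm:2-cylce-repr}: the only point where squareness might be invoked is the existence, for each element and each diversity atom $a$, of an $a$-successor, and this holds in every representation because $\id \le a \circ \breve a$.

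For part (iii), $\fC$ and $\fD$ are normal, hence square and fully universal, so by (ii) and (i) the structure $\fD[\fC]$ is square and fully universal, and only homogeneity remains. Here I would use the wreath-product description of $\Aut(\fD[\fC])$ announced in Section~\ref{sect:2cycle}: any pair $(\sigma, (\tau_u)_{u \in D})$ with $\sigma \in \Aut(\fD)$ and $\tau_u \in \Aut(\fC)$ acts by $(u,v) \mapsto (\sigma(u), \tau_u(v))$, and a direct check against the formulas shows this is an automorphism. Given an isomorphism $\phi$ between finite substructures $S, S'$, the fact that $\phi$ preserves $1_\bA$ forces it to map fibers to fibers; it therefore induces an isomorphism between the finite substructures of $\fD$ obtained as the quotients $\faktor{S}{1_\bA}$ and $\faktor{S'}{1_\bA}$, which extends to some $\sigma \in \Aut(\fD)$ by homogeneity of $\fD$, and, inside each fiber, an isomorphism between finite substructures of $\fC$, which extends to some $\tau_u \in \Aut(\fC)$ by homogeneity of $\fC$ (taking $\tau_u$ to be the identity on the unused fibers). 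The resulting wreath-product element extends $\phi$. The main obstacle, already present in (i), is the careful bookkeeping of how compositions of atoms behave in $\bC$; once the three rules recorded above are in place, both the decomposition in (i) and the assembly in (iii) are routine.
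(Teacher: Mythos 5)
Your proposal follows essentially the same route as the paper's proof: (ii) by direct inspection of the defining formulas; (i) by splitting a consistent atomic $\bA[\bB]$-network along the equivalence relation $1_\bA$ (Remark~\ref{rem:equ_relation}), solving the classes in $\fC$ and the contraction in $\fD$, and reassembling; and (iii) by extending a partial isomorphism fiberwise via homogeneity of $\fC$ and on the quotient via homogeneity of $\fD$ --- your wreath-product packaging is only cosmetically different from the paper's $\alpha := \bigcup_{i \in I} g_i$. In the setting where $\fC$ and $\fD$ are square (the only setting in which Theorem~\ref{thm:2-cylce-repr} guarantees that $\fD[\fC]$ is a representation at all, and the setting in which the paper applies the lemma), your arguments are correct and in fact more detailed than the paper's: the verifications that $\sim$ is an equivalence relation, that its classes carry consistent atomic $\bA$-networks, and that the contraction is a well-defined consistent atomic $\bB$-network are all carried out correctly from the cycle structure of $\bA[\bB]$.

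There is, however, one genuine error: your claim that the formulas of Theorem~\ref{thm:2-cylce-repr} define a representation for \emph{arbitrary} representations $\fC$, $\fD$, with squareness needed only for the existence of $a$-successors (which you correctly derive from $\id \leq a \circ \breve{a}$). This is false when $\fC$ is not square. Fix a diversity atom $b \in B_0$. The pairs of $\fD[\fC]$ with equal first coordinate that lie in $(b \circ \breve{b})^{\fD[\fC]}$ are exactly those whose second coordinates satisfy $(v_0,v_1) \in (1_\bA)^{\fC}$: every $\bA$-atom relation $a^{\fD[\fC]}$ requires $(v_0,v_1) \in a^{\fC} \subseteq (1_\bA)^{\fC}$, and $\bB$-diversity atoms contribute no equal-first-coordinate pairs. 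On the other hand, $b^{\fD[\fC]} \circ \breve{b}^{\fD[\fC]}$ contains $((u,v_0),(u,v_1))$ for \emph{all} $v_0,v_1 \in C$ as soon as $u$ has an outgoing $b$-edge in $\fD$. So if $\fC$ has more than one square component, the composition axiom for $b \circ \breve{b}$ fails (equivalently, $1^{\fD[\fC]}$ is not transitive). Concretely: take $\bA = \ra{1}{2}$, $\bB = \ra{2}{2}$ (so that $\bA[\bB] = \ra{3}{7}$), $\fD = K_\omega^b$, and $\fC$ the disjoint union of two copies of $K_2^a$, which is a fully universal but non-square representation of $\ra{1}{2}$; then $\fD[\fC]$ is not a representation of $\ra{3}{7}$. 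The non-square case therefore cannot be rescued in the way you suggest; part (i) must be read, as the paper implicitly does, with $\fC$ and $\fD$ square (the paper's own proof also uses this silently, when it asserts that each class of $(1_\bA)^{\fD[\fC]}$ induces a fully universal representation of $\bA$).
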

\begin{proof}
    (i): Suppose that $\fC$ and $\fD$ are fully universal. 
    Let $N = (V, f)$ be a consistent atomic $\bA[\bB]$-network. 
    By Remark~\ref{rem:equ_relation} we know that $1_\bA$ is an equivalence relation of $\bA[\bB]$. Then $(1_\bA)^N$ is an equivalence relation on $V$. Let $C_1, \dots, C_k$ be its equivalence classes. 
    Then $N_i := (C_i,f|_{C_i})$ is a consistent atomic $\bA$-network. The relation $(1_\bA)^{\fD[\fC]}$ is also an equivalence relation and each class induces a fully universal representation $\fC_i$ of $\bA$. So each $N_i$ can be satisfied in a class of $(1_\bA)^{\fD[\fC]}$.

    By Remark~\ref{rem:equ_relation} 
    the contraction $\faktor{N}{1_\bA}$
    is well-defined. This contraction can be regarded as a consistent atomic $\bB$-network. Since $\fD$ is fully universal we find suitable equivalence classes of $(1_\bA)^{\fD[\fC]}$ such that the contraction is satisfiable. So $\fD[\fC]$ is fully universal. 

    (ii): Let $(u_0, v_0), (u_1, v_1) \in D \times C$. If $(u_0, v_0) = (u_1, v_1)$ then $((u_0, v_0), (u_1, v_1)) \in \id^{\fD[\fC]}$. If $u_0 \neq u_1$ then $(u_0, u_1) \in b^\fD$ for some $b \in B_0 \setminus \{\id\}$ since $\fD$ is square. Then $((u_0, v_0), (u_1, v_1)) \in b^{\fD[\fC]}$. Finally, assume that $u_0 = u_1$ and $v_0 \neq v_1$. Then for some $a \in A_0 \setminus \{\id\}$ we have $(v_0, v_1) \in a^\fC$ since $\fC$ is square, and hence $((u_0, v_0), (u_1, v_1)) \in a^{\fD[\fC]}$.
    
    (iii): It remains to show that $\fD[\fC]$ is homogeneous. Let $f$ be a partial isomorphism of $\fD[\fC].$ First, note that $f$ preserves the relations $a^{\fD[\fC]}$ for $a \in A,$ in particular it preserves the equivalence classes $(C_i)_{i \in I}$ of $(1_\bA)^{\fD[\fC]}.$
    Each of these classes is isomorphic to the normal representation $\fC$ of $\bA,$ which is homogeneous. Thus, for each $i \in I,$ there is at most one $j \in I$ with $f(C_i) \cap C_j \neq \varnothing,$ and the restriction of $f$ to $C_i$ can be extended to an isomorphism $g_i$ between $C_i$ and $C_j.$ Moreover, since $\fB$ is square, there is no $i^\prime \neq i$ with $f(C_{i^\prime}) \cap C_j \neq \varnothing.$ Hence, $f$ induces a partial isomorphism on the contracted structure $\faktor{\fD[\fC]}{1_\bA},$ which is isomorphic to the homogeneous structure $\fD,$ and can thus be extended to an automorphism $h$ of $\faktor{\fD[\fC]}{1_\bA}.$
    If $\mathrm{dom}(f) \cap C_i = \varnothing,$ let $g_i$ be a function which maps $C_i$ isomorphically to $h(C_i).$ It is easy to see that the function $\alpha := \bigcup_{i \in I} g_i$ is an automorphism of $\fD[\fC]$ that extends $f.$
\end{proof}

To obtain a relation algebra from $\ra{1}{37}$--$\ra{37}{37}$ as a 2-cycle product, we have to multiply an algebra from $\ra{1}{3}$--$\ra{3}{3}$ with $\ra{1}{2}$ or with $\ra{2}{2}$. See the first three lines of Figure~\ref{fig:2-prod-4atom} for all such products. Since all these algebras have a normal representation (see \eqref{eq:norm_all}), the 2-cycle product has a normal representation as well. A description of these representations can be found in the first twelve entries of Table~\ref{tab:overview_asymmetric}. 

To obtain a relation algebra from $\ra{1}{65}$--$\ra{65}{65}$ as a 2-cycle product, we can restrict ourselves to the case where we multiply algebras from $\ra{1}{7}$--$\ra{7}{7}$ with $\ra{1}{2}$ or with $\ra{2}{2}$, see the last seven lines of Table~\ref{fig:2-prod-4atom}. Except for $\ra{5}{7}$, all factors have a normal representation, so the corresponding products have a normal representation. One can find descriptions of the representation in the first 20 rows of Table~\ref{tab:overview_symmetric}. In the following proposition we consider the four relation algebras with $\ra{5}{7}$ as a factor.

\begin{prop}\label{prop:2-cycle-no-fu}
    The relation algebras $\ra{9}{65}$, $\ra{12}{65}$, $\ra{15}{65}$, and $\ra{17}{65}$ do not have a fully universal representation. 
\end{prop}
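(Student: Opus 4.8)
The plan is to exploit that each of these four algebras is a $2$-cycle product having $\ra{5}{7}$ as a factor, and to lift the \emph{evil square} (Figure~\ref{fig:evil_square}) from $\ra{5}{7}$ to the product. Recall from Example~\ref{example:5_7_not_fu} that the evil square is a consistent atomic $\ra{5}{7}$-network which is unsatisfiable in the representation $\Z_5$; since $\Z_5$ is the unique, and in particular universal, representation of $\ra{5}{7}$, the evil square is unsatisfiable in \emph{every} representation of $\ra{5}{7}$. As a fully universal representation satisfies every consistent atomic network, it suffices to produce, for each of the four algebras $\bC$, a single consistent atomic $\bC$-network that is satisfiable in no representation of $\bC$.

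Write $\bC = \bA[\bB]$, where exactly one of the two factors is $\ra{5}{7}$: it is the coarse factor $\bB$ for $\ra{15}{65}$ and $\ra{17}{65}$, and the fine factor $\bA$ for $\ra{9}{65}$ and $\ra{12}{65}$. The first step is a structural observation read off directly from Definition~\ref{def:2cycl}: the composition of two $\ra{5}{7}$-atoms in $\bC$ agrees with their composition in $\ra{5}{7}$. Indeed, the only triples of $\Cy(\bC)$ beyond $\Cy(\bA) \cup \Cy(\bB)$ are the cross-triples $(a,b,b)$ with $a \in A_0 \setminus \{\id\}$ and $b \in B_0 \setminus \{\id\}$, and these never have both of their first two coordinates among the $\ra{5}{7}$-atoms. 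Labelling the four vertices of the evil square by the corresponding $\ra{5}{7}$-atoms of $\bC$ therefore yields a network $N$ all of whose triangle checks reduce to consistency checks already valid in $\ra{5}{7}$; hence $N$ is a consistent atomic $\bC$-network.

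It remains to show that $N$ is unsatisfiable in every representation of $\bC$. As $\bC$ is integral, hence simple, Lemma~\ref{lem:decomp} and the argument of Corollary~\ref{cor:simple-implies-square-rep} let us pass to a square representation $\fR$, in which $1_\bA^\fR$ is an equivalence relation on the domain by Remark~\ref{rem:equ_relation}. Suppose $s$ satisfied $N$. If $\ra{5}{7} = \bB$ is coarse, then every off-diagonal label of $N$ is a $\bB$-diversity atom, hence disjoint from $1_\bA$, so the four values of $s$ lie in four distinct $1_\bA^\fR$-classes; projecting to the contraction $\faktor{\fR}{1_\bA}$, a representation of $\bB = \ra{5}{7}$, the induced assignment would satisfy the evil square there, a contradiction. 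If instead $\ra{5}{7} = \bA$ is fine, then every off-diagonal label is an $\bA$-diversity atom below $1_\bA$, so the four values of $s$ lie in a single $1_\bA^\fR$-class, on which $\fR$ induces a representation of $\bA = \ra{5}{7}$ satisfying the evil square, again a contradiction. In either case no such $s$ exists.

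The points I expect to require the most care are the two claims used in the last paragraph: that in an \emph{arbitrary} square representation of $\bC$ the contraction $\faktor{\fR}{1_\bA}$ is a representation of $\bB$, and dually that each $1_\bA^\fR$-class carries a representation of $\bA$. Both are consequences of the composition-preservation observation of the second paragraph --- it makes the relevant $\bB$- or $\bA$-consistency of the projected structure coincide with the $\bC$-consistency already present in $\fR$ --- but they have to be proved directly rather than quoted from Theorem~\ref{thm:2-cylce-repr}, which constructs only one particular representation of $\bC$ and says nothing about the others. Granting them, $N$ is a consistent atomic $\bC$-network satisfiable in no representation of $\bC$, so none of $\ra{9}{65}$, $\ra{12}{65}$, $\ra{15}{65}$, $\ra{17}{65}$ has a fully universal representation.
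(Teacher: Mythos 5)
Your overall strategy is the same as the paper's: lift the evil square to a consistent atomic network over the product, then restrict to a $1_\bA$-class (fine case) or pass to the contraction $\faktor{\fR}{1_\bA}$ (coarse case) to obtain a representation of $\ra{5}{7}$, which must be $\Z_5$ and hence cannot satisfy the square. However, the single piece of reasoning you offer in support of the two lemmas everything rests on is wrong, and it is wrong exactly in the cases $\ra{15}{65}$ and $\ra{17}{65}$. The observation ``the composition of two $\ra{5}{7}$-atoms in $\bC$ agrees with their composition in $\ra{5}{7}$'' fails when $\ra{5}{7}$ is the \emph{coarse} factor: $\Cy(\bC)$ is closed under the cycle law (Proposition~\ref{prop:cyclelaw}), so the cross-cycle $(a,b,b)$ (one fine diversity atom, two equal coarse diversity atoms) forces $(b,b,a) \in \Cy(\bC)$, i.e., $a \leq b \circ b$ in $\bC$. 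Concretely, in $\ra{15}{65}$ (coarse atoms named $a,c$, fine atom $b$) the triple $baa$ is allowed by Table~\ref{tab:overview_symmetric}, so $a \circ a = \id \cup b \cup c$ there, whereas the corresponding composition in $\ra{5}{7}$ is $\id \cup c$. So cross-cycles \emph{do} have both of their first two coordinates among the $\ra{5}{7}$-atoms; your parenthetical argument overlooks the Peircean transforms of $(a,b,b)$.

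This error is harmless for the consistency of the lifted square (that only needs $\Cy(\ra{5}{7}) \subseteq \Cy(\bC)$), and it is harmless in the fine-factor case $\ra{9}{65}$, $\ra{12}{65}$ (no cross-cycle contains two fine atoms, so composition of $\bA$-atoms really is preserved in $\bC$, and any witness for $a_1^\fR \circ a_2^\fR$ stays inside the class because $a_1 \leq 1_\bA$). But for $\ra{15}{65}$ and $\ra{17}{65}$ it invalidates your stated justification for the claim that $\faktor{\fR}{1_\bA}$ is a representation of $\ra{5}{7}$ --- and you then explicitly ``grant'' that claim rather than prove it, even though it is the entire technical content of the proposition for these two algebras. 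A correct proof must confront precisely the discrepancy you deny: a relational witness for $b^\fR \circ b^\fR$ may lie in the same $1_\bA$-class as the endpoints, and one has to check that such collapsed witnesses are exactly accounted for by $\id \leq b \circ b$ in $\ra{5}{7}$, and conversely that compositions prescribed by $\ra{5}{7}$ admit witnesses in the contraction. This can be done (the claim is true), but as written your proof has a hole at the step you yourself flag as the most delicate; the paper discharges it by invoking the argument of \cite[Theorem 4.5]{AndrekaMaddux}, which shows that the class, respectively the contraction, is isomorphic to the unique representation of $\ra{5}{7}$.
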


\begin{proof}
    Let $\fC$ be a representation of a finite relation algebra $\bB \in \{\ra{9}{65}, \ra{12}{65}, \ra{15}{65}, \ra{17}{65}\}$. 
    First consider the case that $\bB$ is of the form $\bB = \ra{5}{7}[\bA]$ for some relation algebra $\bA$.  Then $1_\bA$ is an equivalence relation of $\bB$ and $(1_\bA)^{\fC}$ is an equivalence relation on $C$. Consider any equivalence class $C$ of $(1_\bA)^\fC$. By an analogous argument as in \cite[Theorem 4.5]{AndrekaMaddux} one can show that the induced substructure of $\fC$ by $C$ is isomorphic to the unique representation of $\ra{5}{7}$ (see Example~\ref{ex:5_7Part1}). After renaming the edges accordingly, it follows that the network in Figure~\ref{fig:evil_square} is not satisfiable. This settles the cases of $\ra{9}{65} = \ra{5}{7}[\ra{1}{2}]$ and $\ra{12}{65} = \ra{5}{7}[\ra{2}{2}]$.

    Now consider the case where $\bB = \bA[\ra{5}{7}]$. Then the contraction $\faktor{\fC}{(1_\bA)^\fC}$ is well-defined. Again, by the same argument as in \cite[Theorem 4.5]{AndrekaMaddux}, it follows that this contraction is isomorphic to the unique representation of $\ra{5}{7}$. So with adequate naming, the atomic network shown in  Figure~\ref{fig:evil_square} is again not satisfiable. This settles the cases of $\ra{15}{65} = \ra{5}{7}[\ra{1}{2}]$ and $\ra{17}{65} = \ra{5}{7}[\ra{2}{2}]$. Note that in these cases the atoms of $\ra{5}{7}$ are named $a$ and $c$ in Table~\ref{tab:overview_symmetric}.
\end{proof}

\begin{figure}
\centering
\begin{tabular}{|L|L|L|L|L|}\hline
     & \multicolumn{2}{C|}{\ra{1}{2}} & \multicolumn{2}{C|}{\ra{2}{2}} \\ \hline
    \bA & \ra{1}{2}[\bA] & \bA[\ra{1}{2}] & \ra{2}{2}[\bA] & \bA[\ra{2}{2}] \\ \hline \hline
    \ra{1}{3} & \ra{1}{37} & \ra{7}{37} & \ra{2}{37} & \ra{8}{37} \\ \hline
    \ra{2}{3} & \ra{3}{37} & \ra{9}{37} & \ra{4}{37} & \ra{10}{37} \\ \hline
    \ra{3}{3} & \ra{5}{37} & \ra{11}{37} & \ra{6}{37} & \ra{12}{37} \\ \hline \hline
    \ra{1}{7} & \ra{1}{65} & \ra{1}{65} &  \ra{2}{65} & \ra{5}{65} \\ \hline
    \ra{2}{7} & \ra{3}{65} & \ra{2}{65} &  \ra{4}{65} & \ra{6}{65} \\ \hline
    \ra{3}{7} & \ra{5}{65} & \ra{3}{65} &  \ra{6}{65} & \ra{7}{65} \\ \hline
    \ra{4}{7} & \ra{7}{65} & \ra{4}{65} &  \ra{8}{65} & \ra{8}{65} \\ \hline
    \ra{5}{7} & \ra{15}{65} & \ra{9}{65} &  \ra{17}{65} & \ra{12}{65} \\ \hline
    \ra{6}{7} & \ra{16}{65} & \ra{10}{65} &  \ra{18}{65} & \ra{13}{65} \\ \hline
    \ra{7}{7} & \ra{19}{65} & \ra{11}{65} &  \ra{20}{65} & \ra{14}{65} \\ \hline
\end{tabular}
\caption{Not-so-small relation algebras formed by 2-cycle products of small relation algebras.}
\label{fig:2-prod-4atom}
\end{figure}

\subsection{Fully Universal Representations}
\label{sect:FU} 
We are now left with relation algebras that are representable (as we will see), but that do not have a normal representation. 
In this section we investigate which of them have at least a fully universal representation, using Corollary~\ref{cor:fu}. 

We start with the relation algebra $\ra{13}{37}$, which is also known as the \emph{left linear point algebra}~\cite{Duentsch,BodirskyKutzAI}. 

\begin{prop}
\label{prop:13_37-repr}
    $\ra{13}{37}$ has a fully universal square representation.
\end{prop}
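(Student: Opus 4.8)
The plan is to apply Corollary~\ref{cor:fu}: to obtain a fully universal \emph{square} representation it suffices to show that the class $\mathcal C$ of all consistent atomic $\ra{13}{37}$-networks has the JEP and $\AP(3,2,n)$ for every $n$. First I would fix a convenient combinatorial picture of $\mathcal C$. Writing the four atoms as $\id$, the symmetric diversity atom $a$, and the asymmetric pair $r,\breve r$, the composition table of $\ra{13}{37}$ gives $r\circ r = r$, $\breve r \circ r = \id \cup r \cup \breve r = \overline{a}$, and $r \circ \breve r = 1$. Reading an atomic network as an oriented graph (with $r$ the edge relation and $a$ the non-edge relation, as in Section~\ref{sect:normal}), a direct check on triangles shows that a network is consistent if and only if $r$ is a strict partial order (transitivity from $r\circ r = r$, irreflexivity and antisymmetry from atomicity) in which the successor set $\{y : (x,y)\in r\}$ of every vertex $x$ is linearly ordered by $r$; this last condition is exactly the exclusion forced by $\breve r \circ r = \overline{a}$ (two successors of a common vertex cannot be incomparable). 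Thus $\mathcal C$ is, up to identity-atom bookkeeping, the class of finite \emph{left-linear orders}.

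For the JEP I would take two left-linear orders on disjoint vertex sets and join them by declaring every cross pair to be an $a$-edge. No new $r$-edge is created, so every successor set is unchanged and the result is again a strict partial order with linearly ordered successor sets; the only new triangles have one vertex in each part, and these are allowed because $a \le s \circ a$ and $s \le a \circ a = 1$ hold for every atom $s$. Hence $\mathcal C$ has the JEP, which is what upgrades the eventual representation to a square one via Theorem~\ref{thm:fu}.

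The main work is $\AP(3,2,n)$, the one-point amalgamation: given a consistent network $N=(V,f)$, marked vertices $u,v$, and a new vertex $w$ whose atoms $f(w,u),f(w,v)$ make the triangle $\{u,v,w\}$ consistent, I must extend $f$ to $f(w,x)$ for all $x\in V$ keeping consistency. I would place $w$ as follows: let its up-set $U_w$ be the upward closure in $V$ of the prescribed strict-greater neighbours among $\{u,v\}$, let its down-set $D_w$ be the downward closure of the prescribed strict-smaller neighbours together with every vertex forced below $w$ by the successor-chain condition, and make $w$ incomparable to all remaining vertices. The key point is that consistency of the prescribed triangle already rules out the obstructions: if both $u,v$ were prescribed above $w$ then $f(u,v)\le \breve r\circ r = \overline{a}$ forces them comparable, so $U_w$ is a chain (every principal up-set of a left-linear order is a chain); and whenever a vertex would be forced onto a successor-chain through $w$ while being incomparable to $w$, one checks that the triangle $\{u,v,w\}$ (via $r\circ r = r$ or $\breve r\circ r=\overline{a}$) is already inconsistent and hence excluded from the input.

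I expect the main obstacle to be exactly this global verification: showing that the placement never destroys the successor-chain condition at an old vertex and that $U_w,D_w$ can always be completed consistently. This is where the left-linear point algebra departs from an ordinary linear order, and it is precisely the point where \emph{two}-point amalgamation fails (the counterexample $(a,a,a),(r,a,r),(a,r,r)$ of Figure~\ref{tab:amalgamation-failure}, an evil-square-type obstruction forcing two incomparable vertices below a common successor to become comparable): with a single new point $w$ we retain enough freedom to route $w$ comparably along every relevant chain, whereas two new points can be mutually pinned. Once JEP and $\AP(3,2,n)$ are established, Corollary~\ref{cor:fu} yields the claimed fully universal square representation of $\ra{13}{37}$.
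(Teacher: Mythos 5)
Your framework (Corollary~\ref{cor:fu}: JEP plus $\AP(3,2,n)$), your JEP argument, your composition facts, and your picture of consistent atomic $\ra{13}{37}$-networks as strict partial orders whose successor sets are chains are all correct and match the paper. The genuine gap is in the $\AP(3,2,n)$ step, and it is not merely the ``global verification'' you defer at the end --- the placement rule itself is wrong. Your rule only ever places old vertices above $w$ when they lie in the upward closure of a prescribed strictly-greater neighbour; every other vertex that must be comparable to $w$ is either pushed into $D_w$ or (on the strict reading of ``forced below'') left incomparable. Consider the consistent network $V=\{u,v,t\}$ with $u\to t$, $v\to t$ and $f(u,v)=a$ (a shared successor of two incomparable vertices is allowed), together with the prescription $f(u,w)=a$ and $v\to w$; this is a legitimate amalgamation instance, in the paper's notation the case $(p,q)=(a,\breve{r})$, since $f(u,v)=a\leq a\circ\breve{r}$. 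The successor-chain condition at $v$ (successors $w$ and $t$) forces $t$ to be comparable to $w$, and $t$ cannot go below $w$: from $u\to t\to w$, transitivity (i.e., $a\not\leq r\circ r$) would force $u\to w$, contradicting the prescribed $a$-edge between $u$ and $w$. Hence $w\to t$ is the \emph{only} consistent completion; but your rule can never produce it, because here $U_w$ is the upward closure of the empty set. On the loose reading, your rule puts $t$ into $D_w$, and the downward closure required by transitivity then drags $u$ below $w$ as well (or, without that closure, leaves the forbidden configuration $u\to t\to w$ with $f(u,w)=a$ in place); on the strict reading, $t$ stays incomparable to $w$ and the chain condition at $v$ is violated. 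Either way the construction breaks, so this is not a verification that can be postponed --- the placement has to be changed.

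The missing idea is exactly what the paper does in these cases: when a marked vertex is prescribed strictly below the new vertex, make the new vertex a \emph{copy} of a suitable marked vertex instead of routing forced vertices downwards. In the case $(p,q)=(a,\breve{r})$ the paper sets $g(z,v'):=f(y,v')$ for all other $v'$, so that everything above $y$ is placed above $z$ (in the counterexample, $w\to t$); the cases $(r,r)$ and $(\breve{r},\breve{r})$ are likewise handled by copying a marked vertex, and $(r,\breve{r})$ by placing $z$ above the entire network. Your remaining cases ($(a,a)$, $(a,r)$, $(\breve{r},r)$) do agree with the paper's constructions, but without the copying idea for the ``prescribed below'' cases the amalgamation argument, and with it the whole proof, fails.
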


\begin{proof}
    We verify the conditions of Corollary~\ref{cor:fu}. 
    For any two disjoint consistent atomic $\ra{13}{37}$-networks, we may connect all vertices by $a$ and obtain another consistent atomic $\ra{13}{37}$-network. So $\ra{13}{37}$ has the JEP.

    To verify $\AP(3,2,n)$,
    let $(V,f)$ be a reduced consistent atomic $\ra{13}{37}$-network, and let $x,y \in V$. 
    We have to verify that for all atoms $p,q$ of $\ra{13}{37}$ different from $\id$
    such that $f(x,y) \leq p \circ q$ 
    we can find a consistent atomic network
    $(V \cup \{z\},g)$, for some new node $z \notin V$, such that $g|_{V^2} = f$, 
    $g(x,z) = p$, and $g(z,y) = q$. 
    Without loss of generality we can assume that if $a\in \{p,q\}$ then $p=a$. Let $W := V \setminus \{x,y\}$.
    \begin{itemize}
        \item $p=q=a$. Define $g(z,v) = a$ for all $v \in W$.
        \item $(p,q) \in \{(a,\breve{r}), (r, r)\}$. Define $g(z,v) = f(y,v)$ for all $v \in W$.
        \item $(p, q) = (r, \breve{r})$. Set $g(z, v) = \breve{r}$ for all $v \in W$.
        \item $(p, q) = (a, r)$. For all $v \in W$, define $g(z, v) = r$ if $f(y, v) = r$ and $g(z, v) = a$ otherwise.
        \item $(p, q) = (\breve{r}, \breve{r})$. For all $v \in W$, we set $g(z, v) = f(x, v)$.
        \item $(p,q) = (\breve{r},r)$. For all $v \in W$, define $g(z, v) = r$ if $r\in \{f(x, v),f(y,v)\}$ and $g(z, v) = a$ otherwise.
    \end{itemize}
    In each case it is straightforward to check that 
    $g(u,v) \leq g(u,z) \circ g(z,v)$ for all $u,v \in V$, and hence that
    the resulting atomic network is consistent.
\end{proof}

The reduced consistent atomic networks of $\ra{30}{37}$ can be viewed as oriented graphs that do not embed the oriented graph $(\{0,1,2\}; \{(0,1),(0, 2)\})$. 

\begin{prop}
\label{prop:30_37-repr}
    $\ra{30}{37}$ has a fully universal square representation. 
\end{prop}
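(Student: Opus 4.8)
The plan is to verify, via Corollary~\ref{cor:fu}, that the class $\mathcal C$ of (reduced) consistent atomic $\ra{30}{37}$-networks has the JEP and $\AP(3,2,n)$ for all $n$. I work throughout in the oriented-graph picture recalled above, where the atom $r$ is the edge relation and $a$ the non-edge relation. A direct inspection of the composition table of $\ra{30}{37}$ shows that among all three-element oriented graphs the only inconsistent one is the out-star $(\{0,1,2\};\{(0,1),(0,2)\})$; since consistency of an atomic network is a condition on triangles, this means that a network is consistent precisely when its oriented graph embeds no out-star. Equivalently, $(V,f)$ is consistent iff for every vertex $u$ the out-neighbourhood $N^+(u)=\{v\mid f(u,v)=r\}$ is a tournament (any two out-neighbours of $u$ are adjacent). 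This reformulation is what I would use at every step.

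For the JEP, given two disjoint consistent networks I join them by labelling every crossing pair with $a$. Then the out-neighbourhood of any vertex stays inside its own part, so no out-star can cross the two parts and none is created; hence the join is consistent. This establishes the JEP, which together with $\AP(3,2,n)$ yields a fully universal \emph{square} representation by the second item of Corollary~\ref{cor:fu}.

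The bulk of the work is $\AP(3,2,n)$, which I would treat following the template of Proposition~\ref{prop:13_37-repr}. Take a reduced consistent network $(V,f)$, two vertices $x,y\in V$, and diversity atoms $p,q$ with $f(x,y)\le p\circ q$; I must adjoin a new vertex $z$ with $g(x,z)=p$, $g(z,y)=q$, and define $g(z,v)$ for $v\in W:=V\setminus\{x,y\}$ so that no out-star is created. Since $(V,f)$ is already out-star-free, any new out-star must involve $z$, either as its centre (two non-adjacent out-neighbours of $z$) or as a leaf (some $w$ with $w\to z$ having an out-neighbour not adjacent to $z$). I would split the nine cases for $(p,q)$ into three recipes according to whether $z$ acquires an in-neighbour among $\{x,y\}$ (that is, whether $p=r$ or $q=\breve r$) and whether the base vertices are adjacent to $z$. \textbf{(i)} In the cases $(a,a),(a,r),(\breve r,a),(\breve r,r)$, where $z$ gets no in-neighbour, set $g(z,v)=a$ for all $v\in W$, isolating $z$ from $W$; the only out-neighbours of $z$ lie in $\{x,y\}$ and are adjacent whenever both occur, using $f(x,y)\le\breve r\circ r=\id\cup r\cup\breve r$ in case $(\breve r,r)$. \textbf{(ii)} In the cases $(r,a)$ and $(a,\breve r)$, where exactly one of $x,y$ is an in-neighbour of $z$ and the other is non-adjacent to $z$, set $g(z,v)=r$ for $v\in N^+(x)$ (resp. $v\in N^+(y)$) and $g(z,v)=a$ otherwise. \textbf{(iii)} In the cases $(r,r),(r,\breve r),(\breve r,\breve r)$, where $z$ has an in-neighbour among $\{x,y\}$ while both base vertices are adjacent to $z$, set $g(z,v)=\breve r$ for every $v\in W$ that is reachable by a directed path in $(V,f)$ from a forced in-neighbour of $z$ ($x$ if $p=r$, $y$ if $q=\breve r$), and $g(z,v)=a$ otherwise.

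In each recipe I would check that the resulting network is out-star-free. The main obstacle is controlling leaf-out-stars: whenever $w\to z$, every out-neighbour of $w$ must be adjacent to $z$. Recipe (iii) is engineered for exactly this, since the in-neighbours of $z$ form a directed-reachable, hence forward-closed, set, so all their out-neighbours in $W$ are again in-neighbours of $z$; the only worry is an edge from such an in-neighbour to a base vertex that is non-adjacent to $z$, and in these three cases both $x$ and $y$ are adjacent to $z$, so this cannot happen. In the two cases of recipe (ii) one base vertex \emph{is} non-adjacent to $z$, which is precisely why in-reachability would fail and one must instead route through $N^+$; here the tournament property of $N^+(x)$ and $N^+(y)$ prevents centre-out-stars, while the hypothesis $f(x,y)\le p\circ q$ guarantees (e.g. $f(x,y)\neq r$ in case $(r,a)$) that the non-adjacent base vertex does not lie in the relevant out-neighbourhood. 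Checking the finitely many triangle types through $z$, together with the triangle $\{x,z,y\}$ (consistent since $f(x,y)\le p\circ q$), is then routine in every case, and completes the verification of $\AP(3,2,n)$.
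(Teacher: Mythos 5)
Your proof is correct, and it follows the same overall strategy as the paper's: both verify the JEP and $\AP(3,2,n)$ via Corollary~\ref{cor:fu}, both work in the oriented-graph picture (consistency $=$ no out-star, i.e., every out-neighbourhood is a tournament), and both proceed by explicit one-point extension recipes. The differences lie in the case decomposition and in the recipes for the fully oriented cases. The paper first reduces to fewer cases by the symmetry that exchanges $x$ and $y$ and replaces $(p,q)$ by $(\breve q,\breve p)$ (so one may assume $p=a$ whenever $a\in\{p,q\}$), while you treat all nine pairs directly. More substantively, for $p,q\in\{r,\breve r\}$ the paper uses one uniform recipe, $g(z,u)=\breve r$ for every $u\in W$: making every old vertex an in-neighbour of $z$ makes the in-neighbour set of $z$ trivially forward-closed, and since both $x$ and $y$ are adjacent to $z$ in these cases (in the case $(\breve r,r)$ one uses $f(x,y)\le\breve r\circ r=\id\cup r\cup\breve r$), the check is immediate. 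You instead split these four cases further, isolating $z$ from $W$ when $(p,q)=(\breve r,r)$ and otherwise taking the in-neighbours of $z$ inside $W$ to be the reachability closure of the forced in-neighbours --- the minimal forward-closed choice where the paper takes the maximal one. Both choices satisfy the same key invariant (forward-closure of the in-neighbour set, plus adjacency of $x$ and $y$ to $z$), so your verification goes through; the paper's maximal choice just minimizes bookkeeping. Your recipe for $(a,\breve r)$ (and its mirror $(r,a)$) coincides with the paper's: route $z$'s out-edges to the out-neighbourhood of the base in-neighbour, using that this out-neighbourhood is a tournament and that $f(x,y)\le a\circ\breve r=a\cup r$ excludes the one problematic edge.
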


\begin{proof} 
    For any two disjoint consistent atomic $\ra{30}{37}$-networks, we may connect all vertices by $a$ and obtain another consistent atomic $\ra{30}{37}$-network. So $\ra{30}{37}$ has the JEP.

    Let $(V,f)$ be a reduced consistent atomic $\ra{30}{37}$-network, and let $x,y \in V$. 
    We have to verify that for all atoms $p,q$ of $\ra{30}{37}$ different from $\id$
    such that $f(x,y) \leq p \circ q$ 
    we can find a consistent atomic network
    $(V \cup \{z\},g)$, for some new node $z \notin V$, such that $g|_{V^2} = f$, 
    $g(x,z) = p$, and $g(z,y) = q$. Without loss of generality we can assume that if $a\in \{p,q\}$ then $p=a$.

    We prove the statement by considering the following cases. Define $W := V \setminus \{x,y\}$.
    \begin{itemize}
        \item If $p=q=a$ then define $g(z,u):=a$ for all $u \in W$
        \item If $p,q \in \{r, \breve{r}\}$, then define $g(z,u):=\breve{r}$ for all $u \in W$.
        \item If $(p,q)  = (a,r)$, then define $g(z,u):= a$ for all $u \in W$.
        \item If $(p,q)  = (a,\breve{r})$, we let $X$ be the set of all vertices $v \in W$ such that $f(y,v) = r$. Observe that $f(u,v) \neq a$ for all $u,v \in X$. Define $g(z,v) = r$ for all $v \in X$ and $g(z,v) = a$ for all $v \in W \setminus X$.
    \end{itemize}
    In each of the cases it is easy to check that the resulting atomic network is consistent. 
\end{proof} 

The consistent atomic networks of the relation algebra $\ra{24}{65}$ can be viewed as edge-3-colourings of cliques that avoid `rainbows', i.e., triangles that carry all three colors. 

\begin{prop}\label{prop:24_65-repr}
    $\ra{24}{65}$ has a fully universal square representation.
\end{prop}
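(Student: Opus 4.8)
The plan is to verify the two conditions of Corollary~\ref{cor:fu}: that the class $\mathcal C$ of all consistent atomic $\ra{24}{65}$-networks has the JEP and $\AP(3,2,n)$ for every $n$, working throughout with the description of consistent atomic networks as rainbow-free edge-$3$-colourings of cliques (the three diversity atoms $a,b,c$ as colours, $\id$ as the diagonal). First I would read off the composition table from this description: a triple of diversity atoms is allowed exactly when it uses at most two of the three colours, so $a\circ a=b\circ b=c\circ c=1$, while $p\circ q=p\cup q$ whenever $p\neq q$ are diversity atoms. This is the only computational input and it is routine, but it is worth recording because the shape of $p\circ q$ is exactly what drives the amalgamation argument.

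For the JEP, given two disjoint consistent atomic networks I would colour every edge between them by a single fixed colour, say $a$. Any new triangle has at least two of its vertices in one part, hence at least two of its edges are monochromatic cross-edges; so it uses at most two colours and is rainbow-free. Thus the union is again a consistent atomic network, and $\mathcal C$ has the JEP.

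For $\AP(3,2,n)$ I would take a reduced consistent atomic network $(V,f)$, two vertices $x,y\in V$, and diversity atoms $p,q$ with $f(x,y)\le p\circ q$, and add a new vertex $z$ with $g(x,z)=p$ and $g(z,y)=q$; the task is to choose $g(z,v)$ for $v\in W:=V\setminus\{x,y\}$ so that no triangle through $z$ is a rainbow. I would split into two cases according to the table. If $p=q$, set $g(z,v)=p$ for all $v\in W$; then $z$ is joined to all of $V$ by the single colour $p$, so every triangle through $z$ carries two $p$-edges and is rainbow-free, and $f(x,y)\le p\circ p = 1$ holds automatically. If $p\neq q$, then $p\circ q=p\cup q$ forces $f(x,y)\in\{p,q\}$; assuming $f(x,y)=p$ (the case $f(x,y)=q$ being symmetric, swapping the roles of $x$ and $y$), I would make $z$ a twin of $y$ on $W$ by setting $g(z,v)=f(y,v)$.

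The crux is checking rainbow-freeness in this twin construction, and here the forced value $f(x,y)=p$ is precisely what is needed. The triangle $z,x,y$ carries colours $\{p,q,f(x,y)\}=\{p,q\}$; each triangle $z,y,v$ carries $\{q,f(y,v)\}$; each triangle $z,u,v$ with $u,v\in W$ carries $\{f(y,u),f(y,v),f(u,v)\}$, i.e. the colours of the pre-existing triangle $y,u,v$; and each triangle $z,x,v$ carries $\{p,f(y,v),f(x,v)\}=\{f(x,y),f(y,v),f(x,v)\}$, i.e. the colours of the pre-existing triangle $x,y,v$. In every case the triangle is either two-coloured or a copy of a triangle already present in the rainbow-free network $(V,f)$, so no rainbow appears and $(V\cup\{z\},g)$ is consistent. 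Since $\mathcal C$ then has both the JEP and $\AP(3,2,n)$ for all $n$, Corollary~\ref{cor:fu} yields a fully universal square representation. The only real obstacle is bookkeeping: making sure the composition table is read off correctly so that the split $p=q$ versus $p\neq q$ is exhaustive, and that the forced constraint $f(x,y)\in\{p,q\}$ is genuinely available in the second case.
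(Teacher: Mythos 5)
Your proposal is correct and follows essentially the same route as the paper: JEP via a monochromatic join, and AP$(3,2,n)$ via the case split $p=q$ (monochromatic star at $z$) versus $p\neq q$, where $f(x,y)\in\{p,q\}$ is forced by rainbow-freeness and $z$ is made a twin of one endpoint. The only cosmetic difference is that you twin $z$ with $y$ when $f(x,y)=p$ while the paper twins $z$ with $x$ when $f(x,y)=q$ — mirror images of the same construction — and you additionally spell out the composition table ($p\circ p=1$, $p\circ q=p\cup q$), which the paper uses implicitly.
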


\begin{proof}
    It is clear that the class of all consistent atomic $\ra{24}{65}$-networks has the JEP, since we may add the same atom between all vertices from one 
    consistent atomic $\ra{24}{65}$-network and another one, and in this way clearly avoid rainbows.     
  
    To verify that the class of reduced consistent atomic $\ra{24}{65}$-networks has AP$(3,2,n)$ for all $n \in {\mathbb N}$, 
    let $(V,f)$ be a reduced consistent atomic $\ra{24}{65}$-network, and let $x,y \in V$.
    We have to verify that for all atoms $p,q$ of $\ra{24}{65}$ different from $\id$
    such that $f(x,y) \leq p \circ q$ 
    we can find a consistent atomic network
    $(V \cup \{z\},g)$, for some new node $z \notin V$, such that $g|_{V^2} = f$, 
    $g(x,z) = p$, and $g(z,y) = q$.    
    We distinguish two cases:
    \begin{itemize}
        \item $p=q$. Then 
        define $g(z,v) := p$ for all $v \in V$. 
        Then clearly no triangle containing $z$ is rainbow.
        \item $p \neq q$. Then $f(x,y)\in\{p,q\}$, as otherwise $\{v,x,y\}$ forms a rainbow.  
        Say that $f(x,y) = q$. In this case for every vertex $v \in V \setminus \{x,y\}$, define $g(z,v):= f(x,v)$. 
    \end{itemize}
    We claim that 
    $(V \cup \{z\},g)$ is consistent. Suppose for contradiction that we found a rainbow triangle containing $z$. It cannot contain $x$ (as in every triangle $\{v,x,z\}$ we have that $f(x,v) = f(x,z)$, so it is $\{v,w,z\}$ for some $v,w \in V \setminus \{x\}$. But then this triangle is isomorphic to $\{x,v,w\}$, which cannot be rainbow. 
\end{proof}

In the relation algebras $\ra{30}{65}$ and $\ra{31}{65}$ the relation $c \cup \id$ is an equivalence relation. In every fully universal representation 
every class of $(c \cup \id)^\fB$ has infinitely many elements.

\begin{prop}
\label{prop:30_65-31_65-repr}
    If $\bA \in \{\ra{30}{65}, \ra{31}{65}\}$, then $\bA$ has a fully universal square representation.
\end{prop}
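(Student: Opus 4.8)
The plan is to verify the two conditions of Corollary~\ref{cor:fu} for $\bA \in \{\ra{30}{65}, \ra{31}{65}\}$: the joint embedding property, and $\AP(3,2,n)$ for all $n$. A single argument handles both algebras, since they differ only in triples that play no role below. The guiding structural fact is that $c \cup \id$ is an equivalence relation, so in every reduced consistent atomic network the atom $c$ joins exactly the distinct vertices of a common class, while $a$ and $b$ join vertices of distinct classes; moreover $c \circ c = \id \cup c$ and $a \circ a = 1$, and, crucially, $(b,b,c) \notin \Cy(\bA)$.

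For the JEP I would take two disjoint reduced consistent atomic networks and join every pair of vertices lying in different networks by the atom $a$. Each newly created triangle consists of two fresh $a$-edges and one old edge carrying some atom $t$; since $a \circ a = 1$, every such triangle is allowed, so the union is again a consistent atomic network and the JEP holds.

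For $\AP(3,2,n)$, let $(V,f)$ be a reduced consistent atomic network, let $x,y \in V$, and let $p,q \neq \id$ with $f(x,y) \le p \circ q$; the task is to add a new vertex $z$ with $g(x,z)=p$ and $g(z,y)=q$ and to extend $g$ consistently to $W := V \setminus \{x,y\}$. I would split into cases according to $(p,q)$. When neither $p$ nor $q$ equals $c$, the vertex $z$ lies in a fresh class and I would set $g(z,v):=a$ for all $v \in W$; when exactly one of $p,q$ equals $c$ but the other is not $b$, the vertex $z$ is forced into the class of the corresponding base vertex, and I would set $g(z,v):=c$ for the members of that class and $g(z,v):=a$ otherwise. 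In each of these cases no new $b$-edge is created apart from the prescribed one, and a direct check using $c \circ c = \id \cup c$, $c \circ b = a$, and $a \circ t \ni a$ for every atom $t$ shows that all triangles through $z$ are allowed.

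I expect the delicate case $(p,q)=(b,c)$ (and symmetrically $(c,b)$) to be the main obstacle: here $z$ is forced into the class of $y$ while $g(x,z)=b$. The obstruction is that $W$ may contain a vertex $v$ with $f(y,v)=c$ and $f(x,v)=b$, for which a fresh placement of $z$ would produce the triangle $z,v,x$ with labels $c,b,b$ — forbidden since $(b,b,c) \notin \Cy(\bA)$. The point I would exploit is that such a $v$ is \emph{unique}, since two of them would already form a forbidden $(b,b,c)$-triangle inside $(V,f)$; as this $v$ agrees with the prescribed relations of $z$ to \emph{both} overlap vertices $x$ and $y$, I would realize the extension by $v$ itself, setting $g(z,v):=\id$ and $g(z,w):=f(v,w)$ for the remaining $w$, which yields a consistent (if non-reduced) amalgam. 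If no such $v$ exists, $z$ is added as a genuinely new member of $y$'s class as before, the infiniteness of the classes guaranteeing that there is room to do so. This identification rescue is exactly the phenomenon separating these algebras from the normal ones: it works because in $\AP(3,2,n)$ the new vertex is constrained only on the two-element overlap $\{x,y\}$, whereas over a three-element base — as in the amalgamation diagram of Figure~\ref{fig:2-pt-amal-size-5}, witnessing the failure of the condition in Theorem~\ref{thm:two-point-amalgamation} — a candidate partner may disagree on the third base vertex, so no consistent completion exists.
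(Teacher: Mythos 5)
Your proposal follows the paper's overall strategy---Corollary~\ref{cor:fu}, the JEP by joining two networks with $a$, and one-point extensions organized by the equivalence relation $c \cup \id$---and your first two cases coincide with the paper's construction. The real divergence is your ``delicate'' case $(p,q)=(b,c)$ (resp.\ $(c,b)$), and here your treatment is not merely an alternative: it is a necessary repair. The paper handles every case with $c \in \{p,q\}$ by the same fresh placement ($g(z,v)=c$ on the class of the $c$-endpoint, $g(z,v)=a$ elsewhere), and that construction is inconsistent in exactly the configuration you isolate: in the paper's normalization $p=c$, $q=b$, a vertex $v$ in the class of $x$ with $f(y,v)=b$ gives $g(y,v)=b$ while $g(y,z)\circ g(z,v) = b\circ c = a$, violating consistency; and such a $v$ can occur, since $f(x,y)=a$, $f(x,v)=c$, $f(y,v)=b$ is an allowed (abc-type) triangle. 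Moreover, no atom can label the pair $(z,v)$ at all, because consistency forces $g(z,v) \leq (c\circ c)\cap(b\circ b) = \id$ in both algebras, so the only amalgam is the non-reduced one with $g(z,v)=\id$---precisely your identification rescue. This rescue is legitimate: Corollary~\ref{cor:fu} asks for $\AP(3,2,n)$ in the class of \emph{all} consistent atomic networks, and membership in that class does not require reducedness, so an amalgam with an $\id$-edge between distinct vertices is allowed. Your uniqueness observation (two such $v$'s would span a forbidden $(c,b,b)$-triangle inside $(V,f)$) and your explanation of why the rescue is unavailable over a three-element base, in line with the failure of the condition of Theorem~\ref{thm:two-point-amalgamation} for these algebras, are both correct.

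Two points in your write-up need repair. First, your case split omits $(p,q)=(c,c)$: there $f(x,y)\leq c\circ c = c \cup \id$ and reducedness force $f(x,y)=c$, so $x$ and $y$ share a class and $z$ simply joins it ($g(z,v)=c$ on that class, $a$ elsewhere); the verification is the same as in your second case, using that $f(v,x)=c$ if and only if $f(v,y)=c$. Second, in the branch of the delicate case where no obstruction vertex exists, ``the infiniteness of the classes'' is not the right justification---the networks are finite; what makes the fresh placement work is the triangle check itself: every $v$ in the class of $y$ then satisfies $f(x,v)=a$, so all triangles through $z$ have one of the allowed shapes $(b,c,a)$, $(c,c,c)$, $(c,a,\cdot)$, $(b,a,\cdot)$, or $(a,a,\cdot)$. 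With these two fixes your argument is complete, and it is in fact more careful than the paper's own proof of this proposition.
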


\begin{proof}
    For any two disjoint consistent atomic $\bA$-networks, we may connect all vertices by $a$ and obtain another consistent atomic $\bA$-network. So $\bA$ has the JEP.

    Let $(V,f)$ be a reduced consistent atomic $\bA$-network, and let $x,y \in V$. 
    We have to verify that for all atoms $p,q \in A_0$
    such that $f(x,y) \leq p \circ q$ 
    we can find a consistent atomic network
    $(V \cup \{z\},g)$, for some new node $z \notin V$, such that $g|_{V^2} = f$, 
    $g(x,z) = p$, and $g(z,y) = q$. Without loss of generality we can assume that if $c\in \{p,q\}$ then $p=c$.

    We prove the statement by considering the following cases. Define $W := V \setminus \{x,y\}$.
    \begin{itemize}
        \item If $p\neq c$ (and so $q\neq c$), define $g(z,v) = a$ for all $v \in W$.
        \item If $p = c$, let $X$ be the set of all vertices $v \in W$ such that $f(v,x) = c$ and observe that $f(v,u) = c$ for all $u,v \in X$, and if $v \in X$ and $f(u,v)=c$ for some $u \in W$, then $u \in X$. Define $g(z,v) = c$ for all $v \in X$ and $g(z,v) = a$ for all $v \in W \setminus X$. Note that if $q=c$ then $f(x,y) = c$, and so for all $v \in W$ we have $f(v,x) = c$ if and only if $f(v,y) = c$.
    \end{itemize}
    
    In each case and for each of the algebras $\ra{30}{65}$ and $\ra{31}{65}$ it is straightforward to check that 
    $g(u,v) \leq g(u,z) \circ g(z,v)$ for all $u,v \in V$ and hence
    the resulting atomic network is consistent.
\end{proof}

\subsection{Bounded Square Representations}
\label{sect:bounded-square}
In this section we show the relation algebras $\ra{39}{65}$
and $\ra{62}{65}$ have representations, and that the size of all square representations of these algebras is bounded by some finite number. Via Lemma~\ref{lem:bounded}, this implies hardness
of the respective NSPs (see Proposition~\ref{prop:bounded-hard}). We first need to introduce Ramsey numbers.

\begin{definition}
    Let $c \in \N^+$ and $n_1, \dots, n_c \in \N^+$. The \emph{Ramsey number} $R = R(n_1, \dots, n_c)$ denotes the smallest natural number such that if the edges of a complete graph of order $R$ are colored with $c$ different colors, then for some $i \in \{1, \hdots, c\}$, the graph must contain a complete subgraph of order $n_i$ whose edges are all in color $i$. Ramsey's theorem assures that this number always exists.
\end{definition}

\begin{lemma}
\label{lem:finite_classes}
    Let $\bA \in \RRA$ be a finite relation algebra with an equivalence relation $e \in A$. Furthermore, let $S$ be the set of atoms $a \in A_0$ such that $a \not\leq e \circ e$. If $S \neq \varnothing$ and  for every $a \in S$, the triple $(a,a,a)$ is forbidden, then in every square representation $\fB$ of $\bA$, the number of equivalence classes of $e^\fB$ is bounded by $R(\underbrace{3, \dots, 3}_{|S|~\text{times}}) - 1 < \infty$. 
\end{lemma}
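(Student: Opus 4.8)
The plan is to fix an arbitrary square representation $\fB$ and bound the number of classes of $e^\fB$ by a Ramsey argument on a complete graph whose vertices are a set of class representatives. The crucial trick, which I will explain last, is to orient this complete graph consistently through a linear order, so that a \emph{monochromatic} triangle is automatically a \emph{transitive} one — exactly the configuration that the forbidden triples $(a,a,a)$ rule out.

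First I would record the elementary observation that identifies the relevant colours. Since $e$ is an equivalence relation, $e^\fB$ is an equivalence relation on $B$, and $(e \circ e)^\fB = e^\fB \circ e^\fB$ relates two elements precisely when they lie in a common class. Hence, if $u,v \in B$ lie in \emph{distinct} classes of $e^\fB$, then the unique atom $a \in A_0$ with $(u,v) \in a^\fB$ cannot satisfy $a \le e \circ e$ (otherwise $u$ and $v$ would share a class), so $a \in S$; moreover $a \neq \id$. Thus every pair of distinct classes is joined by an atom from $S$, and $S \neq \varnothing$ guarantees the relevant Ramsey number is defined and finite.

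Next I would set up the Ramsey argument. Choose one representative from each class of $e^\fB$ to obtain a set $T$; the number of classes equals $|T|$, so it suffices to prove $|T| \le R(\underbrace{3,\dots,3}_{|S|}) - 1$. Suppose for contradiction that $|T| \ge R(3,\dots,3)$ and select a subset of exactly that size. I would then fix a linear order $<$ on this subset and colour the complete graph on it by assigning to each edge $\{u,v\}$ with $u<v$ the atom $a \in S$ for which $(u,v) \in a^\fB$; this uses at most $|S|$ colours. By the definition of the Ramsey number there exist vertices $u<v<w$ and an atom $a \in S$ with
\[
(u,v)\in a^\fB,\quad (v,w)\in a^\fB,\quad (u,w)\in a^\fB.
\]
To finish, I would observe that, since $\fB$ is a representation, the pairs $(u,v),(v,w)\in a^\fB$ witness $(u,w) \in a^\fB \circ a^\fB = (a\circ a)^\fB$; as also $(u,w)\in a^\fB$ and the relations $b^\fB$ for distinct atoms $b$ are pairwise disjoint, the atom $a$ must appear among the atoms below $a\circ a$, i.e. $a \le a\circ a$. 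Therefore $(a,a,a) \in \Cy(\bA)$ is allowed, contradicting the hypothesis that $(a,a,a)$ is forbidden for every $a\in S$. Hence $|T| \le R(3,\dots,3)-1$.

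The only genuinely delicate point — and the reason the bound must be phrased exactly this way — is the orientation step. If one instead coloured the \emph{un}oriented complete graph (grouping $a$ with $\breve{a}$), a monochromatic triangle would only guarantee that each edge carries $a$ or $\breve{a}$ in some orientation, and a directed $3$-cycle all of whose arcs are $a$ yields only $\breve{a} \le a\circ a$, which is \emph{not} excluded by $(a,a,a)$ being forbidden (indeed $[a,a,\breve a]=\{(a,a,\breve a),(\breve a,\breve a,a)\}$ is a different cycle). Imposing a single global linear order removes this ambiguity: the edges of any triangle $u<v<w$ are oriented $(u,v),(v,w),(u,w)$, so a monochromatic triangle is forced to be transitive, and precisely the forbidden triple $(a,a,a)$ appears.
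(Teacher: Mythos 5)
Your proof is correct and follows essentially the same route as the paper's: pick one representative per class of $e^\fB$, note that any two representatives are joined by an atom of $S$, colour pairs by these atoms, and use the Ramsey number $R(3,\dots,3)$ to extract a monochromatic triangle contradicting the forbidden triples $(a,a,a)$. The only difference is that you make explicit the orientation subtlety for asymmetric atoms—which the paper handles silently by indexing the representatives and reading the monochromatic triangle as $(x_i,x_j),(x_j,x_k),(x_i,x_k)$ with $i<j<k$—and this is a genuine clarification rather than a new approach, since for asymmetric $a$ a cyclically oriented monochromatic triangle would only yield $\breve{a}\leq a\circ a$, which the hypothesis does not exclude.
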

\begin{proof}
    Let $\fB$ be a square representation of $\bA$ and $e^\fB$ an equivalence relation. It holds that 
    \[n := R(\underbrace{3, \dots, 3}_{|S|~\text{times}})< \infty ,\]
    since $\bA$ is finite. Assume that $e^\fB$ has $n$ classes and choose representatives $x_1, \dots, x_n$ of those. For $1 \leq i < j \leq n$ and every $x_i, x_j$ there exists a relation $a \in S$ such that $(x_i, x_j) \in a^\fB$. If we again interpret the relations as colors then from the definition of the Ramsey number it follows, that there exists an atom $a \in S \subseteq A_0$ such that $x_i,x_j,x_k \in B$ with $(x_i, x_j), (x_j,x_k), (x_i,x_k) \in a^\fB$ exist. However, this contradicts the fact that $(a,a,a)$ is a forbidden triple. So $e^\fB$ cannot have $n$ classes. 
\end{proof}

In~\cite{GG1955} it was proved that $R(3,3,3) = 17$.

\begin{prop}
\label{prop:39_65-repr}
    $\bA = \ra{39}{65}$ has a square representation $\fB$ with 7 elements, but it has no fully universal representation. Moreover, all square representations have at most 16 elements.
\end{prop}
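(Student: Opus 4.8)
The plan is to treat the three assertions separately, relying throughout on the cycle structure of $\ra{39}{65}$. Writing $a,b,c$ for its three diversity atoms, its multiplication table (as recorded in Maddux's list~\cite{Maddux2006-dp}, and matching the amalgamation data of Figure~\ref{tab:amalgamation-failure}) is
$a^2 = \id \cup c$, $b^2 = \id \cup a$, $c^2 = \id \cup b$, $a\circ b = b \cup c$, $a \circ c = a \cup b$, $b \circ c = a \cup c$; equivalently, the allowed triples (up to the symmetry of $\Cy(\ra{39}{65})$) are exactly $\{a,b,b\}$, $\{a,a,c\}$, $\{a,b,c\}$, $\{b,c,c\}$. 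In particular the three monochromatic triples $(a,a,a)$, $(b,b,b)$, $(c,c,c)$ are all forbidden, a fact I will use for both the Ramsey bound and the amalgamation witness.

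For the $7$-element square representation I would take the cyclic Cayley colouring on $\Z_7$: the domain is $\{0,\dots,6\}$, with $a^\fB$ the pairs of difference $\pm 1$, $c^\fB$ the pairs of difference $\pm 2$, $b^\fB$ the pairs of difference $\pm 3$, and $\id^\fB$ the diagonal. Since $\pm1,\pm2,\pm3$ exhaust the nonzero residues this colouring is square and has $7$ elements, so it remains only to verify $p^\fB \circ q^\fB = (p\circ q)^\fB$ for all atoms. This is a short computation in $\Z_7$: for instance $1+1\equiv 2$, $3+3\equiv -1$, $2+2\equiv -3$ give $a^2=\id\cup c$, $b^2=\id\cup a$, $c^2=\id\cup b$, and the three mixed products are checked the same way; one simultaneously confirms that no difference realizes a forbidden product, so the triangle types occurring are precisely $\{a,b,b\},\{a,a,c\},\{a,b,c\},\{b,c,c\}$. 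Thus $\fB$ is a square representation with $7$ elements, and in particular $\ra{39}{65}\in\RRA$.

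For the bound on square representations I would invoke Lemma~\ref{lem:finite_classes} with the equivalence relation $e:=\id$. Then $e\circ e=\id$, so the set $S$ of atoms not below $e\circ e$ is exactly the diversity-atom set $\{a,b,c\}$, and for each $s\in S$ the triple $(s,s,s)$ is forbidden. The lemma therefore bounds the number of classes of $\id^\fB$ in any square representation $\fB$ by $R(3,3,3)-1=16$; since the classes of $\id^\fB$ are singletons, this is exactly $|B|\le 16$.

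For the non-existence of a fully universal representation, by Corollary~\ref{cor:fu} it suffices to exhibit a failure of $\AP(3,2,n)$; I would show it already fails at $n=4$. Let $(V_2,f_2)$ be the network on $V_2=\{x,y,w,w'\}$ with $f_2(x,y)=f_2(w,w')=a$ and $f_2(x,w)=f_2(x,w')=f_2(y,w)=f_2(y,w')=b$; all four of its triangles are of type $\{a,b,b\}$, so it is consistent. Attempt the one-point extension over $\{x,y\}$ by a new vertex $z$ with $g(x,z)=g(z,y)=b$, which is legitimate as $f_2(x,y)=a\le b\circ b$. In any consistent completion the triangle on $\{z,x,w\}$ carries two $b$-edges, and the only atom $s$ with $\{b,b,s\}$ allowed is $s=a$, forcing $g(z,w)=a$; by the same argument $g(z,w')=a$. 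But then the triangle on $\{z,w,w'\}$ carries $g(z,w)=g(z,w')=f_2(w,w')=a$, the forbidden triple $(a,a,a)$ — a contradiction. Hence $\AP(3,2,4)$ fails and there is no fully universal representation. The main obstacle, I expect, lies in the first part: pinning down the cycle structure precisely (so that the $\Z_7$-colouring is matched to the correct atom labels) and verifying that \emph{every} allowed product is genuinely realized in $\Z_7$, not merely that no forbidden triangle occurs; once the composition table is secured, the Ramsey bound and the amalgamation witness are routine.
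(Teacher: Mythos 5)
Your proposal is correct and follows essentially the same route as the paper's proof: the identical $\Z_7$ Cayley-type square representation (with $a,c,b$ assigned to differences $\pm 1,\pm 2,\pm 3$), the identical application of Lemma~\ref{lem:finite_classes} with $e=\id$ yielding the bound $R(3,3,3)-1=16$, and a failure of $\AP(3,2,4)$ on the same four-vertex base network (two $a$-``diagonals'' inside a $b$-four-cycle), differing from Figure~\ref{fig:ap-failure-39-65} only in which one-point extension is attempted (two $b$-edges over the $a$-edge, rather than an $a$- and a $b$-edge over a $b$-edge). Both witnesses are valid, so there is no gap.
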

\begin{proof}
    It is straightforward to verify that the following structure $\fB$ is a representation of $\bA\colon$
     \begin{align*}
        B &= \Z_7, \\
        a^\fB &= \{(i, i \pm_7 1) \mid i \in B\}, \\
        c^\fB &= \{(i, i \pm_7 2) \mid i \in B\}, \\
        b^\fB &= B^2 \setminus (a^\fB \cup c^\fB \cup \id^\fB) = \{(i, i \pm_7 3) \mid i \in B\}.
    \end{align*}
    
    However, $\ra{39}{65}$ does not have AP$(3, 2, 4)$; see Figure \ref{fig:ap-failure-39-65}. Hence, it does not have a fully universal representation by Theorem~\ref{thm:fu}.
    \begin{figure}
    \centering
    \begin{tikzcd}
    && \bullet && \bullet \\
	\bullet \\
	&& \bullet && \bullet
	\arrow["a", no head, from=1-3, to=1-5]
	\arrow["a"', no head, from=1-3, to=3-3]
	\arrow["c"{pos=0.7}, no head, from=1-3, to=3-5]
	\arrow["a", no head, from=1-5, to=3-5]
	\arrow["b", no head, from=2-1, to=1-3]
	\arrow[dashed, no head, from=2-1, to=1-5]
	\arrow["b"', no head, from=2-1, to=3-3]
	\arrow[dashed, no head, from=2-1, to=3-5]
	\arrow["c"{pos=0.7}, no head, from=3-3, to=1-5]
	\arrow["a"', no head, from=3-3, to=3-5]
\end{tikzcd}
    \caption{Failure of AP$(3, 2, 4)$ for $\ra{39}{65}$.}
    \label{fig:ap-failure-39-65}
\end{figure}

\begin{figure}
\centering
\begin{tikzpicture}[
    scale=1,
    vertex/.style={circle, fill=black, inner sep=1.4pt},
    edge/.style={line width=0.5pt, shorten <=5pt, shorten >=5pt},
    missing/.style={dashed, line width=0.5pt, shorten <=5pt, shorten >=5pt}
]
\coordinate (0) at (-1.4, 0.8);
\coordinate (1) at (0, 1.6);
\coordinate (2) at (1.4, 0.8);
\coordinate (3) at (1.4,-0.8);
\coordinate (4) at (0,-1.6);
\coordinate (5) at (-1.4,-0.8);

\coordinate (z) at (-3.0,0);

\draw[edge, black] (0) -- (1);          
\draw[edge, blue]  (1) -- (2);          
\draw[edge, red] (2) -- (3);   
\draw[edge, blue]  (3) -- (4);          
\draw[edge, black] (4) -- (5);          
\draw[edge, blue]  (5) -- (0);          

\draw[edge, black] (0) -- (2);          
\draw[edge, black] (0) -- (3);          
\draw[edge, black] (0) -- (4);          

\draw[edge, blue]  (1) -- (3);          
\draw[edge, red] (1) -- (4);   
\draw[edge, black] (1) -- (5);          

\draw[edge, blue]  (2) -- (4);          
\draw[edge, black] (2) -- (5);          

\draw[edge, black] (3) -- (5);          

\draw[edge, black] (z) -- (0);          
\draw[edge, blue]  (z) -- (5);          

\draw[missing, out=50, in=190] (z) to (1);
\draw[missing] (z) -- (2);
\draw[missing] (z) -- (3);
\draw[missing, out=-50, in=170] (z) to (4);

\node[vertex] at (0) {};
\node[vertex] at (1) {};
\node[vertex] at (2) {};
\node[vertex] at (3) {};
\node[vertex] at (4) {};
\node[vertex] at (5) {};
\node[vertex] at (z) {};
\end{tikzpicture}
\captionsetup{justification=centering}
\caption{Failure of AP$(3,2,6)$ for $\ra{62}{65}$.
Edges colored blue, red, and black\\ are labeled $a$, $b$, and $c$, respectively.}
\label{fig:ap-failure-62-65}
\end{figure}

    By Lemma~\ref{lem:finite_classes} for $e = \id$ it follows that every square representation of $\bA$ has size at most $16 = R(3,3,3)-1$.
\end{proof}

\begin{prop}\label{prop:62_65-repr}
    $\ra{62}{65}$ has a square representation $\fB$ with 13 elements, 
    but no fully universal representation. All square representations have at most 16 elements.
\end{prop}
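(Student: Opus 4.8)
The plan is to treat the three assertions separately, exploiting the combinatorial description of $\ra{62}{65}$: it is the symmetric relation algebra whose diversity atoms $a,b,c$ have the property that the only forbidden triples are the three $1$-cycles $(a,a,a)$, $(b,b,b)$, $(c,c,c)$ (see Remark~\ref{rem:62}). Consequently a square representation of $\ra{62}{65}$ is precisely an edge-$3$-colouring of a complete graph by the colours $a,b,c$ with no monochromatic triangle and in which every composition $(x\circ y)^\fB = x^\fB\circ y^\fB$ is realized; since every triple other than a $1$-cycle is allowed, the latter amounts to: every vertex meets all three colours, and for each pair of colours $x,y$ and each edge coloured by an atom below $x\circ y$ a connecting vertex of the right colours exists.

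For the $13$-element representation I would give an explicit circulant colouring on $\fB := \Z_{13}$. Colour the edge $\{i,j\}$ by $a$, $b$, or $c$ according to whether $i-j$ lies in the symmetric connection set $S_a=\{\pm 1,\pm 5\}$, $S_b=\{\pm 2,\pm 3\}$, or $S_c=\{\pm 4,\pm 6\}$; these partition $\Z_{13}\setminus\{0\}$. A monochromatic triangle in colour $x$ corresponds to $d_1,d_2,d_3\in S_x$ with $d_1+d_2+d_3\equiv 0 \pmod{13}$, and a short case check shows each $S_x$ is free of such zero-sum triples, so the colouring is a consistent atomic network. The composition axioms reduce to sumset identities: one checks $S_a+S_a=\{0\}\cup S_b\cup S_c$ (and the two analogues), and $S_x+S_y=\Z_{13}\setminus\{0\}$ for $x\neq y$, which give $a^\fB\circ a^\fB=(\id\cup b\cup c)^\fB$ and $x^\fB\circ y^\fB=(a\cup b\cup c)^\fB$ respectively; together with $0\notin S_x+S_y$ for $x\neq y$ (as $S_x\cap S_y=\varnothing$) this verifies Definition~\ref{defn:repr}. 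Hence $\fB$ is a square representation with $13$ elements. I expect this verification --- in particular confirming all six sumset computations --- to be the only genuinely computational part of the argument.

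For the absence of a fully universal representation I would invoke Corollary~\ref{cor:fu}: it suffices to exhibit a single failure of $\AP(3,2,n)$. Such a failure, on $5$ vertices, is recorded on the right of Figure~\ref{fig:ap-failure-39-65}; concretely, amalgamating the triangle with edges $a,b,c$ and the displayed $4$-clique over their two shared vertices forces both new edges to be coloured $c$, creating a monochromatic $c$-triangle. Thus $\AP(3,2,4)$ fails and $\ra{62}{65}$ has no fully universal representation.

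Finally, the bound on all square representations is immediate from Lemma~\ref{lem:finite_classes} applied with the equivalence relation $e=\id$, exactly as in the proof of Proposition~\ref{prop:39_65-repr}: here $e\circ e=\id$, so $S=\{a,b,c\}$ is the set of diversity atoms, each $1$-cycle $(a,a,a)$, $(b,b,b)$, $(c,c,c)$ is forbidden, and therefore every square representation has at most $R(3,3,3)-1=16$ elements.
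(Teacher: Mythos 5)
Your proposal is correct and follows essentially the same route as the paper's proof: the circulant colouring of $\Z_{13}$ with difference sets $\{\pm 1,\pm 5\}$, $\{\pm 2,\pm 3\}$, $\{\pm 4,\pm 6\}$ is exactly the representation the paper records (attributed to P.~Jipsen), the non-existence of a fully universal representation is deduced from the very same failure of AP$(3,2,4)$ shown on the right of Figure~\ref{fig:ap-failure-39-65}, and the bound of $16$ comes, as in the paper, from Lemma~\ref{lem:finite_classes} applied with $e=\id$ and $R(3,3,3)=17$. The only difference is one of detail: the paper asserts the representation without verification and delegates the amalgamation failure to the figure, whereas you sketch the sumset computations and spell out the forced monochromatic $c$-triangle.
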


\begin{proof}
    We learned the following representation $\fB$ of $\ra{62}{65}$ from P.\ Jipsen:
    \begin{align*}
        B &= \Z_{13}, \\
        a^\fB &= \{(x,x+i) \in B^2 \mid i \in \{1,5,8,12\} \}, \\
        b^\fB &= \{(x,x+i) \in B^2 \mid i \in \{2,3,10,11\} \}, \\
        c^\fB &= B^2 \setminus (a^\fB \cup b^\fB \cup \id^\fB) = \{(x,x+i) \in B^2 \mid i \in \{4,6,7,9\} \}. 
    \end{align*}
        However, $\ra{62}{65}$ does not have AP$(3,2,6)$; see Figure~\ref{fig:ap-failure-62-65}. By Theorem~\ref{thm:fu}, there is no fully universal representation.
      By Lemma~\ref{lem:finite_classes} for $e = \id$ it follows that every square representation of $\bA$ has size at most $16 = R(3,3,3)-1$.
\end{proof}

\subsection{Representable Algebras without Fully Universal Representation}
\label{sec:nfu}
In this section we treat representable relation algebras which have infinite representations, but no fully universal representations.

\subsubsection{The Relation Algebra \texorpdfstring{$\ra{17}{37}$}{17\textunderscore 37}}
\label{sect:17_37-rep}
The reduced consistent atomic $\ra{17}{37}$-networks can be viewed as oriented graphs that do not embed the oriented graph $P_2 = (\{0,1,2\}; \{(0,1),(1,2)\})$. These structures have been studied before and are called \emph{quasi-transitive oriented graphs}~\cite{BJJ95}.
First, we will identify one consistent atomic network on four vertices which is not satisfiable, and then we will construct a representation in which all consistent atomic networks avoiding the unsatisfiable one are satisfiable.

Let $(A,f)$ be an atomic $\ra{17}{37}$-network. An \emph{induced $a$-path} in $(A,f)$ is a sequence of distinct vertices $a_0, \dots, a_{n-1} \in A$
such that $f(a_i, a_j) = a$ if and only if $|i-j|=1$. An induced $a$-path $a_0,\dots,a_{n-1}$ is \emph{independent} from some $b\in A$ if for every $i\in \{0,\dots,n-1\}$ we have that $f(a_i,b) \notin \{\id, a\}$.

\begin{lemma}
    Let $(A,f)$ be a consistent atomic $\ra{17}{37}$-network, and let $a_0, \ldots, a_{n-1} \in A$ be an induced $a$-path in $(A,f)$. Then for every $x\in A$ such that $a_0, \ldots, a_{n-1}$ is independent from $x$ we have that $f(x,a_i) = f(x,a_j)$ for every $i,j\in \{0,\ldots,n-1\}$.
\end{lemma}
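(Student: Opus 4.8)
The plan is to argue entirely in the oriented-graph picture just introduced, where $r$ and $\breve r$ are the two orientations of an edge, $a$ is a non-edge, and consistency of the atomic network is exactly quasi-transitivity. Concretely, the forbidden oriented graph $P_2 = (\{0,1,2\};\{(0,1),(1,2)\})$ translates into the fact that the triple $(r,r,a)$ is forbidden: since a consistent network satisfies $f(u,w) \le f(u,v)\circ f(v,w)$ and $a \not\le r\circ r$, any directed path $u \to v \to w$ forces $u$ and $w$ to be adjacent (i.e.\ $f(u,w)\in\{r,\breve r\}$). This is the only structural input I will need.

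First I would reduce the claim to consecutive indices. Because equality is transitive, it suffices to prove $f(x,a_i)=f(x,a_{i+1})$ for every $i\in\{0,\dots,n-2\}$ and then propagate along the path. I would also record two elementary translations that make the remainder purely combinatorial: by the definition of an induced $a$-path, consecutive vertices satisfy $f(a_i,a_{i+1})=a$, so $a_i$ and $a_{i+1}$ are non-adjacent; and by independence, $f(a_i,x)\notin\{\id,a\}$, hence $f(a_i,x)\in\{r,\breve r\}$ for every $i$. Since a consistent atomic network satisfies $f(x,a_i)=\breve{f(a_i,x)}$ (from $\id = f(x,x)\le f(x,a_i)\circ f(a_i,x)$ and the cycle law), and $\{r,\breve r\}$ is closed under $\breve{\phantom{o}}$, we also get $f(x,a_i)\in\{r,\breve r\}$.

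The heart of the argument is then a single triangle $\{x,a_i,a_{i+1}\}$. Suppose toward a contradiction that $f(x,a_i)\neq f(x,a_{i+1})$; as both lie in $\{r,\breve r\}$, one equals $r$ and the other $\breve r$. In either sub-case this yields a directed path of length two between $a_i$ and $a_{i+1}$ through $x$, namely $a_{i+1}\to x\to a_i$ or $a_i\to x\to a_{i+1}$. Quasi-transitivity---equivalently, consistency together with the forbidden triple $(r,r,a)$---then forces $a_i$ and $a_{i+1}$ to be adjacent, contradicting $f(a_i,a_{i+1})=a$. Hence $f(x,a_i)=f(x,a_{i+1})$, and the reduction step concludes the proof.

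I do not expect a genuine obstacle here; the only point demanding care is bookkeeping the edge orientations so that the offending configuration is literally the forbidden $P_2$, and correctly passing between $f(a_i,x)$ and $f(x,a_i)$ via $\breve a = a$, $\breve{\id}=\id$ and the cycle law. The two sub-cases are interchanged by reversing all edges, so it is enough to spell out one of them in detail.
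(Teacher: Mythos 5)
Your proof is correct and follows essentially the same route as the paper's: the paper also reduces to consecutive vertices, notes that $f(x,a_i) \neq f(x,a_{i+1})$ would make $x, a_i, a_{i+1}$ a forbidden triple, and concludes by transitivity of equality. You merely spell out the details the paper leaves implicit, namely that independence forces $f(x,a_i), f(x,a_{i+1}) \in \{r,\breve{r}\}$, so a disagreement yields a directed path through $x$ between the non-adjacent vertices $a_i, a_{i+1}$, violating the forbidden triple $(r,r,a)$.
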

\begin{proof}
    For every $i<n-1$ we have that $f(x,a_i) = f(x,a_{i+1})$, as otherwise $x,a_i,a_{i+1}$ would be a forbidden triple. The lemma now follows by transitivity of equality.
\end{proof}

\begin{lemma}
    Let $(A,f)$ be a consistent atomic $\ra{17}{37}$-network and let $a_0, \ldots, a_{n-1} \in A$ be an induced $a$-path in $(A,f)$. Then for all $i,i',j,j' \in \{0,\dots,n-1\}$ such that $i < j-1$ and $i' < j'-1$, it holds that $f(a_i,a_j) = f(a_{i'},a_{j'})$.
\end{lemma}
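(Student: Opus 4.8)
The plan is to show that the function $(i,j)\mapsto f(a_i,a_j)$, defined on the pairs with $j\ge i+2$, is constant. I would obtain this from two elementary ``moves,'' each leaving the value unchanged and each obtained by applying the preceding lemma to a suitable subpath of $a_0,\dots,a_{n-1}$. I will also use that in a consistent atomic network $f(u,v)=\breve{f(v,u)}$; this follows from consistency, since $\id=f(u,u)\le f(u,v)\circ f(v,u)$ together with the cycle law (Proposition~\ref{prop:cyclelaw}) forces $f(v,u)=\breve{f(u,v)}$ for atoms $f(u,v),f(v,u)$.

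First move (fixing the start vertex): for a fixed $i$ I would show that $f(a_i,a_j)$ is the same for all $j\in\{i+2,\dots,n-1\}$. The sequence $(a_{i+2},\dots,a_{n-1})$ is again an induced $a$-path, and since $|i-k|\ge 2$ for every $k$ in this range, the vertex $a_i$ is joined by an edge (i.e.\ by $r$ or $\breve r$, never by $\id$ or $a$) to each of them; hence $a_i$ is independent from this subpath, and the preceding lemma applied with $x=a_i$ gives $f(a_i,a_{i+2})=\cdots=f(a_i,a_{n-1})$. I denote this common value by $c_i$, which is defined exactly when $i\le n-3$.

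Second move (shifting the start vertex): I would show $c_i=c_{i+1}$ whenever both are defined, i.e.\ for $0\le i\le n-4$. Apply the preceding lemma to the two-vertex induced $a$-path $(a_i,a_{i+1})$ with external vertex $x:=a_j$ for $j:=i+3\le n-1$; since $j-i\ge 2$ and $j-(i+1)\ge 2$, the vertex $a_j$ is joined by edges to both $a_i$ and $a_{i+1}$, so the path is independent from $a_j$ and the lemma yields $f(a_j,a_i)=f(a_j,a_{i+1})$. Taking converses and using $f(a_k,a_j)=\breve{f(a_j,a_k)}$ gives $f(a_i,a_j)=f(a_{i+1},a_j)$, that is $c_i=c_{i+1}$.

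Combining the two moves by transitivity yields $c_0=c_1=\cdots=c_{n-3}$, and by the first move every admissible $f(a_i,a_j)$ equals $c_i$; hence all the chords $f(a_i,a_j)$ with $j\ge i+2$ share a single value, which is exactly the claim. When $n\le 2$ there are no admissible pairs and the statement is vacuous. I expect the only genuine care to lie in the index bookkeeping---checking that each compared pair is really a forward chord ($j\ge i+2$) and that the witness $j=i+3$ still lies in $\{0,\dots,n-1\}$---together with the small preliminary observation that the converse law $f(u,v)=\breve{f(v,u)}$ holds in consistent atomic networks; the combinatorial content itself is entirely carried by the preceding lemma.
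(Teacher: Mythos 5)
Your proof follows, at its core, the same mechanism as the paper's: the paper establishes four local shift-equalities (moving $j$ to $j\pm 1$ and $i$ to $i\pm 1$ under suitable index constraints), each via the triangle/forbidden-triple argument across an $a$-edge, and then chains them; your two moves are precisely such chains, except that you obtain the local equalities by applying the preceding lemma to subpaths (and to the two-vertex path $(a_i,a_{i+1})$) rather than re-running the triangle argument. The index bookkeeping in both moves is correct, as is the preliminary observation that $f(v,u)=\breve{f(u,v)}$ holds in consistent atomic networks.

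There is, however, one step you assert without justification, and it is exactly where your route differs materially from the paper's: the claim that $a_i$ is \emph{never} related by $\id$ to a path vertex $a_k$ with $|i-k|\ge 2$. The definition of an induced $a$-path only rules out the label $a$ on such pairs; the vertices are distinct, but the lemma is stated for arbitrary (not necessarily reduced) consistent atomic networks, in which distinct vertices may carry the label $\id$. This matters because the independence hypothesis of the preceding lemma explicitly requires $f(a_k,a_i)\notin\{\id,a\}$, so the $\id$-exclusion is load-bearing for you, whereas the paper's direct argument needs no such exclusion: if one chord of a triangle across an $a$-edge were $\id$ and the other were in $\{r,\breve{r}\}$, that triangle would itself contain a forbidden triple (a triple with one entry $\id$ is forbidden as soon as the other two atoms differ), so consistency rules this out en passant. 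Indeed, your assertion is literally false in one degenerate case: for $n=3$, the network with $f(a_0,a_1)=f(a_1,a_2)=a$ and $f(a_0,a_2)=\id$ is a consistent atomic $\ra{17}{37}$-network forming an induced $a$-path, and there $a_0$ is not independent from the subpath $(a_2)$. The lemma is trivially true in that case, but your proof, as written, does not go through. The repair is short and should be written out: if $f(a_i,a_k)=\id$ with $k\ge i+2$, consistency forces $f(a_i,v)=f(a_k,v)$ for every vertex $v$; from $f(a_i,a_{i+1})=a$ one gets $f(a_k,a_{i+1})=a$, hence $k=i+2$, and then whichever of $a_{i-1}$, $a_{k+1}$ exists (one does as soon as $n\ge 4$) yields an $a$-labelled pair at index distance $3$, contradicting the path definition. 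So $\id$-chords can occur only when $n=3$, where the statement is trivial; with this observation inserted, your argument is complete.
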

\begin{proof}
    Let $0 \leq \ell < n$ and fix an arbitrary $k < \ell-1$ and observe the following properties (all follow directly from the fact that $(A,f)$ does not contain the single forbidden triple):
    \begin{enumerate}
        \item If $\ell+1 < n$ then $f(a_k, a_\ell) = f(a_k, a_{\ell+1})$,
        \item if $k < \ell-2$ then $f(a_k,a_\ell) = f(a_k,a_{\ell-1})$,
        \item if $k < \ell-2$ then $f(a_k,a_\ell) = f(a_{k+1},a_\ell)$, and
        \item if $k > 0$ then $f(a_k, a_\ell) = f(a_{k-1},a_\ell)$.
    \end{enumerate}
    Now we can prove the statement of the lemma by repeated applications of the first or second, and the third or fourth points (depending on the order of $j$ and $j'$, and $i$ and $i'$ respectively).
\end{proof}

\begin{cor}\label{cor:1737induced}
    Two induced $a$-paths are isomorphic if and only if they have the same number of vertices.
\end{cor}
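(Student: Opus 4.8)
The plan is to prove the nontrivial ``if'' direction; the ``only if'' direction is immediate, since isomorphic structures have the same cardinality and hence the same number of vertices. So suppose we are given two induced $a$-paths, say $p_0, \dots, p_{n-1}$ and $q_0, \dots, q_{n-1}$, with the same number $n$ of vertices, sitting inside (possibly different) consistent atomic $\ra{17}{37}$-networks. My first step is to record that the two preceding lemmas completely determine the atom on every pair of an induced $a$-path: combining the defining condition of an induced $a$-path with the previous lemma, the label $f(p_i,p_j)$ depends only on $j-i$. Concretely it equals $\id$ when $i=j$, it equals $a$ when $|i-j|=1$, it equals one fixed atom $d$ whenever $j \geq i+2$, and (taking converses) it equals $\breve{d}$ whenever $i \geq j+2$. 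Since distinct vertices and $|i-j|\geq 2$ rule out both $\id$ and $a$, we have $d \in \{r,\breve{r}\}$ as soon as $n \geq 3$; the same applies to the second path with some atom $d' \in \{r,\breve{r}\}$.

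The key observation is then that, because $r \neq \breve{r}$, either $d'=d$ or $d'=\breve{d}$, and in each case I write down an explicit isomorphism. If $d'=d$, the index-preserving bijection $p_i \mapsto q_i$ works, since it matches $\id$-, $a$-, $d$-, and $\breve{d}$-labeled pairs to pairs carrying the same label. If instead $d'=\breve{d}$, I use the order-reversing bijection $p_i \mapsto q_{n-1-i}$: reversing indices turns a forward pair $(p_i,p_j)$ with $j \geq i+2$, labeled $d$, into the backward pair $(q_{n-1-i},q_{n-1-j})$, which is labeled $\breve{d'}=\breve{\breve{d}}=d$, and symmetrically sends backward pairs to forward pairs, while the $\id$- and $a$-labeled pairs are visibly preserved. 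Either way we obtain an isomorphism of induced $a$-paths. The cases $n \leq 2$ need no argument, as each has a unique isomorphism type and the index-preserving map suffices.

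The only genuinely delicate point is the reduction, carried out in the first step, from ``isomorphic'' to ``agreeing up to the order-reversal of the long-distance atom $d$''. I do not expect the multiplication table of $\ra{17}{37}$ to force $d$ to a single value, precisely because the symmetric atom $a$ makes $a \circ a$ self-converse, so it may contain both $r$ and $\breve{r}$; this is exactly why the order-reversing map, rather than the naive index-preserving one, is needed in general. The two preceding lemmas do all the real work by showing that an induced $a$-path is governed by the single parameter $d$, leaving only this binary choice, which the reversal resolves.
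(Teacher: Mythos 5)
Your proof is correct and is essentially the argument the paper leaves implicit: the two preceding lemmas determine every label of an induced $a$-path from its length and a single long-range atom $d$, and your index-preserving map (when $d'=d$) and order-reversing map (when $d'=\breve{d}$) supply exactly the isomorphism the corollary needs, the reversal being precisely the detail that resolves the $r$ versus $\breve{r}$ ambiguity. One small caveat: distinctness of vertices alone does not exclude $\id$ as the long-range label in a non-reduced consistent atomic network (for $n=3$, the network with $f(a_0,a_1)=f(a_1,a_2)=a$ and $f(a_0,a_2)=\id$ is consistent and atomic), but since this degenerate case would equally contradict the corollary as literally stated, the intended reading is that the paths live in reduced networks, under which your claim that $d\in\{r,\breve{r}\}$ is valid.
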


This justifies that we can call the $\ra{17}{37}$-network $(P,f_P)$ with $P = \{0,1,2,3\}$, $f_P(0,1)=f_P(1,2)=f_P(2,3) = a$, and $f_P(0,2)=f_P(0,3)=f_P(1,3) = r$ the \emph{induced $a$-path on 4 vertices}. Note that when viewed as an oriented graph, $(P,f_P)$ is acyclic, and is in fact the irreflexive variant of the poset sometimes called $N$. The $N$-free posets are known as \emph{series-parallel}~\cite{SP}. In Section~\ref{sect:17-alg} we will employ a decomposition theorem for these posets in order to obtain an algorithm for $\NSP(\ra{17}{37})$.

\begin{lemma}\label{lem:nfu-37}
    The induced $a$-path on 4 vertices is a consistent atomic $\ra{17}{37}$-network which is unsatisfiable.
\end{lemma}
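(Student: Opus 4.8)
The statement splits into a consistency claim and an unsatisfiability claim, and the second is the interesting one.

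For consistency I would simply invoke the oriented-graph description of the consistent atomic $\ra{17}{37}$-networks. A short computation from the single forbidden triple $[r,r,a]$ shows that a triangle of an atomic network fails path-consistency exactly when it realizes the pattern $P_2$, i.e.\ a directed $2$-path $x \to y \to z$ whose endpoints are joined by $a$. Viewed as an oriented graph, the induced $a$-path $(P,f_P)$ has $r$-edges $0 \to 2$, $0 \to 3$, $1 \to 3$ and no others, and neither $2$ nor $3$ has an out-edge; hence $(P,f_P)$ has no directed $2$-path at all, so it embeds no $P_2$ and is consistent. (Equivalently, one checks the inequality $f(x,y) \le f(x,z) \circ f(z,y)$ directly on the four triangles.)

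For unsatisfiability the key observation is that $(P,f_P)$ is already $3$-consistent, so the contradiction cannot be produced locally: it must use the \emph{exactness} of composition in a representation, not merely the triangle inequalities. The engine I would use is the identity $r \circ r = r \cup \breve{r}$ (again read off from $[r,r,a]$), whose ``$\supseteq$'' part $\breve{r} \le r \circ r$ says that in any representation every $r$-edge lies on a directed $2$-path, i.e.\ sits inside a directed $3$-cycle. Concretely, suppose $s \colon P \to B$ realizes $(P,f_P)$ in a representation $\fB$, and write $p_i := s(i)$, so that $(p_0,p_2),(p_0,p_3),(p_1,p_3) \in r^\fB$ while $\{p_0,p_1\},\{p_1,p_2\},\{p_2,p_3\}$ are $a$-non-edges. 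Applying $\breve{r}^\fB \subseteq r^\fB \circ r^\fB$ to $(p_2,p_0) \in \breve{r}^\fB$ produces a vertex $y$ with $(p_2,y),(y,p_0) \in r^\fB$. From $(y,p_0),(p_0,p_3) \in r^\fB$ and $r \circ r = r \cup \breve{r}$ we get $(y,p_3) \in r^\fB \cup \breve{r}^\fB$, and I finish by excluding both orientations. If $(y,p_3) \in r^\fB$, then $(p_2,y),(y,p_3) \in r^\fB$ force $(p_2,p_3) \in r^\fB \cup \breve{r}^\fB$, contradicting the non-edge $\{p_2,p_3\}$. If instead $(p_3,y) \in r^\fB$, then $(p_1,p_3),(p_3,y) \in r^\fB$ force $(p_1,y) \in r^\fB \cup \breve{r}^\fB$; but $a \circ r = r \cup a$ applied to $(p_1,p_2) \in a^\fB$ and $(p_2,y) \in r^\fB$ gives $(p_1,y) \in r^\fB \cup a^\fB$, and since the three atoms are disjoint this pins down $(p_1,y) \in r^\fB$. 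Then $(p_1,y),(y,p_0) \in r^\fB$ force $(p_1,p_0) \in r^\fB \cup \breve{r}^\fB$, contradicting the non-edge $\{p_0,p_1\}$. Either way we reach a contradiction, so no representation realizes $(P,f_P)$.

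The main thing to get right — and the reason this lemma is not a triviality — is precisely the warning that $3$-consistency holds, so one must resist any purely local argument; everything hinges on using an \emph{exact} composition identity ($\breve{r} \le r \circ r$) to manufacture the auxiliary vertex $y$, and then squeezing $y$ between its forced adjacency to $p_3$ and the non-edges of $(P,f_P)$ via $r \circ r = r \cup \breve{r}$ and $a \circ r = r \cup a$. I would also note in passing that $y$ is automatically distinct from the $p_i$ (the relation $(p_2,y) \in r^\fB$ rules out $y=p_1$, and $(y,p_0) \in r^\fB$ rules out $y=p_3$), although the argument only uses the incidences of $y$ and not its novelty.
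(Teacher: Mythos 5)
Your proof is correct and takes essentially the same route as the paper: both arguments introduce the very same auxiliary vertex (forced by the exact composition identity $\breve{r} \le r \circ r$, equivalently $r \le \breve{r}\circ\breve{r}$, applied to the $r$-edge from $s(0)$ to $s(2)$) and then derive a contradiction from the single forbidden cycle class $[r,r,a]$. The only difference is bookkeeping: the paper first pins down $(s(1),x)\in a^\fB$ and then rules out every choice for $(s(3),x)$, whereas you first constrain $(y,p_3)$ to $r\cup\breve{r}$ and treat $p_1$ only in the second case --- a mild streamlining of the same argument, phrased via composition identities and disjointness of atoms rather than forbidden triples.
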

\begin{proof}
    Clearly, $(P,f_P)$ is consistent and atomic. Assume for a contradiction that there is some square representation $\fB$ of $\ra{17}{37}$ satisfying $(P, f_P)$. That is, there is a map $s\colon P\to B$ such that for every $x,y\in P$ it holds that $(s(x),s(y)) \in f_P(x,y)^{\fB}$. Since $\fB$ is a representation of $\ra{17}{37}$ and $r \leq \Breve{r} \circ \Breve{r}$, there is some $x\in B$ with $(s(0),x) \in \Breve{r}^\fB$ and $(x,s(2)) \in \Breve{r}^\fB$. If $(s(1),x) \in r^\fB$ or $(x,s(1)) \in r^\fB$ then one of $(s(1),x,s(0))$ and $(s(2),x,s(1))$ is a forbidden triple; hence, $(s(1),x) \in a^{\fB}$. But now every selection for the edge $(s(3),x)$ leads immediately to a contradiction.  
\end{proof}
\begin{cor}
    Every atomic $\ra{17}{37}$-network containing an induced $a$-path on at least $4$ vertices is unsatisfiable.
\end{cor}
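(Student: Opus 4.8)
The plan is to reduce the general statement to the special case of the induced $a$-path on exactly four vertices, which is already handled by Lemma~\ref{lem:nfu-37}, exploiting the elementary fact that satisfiability of an atomic network is inherited by all of its subnetworks.

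First I would note that the initial segment of a long induced $a$-path is again an induced $a$-path. Concretely, if $(A,f)$ is an atomic $\ra{17}{37}$-network and $a_0,\dots,a_{n-1}$ is an induced $a$-path with $n \geq 4$, then for all $i,j \in \{0,1,2,3\}$ the defining equivalence $f(a_i,a_j)=a \iff |i-j|=1$ continues to hold, since it was assumed for all indices. Hence $a_0,a_1,a_2,a_3$ is an induced $a$-path on four vertices, and the subnetwork of $(A,f)$ induced on $\{a_0,a_1,a_2,a_3\}$ is exactly such a path.

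Next I would argue by contradiction: assume $(A,f)$ is satisfiable, witnessed by a map $s \colon A \to B$ into a representation $\fB$ of $\ra{17}{37}$, so that $(s(u),s(v)) \in f(u,v)^\fB$ for all $u,v \in A$. Restricting $s$ to $\{a_0,a_1,a_2,a_3\}$ then satisfies the subnetwork induced on those four vertices. Since a satisfiable atomic network is automatically consistent (any satisfying assignment certifies $f(x,z) \leq f(x,z) \cap (f(x,y)\circ f(y,z))$ and $f(x,x)\leq\id$ in $\fB$, using that the entries are atoms), Corollary~\ref{cor:1737induced} applies and identifies this four-vertex induced $a$-path with $(P,f_P)$ up to isomorphism. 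But $(P,f_P)$ is unsatisfiable by Lemma~\ref{lem:nfu-37}, contradicting the existence of the restricted satisfying assignment. Therefore $(A,f)$ is unsatisfiable.

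The only step requiring genuine attention is the identification of the four-vertex subnetwork with $(P,f_P)$: in an arbitrary atomic network the three non-consecutive pairs among $a_0,a_1,a_2,a_3$ carry atoms from $\{r,\breve{r}\}$ whose orientations are a priori unconstrained, and it is precisely consistency (forced here by satisfiability) that, via Corollary~\ref{cor:1737induced}, pins these orientations down so as to match $(P,f_P)$. Beyond this bookkeeping, no new relation-algebraic computation is needed, as the substantive work already resides in Lemma~\ref{lem:nfu-37}.
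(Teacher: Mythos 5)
Your proof is correct and takes the same route the paper intends: the paper states this as an immediate corollary of Lemma~\ref{lem:nfu-37}, and your write-up just makes the implicit steps explicit (restricting to the first four vertices of the path, noting that satisfiability of an atomic network forces consistency, and invoking Corollary~\ref{cor:1737induced} to identify the resulting four-vertex subnetwork with $(P,f_P)$). No gaps.
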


Let $\mathcal C$ be the class of all finite consistent atomic $\ra{17}{37}$-networks containing no induced $a$-path on $4$ vertices. 
\begin{lemma}\label{lem:1737jep}
    $\mathcal C$ has JEP.
\end{lemma}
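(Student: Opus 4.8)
The plan is to establish the JEP by the same joining construction used for the earlier algebras in this section. Given two disjoint networks $(V_1,f_1),(V_2,f_2)\in\mathcal C$, I would define $f$ on $V_1\cup V_2$ by $f|_{V_1^2}=f_1$, $f|_{V_2^2}=f_2$, and $f(x,y)=a$ for every pair $x,y$ lying in different parts (recall $\breve a=a$). The resulting network $(V_1\cup V_2,f)$ plainly contains both given networks as subnetworks, so everything reduces to showing $(V_1\cup V_2,f)\in\mathcal C$, i.e.\ that it is a consistent atomic network and that it contains no induced $a$-path on $4$ vertices. By the remark following Definition~\ref{def:AP} it suffices to treat reduced networks, so I may assume that distinct vertices always carry one of $r,\breve r,a$.

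For consistency I would check $f(x,y)\leq f(x,z)\circ f(z,y)$ for every triple $x,y,z$; the only triples not already settled by consistency of $f_1$ and $f_2$ are those with a vertex in each part. Here the computation is routine once one records the relevant entries of $\Cy(\ra{17}{37})$: one checks that $a\circ a=1$ (three mutually non-adjacent vertices, and a non-$a$ pair with a common ``non-neighbour'', are allowed), so $r,\breve r\leq a\circ a$, and then the cycle law (Proposition~\ref{prop:cyclelaw}) applied to $[a,a,\breve r]$ yields $a\leq r\circ a$, $a\leq a\circ r$, $a\leq\breve r\circ a$, and $a\leq a\circ\breve r$. Since every mixed triple places either $a\circ a$ or $a\circ b$ (for $b\in\{r,\breve r\}$) on the right-hand side, the required inequality holds in each of the at most three positions of the ``middle'' vertex, and consistency follows by a short case distinction.

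The hard part, and the real content of the lemma, is excluding an induced $a$-path $p_0,p_1,p_2,p_3$ in the joined network. Such a path cannot lie entirely inside $V_1$ or entirely inside $V_2$ because $(V_1,f_1),(V_2,f_2)\in\mathcal C$, so I only need to rule out a path using vertices from both parts. The key observation is that in an induced $a$-path the non-$a$ pairs are exactly $\{p_0,p_2\},\{p_0,p_3\},\{p_1,p_3\}$, and these form a connected graph spanning all four vertices (the path $P_4$ is self-complementary, its complement being $p_2\!-\!p_0\!-\!p_3\!-\!p_1$). Since every cross-part pair is labelled $a$, none of these non-$a$ pairs can cross the partition; as they connect all four vertices, all four must lie on the same side, contradicting that the path uses both parts. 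The main obstacle is thus purely combinatorial: recognising that the complement of the forbidden $4$-vertex pattern is connected, which forces any copy of it to be confined to a single part. This completes the verification that $(V_1\cup V_2,f)\in\mathcal C$, and hence that $\mathcal C$ has the JEP.
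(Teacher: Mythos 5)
Your proposal is correct and uses exactly the paper's construction: join the two disjoint networks by labelling every cross-part pair with $a$, then check membership in $\mathcal C$ (the paper's proof does the same and dismisses the verification as straightforward). Your filled-in verification is sound — the composition facts follow from the allowed cycles $[a,a,a]$ and $[r,a,a]$, and the observation that the non-$a$ pairs of an induced $a$-path on $4$ vertices form a connected spanning subgraph (the complement of $P_4$ is again a path) correctly forces any such path into a single part.
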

\begin{proof}
    Pick arbitrary $(V_1,f_1),(V_2,f_2)\in \mathcal C$ with $V_1\cap V_2 = \varnothing$. Put $V = V_1\cup V_2$ and define a $\ra{17}{37}$-network $(V,g)$ with $g|_{V_1^2} = f_1$, $g|_{V_2^2} = f_2$ and $g(x,y) = a$ 
    if $x\in V_1$ and $y\in V_2$ or $x\in V_2$ and $y\in V_1$. 
    It is straightforward to verify that $(V,g)\in \mathcal C$.
\end{proof}

We will spend the remainder of this section proving the following lemma. 

\begin{lemma}\label{lem:1737ap}
    $\mathcal C$ has AP$(3,2,n)$.
\end{lemma}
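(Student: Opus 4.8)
The plan is to read $\AP(3,2,n)$ as a one-point extension and to exploit a reformulation of the excluded configuration as a $P_4$-freeness condition. Recall that the consistent atomic $\ra{17}{37}$-networks are exactly the quasi-transitive oriented graphs, with $r$ the oriented edge relation and $a$ the non-edge relation, the single forbidden triple being the directed path $u\to v\to w$ with $u,w$ non-adjacent. Thus $\AP(3,2,n)$ amounts to the following task: starting from $(V,f)\in\mathcal C$ with $|V|=n$, two marked vertices $x,y$, and diversity atoms $p,q$ with $f(x,y)\le p\circ q$ — equivalently a new vertex $z$ with $g(x,z)=p$, $g(z,y)=q$ forming a consistent triangle on $\{x,y,z\}$ — I must define $g(z,v)$ for every $v\in W:=V\setminus\{x,y\}$ so that $(V\cup\{z\},g)\in\mathcal C$. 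I write $\alpha:=f(x,y)$, $\beta_v:=f(x,v)$, $\gamma_v:=f(y,v)$, and $\delta_v:=g(z,v)$.

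The key reformulation is this: define the \emph{non-edge graph} $G$ on $V$ by $uv\in E(G)$ iff $f(u,v)=a$. By Corollary~\ref{cor:1737induced}, an induced $a$-path on four vertices is precisely an induced $P_4$ in $G$, so membership $(V,f)\in\mathcal C$ says exactly that $(V,f)$ is consistent and $G$ is $P_4$-free, i.e. a cograph. Adding $z$ corresponds to adding a vertex to $G$ with $G$-neighbourhood $\{v:\delta_v=a\}$, and $G'$ remains a cograph whenever $z$ is made an isolated vertex, a universal vertex, or a (true or false) twin of an existing vertex. This gives a global handle on the otherwise non-local exclusion. To cut down on cases I use two symmetries that both preserve consistency and $\mathcal C$: the converse involution ($r\leftrightarrow\breve{r}$, $a\mapsto a$) and the swap of the marked points, which sends $(p,q,\alpha)\mapsto(\breve{q},\breve{p},\breve{\alpha})$. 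Up to these it suffices to treat $(p,q)\in\{(a,a),(r,a),(r,r),(r,\breve{r})\}$, with $\alpha$ ranging over the values for which $\{x,y,z\}$ is consistent.

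I would then give explicit rules for the easy cases. For $(a,a)$, set $\delta_v=a$ for all $v$, so $z$ is universal in $G$. For $(r,\breve{r})$, set $v\to z$ for all $v$ (a universal sink), so $z$ is isolated in $G$; a direct check against the composition table shows every triangle through $z$ is allowed. For $(r,a)$, mimic $x$ (take $\delta_v=\beta_v$) when $\alpha=a$ and mimic $y$ (take $\delta_v=\gamma_v$) when $\alpha=r$, yielding a false twin of $x$ respectively a true twin of $y$. For $(r,r)$ with $\alpha=r$, mimic $x$, a false twin of $x$. In each of these, the triangles $\{x,z,v\}$, $\{y,z,v\}$, and $\{z,v,w\}$ are verified directly; crucially, the $(\beta_v,\gamma_v)$-types at which "mimic $x$" or "mimic $y$" would fail are exactly those that consistency of the original triangle $\{x,y,v\}$ already forbids, and cograph-preservation is immediate from the twin/universal/isolated structure.

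The main obstacle is the single remaining sub-case $(p,q)=(r,r)$ with $\alpha=\breve{r}$, i.e. the directed $3$-cycle $x\to z\to y\to x$, where no single twin is consistent. Intersecting the constraints from $\{x,z,v\}$ and $\{y,z,v\}$ shows $\delta_v$ is \emph{forced} on three types — $a$ on type $(a,a)$, $\breve{r}$ on type $(a,r)$, and $r$ on type $(\breve{r},a)$ — and free (a choice in $\{r,\breve{r}\}$) on the others. The danger is that two forced values clash across a third vertex: e.g. a forced $v\to z$ together with a forced $z\to u$ would make $v\to z\to u$ a forbidden triple whenever $f(v,u)=a$. The plan is to prove that every such potential clash is already excluded by the cograph hypothesis on the input. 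Indeed, in the example just described the four vertices $x,v,u,y$ carry precisely the non-edges $xv,vu,uy$ and oriented edges on the three remaining pairs, which is exactly an induced $a$-path on four vertices and hence impossible in $(V,f)\in\mathcal C$. So $P_4$-freeness of $G$ is exactly what makes the forced assignment consistent. I would finish by fixing a uniform choice on the free types (for instance making $z$ an in-neighbour of every remaining vertex) and checking the finitely many triangle patterns $\{z,v,w\}$ together with $P_4$-freeness of $G'$, each verification reducing to the absence of a forbidden four-vertex subnetwork in $(V,f)$.
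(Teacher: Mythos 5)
Your reformulation of membership in $\mathcal C$ as ``consistent and the $a$-graph is a cograph'' is correct, and your easy cases (universal vertex, isolated vertex, twins of $x$ or of $y$) are sound; they coincide with cases 1--3 of the paper's proof, where $z$ is made a copy of $x$ or of $y$. In the cyclic-triangle case your computation of the forced values ($a$, $\breve{r}$, $r$ on the types $(a,a)$, $(a,r)$, $(\breve{r},a)$ respectively) is also right, as is the observation that a clash between two forced values produces an induced $a$-path on $x,v,u,y$ and is therefore excluded.

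The gap is the final step: \emph{no} uniform choice on the free types works, so the claimed finish fails. Take $V=\{x,y,u,v\}$ with $y\to x$, $u\to x$, $u\to y$, $u\to v$, and $f(x,v)=f(y,v)=a$. This network is consistent, its $a$-graph is the two-edge path on $x,v,y$ together with the isolated vertex $u$, so it lies in $\mathcal C$; here $u$ has the free type $(\breve{r},\breve{r})$ and $v$ is forced to $g(z,v)=a$. Your choice ``$z$ an in-neighbour of every remaining vertex'' gives $z\to u$, and then $z\to u\to v$ together with $g(z,v)=a$ is the forbidden triple; the only consistent completion is $u\to z$. Dually, the network with $x\to u$, $y\to u$, $v\to u$, $f(x,v)=f(y,v)=a$ lies in $\mathcal C$ and forces $z\to u$, so the opposite global orientation fails as well. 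Worse, the choice cannot even be made per type: a vertex $u$ of type $(r,r)$ is forced to take $u\to z$ whenever some $w$ of forced type $(a,r)$ satisfies $f(u,w)=a$ (otherwise $w\to z\to u$ is forbidden), yet it is forced to take $z\to u$ in the dual example above; both configurations are realizable in $\mathcal C$, and $P_4$-freeness only guarantees that they cannot occur for the same $u$ simultaneously. Hence the orientation of $z$ towards a free vertex must be determined from the surrounding $a$-path structure, which is exactly what the paper's rule does: $v$ mimics $y$ if there is an induced $a$-path from $v$ to $x$ independent from $y$, mimics $x$ if there is one from $v$ to $y$ independent from $x$, and otherwise receives the default value $f(v,x)$. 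Proving that this rule is well defined (no vertex admits both kinds of paths) and that it creates neither a forbidden triple nor an induced $a$-path on four vertices constitutes the bulk of the paper's argument, and it has no counterpart in your proposal; the proof is incomplete at precisely its hardest point.
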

Before we proceed with a proof of Lemma~\ref{lem:1737ap}, observe that assuming it holds, we 
obtain a universal square representation of $\ra{17}{37}$. In fact, using Theorem~\ref{thm:fu} we 
immediately obtain the following.

\begin{thm}
    There is a square representation $\fB$ of $\ra{17}{37}$ such that the following are equivalent for every atomic $\ra{17}{37}$-network $(A,f)$:
    \begin{enumerate}
        \item $(A,f)$ is satisfiable,
        \item $(A,f)$ is satisfiable in $\fB$,
        \item there is a consistent atomic $\ra{17}{37}$-network $(A,g)$ containing no induced $a$-path on $4$ vertices such that $g(x,y) \leq f(x,y)$ for every $x,y\in A$.
    \end{enumerate}
\end{thm}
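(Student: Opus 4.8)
The plan is to read the theorem off Theorem~\ref{thm:fu} once Lemma~\ref{lem:1737ap} is available, so that almost all of the substance lies in that lemma and the theorem itself is a short bookkeeping argument. First I would combine the two properties established for $\mathcal C$: by Lemma~\ref{lem:1737jep} it has the JEP, and by Lemma~\ref{lem:1737ap} it has $\AP(3,2,n)$ for all $n$. The second bullet of Theorem~\ref{thm:fu} then produces a $\mathcal C$-universal square representation $\fB$ of $\ra{17}{37}$; by the definition of $\mathcal C$-universality, an atomic $\ra{17}{37}$-network is satisfiable in $\fB$ if and only if it lies in $\mathcal C$, i.e.\ if and only if it is consistent and contains no induced $a$-path on $4$ vertices. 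This $\fB$ is the representation claimed by the theorem, and it remains to prove the three equivalences, which I would organize as the cycle $(2)\Rightarrow(1)\Rightarrow(3)\Rightarrow(2)$.

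The implication $(2)\Rightarrow(1)$ is immediate, since $\fB$ is a representation of $\ra{17}{37}$ and satisfiability in one particular representation is a special case of satisfiability. For $(1)\Rightarrow(3)$ I would take a representation $\fD$ and a map $s\colon A\to D$ witnessing satisfiability of $(A,f)$, and refine $f$ to an atomic network by letting $g(x,y)$ be the unique atom of $\ra{17}{37}$ with $(s(x),s(y))\in g(x,y)^{\fD}$ (this exists and is unique because the atoms partition $1^{\fD}\supseteq f(x,y)^{\fD}$). Then $g(x,y)\leq f(x,y)$ by construction, the network $(A,g)$ is consistent because the composition axiom in $\fD$ forces $f(x,y)\leq f(x,z)\circ f(z,y)$ for the atomic values witnessed by $s$, and $(A,g)$ is satisfiable by the very same $s$; hence by the corollary to Lemma~\ref{lem:nfu-37} it contains no induced $a$-path on $4$ vertices, so $(A,g)\in\mathcal C$.

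Finally, for $(3)\Rightarrow(2)$, given $(A,g)\in\mathcal C$ with $g\leq f$, the $\mathcal C$-universality of $\fB$ yields a map $s\colon A\to B$ with $(s(x),s(y))\in g(x,y)^{\fB}$ for all $x,y$; since $g(x,y)\leq f(x,y)$ implies $g(x,y)^{\fB}\subseteq f(x,y)^{\fB}$ (monotonicity, from axiom~\ref{ax:repr:6} of Definition~\ref{defn:repr}), the same $s$ witnesses that $(A,f)$ is satisfiable in $\fB$. This closes the cycle and establishes the equivalence.

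I expect essentially no obstacle in this wrap-up, precisely because the genuine difficulty has been isolated in Lemma~\ref{lem:1737ap}: the hard part is the amalgamation analysis of quasi-transitive oriented graphs avoiding the induced $a$-path on $4$ vertices, not the deduction of the theorem from it. The only two points that warrant care are (i) that the refinement $g$ in $(1)\Rightarrow(3)$ is genuinely consistent and lands in $\mathcal C$, for which the corollary to Lemma~\ref{lem:nfu-37} is exactly the statement needed, and (ii) that when $(A,f)$ is itself atomic the condition $g\leq f$ forces $g=f$, so that $(3)$ simply asserts $(A,f)\in\mathcal C$; the cycle above handles this case uniformly.
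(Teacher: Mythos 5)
Your proposal is correct and takes essentially the same route as the paper: the paper obtains this theorem immediately from the second bullet of Theorem~\ref{thm:fu} applied to the class $\mathcal C$, using Lemma~\ref{lem:1737jep} (JEP) and Lemma~\ref{lem:1737ap} (AP$(3,2,n)$), together with the corollary of Lemma~\ref{lem:nfu-37} for the implication from satisfiability to membership in $\mathcal C$. Your write-up simply makes explicit the routine equivalences (including the observation that $g\leq f$ forces $g=f$ for atomic networks) that the paper leaves implicit.
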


\vspace{\parskip}
\begin{proof}[Proof of Lemma~\ref{lem:1737ap}]
    Pick some reduced $(A,f)\in \mathcal C$ with $z\notin A$, pick $x,y\in A$ and some non-identity atoms  $p,q$ of $\ra{17}{37}$ such that $f(x,y)\leq p\circ \breve{q}$. Put $B = A\cup \{z\}$. We will show that there is $(B,g)\in \mathcal C$ such $g(x,z) = p$, $g(y,z) = q$, 
    and $g$ restricted to $A^2$ is equal to $f$. We distinguish four cases:
    \begin{enumerate}
        \item If $p=q$ then we can define $g(v,z) = p$ for every $v\in A$ which clearly works. So $p\neq q$.
        \item\label{lem:1737ap:3} If $f(x,y) = p$ then we can put $g(v,z) = f(v,y)$ for every $v\in A\setminus \{y\}$ (essentially making $z$ a copy of $y$), and $g(y,z) = q$. 
        Note that $g(x,z) = f(x,y) = p$, so we only need to verify that $(B,g) \in \mathcal C$. 
        Assume for a contradiction that this is not the case, that is, $(B,g)$ contains either the forbidden triple, or the induced $a$-path on 4 vertices. Since $(A,f) \in \mathcal C$, it follows that the hypothetical forbidden picture contains $z$. If it does not contain $y$ then we obtain an isomorphic subnetwork of $(A,f)$ by exchanging $z$ for $y$, hence it contains both $y$ and $z$. However, it is easy to verify that neither the forbidden triple, nor the induced $a$-path on 4 vertices contain a pair of vertices connected in the same way to every other vertex, which is a contradiction.
        \item If $f(y,x) = q$ then we can put $g(v,z) = f(v,x)$ for every $v\in A\setminus \{x\}$ (essentially making $z$ a copy of $x$), and $g(x,z) = p$. 
        Note that $g(y,z) = f(y,x) = q$, and 
        we can use the same argument as in the previous case to prove that $(B,g)\in \mathcal C$.
        \item If none of the above holds, observe that $f(x,y) \in \{r,\Breve{r}\}$. By symmetry, we can assume that $f(x,y) = r$. Since $p\neq q$, we get that at least one of them is different from $a$, and so either $p = g(x,z) = \Breve{r}$, or $q = g(y,z) = r$. In fact, in both cases it follows that $p =  \Breve{r}$ and $q = r$. In other words, $(x,y,z)$ is a cyclically oriented triangle. 
        
        Note that as $(A,f)\in \mathcal C$, all induced $a$-paths in $(A,f)$ have at most 3 vertices. Observe that there is no $v\in A$ for which there are both an induced $a$-path $v = a_0,\ldots,a_{n-1} = x$ independent
        from $y$ and an induced $a$-path $v = b_0,\ldots,b_{m-1} = y$ independent from $x$ for some $n,m \in \{2,3\}$. This is clear if $f(x,v) = a$ or $f(y,v) = a$. So $m = n = 3$. Note that $f(y,a_1)\neq a$ and $f(x,b_1)\neq a$. If $f(a_1,b_1) = a$ then $x,a_1,b_1,y$ is an induced $a$-path on 4 vertices, a contradiction. So $f(a_1,b_1)\neq a$, but then $x,a_1,v,b_1,y$ is an induced $a$-path, again a contradiction.        
        We put
        $$g(v,z) = \begin{cases}
            f(y,v) &\text{if there is an induced $a$-path $v = a_0,\ldots,a_{n-1} = x$ independent from $y$}\\
            f(x,v) &\text{if there is an induced $a$-path $v = a_0,\ldots,a_{n-1} = y$ independent from $x$}\\
            f(v,x) &\text{otherwise}.
        \end{cases}
        $$
        By the paragraph above, this is well-defined. It is easy to see that $g(x,z) = p$ and $g(y,z) = q$, so we only need to prove that $(B,g)\in \mathcal C$. Observe that for $v\in A$ we have that $g(v,z) = a$ if and only if $f(x,v) = f(y,v) = a$. We will use this below. Suppose for a contradiction that $(B,g)\notin \mathcal C$. We need to distinguish four cases.
        
        \begin{enumerate}
            \item First assume that there are $u,v\in A$ such that $g(u,v) = r$, $g(v,z) = r$, and $g(u,z) = a$. We know that $f(x,u) = f(y,u) = a$. It follows that $f(x,v) \neq a$ and $f(y,v) \neq a$ (both cannot be equal to $a$ at the same time as $g(v,z)\neq a$, and if only one is equal to $a$ then we find an induced $a$-path on vertices $x,y,u,v$). 
            If there is an induced $a$-path $v = a_0,\ldots,a_{n-1} = x$ independent from $y$,
            then we know that $n = 3$. 
            Clearly, $f(a_1,y)\neq a$. If $f(a_1,u)\neq a$ then we have the induced $a$-path $v,a_1,x,u$, and if $f(a_1,u) = a$ then we find the induced $a$-path $v,a_1,u,y$. 
            So there is no induced $a$-path $v = a_0,\ldots,a_{n-1} = x$ independent from $y$, and similarly we can prove that there is no induced $a$-path $v = a_0,\ldots,a_{n-1} = y$ independent from $x$. Consequently, $r = g(v,z) = f(v,x)$, but as we also have $r = g(u,v) = f(u,v)$ and $a = f(x,u)$, it follows that $(A,f)$ induces the forbidden triple on $u,v,x$, a contradiction.
            \item If there are $u,v\in A$ such that $g(u,v) = \breve{r}$, $g(v,z) = \breve{r}$, and $g(u,z) = a$ then we can proceed analogously as in the previous case. One can also use the fact that changing the direction of all edges, that is, exchanging $r$ and $\breve{r}$, preserves membership in $\mathcal C$ and reduces this case to the previous one.
            \item Next assume that there are $u,v\in A$ such that $g(u,v) = a$, $g(v,z) = r$, and $g(z,u) = r$. As $g(v,z)$ and $g(u,z)$ are not equal to $a$, we know that at most one of $f(x,u)$ and $f(y,u)$ is equal to $a$ and at most one of $f(x,v)$ and $f(y,v)$ is equal to $a$. If $f(x,u) = f(y,v) = a$ then we find an induced $a$-path on vertices $x,u,v,y$, similarly if $f(x,v) = f(y,u) = a$. So these cases do not happen. It follows that $g(u,z) = f(y,u)$ if and only if $g(v,z) = f(y,v)$ (and in this case we get that $f(y,u) =\breve{r}$ and $f(y,v) = r$, so the triple $u,v,y$ is forbidden), that $g(u,z) = f(x,u)$ if and only if $g(v,z) = f(x,v)$ (and in this case the triple $u,v,x$ is forbidden), and that $g(u,z) = f(u,x)$ if and only if $g(v,z) = f(v,x)$ (and in this case the triple $u,v,x$ is again forbidden). In all cases, we arrived to a contradiction.
            
            \item The above three points imply that all triples in $(B,g)$ are allowed, and so $(B,g)$ contains an induced $a$-path on 4 vertices. Clearly, this path contains $z$. There are two possibilities, either $z$ is an endpoint of this path, or it is not.
            
            If $z$ is an endpoint, call the other vertices of the path $u,v,w$ such that $f(u,v) = f(v,w) = g(w,z) = a$. As $g(w,z) = a$, we know that $f(x,w) = f(y,w) = a$. Now, since $g(u,z),g(v,z) \neq a$, it follows that at most one of $f(x,u)$ and $f(y,u)$ is equal to $a$ and similarly for $f(x,v)$ and $f(y,v)$. On the other hand, we know that $a\in \{f(x,u),f(x,v)\}$ (as otherwise $u,v,w,x$ would be an induced $a$-path on 4 vertices in $(A,f)$, and similarly $a\in \{f(y,u),f(y,v)\}$. It follows that either $f(x,u) = f(y,v) = a$, or $f(x,v) = f(y,u) = a$. In the first case $x,u,v,y$ is an induced $a$-path on 4 vertices in $(A,f)$, and in the second case $x,v,u,y$.

            So $z$ is not an endpoint. Call the other vertices of the path $u,v,w$ such that $f(u,v) = g(v,z) = g(z,w) = a$. Then we know that $f(v,x) = f(v,y) = f(w,x) = f(w,y) = a$, and at least one of $f(u,x)$ and $f(u,y)$ is not equal to $a$. If $f(u,x) \neq a$ then $u,v,x,w$ is an induced $a$-path on 4 vertices in $(A,f)$, and if $f(u,y) \neq a$ then $u,v,y,w$ is an induced $a$-path on 4 vertices in $(A,f)$, a contradiction. Consequently, $(B,g) \in \mathcal C$. \qedhere
        \end{enumerate}
    \end{enumerate}
\end{proof}

\subsubsection{The Relation Algebra \texorpdfstring{$\ra{51}{65}$}{51\textunderscore 65}}
\label{sect:51_65}

The relation algebra $\ra{51}{65}$ is given 
by the facts that $b \cup \id$ is an equivalence relation and $(a, a, a)$ and $(c, c, c)$ are forbidden triples.
We will find a universal, but not fully universal representation $\fB$ of $\ra{51}{65}$ such that a finite $\ra{51}{65}$-network is satisfiable in some representation of $\ra{51}{65}$ if and only if it is satisfiable in $\fB$. This representation will not be fully universal, and we will characterize the consistent atomic $\ra{51}{65}$-networks which are satisfiable by means of four forbidden subnetworks on four vertices. These will be the following:

Put $A = \{0,1,2,3\}$ and define four atomic $\ra{51}{65}$-networks $(A,f_1)$, $(A,f_2)$, $(A,f_3)$, and $(A,f_4)$ as follows:
\begin{enumerate}
    \item $f_1(0,1) = f_1(1,2) = f_1(2,3) = f_1(3,0) = a$, and $f_1(0,2) = f_1(1,3) = c$,
    \item $f_2(0,1) = f_2(1,2) = f_2(2,3) = f_2(3,0) = c$, and $f_2(0,2) = f_2(1,3) = a$ (so $(A,f_1)$ and $(A,f_2)$ only differ by swapping $a$ with $c$),
    \item $f_3(0,1) = f_3(1,2) = f_3(2,3) = a$, and $f_3(0,2) = f_3(0,3) = f_3(1,3) = c$ 
    \item $f_4(0,1) = f_4(2,3) = b$, $f_4(0,2) = f_4(1,3) = a$, $f_4(0,3) = f_4(1,2) = c$.
\end{enumerate}

Note that $(b \cup \id) \circ (b\cup \id) = b\cup \id =: e$ is an equivalence relation. Abusing notation, if $(A,f)$ is a consistent atomic $\ra{51}{65}$-network, we will also consider $e$ as an equivalence relation on $(A,f)$ whose equivalence classes are the maximal subsets of $A$ such that the range of the restriction of $f$ is $\{b,\id\}$.

\begin{lemma}\label{lem:5165_unsat}
All networks $(A,f_1)$, $(A,f_2)$, $(A,f_3)$, and $(A,f_4)$ are consistent, but none is satisfiable in any representation of $\ra{51}{65}$.
\end{lemma}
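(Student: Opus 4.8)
The plan is to treat consistency and unsatisfiability separately, using throughout the composition table of $\ra{51}{65}$ that is forced by its defining data. Recall that $a,b,c$ are symmetric, that $e:=b\cup\id$ is an equivalence relation (so $a,c\not\le b\circ b$, i.e.\ the triples $\{a,b,b\}$ and $\{b,b,c\}$ are forbidden), and that $(a,a,a)$ and $(c,c,c)$ are forbidden. Together with associativity these determine the remaining products; in particular $a\circ a=\id\cup b\cup c$, $c\circ c=\id\cup a\cup b$, $a\circ c=a\cup b\cup c$, and $a\circ b=b\circ c=a\cup c$, and the triples $\{a,a,c\}$, $\{a,c,c\}$, $\{a,b,c\}$ are allowed. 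I record the two facts I will use most: $a\not\le a\circ a$ and $c\not\le c\circ c$, so in any representation the graphs $a^\fB$ and $c^\fB$ are triangle-free; and since composition holds with equality in a representation, the inclusions $a\le c\circ c$ and $c\le a\circ a$ yield witnesses, e.g.\ every $a$-edge has a common $c$-neighbour.

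Consistency is a finite check: every triangle of $(A,f_1)$ and $(A,f_2)$ has atom-multiset $\{a,a,c\}$ resp.\ $\{a,c,c\}$, every triangle of $(A,f_3)$ is $\{a,a,c\}$ or $\{a,c,c\}$, and every triangle of $(A,f_4)$ is $\{a,b,c\}$; all of these are allowed, and the diagonal condition $f_i(x,x)\le\id$ is vacuous, so all four networks are consistent.

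The cleanest unsatisfiability argument is for $(A,f_4)$, which I would present first as the model for the others. Suppose $s$ satisfies it in a representation $\fB$ and write $v_i=s(i)$, so $(v_0,v_1),(v_2,v_3)\in b^\fB$ while the four cross pairs carry the twisted matching $a,c,c,a$. Applying $a\le c\circ c$ to the $a$-edge $(v_0,v_2)$ gives a common $c$-neighbour $w$. Intersecting the two composition constraints along $v_0$ and $v_2$ pins the remaining edges: $(w,v_1)\in(c\circ b)\cap(c\circ c)=(a\cup c)\cap(\id\cup a\cup b)=\{a\}$ (using $(v_0,v_1)\in b$ and $(v_2,v_1)\in c$), and likewise $(w,v_3)\in\{a\}$ (using $(v_2,v_3)\in b$ and $(v_0,v_3)\in c$). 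Then $(w,v_1),(w,v_3),(v_1,v_3)$ all lie in $a^\fB$, a monochromatic $a$-triangle, contradicting $a\not\le a\circ a$.

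For $(A,f_1)$, $(A,f_2)$, $(A,f_3)$ the same mechanism applies but is more delicate, because these networks contain no $b$-edge, so the witnesses cannot be pinned to a single atom in one step. Here the four points lie in four distinct $e$-classes, and the induced $\{a,c\}$-colouring of $K_4$ has no monochromatic triangle. I would argue as follows: choose an $a$-edge whose opposite edge is also $a$ (the $a$-$4$-cycle of $f_1$ provides such a pair), take the common $c$-neighbour $w$ of its endpoints, and compute that each of $w$'s two remaining edges lies in $\{a,b\}$. If both are $a$ we obtain a monochromatic $a$-triangle on the opposite $a$-edge and are done; otherwise $w$ merges into one of the existing classes, producing a genuine $b$-edge and a twist, which puts us in the situation of $(A,f_4)$, where iterating the witness construction with the pinning $c\circ b=a\cup c$ forces a monochromatic $a$- or $c$-triangle. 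The algebra's $a\leftrightarrow c$ symmetry reduces $(A,f_2)$ to $(A,f_1)$, and the self-dual $(A,f_3)$ is handled by the same bookkeeping applied to its $a$-path. I expect the main obstacle to be exactly this last step: since consistency is purely local, no single triangle is ever violated, and the contradiction only emerges after introducing witness points in the infinite representation; controlling the case distinction on whether each witness stays in a new class or collapses into an old one, and verifying that the process terminates in a monochromatic triangle rather than cycling, is the technical heart of the proof.
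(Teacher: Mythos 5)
Your consistency check, your argument for $(A,f_4)$, and the $a\leftrightarrow c$ symmetry reduction of $(A,f_2)$ to $(A,f_1)$ are all correct (and for $(A,f_4)$ your argument is essentially the paper's). The gap is in $(A,f_1)$ and $(A,f_3)$, exactly where you anticipate it, and it is genuine rather than a matter of bookkeeping. Your witness comes from $a \le c\circ c$ applied to an $a$-edge, which only pins the two remaining edges of $w$ into $a \cup b$; in the case where one of them is $b$, your claim that this ``puts us in the situation of $(A,f_4)$'' is false: $(A,f_4)$ has two disjoint $b$-edges, while your configuration has exactly one, so no $4$-subset is a copy of $(A,f_4)$. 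What the bad case of $(A,f_1)$ actually contains is a copy of $(A,f_3)$ (on $\{w,v_0,v_1,v_3\}$ the $a$-edges form the path $w,v_3,v_0,v_1$ and all other edges are $c$), and the bad case of $(A,f_3)$ in turn contains a copy of $(A,f_2)$ or of $(A,f_3)$ itself; so ``iterating'' the construction is a circular reduction among the four networks, and without a termination argument (which you explicitly do not supply) nothing is proved. For $(A,f_3)$ there is a further unaddressed issue: the two composition constraints on $(w,v_3)$ only give $\id \cup a \cup b$, so the witness may coincide with $s(3)$, in which case the construction yields no information at all.

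The repair is to choose the witness differently, which is what the paper does: decompose the $c$-diagonal instead of an $a$-edge. Since $(a,b,c)$ is an allowed triple, $c \le b \circ a$, so from $(s(0),s(2)) \in c^\fC$ one gets $x$ with $(s(0),x) \in b^\fC$ and $(x,s(2)) \in a^\fC$. The $b$-edge at $x$ is decisive: $(s(3),x)$ cannot be $a$ (forbidden $(a,a,a)$ with $(s(2),s(3)) \in a^\fC$) and cannot be $b$ (forbidden $(a,b,b)$ with $(s(0),s(3)) \in a^\fC$ in the case of $f_1$, respectively forbidden $(c,b,b)$ when $f_3(0,3)=c$), and $x \ne s(3)$ since their edges to $s(0)$ differ; hence $(s(3),x) \in c^\fC$, and likewise $(s(1),x) \in c^\fC$. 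Then $(s(1),s(3),x)$ is a forbidden $(c,c,c)$ triangle. This single witness settles $(A,f_1)$ and, verbatim, $(A,f_3)$, with no iteration and no case analysis; combined with your $(A,f_4)$ argument and the $a\leftrightarrow c$ symmetry, it completes the lemma.
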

\begin{proof}
    Consistency is easy to check. Since $(A,f_1)$ and $(A,f_2)$ only differ by swapping $a$ and $c$, it is enough to verify unsatisfiability for $(A,f_1)$, $(A,f_3)$, and $(A,f_4)$.
    
    We start with $(A,f_1)$. Assume for a contradiction that there is some representation $\fC$ of $\ra{51}{65}$ satisfying $(A,f_1)$, that is, there is a map $s\colon A\to C$ such that for every $x,y\in A$ it holds that $(s(x),s(y)) \in f_1(x,y)^{\fC}$. Since $\fC$ is a representation of $\ra{51}{65}$, it follows that there exists $x\in C$ such that $(s(0),x) \in b^{\fC}$ and $(s(2),x)\in a^{\fC}$. It follows that $(s(3),x) \in c^{\fC}$ (it cannot be in $a^{\fC}$ because of the triple $(s(2),s(3),x)$, it cannot be in $b^{\fC}$ because of the triple $(s(0),s(3),x)$, and clearly $s(3) \neq x$). By an analogous argument we get $(s(1),x) \in c^{\fC}$, but then $(s(1),s(3),x)$ is a forbidden triple in $\fC$, a contradiction.

    Note that the verbatim same argument works also when one puts $f_1(0,3) = c$ instead, which proves that $(A,f_3)$ is not satisfiable.

    Finally, we look at $(A,f_4)$. Analogously as for $(A,f_1)$ we assume that it is satisfiable in some $\fC$, obtain a map $s\colon A\to C$, and find a vertex $x\in C$ such that $(s(0),x), (s(2),x) \in c^{\fC}$. It follows that $(s(1),x),(s(3),x) \in a^{\fC}$, and so the triple $(s(1),s(3),x)$ is forbidden, a contradiction.
\end{proof}

\begin{cor}\label{cor:5165_unsat}
    Let $(A,f)$ be a consistent atomic $\ra{51}{65}$-network with at least 4 equivalence classes of $e$. Then $(A,f)$ contains one of $(A,f_1)$, $(A,f_2)$, or $(A,f_3)$ as a subnetwork, and hence is unsatisfiable.
\end{cor}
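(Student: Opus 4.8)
The plan is to extract a four-vertex subnetwork isomorphic to one of $(A,f_1)$, $(A,f_2)$, or $(A,f_3)$ directly from four distinct $e$-classes, and then invoke Lemma~\ref{lem:5165_unsat}. First I would choose representatives $v_0,v_1,v_2,v_3 \in A$, one from each of four distinct equivalence classes of $e$ (these exist by hypothesis). Since any two of them lie in different $e$-classes, they are distinct and $f(v_i,v_j) \notin \{b,\id\}$; as the network is atomic and the atoms of $\ra{51}{65}$ are $\id,a,b,c$, this forces $f(v_i,v_j)\in\{a,c\}$ for all $i\neq j$. Thus the induced subnetwork on $\{v_0,v_1,v_2,v_3\}$ is a complete graph on four vertices whose edges are $2$-coloured by $\{a,c\}$.

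Next I would use consistency. Since $(a,a,a)$ and $(c,c,c)$ are forbidden triples of $\ra{51}{65}$, the induced colouring contains no monochromatic triangle in either colour; equivalently, the graph $G_a$ of $a$-edges and its complement $G_c$ (the graph of $c$-edges) are both triangle-free. The combinatorial heart of the argument is then the classification of such colourings: a graph $G$ on four vertices for which both $G$ and its complement are triangle-free is, up to isomorphism, either the $4$-cycle $C_4$ ($4$ edges), a perfect matching $M_2$ ($2$ edges), or the path $P_4$ ($3$ edges). I would prove this by a short enumeration over the number $k$ of $a$-edges: for $k\in\{0,1\}$ (and symmetrically $k\in\{5,6\}$) the denser colour already contains a triangle; for $k=2$ (resp.\ $k=4$) triangle-freeness of both colours forces the two minority edges to form a matching, i.e.\ $G_a\cong M_2$ (resp.\ $G_a\cong C_4$, since two adjacent minority edges leave a monochromatic triangle in the majority colour); and for $k=3$ the only triangle-free graph whose complement is also triangle-free is $P_4$ (the triangle $K_3$ is itself monochromatic, and the star $K_{1,3}$ has a monochromatic triangle in the other colour).

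Matching these three cases back to the networks, $G_a\cong C_4$ gives exactly $(A,f_1)$, $G_a\cong M_2$ (so $G_c\cong C_4$) gives $(A,f_2)$, and $G_a\cong P_4\cong G_c$ gives $(A,f_3)$; hence the induced subnetwork on $\{v_0,v_1,v_2,v_3\}$ is isomorphic to one of $(A,f_1)$, $(A,f_2)$, $(A,f_3)$, which establishes the containment claim. Unsatisfiability then follows immediately: by Lemma~\ref{lem:5165_unsat} none of $(A,f_1),(A,f_2),(A,f_3)$ is satisfiable in any representation of $\ra{51}{65}$, and a network containing an unsatisfiable subnetwork is itself unsatisfiable, since restricting a satisfying assignment to the vertices of the subnetwork would satisfy the subnetwork.

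The only real obstacle is making the colouring classification airtight, in particular verifying that the extracted four-vertex subnetwork is genuinely \emph{isomorphic} to one of $f_1,f_2,f_3$ (not merely colour-compatible) and that the three exceptional graphs are matched to the correct networks; this is routine once the enumeration by $k$ is carried out, and is aided by the observation that $(A,f_3)$ is invariant under swapping the colours $a$ and $c$, so that no separate mirror case arises there.
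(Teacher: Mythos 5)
Your proof is correct and follows essentially the same route as the paper: pick one representative from each of four distinct $e$-classes, observe that all pairwise labels lie in $\{a,c\}$, and check that the induced subnetwork must be isomorphic to one of $(A,f_1)$, $(A,f_2)$, $(A,f_3)$ before invoking Lemma~\ref{lem:5165_unsat}. The only difference is that the paper dismisses the case check as ``easy to verify,'' whereas you carry it out explicitly via the enumeration of $2$-colourings of $K_4$ with no monochromatic triangle, which is a correct and complete way to fill that gap.
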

\begin{proof}
    Find $B\subseteq A$ of size 4 such that no two vertices of $B$ are equivalent. Consequently, for every $x\neq y\in B$ it holds that $f(x,y) \in \{a,c\}$. It is easy to verify that every such $(B, f|_{B^2})$ is isomorphic to one of $(A,f_1)$, $(A,f_2)$, or $(A,f_3)$.
\end{proof}

Comer~\cite[Proposition 3]{ComerColorSchemes} gave a representation of $\ra{51}{65}$. He left the proof as an exercise to the reader. We will verify his representation and prove that, in fact, 
Comer's representation is universal
(every satisfiable $\ra{51}{65}$-network is satisfiable in this representation).

\begin{prop}
\label{prop:51rep}
    There is a square representation $\fB$ of $\ra{51}{65}$ such that a consistent atomic $\ra{51}{65}$-network is satisfiable in $\fB$ if and only if it avoids $(A,f_1)$, $(A,f_2)$, $(A,f_3)$, and $(A,f_4)$. Consequently, $\fB$ is a finitely bounded universal representation of $\ra{51}{65}$.
\end{prop}
\begin{proof}
    First, we will construct $\fB$ and prove that it is a representation of $\ra{51}{65}$. We will follow the construction by Comer. Let $B := \Q$ be the rational numbers with a partition $B_0, B_1, B_2$ such that for all $i \in \{0,1,2\}$ the set $B_i$ is dense, cofinal and coinitial in $\Q$, i.e.,
    \begin{align*}
        \forall x \in \Q \ \exists y \in B_i\colon x &\leq y \quad \text{(cofinal),} \\
        \forall x \in \Q \ \exists y \in B_i\colon y &\leq x \quad \text{(coinitial)} .
    \end{align*}
    Examples of such sets could be: 
    \begin{align*}
        B_0 &= \biggl\{ \frac{p}{q} \in \Q \mid q \equiv_3 0 \biggr\} \text{ and } 
        B_i = \biggl \{\frac{p}{q} \in \Q \mid p \neq 0, q \equiv_3 i \biggr \} \text{ for $i \in \{1,2\}$}.
    \end{align*}
    We now define a representation $\fB$ with domain $B\colon$
    \begin{align*}
        \id^\fB   & := \{(x,x) \mid x \in \Q \}, \\
        a^\fB     & := \{(x,y) \mid \exists i \in \{0,1, 2\}\, (\min(x,y) \in B_i \land \max(x,y) \in B_{i +_3 1})\}, \\
        b^\fB     & := \{(x,y) \mid x \neq y \land \exists i \in \{0,1, 2\}\, (x,y \in B_i)\}, \\
        c^\fB     & := B^2 \setminus (\id^\fB \cup \, a^\fB \cup b^\fB).
    \end{align*}
    Obviously, every relation of $\fB$ is symmetric. We will now compute the composition table of $\fB$. For the rest of the proof, we omit specifying $\fB$ in the superscript of the relations, addition will always be modulo $3$, and the variable $i$ will always be in $\{0,1,2\}$. Note that $\id \circ\,d = d \circ \id = d$ for all $d \in \{a, b, c\}$.
    
    For $x,z \in \Q$ the following holds:
    \begin{align*}
       (x,z) \in b \circ b
       &\iff \exists y \in \Q\, ((x,y), (y,z) \in b) \\
       &\iff \exists i  \, \exists y \in B_i\, (x\neq y \land y \neq z \land x,y,z \in B_i) \\
       &\iff \exists i  \, (x, z \in B_i) \\
       &\iff (x,z) \in b \cup \id.\\ \\ \\ 
        (x, z) \in a \circ a
        &\iff \exists y \in \Q\, ((x,y) \in a \land (y,z) \in a) \\
        &\iff \exists i   \, \exists y \in B_i\,
        \begin{cases}
            &((x, z< y) \land x, z \in B_{i+2})\\
            \lor &((y < x,z) \land x, z \in B_{i+1})\\
            \lor &(x < y < z \land x \in B_{i+2} \land z \in B_{i+1})\\
            \lor &(z < y < x \land x \in B_{i+1} \land z \in B_{i+2})
        \end{cases}\\
        &\iff \exists i \, ((x, z \in B_i) \lor (x < z \land z \in B_i \land x \in B_{i+1}) \lor (z < x \land x \in B_i \land z \in B_{i+1}))\\
        &\iff (x, z) \in b \cup c \cup \id.
    \\ \\ \\
        (x, z) \in c \circ c
        &\iff \exists y \in \Q\, ((x,y) \in c \land (y,z) \in c) \\
        &\iff \exists i   \, \exists y \in B_i\,
        \begin{cases}
            &((x, z < y) \land x, z \in B_{i+1})\\
            \lor &((y < x,z) \land x, z \in B_{i+2})\\
            \lor &(x < y < z \land x \in B_{i+1} \land z \in B_{i+2})\\
            \lor &(z < y < x \land x \in B_{i+2} \land z \in B_{i+1})
        \end{cases}\\
        &\iff \exists i \, ((x, z \in B_i) \lor (x < z \land x \in B_i \land z \in B_{i+1}) \lor (z < x \land z \in B_i \land x \in B_{i+1}))\\
        &\iff (x, z) \in a \cup b \cup \id.
    \\ \\ \\
        (x, z) \in a \circ b
        &\iff \exists y \in \Q\, ((x,y) \in a \land (y,z) \in b) \\
        &\iff \exists i   \, \exists y \in B_i\, (y \neq z \land z \in B_i \land ((x < y \land x \in B_{i+2}) \lor (y < x \land x \in B_{i+1})))\\
        &\iff \exists i  \, (z \in B_i \land x \not\in B_i)\\
        &\iff (x, z) \in a \cup c.\\ \\ \\
        (x, z) \in b \circ c
        &\iff \exists y \in \Q\, ((x,y) \in b \land (y,z) \in c) \\
        &\iff \exists i   \, \exists y \in B_i\, (x \neq y \land x \in B_i \land ((z < y \land z \in B_{i+1}) \lor (y < z \land z \in B_{i+2})))\\
        &\iff \exists i  \, (x \in B_i \land z \not\in B_i)\\
        &\iff (x, z) \in a \cup c.\\ \\ \\
        (x, z) \in a \circ c
        &\iff \exists y \in \Q\, ((x,y) \in a \land (y,z) \in c) \\
        &\iff \exists i   \, \exists y \in B_i\,
        \begin{cases}
        &(y < x < z \land x \in B_{i+1} \land z \in B_{i+2})\\
        \lor &(y < z < x \land x \in B_{i+1} \land z \in B_{i+2})\\
        \lor &(x < y < z \land x \in B_{i+2} \land z \in B_{i+2})\\
        \lor &(x < z < y \land x \in B_{i+2} \land z \in B_{i+1})\\
        \lor &(z < y < x \land x \in B_{i+1} \land z \in B_{i+1})\\
        \lor &(z < x < y \land x \in B_{i+2} \land z \in B_{i+1})\\
        \end{cases}\\
        &\iff (x, z) \in a \cup b \cup c.
    \end{align*}

    Since $a \circ b$, $a \circ c$, and $b \circ c$ are unions of $a$, $b$, $c$, and $\id$, which are all symmetric, it follows that $a \circ b = b \circ a$, $a \circ c = c \circ a$, and $b \circ c = c \circ b$, which concludes the proof that $\fB$ is indeed a representation of $\ra{51}{65}$.

    \medskip

    Next, we prove that a consistent atomic $\ra{51}{65}$-network $(C, f)$ is satisfiable in $\fB$ if and only if it avoids $(A,f_1)$, $(A,f_2)$, $(A,f_3)$, and $(A,f_4)$. Clearly, if $(C,f)$ contains one of $(A,f_1)$, $(A,f_2)$, $(A,f_3)$, and $(A,f_4)$ then it is not satisfiable at all by Lemma~\ref{lem:5165_unsat}.

    So assume that $(C,f)$ is a consistent atomic $\ra{51}{65}$-network which avoids $(A,f_1)$, $(A,f_2)$, $(A,f_3)$, and $(A,f_4)$. We will find a partition $C = C_1\sqcup C_2 \sqcup C_3$ and a linear order on $C$ such that any increasing function $\varphi\colon C\to B$ such that $\varphi[C_i]\subseteq B_i$ witnesses that $\fB$ represents $(C,f)$.

    By Corollary~\ref{cor:5165_unsat} we know that $(C,f)$ has three (possibly empty) equivalence classes of $e$; put $C_1\sqcup C_2 \sqcup C_3 = C$ to be the classes. Note that, for fixed $1\leq i < j\leq 3$ and for every $x\in B_i, y\in B_j$, the relation of $(x,y)$ in $\fB$ only depends on the order of $x$ and $y$. Consequently, for every $1\leq i < j\leq 3$ and for every $x\in C_i, y\in C_j$, the value $f(x,y)$ determines the order of $\varphi(x)$ and $\varphi(y)$ for every possible function $\varphi\colon C\to B$ which maps $C_k\to B_k$ for every $k\in \{1,2,3\}$ and witnesses that $\fB$ represents $(C,f)$. Let $<$ be the binary relation on $C$ obtained from this. Clearly it is antisymmetric. If there is a linear extension of $<$ then we are done.

    Assume for a contradiction that there is no such linear extension. It follows that there is a sequence $c_0,c_1,\ldots, c_{n-1}\in C$ such that, for every $0\leq i < n$, $c_i < c_{i+_n 1}$. We can assume that $c_0,c_1,\ldots, c_{n-1}\in C$ is the shortest such sequence. Observe that, for every $0\leq i < n$, we have that $c_i$ and $c_{i +_n 1}$ are in different classes of $e$ in $C$. Also note that from the minimality of $n$ it follows that $<$ is not defined on any pair $c_i, c_j$ with $\lvert i-j\rvert \geq 2$. Consequently, we have that $n\in \{3,4\}$.

    If $n=3$ then we have that, without loss of generality $c_0\in C_1$, $c_1\in C_2$, and $c_2\in C_3$. However, from the definition of relations in $\fB$ it follows that $(c_0,c_1,c_2)$ is a forbidden monochromatic triple, a contradiction.

    So $n=4$, and we have that $c_0,c_2\in C_i$ and $c_1,c_3\in C_j$ for some $i\neq j\in \{1,2,3\}$. Consequently, from the definition of relation on $\fB$ we get that $f(c_0,c_1) = f(c_2,c_3) \neq f(c_0,c_3) = f(c_1,c_2) \in \{a,c\}$. This implies that for $D = \{c_0,c_1,c_2,c_3\}$ we have that $(D, f|_{D^2})$ is isomorphic to $(A,f_4)$, a contradiction.

    Hence there is a linear extension of $<$. Note that since all $B_i$'s are dense, cofinal and coinitial, and $C$ is finite, there exists a desired function $\varphi$.
\end{proof}

\subsubsection{The Relation Algebra \texorpdfstring{$\ra{56}{65}$}{56\textunderscore 65}}
\label{sect:56_65}
The relation algebra $\ra{56}{65}$ has almost the same forbidden triples as $\ra{51}{65}$, but additionally allows the triple $(a,b,b)$ (and so $b$ is no longer an equivalence relation). This relation algebra has a representation, which according to R.~Maddux and P.~Jipsen (personal communication) was found by E.~Lukács. 
However, it does not have a fully universal one (Proposition~\ref{prop:56_65-not-fu}).

\begin{prop}
\label{prop:56rep}
    $\ra{56}{65}$ has a square representation. 
\end{prop}
\begin{proof}
Let $S_0, S_1, \hdots, S_5$ be a partition of the open interval $B := (0, 1) \cap {\mathbb Q}$ such that $S_i$ is dense in $(0, 1)$ for each $i \in \{0, \hdots, 5\}.$ In what follows, all operations with indices of the sets $S_i$ are performed in $\mathbb Z_6$. Let $\fB$ be the structure with domain $B$, where, for $i, j \in \{0, \hdots, 5\},$ $x \in S_i$ and $y \in S_j$ with $x < y,$
\begin{align*}
    (x, y), (y, x) \in a^{\mathfrak B} &\iff j-i \in \{2, 5\},\\
    (x, y), (y, x) \in b^{\mathfrak B} &\iff j-i \in \{0, 1\}, \text{~and~}\\
    (x, y), (y, x) \in c^{\mathfrak B} &\iff j-i \in \{3, 4\}.\\
\end{align*}
To prove that $\fB$ is indeed a representation of $\ra{56}{65}$, one needs to compute the composition table as in Proposition~\ref{prop:51rep}. We will only exemplarily show that $b \circ b = a \cup b \cup \id$; the other cases can be treated very similarly.

For one direction, suppose that $(x, z) \in b \circ b$ with $x < z.$ By the definition of $b \circ b,$ there is some $y \in B$ such that $(x, y) \in b$ and $(y, z) \in b.$ Let $i, j, k \in \{0, \hdots, 5\}$ such that $x \in S_i, y \in S_j$ and $z\in S_k.$
        If $y < x < z,$ we have
        \begin{align*}
            k-i = (k-j)+(j-i) = (k-j) - (i-j) \in \{0,1\}-\{0,1\} = \{0,1,5\};
        \end{align*}
        if $x < y < z,$ we have
        \begin{align*}
            k-i = (k-j)+(j-i) \in \{0,1\}+\{0,1\} = \{0,1,2\};
        \end{align*}
        and if $x < z < y,$ we have
        \begin{align*}
            k-i = (k-j)+(j-i) = -(j-k)+(j-i) \in -\{0,1\}+\{0,1\} = \{0, 5\}+\{0, 1\} = \{0, 1, 5\}.
        \end{align*}
        Thus, $(k-i) \in \{0, 1, 2, 5\}$ and consequently $(x, z) \in a \cup b.$
    
    In the other direction, let $(x, z) \in a \cup b \cup \id$ with $x \leq z$ and let $i, k \in \{0, \hdots, 5\}$ be such that $x \in S_i$ and $z \in S_k.$ If $(x, z) \in \id$, i.\,e.\ $x = z,$ choose an arbitrary $y \in S_i$ different from $x$ to obtain $(x, y) \in b,$ $(y, z) \in b$ and hence $(x, z) \in b \circ b.$ For the case $(x, z) \in a,$ we distinguish two cases: If $k-i = 2,$ choose an arbitrary $y \in S_{i+1}$
        with $x < y < z;$ if $k-i = 5,$ choose an arbitrary $y \in S_i$ with $x < z < y.$ If $(x, z) \in b,$ we can choose an arbitrary $y \in S_i$ with $x < y < z.$ In all cases, it holds that $(x, y) \in b$ and $(y, z) \in b.$
\end{proof}

\begin{figure}
    \centering
    \[
    \begin{tikzcd}
        \bullet && \bullet \\
        \\
        \bullet && \bullet & \bullet\,x
        \arrow["c", no head, from=1-1, to=1-3]
        \arrow["c"', no head, from=1-1, to=3-1]
        \arrow["a"{pos=0.7}, no head, from=1-1, to=3-3]
        \arrow["c", no head, from=1-3, to=3-3]
        \arrow["a"{pos=0.7}, no head, from=3-1, to=1-3]
        \arrow["c"', no head, from=3-1, to=3-3]
        \arrow["b", no head, from=1-3, to=3-4]
        \arrow["c", no head, bend right=40, from=3-1, to=3-4]
        \arrow[dashed, no head, bend left=90, from=1-1, to=3-4]
        \arrow[dashed, no head, from=3-3, to=3-4]
    \end{tikzcd}
    \]\caption{AP$(3, 2, 4)$ fails for $\ra{56}{65}$.}
    \label{fig:ap-failure-56}
\end{figure}

\begin{prop}\label{prop:56_65-not-fu}
$\ra{56}{65}$ does not have a fully universal representation.
\end{prop}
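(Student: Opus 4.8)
The plan is to apply Corollary~\ref{cor:fu}: since $\ra{56}{65}$ is representable (Proposition~\ref{prop:56rep}), it has a fully universal representation if and only if the class of all consistent atomic $\ra{56}{65}$-networks has $\AP(3,2,n)$ for every $n$. Hence it suffices to exhibit a single failure of $\AP(3,2,4)$, which is exactly what Figure~\ref{fig:ap-failure-56} encodes. Label the four corner vertices of that figure $\mathrm{TL},\mathrm{TR},\mathrm{BL},\mathrm{BR}$ (top-left, top-right, bottom-left, bottom-right) and the fifth vertex $x$. The two input networks are the $4$-vertex ``square'' $(V_2,f_2)$ on $\{\mathrm{TL},\mathrm{TR},\mathrm{BL},\mathrm{BR}\}$ carrying $c$ on its $4$-cycle and $a$ on its two diagonals, and the $3$-vertex triangle $(V_1,f_1)$ on $\{\mathrm{TR},\mathrm{BL},x\}$ with $f_1(\mathrm{TR},\mathrm{BL})=a$, $f_1(\mathrm{TR},x)=b$, $f_1(\mathrm{BL},x)=c$; they share the pair $\{\mathrm{TR},\mathrm{BL}\}$ and agree there. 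I would show that no consistent atomic amalgam on the five vertices exists.

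First I would record the allowed triples of $\ra{56}{65}$. Because every atom is symmetric, the cycle law (Proposition~\ref{prop:cyclelaw}) collapses each class $[x,y,z]$ to the set of all permutations of $(x,y,z)$, so whether a triple of diversity atoms lies in $\Cy(\ra{56}{65})$ depends only on its underlying multiset. The composition table of $\ra{56}{65}$ coincides with that of $\ra{51}{65}$ except that $(a,b,b)$ is now allowed, which forces $b\circ b=a\cup b\cup\id$ (as computed in Proposition~\ref{prop:56rep}). Reading off the allowed diversity multisets, these are exactly $\{a,a,b\}$, $\{a,a,c\}$, $\{a,b,b\}$, $\{a,b,c\}$, $\{a,c,c\}$, $\{b,b,b\}$, $\{b,c,c\}$, while $\{a,a,a\}$, $\{b,b,c\}$, and $\{c,c,c\}$ are forbidden.

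Next I would verify that both inputs are consistent: every triangle of the square $(V_2,f_2)$ has atom-multiset $\{a,c,c\}$ and the triangle $(V_1,f_1)$ has multiset $\{a,b,c\}$, both allowed. The core is then a short forcing computation over the two undetermined edges $f(\mathrm{TL},x)$ and $f(\mathrm{BR},x)$ (the dashed edges of Figure~\ref{fig:ap-failure-56}). The triangles $\{\mathrm{TL},\mathrm{TR},x\}$ and $\{\mathrm{TL},\mathrm{BL},x\}$ give the multisets $\{b,c,f(\mathrm{TL},x)\}$ and $\{c,c,f(\mathrm{TL},x)\}$; allowedness forces $f(\mathrm{TL},x)\in\{a,c\}\cap\{a,b\}=\{a\}$. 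Then the triangles $\{\mathrm{TL},\mathrm{BR},x\}$ and $\{\mathrm{TR},\mathrm{BR},x\}$ give $\{a,a,f(\mathrm{BR},x)\}$ and $\{b,c,f(\mathrm{BR},x)\}$, forcing $f(\mathrm{BR},x)\in\{b,c\}\cap\{a,c\}=\{c\}$. But then the triangle $\{\mathrm{BL},\mathrm{BR},x\}$ carries the multiset $\{c,c,c\}$, which is forbidden — a contradiction. Hence no consistent amalgam exists, $\AP(3,2,4)$ fails, and by Corollary~\ref{cor:fu} there is no fully universal representation.

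Since the whole argument is a finite case analysis, there is no genuine analytic obstacle; the only real content is choosing the five-vertex diagram so that the two dashed edges are each pinned down to a \emph{unique} value whose combination produces a monochromatic $c$-triangle. The step most prone to error is the tabulation of the allowed triples, so I would cross-check that list against the composition table established in Proposition~\ref{prop:56rep} before writing up the forcing.
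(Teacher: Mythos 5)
Your proposal is correct and takes essentially the same approach as the paper: both exhibit a failure of $\AP(3,2,4)$ using the five-vertex diagram of Figure~\ref{fig:ap-failure-56} (square with $c$-edges and $a$-diagonals plus the extra vertex attached by $b$ and $c$) and then conclude via Theorem~\ref{thm:fu}/Corollary~\ref{cor:fu}. The only difference is that you spell out the list of allowed triples and the forcing of the two dashed edges to $a$ and $c$ (yielding the forbidden $\{c,c,c\}$), which the paper compresses into ``it is easy to see.''
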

\begin{proof}
Consider the diagram from Figure~\ref{fig:ap-failure-56}. 
It is easy to see that all assignments of atoms to the dashed edges induce one of the forbidden triples $(a,a,a)$, $(c,c,c)$,  or $(c,b,b)$, so $\ra{56}{65}$ does not have AP$(3,2,4)$. Hence, it does not have a fully universal representation by Theorem \ref{thm:fu}.
\end{proof}

Recently, it was shown that the algebra $\ra{56}{65}$ has a finitely bounded universal representation~\cite{56paper} (this was posed as a question in an early preprint of this paper); hence, its NSP is contained in $\NP$ by Lemma \ref{lem:fin_bounded_np}. The representation is closely connected to the generic circular triangle-free graph studied in~\cite{BGP2025Circular}. The construction involves deeper results about signed graphs and requires substantially different techniques.


\section{NP-hard Algebras with at most 4 Atoms} 
\label{sect:NPhard}
In this section we present $\NP$-hardness results for relation algebras with at most 4 atoms. If the relation algebra has a normal representation, it falls into the scope of the tractability conjecture (Conjecture~\ref{conj:tractability}); in order to confirm the conjecture for the relation algebras with a normal representation, we also explain how to obtain a pp-construction of $(\{0,1\};\NAE)$ in each of these cases.

For the relation algebras with at most 3 atoms, a classification has already been established by~\cite{HirschCristiani}, with two corrections in~\cite{BodirskyKnaeRamics}; in particular, $\ra{6}{7}$ is $\NP$-complete.

For the relation algebras with 4 atoms, 
the NSP for 
$\ra{15}{37}$ has been shown to be $\NP$-complete by Broxvall and Johnsson~\cite{BroxvallJonsson}: 
it has a normal representation (Section~\ref{sec:normal_repr}) and in the countable normal representation, the relation $r$ denotes a homogeneous strict partial order. 
The CSP of all reducts of this representation have been classified by Broxvall and Johnsson~\cite{BroxvallJonsson}, and the hardness result follows since the maximal tractable classes presented in Section~4 there do not cover all the relations of $\ra{15}{37}$. 
We thus have the following. 

\begin{prop}\label{prop:15_37}
    NSP($\ra{15}{37}$) is $\NP$-complete. 
\end{prop}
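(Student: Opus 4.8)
The plan is to treat the two directions separately, relying on structural facts already assembled earlier in the paper. For membership in $\NP$, I would note that $\ra{15}{37}$ appears in the list~\eqref{eq:norm_all} of Section~\ref{sec:normal_repr}, so it has a normal representation $\fB$; since every normal representation is in particular fully universal, Corollary~\ref{cor:fully-univ-NP} immediately yields $\NSP(\ra{15}{37}) \in \NP$. By Lemma~\ref{lem:nsp-ncp}, $\NSP(\ra{15}{37})$ moreover coincides with $\NCP(\ra{15}{37})$, and since $\fB$ is fully universal this is the same computational problem as $\CSP(\fB)$.

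Next I would pin down $\fB$ concretely. Reduced consistent atomic $\ra{15}{37}$-networks are oriented graphs with edge relation $r$ and non-edge relation $a$; inspecting the allowed triples of $\ra{15}{37}$ shows that $r$ is transitive (so $(r,r,r)$ is allowed while $(r,r,\breve r)$ and $(r,r,a)$ are forbidden), whence every such network is a finite strict partial order with $r$ the order, $\breve r$ its converse, and $a$ incomparability. Because $\fB$ is fully universal, its age is exactly the class of all finite strict partial orders, and by the uniqueness of countable normal representations $\fB$ is the (unique up to isomorphism) homogeneous strict partial order with this age, i.e., the generic poset. Thus $\CSP(\fB)$ is a CSP over a first-order reduct of the generic poset, whose relations are the atoms of $\ra{15}{37}$ together with their Boolean combinations such as $\overline{\id} = r \cup \breve r \cup a$.

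Hardness then follows from the dichotomy of Broxvall and Jonsson~\cite{BroxvallJonsson}, who classify $\CSP(\fB')$ for every first-order reduct $\fB'$ of the generic poset and exhibit a finite list of maximal tractable constraint languages. The only genuine content of the proof—and the step I expect to be the main obstacle—is the verification that the constraint language of $\ra{15}{37}$ is contained in none of the maximal tractable classes of their Section~4; concretely, one must match the atoms $r$, $\breve r$, $a$ with the primitive relations of the generic poset and confirm that a relation such as $r \cup \breve r \cup a$ (or whichever combination generates the offending behaviour) lies outside every tractable class. Once this finite bookkeeping check against their list is carried out, the Broxvall--Jonsson dichotomy places $\ra{15}{37}$ in the $\NP$-hard regime, and combined with the $\NP$-membership above this gives $\NP$-completeness.
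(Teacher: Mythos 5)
Your proposal is correct and follows essentially the same route as the paper: both establish $\NP$-membership via the normal representation (which exists by Section~\ref{sec:normal_repr} and is fully universal), identify that representation as the generic partial order in which $r$ is a homogeneous strict partial order, and then obtain hardness by checking that the relations of $\ra{15}{37}$ are not covered by any of the maximal tractable classes in Section~4 of Broxvall and Jonsson~\cite{BroxvallJonsson}. Your verification that the allowed triples force $r$ to be transitive, so that the consistent atomic networks are exactly the finite strict partial orders, is a detail the paper asserts without proof, but it does not change the argument.
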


The hardness proof given in ~\cite{BroxvallJonsson} is a primitive positive construction of the Betweenness relation $\Betw$ on ${\mathbb Q}$; since the $({\mathbb Q};\Betw)$ primitively positively interprets $(\{0,1\};\NAE)$ (see, e.g.,~\cite{BP-reductsRamsey}), this confirms the tractability conjecture for the normal representation of $\ra{15}{37}$.
The remaining hardness proofs are grouped in the following subsections according to the applied proof method. 

\subsection{Hardness for Bounded Square Representations}
\label{sect:bounded-square-hard}
We have already studied representations of $\ra{39}{65}$ 
and $\ra{62}{65}$ in Section~\ref{sect:bounded-square} (they do not have fully universal representations). 

\begin{prop}
\label{prop:bounded-hard}
    $\NSP(\ra{39}{65})$ and $\NSP(\ra{62}{65})$
    are $\NP$-complete. 
\end{prop}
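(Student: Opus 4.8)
The plan is to apply Lemma~\ref{lem:bounded} directly, since it was tailor-made for exactly this situation: it asserts that if $\bA$ is a finite \emph{simple} relation algebra whose square representations all have cardinality bounded by some finite $n \in \N$, and if $\bA$ moreover has \emph{some} square representation with at least three elements, then $\NSP(\bA)$ is $\NP$-complete. Thus the entire argument reduces to checking these three hypotheses for each of $\ra{39}{65}$ and $\ra{62}{65}$, all of which are already available from the preceding sections.

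First I would note simplicity. Both $\ra{39}{65}$ and $\ra{62}{65}$ appear in Maddux's list of integral symmetric relation algebras with four atoms, and every integral relation algebra is simple (by~\cite[Theorem 4.18\,(ii)]{TarskiJonnsonOperators}, as recalled in the preliminaries). Hence both are finite simple relation algebras, satisfying the standing assumption of Lemma~\ref{lem:bounded}.

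Next I would invoke the representation results of Section~\ref{sect:bounded-square}. Proposition~\ref{prop:39_65-repr} exhibits an explicit square representation of $\ra{39}{65}$ on $\Z_7$, so $\ra{39}{65}$ has a square representation with $7 \geq 3$ elements, and it further establishes (via Lemma~\ref{lem:finite_classes} applied with $e = \id$ and the Ramsey bound $R(3,3,3) = 17$) that every square representation of $\ra{39}{65}$ has at most $16$ elements; in particular the maximum cardinality of all square representations is a finite natural number. Proposition~\ref{prop:62_65-repr} does the same for $\ra{62}{65}$, providing a square representation on $\Z_{13}$ with $13 \geq 3$ elements and again bounding all square representations by $16$ elements. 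So in both cases all three hypotheses of Lemma~\ref{lem:bounded} are met.

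Applying Lemma~\ref{lem:bounded} then yields $\NP$-completeness of $\NSP(\ra{39}{65})$ and $\NSP(\ra{62}{65})$ at once: the bounded cardinality of square representations gives the $\NP$ membership (nondeterministically guess one of the finitely many square representations together with a satisfying assignment), while the existence of a square representation with at least three elements supplies $\NP$-hardness by the reduction from $n$-colouring built into that lemma. I expect no genuine obstacle here, since the substantive work—constructing the concrete finite representations and proving the finite bound on their size through the Ramsey-type argument of Lemma~\ref{lem:finite_classes}—has already been carried out in the cited propositions; the present proof is merely the assembly of those facts into the hypotheses of Lemma~\ref{lem:bounded}.
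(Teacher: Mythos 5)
Your proposal is correct and follows exactly the paper's own proof: the paper derives Proposition~\ref{prop:bounded-hard} by combining Lemma~\ref{lem:bounded} with Propositions~\ref{prop:39_65-repr} and~\ref{prop:62_65-repr}, just as you do. Your additional explicit verification of the hypotheses (simplicity via integrality, the concrete square representations on $\Z_7$ and $\Z_{13}$ with at least three elements, and the Ramsey bound of $16$ on all square representations) is a correct and slightly more detailed spelling-out of what the paper leaves implicit.
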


\begin{proof}
    The statement follows from Lemma~\ref{lem:bounded} together with Proposition~\ref{prop:39_65-repr} and Proposition~\ref{prop:62_65-repr}, respectively.
\end{proof}

\subsection{Symmetric Algebras with a Flexible Atom} 
\label{subsec:flex_atoms}

The network satisfaction problem for finite symmetric relation algebras with a flexible atom is in $\Ptime$ or $\NPc$~\cite{BodirskyKnaeJAIR}.
The tractability conjecture (Conjecture~\ref{conj:tractability}) has already been confirmed for this case (compare to \cite[Theorem 9.1]{BodirskyKnaeuerDatalog23}).
To determine which of the two cases applies in the concrete cases of relation algebras with four atoms in this article, we need the following proposition.

\begin{prop}[{\cite[Theorem 2.21 and Proposition 6.1]{BodirskyKnaeJAIR}}]\label{prop:flexNPcBin}
    Let $\bA \in \RA$ be a finite, symmetric, integral relation algebra with a flexible atom and $\fB$ a normal representation of $\bA$. If $\fB$ does not have a binary injective polymorphism, then $\CSP(\fB)$ is $\NP$-complete.
\end{prop}

\begin{prop}[{\cite[Lemma 4.3]{BodirskyKnaeuerDatalog23}}]\label{prop:normBin1Cycle}
    Let $\bA \in \RA$ be a finite symmetric relation algebra with a normal representation $\fB$ that has a binary injective polymorphism. Then $\bA$ has all 1-cycles.     
\end{prop}

\begin{prop}
\label{prop:flexNo1CycleNSP}
    Let $\bA \in \{\ra{32}{65}, \ra{33}{65}, \ra{55}{65}, \ra{57}{65}, \ra{59}{65}, \ra{63}{65}, \ra{64}{65}\}$, then $\NSP(\bA)$ is $\NP$-complete. 
\end{prop}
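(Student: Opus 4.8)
The plan is to chain together the two quoted results about flexible atoms, Proposition~\ref{prop:flexNPcBin} and Proposition~\ref{prop:normBin1Cycle}, with the fact that the network satisfaction problem coincides with the CSP of a fully universal representation. Every algebra $\bA$ in the list is finite, symmetric, and integral, has a flexible atom (Figure~\ref{fig:flex}), and occurs among the algebras with a normal representation in~\eqref{eq:norm_all}; fix such a normal representation $\fB$ for each. Since a normal representation is in particular fully universal, Corollary~\ref{cor:fully-univ-NP} gives $\NSP(\bA) \in \NP$, and moreover $\NSP(\bA)$ is the same problem as $\CSP(\fB)$. It therefore suffices to prove that $\CSP(\fB)$ is $\NP$-hard.

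To invoke Proposition~\ref{prop:flexNPcBin}, I would verify that $\fB$ has no binary injective polymorphism. This is exactly where Proposition~\ref{prop:normBin1Cycle} enters, through its contrapositive: if $\bA$ fails to have all 1-cycles, then $\fB$ cannot have a binary injective polymorphism. Concretely, it is enough to exhibit, for each algebra, a single diversity atom $a$ whose 1-cycle $(a,a,a)$ is forbidden, i.e.\ with $a \not\leq a \circ a$. (The identity atom always satisfies $\id \leq \id \circ \id$, so any witness must be a diversity atom.) Granting this, Proposition~\ref{prop:normBin1Cycle} rules out a binary injective polymorphism of $\fB$, and Proposition~\ref{prop:flexNPcBin} then yields that $\CSP(\fB)$ is $\NP$-complete; combined with the previous paragraph, $\NSP(\bA)$ is $\NP$-complete.

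All that remains is a finite table check: for each of $\ra{32}{65}$, $\ra{33}{65}$, $\ra{55}{65}$, $\ra{57}{65}$, $\ra{59}{65}$, $\ra{63}{65}$, $\ra{64}{65}$, read off from $\Cy(\bA)$ (Table~\ref{tab:overview_symmetric}) one diversity atom whose self-composition omits itself. Indeed the label of the proposition signals that these seven algebras are precisely the flexible-atom algebras with at most four atoms that lack some 1-cycle; the complementary flexible-atom algebras, which do possess all 1-cycles, are treated separately on the tractable side (where a binary injective, hence symmetric, polymorphism becomes available).

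I expect the only real obstacle to be bookkeeping in this last step: one must read the cycle sets correctly and ensure that the witnessing atom is a genuine diversity atom rather than the identity (whose 1-cycle is automatic). In particular, for $\ra{64}{65}$, which has the two flexible atoms $a,b$, the missing 1-cycle must be located at the remaining non-flexible atom $c$, i.e.\ one checks $c \not\leq c \circ c$. No representation-theoretic or amalgamation argument is needed beyond what the two cited propositions already supply, so the proof is essentially a direct application of the flexible-atom dichotomy machinery.
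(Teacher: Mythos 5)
Your proposal is correct and follows essentially the same route as the paper's proof: both establish membership of the seven algebras in Figure~\ref{fig:flex}, locate a missing 1-cycle (which the paper identifies as $(c,c,c)$ for all cases except $\ra{59}{65}$, where $(b,b,b)$ is the missing one), and then chain the contrapositive of Proposition~\ref{prop:normBin1Cycle} with Proposition~\ref{prop:flexNPcBin} to conclude $\NP$-completeness. Your additional remarks on $\NP$-membership via full universality and on the identity atom are fine but not needed beyond what the two cited propositions already deliver.
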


\begin{proof}
    All the relation algebras appear in Figure~\ref{fig:flex}, so they have a flexible atom. Moreover, they are symmetric and except for the case of $\ra{59}{65}$ they lack the 1-cycle $(c,c,c)$. The algebra $\ra{59}{65}$ does not have the 1-cycle $(b,b,b)$. Thus, $\bA$ has no binary injective polymorphism by Proposition~\ref{prop:normBin1Cycle}, 
    and therefore, by Proposition~\ref{prop:flexNPcBin}, the NSP is $\NPc$.
\end{proof}

\begin{thm}[{\cite[Theorem 9.1]{BodirskyKnaeJAIR}}]
\label{thm:flex_atom_dichotomy}
    Let $\bA$ be a finite symmetric relation algebra with a flexible atom and atom structure $\fAo$. Then $\CSP(\fA_0)$ and $\NSP(\bA)$ are polynomial-time equivalent.
\end{thm}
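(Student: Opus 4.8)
The plan is to prove the two problems polynomial-time equivalent by combining the reduction that is already available in one direction with a complexity \emph{dichotomy transfer} in the other, crucially exploiting that $\bA$ has a flexible atom and hence, as noted after Definition~\ref{def:flex}, a normal representation $\fB$. Since a normal representation is by definition fully universal, Corollary~\ref{cor:fully-univ-NP} places $\NSP(\bA)$ in $\NP$; moreover, as observed at the start of Section~\ref{sect:PC}, $\NSP(\bA)$ then coincides with $\CSP(\fB)$. On the other side, $\CSP(\fAo)$ is a finite-domain CSP and so is in $\NP$ as well. One reduction is immediate: Proposition~\ref{prop:reductionAtomStructure} yields a polynomial-time reduction from $\NSP(\bA)$ to $\CSP(\fAo)$. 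It therefore suffices to establish a reduction in the opposite direction, and since both problems lie in $\NP$, it is enough to show that they have the \emph{same} tractability status (both in $\Ptime$, or both $\NP$-complete): in the former case $\CSP(\fAo)$ reduces to any nontrivial problem in $\Ptime$, in particular to $\NSP(\bA)$, and in the latter case $\CSP(\fAo)$ reduces to the $\NP$-complete problem $\NSP(\bA)$.

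For the tractable side I would use that the atom structure $\fAo$ is conservative (as noted after Definition~\ref{def:atom-struct}), so Bulatov's dichotomy, Theorem~\ref{thm:dichotomyConservative}, applies: either for every pair of atoms $p,q$ there is a polymorphism of $\fAo$ whose restriction to $\{p,q\}$ is binary symmetric, a majority, or a minority operation, or else $\CSP(\fAo)$ is $\NP$-complete. In the first case the plan is to lift each such operation on atoms to a \emph{canonical} polymorphism of $\fB$ in the sense of the last definition of Section~\ref{sect:CSPS-and-universal-algebra}, whose behaviour on atoms is the prescribed operation. The flexible atom is exactly the ingredient that makes this lifting possible: it allows one to realise any prescribed behaviour on atoms while keeping every triangle consistent, which is the same mechanism that produces the normal representation through Theorem~\ref{thm:two-point-amalgamation}. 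These canonical polymorphisms of $\fB$ then witness tractability of $\CSP(\fB)=\NSP(\bA)$, so $\CSP(\fAo)\in\Ptime$ forces $\NSP(\bA)\in\Ptime$.

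For the hard side, suppose $\CSP(\fAo)$ is $\NP$-complete. By Theorem~\ref{thm:dichotomyConservative} the structure $\fAo$ pp-interprets $(\{0,1\};\NAE)$; equivalently, there is a clone homomorphism from $\Pol(\fAo)$ to the projections. I would transfer this hardness to $\fB$. Since $\fB$ is countable and $\omega$-categorical (being homogeneous over the finite signature $A$), Theorem~\ref{thm:clone-hom-to-proj-implies-NP-hard} reduces the goal to exhibiting a uniformly continuous clone homomorphism from $\Pol(\fB)$ to the projections. Here I would use the behaviour map that sends a canonical polymorphism of $\fB$ to its action on the atoms, which is a polymorphism of $\fAo$; the flexible atom guarantees, as in the tractable case, that this map is onto $\Pol(\fAo)$, and this lets me pull the clone homomorphism on $\Pol(\fAo)$ back to one on $\Pol(\fB)$. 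Consequently $\fB$ pp-interprets $(\{0,1\};\NAE)$ and $\NSP(\bA)=\CSP(\fB)$ is $\NP$-hard, hence $\NP$-complete. (This is compatible with, and refines, the special hardness criterion of Proposition~\ref{prop:flexNPcBin} and Proposition~\ref{prop:normBin1Cycle}, which handle the failure of a binary injective polymorphism, i.e.\ a missing $1$-cycle.)

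Combining the two cases produces polynomial-time reductions in both directions, which gives the asserted equivalence. The main obstacle is the hard side, and more precisely the claim that the behaviour map from the canonical polymorphisms of $\fB$ onto $\Pol(\fAo)$ is surjective and that it controls enough of $\Pol(\fB)$ to pull back a clone homomorphism: this is where the flexible atom must do real work, turning an arbitrary finite operation on atoms into a genuine, globally consistent polymorphism of the infinite representation, and handling polymorphisms of $\fB$ that are not already canonical. Making this canonical-polymorphism calculus precise, rather than the routine finite-domain dichotomy bookkeeping on the $\fAo$ side, is the technical heart of the argument.
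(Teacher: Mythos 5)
First, a structural remark: the paper does not prove this theorem at all --- it is imported with a citation (Theorem 9.1 of the Bodirsky--Kn\"auer JAIR article), so your attempt can only be compared with that cited proof. Your overall architecture does match it: the reduction $\NSP(\bA)\le_p\CSP(\fAo)$ is free from Proposition~\ref{prop:reductionAtomStructure}, both problems are in $\NP$, and the entire substance lies in transferring $\NP$-hardness of $\CSP(\fAo)$ to $\NSP(\bA)=\CSP(\fB)$. (Incidentally, your ``lifting'' in the tractable case is superfluous and, as stated, unjustified: if $\CSP(\fAo)\in\Ptime$, then $\NSP(\bA)\in\Ptime$ follows immediately from the very reduction you quoted, with no canonical polymorphisms needed.)

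The genuine gap is in the hard case. Your plan is to take the behaviour map $\beta$ from the \emph{canonical} polymorphisms of $\fB$ onto $\Pol(\fAo)$ and ``pull back'' the clone homomorphism $\Pol(\fAo)\to$ projections to one on $\Pol(\fB)$. But $\beta$ is defined only on canonical polymorphisms, so composing with it yields a homomorphism from the clone of \emph{canonical} polymorphisms to the projections --- not from all of $\Pol(\fB)$, which is what Theorem~\ref{thm:clone-hom-to-proj-implies-NP-hard} requires. Surjectivity of $\beta$ (which the flexible atom does provide) is the wrong key property: the danger is precisely that $\fB$ has useful \emph{non-canonical} polymorphisms even though $\fAo$ has only bad ones. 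This danger is real, and the paper itself records it: right after Proposition~\ref{prop:reductionAtomStructure} it notes that for the point algebra $\ra{1}{3}$ the CSP of the atom structure is $\NP$-complete while the NSP is in $\Ptime$. The point algebra is of course not symmetric and has no flexible atom, so it does not contradict the theorem, but it shows that the implication ``$\CSP(\fAo)$ $\NP$-hard $\Rightarrow$ $\CSP(\fB)$ $\NP$-hard'' is false in general and hence cannot follow from surjectivity of a behaviour map alone; the hypotheses must enter in a way your sketch never makes precise. In the cited proof this is done by passing to a Ramsey order expansion of $\fB$, canonising \emph{arbitrary} polymorphisms of $\fB$ by interpolation, and then using symmetry and the flexible atom to argue that the resulting canonical behaviours collapse to conservative operations on the atoms, so that a uniformly continuous clone homomorphism on all of $\Pol(\fB)$ can be extracted. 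That canonisation machinery is the technical heart of the theorem; you correctly identify where it is needed, but you do not supply it, so the proposal as it stands does not constitute a proof.
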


\begin{prop}
\label{prop:34_65}
    $\NSP(\ra{34}{65})$ is $\NP$-complete.
\end{prop}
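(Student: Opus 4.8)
The plan is to reduce to a finite-domain problem and then invoke Bulatov's dichotomy. Since $\ra{34}{65}$ appears in Figure~\ref{fig:flex}, it is a finite, symmetric, integral relation algebra with a flexible atom $a$, so Theorem~\ref{thm:flex_atom_dichotomy} applies and tells us that $\CSP(\fAo)$ and $\NSP(\ra{34}{65})$ are polynomial-time equivalent, where $\fAo$ is the atom structure of $\ra{34}{65}$. In particular, containment in $\NP$ is immediate: $\CSP(\fAo)$ is a CSP over the finite domain $A_0 = \{\id,a,b,c\}$ and hence in $\NP$, and the polynomial-time equivalence transports this to $\NSP(\ra{34}{65})$. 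It therefore suffices to prove that $\CSP(\fAo)$ is $\NP$-hard.

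As observed right after Definition~\ref{def:atom-struct}, the atom structure $\fAo$ is conservative, so I would apply Bulatov's theorem (Theorem~\ref{thm:dichotomyConservative}). The idea is to use the forbidden triples of $\ra{34}{65}$, encoded in the ternary relation $R^{\fAo} = \Cy(\ra{34}{65})$ together with the unary relations, to exhibit a two-element subset $\{x,y\} \subseteq \{b,c\}$ of non-flexible diversity atoms (recall that of the three diversity atoms only $a$ is flexible) on which no polymorphism of $\fAo$ can restrict to a binary symmetric, majority, or minority operation. Concretely, I would run through the finitely many candidate behaviours on $\{x,y\}$ and, in each case, point to allowed triples whose coordinatewise image under the operation would be a forbidden triple of $\ra{34}{65}$, contradicting that the operation preserves $R^{\fAo}$. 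By Theorem~\ref{thm:dichotomyConservative}, the absence of all three restriction types on this pair forces $(\{0,1\};\NAE)$ to have a pp-interpretation in $\fAo$, so $\CSP(\fAo)$ is $\NP$-complete, and hence so is $\NSP(\ra{34}{65})$.

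The main obstacle is the explicit bookkeeping: one must read off the forbidden triples of $\ra{34}{65}$ from Maddux's list and verify, pair by pair and operation type by operation type, that the required symmetric/majority/minority restriction is genuinely obstructed. This is a routine finite check — there are only $\binom{4}{2}$ pairs of atoms and three operation types to rule out — but it is the only genuinely computational part of the argument. Once the obstructing pair $\{x,y\}$ and the witnessing forbidden triples are identified, the conclusion follows immediately from Bulatov's dichotomy and the polynomial-time equivalence of Theorem~\ref{thm:flex_atom_dichotomy}.
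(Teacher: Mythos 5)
Your overall strategy is exactly the paper's: since $\ra{34}{65}$ is symmetric with flexible atom $a$, Theorem~\ref{thm:flex_atom_dichotomy} makes $\NSP(\ra{34}{65})$ and $\CSP(\fAo)$ polynomial-time equivalent, and one then applies Bulatov's conservative dichotomy (Theorem~\ref{thm:dichotomyConservative}) to the pair $\{b,c\}$ --- the same pair the paper uses. Your $\NP$-membership argument via that equivalence is also fine.

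There is, however, a genuine gap: the verification that no polymorphism of $\fAo$ restricts on $\{b,c\}$ to a binary symmetric, majority, or minority operation is the \emph{entire} mathematical content of the proposition, and you defer it as a ``routine finite check'' without executing it. This cannot be waved away, because the framework alone decides nothing: the identical setup applies verbatim to, e.g., $\ra{61}{65}$ and $\ra{65}{65}$ (also symmetric, with a flexible atom, with conservative atom structures), whose NSPs are in $\Ptime$ (Proposition~\ref{prop:can-bin-sym}); whether Bulatov's hardness condition holds depends entirely on the particular forbidden triples of $\ra{34}{65}$. Moreover, your description of how the check would go --- exhibit allowed triples whose coordinatewise image is forbidden --- understates what is needed. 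The only allowed triples of $\ra{34}{65}$ with all entries in $\{b,c\}$ are $(b,b,b)$ and $(c,c,c)$, and applying a conservative operation coordinatewise to these can never produce a forbidden triple, so no obstruction is visible on $\{b,c\}$ alone. One must instead propagate constraints through values on tuples involving $a$ and $\id$: for the symmetric case, from $m(b,c)=m(c,b)=b$ one first deduces $m(\id,c)=m(c,\id)=\id$ (otherwise applying $m$ to the allowed triples $(\id,b,b)$ and $(c,c,c)$ yields the forbidden $(c,b,b)$), and only then reaches the forbidden $(\id,\id,c)$; for the majority and minority cases the paper needs chains of several such deductions (forcing the values on permutations of $(a,a,b)$, $(\id,\id,b)$, $(b,b,a)$, and so on) before arriving at a forbidden image such as $(\id,b,a)$. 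Until this propagation argument --- or an equivalent exhaustive computer verification --- is actually carried out, the proof is incomplete.
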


\begin{proof}
    We want to show that there is no polymorphism of the atom structure $\fA_0$ of $\ra{34}{65}$ such that its restriction to $\{b, c\}$ is a binary symmetric, majority or minority operation.
    By Theorem~\ref{thm:dichotomyConservative}, $\CSP(\fA_0$) is $\NPc$ and by Theorem~\ref{thm:flex_atom_dichotomy}, $\NSP(\bA)$ is $\NPc$.

    First, assume that $m$ is a binary symmetric polymorphism of $\fA_0$
    when restricted to $\{b,c\}$. We consider the case that $m(b,c) = m(c,b) = b$. This implies $m(\id,c) = m(c,\id) = \id$ as otherwise $(m(\id,c),m(b,c),m(b,c))$ would equal a forbidden triple (recall that $\fA_0$ is conservative). But now $(m(\id,c),m(c,\id),m(c,c)) = (\id,\id,c)$ is a forbidden triple, a contradiction. Similar reasoning can be applied in the case of $m(b,c) = m(c,b) = c$. So $m$ cannot exist.

    Next, assume for contradiction that there exists a polymorphism $m$ which is a majority when restricted to $\{b,c\}$.
    If we apply $m$ to the allowed triples $(b,b,b), (b,c,a), (c,c,a)$, we obtain
    \begin{align*}
        (m(b,b,c), m(b,c,c), m(b,a,a)) = (b,c,m(b,a,a)) \in \Cy^{\fA_0}
    \end{align*}
    and hence $m(b,a,a) = a$.
    If we apply $m$ to the allowed triples $(c,c,\id), (b,b,b), (c,c,\id)$, we obtain
    \begin{align*}
        (m(c,b,c), m(c,b,c), m(\id,b,\id)) = (c,c,m(\id,b,\id)) \in \Cy^{\fA_0}
    \end{align*}
    and hence $m(\id,b,\id) = \id$. By permuting the triples, we analogously get $m(\id,\id,b) = \id$.
    Using this, we apply $m$ to the allowed triples $(\id,b,b), (b,c,a), (\id,b,b)$ and get
    \begin{align*}
        (m(\id,b,\id), m(b,c,b), m(b,a,b)) = (\id,b,m(b,a,b)) \in \Cy^{\fA_0},
    \end{align*}
    hence $m(b,a,b) = b$.
    Putting it all together, we apply $m$ to the allowed triples $(\id,b,b), (\id,a,a), (b,b,a)$ and get
    \begin{align*}
        (m(\id,\id,b), m(b,a,b), m(b,a,a)) = (\id,b,a) \in \Cy^{\fA_0},
    \end{align*}
    which is a contradiction.
    
    Finally, assume towards a contradiction that there is a polymorphism $m$ that is a minority when restricted to $\{b,c\}$.
    If we apply $m$ to the allowed triples $(b,c,a), (b,b,b), (c,c,a)$, we obtain
    \begin{align*}
        (m(b,b,c), m(c,b,c), m(a,b,a)) = (c,b,m(a,b,a)) \in \Cy^{\fA_0}
    \end{align*}
    and hence $m(a,b,a) = a$.
    Applying $m$ to the allowed triples $(c,c,\id), (b,b,\id), (b,b,b)$, we get
    \begin{align*}
        (m(c,b,b), m(c,b,b), m(\id, \id, b)) = (c,c,m(\id,\id,b)) \in \Cy^{\fA_0}
    \end{align*}
    and hence $m(\id,\id,b) = \id$.
    Finally, applying $m$ to the allowed triples $(\id,b,b), (\id,b,b), (b,c,a)$ yields
    \begin{align*}
        (m(\id,\id,b), m(b,b,c), m(b,b,a)) = (\id,c,m(b,b,a)) \in \Cy^{\fA_0};
    \end{align*}
    since $(\id, c, x)$ is only allowed for $x = c$, this implies $m(b,b,a) = c$, contradicting the conservativity of $m$.
\end{proof}

\subsection{Finitely Many Equivalence Classes}
\label{subsec:finitelyclasses}

An equivalence relation $E \subseteq B^2$ is called \emph{proper} if $E$ is a proper subset of $B^2$; it is called \emph{trivial} if it is the equality relation on $B$, and non-trivial otherwise. 

\begin{thm}[{\cite[Theorem 23]{BodirskyKnaeRamics}}]
\label{thm:normal_hard_finite}
    Let $\bA \in \RRA$ be a finite relation algebra with normal 
    representation $\fB$ and a non-trivial proper equivalence relation $e \in A$ such that $e^\fB$ has only finitely many classes. Then $\CSP(\fB)$ is $\NP$-complete. 
\end{thm}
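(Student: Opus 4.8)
The plan is to prove membership in $\NP$ and $\NP$-hardness separately, obtaining the latter from a primitive positive interpretation of a complete graph on the $e$-classes.

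Containment in $\NP$ is the easy part. Since $\fB$ is a normal representation it is in particular fully universal, so $\NSP(\bA)$ lies in $\NP$ by Corollary~\ref{cor:fully-univ-NP}; and because $\fB$ is fully universal, $\CSP(\fB)$ is the same problem as $\NSP(\bA)$ (and as $\NCP(\bA)$), whence $\CSP(\fB) \in \NP$.

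For hardness, I would exploit the finiteness of the quotient. Let $k$ be the number of classes of $e^\fB$. I would give a one-dimensional primitive positive interpretation of the complete graph $K_k = (\{1,\dots,k\}; {\neq})$ in $\fB$: the coordinate map $h \colon B \to \{1,\dots,k\}$ sends each point to its $e$-class, the domain formula is $\top$, the equality formula is $e(x,y)$, and the edge relation of $K_k$ pulls back to $\overline{e}(x,y)$. Both $e$ and $\overline{e}$ are elements of $\bA$ (relation algebras are closed under complementation), hence relation symbols of $\fB$, so the two defining formulas are atomic and in particular primitive positive; surjectivity of $h$ and correctness of the equality formula are immediate from the fact that $e^\fB$ is an equivalence relation with exactly $k$ classes. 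Thus $\fB$ pp-interprets $K_k$. Provided $k \geq 3$, the standard fact used in Remark~\ref{rem:tractability_conj_1_7_and_2_3}, that $K_n$ pp-constructs $(\{0,1\};\NAE)$ for $n \geq 3$, lets us compose pp-constructions: $\fB$ pp-constructs $(\{0,1\};\NAE)$, and $\CSP(\fB)$ is $\NP$-hard by Proposition~\ref{prop:pp-interpretation-implies-logspace-reducible}.

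The hard part, and the place where the hypotheses must be used most carefully, is controlling the number of classes $k$: the interpretation above only yields hardness once $k \geq 3$, and this threshold is genuinely necessary. For $k = 2$ the conclusion can fail: the cycle product $\ra{2}{2}[\ra{1}{2}]$ has a normal representation consisting of two infinite cliques joined completely, its equivalence relation $e = b \cup \id$ has exactly two classes, yet its network satisfaction problem reduces to a bipartite-style $2$-colouring and is solvable in polynomial time. Consequently the proof must guarantee at least three classes; I would either read this into the precise hypotheses or, in the concrete setting where the theorem is applied, verify $k \geq 3$ for each of the finitely many algebras in question. Once $k \geq 3$ is secured, the interpretation of $K_k$ and the reduction to $\NAE$ complete the argument.
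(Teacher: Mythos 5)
Your $\NP$-membership argument and your one-dimensional interpretation of $K_k$ are both correct, and for $k \geq 3$ they do yield hardness exactly as you describe. The fatal problem is the case $k=2$: the theorem as stated includes it, your proposal excludes it, and your justification for excluding it --- that the statement fails for two classes --- is itself false. The algebra $\ra{2}{2}[\ra{1}{2}]$ is the relation algebra $\ra{2}{7}$ (within-class atom $a$, cross-class atom $b$, with $(b,b,b)$ and $(b,a,a)$ forbidden), whose normal representation is $K_2^b[K_\omega^a]$, i.e.\ two infinite $a$-cliques joined completely by $b$; and $\NSP(\ra{2}{7})$ is $\NP$-complete, as recorded in Figure~\ref{fig:3SymAtoms}. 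Indeed, the paper notes (in the paragraph preceding Figure~\ref{fig:2Atoms}) that the claim of tractability for $\ra{2}{7}$ in \cite{HirschCristiani} was an error, corrected precisely by \cite{BodirskyKnaeRamics} --- you have reproduced that error. The ``bipartite 2-colouring'' intuition fails because the constraint language of $\CSP(\fB)$ consists of \emph{all} elements of $\bA$, not just $e$ and $\overline{e}$; in particular it contains $\id \cup b$ (``equal, or in different classes'') and $a \cup b$ (``distinct''). With these, for pairwise distinct $x_1,x_2,x_3$ (enforceable by $a \cup b$), the primitive positive formula
\[
\exists u \, \bigl( (\id \cup b)(x_1,u) \wedge (\id \cup b)(x_2,u) \wedge a(x_3,u) \bigr)
\]
holds if and only if $x_1,x_2,x_3$ do \emph{not} all lie in the same class: if they all lie in one class, then $u$ must lie in that class as well, so $\id \cup b$ forces $u = x_1$ and $u = x_2$, contradicting distinctness; conversely, if the class of $x_3$ contains exactly one of $x_1,x_2$, take $u$ to be that element, and if it contains neither, take $u$ to be a fresh point in the class of $x_3$. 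This gives a reduction from positive not-all-equal 3-SAT (Example~\ref{expl:hard}), so your proposed counterexample has an $\NP$-hard CSP after all.

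Moreover, even granting your suggested patch (restricting to $k \geq 3$, or checking $k \geq 3$ in each application), the resulting statement would not serve the paper: Theorem~\ref{thm:normal_hard_finite} is applied in Proposition~\ref{prop:normal_hard_finite} and Figure~\ref{tab:normal_hard_finite} to several algebras whose distinguished equivalence relation has exactly two classes, e.g.\ $\ra{7}{37}$, $\ra{9}{37}$, $\ra{11}{37}$ (normal representations $K_2^a[\Q]$, $K_2^a[C_3^r]$, $K_2^a[\T]$, see Table~\ref{tab:overview_asymmetric}) and $\ra{3}{65}$, $\ra{4}{65}$, $\ra{10}{65}$, $\ra{11}{65}$ (all of the form $K_2^c[\,\cdot\,]$, see Table~\ref{tab:overview_symmetric}). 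So $k = 2$ is not a fringe case but the heart of the theorem, and any correct proof must exploit the structure within and across the classes (as the gadget above does), not merely the quotient graph on the classes. Note finally that the paper does not reprove this statement but imports it from \cite[Theorem 23]{BodirskyKnaeRamics}; the two-class case is exactly the content that citation carries, and it is the part your proposal leaves unproven.
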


The proof of Theorem~\ref{thm:normal_hard_finite} in \cite{BodirskyKnaeRamics} already confirms the tractability conjecture (Conjecture~\ref{conj:tractability}) in this case.

\begin{figure}
    \centering
    \begin{tabular}{|L|L|L|}
        \hline
        \text{Relation Algebra} & \text{Equivalence Relation} & \text{Forbidden Triples}  \\ \hline
        \ra{3}{37} & a \cup \id & (r,r,r) \\ \hline
        \ra{4}{37} & a \cup \id & (r,r,r) \\ \hline
        \ra{7}{37} & r \cup \Breve{r} \cup \id & (a,a,a) \\ \hline
        \ra{9}{37} & r \cup \Breve{r} \cup \id & (a,a,a) \\ \hline
        \ra{11}{37} & r \cup \Breve{r} \cup \id & (a,a,a) \\ \hline
        \ra{18}{37} & a \cup \id & (r,r,r) \\ \hline
        \ra{20}{37} & a \cup \id & (r,r,r) \\ \hline
        \ra{1}{65} & a \cup \id & (b,b,b), (c,c,c) \\ \hline
        \ra{2}{65} & a \cup \id & (b,b,b), (c,c,c) \\ \hline
        \ra{3}{65} & a \cup b \cup \id & (c,c,c) \\ \hline
        \ra{4}{65} & a \cup b \cup \id & (c,c,c) \\ \hline
        \ra{10}{65} & a \cup b \cup \id & (c,c,c) \\ \hline
        \ra{11}{65} & a \cup b \cup \id & (c,c,c) \\ \hline
        \ra{25}{65} & a \cup \id & (b,b,b), (c,c,c) \\ \hline
        \ra{26}{65} & a \cup \id & (b,b,b), (c,c,c) \\ \hline
        \ra{28}{65} & a \cup \id & (b,b,b), (c,c,c) \\ \hline
    \end{tabular}
    \caption{Relation algebras where Theorem~\ref{thm:normal_hard_finite} applies.}
    \label{tab:normal_hard_finite}
\end{figure}

\begin{prop}
\label{prop:normal_hard_finite}
    Let
    \begin{align*}
    \bA \in \{\ra{3}{37}, \ra{4}{37}, \ra{7}{37}, \ra{9}{37}, \ra{11}{37}, \ra{18}{37}, \ra{20}{37}, \ra{1}{65}, \ra{2}{65}, \ra{3}{65}, \ra{4}{65}, \ra{10}{65}, \ra{11}{65}, \ra{25}{65}, \ra{26}{65}, \ra{28}{65}\},
    \end{align*}
    then $\NSP(\bA)$ is $\NP$-complete.
\end{prop}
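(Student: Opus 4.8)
The plan is to apply Theorem~\ref{thm:normal_hard_finite} to each of the sixteen algebras. All of them occur in the list~\eqref{eq:norm_all}, so each such $\bA$ has a normal representation $\fB$; in particular $\fB$ is square and fully universal. It therefore suffices to produce, for each $\bA$, a non-trivial proper equivalence relation $e \in A$ such that $e^\fB$ has only finitely many classes. The relation $e$ is read off from the second column of Figure~\ref{tab:normal_hard_finite}. One first checks, directly from the multiplication table of each algebra, that the listed $e$ is indeed symmetric and transitive (so $\breve{e} = e$ and $e \circ e = e$), and that it is proper and non-trivial: it is strictly below $1$ because at least one diversity atom is not below it, and strictly above $\id$ because it contains at least one diversity atom.

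The key step is to bound the number of classes of $e^\fB$, and here I would invoke Lemma~\ref{lem:finite_classes}. Since $e$ is an equivalence relation we have $e \circ e = e$, so the set $S$ of atoms not below $e \circ e$ is exactly the set of diversity atoms not below $e$. Comparing with Figure~\ref{tab:normal_hard_finite} one sees that these are precisely the atoms whose ``cube'' triple appears in the third column: for instance, for $\ra{3}{37}$ with $e = a \cup \id$ the atoms off $e$ are $r$ and $\breve{r}$, while for $\ra{1}{65}$ with $e = a \cup \id$ they are $b$ and $c$. To see that $(x,x,x)$ is forbidden for \emph{every} $x \in S$ rather than only for those explicitly tabulated, I would use the cycle law (Proposition~\ref{prop:cyclelaw}): the class $[r,r,r]$ contains $(\breve{r},\breve{r},\breve{r})$, so in the asymmetric cases forbidding $(r,r,r)$ automatically forbids $(\breve{r},\breve{r},\breve{r})$ as well; in the symmetric cases every listed atom is self-converse and its cube is already tabulated. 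Thus in every case $S \neq \varnothing$ and $(x,x,x)$ is forbidden for all $x \in S$, so Lemma~\ref{lem:finite_classes} applies and bounds the number of classes of $e^\fB$ in the square representation $\fB$ by $R(3,\dots,3)-1$ (with $|S|$ threes), i.e.\ by $2$ when $|S|=1$ and by $5$ when $|S|=2$.

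With finitely many classes established, Theorem~\ref{thm:normal_hard_finite} yields that $\CSP(\fB)$ is $\NP$-complete. It remains to transfer this to $\NSP(\bA)$: because $\fB$ is fully universal, $\CSP(\fB)$ is the same problem as $\NSP(\bA) = \NCP(\bA)$ (cf.\ Lemma~\ref{lem:nsp-ncp} and the discussion preceding Proposition~\ref{prop:reductionAtomStructure}), so the $\NP$-hardness carries over, while containment in $\NP$ is immediate from Corollary~\ref{cor:fully-univ-NP}. Hence $\NSP(\bA)$ is $\NP$-complete for each algebra in the list.

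The only genuinely case-by-case part of the argument is the bookkeeping in the first two paragraphs---verifying for each of the sixteen algebras that the tabulated $e$ really is an equivalence relation and that its off-diagonal atoms are exactly the ones with forbidden cubes---but this is routine given Maddux's multiplication tables, and the cycle law removes the need to list converse triples separately. I expect no conceptual obstacle, since the substance is entirely contained in the already-proven Lemma~\ref{lem:finite_classes} and Theorem~\ref{thm:normal_hard_finite}; the proposition is essentially a uniform packaging of these two results.
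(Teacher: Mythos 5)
Your proposal is correct and follows essentially the same route as the paper: both apply Lemma~\ref{lem:finite_classes} to the equivalence relations tabulated in Figure~\ref{tab:normal_hard_finite} in order to verify the hypotheses of Theorem~\ref{thm:normal_hard_finite}. The extra details you supply (checking that each tabulated $e$ is an equivalence relation, using the cycle law to cover converse triples, and the transfer from $\CSP(\fB)$ to $\NSP(\bA)$ via full universality) are exactly the routine verifications the paper leaves implicit.
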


\begin{proof}
    Use Lemma~\ref{lem:finite_classes} to see that the conditions for Theorem~\ref{thm:normal_hard_finite} are satisfied. Figure~\ref{tab:normal_hard_finite} shows the choice of $e$ for each of the relation algebras.
\end{proof}

\subsection{No Non-trivial Equivalence Relation}
\label{subsec:nonontrivial}

A permutation group is called \emph{primitive} if it does not preserve a non-trivial proper equivalence relation. 
Note that if $\bA$ is a finite relation algebra with a normal representation $\fB$, then $\Aut(\fB)$ is primitive if and only if $\bA$ does not contain equivalence relations besides $1$ and $\id$.

\begin{thm}[{\cite[Theorem 29]{BodirskyKnaeRamics}}]
\label{thm:primitiv_hard}
    Let $\bA$ be a relation algebra with a normal representation $\fB$. If $\Aut(\fB)$ is primitive and $a \in A_0$ is symmetric such that $(a,a,a)$ is forbidden in $\bA$, then $\CSP(\fB)$ and $\NSP(\bA)$ are $\NP$-complete.
\end{thm}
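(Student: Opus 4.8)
The plan is to dispatch membership in $\NP$ first and then concentrate on $\NP$-hardness, for which I would aim at a primitive positive construction of $(\{0,1\};\NAE)$ and invoke Proposition~\ref{prop:pp-interpretation-implies-logspace-reducible} together with Theorem~\ref{thm:clone-hom-to-proj-implies-NP-hard}. Since $\fB$ is a normal representation it is in particular fully universal, so by Lemma~\ref{lem:nsp-ncp} and Corollary~\ref{cor:fully-univ-NP} we have $\NSP(\bA) = \NCP(\bA) \in \NP$, and $\CSP(\fB)$ is essentially the same problem; thus only hardness remains. As $\fB$ is homogeneous with a finite relational signature, it is $\omega$-categorical, so the criterion of Theorem~\ref{thm:clone-hom-to-proj-implies-NP-hard} applies: it suffices to exhibit a uniformly continuous clone homomorphism from $\Pol(\fB)$ to the clone of projections on a two-element set.

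Next I would extract the combinatorial content of the hypotheses. Because $a$ is a symmetric atom, $a^\fB$ is an undirected graph and (being a relation symbol of $\fB$) is primitive positive definable, and the forbidden triple $(a,a,a)$ says precisely that $a^\fB$ is triangle-free. Primitivity of $\Aut(\fB)$ entails transitivity, so $\bA$ is integral with $\id$ as its only identity atom; moreover, reachability in the graph $a^\fB$ is an $\Aut(\fB)$-invariant equivalence relation, so by primitivity and $a^\fB \neq \varnothing$ (full universality realizes a single $a$-edge) the graph $a^\fB$ is connected. Finally, were $a^\fB$ bipartite, the bipartition of a connected bipartite graph is unique up to swapping the sides and hence preserved by every automorphism, yielding a non-trivial proper invariant equivalence relation (``same side''), which again contradicts primitivity. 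Thus $a^\fB$ is connected, triangle-free, and non-bipartite, so its odd girth is at least $5$.

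With these structural facts in hand, the heart of the argument is to convert the symmetric triangle-free obstruction into the desired clone homomorphism, and here I would use the machinery of canonical operations (with respect to $\fB$) set up in Section~\ref{sect:prelims}. Concretely, I would argue that the forbidden monochromatic $a$-triangle rules out the existence in $\Pol(\fB)$ of any $\{a\}$-symmetric binary operation, any $\{a\}$-majority, and any $\{a\}$-minority: each such behaviour applied to suitable $a$-related tuples would force a new $a$-edge closing up a forbidden configuration on $a^\fB$, incompatible with a connected triangle-free non-bipartite graph. Combined with primitivity, which blocks passing to a tractable quotient, this should let one read off a uniformly continuous, arity- and composition-preserving map $\Pol(\fB) \to$ projections, so that Theorem~\ref{thm:clone-hom-to-proj-implies-NP-hard} delivers the pp-interpretation of $(\{0,1\};\NAE)$ and hence $\NP$-hardness; together with membership this gives $\NP$-completeness of both $\CSP(\fB)$ and $\NSP(\bA)$.

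I expect the last step to be the main obstacle. The graph $a^\fB$ is infinite and, taken on its own, may have a tractable $\CSP$ (for instance if it is the generic triangle-free graph), so one cannot simply reduce from a finite $H$-colouring problem or argue via the reduct $(B; a^\fB)$; the hardness must come from the \emph{whole} signature of $\fB$ used in concert with primitivity. Making the ``no good canonical polymorphism'' analysis rigorous is therefore the real work: ideally one extracts canonical behaviour through a Ramsey-type argument and then checks that the finitely many possible behaviours on the relevant atoms all collapse to projection-like maps, or else one builds a direct gadget encoding an $\NAE$-constraint from two $a$-adjacent vertices and the impossibility of a common $a$-neighbour.
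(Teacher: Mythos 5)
First, a point of order: the paper does \emph{not} prove Theorem~\ref{thm:primitiv_hard} itself --- it quotes it from \cite[Theorem 29]{BodirskyKnaeRamics} --- so there is no in-paper proof to compare yours against, and I can only judge the proposal on its own merits. The parts you actually carry out are correct but are the easy ones: membership in $\NP$ via Lemma~\ref{lem:nsp-ncp} and Corollary~\ref{cor:fully-univ-NP}, and the structural facts that $a^\fB$ is a pp-definable, symmetric, irreflexive, triangle-free graph which, by primitivity, is connected and (once $|B|\geq 3$) non-bipartite. Note that your non-bipartiteness argument genuinely needs $|B|\geq 3$: for the two-element normal representation $K_2^a$ of $\ra{1}{2}$, the atom $a$ is symmetric, $(a,a,a)$ is forbidden, and $\Aut(K_2^a)$ is primitive under the paper's definition, yet $\CSP(K_2^a)$ is graph $2$-colouring and in $\Ptime$; so the theorem must be read with $\fB$ infinite, and any correct proof has to use that somewhere.

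The genuine gap is the hardness core, and the one concrete claim you make towards it is false as stated. With the paper's definitions, the notions you invoke for the singleton set $X=\{a\}$ degenerate: every binary behaviour on a one-element set is symmetric, and the projections themselves are $\{a\}$-canonical with behaviour $\overline{f}(a,a,a)=a$, which makes them simultaneously $\{a\}$-majorities and $\{a\}$-minorities. Hence triangle-freeness of $a^\fB$ rules out none of these operations, and nothing about $\Pol(\fB)$ has been derived. The criterion that actually feeds Post's Theorem~\ref{thm:post41} and Theorem~\ref{thm:clone-hom-to-proj-implies-NP-hard} is Proposition~\ref{prop:normal-canonical-schaefer}, which requires \emph{two} distinct atoms $x,y$ and, crucially, a proof that \emph{every} polymorphism of $\fB$ is $\{x,y\}$-canonical; where the paper runs this argument for concrete algebras (Propositions~\ref{prop:19_37}, \ref{prop:27_65}, and~\ref{prop:29_65}), canonicity is extracted from algebra-specific pp-definable gadgets, for which you offer no abstract substitute. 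Moreover, even granting that no canonical polymorphism has good behaviour, that alone does not produce a uniformly continuous clone homomorphism from all of $\Pol(\fB)$ to projections: bridging from ``no good canonical polymorphisms'' to ``no good polymorphisms at all'' is the heart of the theorem. In \cite{BodirskyKnaeRamics} this is where the real work happens, via the pseudo-Siggers/pseudo-loop machinery of Barto and Pinsker \cite{BartoPinskerDichotomy} applied to the graph $a^\fB$ --- that is precisely where your structural facts (connected, non-bipartite, triangle-free, a single orbit of pairs) get consumed --- and your proposal, by your own admission in the final paragraph, leaves this step open. What you have is a reasonable plan with correct preliminaries, not a proof.
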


The proof of Theorem~\ref{thm:primitiv_hard} in \cite{BodirskyKnaeRamics} already confirms the tractability conjecture (Conjecture~\ref{conj:tractability}) in this case.

\begin{prop}\label{prop:prim}
    If $\bA \in \{\ra{23}{37},\ra{36}{37},\ra{46}{65}\}$, then $\NSP(\bA)$ is $\NP$-complete.
\end{prop}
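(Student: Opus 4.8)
The plan is to invoke Theorem~\ref{thm:primitiv_hard} in all three cases. That theorem has three hypotheses: the algebra must have a normal representation $\fB$; the group $\Aut(\fB)$ must be primitive; and there must be a symmetric atom $a$ with $(a,a,a)$ forbidden. I would verify these three ingredients in turn for each of $\ra{23}{37}$, $\ra{36}{37}$, and $\ra{46}{65}$, and then read off the conclusion. Note that since each algebra has a normal (hence fully universal) representation, its $\NSP$ is already in $\NP$ by Corollary~\ref{cor:fully-univ-NP}, so the theorem's $\NP$-hardness upgrades to $\NP$-completeness exactly as claimed.

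For the first ingredient, all three algebras appear in the list~\eqref{eq:norm_all}, so each has a normal representation $\fB$; this is precisely the countable normal representation to which Theorem~\ref{thm:primitiv_hard} applies.

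For primitivity, I would use the characterization stated just before Theorem~\ref{thm:primitiv_hard}: since $\fB$ is normal, $\Aut(\fB)$ is primitive if and only if $\bA$ has no equivalence relation besides $\id$ and $1$. Any equivalence relation is symmetric and contains $\id$, hence equals the union of $\id$ with a $\breve{\phantom{o}}$-closed set of diversity atoms. For the asymmetric algebras $\ra{23}{37}$ and $\ra{36}{37}$, with symmetric diversity atom $a$ and asymmetric pair $r,\breve{r}$, the only proper nontrivial candidates are $\id \cup a$ and $\id \cup r \cup \breve{r}$; for the symmetric algebra $\ra{46}{65}$ with diversity atoms $a,b,c$, the candidates are the six proper unions of $\id$ with a nonempty subset of $\{a,b,c\}$. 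In each case I would consult the relevant entries of the multiplication tables (Tables~\ref{tab:overview_asymmetric} and~\ref{tab:overview_symmetric}) and check that the candidate fails transitivity, i.e.\ that its square contains an atom lying outside it. For instance, the flexibility of $r$ in $\ra{36}{37}$ (see Figure~\ref{fig:flex}) already gives $r \leq a \circ a$, so $\id \cup a$ is not transitive there; the remaining candidates are dispatched by the analogous triple lookups.

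Finally, for the forbidden triple I would locate in each table a symmetric atom $a$---the unique symmetric diversity atom in the two asymmetric cases, and a suitable atom in $\ra{46}{65}$---for which $(a,a,a) \notin \Cy(\bA)$. With all three hypotheses in place, Theorem~\ref{thm:primitiv_hard} yields that $\CSP(\fB)$, and therefore $\NSP(\bA)$, is $\NP$-complete. The main obstacle is not conceptual but bookkeeping: the primitivity step requires ruling out \emph{every} candidate equivalence relation rather than a single one, and both the primitivity check and the forbidden-triple check depend on the concrete $\Cy(\bA)$-data recorded in Tables~\ref{tab:overview_asymmetric} and~\ref{tab:overview_symmetric}, which are not reproduced in this excerpt; once that data is consulted, every step is a routine finite verification.
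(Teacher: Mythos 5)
Your proposal is correct and takes essentially the same route as the paper: both verify the three hypotheses of Theorem~\ref{thm:primitiv_hard} — normal representations from~\eqref{eq:norm_all}, the forbidden symmetric triples ($(a,a,a)$ for $\ra{23}{37}$ and $\ra{36}{37}$, $(c,c,c)$ for $\ra{46}{65}$), and primitivity by showing every candidate union of $\id$ with a symmetric set of diversity atoms fails transitivity via the cycle data — and the paper likewise carries out the explicit square computations only for $\ra{46}{65}$, leaving the two asymmetric cases to the reader. The only detail you leave unspecified is which atom witnesses the forbidden triple in $\ra{46}{65}$; it is $c$.
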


\begin{proof}
    The relation algebras $\ra{23}{37},\ra{36}{37}$, and $\ra{46}{65}$ have normal representations (see~\eqref{eq:norm_all}). 
    In the case of $\ra{23}{37}$ and $\ra{36}{37}$, the triple
    $(a,a,a)$ is forbidden, 
    and in the case of $\ra{46}{65}$ the triple $(c,c,c)$ is forbidden.
    Moreover, each of the three algebras does not contain equivalence relations besides $1$ and $\id$. 
    We verify this for $\ra{46}{65}$, and leave the proof for the other two algebras to the reader.
    \[ (a \cup \id)^2 = a \cup c \cup \id, \quad (b \cup \id)^2 = a \cup b \cup \id, \quad (c \cup \id)^2 = b \cup c \cup \id.\]
    Furthermore, since $(a,b,c)$ is an allowed triple, 
    \[ (a \cup b \cup \id)^2 = (a \cup c \cup \id)^2 = (b \cup c \cup \id)^2 = 1.\]
    So the statement follows from Theorem~\ref{thm:primitiv_hard}.
\end{proof}

\subsection{Hardness for 2-Cycle Products}
\label{sect:cycle-prod-hardness}
The following two lemmas imply that if a relation algebra is a 2-cycle product of two relation algebras $\bA$ and $\bB$, and $\bA$ or $\bB$ has an $\NP$-complete NSP, then the 
network satisfaction problem for the 2-cycle product has an $\NP$-complete NSP as well.

\begin{lemma}
\label{lem:2-cycle-np}
    Let $\bA$ and $\bB$ be finite relation algebras such that 
    $\NSP(\bA)$ and $\NSP(\bB)$ are both in $\NP$.
    Then $\NSP(\bA[\bB])$ is in $\NP$ as well.
\end{lemma}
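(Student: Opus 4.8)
The plan is to give a nondeterministic polynomial-time algorithm that reduces an instance of $\NSP(\bA[\bB])$ to polynomially many instances of $\NSP(\bA)$ and one instance of $\NSP(\bB)$, exploiting that $1_\bA$ is an equivalence relation of $\bC := \bA[\bB]$ (Remark~\ref{rem:equ_relation}). Given an input $\bC$-network $(V,f)$, I would first nondeterministically guess an equivalence relation $\sim$ on $V$; the intended meaning is that $x \sim y$ holds precisely when a satisfying assignment places $x$ and $y$ into the same class of $1_\bA$. From $\sim$ I form two kinds of sub-instances: for each $\sim$-class $E$ the $\bA$-network $(E, f_E)$ with $f_E(x,y) := f(x,y) \cap 1_\bA$, and the contracted $\bB$-network $(V/{\sim}, g)$ with $g(E_1, E_2) := \big(\bigcap_{x \in E_1,\, y \in E_2} f(x,y)\big) \cap e_\bB$ for $E_1 \neq E_2$ and $g(E,E) := \id$, where $e_\bB := \bigcup_{b \in B_0 \setminus \{\id\}} b$. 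The algorithm accepts iff, using the assumed $\NP$ procedures, every $(E,f_E)$ is a satisfiable instance of $\NSP(\bA)$ and $(V/{\sim}, g)$ is a satisfiable instance of $\NSP(\bB)$. Since there are at most $|V|$ classes, each sub-instance has size at most $|V|$, and the nondeterministic certificates of the $\NSP(\bA)$- and $\NSP(\bB)$-procedures can be folded into the overall guess, the whole procedure runs in nondeterministic polynomial time.

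For the forward direction of correctness I would start from a representation $\fC$ of $\bC$ and an assignment $s$ satisfying $(V,f)$, and let $\sim$ be the pullback of the equivalence relation $1_\bA^\fC$ along $s$, i.e.\ $x \sim y$ iff $(s(x),s(y)) \in 1_\bA^\fC$. For each class $E$, the substructure of $\fC$ induced on the $1_\bA^\fC$-class containing $s(E)$ is a representation of $\bA$ (here one uses that $\Cy(\bC) \cap A_0^3 = \Cy(\bA)$, which is immediate from Definition~\ref{def:2cycl} together with $A_0 \cap B_0 = \{\id\}$), and $s|_E$ witnesses satisfiability of $(E, f_E)$. The contraction $\faktor{\fC}{1_\bA^\fC}$ is a representation of $\bB$, and the map sending $E$ to the $1_\bA^\fC$-class of $s(E)$ satisfies $(V/{\sim}, g)$: the $\bB$-atom $b$ relating two distinct classes is independent of the chosen representatives by Remark~\ref{rem:equ_relation}, so $b \leq f(x,y)$ for all $x \in E_1, y \in E_2$, whence $b \leq g(E_1,E_2)$.

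The backward direction is the crux and the step I expect to be the main obstacle. Given satisfying square representations $\fC_E$ of $\bA$ for each class (via an assignment $s_E$) and a satisfying square representation $\fD$ of $\bB$ for the contraction (via $t$), I would reassemble them into a representation of $\bC$ by a cycle-product construction as in Theorem~\ref{thm:2-cylce-repr}, but generalized so that different points $u$ of $\fD$ may be blown up into different fibers $\fC_u$ (taking $\fC_u := \fC_E$ when $u = t(E)$, and an arbitrary fixed square representation of $\bA$ otherwise). One then has to re-verify axiom~\ref{ax:repr:7} of Definition~\ref{defn:repr} for this construction; the only delicate case is the composition $b \circ \breve{b}$ of a $\bB$-diversity atom $b$, where the cycle-law closure of the triples $(a,b,b)$ forces $1_\bA \leq b \circ \breve{b}$, and matching this against the construction requires that every point of $\fD$ has a $b$-neighbor — which is exactly guaranteed by $\fD$ being square.

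This is why I would first replace the representations produced by the $\NP$ procedures by square ones. This is possible because the condition $A_0 \cap B_0 = \{\id\}$ forces $\id$ to be an atom of both $\bA$ and $\bB$, so both are integral and hence simple; thus Corollary~\ref{cor:simple-implies-square-rep} applies, and moreover by Lemma~\ref{lem:decomp} the square component of any representation that carries the (finite) satisfying assignment is already a representation of the full simple algebra. Finally, the assignment $x \mapsto (t([x]), s_{[x]}(x))$ satisfies $(V,f)$ in the assembled representation: within a class the label is an $\bA$-atom $\leq f_E(x,y) \leq f(x,y)$, and across classes the label is a $\bB$-atom $\leq g([x],[y]) \leq f(x,y)$. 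This establishes the equivalence underlying the algorithm and completes the proof.
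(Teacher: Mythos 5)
Your proposal is correct and follows essentially the same route as the paper's proof: guess the equivalence relation induced by $1_\bA$, split the instance into per-class $\bA$-networks and a contracted $\bB$-network, decide these with the assumed $\NP$ procedures, and reassemble a representation of $\bA[\bB]$ by blowing up the points of the $\bB$-representation with the $\bA$-representations of the classes. You are in fact somewhat more careful than the paper in the soundness direction — excluding $\id$ from the cross-class labels to force injectivity of the contraction's assignment, and reducing to square representations via integrality and simplicity of $\bA$ and $\bB$ — details that the paper's proof leaves implicit.
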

\begin{proof}
    Let $N = (V, f)$ be an $\bA[\bB]$-network. The algorithm proceeds as follows: First, guess an equivalence relation $W \subseteq V^2$. Replace $f(x, y)$ by $f(x, y) \cap A$ for all $(x, y) \in W,$ replace $f(x, y)$ by $f(x, y) \cap B$ for all $(x, y) \in V^2\backslash W$ and check whether $f(x, y) \neq \varnothing$ for all $x, y \in V.$
    Let $C_1, \hdots, C_k$ be the equivalence classes of $W.$ For all $i \in \{1, \hdots, k\}$ and all $(x, y, z) \in C_i \times C_i \times (V\backslash C_i),$ 
    replace $f(x, z)$ and $f(y, z)$ by $f(x, z) \cap f(y, z)$ until a stable state is reached. Again, verify that this replacement did not yield $\varnothing$ on any edge.
    Next, check whether each of the classes can be satisfied in some representation of $\bA.$ If this is the case, let $\fC_1, \hdots, \fC_k$ be respective representations. Finally, consider the contraction $\faktor{N}{W} = \left(\faktor{V}{W}, \faktor{f}{W}\right)$ and check whether it is satisfiable in some representation of $\bB.$ If this is the case, let $\fD$ be such a representation.
    If all conditions were satisfied up to this point, the structure $\fD[\fC_1, \hdots, \fC_k],$ which is obtained from $\fD$ by replacing, for every $i \in \{1, \hdots, k\},$ the element $x_i \in \faktor{V}{W}$ which represents the class $C_i$ by $\fC_i,$ is a representation of $\bA[\bB]$ which satisfies $N.$
    
    Observe that if $N$ is satisfiable then any satisfying assignment gives an equivalence relation $W\subseteq V^2$ which, if guessed at the beginning by the algorithm, will lead to the algorithm identifying $N$ as satisfiable, which proves correctness of the algorithm.
\end{proof}

\begin{lemma}
\label{thm:2-cycle-red}
    Let $\bA$ and $\bB$ be finite representable relation algebras such that $A_0 \cap B_0 = \{\id\}$. Then there is a 
    polynomial-time reduction from $\NSP(\bA)$ to $\NSP(\bA[\bB])$ and from  $\NSP(\bB)$ to $\NSP(\bA[\bB])$.
\end{lemma}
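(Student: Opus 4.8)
The plan is to realize both reductions by the \emph{identity map on networks}: an $\bA$-network $(V,f)$ is sent to the $\bA[\bB]$-network $(V,\iota f)$ obtained by reading each label $f(x,y)\in A$ as the element of $\bC := \bA[\bB]$ with the same set of atoms (recall $A_0\subseteq C_0$), and symmetrically a $\bB$-network $(W,g)$ is sent to $(W,\jmath g)$. These maps are computable in linear time, so the entire content is to show that satisfiability is preserved in both directions.

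For the backward implications I would exploit that $1_\bA$ is an equivalence relation of $\bC$ (Remark~\ref{rem:equ_relation}). If $(V,\iota f)$ is satisfiable in a representation $\fR$ of $\bC$ via $s$, then, since every label lies below $1_\bA$, all values $s(x)$ fall into a single class $E$ of $1_\bA^\fR$; the restriction $\fR|_E$ uses only the signature $A$ and is a representation of $\bA$ (here $\Cy(\bC)\cap (A_0)^3=\Cy(\bA)$, and any $a\in A_0$ witnessing a composition necessarily stays inside $E$ because it lies below $1_\bA$), so $s$ already witnesses satisfiability of $(V,f)$ in $\bA$. Dually, if $(W,\jmath g)$ is satisfiable in $\fR$, then the contraction $\faktor{\fR}{1_\bA}$ is a representation of $\bB$ (again by Remark~\ref{rem:equ_relation}, the $B_0$-atom relating two distinct classes is well defined), and $w\mapsto [s(w)]_{1_\bA}$ witnesses satisfiability of $(W,g)$ there.

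For the forward implications I would build a representation of $\bC$ from a witnessing representation using Theorem~\ref{thm:2-cylce-repr}. Suppose $(V,f)$ is a yes-instance of $\NSP(\bA)$. Because $\bA$ is integral, hence simple, any satisfying assignment already lives inside a single square component of its witnessing representation, which by Lemma~\ref{lem:decomp} together with simplicity is itself a square representation $\fC$ of $\bA$ (this is exactly the argument used in the proof of Lemma~\ref{lem:bounded}); fix also any square representation $\fD$ of $\bB$. By Theorem~\ref{thm:2-cylce-repr}, $\fD[\fC]$ is a representation of $\bC$, and each of its $1_\bA$-classes is an isomorphic copy of $\fC$, so mapping $V$ into one such class via the square witnessing assignment satisfies $(V,\iota f)$. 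For $\NSP(\bB)\to\NSP(\bA[\bB])$ I argue symmetrically: take $\fD$ to be the square witnessing representation of $\bB$, any square $\fC$ of $\bA$, and send $w$ to $(t(w),v_0)$ for a fixed $v_0\in C$; since the interpretation of a $B_0$-atom in $\fD[\fC]$ depends only on the $\fD$-coordinate, this satisfies $(W,\jmath g)$.

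The main obstacle I anticipate is the forward direction, specifically bridging the gap between an arbitrary witnessing representation and the square representations required by Theorem~\ref{thm:2-cylce-repr}: the clean cycle-product construction $\fD[\fC]$ is stated only for square $\fC,\fD$, so one must first reduce to a square witness, which is where integrality (simplicity) of the factors, together with the decomposition into square components from Lemma~\ref{lem:decomp}, is essential. A secondary point requiring care is the justification, in the backward direction, that the restriction to a $1_\bA$-class and the contraction by $1_\bA$ of an \emph{arbitrary} representation of $\bC$ are genuinely representations of $\bA$ and $\bB$; this is intuitively clear from $\Cy(\bC)$ and Remark~\ref{rem:equ_relation}, but the composition axiom should be verified explicitly.
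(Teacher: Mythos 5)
Your proposal is correct and takes essentially the same approach as the paper's proof: the reduction is the identity map on networks, the forward direction goes through Theorem~\ref{thm:2-cylce-repr} after passing to square witnessing representations via Lemma~\ref{lem:decomp} and simplicity (which, as you note, follows from $\id \in A_0 \cap B_0$), and the backward direction uses the $A$-part of the representation for $\bA$ and the contraction by $1_\bA$ for $\bB$. The only cosmetic differences are that the paper handles a non-square witness by taking the union of $\fD[\fB_i]$ over all square components rather than restricting the satisfying assignment to a single component, and takes the $A$-reduct of the whole representation rather than the restriction to one $1_\bA$-class.
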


\begin{proof} 
    Let $N = (V, f)$ be a given $\bA$-network or $\bB$-network, respectively. Since $\bA, \bB \subseteq \bC := \bA[\bB]$ we know that $N$ is also a $\bC$-network. We will show that $N$ is a satisfiable $\bA$- or $\bB$-network, respectively, if and only if it is a satisfiable $\bC$-network.
    
    For the first direction, we will only consider the case of $N$ being an $\bA$-network, but the other case is analogous. Suppose that $N$ is satisfiable in a square representation $\fC$ of $\bA$, then choose any square representation $\fD$ of $\bB$. Then $\fD[\fC]$, which by Theorem~\ref{thm:2-cylce-repr} is a representation of $\bC$, also satisfies the $\bC$-network $N$. If $N$ is only satisfiable in a non-square representation $\fB$ of $\bA$, then consider the union over all structures $\fD[\fB_i]$, where $\{\fB_i \mid i \in I\}$ are the square components of $\fB$. This shows one direction for both cases.
    
    For the converse, first suppose that $N$ is an $\bA$-network which is satisfiable in a representation $\fB$ of $\bC$. Since $f$ only maps to $A$, $N$ is also satisfiable in the $A$-reduct of $\fB$ which is a (not necessarily square) representation of $\bA$.

    Next, let $N$ be a $\bB$-network which is satisfiable in a representation $\fB$ of $\bC$. By Remark~\ref{rem:equ_relation}, $1_\bA$ is an equivalence relation in $\bC$, so the contracted structure $\faktor{\fB}{e}$ is well-defined and a representation of $\bB$ which satisfies $N$.
\end{proof}

The proof of the following lemma will be important to show that our complexity dichotomy confirms the tractability conjecture (Conjecture~\ref{conj:tractability}). It can be shown analogously to the previous lemma.

\begin{lemma}\label{lem:cycle-pp-construct}
If $\fA$ and $\fB$ are normal representations of $\bA$ and $\bB$, respectively, then $\fB[\fA]$ pp-constructs 
$\fA$ and $\fB$.
\end{lemma}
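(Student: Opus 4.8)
The plan is to mirror the two directions of the proof of Lemma~\ref{thm:2-cycle-red}, but now producing pp-interpretations instead of polynomial-time reductions. Write $\bC := \bA[\bB]$, so that by Theorem~\ref{thm:2-cylce-repr} the structure $\fB[\fA]$ has domain $B \times A$, and recall from Remark~\ref{rem:equ_relation} that $1_\bA$ is an equivalence relation of $\bC$. In $\fB[\fA]$ its interpretation is $1_\bA^{\fB[\fA]} = \{((u_0,v_0),(u_1,v_1)) \mid u_0 = u_1\}$ (using that $\fA$ is square, so $1^{\fA} = A^2$); thus the classes of $1_\bA^{\fB[\fA]}$ are the ``columns'' $\{u\} \times A$, each of which induces a copy of $\fA$, while contracting these columns yields $\fB$. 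The $\fB$-direction will come from this contraction and the $\fA$-direction from the columns.

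For $\fB$ I would give a $1$-dimensional pp-interpretation with coordinate map $h(u,v) := u$ (projection onto the $\fB$-coordinate), domain formula $\varphi_\top := \top$, and equality formula $\varphi_=(x,y) := 1_\bA(x,y)$; the latter is correct since the kernel of $h$ is exactly $1_\bA^{\fB[\fA]}$, and it is a single atomic relation of $\fB[\fA]$ because $1_\bA$ is an element of $\bC$. The only real computation is to check that $h^{-1}(b^{\fB})$ is pp-definable for every $b \in B$. Writing $b^- := b \cap \overline{\id}$ for the diversity part of $b$, a short case distinction shows $h^{-1}(b^{\fB}) = (b^-)^{\fB[\fA]}$ if $\id \not\leq b$, and $h^{-1}(b^{\fB}) = (1_\bA \cup b^-)^{\fB[\fA]}$ if $\id \leq b$; in either case this is the interpretation of a single element of $\bC$, hence atomic. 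This establishes that $\fB[\fA]$ pp-interprets, and therefore pp-constructs, $\fB$.

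For $\fA$ I expect the main obstacle: the analogous move of projecting onto the $\fA$-coordinate fails, since the $\bB$-atoms relate elements lying in different columns while carrying no information about their $\fA$-coordinates, so the intended equality relation $\{v_0 = v_1\}$ is not even first-order definable (the columns form a single orbit under $\Aut(\fB[\fA])$ and cannot be isolated by a formula). The way around this is to pass to a homomorphically equivalent structure. I would take the reduct $\fA^\ast$ of $\fB[\fA]$ to the signature $A \subseteq C$: since for every $a \in A$ the relation $a^{\fB[\fA]}$ relates only elements of the same column, $\fA^\ast$ is precisely the disjoint union $\bigsqcup_{u \in B} \fA$ of copies of $\fA$, with no $A$-relations between distinct copies. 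Such a disjoint union is homomorphically equivalent to $\fA$ (embed $\fA$ as one copy in one direction, and collapse each copy via its isomorphism with $\fA$ in the other; this is a homomorphism because every tuple in an $A$-relation lies inside a single copy). Since a reduct is a trivial pp-interpretation (identity coordinate map, $\varphi_\top = \top$, $\varphi_=$ the equality relation, and each $a \in A$ interpreted by the atomic formula $a$), it follows that $\fB[\fA]$ pp-interprets $\fA^\ast$ and hence pp-constructs $\fA$, completing the plan.
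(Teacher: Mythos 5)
Your proposal is correct and takes essentially the same route as the paper: the paper gives no explicit proof, stating only that the lemma ``can be shown analogously'' to Lemma~\ref{thm:2-cycle-red}, and the two directions of that lemma's proof are exactly your two constructions --- the quotient by $1_\bA$ (your one-dimensional interpretation with $\varphi_= := 1_\bA$) for $\fB$, and the $A$-reduct (your $\fA^\ast$, a disjoint union of copies of $\fA$, homomorphically equivalent to $\fA$) for $\fA$. Your observation that the $\fA$-direction genuinely requires pp-construction rather than a projection-style pp-interpretation (since the columns cannot be isolated by a formula) is a correct and worthwhile filling-in of a detail the paper leaves implicit.
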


\begin{prop}
\label{prop:2CycleProNPc}
    If $\bA \in \{\ra{10}{37}, \ra{5}{65}, \ra{6}{65}, \ra{9}{65}, \ra{12}{65},  \ra{13}{65}, \ra{15}{65}, \ra{16}{65}, \ra{17}{65}, \ra{18}{65}\}$, then $\NSP(\bA)$ is $\NP$-complete.
    If $\bA$ has a normal representation $\fB$, then $\fB$ pp-constructs $(\{0,1\};\NAE)$.
\end{prop}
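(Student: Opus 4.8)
The plan is to read off the $2$-cycle decompositions recorded in Figure~\ref{fig:2-prod-4atom} and to exploit that each of these products already contains a factor with an $\NP$-hard network satisfaction problem. Concretely, $\ra{10}{37}=\ra{2}{3}[\ra{2}{2}]$, while for the symmetric algebras $\ra{5}{65}=\ra{1}{7}[\ra{2}{2}]=\ra{1}{2}[\ra{3}{7}]$, $\ra{6}{65}=\ra{2}{7}[\ra{2}{2}]=\ra{2}{2}[\ra{3}{7}]$, $\ra{9}{65}=\ra{5}{7}[\ra{1}{2}]$, $\ra{12}{65}=\ra{5}{7}[\ra{2}{2}]$, $\ra{13}{65}=\ra{6}{7}[\ra{2}{2}]$, $\ra{15}{65}=\ra{1}{2}[\ra{5}{7}]$, $\ra{16}{65}=\ra{1}{2}[\ra{6}{7}]$, $\ra{17}{65}=\ra{2}{2}[\ra{5}{7}]$, and $\ra{18}{65}=\ra{2}{2}[\ra{6}{7}]$. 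In every line one factor lies in $\{\ra{2}{3},\ra{1}{7},\ra{2}{7},\ra{3}{7},\ra{5}{7},\ra{6}{7}\}$, and I would first record that each of these has an $\NP$-hard $\NSP$: the algebra $\ra{2}{3}$ has the fully universal three-element representation $C_3$, so Lemma~\ref{lem:col} applies; $\ra{5}{7}$ has only square representations of size five, so Lemma~\ref{lem:bounded} applies; and $\ra{1}{7}$, $\ra{2}{7}$, $\ra{3}{7}$, $\ra{6}{7}$ are $\NP$-hard by the classification of the relation algebras with at most three atoms~\cite{HirschCristiani} (with the corrections from~\cite{BodirskyKnaeRamics}).

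With this in hand the $\NP$-completeness is quick. Writing each $\bA$ in the list as a $2$-cycle product $\bA'[\bB']$ whose first factor (say) has $\NP$-hard $\NSP$, Lemma~\ref{thm:2-cycle-red} supplies a polynomial-time reduction from $\NSP(\bA')$ to $\NSP(\bA'[\bB'])=\NSP(\bA)$, so $\NSP(\bA)$ is $\NP$-hard. Both factors are small relation algebras and therefore have an $\NSP$ in $\NP$, so Lemma~\ref{lem:2-cycle-np} gives $\NSP(\bA)\in\NP$; together this yields $\NP$-completeness.

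For the pp-construction I would restrict attention to those $\bA$ in the list that actually have a normal representation, namely $\ra{10}{37},\ra{5}{65},\ra{6}{65},\ra{13}{65},\ra{16}{65},\ra{18}{65}$ (by~\eqref{eq:norm_all}; the remaining four contain $\ra{5}{7}$ and so have no fully universal, let alone normal, representation by Proposition~\ref{prop:2-cycle-no-fu}). For each of these I would choose a decomposition $\bA=\bA'[\bB']$ in which \emph{both} factors carry normal representations $\fD$ of $\bA'$ and $\fC$ of $\bB'$ — for $\ra{5}{65}$ and $\ra{6}{65}$ this means using $\ra{1}{2}[\ra{3}{7}]$ and $\ra{2}{2}[\ra{3}{7}]$ rather than the decompositions through $\ra{1}{7}$ or $\ra{2}{7}$. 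By Lemma~\ref{lem:2-cycle-properties}(iii) we then have $\fB\cong\fD[\fC]$, and Lemma~\ref{lem:cycle-pp-construct} shows that $\fD[\fC]$ pp-constructs both $\fC$ and $\fD$. In each case one of these factor representations pp-constructs $(\{0,1\};\NAE)$: the representation $C_3$ of $\ra{2}{3}$ pp-interprets $K_3$ and hence pp-constructs $\NAE$ (Remark~\ref{rem:tractability_conj_1_7_and_2_3}), while the normal representations of $\ra{3}{7}$ and $\ra{6}{7}$ pp-construct $\NAE$ by the three-atom classification, where the tractability conjecture is already confirmed. Since pp-constructibility is transitive, $\fB$ pp-constructs $\NAE$, which also reproves $\NP$-hardness in these cases via Proposition~\ref{prop:pp-interpretation-implies-logspace-reducible}.

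The hard part is not the reductions themselves but the bookkeeping that makes the pp-construction step legitimate: Lemma~\ref{lem:cycle-pp-construct} requires both factors to be presented by normal representations, so for each $\bA$ with a normal representation one must select a decomposition whose $\NP$-hard factor itself has a normal representation. This is precisely why the cases $\ra{5}{65}$ and $\ra{6}{65}$ are routed through $\ra{3}{7}$, and it is where one has to be sure that the chosen three-atom factor genuinely pp-constructs $\NAE$ rather than merely having an $\NP$-hard $\NSP$.
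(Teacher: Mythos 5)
Your proof of the $\NP$-completeness half follows the paper's approach and is essentially correct: decompose via Figure~\ref{fig:2-prod-4atom}, transfer hardness from a hard factor through Lemma~\ref{thm:2-cycle-red}, and obtain membership in $\NP$ from Lemma~\ref{lem:2-cycle-np}. However, your list of $\NP$-hard three-atom factors contains a factual error: $\NSP(\ra{3}{7})$ is in $\Ptime$ (Corollary~\ref{cor:3_7}; see also Figure~\ref{fig:3SymAtoms}), not $\NP$-hard. For the hardness half this slip is harmless, since every algebra in the statement admits a decomposition through one of $\ra{2}{3}$, $\ra{1}{7}$, $\ra{2}{7}$, $\ra{5}{7}$, $\ra{6}{7}$, which are all genuinely $\NP$-hard. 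But it becomes fatal in the pp-construction half: you route $\ra{5}{65}$ and $\ra{6}{65}$ through the decompositions $\ra{1}{2}[\ra{3}{7}]$ and $\ra{2}{2}[\ra{3}{7}]$ and claim that the normal representation of $\ra{3}{7}$ pp-constructs $(\{0,1\};\NAE)$ ``by the three-atom classification, where the tractability conjecture is already confirmed.'' This is exactly backwards. Confirming the tractability conjecture for a \emph{tractable} algebra means its normal representation does \emph{not} pp-construct $(\{0,1\};\NAE)$; indeed, if it did, then Proposition~\ref{prop:pp-interpretation-implies-logspace-reducible} would make $\CSP$ of that representation, i.e.\ $\NSP(\ra{3}{7})$, $\NP$-hard, contradicting Corollary~\ref{cor:3_7} (unless $\Ptime=\NP$). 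So your argument for the pp-construction collapses precisely for $\ra{5}{65}$ and $\ra{6}{65}$.

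The rerouting that caused this was also unnecessary, because its premise is false: both $\ra{1}{7}$ and $\ra{2}{7}$ \emph{do} have normal representations --- they appear in \eqref{eq:norm_all}, and concretely these are $K_2^b[K_2^a]$ and $K_2^b[K_\omega^a]$ (Figure~\ref{fig:3SymAtoms}) --- and these representations pp-construct $(\{0,1\};\NAE)$: for $\ra{1}{7}$ this is Remark~\ref{rem:tractability_conj_1_7_and_2_3}, and for $\ra{2}{7}$ it follows from the results of~\cite{BodirskyKnaeRamics} behind Theorem~\ref{thm:normal_hard_finite}, whose hardness proof confirms the tractability conjecture. This is what the paper does: it keeps the decompositions $\ra{5}{65}=\ra{1}{7}[\ra{2}{2}]$ and $\ra{6}{65}=\ra{2}{7}[\ra{2}{2}]$, notes that both factors have normal representations, and applies Lemma~\ref{lem:cycle-pp-construct} together with transitivity of pp-constructibility. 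With that replacement your treatment of the remaining normal cases ($\ra{10}{37}$ through $\ra{2}{3}$, and $\ra{13}{65}$, $\ra{16}{65}$, $\ra{18}{65}$ through $\ra{6}{7}$) is correct and coincides with the paper's proof.
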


\begin{proof}
    In every case, the algebra $\bA$ is the 2-cycle product with one component being from $\ra{2}{3}$, 
    $\ra{1}{7}$, $\ra{2}{7}$, $\ra{5}{7}$, and $\ra{6}{7}$ (see Figure~\ref{fig:2-prod-4atom}).
    The $\NSP$ of these algebras is $\NP$-hard as stated in Table~\ref{fig:3AsyAtoms} and~\ref{fig:3SymAtoms}, so $\NSP(\bA)$ is $\NP$-hard as well by Theorem~\ref{thm:2-cycle-red}. 
    The $\NP$-membership of $\NSP(\bA)$ follows from Lemma~\ref{lem:2-cycle-np}, as the NSP of both components of the 2-cycle product is in $\NP$.

    Since the normal representations of $\ra{2}{3}$, $\ra{1}{7}$, $\ra{2}{7}$, and $\ra{6}{7}$ all pp-construct $(\{0, 1\}; \NAE)$ (see Remark \ref{rem:tractability_conj_1_7_and_2_3} and the already mentioned results in \cite{BodirskyKnaeRamics}), so do the representations of the respective algebras with normal representations ($\ra{10}{37}$, $\ra{5}{65}$, $\ra{6}{65}$, $\ra{13}{65}$, $\ra{16}{65}$, and $\ra{18}{65}$) by Lemma \ref{lem:cycle-pp-construct} and the transitivity of pp-constructibility.
\end{proof}

\subsection{Hardness via Primitive Positive Constructions}
\label{sec:pp_reductions}

In the following, we will prove the $\NP$-hardness of seven NSPs by means of pp-interpretations. For that we will use various concepts from Section \ref{sect:CSPS-and-universal-algebra} as well as Definition \ref{def:xy-canonical}. For the first three cases, we will infer from the nonexistence of certain polymorphisms that the respective normal representation pp-interprets $(\{0, 1\}; \mathrm{NAE})$; the argument is summarized in the following proposition.

\begin{prop}\label{prop:normal-canonical-schaefer}
Let $\bA$ be a finite relation algebra with a normal representation $\fB$. If there are two distinct elements $x, y \in A_0$ such that every $f \in \Pol(\fB)$ is $\{x, y\}$-canonical, but not $\{x, y\}$-symmetric, a $\{x, y\}$-majority, or a $\{x, y\}$-minority, then $\NSP(\bA)$ is $\NP$-complete.
\end{prop}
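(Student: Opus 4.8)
The plan is to combine the two engines available to us: Corollary~\ref{cor:fully-univ-NP} for membership in $\NP$, and Theorem~\ref{thm:clone-hom-to-proj-implies-NP-hard} together with Post's theorem (Theorem~\ref{thm:post41}) for $\NP$-hardness. Since $\fB$ is normal it is in particular fully universal and square, so $\NSP(\bA)$ is in $\NP$ by Corollary~\ref{cor:fully-univ-NP}, and $\NSP(\bA)$ and $\CSP(\fB)$ are essentially the same problem. We may take $\fB$ countable (the countable normal representation is unique up to isomorphism), and as a homogeneous structure with a finite relational signature it is $\omega$-categorical, so Theorem~\ref{thm:clone-hom-to-proj-implies-NP-hard} applies. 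It therefore suffices to exhibit a uniformly continuous clone homomorphism from $\Pol(\fB)$ to the clone of projections on the two-element set $\{x,y\}$.

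First I would organise the behaviours. For $f \in \Pol(\fB)$ of arity $n$ write $\overline f \colon \{x,y\}^n \to A_0$ for its behaviour, which exists by the assumed $\{x,y\}$-canonicity. Since the behaviour of a composition is the composition of behaviours whenever the inner behaviours take values in $\{x,y\}$, the set $\mathcal E$ of those behaviours taking all their values in $\{x,y\}$ is a clone on the two-element set $\{x,y\}$: it contains the projections (as $\overline{\pi_i} = \pi_i$) and is closed under composition. Applying Post's theorem (Theorem~\ref{thm:post41}) to $\mathcal E$, and using that any binary symmetric, majority, or minority member of $\mathcal E$ would be the behaviour of a binary, respectively ternary, polymorphism witnessing that some $f \in \Pol(\fB)$ is $\{x,y\}$-symmetric, a $\{x,y\}$-majority, or a $\{x,y\}$-minority — all forbidden by hypothesis — forces $\mathcal E$ to be essentially unary; here I also use that a binary constant operation is symmetric, so that a short inspection of Post's lattice rules out every clone beyond the one generated by the projections and the swap $s\colon x \leftrightarrow y$. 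Thus every $\{x,y\}$-valued behaviour has the form $s^{\varepsilon}\pi_i$.

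From this essentially unary structure I would read off $\xi$ by forgetting the swaps: for an operation with behaviour $s^{\varepsilon}\pi_i$ set $\xi(f) := \pi_i$. A direct computation shows this respects projections and composition: if $\overline f = s^{\varepsilon}\pi_i$ and $\overline{g_k}=s^{\delta_k}\pi_{j_k}$ then $\overline{f(g_1,\dots,g_n)} = s^{\varepsilon+\delta_i}\pi_{j_i}$, whose image $\pi_{j_i}$ equals $\pi_i(\pi_{j_1},\dots,\pi_{j_n}) = \xi(f)(\xi(g_1),\dots,\xi(g_n))$. Uniform continuity is immediate, since the restriction of $f$ to fixed representatives of the finitely many pairs realising the $x$- and $y$-types already determines $\overline f$, and hence $\xi(f)$. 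Feeding $\xi$ into Theorem~\ref{thm:clone-hom-to-proj-implies-NP-hard} yields a primitive positive interpretation of $(\{0,1\};\NAE)$ in $\fB$, so $\CSP(\fB)=\NSP(\bA)$ is $\NP$-hard, and together with the $\NP$-membership above it is $\NP$-complete.

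The step I expect to be the main obstacle is that $\xi$ must be defined on all of $\Pol(\fB)$, whereas $\mathcal E$ only captures polymorphisms whose behaviour stays inside $\{x,y\}$; a priori $\overline f$ may take values in $A_0 \setminus \{x,y\}$ (for instance $\overline f(x,x) = \id$). The heart of the argument is therefore to show that each behaviour still depends, up to the swap, on a single coordinate, so that a coordinate $i(f)$ is unambiguously assigned. I would obtain this by the usual essential-arity analysis: for a chosen coordinate one forms the binary polymorphism $h(s,t) = f(s,\dots,s,t,s,\dots,s)$, notes that $h$ cannot be $\{x,y\}$-symmetric, and combines these constraints with the exclusion of majority and minority behaviours to forbid genuine dependence on two coordinates, the swap being absorbed exactly as above. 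Checking that this assignment is coherent under composition, and handling the boundary role of $\id$ among the escaping output values, is the only genuinely delicate point, and is precisely where the full strength of the hypothesis — that no polymorphism is $\{x,y\}$-symmetric, a $\{x,y\}$-majority, or a $\{x,y\}$-minority — is consumed.
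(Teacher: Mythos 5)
Your overall strategy coincides with the paper's: $\NP$-membership from normality, and $\NP$-hardness by exhibiting a uniformly continuous clone homomorphism from $\Pol(\fB)$ onto the projections on $\{x,y\}$ and then invoking Post's Theorem~\ref{thm:post41} and Theorem~\ref{thm:clone-hom-to-proj-implies-NP-hard}. However, your proof is incomplete exactly where you say it is: the ``heart of the argument'' --- showing that behaviours which allegedly escape $\{x,y\}$ (e.g.\ $\overline f(x,x)=\id$) still depend on a single coordinate up to a swap, so that a coordinate $i(f)$ can be assigned coherently --- is never carried out, only sketched. The sketch itself is not convincing: observing that $h(s,t)=f(s,\dots,s,t,s,\dots,s)$ cannot be $\{x,y\}$-symmetric does not by itself forbid genuine dependence on two coordinates once outputs outside $\{x,y\}$ are allowed, and no argument is given that the resulting assignment respects composition. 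So as written there is a genuine gap.

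The missing idea is that the obstacle you anticipate cannot occur at all. The signature of a representation is the whole of $A$, not just $A_0$ (Definition~\ref{defn:repr}); in particular $x$, $y$, and $x\cup y$ are relation symbols of $\fB$, with $(x\cup y)^\fB = x^\fB\cup y^\fB$. Every polymorphism preserves these relations, so if all input pairs lie in $x^\fB\cup y^\fB$, then so does the output pair; hence the behaviour of every $\{x,y\}$-canonical polymorphism maps $\{x,y\}^n$ into $\{x,y\}$, and is in fact conservative ($\overline f(a_1,\dots,a_n)\in\{a_1,\dots,a_n\}$, by the same argument applied to the relation $(a_1\cup\dots\cup a_n)^\fB$); this is the same observation the paper makes, right after Definition~\ref{def:atom-struct}, to see that atom structures are conservative. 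Consequently your set $\mathcal E$ is all of $\xi(\Pol(\fB))$, values such as $\id$ never arise, and conservativity also excludes the unary swap, so no ``forgetting of swaps'' is needed: the behaviour map $\xi$ itself is a uniformly continuous clone homomorphism whose image is a clone on $\{x,y\}$ containing no binary symmetric operation, no majority, no minority, and no swap, hence by Theorem~\ref{thm:post41} exactly the projections; Theorem~\ref{thm:clone-hom-to-proj-implies-NP-hard} then gives hardness. This is precisely the paper's proof. (Note also that your swap-tolerant variant silently uses more than Theorem~\ref{thm:post41} as stated: that theorem does not assert that a clone containing the swap but no symmetric, majority, or minority operation consists only of operations of the form $s^{\varepsilon}\pi_i$; with conservativity in hand, this issue disappears.)
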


\begin{proof} 
Since $\fB$ is a normal representation of $\bA$, $\NSP(\bA) = \CSP(\fB)$ is contained in $\NP$. 
Because every polymorphism of $\fB$ is $\{x, y\}$-canonical, the function $\xi$ with domain $\Pol(\fB)$ given by 
\begin{align*}
        f \mapsto \text{~(the behaviour of $f$ on~} \{x, y\})
\end{align*}
is well-defined.
Note that, since 
the behaviour of $f$ on $\{x,y\}$ is conservative,
$\xi(\Pol(\fB))$ is a clone on the two-element set $\{x, y\}$. 
It is easy to see that $\xi$ is a clone homomorphism. 
Note that if we have four vertices $x_1, x_2, x_3, x_4 \in B$ with $x^\fB(x_1, x_2)$ and $y^\fB(x_3, x_4)$, then, for any $f \in \Pol(\fB)$, the values of $f$ on tuples with components from $\{x_1, x_2, x_3, x_4\}$ suffice to determine $\xi(f)$; hence, $\xi$ is uniformly continuous (see Definition \ref{def:uniformly-continuous-clone-hom}).
By assumption, $\Pol(\fB)$ contains no $\{x, y\}$-symmetric operation, no $\{x, y\}$-minority and no $\{x, y\}$-majority. Since the behaviour of $f$ on $\{x,y\}$ is conservative, $\xi(\Pol(\fB))$ also does not contain the function $f$ satisfying $f(x) = y$ and $f(y) = x$. 
Thus, by Post's Theorem \ref{thm:post41}, $\xi(\Pol(\fB))$ is the projection clone. 
Since $\fB$ is homogeneous, it is in particular $\omega$-categorical. It follows from Theorem \ref{thm:clone-hom-to-proj-implies-NP-hard} that $\fB$ pp-constructs $(\{0, 1\}; \NAE)$ and $\CSP(\fB) = \NSP(\bA)$ is $\NP$-hard.
\end{proof}

The relation algebra $\ra{19}{37}$ has a normal representation $\fB$ with domain ${\mathbb T} \times \{0,1\}$ and the following relations:
\begin{itemize}
    \item $a^{\fB} := \{\bigl((x,0),(x,1)\bigr) \mid x \in {\mathbb T}\} \cup \{\bigl((x,1), (x,0)\bigr) \mid x \in {\mathbb T}\}$. 
    \item $r^{\fB} := \bigcup_{(x,y) \in E^\T} \big \{ \bigl ((x,0),(y,0) \bigr ), 
    \bigl((x,1),(y,1) \bigr), \bigl((y,0),(x,1)\bigr), \bigl((y,1),(x,0)\bigr) \big \}$.
\end{itemize}

\begin{prop}
    \label{prop:19_37}
    $\NSP(\ra{19}{37})$ is $\NP$-complete.
\end{prop}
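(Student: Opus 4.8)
The plan is to apply Proposition~\ref{prop:normal-canonical-schaefer} with the pair of atoms $\{r,\breve{r}\}$. Containment of $\NSP(\ra{19}{37})=\CSP(\fB)$ in $\NP$ is immediate from Corollary~\ref{cor:fully-univ-NP}, since $\fB$ is a normal (hence fully universal) representation. Note that the earlier reductions do not apply here: $a\cup\id$ is an equivalence relation of $\ra{19}{37}$ whose classes have size two, so there are infinitely many classes in $\fB$ and neither Theorem~\ref{thm:normal_hard_finite} nor Theorem~\ref{thm:primitiv_hard} is available, which is why I would go through canonical polymorphisms. For hardness it then suffices to verify the two hypotheses of Proposition~\ref{prop:normal-canonical-schaefer}: first, that every $f\in\Pol(\fB)$ is $\{r,\breve{r}\}$-canonical, and second, that no $f\in\Pol(\fB)$ is $\{r,\breve{r}\}$-symmetric, a $\{r,\breve{r}\}$-majority, or a $\{r,\breve{r}\}$-minority. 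Conservativity of the behaviour, and hence the exclusion of the ``swap'' in Post's Theorem~\ref{thm:post41}, is automatic because every polymorphism preserves $(r\cup\breve{r})^{\fB}$.

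For the first hypothesis I would invoke the Ramsey method. The homogeneous tournament $\T$ admits a Ramsey expansion by a generic linear order, and $\fB$ is a reduct of a finitely bounded homogeneous structure built over this ordered tournament; the canonization machinery for reducts of finitely bounded homogeneous Ramsey structures then yields that every polymorphism behaves canonically on $r/\breve{r}$-configurations, so that the behaviour $\overline{f}$ on $\{r,\breve{r}\}$ is well defined. This is the main obstacle: in $\T$ \emph{every} pattern of $r$- and $\breve{r}$-edges is realizable (all finite tournaments embed on a single layer), so there are no forbidden triples among $r,\breve{r}$ to exploit directly, and the uniformity of $\overline{f}$ across realizations must be forced by Ramsey-theoretic control rather than by local combinatorics.

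For the second hypothesis the key structural feature is the twist in the definition of $r^{\fB}$: since $((x,i),(y,j))\in r^{\fB}$ holds exactly when $[x\to y]$ and $[i\neq j]$ have different truth values, applying the $a$-involution $\phi\colon(x,i)\mapsto(x,1-i)$ to the endpoint of an edge exchanges $r$ and $\breve{r}$. Concretely, given a canonical $f$ and a pattern $p\in\{r,\breve{r}\}^n$, I would realize $p$ between tuples $P_1$ and $P_2$, and realize the coordinatewise converse $\bar{p}$ between $P_1$ and the tuple obtained by applying $\phi$ coordinatewise to $P_2$. As $f$ preserves the symmetric atom $a$, the images of $P_2$ and of its $\phi$-image are $a$-related and hence share a base vertex in distinct layers; comparing the two edges emanating from $f(P_1)$ then forces $\overline{f}(\bar{p})=\overline{\overline{f}(p)}$, i.e.\ $\overline{f}$ is \emph{self-dual}. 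Self-duality already rules out every $\{r,\breve{r}\}$-symmetric operation, because then $\overline{f}(\breve{r},r)=\overline{\overline{f}(r,\breve{r})}\neq\overline{f}(r,\breve{r})$. It does not by itself exclude a majority or a minority, both of which are self-dual; to eliminate these I would place configurations on a single layer, where $\fB$ restricts to a copy of $\T$, and use that $\ra{3}{3}$ is $\NP$-hard (Figure~\ref{fig:3AsyAtoms}): the tournament's simultaneous presence of directed $3$-cycles and transitive triangles is incompatible with a canonical majority or minority, and this obstruction transfers to $\overline{f}$. Combining self-duality with the tournament obstruction shows $\overline{f}$ is a projection for every $f$, so that $f\mapsto\overline{f}$ is a uniformly continuous clone homomorphism onto the projections and $\CSP(\fB)$ is $\NP$-hard by Theorem~\ref{thm:clone-hom-to-proj-implies-NP-hard}. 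I expect the canonicity step and the precise casework ruling out majority and minority to be the most delicate parts.
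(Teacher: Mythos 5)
Your proposal diverges from the paper's proof (which works with the pair $\{a,\id\}$ rather than $\{r,\breve{r}\}$), and it has two genuine gaps, one in each hypothesis of Proposition~\ref{prop:normal-canonical-schaefer}.

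First, the canonicity step. Proposition~\ref{prop:normal-canonical-schaefer} requires that \emph{every} polymorphism of $\fB$ is canonical on the chosen pair of atoms. The Ramsey canonization machinery you invoke cannot deliver this: it shows that from an arbitrary polymorphism one can \emph{produce} a canonical function by interpolation (composing with automorphisms and taking limits), not that each polymorphism already is canonical. These are very different statements, and only the latter feeds into the clone homomorphism $f \mapsto \overline{f}$ that the proposition needs. The paper closes exactly this gap by exhibiting a primitive positive definition (the gadget of Figure~\ref{fig:gadget_19_37}) of the relation ``two $(a\cup\id)$-edges have the same type''; since every polymorphism preserves a pp-definable relation, $\{a,\id\}$-canonicity of all polymorphisms follows. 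You would need an analogous pp-definable ``same orientation'' relation for disjoint $(r\cup\breve{r})$-edges, and you do not provide one.

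Second, the exclusion of majority and minority behaviours rests on a false premise: you assert that $\ra{3}{3}$ is $\NP$-hard, but by Figure~\ref{fig:3AsyAtoms} and Proposition~\ref{prop:can-bin-sym2} the problem $\NSP(\ra{3}{3})$ is in $\Ptime$ (its atom structure even has a ternary WNU polymorphism). Worse, the intended obstruction cannot exist at the local level you describe: among the atoms $r,\breve{r}$ alone, \emph{every} triangle pattern is an allowed triple of $\ra{19}{37}$ (both $rrr$ and $rr\breve{r}$ are allowed), so taking edge-wise majorities or minorities of triangles on a single layer never produces a forbidden triple, and no contradiction arises from ``directed $3$-cycles versus transitive triangles''. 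This is precisely why the paper works with $\{a,\id\}$ instead: there the forbidden triples $(a,a,a)$, $(\id,\id,a)$, $(a,r,r)$, etc.\ give the short case analysis that kills symmetric, majority, and minority behaviours. Your self-duality observation (the twist $\phi\colon(x,i)\mapsto(x,1-i)$ swaps $r$ and $\breve{r}$) is correct and does rule out a $\{r,\breve{r}\}$-symmetric behaviour, but as you note it leaves majority and minority untouched, and the argument you offer to finish the job does not work.
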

\begin{proof}
Let $\fB$ be the normal representation of $\ra{19}{37}$ defined above. Observe that the relation
\begin{align*}
    R = \{(x_1, x_2, y_1, y_2) \in B^4 \mid (a \cup \id)^\fB(x_1, x_2) \text{~and~} & (a \cup \id)^\fB(y_1, y_2) \\
    \text{~and~} & a^\fB(x_1, x_2) \iff a^\fB(y_1, y_2)\}
\end{align*} has the following pp-definition in $\fB$ (see Figure~\ref{fig:gadget_19_37}):
\begin{align*}
    (a\cup \id)(x_1, x_2) \land
    (a\cup \id)(y_1, y_2) \land
    \exists z_1, z_2, z_3, z_4 \biggl(
    (a\cup \id)(z_1, z_2) \land
    (a\cup \id)(z_3, z_4) \\\land
    (r\cup \id)(x_1, z_1) \land
    (r\cup \id)(z_1, z_3) \land
    (r\cup \id)(z_3, y_1) \land
    (r\cup \id)(x_2, z_2) \\\land
    (r\cup \id)(z_2, x_4) \land
    (r\cup \id)(z_4, y_2) \land
    (r\cup \id)(z_2, x_1) \land
    (r\cup \id)(z_1, x_2) \\ \land
    (r\cup \id)(z_4, z_1) \land
    (r\cup \id)(z_3, z_2) \land
    (r\cup \id)(y_2, z_3) \land
    (r\cup \id)(y_1, z_4)
    \bigg).
\end{align*}
\begin{figure}
    \centering
    \[
    \begin{tikzcd}
        x_1 && z_1 && z_3 && y_1 \\ \\
        x_2 && z_2 && z_4 && y_2
        \arrow["r \cup \id", from=1-1, to=1-3]
        \arrow["r \cup \id", from=1-3, to=1-5]
        \arrow["r \cup \id", from=1-5, to=1-7]
        \arrow["r \cup \id", swap, from=3-1, to=3-3]
        \arrow["r \cup \id", swap, from=3-3, to=3-5]
        \arrow["r \cup \id", swap, from=3-5, to=3-7]
        \arrow["a \cup \id", swap, no head, from=1-1, to=3-1]
        \arrow["a \cup \id", no head, from=1-7, to=3-7]
        \arrow["a \cup \id", no head, from=1-3, to=3-3]
        \arrow["a \cup \id", no head, from=1-5, to=3-5]
        \arrow["r \cup \id"{pos=0.15}, swap, from=1-5, to=3-3]
        \arrow["r \cup \id"{pos=0.15}, from=3-5, to=1-3]
        \arrow["r \cup \id"{pos=0.15}, swap, from=1-3, to=3-1]
        \arrow["r \cup \id"{pos=0.15}, from=3-3, to=1-1]
        \arrow["r \cup \id"{pos=0.15}, swap, from=1-7, to=3-5]
        \arrow["r \cup \id"{pos=0.15}, from=3-7, to=1-5]
    \end{tikzcd}
    \]\caption{In every solution $g$ to this $\ra{19}{37}$-network, we have $g(x_1, x_2) = g(y_1, y_2)$.}
    \label{fig:gadget_19_37}
\end{figure}
Hence, every polymorphism of $\fB$ preserves $R$, and thus is $\{a, \id\}$-canonical.

Assume that there is an $\{a, \id\}$-symmetric polymorphism $m\colon$ Then either $m(a, \id) = m(\id, a) = a$ or $m(a, \id) = m(\id, a) = \id$. In both cases, we obtain a contradiction, because $(m(a, \id),\allowbreak m(\id, a),\allowbreak m(a, a))$ would either be equal to $(a, a, a)$ or to $(\id, \id, a)$, which are both forbidden triples. Next, assume that there is an $\{a, \id\}$-majority $m\colon$ Then $(m(\id, a, a), m(a, \id, a), m(a, a, \id)) = (a, a, a)$, which is a forbidden triple, a contradiction to $m$ being a polymorphism. 
Finally, assume that there is an $\{a, \id\}$-minority $m\colon$ We consider $(m(\id, a,\id),m(a,\breve{r},a),m(a,r,a))$, which has to be an allowed triple. Since the first entry has to be $a$, we conclude that this expression is equal to $(a, \breve{r}, r)$ (recall that $m$ is conservative). In particular, we obtain $m(a, r, a) = r$. 
Hence, the fact that $(m(a, r, a), m(\id, r, a), m(a, a, \id))$ has to be allowed implies that $m(\id, r, a) = \breve{r}$, which contradicts the fact that every polymorphism of $\fB$ is conservative. It follows from Proposition \ref{prop:normal-canonical-schaefer} that $\CSP(\fB) = \NSP(\ra{19}{37})$ is $\NP$-complete.
\end{proof}

The consistent reduced atomic $\ra{27}{65}$-networks are exactly those in which $c$ only appears in rainbow triangles, i.e., in triples $(x, y, z)$ such that $\{x, y, z\} = \{a, b, c\}$.

Let $\R = (R; E^\R)$ be the Rado graph. The algebra $\ra{27}{65}$ has a normal representation $\fB$ with domain $R \times \{0, 1\}$ and the following relations (see Figure \ref{fig:27_65_rep}):
\begin{align*}
    &a^\fB =
    \bigcup_{(x, y) \in E^{\R}} \left\{\bigl((x, 0), (y, 0)\bigr),
    \bigl((x, 1), (y, 1)\bigr)\right\}
    \cup
    \bigcup_{(x, y) \not\in E^{\R}} \left\{\bigl((x, 0), (y, 1)\bigr),
    \bigl((x, 1), (y, 0)\bigr)\right\}, \\
    &b^\fB = \bigcup_{(x, y) \not\in E^{\R}} \left\{\bigl((x, 0), (y, 0)\bigr),
    \bigl((x, 1), (y, 1)\bigr)\right\}
    \cup
    \bigcup_{(x, y) \in E^{\R}} \left\{\bigl((x, 0), (y, 1)\bigr),
    \bigl((x, 1), (y, 0)\bigr)\right\}, \\
    &c^\fB = \left\{\bigl((x, 0), (x, 1)\bigr), \bigl((x, 1), (x, 0)\bigr) \mid x \in \R\right\}.
\end{align*}
\begin{figure}
    \centering
    \[
    \begin{tikzcd}
        (x,0) && (y,0) & \cdots & (x', 0) && (y', 0) \\
        && & \ddots & &&\\
        (x,1) && (y,1) & \cdots & (x', 1) && (y', 1)
        \arrow["c", swap, no head, from=1-1, to=3-1]
        \arrow["c", no head, from=1-3, to=3-3]
        \arrow["c", swap, no head, from=1-5, to=3-5]
        \arrow["c", no head, from=1-7, to=3-7]
        \arrow["a"{pos=0.6}, no head, from=1-5, to=3-7]
        \arrow["a"{pos=0.6}, no head, from=3-5, to=1-7]
        \arrow["a", no head, from=1-1, to=1-3]
        \arrow["a", swap, no head, from=3-1, to=3-3]
        \arrow["b"{pos=0.6}, no head, from=1-1, to=3-3]
        \arrow["b"{pos=0.6}, no head, from=3-1, to=1-3]
        \arrow["b", no head, from=1-5, to=1-7]
        \arrow["b", swap, no head, from=3-5, to=3-7]
    \end{tikzcd}
    \]\caption{Detail from the normal representation $\fB$ of $\ra{27}{65}$. Here, $(x, y) \in E^{\R}$ and $(x', y') \not\in E^{\R}$.}
    \label{fig:27_65_rep}
\end{figure}

\begin{prop}
    \label{prop:27_65}
    $\NSP(\ra{27}{65})$ is  $\NP$-complete.
\end{prop}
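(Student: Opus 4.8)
The plan is to apply Proposition~\ref{prop:normal-canonical-schaefer} to the given normal representation $\fB$ of $\ra{27}{65}$ on $\R \times \{0,1\}$, exactly in the spirit of the proof of Proposition~\ref{prop:19_37}. I would work with the pair $\{b,c\}$; note that the map $(x,\epsilon) \mapsto (\phi(x),\epsilon)$, where $\phi$ is an isomorphism from $\R$ to its complement, is an automorphism of $\ra{27}{65}$ interchanging $a$ and $b$ and fixing $c$, so the pair $\{a,c\}$ is equivalent and the choice is immaterial. The key structural features are that the two elements of each $(c\cup\id)$-class are $c$-partners, that crossing to the $c$-partner flips the layer (a $\mathbb{Z}_2$-structure, visible in Figure~\ref{fig:27_65_rep}), and that $c$ occurs only in rainbow triangles; these together drive the hardness. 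Membership in $\NP$ is immediate since $\fB$ is normal, hence fully universal.

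The first step is to show that every $f \in \Pol(\fB)$ is $\{b,c\}$-canonical. Following the gadget $R$ from Proposition~\ref{prop:19_37}, I would pp-define a relation that synchronizes the $\{b,c\}$-colour of diversity pairs: if $x_1,y_1$ are $c$-related and $x_2,y_2$ are $c$-related, then $(x_1,x_2)$ and $(y_1,y_2)$ carry the same colour (the partner square of Figure~\ref{fig:27_65_rep}); stringing such squares together with rainbow triangles forces the colour of $(f(x),f(y))$ to depend only on the colours of the inputs and to stay inside $\{b,c\}$. Hence the behaviour $\overline{f}$ is a well-defined conservative operation on $\{b,c\}$ and $f \mapsto \overline{f}$ is a uniformly continuous clone homomorphism onto a clone on $\{b,c\}$.

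Next I would rule out the three behaviour types; by conservativity and Post's Theorem~\ref{thm:post41} it suffices to exclude $\{b,c\}$-symmetric, $\{b,c\}$-majority, and $\{b,c\}$-minority behaviours. A minority dies immediately: using the allowed argument-triples $(b,b,b)$, $(b,c,a)$, $(c,b,a)$ as the three columns, the rows become $(b,b,c)$, $(b,c,b)$, $(b,a,a)$; a minority sends the first two rows to $c$ while the third is conservative, so the result is $(c,c,\cdot)$ with $\cdot \in \{a,b\}$, a forbidden triple. A symmetric behaviour with $\overline{f}(b,c)=c$ is excluded the same way via the columns $(a,b,c)$, $(a,c,b)$, whose rows $(a,a)$, $(b,c)$, $(c,b)$ force the forbidden $(a,c,c)$.

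The remaining behaviours --- symmetric with $\overline{f}(b,c)=b$, and majority --- are \emph{not} obstructed by triangles alone, because in $\ra{27}{65}$ the atom $c$ is never forced to appear in a consistent completion (every rainbow may be replaced by a non-rainbow). This is the main obstacle, and here I would use a genuinely existential gadget: combining the $\mathbb{Z}_2$-matching $c$ (which yields affine/parity constraints on the $\{b,c\}$-colours) with the \emph{non-affine} rainbow constraint, I would pp-define a relation whose restriction to $\{b,c\}$-colours is an $\NAE$-type relation, which is preserved by no symmetric, majority, or minority operation. Constructing this gadget and verifying the canonicity step above are the parts that require real work; once the behaviour clone is identified with the projection clone, Proposition~\ref{prop:normal-canonical-schaefer} yields that $\CSP(\fB)=\NSP(\ra{27}{65})$ is $\NP$-hard, completing the proof.
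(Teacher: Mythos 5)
Your proposal follows the same high-level strategy as the paper's proof (establish canonicity on a two-element set of atoms, then apply Proposition~\ref{prop:normal-canonical-schaefer} via Post's theorem, exactly as in Proposition~\ref{prop:19_37}), but you work with the pair $\{b,c\}$, whereas the paper works with $\{c,\id\}$ --- and this choice is precisely where your argument develops genuine gaps. Your two triangle arguments are correct: the matrix with columns $(b,b,b)$, $(b,c,a)$, $(c,b,a)$ kills a $\{b,c\}$-minority, and the columns $(a,b,c)$, $(a,c,b)$ kill a symmetric behaviour with value $c$. But the two remaining steps, which are the entire substance of the proof, are missing. First, the canonicity step is not established: the partner-square fact you cite (if $c(x_1,y_1)$ and $c(x_2,y_2)$, then $(x_1,x_2)$ and $(y_1,y_2)$ have the same colour) synchronizes the $\{a,b\}$-colours of two cross-fibre pairs, or the $\{c,\id\}$-colours of two same-fibre pairs; it never relates a $b$-pair to a $c$-pair, which is what a pp-definition of ``same $\{b,c\}$-colour'' must do, and ``stringing squares together with rainbow triangles'' is an intention, not an argument. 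Second, and more seriously, you correctly observe that majority and symmetric-with-value-$b$ behaviours cannot be refuted by triangle arguments at all (indeed, any matrix in which two rows are forced to colour $c$ has a column containing two $c$'s, hence a forbidden column), and for exactly these cases you offer only an unconstructed ``NAE-type gadget''. So what you have written is a plan whose two hardest components are deferred.

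For comparison, the paper's choice of $\{c,\id\}$ makes every step elementary: the canonicity gadget is the explicit six-point pp-definition of Figure~\ref{fig:gadget_27_65}, the symmetric and majority behaviours die in one line each (they produce $(c,c,c)$ or $(\id,\id,c)$), and the minority dies in a short chain --- the allowed triples $(m(b,c,b),m(\id,c,\id),m(b,\id,b))$ and $(m(c,a,a),m(c,\id,\id),m(\id,a,a))$ force $m(b,c,b)=m(c,a,a)=c$, whence $(m(c,a,a),m(a,b,c),m(b,c,b))$ is forbidden. Your symmetry remark (that $\{a,c\}$ and $\{b,c\}$ are interchangeable) is true but beside the point; the relevant comparison is with $\{c,\id\}$. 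To be fair, your approach is not doomed: using the paper's $\{c,\id\}$-analysis together with the $\mathbb{Z}_2$-layer structure of $\fB$ one can check that every polymorphism is essentially determined by a single argument, hence is $\{b,c\}$-canonical with projection behaviour, and then the Galois connection for $\omega$-categorical structures guarantees that the gadgets you postulate exist. But that verification presupposes the paper's proof (or an equivalent amount of work), so as written your proposal does not constitute a proof of the proposition.
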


\begin{proof}
Let $\fB$ be the normal representation of $\ra{27}{65}$ as defined above.
Observe that the relation
\begin{align*}
    R = \{(x_1, x_2, y_1, y_2) \in B^4\mid (c \cup \id)^\fB(x_1, x_2) \text{~and~} (c \cup \id)^\fB(y_1, y_2) \text{~and~} c^\fB(x_1, x_2) \iff c^\fB(y_1, y_2)\}
\end{align*} has the following pp-definition in $\fB$ (see Figure~\ref{fig:gadget_27_65}):
\begin{align*}
    (c \cup \id)(x_1, x_2) \land
    (c \cup \id)(y_1, y_2) \land
    \exists z_1, z_2 \biggl(
    (c \cup \id)(z_1, z_2) \land
    (a \cup \id)(x_1, z_1) \land
    (a \cup \id)(z_1, y_1) \\ \land
    (a \cup \id)(x_2, z_2) \land
    (a \cup \id)(z_2, y_2) \land
    (b \cup \id)(x_2, z_1) \land
    (b \cup \id)(z_2, y_1)
    \bigg).
\end{align*}
\begin{figure}
    \centering
    \[
    \begin{tikzcd}
        x_1 && z_1 && y_1 \\ \\
        x_2 && z_2 && y_2
        \arrow["a \cup \id", no head, from=1-1, to=1-3]
        \arrow["a \cup \id", no head, from=1-3, to=1-5]
        \arrow["a \cup \id", swap, no head, from=3-1, to=3-3]
        \arrow["a \cup \id", swap, no head, from=3-3, to=3-5]
        \arrow["c \cup \id", swap, no head, from=1-1, to=3-1]
        \arrow["c \cup \id", no head, from=1-3, to=3-3]
        \arrow["c \cup \id", no head, from=1-5, to=3-5]
        \arrow["b \cup \id", swap, no head, from=3-3, to=1-5]
        \arrow["b \cup \id", no head, from=1-3, to=3-1]
    \end{tikzcd}
    \]\caption{In every solution $g$ to this $\ra{27}{65}$-network, we have $g(x_1, x_2) = g(y_1, y_2)$.}
    \label{fig:gadget_27_65}
\end{figure}

This implies that the polymorphisms of the normal representation are $\{c, \id\}$-canonical.

Assume that there is a $\{c, \id\}$-symmetric polymorphism $m\colon$ Then either $m(c, \id) = m(\id, c) = c$ or $m(c, \id) = m(\id, c) = \id$. In both cases, we obtain a contradiction since $(m(c, \id), m(\id, c), m(c, c))$ would either be equal to $(c, c, c)$ or to $(\id, \id, c)$, which are both forbidden triples.
Next, assume that there is a $\{c, \id\}$-majority $m\colon$ Then $(m(\id, c, c), m(c, \id, c), m(c, c, \id)) = (c, c, c)$, which is a forbidden triple, a contradiction to $m$ being a polymorphism.
Finally, assume that there is a $\{c, \id\}$-minority $m\colon$ Since $(m(b, c, b), m(\id, c, \id), m(b, \id, b))$ has to be an allowed triple, $m(\id, c, \id) = c$ and $m(b, \id, b) \in \{b, \id\}$, it follows that $m(b, c, b) = c$.
Analogously, since $(m(c, a, a), m(c, \id, \id), m(\id, a, a))$ has to be an allowed triple, we infer $m(c, a, a) = c$. Hence, $(m(c, a, a), m(a, b, c), m(b, c, b))$, which has to be an allowed triple, is one of $(c, a, c), (c, b, c)$, and $(c, c, c)$, which are all forbidden, a contradiction. 

It follows from Proposition \ref{prop:normal-canonical-schaefer} that $\CSP(\fB) = \NSP(\ra{27}{65})$ is $\NP$-complete.
\end{proof}

In the relation algebra $\ra{29}{65}$, both 
$b \cup \id$ and $c \cup \id$ denote an equivalence relation with infinitely many infinite classes.

\begin{prop}
    \label{prop:29_65}
    $\NSP(\ra{29}{65})$ is $\NP$-complete.
\end{prop}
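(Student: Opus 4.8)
The plan is to keep the two halves of the statement apart: membership in $\NP$ is immediate, and the whole content is $\NP$-hardness. Since $\ra{29}{65}$ has the normal representation $\fB = K_\omega^c \boxtimes_a K_\omega^b$ recorded with the notation of Section~\ref{sect:normal}, this $\fB$ is fully universal and square, so $\NSP(\ra{29}{65}) = \CSP(\fB)$ is in $\NP$ by Corollary~\ref{cor:fully-univ-NP}. For the hardness part I would, as in the three preceding propositions, aim to produce a primitive positive construction of $(\{0,1\};\NAE)$ in $\fB$ and then invoke Proposition~\ref{prop:pp-interpretation-implies-logspace-reducible}. First I would compute the composition table of $\fB$ exactly as in Proposition~\ref{prop:51rep}, viewing a point of $B=\N\times\N$ as a $(\text{row},\text{column})$ pair, so that $b$ means ``same row, different column'', $c$ means ``same column, different row'', and $a$ means ``different row and different column''. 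The facts I actually need are the forcing rules $b\circ c = c\circ b = a$, $b\circ b=\id\cup b$, $c\circ c=\id\cup c$, $a\circ b = a\cup c$, $a\circ c=a\cup b$, together with the observation that $b\cup\id$ and $c\cup\id$ are the two orthogonal equivalence relations (each with infinitely many infinite classes) named in the statement.

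It is useful to first record the resulting combinatorial reformulation of the problem. Because $\fB$ is fully universal, a network is satisfiable iff one can refine every (union) label to an atom and simultaneously split the variables into ``rows'' and ``columns'' so that, for each pair, the induced pattern $(\text{row-equal?},\text{column-equal?})\in\{0,1\}^2$ is the one prescribed by its atom ($a\mapsto(0,0)$, $b\mapsto(1,0)$, $c\mapsto(0,1)$, $\id\mapsto(1,1)$). The source of hardness is precisely the disjunctive choice forced by union labels such as $b\cup c$, which demands that \emph{exactly one} of the two coordinates agree. Concretely, I would encode a Boolean value by the type ($b$ versus $c$) of a designated pair of points constrained to lie in $b\cup c$, and then use the forcing rules above (especially $b\circ c = a$ and $a\circ b = a\cup c$) as implication gadgets to wire together a not-all-equal gadget on three such bits, in the spirit of Figures~\ref{fig:gadget_19_37} and~\ref{fig:gadget_27_65}; its correctness would be checked by a direct case analysis on the composition table.

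The hard part will be exactly this gadget design and verification, because the shortcut of Proposition~\ref{prop:normal-canonical-schaefer} is \emph{not} available here. Indeed, unlike $\ra{19}{37}$ and $\ra{27}{65}$, for $\ra{29}{65}$ the ternary relation $\Cy(\ra{29}{65})$ restricted to \emph{any} pair of atoms admits a binary symmetric polymorphism: on $\{b,\id\}$, $\{c,\id\}$, and $\{a,\id\}$ the restricted allowed-triple relation is ``not exactly one non-identity coordinate'' and is preserved by the join (max); on $\{a,b\}$ and $\{a,c\}$ it is ``not exactly two $b$'s (resp.\ $c$'s)'' and is preserved by the meet (min); and on $\{b,c\}$ only the constant triples are allowed, so every operation preserves it. Consequently, no single pair $\{x,y\}$ can witness the simultaneous failure of symmetry, majority, and minority at the level of $\Cy$ alone, and the per-pair Post/Schaefer argument of the previous proofs collapses. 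Thus the argument must genuinely exploit the grid geometry of $\fB$ (two orthogonal infinite equivalence relations glued by $b\circ c=a$): either through the explicit $\NAE$-gadget sketched above, whose delicate point is proving that the gadget is satisfiable precisely when the encoded triple of bits is not all equal, or through a finer polymorphism analysis showing that the symmetric behaviours allowed by $\Cy$ are not realized by any actual polymorphism of $\fB$. Once the pp-construction of $(\{0,1\};\NAE)$ is in place, Proposition~\ref{prop:pp-interpretation-implies-logspace-reducible} yields $\NP$-hardness, and together with membership in $\NP$ we conclude that $\NSP(\ra{29}{65})$ is $\NP$-complete.
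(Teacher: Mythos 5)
Your membership argument is fine (and matches the paper), your composition table for $\fB$ is correct, and your encoding of a Boolean value as the $b$-versus-$c$ type of a pair constrained to lie in $b\cup c$ is exactly the encoding the paper uses. But the proposal has a genuine gap: $\NP$-hardness is the entire content of the proposition, and that is precisely the part you leave unexecuted. No $\NAE$ gadget is exhibited and nothing is verified; ``wire together a not-all-equal gadget \dots\ checked by a direct case analysis'' is a plan, not a proof. For calibration, even the paper's \emph{auxiliary} gadget, which only pp-defines the relation saying that two $(b\cup c)$-pairs have the same type (Figure~\ref{fig:gadget_29_65}), needs eight existentially quantified points and a nontrivial correctness check; the route you sketch is not impossible in principle, but as written it proves nothing.

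Worse, the reason you give for abandoning the paper's method rests on a conflation. Proposition~\ref{prop:normal-canonical-schaefer} does not ask whether the restriction of $\Cy(\ra{29}{65})$ to $\{x,y\}^3$ admits symmetric, majority, or minority operations on the two-element set $\{x,y\}$; it asks whether the \emph{infinite structure} $\fB$ has polymorphisms that are $\{x,y\}$-canonical with such behaviours. A behaviour on $\{b,c\}$ must be compatible with every configuration in $\fB$, including configurations whose column triples involve $a$ and $\id$, and this is far more restrictive than preserving $\Cy$ restricted to $\{b,c\}^3$. Your observation that every operation preserves that restriction (only the constant triples are allowed there) is correct but beside the point: for instance, a $\{b,c\}$-symmetric polymorphism applied to a configuration realizing the column triples $(b,c,a)$ and $(c,b,a)$ (both allowed, being cycle-equivalent to $(a,b,c)$) would produce an image triple $(d,d,a)$ with $d\in\{b,c\}$ --- here one uses that polymorphisms preserve the relation $a^{\fB}$ itself --- and both $(b,b,a)$ and $(c,c,a)$ are forbidden, being cycle-equivalent to $(a,b,b)$ and $(a,c,c)$. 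This is exactly how the paper proceeds: Figure~\ref{fig:gadget_29_65} gives a pp-definition forcing every polymorphism to be $\{b,c\}$-canonical, and then symmetric, majority, and minority behaviours are eliminated by case analyses using mixed triples such as $(a,b,b)$, $(b,a,b)$, $(c,c,b)$, and $(c,c,a)$, after which Proposition~\ref{prop:normal-canonical-schaefer} applies. So the ``finer polymorphism analysis'' you relegate to a fallback is not an alternative to the shortcut; it \emph{is} the shortcut, and it is the paper's proof. (Your reading of the earlier cases is off for the same reason: also in Propositions~\ref{prop:19_37} and~\ref{prop:27_65} the minority behaviour is consistent with the restricted cycle relation and is killed only by configurations involving atoms outside the distinguished pair.)
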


\begin{proof} 
Let $\fB$ be the normal representation of $\ra{29}{65}$ (see~\eqref{eq:norm_all}).
It can be checked by various case distinctions that the relation
\begin{align*}
    R = \{(x_1, x_2, y_1, y_2) \in B^4\mid (b \cup c)^\fB(x_1, x_2) \text{~and~} (b \cup c)^\fB(y_1, y_2) \text{~and~} b^\fB(x_1, x_2) \iff b^\fB(y_1, y_2)\}
\end{align*} has the following pp-definition in $\fB$ (see Figure~\ref{fig:gadget_29_65}): 
\begin{align*}
    (b\cup c)(x_1, x_2) \land
    (b\cup c)(y_1, y_2) \land
    \exists z_1, z_2, z_3, z_4, z_5, z_6 \biggl(
    (b \cup c)(z_2, z_3) \land
    (b \cup c)(z_6, y_1) \land
    (b \cup c)(x_1, z_1) \\ \land
    (b \cup c)(x_2, z_2) \land
    (b \cup c)(z_1, z_2) \land
    (b \cup c)(z_3, z_4) \land
    (b \cup c)(z_3, z_5) \land
    (b \cup c)(z_4, z_6) \land
    (b \cup c)(z_5, z_6) \\ \land
    (b \cup c \cup \id)(z_2, z_4) \land
    (b \cup c \cup \id)(z_5, y_1) \land
    (b \cup c \cup \id)(z_6, y_2) \land
    (b \cup c \cup \id)(z_1, z_3) \\ \land
    a(x_2, z_1) \land
    a(z_1, x_2) \land
    a(z_3, z_6) \land
    a(z_4, z_5)\bigg).
\end{align*}

\begin{figure}
    \centering
    \[
    \begin{tikzcd}
        x_1 && z_1 && z_3 && z_5 && y_1 \\ \\
        x_2 && z_2 && z_4 && z_6 && y_2
        \arrow["b \cup c", no head, from=1-1, to=1-3]
        \arrow["b \cup c \cup \id", no head, from=1-3, to=1-5]
        \arrow["b \cup c", no head, from=1-5, to=1-7]
        \arrow["b \cup c \cup \id", no head, from=1-7, to=1-9]
        \arrow["b \cup c", swap, no head, from=3-1, to=3-3]
        \arrow["b \cup c \cup \id", swap, no head, from=3-3, to=3-5]
        \arrow["b \cup c", swap, no head, from=3-5, to=3-7]
        \arrow["b \cup c \cup \id", swap, no head, from=3-7, to=3-9]
        \arrow["b \cup c", swap, no head, from=1-1, to=3-1]
        \arrow["b \cup c", no head, from=1-9, to=3-9]
        \arrow["b \cup c", no head, from=1-7, to=3-7]
        \arrow["b \cup c", no head, from=1-3, to=3-3]
        \arrow["b \cup c", no head, from=1-5, to=3-5]
        \arrow["b \cup c", swap, no head, from=3-7, to=1-9]
        \arrow["b \cup c", swap, no head, from=3-3, to=1-5]
        \arrow["a"{pos=0.6}, no head, from=1-3, to=3-1]
        \arrow["a"{pos=0.6}, no head, from=1-7, to=3-5]
        \arrow["a"{pos=0.6}, no head, from=1-1, to=3-3]
        \arrow["a"{pos=0.6}, no head, from=1-5, to=3-7]
    \end{tikzcd}
    \]\caption{In every solution $g$ to this $\ra{29}{65}$-network, we have $g(x_1, x_2) = g(y_1, y_2)$.}
    \label{fig:gadget_29_65}
\end{figure}

This implies that the polymorphisms of the normal representation are $\{b,c\}$-canonical. It is easy to see that there cannot be a $\{b,c\}$-symmetric polymorphism. Assume for contradiction that there exists a $\{b,c\}$-majority $m$. Then $m(a,b,a)=b$, since otherwise $(m(a,b,a),m(b,b,c),m(c,b,b))$, which has to be an allowed triple, would equal $(a,b,b)$, which is a contradiction. But now we get $(m(a,b,a),m(a,a,a),m(b,c,b)) = (b,a,b)$, which is also a contradiction to $m$ being a polymorphism. As a last step assume for contradiction that there exists a $\{b,c\}$-minority $m$. Then $(m(b,b,c),m(b,c,b),m(b,a,a))$ is either $(c,c,b)$ or $(c,c,a)$, which are both forbidden triples, a contradiction. It follows from Proposition~\ref{prop:normal-canonical-schaefer} that $\NSP(\ra{29}{65})$ is $\NP$-complete.
\end{proof} 

We take a different approach for the following two hardness proofs, in which we will directly give a pp-interpretation of a structure with $\NP$-hard CSP in the normal representation of the respective relation algebra.

The consistent atomic $\ra{33}{37}$-networks can be seen as oriented graphs that do not embed a directed 3-cycle. Hence, $\ra{33}{37}$ has the flexible atom $a$ and therefore a normal representation. The classification from~\cite{BodirskyKnaeJAIR} cannot be applied here since only symmetric relation algebras are covered there.

\begin{prop}\label{prop:33_37}
    There is a primitive positive interpretation of $(\{0,1\};\NAE)$ in the normal representation $\fB$ of $\ra{33}{37}$; consequently,
    $\NSP(\ra{33}{37})$ is $\NP$-complete.
\end{prop}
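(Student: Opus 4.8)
The plan is to establish $\NP$-completeness by combining an easy membership argument with a primitive positive interpretation of $(\{0,1\};\NAE)$. Membership of $\NSP(\ra{33}{37})$ in $\NP$ is immediate: since $\ra{33}{37}$ has a normal (hence fully universal) representation $\fB$, Corollary~\ref{cor:fully-univ-NP} applies. For hardness I would first pin down $\fB$ concretely. The four atoms are $\id$, the asymmetric pair $r,\breve{r}$, and the symmetric flexible atom $a$; flexibility of $a$ forces every composition to contain $a$, so the only nontrivial composition is $r\circ r = r\cup a$, and the single forbidden triple (up to the cycle law of Proposition~\ref{prop:cyclelaw}) is the directed $3$-cycle $\breve{r}\le r\circ r$. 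Thus $\fB$ is the Fraïssé limit of finite oriented graphs with no directed $3$-cycle, where $r$ is the arc relation and $a$ the non-edge relation.

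The bit-encoding I would use is the orientation of an adjacent pair: take dimension $2$, domain formula $\varphi_\top(x_1,x_2):=(r\cup\breve{r})(x_1,x_2)$, and coordinate map $h(x_1,x_2)=0$ if $r(x_1,x_2)$ and $h(x_1,x_2)=1$ if $\breve{r}(x_1,x_2)$. Since every algebra element is a relation symbol of $\fB$, the only relations still requiring work are $h^{-1}(=)$ and $h^{-1}(\NAE)$. The conceptual heart of $h^{-1}(\NAE)$ is that a tournament on three vertices with no directed $3$-cycle is transitive: if three arcs are placed on a triangle $p_1p_2p_3$, the two \emph{cyclic} orientations are exactly those excluded by $\breve{r}\not\le r\circ r$, while the remaining six transitive orientations match the six tuples of $\NAE$ once the per-edge reference direction is fixed along one cyclic orientation. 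So I would realize $\NAE$ by existentially introducing such a triangle and wiring each input arc to one triangle edge, invoking full universality of $\fB$ to conclude that the resulting network is consistent (equivalently satisfiable) precisely when the three input bits are not all equal.

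The remaining and genuinely hard step is the \emph{synchronization} relation $h^{-1}(=)$, i.e.\ a primitive positive definition of ``two disjoint adjacent pairs have equal orientation'', namely $r(x_1,x_2)\leftrightarrow r(y_1,y_2)$. This is a disjunction of two conjunctions, so it cannot be written directly and must be forced by a gadget with auxiliary existential vertices, in the spirit of the ladder gadgets of Figures~\ref{fig:gadget_19_37} and~\ref{fig:gadget_27_65} used for $\ra{19}{37}$ and $\ra{27}{65}$. The gadget should transport orientation along directed chains, exploiting that $\exists w\,\bigl((r\cup\id)(x_1,w)\wedge(r\cup\id)(w,x_2)\bigr)$ holds on an adjacent pair if and only if $r(x_1,x_2)$, because $(r\cup\id)\circ(r\cup\id)=\id\cup r\cup a$ omits $\breve{r}$; chaining and cross-linking such $2$-paths between the two pairs should pin down the biconditional. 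I expect the main obstacle to be exactly the design and verification of this gadget: I would need to exhibit an explicit small network and check, via the composition table and the amalgamation property underlying full universality, that it is solvable in both matching orientations and unsolvable in each mismatched one. If a clean equal-orientation gadget proves elusive, a convenient fallback is to pp-interpret $K_3$ instead (three pairwise-adjacent orientation-bits constrained to a transitive triangle), which pp-constructs $(\{0,1\};\NAE)$ by Remark~\ref{rem:tractability_conj_1_7_and_2_3}.

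Finally I would assemble these pieces into a dimension-$2$ primitive positive interpretation of $(\{0,1\};\NAE)$ in $\fB$. Proposition~\ref{prop:pp-interpretation-implies-logspace-reducible} then yields $\NP$-hardness of $\CSP(\fB)=\NSP(\ra{33}{37})$, and together with the $\NP$-membership above this gives $\NP$-completeness. Since the hardness is witnessed by a pp-construction of $(\{0,1\};\NAE)$, it simultaneously confirms the tractability conjecture (Conjecture~\ref{conj:tractability}) for the normal representation $\fB$.
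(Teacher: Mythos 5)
Your overall framework coincides with the paper's proof: the $\NP$-membership argument is fine, and the hardness skeleton — a dimension-$2$ interpretation whose domain is the set of adjacent pairs, a coordinate map reading off the orientation, and $h^{-1}(\NAE)$ realized by a fresh triangle $v_0,v_1,v_2$ whose three edges are synchronized with the three input pairs, using that a $3$-tournament with no directed $3$-cycle is transitive — is exactly what the paper does. But the proposal has a genuine gap precisely where you admit to expecting one: the equal-orientation gadget defining $h^{-1}(=)$ is never constructed, and this gadget is also what your $\NAE$ step needs in order to ``wire each input arc to one triangle edge'', so both interpreted relations depend on it. That gadget is the entire technical content of the paper's proof: an explicit $8$-vertex network (Figure~\ref{fig:gadget_33_37}) mixing $r$-edges and $(r\cup\breve r)$-edges, whose correctness (every solution forces equal orientations, and each of the two equal-orientation assignments extends to a solution) must be verified against the forbidden directed $3$-cycle. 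Your preparatory observation that $(r\cup\id)\circ(r\cup\id)=\id\cup r\cup a$ omits $\breve r$ is correct but buys nothing by itself, since $r$ is already an atomic relation of $\fB$; the real obstacle is that $r(x_1,x_2)\leftrightarrow r(y_1,y_2)$ is a disjunction of two conjunctions and must be forced indirectly, and ``chaining and cross-linking $2$-paths'' is a direction, not a proof.

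The fallback does not rescue the argument either. First, any primitive positive interpretation — of $K_3$ or of anything else — must pp-define the preimage of equality (Definition~\ref{def:pp-interpretation}), so interpreting $K_3$ instead of $(\{0,1\};\NAE)$ does not let you avoid the synchronization gadget. Second, with your orientation coordinate map the image of $h$ has two elements, so it cannot be surjective onto the three vertices of $K_3$; ``three pairwise-adjacent orientation-bits constrained to a transitive triangle'' describes the $\NAE$ relation on bits, not a $3$-element domain, so the fallback is not coherent as stated. The proposal therefore stands or falls with the missing gadget, which is exactly the piece the paper supplies.
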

\begin{proof}
We present a primitive positive interpretation in $\fB$ of the structure $(\{0, 1\}; \mathrm{NAE})$. It then follows from Proposition \ref{prop:pp-interpretation-implies-logspace-reducible} that $\NSP(\ra{33}{37}$) is $\NP$-hard; it is contained in $\NP$ since $\ra{33}{37}$ has a normal representation.

We work with the terminology from Definition \ref{def:pp-interpretation}. The dimension of the interpretation is $d = 2$. The domain formula is given by $\varphi_\top(x_0, x_1) := (r \cup \breve{r})(x_0, x_1)$. The coordinate map $h\colon (r \cup \breve{r})^\fB \rightarrow \{0, 1\}$ is given by $h(x_0, x_1) = 0$ if $r^\fB(x_0, x_1)$ and $h(x_0, x_1) = 1$ if $(\breve{r})^\fB(x_0, x_1)$.
For the interpretation of the relations $=$ and $\mathrm{NAE}$, consider the gadget network in Figure \ref{fig:gadget_33_37}, which allows us to encode equal orientation of two disjoint $(r \cup \breve{r})$-edges.

\usetikzlibrary{positioning,arrows}
\begin{figure}
\centering
\begin{tikzpicture}
    \node (X0) at ({150}:3.5) {$x_0$};be a relation algebra. An atom
    \node (X1) at ({210}:3.5) {$x_1$};
    \node (ZXY0) at ({90}:3.5) {$u_0$};
    \node (ZXY1) at ({90}:1) {$u_1$};
    \node (ZXY2) at ({270}:1) {$u_2$};
    \node (ZXY3) at ({270}:3.5) {$u_3$};
    \node (Y0) at ({30}:3.5) {$y_0$};
    \node (Y1) at ({330}:3.5) {$y_1$};
     \draw (X0) -- (X1) node[midway, above] {};
     \draw[-{Straight Barb[angle'=30,scale=3]}] (X0) -- (ZXY0) node[midway, above] {};
     \draw (ZXY0) -- (ZXY1) node[midway, above] {};
     \draw (Y0) -- (Y1) node[midway, above] {};
     \draw[-{Straight Barb[angle'=30,scale=3]}] (ZXY0) -- (Y0) node[midway, above] {};
     \draw[-{Straight Barb[angle'=30,scale=3]}]  (Y1) -- (ZXY1) node[midway, above] {};
     \draw (ZXY1) -- (Y0) node[midway, above] {};
     \draw[-{Straight Barb[angle'=30,scale=3]}] (ZXY1) -- (X1) node[midway, above] {};
     \draw (ZXY0) -- (X1) node[midway, above] {};
     \draw[-{Straight Barb[angle'=30,scale=3]}] (ZXY2) -- (X0) node[midway, above] {};
     \draw (ZXY2) -- (X1) node[midway, above] {};
     \draw[-{Straight Barb[angle'=30,scale=3]}] (X1) -- (ZXY3) node[midway, above] {};
     \draw (ZXY2) -- (ZXY3) node[midway, above] {};
     \draw[-{Straight Barb[angle'=30,scale=3]}] (Y0) -- (ZXY2) node[midway, above] {};
     \draw[-{Straight Barb[angle'=30,scale=3]}] (ZXY3) -- (Y1) node[midway, above] {};
     \draw (ZXY3) -- (Y0) node[midway, above] {};
\end{tikzpicture}
    \caption{In this $\ra{33}{37}$-network, a directed edge stands for $r$, and an undirected edge stands for $(r \cup \breve{r})$. In every solution $g$ to it, we have $g(x_0, x_1) = g(y_0, y_1)$.}
    \label{fig:gadget_33_37}
\end{figure}

Let $E(x_0, x_1, y_0, y_1)$ be the formula $\exists u_0 \exists u_1 \exists u_2 \exists u_3~ \psi$, where $\psi$ is the conjunction of all the relations depicted in Figure \ref{fig:gadget_33_37}. Then our defining formula for equality is given by
\begin{align*}
    \varphi_=(x_0, x_1, y_0, y_1) := \exists z_0, z_1 \bigl(E(x_0, x_1, z_0, z_1) \land E(y_0, y_1,z_0, z_1) 
    \bigr).
\end{align*}
($\varphi_=$ can obviously be transformed into an equivalent pp-formula by renaming the existentially bound variables in the occurrences of $E$ and subsequently shifting them to the front.)
By the observation about the gadget network, it is clear that $\varphi_=(x_0, x_1, y_0, y_1)$ holds if and only if $h(x_0, x_1) = h(y_0, y_1)$. The auxiliary variables $z_0$ and $z_1$ are needed since $\{x_0, x_1\}$ and $\{y_0, y_1\}$ are not necessarily disjoint.

The defining formula of $\mathrm{NAE}$ is given by
\begin{align*}
    \varphi_{\mathrm{NAE}}(x_0, x_1, y_0, y_1, z_0, z_1) := \exists v_0, v_1, v_2, q_0, \hdots, q_5
    \bigl( E(x_0, x_1, q_0, q_1) \land E(y_0, y_1, q_2, q_3)\\\land\,
    E(z_0, z_1, q_4, q_5) \land E(q_0, q_1, v_0, v_1) \land E(q_2, q_3, v_1, v_2) \land E(q_4, q_5, v_2, v_0)\bigr).
\end{align*}
The auxiliary variables $q_0, \hdots, q_5$ are needed since some of $x_0, x_1, y_0, y_1, z_0, z_1$ might be equal. Now $\varphi_{\mathrm{NAE}}(x_0, x_1, y_0, y_1, z_0, z_1)$ holds in $\fB$ if and only if there are $v_0, v_1, v_2 \in \fB$ such that $(x_0, x_1)$ has the same orientation as $(v_0, v_1)$, $(y_0, y_1)$ has the same orientation as $(v_1, v_2)$, and $(z_0, z_1)$ has the same orientation as $(v_2, v_0)$. Since the only forbidden triple of $\ra{33}{37}$ is the directed $3$-cycle, it is clear that this in turn holds if and only if $h(x_0, x_1)$, $h(y_0, y_1)$, and $h(z_0, z_1)$ are not all equal.
\end{proof}

The consistent atomic $\ra{35}{37}$-networks 
may be viewed as oriented graphs that do not embed 
the oriented graph 
$T_3 := (\{0,1,2\}; \{(0,1),(1,2),(0,2)\})$. Like $\ra{33}{37}$, it has $a$ as a flexible atom and thus a normal representation; its NSP is $\NP$-complete as well.

\begin{prop}\label{prop:35_37}
    There is a primitive positive interpretation of $(\{0,1\};\NAE)$ in the normal representation $\fB$  of $\ra{35}{37}$; consequently,
    $\NSP(\ra{35}{37})$ is $\NP$-complete. 
\end{prop}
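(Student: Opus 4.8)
The plan is to mirror the strategy of Proposition~\ref{prop:33_37} and exhibit a $2$-dimensional primitive positive interpretation of $(\{0,1\};\NAE)$ in the normal representation $\fB$ of $\ra{35}{37}$ (which exists because $a$ is a flexible atom, and which already guarantees $\NSP(\ra{35}{37}) = \CSP(\fB) \in \NP$). As for $\ra{33}{37}$, I would take the domain formula $\varphi_\top(x_0,x_1) := (r \cup \breve{r})(x_0,x_1)$, so that the elements of the interpreted structure are the oriented edges of $\fB$, with coordinate map $h$ given by $h(x_0,x_1) = 0$ if $r^\fB(x_0,x_1)$ and $h(x_0,x_1) = 1$ if $(\breve{r})^\fB(x_0,x_1)$; reversing an edge then realizes Boolean negation for free. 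The first, routine, step is an orientation-copying gadget $E(x_0,x_1,y_0,y_1)$ forcing $(y_0,y_1)$ to carry the same orientation as $(x_0,x_1)$, verified by a short case analysis using the composition table of $\ra{35}{37}$; from $E$ one defines $\varphi_=$ exactly as in the proof of Proposition~\ref{prop:33_37}, using auxiliary variables to cover the case where the two edge-pairs overlap.

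The substantial and genuinely new step is the defining formula $\varphi_{\NAE}$, and this is where $\ra{35}{37}$ departs from $\ra{33}{37}$. In $\ra{33}{37}$ the forbidden triple is the directed $3$-cycle, so placing the three input orientations on a single triangle is realizable precisely when they do not all agree, which is exactly $\NAE$. For $\ra{35}{37}$ the forbidden triple is instead the transitive tournament $T_3$, so the six transitive orientations of a triangle are forbidden and only the two cyclic orientations survive; a single triangle therefore realizes only an affine relation (the two cyclic tournaments), from which $\NAE$ cannot be obtained. Worse, one cannot force a $T_3$ out of a directed $2$-path, since $r \circ r = a \cup \breve{r}$ (a directed path of length two closes either into a non-edge or into a back-edge, never into a transitive edge). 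Thus the naive triangle construction fails, and the heart of the proof is to design a larger gadget network, with auxiliary vertices, whose satisfiability for a fixed choice of the three input orientations holds if and only if these orientations are not all equal.

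To build this gadget I would exploit the composition identities $r \circ r = a \cup \breve{r}$ and $r \circ \breve{r} = \id \cup a$ (the latter saying that two vertices with a common $r$-target, or with a common $r$-source, are forced to be non-adjacent) to manufacture non-affine interactions between the three input edges; the intended behaviour is that an all-equal input pattern is forced, somewhere in the gadget, into a transitive triangle and hence becomes unsatisfiable, while every mixed pattern admits a $T_3$-free completion using the flexible atom $a$. Correctness of the resulting $\varphi_{\NAE}$ is then a finite case analysis over the $2^3$ orientation patterns of the inputs. Once the interpretation is in place, Proposition~\ref{prop:pp-interpretation-implies-logspace-reducible} gives a reduction from the $\NP$-hard problem $\CSP(\{0,1\};\NAE)$ (Example~\ref{expl:hard}), and together with $\NP$-membership this yields that $\NSP(\ra{35}{37})$ is $\NP$-complete. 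I expect the design and verification of $\varphi_{\NAE}$ to be the main obstacle, precisely because the affineness of the single-triangle relation forces one to find a subtler, non-triangle configuration.
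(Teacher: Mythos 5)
Your framework is sound and your diagnosis is exactly right: the domain formula, coordinate map, and $\NP$-membership are as in the paper, the equality gadget is indeed routine (in $\ra{35}{37}$ every all-$(r\cup\breve{r})$ triangle is forced to be cyclic, which copies orientations), and you correctly identify that the single-triangle trick from $\ra{33}{37}$ now pp-defines the all-equal relation instead of $\NAE$, so it cannot work. The genuine gap is that at the decisive step you stop proving and start promising: no gadget for $\varphi_{\NAE}$ is ever exhibited. Saying that one should \emph{design a larger gadget network whose satisfiability holds if and only if the three orientations are not all equal}, and that correctness \emph{is then a finite case analysis}, is a statement of the goal, not an argument; the existence and verification of such a network is the entire mathematical content of this proposition, and nothing in your proposal guarantees it can be completed along the lines you sketch.

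The paper circumvents exactly this obstacle by never constructing an $\NAE$ gadget at all. It instead pp-interprets the structure $(\{0,1\}; R, S)$ with $R = \{0,1\}^3\setminus\{(0,0,0)\}$ (ternary OR) and $S$ the disequality relation, and then invokes Schaefer's Theorem~\ref{thm:schaefer78} to conclude that this structure, and hence $\fB$, pp-interprets $(\{0,1\};\NAE)$. Both relations admit concrete gadgets (Figures~\ref{fig:gadget-35-37-R} and~\ref{gadget:35_37_S}): for $R$, a chain $p_4,p_3,p_2,p_1$ of $(r\cup\id)$-edges together with a direct $r$-edge from $p_4$ to $p_1$ and pendant vertices $q_i$ joined to $p_i$ by $(r\cup\breve{r})$-edges and to $p_{i+1}$ by $r$-edges; if all three edges $(p_i,q_i)$ carried $\breve{r}$, then each triangle on $q_i,p_{i+1},p_i$ would force $p_{i+1}=p_i$ (otherwise it is a transitive tournament, the unique forbidden pattern), collapsing $p_1=\dots=p_4$ and contradicting the $r$-edge. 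For $S$, a five-vertex all-$(r\cup\breve{r})$ network whose forced-cyclic triangles make the two distinguished edges carry opposite orientations. If you insist on your direct route, observe that reversing an edge realizes Boolean negation, so $\NAE(x,y,z)$ is the conjunction of $R(x,y,z)$ and $R(\neg x,\neg y,\neg z)$; thus two applications of an OR-type gadget (one to the input edges, one to their reversals) would yield $\varphi_{\NAE}$ --- but this still requires designing and verifying the OR gadget, which is precisely the step your proposal leaves open.
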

\begin{proof}
    We present a primitive positive interpretation in $\fB$ of the structure $(\{0, 1\}; R, S)$, where $R := \{0,1\}^3 \setminus \{(0,0,0)\}$ and $S := \{(0, 1), (1, 0)\}$. It is easy to verify that this structure does not have a binary symmetric, ternary majority or ternary minority polymorphism; 
    thus, the statement follows from Schaefer's Theorem \ref{thm:schaefer78}.
    
We again use the terminology from Definition \ref{def:pp-interpretation}. The dimension of the interpretation is $d = 2$. The domain formula is given by $\varphi_\top(x_0, x_1) := (r \cup \breve{r})(x_0, x_1)$. The coordinate map $h\colon (r \cup \breve{r})^\fB \rightarrow \{0, 1\}$ is given by $h(x_0, x_1) = 1$ if $r^\fB(x_0, x_1)$ and $h(x_0, x_1) = 0$ if $(\breve{r})^\fB(x_0, x_1)$.
For the interpretation of the relations $=$, $R$, and $S$, consider the following gadget network, which allows us to encode equal orientation of two disjoint $(r \cup \breve{r})$-edges; see Figure~\ref{fig:gadget_35_37}. 

\begin{figure}
    \centering
    \[
    \begin{tikzcd}
        x_0 && y_0 \\
        \\
        x_1 && y_1
        \arrow["{}", no head, from=1-1, to=1-3]
        \arrow["{}", swap, no head, from=1-1, to=3-1]
        \arrow["{}", no head, from=1-3, to=3-3]
        \arrow["{}"{pos=0.6}, no head, from=3-1, to=1-3]
        \arrow["{}", swap, no head, from=3-3, to=3-1]
    \end{tikzcd}
    \]\caption{An edge stands for $(r \cup \breve{r})$. In every solution $g$ to this $\ra{35}{37}$-network, we have $g(x_0, x_1) = g(y_0, y_1)$.}
    \label{fig:gadget_35_37}
\end{figure}

\begin{figure}
    \centering
    \begin{tikzcd}
        p_1 && p_2 && p_3 && p_4 \\
        \\
        & q_1 && q_2 && q_3
        \arrow["r \cup \id", from=1-3, to=1-1]
        \arrow["r \cup \id", from=1-5, to=1-3]
        \arrow["r \cup \id", from=1-7, to=1-5]
        \arrow["r \cup \breve{r}", swap, no head, from=1-1, to=3-2]
        \arrow["r", swap, from=3-2, to=1-3]
        \arrow["r \cup \breve{r}", swap, no head, from=1-3, to=3-4]
        \arrow["r", swap, from=3-4, to=1-5]
        \arrow["r \cup \breve{r}", swap, no head, from=1-5, to=3-6]
        \arrow["r", swap, from=3-6, to=1-7]
        \arrow["r", swap, bend right=40, from=1-7, to=1-1]
    \end{tikzcd}
    \caption{In every solution $g$ to this $\ra{35}{37}$-network, at least one of $g(p_1, q_1), g(p_2, q_2)$ and $g(p_3, q_3)$ is equal to $r$.}
    \label{fig:gadget-35-37-R}
\end{figure}

\begin{figure}
    \centering
    \[
    \begin{tikzcd}
        q_0 && p_0 \\
        & z\\
        q_1 && p_1
        \arrow["{}", no head, from=1-1, to=1-3]
        \arrow["{}", no head, from=1-1, to=3-1]
        \arrow["{}", no head, from=1-3, to=3-3]
        \arrow["{}", no head, from=2-2, to=1-3]
        \arrow["{}", no head, from=2-2, to=1-1]
        \arrow["{}", no head, from=2-2, to=3-3]
        \arrow["{}", no head, from=2-2, to=3-1]
    \end{tikzcd}
    \]\caption{An edge stands for $(r \cup \breve{r})$. In every solution $g$ to this $\ra{35}{37}$-network, we have $g(q_0, q_1) \ne g(p_0, p_1)$.}
    \label{gadget:35_37_S}
\end{figure}

Let $E(x_0, x_1, y_0, y_1)$ be the conjunction of all the relations depicted in Figure~\ref{fig:gadget_35_37}. Then our defining formula for equality is given by
\begin{align*}
    \varphi_=(x_0, x_1, y_0, y_1) := \exists z_0, z_1, z_2, z_3 \bigl(E(x_0, x_1, z_0, z_1) \land E(y_0, y_1, z_2, z_3) \land E(z_0, z_1, z_2, z_3) \bigr).
\end{align*}
As in the proof of Proposition \ref{prop:33_37}, it is clear that $\varphi_=(x_0, x_1, y_0, y_1)$ holds if and only if $h(x_0, x_1) = h(y_0, y_1)$. For the defining formula for $R$, we need yet another gadget network.

Our defining formula $\varphi_R(x_0, x_1, y_0, y_1, z_0, z_1)$ for $R$ is given by
\begin{align*}
    \varphi_R := \exists p_1, p_2, p_3, p_4, q_1, q_2, q_3
    \bigl( E(x_0, x_1, p_1, q_1) \land E(y_0, y_1, p_2, q_2) \land E(z_0, z_1, p_3, q_3) \land \psi\bigr),
\end{align*}
where $\psi$ is the conjunction of all relations depicted in Figure \ref{fig:gadget-35-37-R}. By the observation about this network, it is clear that $\varphi_R(x_0, x_1, y_0, y_1, z_0, z_1)$ holds if and only if at least one of $h(x_0, x_1)$, $h(y_0, y_1)$, and $h(z_0, z_1)$ is equal to $1$.
Finally, for the relation $S$, consider yet another gadget network, shown in Figure~\ref{gadget:35_37_S}, which encodes inequality between the orientations of two disjoint $(r \cup \breve{r})$-edges. 
So a defining formula for $S$ is $\varphi_S(x_0, x_1, y_0, y_1)$ given by
\begin{align*}
    \varphi_S := \exists q_0, q_1, p_0, p_1, z \bigl(
    E(x_0, x_1, q_0, q_1) \land E(y_0, y_1, p_0, p_1) \land \psi 
    \bigr),
\end{align*}
where $\psi$ denotes the conjunction of all relations depicted in Figure \ref{gadget:35_37_S}.
\end{proof}

Let $\mathbb H = (V; E)$ be the \emph{Henson graph}, i.e., the countable homogeneous graph  whose age is the class of all finite undirected triangle-free graphs. 
We obtain a (normal) representation $\fB = \H^{b,a}$ of $\ra{6}{7}$: 
we set $b^{\fB} := E$, $a^{\fB} := \{(u,v) \in V^2 \setminus E \mid u \neq v\}$, $\id^\fB := \{(u,v) \in V^2 \mid u=v\}$, and extend to all other relations by taking the respective Boolean combinations. 
It is known that $\NSP(\ra{6}{7})$ is $\NPc$~\cite{BodirskyKnaeRamics}.

\begin{prop}\label{prop:31_37}
    The normal representations of $\ra{31}{37}$ and $\ra{52}{65}$ pp-construct $(\{0, 1\}; \NAE)$; consequently,
    $\NSP(\ra{31}{37})$ and $\NSP(\ra{52}{65})$ are $\NP$-complete.
\end{prop}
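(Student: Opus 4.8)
The plan is to realize the normal representation $\H^{b,a}$ of $\ra{6}{7}$ inside the normal representations of $\ra{31}{37}$ and $\ra{52}{65}$ by a pp-construction, and then to invoke transitivity of pp-constructibility. Since $\ra{31}{37}$ and $\ra{52}{65}$ both have normal representations (by \eqref{eq:norm_all}), their NSPs are contained in $\NP$ by Corollary~\ref{cor:fully-univ-NP}; hence it suffices to establish the pp-construction of $(\{0,1\};\NAE)$, from which $\NP$-hardness follows via Proposition~\ref{prop:pp-interpretation-implies-logspace-reducible}.

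First I would record that $\H^{b,a}$ itself pp-constructs $(\{0,1\};\NAE)$. The Henson graph $\H$ is primitive, and $\ra{6}{7}$ has the symmetric atom $b$ with $(b,b,b)$ forbidden (triangle-freeness), so Theorem~\ref{thm:primitiv_hard} applies to $\H^{b,a}$; as noted there, its proof confirms the tractability conjecture, i.e.\ it produces a uniformly continuous clone homomorphism from $\Pol(\H^{b,a})$ to the projections on a two-element set. Since $\H^{b,a}$ is $\omega$-categorical, Theorem~\ref{thm:clone-hom-to-proj-implies-NP-hard} then yields the pp-interpretation of $(\{0,1\};\NAE)$ in $\H^{b,a}$. (Equivalently, one may simply cite the already-recalled hardness of $\NSP(\ra{6}{7})$ obtained via this route.)

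The core step is to exhibit, in each normal representation $\fB$, a one-dimensional pp-interpretation of a structure homomorphically equivalent to $\H^{b,a}$. For $\ra{31}{37}$, whose reduced consistent atomic networks are the oriented graphs with triangle-free underlying undirected graph, I would take the reduct $\fB'$ with signature $\{\id, r\cup\breve{r}, a\}$, reading $r\cup\breve{r}$ as the edge relation and $a$ as the non-edge relation. Every Boolean combination of atoms, in particular $(r\cup\breve{r})^\fB$, is already a relation of $\fB$, so $\fB'$ is literally a reduct of $\fB$, one-dimensionally pp-interpreted with the identity coordinate map, domain formula $\top$, and $\id$ as the equality formula. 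Identifying $r\cup\breve{r}$ with the edge atom $b$ of $\ra{6}{7}$, the class of finite structures admitting a homomorphism to $\fB'$ coincides with $\CSP(\H^{b,a})$, since in both cases this class is exactly the finite $\{\id,\text{edge},\text{non-edge}\}$-structures that are consistent triangle-free graphs. As $\fB'$ and $\H^{b,a}$ are both $\omega$-categorical, Observation~\ref{obs:hom-equiv} shows they are homomorphically equivalent, so $\fB$ pp-constructs $\H^{b,a}$. For $\ra{52}{65}$ I would proceed identically, using the symmetric atom $c$ with $(c,c,c)$ forbidden: taking the reduct with edge relation $c^\fB$ and non-edge relation $\overline{c\cup\id}^\fB = (a\cup b)^\fB$, one checks that its homomorphism class again equals $\CSP(\H^{b,a})$, and concludes by Observation~\ref{obs:hom-equiv}. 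Transitivity of pp-constructibility then gives that each of $\ra{31}{37}$ and $\ra{52}{65}$ pp-constructs $(\{0,1\};\NAE)$.

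The main obstacle is the verification of the equality $\CSP(\fB') = \CSP(\H^{b,a})$, and in particular the inclusion $\CSP(\H^{b,a}) \subseteq \CSP(\fB')$: this requires that every finite triangle-free graph occurs as the edge/non-edge reduct of some consistent atomic network, i.e.\ that the non-edges can always be filled in consistently without creating a forbidden triple. For $\ra{31}{37}$ this is immediate because $a$ is flexible, so any non-edge may be taken to be $a$. For $\ra{52}{65}$, which has no flexible atom, this is where the specific composition table must be used: one must identify the correct triangle-free atom and check that every assignment of the non-edges among the atoms below $\overline{c\cup\id}$ can be completed consistently. I expect this realizability check to be the only genuinely computational part of the argument.
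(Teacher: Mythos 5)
Your overall strategy coincides with the paper's: in each normal representation $\fB$, pp-define a structure whose age is the class of finite triangle-free graphs, deduce homomorphic equivalence with a Henson-type structure from equality of CSPs via Observation~\ref{obs:hom-equiv} (both structures being $\omega$-categorical), and finish by transitivity of pp-constructibility; your realizability checks are also correct (for $\ra{31}{37}$ flexibility of $a$ suffices, and for $\ra{52}{65}$ you only need \emph{one} consistent completion, not all of them --- labelling every non-edge by $a$ works, since $(a,c,c)$, $(c,a,a)$, $(a,a,a)$ are all allowed). The genuine divergence is your choice of hardness source: the paper compares with the bare Henson graph $\mathbb H$ (edge relation only) and cites as known that $\mathbb H$ pp-constructs $(\{0,1\};\NAE)$, whereas you compare with $\H^{b,a}$ and derive the pp-construction of $\NAE$ internally from Theorem~\ref{thm:primitiv_hard}.

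That substitution is where a gap appears, in the form of a signature mismatch. Theorem~\ref{thm:primitiv_hard} (and the remark that its proof yields a pp-construction of $\NAE$) applies to the normal representation of $\ra{6}{7}$, which carries a relation for \emph{each} of the eight elements of that algebra; your $\fB'$ carries only the three relations $\id$, $r\cup\breve{r}$, $a$. Hence ``$\fB'$ is homomorphically equivalent to $\H^{b,a}$'' is not well-formed: what your CSP-equality argument actually yields is homomorphic equivalence of $\fB'$ with the \emph{reduct} $(\H;\id,b,a)$, so you only obtain that $\fB$ pp-constructs this reduct, while your first step gives $\NAE$-hardness only for the full-signature structure. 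The missing link --- that the reduct pp-constructs the full $\H^{b,a}$ --- is not established and is not obvious (it amounts, e.g., to pp-defining $(b\cup\id)^{\H}$ from $\{\id,a,b\}$; note that $(a\cup\id)^\H$ and $(a\cup b)^\H$ are pp-definable as $b\circ b$ and $a\circ b$, but no such composition gives $b\cup\id$). The repair is routine and stays on your side of the argument: carry out the one-dimensional interpretation for the \emph{full} signature of $\ra{6}{7}$, interpreting each element $s$ of $\ra{6}{7}$ by the relation of $\fB$ obtained by pulling $s$ back along the atom map ($b\mapsto r\cup\breve{r}$, $a\mapsto a$ for $\ra{31}{37}$; $b\mapsto c$, $a\mapsto a\cup b$ for $\ra{52}{65}$). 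Every such pullback is a union of atoms and hence literally a relation in the signature of $\fB$, and your age/CSP-equality argument goes through verbatim relation-by-relation in the richer signature, after which Theorem~\ref{thm:primitiv_hard} applies as you intended. Alternatively, drop $\H^{b,a}$ altogether and cite, as the paper does, that the Henson graph with the edge relation alone pp-constructs $(\{0,1\};\NAE)$.
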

\begin{proof}
    Let $\fB$ be the normal representation of $\ra{31}{37}$. Consider the structure $\mathfrak{H} := (B; E)$, where $E := (r \cup \breve{r})^\fB$. Clearly, the age of $\mathfrak{H}$ is the class of all finite undirected triangle-free graphs. Hence, $\CSP(\mathfrak{H}) = \CSP(\mathbb H)$, where $\mathbb H$ is the Henson graph. Moreover, $\mathfrak{H}$ is $\omega$-categorical since it is a first-order reduct of the $\omega$-categorical structure $\fB$. It follows from Observation~\ref{obs:hom-equiv} that $\mathfrak{H}$ is homomorphically equivalent to $\mathbb H$. Since $\mathbb H$ pp-constructs $(\{0, 1\}; \NAE)$, it follows that $\mathfrak{H}$ pp-constructs $(\{0, 1\}; \NAE)$ by the transitivity of pp-constructibility. Since $\fB$ pp-defines $\mathfrak{H}$, we obtain that $\fB$ pp-constructs $(\{0, 1\}; \NAE)$. For $\ra{52}{65}$, the proof is completely analogous when we define $E := c^\fB$.
\end{proof}

\subsection{Hardness via Gadget Reductions}
Algebras considered in this section do not have a normal representation and hence do not fall within the scope of the tractability conjecture (Conjecture \ref{conj:tractability}). We will show the $\NP$-hardness of their NSPs by many-one 
reductions from computational problems which are known to be $\NP$-hard. We start with the algebra $\ra{30}{65}$ and yet another reduction from the Henson graph:

\begin{prop}\label{prop:30_65}
    $\NSP(\ra{30}{65})$ is $\NP$-complete.
\end{prop}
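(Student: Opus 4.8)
The plan is to first settle membership in $\NP$ and then prove $\NP$-hardness by a reduction from $\NSP(\ra{6}{7})$, whose normal representation is (up to naming) the Henson graph representation $\mathbb{H}^{b,a}$ and which is already known to be $\NPc$. For containment in $\NP$: by Proposition~\ref{prop:30_65-31_65-repr} the algebra $\ra{30}{65}$ has a fully universal square representation, so Corollary~\ref{cor:fully-univ-NP} gives $\NSP(\ra{30}{65}) \in \NP$ (and in fact $\NSP(\ra{30}{65})$ coincides with $\NCP(\ra{30}{65})$).

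For the hardness, the key structural observation I would record is that in $\ra{30}{65}$ the relation $e := c \cup \id$ is an equivalence relation (as in Proposition~\ref{prop:30_65-31_65-repr}) and that the two diversity atoms $a,b$ realize, between distinct $e$-classes, exactly the Henson structure: after possibly renaming so that $b$ plays the role of the edge relation of $\mathbb{H}$, the allowed triples of $\ra{30}{65}$ involving only $a$ and $b$ coincide with those of $\ra{6}{7}$. In particular $(b,b,b)$ is forbidden (triangle-freeness) while $(a,a,a)$, $(a,a,b)$, $(a,b,b)$ are allowed. Equivalently, the quotient of $\ra{30}{65}$ along $e$ is isomorphic to $\ra{6}{7}$. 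Given this, the reduction is the identity map on networks: an $\ra{6}{7}$-network $(V,f)$, whose labels are built from $\{\id,a,b\}$, is viewed as an $\ra{30}{65}$-network with the same labels, so that the atom $c$ never occurs in the image. This is clearly computable in polynomial time.

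It then remains to show that $(V,f)$ is satisfiable over $\ra{6}{7}$ if and only if it is satisfiable over $\ra{30}{65}$. For the forward direction, a solution in $\mathbb{H}^{b,a}$ yields a refinement to a consistent atomic $\ra{6}{7}$-network $(V,g)$ with $g \leq f$ and $g$-labels in $\{\id,a,b\}$; since the allowed $\{a,b\}$-triples of the two algebras coincide, $(V,g)$ is also a consistent atomic $\ra{30}{65}$-network, hence satisfiable because $\ra{30}{65}$ is fully universal. For the converse, a solution $s$ in a representation $\fB$ of $\ra{30}{65}$ induces an atomic refinement in which $g(x,y)$ is the atom relating $s(x)$ and $s(y)$; since $f$ never contains $c$, no value $g(x,y)$ equals $c$, so all $g$-labels lie in $\{\id,a,b\}$ and distinct images of $s$ lie in distinct $e$-classes. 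Contracting $\fB$ along $e$ (well-defined by Remark~\ref{rem:equ_relation}) produces a representation of the quotient $\ra{6}{7}$ in which $s$ descends to a solution, so $(V,f)$ is $\ra{6}{7}$-satisfiable. Combined with the known $\NPc$-ness of $\NSP(\ra{6}{7})$, this yields $\NP$-hardness.

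The main obstacle is the structural bookkeeping rather than the logical skeleton: one must verify that the allowed triples of $\ra{30}{65}$ restricted to $\{a,b\}$ are \emph{exactly} those of $\ra{6}{7}$, so that $(b,b,b)$ is genuinely forbidden and no spurious $b$-triangle can be concealed via the extra atom $c$, and that the $e$-contraction of an arbitrary representation of $\ra{30}{65}$ really is a representation of $\ra{6}{7}$. Both facts are read off from the multiplication table of $\ra{30}{65}$ together with Remark~\ref{rem:equ_relation}; this is the precise sense in which the hardness comes ``from the Henson graph'', since the between-class part of any representation of $\ra{30}{65}$ is the Henson graph representation of $\ra{6}{7}$.
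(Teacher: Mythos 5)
Your membership argument and the forward direction of your reduction are fine, and your overall route (reduce from $\ra{6}{7}$ and the Henson graph, using that the allowed triples of $\ra{30}{65}$ over $\{\id,a,b\}$ are exactly those of $\ra{6}{7}$) is essentially the paper's, which reduces from $\CSP(\H)$ by labelling edges with $b$ and non-edges with $a\cup\id$. However, the converse direction of your reduction rests on a step that is false: you contract a representation $\fB$ of $\ra{30}{65}$ along $e=c\cup\id$ and justify this by Remark~\ref{rem:equ_relation}. That remark applies only to 2-cycle products, and $\ra{30}{65}$ is not one: if $c\cup\id$ were the equivalence relation $1_{\bA}$ of a 2-cycle product, then $(c,b,b)$ would have to be an allowed triple and $(a,b,c)$ a forbidden one, and in $\ra{30}{65}$ both are the other way around. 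The contraction is in fact not well-defined: since $(a,b,c)$ is allowed, the triangle with labels $a,b,c$ is a consistent atomic network and hence is realized in the fully universal representation of Proposition~\ref{prop:30_65-31_65-repr}; this yields points $u,v,w$ with $(u,w)\in c^\fB$, $(u,v)\in a^\fB$, and $(w,v)\in b^\fB$, so the $e$-class containing $u$ and $w$ is related to $v$ by both $a$ and $b$, and no $\ra{6}{7}$-label can be assigned to this pair of classes. Relatedly, your claim that ``the quotient of $\ra{30}{65}$ along $e$ is isomorphic to $\ra{6}{7}$'' cannot hold in any algebraic sense: $\ra{30}{65}$ is integral, hence simple, so it admits no surjective homomorphism onto the $8$-element algebra $\ra{6}{7}$.

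Fortunately the gap closes using only material you already have, with no contraction at all. Given a solution $s$ of $(V,f)$ in a representation $\fB$ of $\ra{30}{65}$, let $g(x,y)$ be the unique atom with $(s(x),s(y))\in g(x,y)^\fB$. Every triangle of $(V,g)$ is realized in $\fB$, so $(V,g)$ is a consistent atomic $\ra{30}{65}$-network, and since the labels of $f$ never involve $c$, all labels of $g$ lie in $\{\id,a,b\}$. By your (correct) observation that the allowed triples of the two algebras over $\{\id,a,b\}$ coincide, $(V,g)$ is a consistent atomic $\ra{6}{7}$-network, hence satisfiable in the normal (in particular fully universal) representation $\H^{b,a}$ of $\ra{6}{7}$, so $(V,f)$ is satisfiable over $\ra{6}{7}$. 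With this replacement your proof is correct and is, in substance, the paper's proof.
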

\begin{proof}
    Recall that $\ra{30}{65}$ has a fully universal representation (Proposition \ref{prop:30_65-31_65-repr}), so $\NSP(\ra{30}{65}) = \NCP(\ra{30}{65})$.
    Let $\mathbb H$ be the Henson graph and let $\fA = (A; E)$ be an instance of $\CSP(\mathbb H)$. Define the network $N = (A, f)$ by $f(x, x) = \id$, $f(x, y) = b$ if $E(x, y)$, and $f(x, y) = a \cup \id$ otherwise, for all $x, y \in A$. It follows immediately that $\fA$ homomorphically maps to $\mathbb H$ if and only if $N$ is a satisfiable $\ra{30}{65}$-network.
\end{proof}

\label{sect:gadget}
\usetikzlibrary{positioning,arrows}
\begin{figure}
\centering
\begin{tikzpicture}
    \node (X0) at ({150}:3.5) {$x_0$};
    \node (X1) at ({210}:3.5) {$x_1$};
    \node (ZXY0) at ({90}:3.5) {$u_0$};
    \node (ZXY1) at ({90}:1) {$u_1$};
    \node (ZXY2) at ({270}:1) {$u_2$};
    \node (ZXY3) at ({270}:3.5) {$u_3$};
    \node (Y0) at ({30}:3.5) {$y_0$};
    \node (Y1) at ({330}:3.5) {$y_1$};
     \draw (X0) -- (X1) node[midway, above] {$r \cup \breve{r}$\quad\quad\quad\quad};
     \draw[-{Straight Barb[angle'=30,scale=3]}] (ZXY0) -- (X0)
        node[midway, above] {$a \cup r\quad\quad$};
     \draw (ZXY0) -- (ZXY1)
        node[midway, above, fill=white, inner sep=1.5pt] {$r \cup \breve{r}$};
     \draw (Y0) -- (Y1)
        node[midway, above] {\quad\quad\quad$r \cup \breve{r}$};
     \draw[-{Straight Barb[angle'=30,scale=3]}] (Y0) -- (ZXY0)
        node[midway, above] {\quad\quad$a \cup r$};
     \draw (Y1) -- (ZXY0)
        node[midway, fill=white, inner sep=1.5pt] {$a$};
     \draw (ZXY1) -- (X0)
        node[midway, fill=white, inner sep=1.5pt] {$a$};
     \draw[-{Straight Barb[angle'=30,scale=3]}] (X1) -- (ZXY0)
        node[midway, fill=white, inner sep=1.5pt] {$r$};
     \draw[-{Straight Barb[angle'=30,scale=3]}] (ZXY1) -- (Y0)
        node[midway, fill=white, inner sep=1.5pt] {$r$};
     \draw[-{Straight Barb[angle'=30,scale=3]}] (X0) -- (ZXY3)
        node[midway, fill=white, inner sep=1.5pt] {$r$};
     \draw (ZXY2) -- (ZXY3)
        node[midway, above, fill=white, inner sep=1.5pt] {$r \cup \breve{r}$};
     \draw[-{Straight Barb[angle'=30,scale=3]}] (Y1) -- (ZXY3)
        node[midway, below] {\quad\quad$a \cup r$};
     \draw[-{Straight Barb[angle'=30,scale=3]}] (ZXY3) -- (X1)
        node[midway, below] {$a \cup r$\quad\quad\quad};
     \draw[-{Straight Barb[angle'=30,scale=3]}] (ZXY2) -- (Y1)
        node[midway, fill=white, inner sep=1.5pt] {$r$};
     \draw (ZXY2) -- (X1)
        node[midway, fill=white, inner sep=1.5pt] {$a$};
     \draw (ZXY3) -- (Y0)
        node[midway, fill=white, inner sep=1.5pt] {$a$};
\end{tikzpicture}
    \caption{The $\ra{30}{37}$-network $E(x_0, x_1, y_0, y_1)$ encodes equal orientation of two $(r \cup \breve{r})$-edges: In every solution $g$ to it, we have $g(x_0, x_1) = g(y_0, y_1)$.}
    \label{fig:gadget-equality-30_37}
\end{figure}



\begin{figure}
    \centering
    \begin{tikzcd}
        p_1 && p_2 && p_3 && p_4 \\
        \\
        & q_1 && q_2 && q_3
        \arrow["a \cup \id", no head, from=1-1, to=1-3]
        \arrow["a \cup \id", no head, from=1-3, to=1-5]
        \arrow["a \cup \id", no head, from=1-5, to=1-7]
        \arrow["r \cup \breve{r}", swap, no head, from=1-1, to=3-2]
        \arrow["r", swap, from=3-2, to=1-3]
        \arrow["r \cup \breve{r}", swap, no head, from=1-3, to=3-4]
        \arrow["r", swap, from=3-4, to=1-5]
        \arrow["r \cup \breve{r}", swap, no head, from=1-5, to=3-6]
        \arrow["r", swap, from=3-6, to=1-7]
        \arrow["a", bend left=40, no head, from=1-1, to=1-7]
    \end{tikzcd}
    \caption{The $\ra{30}{37}$-network $S(p_1, p_2, p_3, p_4, q_1, q_2, q_3)\colon$ In every solution $g$ to it, at least one of $g(p_1, q_1), g(p_2, q_2)$, and $g(p_3, q_3)$ is equal to $r$.}
    \label{fig:30_37_R}
\end{figure}

\begin{figure}
    \centering
    \[
    \begin{tikzcd}
        p_0 && q_0 \\
        \\
        p_1 && q_1
        \arrow["r", from=1-3, to=1-1]
        \arrow["r \cup \breve{r}", swap, no head, from=1-1, to=3-1]
        \arrow["r \cup \breve{r}", no head, from=1-3, to=3-3]
        \arrow["a \cup r"{pos=0.6}, from=1-1, to=3-3]
        \arrow["a", no head, from=3-3, to=3-1]
    \end{tikzcd}
    \]\caption{The $\ra{30}{37}$-network $T(p_0, p_1, q_0, q_1)\colon$ In every solution $g$ to it, we have that $g(p_0, p_1) = r$ implies $g(q_0, q_1) = \breve{r}$.}
    \label{fig:30_37_T}
\end{figure}

We have already shown that $\ra{30}{37}$ has a fully universal representation (Proposition~\ref{prop:30_37-repr}). Recall that the reduced consistent atomic networks of $\ra{30}{37}$ can be viewed as oriented graphs that do not embed the oriented graph $(\{0,1,2\}; \{(0,1),(0,2)\})$. 

\begin{prop}\label{prop:30_37}
    $\NSP(\ra{30}{37})$ is $\NP$-complete. 
\end{prop}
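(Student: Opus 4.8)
My plan is to prove $\NP$-hardness of $\NSP(\ra{30}{37})$ by giving a primitive positive interpretation of $(\{0,1\};\NAE)$ in the fully universal representation of $\ra{30}{37}$, mirroring the structure of the proof for $\ra{35}{37}$ (Proposition~\ref{prop:35_37}). Since $\ra{30}{37}$ has a fully universal representation (Proposition~\ref{prop:30_37-repr}), $\NSP(\ra{30}{37}) = \NCP(\ra{30}{37})$ is in $\NP$ by Corollary~\ref{cor:fully-univ-NP}, so only hardness remains. The crucial difference from $\ra{35}{37}$ and $\ra{33}{37}$ is that $\ra{30}{37}$ does \emph{not} have a normal representation, so I cannot invoke Proposition~\ref{prop:normal-canonical-schaefer} or work with canonical polymorphisms; instead I reason directly about consistent atomic networks and their satisfiability in the fully universal representation, using the fact that a network is satisfiable iff it is consistent (equivalently, solvable, by Lemma~\ref{lem:nsp-ncp}).

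The key steps, in order: First I would fix the dimension $d=2$ and the domain formula $\varphi_\top(x_0,x_1) := (r\cup\breve{r})(x_0,x_1)$, with coordinate map $h$ sending $r$-edges to one Boolean value and $\breve{r}$-edges to the other. Second, I would establish the equality gadget $E(x_0,x_1,y_0,y_1)$ depicted in Figure~\ref{fig:gadget-equality-30_37}: I must verify that in every \emph{consistent} atomic completion of this network, the forced orientations propagate so that $g(x_0,x_1) = g(y_0,y_1)$, and conversely that equal orientations admit a consistent (hence satisfiable) completion avoiding the single forbidden pattern $(\{0,1,2\};\{(0,1),(0,2)\})$. Third, I would build $\varphi_=$ by chaining two copies of $E$ through auxiliary variables $z_0,z_1$ (as in Proposition~\ref{prop:33_37}), to handle the case where $\{x_0,x_1\}$ and $\{y_0,y_1\}$ overlap. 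Fourth, I would define the ``at least one is $r$'' relation $R$ via the cyclic gadget $S$ in Figure~\ref{fig:30_37_R}, and (optionally) an inequality/implication relation via the gadget $T$ in Figure~\ref{fig:30_37_T}; from these I would assemble $\varphi_{\NAE}$, arranging three $E$-linked edges around a triangle of $r$-orientations so that the network is consistent precisely when the three Boolean values are not all equal. As in the earlier proofs, I would interpose fresh auxiliary variables $q_0,\dots,q_5$ in the $E$-conjuncts so that potentially-coinciding inputs are first copied onto disjoint edges.

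The main obstacle I anticipate is the verification that the gadgets do exactly what the figures promise under the \emph{weaker} semantics of consistency rather than canonicity. Because $\ra{30}{37}$ lacks a normal representation, I cannot argue via polymorphism behaviour; instead, for each gadget I must trace the forced-atom propagation by hand, repeatedly invoking the fact that the only forbidden triple is the ``out-star'' $(\{0,1,2\};\{(0,1),(0,2)\})$ (equivalently, consistency of the network), and confirm both that the constraint is forced and that no spurious inconsistency is introduced. The $\NAE$ gadget is the delicate one: I must check that wiring the three edges into an $r$-oriented $3$-cycle forbids exactly the all-equal assignments while leaving every genuinely non-all-equal assignment consistently completable. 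Once the three defining formulas $\varphi_\top$, $\varphi_=$, and $\varphi_{\NAE}$ are validated, the interpretation is complete, and $\NP$-hardness follows from Proposition~\ref{prop:pp-interpretation-implies-logspace-reducible}; combined with membership in $\NP$, this yields $\NP$-completeness.
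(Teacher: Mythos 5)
Your $\NP$-membership argument is the paper's own (full universality, hence $\NSP(\ra{30}{37})=\NCP(\ra{30}{37})\in\NP$), but the hardness part has a genuine gap. You frame it as a primitive positive \emph{interpretation} of $(\{0,1\};\NAE)$ in a fully universal representation $\fB$ of $\ra{30}{37}$, mirroring Propositions~\ref{prop:33_37} and~\ref{prop:35_37}. Those proofs, however, take place in a \emph{normal} (hence homogeneous) representation, and homogeneity is exactly what your plan silently uses. A pp-interpretation in the sense of Definition~\ref{def:pp-interpretation} requires the defining formulas to hold \emph{pointwise over $\fB$}: for instance, $\varphi_=$ must be satisfied by \emph{every} $4$-tuple of elements of $\fB$ whose two designated edges have equal orientation, no matter how those four points are otherwise related inside $\fB$. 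The soundness direction (forced propagation using the forbidden out-star) is fine, but the completeness direction asks you to realize the gadget's auxiliary points \emph{over prescribed elements} of $\fB$, where each auxiliary carries atomic labels ($a$, $r$) to three or more of the given points. Full universality only guarantees that every consistent atomic network embeds \emph{somewhere} in $\fB$; realizing consistent extensions over prescribed points is a homogeneity-type extension property, and no representation of $\ra{30}{37}$ can have it, because the class of its consistent atomic networks fails $2$-point amalgamation (Theorem~\ref{thm:two-point-amalgamation}, Figure~\ref{tab:amalgamation-failure}) --- which is precisely why $\ra{30}{37}$ has no normal representation, the very fact you note at the outset. Your proposed verification (``equal orientations admit a consistent completion'') only checks the fresh-variable \emph{network} semantics, which is not what Definition~\ref{def:pp-interpretation} and Proposition~\ref{prop:pp-interpretation-implies-logspace-reducible} require. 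A second, smaller problem: the ``triangle of $r$-orientations'' encoding of $\NAE$ is borrowed from $\ra{33}{37}$, where the directed $3$-cycle is the unique forbidden triple; in $\ra{30}{37}$ the cycle $rr\breve{r}$ is \emph{allowed} (the forbidden triple is $rar$), so that gadget does not encode $\NAE$ here.

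The repair is exactly what the paper does, and it is what your gadget-level reasoning actually proves: drop the interpretation formalism and give a many-one reduction from an $\NP$-hard problem to $\NCP(\ra{30}{37})=\NSP(\ra{30}{37})$ in which \emph{all} gadget vertices are fresh network variables. The paper reduces from the SAT variant with clauses $(x\lor y\lor z)$ and $(\neg x\lor\neg y)$, using the gadgets of Figures~\ref{fig:gadget-equality-30_37}, \ref{fig:30_37_R}, and~\ref{fig:30_37_T}: soundness is the propagation argument you describe, and completeness is a purely combinatorial completion of a network built entirely from fresh variables, so no realization over prescribed points of $\fB$ is ever needed. This framing also avoids having to express $\NAE$ at all: the $S$-gadget handles the positive $3$-clauses and the $T$-gadget the negative $2$-clauses directly.
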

\begin{proof}
    Since $\ra{30}{37}$ has a fully universal representation, $\NSP(\ra{30}{37}) = \NCP(\ra{30}{37})$ is contained in $\NP$. We show its $\NP$-hardness by a reduction from the variant of SAT where each clause is of the form $(x \lor y \lor z)$ or of the form $(\neg x \lor \neg y)$. Let $\varphi$ be an instance of this problem and let $V$ be the set of variables occurring in $\varphi$. We will construct an instance $(V^\prime, f)$ of $\NCP(\ra{30}{37})$ such that $(V^\prime, f)$ is satisfiable if and only if $\varphi$ is satisfiable.

    Consider the $\ra{30}{37}$-networks $E(x_0, x_1, y_0, y_1)$, $S(p_1, p_2, p_3, p_4, q_1, q_2, q_3)$, and $T(p_0, p_1, q_0, q_1)$, which are depicted in Figure \ref{fig:gadget-equality-30_37}, Figure \ref{fig:30_37_R}, and Figure \ref{fig:30_37_T}, respectively.

    The network $(V', f)$ is defined as follows: For every variable $x \in V$, we add two fresh variables $x_0$ and $x_1$ to $V'$.
    
    For each clause of the form $(x \lor y \lor z)$, we add seven fresh variables $p_1, p_2, p_3, p_4, q_1, q_2, q_3$ to $V'$ and impose the constraints $E(x_0, x_1, p_1, q_1), E(y_0, y_1, p_2, q_2)$, $E(z_0, z_1, p_3, q_3)$ and $S(p_1, p_2, p_3, p_4, q_1, q_2, q_3)$ on $f$.
    
    For each clause of the form $(\neg x \lor \neg y)$, we add four fresh variables $p_0, p_1, q_0, q_1$ to $V'$ and impose the constraints $E(x_0, x_1, p_0, p_1), E(y_0, y_1, q_0, q_1)$ and $T(p_0, p_1, q_0, q_1)$ on $f$.
    
    It follows immediately from the observations about the gadget networks above that if a function $g$ is a solution to $(V', f)$, then the function $s\colon V \rightarrow \{0, 1\}$ defined by $s(x) = 0$ if $g(x_0, x_1) = r$ and $s(x) = 1$ if $g(x_0, x_1) = \breve{r}$ is a solution to $\varphi$. Conversely, if $s\colon V \rightarrow \{0, 1\}$ is a solution to $\varphi$, then it is easy to see that the partial function $g$ defined by $g(x_0, x_1) = r$ if $s(x) = 0$ and $g(x_0, x_1) = \breve{r}$ if $s(x) = 1$ can be extended to a solution to $(V', f)$.
\end{proof}

\begin{figure}
    \centering
    \[
    \begin{tikzcd}
        x_0 && y_0 \\
        \\
        x_1 && y_1
        \arrow["b \cup c", no head, from=1-1, to=1-3]
        \arrow["b \cup c", swap, no head, from=1-1, to=3-1]
        \arrow["b \cup c", no head, from=1-3, to=3-3]
        \arrow["b \cup c"{pos=0.6}, no head, from=3-1, to=1-3]
        \arrow["b \cup c", swap, no head, from=3-1, to=3-3]
    \end{tikzcd}
    \]\caption{The $\ra{31}{65}$-network $E(x_0, x_1, y_0, y_1)\colon$ In every solution $g$ to it, we have $g(x_0, x_1) = g(y_0, y_1)$.}
    \label{fig:gadget-equality-31_65}
\end{figure}

\begin{figure}
    \centering
    \begin{tikzcd}
        p_1 && p_2 && p_3 && p_4 \\
        \\
        & q_1 && q_2 && q_3
        \arrow["c \cup \id", no head, from=1-1, to=1-3]
        \arrow["c \cup \id", no head, from=1-3, to=1-5]
        \arrow["c \cup \id", no head, from=1-5, to=1-7]
        \arrow["b \cup c", swap, no head, from=1-1, to=3-2]
        \arrow["b \cup c", swap, no head, from=3-2, to=1-3]
        \arrow["b \cup c", swap, no head, from=1-3, to=3-4]
        \arrow["b \cup c", swap, no head, from=3-4, to=1-5]
        \arrow["b \cup c", swap, no head, from=1-5, to=3-6]
        \arrow["b \cup c", swap, no head, from=3-6, to=1-7]
        \arrow["c", bend left=40, no head, from=1-1, to=1-7]
    \end{tikzcd}
    \caption{The $\ra{31}{65}$-network $S(p_1, p_2, p_3, p_4, q_1, q_2, q_3)\colon$ In every solution $g$ to this $\ra{31}{65}$-network, at least one of $g(p_1, q_1), g(p_2, q_2)$ and $g(p_3, q_3)$ is equal to $c$.}
    \label{fig:31_65_R}
\end{figure}
\begin{figure}
    \centering
    \begin{tikzcd}
        p_1 && p_2 && p_3
        \arrow["b \cup c", swap, no head, from=1-1, to=1-3]
        \arrow["b \cup c", swap, no head, from=1-3, to=1-5]
        \arrow["a", bend left=40, no head, from=1-1, to=1-5]
    \end{tikzcd}
    \caption{The $\ra{31}{65}$-network $T(p_1, p_2, p_3)\colon$ In every solution $g$ to it, we have that $g(p_1, p_2) = b$ or $g(p_2, p_3) = b$.}
    \label{fig:31_65_T}
\end{figure}

Recall that $\ra{31}{65}$ has a fully universal representation (Proposition~\ref{prop:30_65-31_65-repr}) and that the relation $c \cup \id$ is an equivalence relation.
\begin{prop}
\label{prop:31_65}
    $\NSP(\ra{31}{65})$ is $\NP$-complete.
\end{prop}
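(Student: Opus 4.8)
The plan is to mirror the proof of Proposition~\ref{prop:30_37}. Since $\ra{31}{65}$ has a fully universal representation by Proposition~\ref{prop:30_65-31_65-repr}, Lemma~\ref{lem:nsp-ncp} yields $\NSP(\ra{31}{65}) = \NCP(\ra{31}{65})$, which is in $\NP$; hence it only remains to establish $\NP$-hardness. I would reduce from the same $\NP$-hard variant of SAT as in Proposition~\ref{prop:30_37}, where every clause is either a positive clause $(x \lor y \lor z)$ or a negative clause $(\neg x \lor \neg y)$.

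The encoding represents each Boolean variable $x$ by a $(b \cup c)$-edge $(x_0, x_1)$, reading $g(x_0, x_1) = c$ as ``$x$ is true'' and $g(x_0, x_1) = b$ as ``$x$ is false''. The three gadget networks in Figures~\ref{fig:gadget-equality-31_65}, \ref{fig:31_65_R}, and \ref{fig:31_65_T} supply exactly the needed logical primitives: $E(x_0, x_1, y_0, y_1)$ forces $g(x_0, x_1) = g(y_0, y_1)$ and is used to make fresh local copies of a variable edge; $S(p_1, p_2, p_3, p_4, q_1, q_2, q_3)$ forces at least one of the three edges $(p_i, q_i)$ to carry label $c$, encoding the positive clause; and $T(p_1, p_2, p_3)$ forces at least one of $(p_1, p_2)$, $(p_2, p_3)$ to carry label $b$, encoding the negative clause. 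The soundness of $T$ hinges on the fact that $c \cup \id$ is an equivalence relation together with the forbidden triple $(c, c, c)$: two consecutive $c$-edges together with the closing $a$-edge of $T$ would force a $c$-class to contain a pair labeled $a$, a contradiction.

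Given a formula $\varphi$ with variable set $V$, I would build the $\NCP(\ra{31}{65})$-instance $(V', f)$ as follows. For each $x \in V$ add fresh vertices $x_0, x_1$ and set $f(x_0, x_1) = b \cup c$. For each positive clause $(x \lor y \lor z)$ add seven fresh vertices and impose $E(x_0, x_1, p_1, q_1)$, $E(y_0, y_1, p_2, q_2)$, $E(z_0, z_1, p_3, q_3)$, and $S(p_1, p_2, p_3, p_4, q_1, q_2, q_3)$. For each negative clause $(\neg x \lor \neg y)$ add fresh vertices $p_1, p_2, p_3$ and impose $E(x_0, x_1, p_1, p_2)$, $E(y_0, y_1, p_2, p_3)$, and $T(p_1, p_2, p_3)$; all pairs not otherwise constrained are set to $1$. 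A solution $g$ of $(V', f)$ then induces the assignment $s(x) = 1$ iff $g(x_0, x_1) = c$, which satisfies $\varphi$ by the gadget properties; conversely, any satisfying $s$ yields a labeling of the variable edges that, since the representation is fully universal, extends to a consistent atomic refinement of $(V', f)$.

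I expect the main obstacle to be the verification of the gadgets, in two respects. First, one must check that each diagram enforces exactly the stated constraint, which is a finite but delicate case analysis using the composition of $\ra{31}{65}$ (in particular the role of the equivalence relation $c \cup \id$ and of the forbidden monochromatic triples). Second, and more subtly, one must confirm that the gadgets compose without over-constraining: where two $E$-gadgets share the middle vertex $p_2$ of a $T$-gadget, or where several clause gadgets attach to the same variable edge, every partial labeling coming from a satisfying assignment must still extend to a globally consistent atomic network. Since $\ra{31}{65}$ has a fully universal representation, this last step reduces to checking that the relevant three-element subnetworks are all consistent, which I would verify directly.
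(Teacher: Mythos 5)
Your proposal follows essentially the same route as the paper's proof: membership in $\NP$ via the fully universal representation and Lemma~\ref{lem:nsp-ncp}, then a reduction from the same SAT variant with clauses $(x \lor y \lor z)$ and $(\neg x \lor \neg y)$, using the same three gadgets $E$, $S$, $T$ from Figures~\ref{fig:gadget-equality-31_65}, \ref{fig:31_65_R}, and~\ref{fig:31_65_T}, the same encoding ($c$ for true, $b$ for false), and the same clause-by-clause construction. One small correction: $(c,c,c)$ is an \emph{allowed} triple of $\ra{31}{65}$; the soundness of the $T$-gadget rests instead on the forbidden triple $(a,c,c)$, i.e., on $c \circ c \leq c \cup \id$, which is precisely the equivalence-class argument you give in the same sentence, so the verification goes through unchanged.
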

\begin{proof}
The proof is similar to that of Proposition~\ref{prop:30_37}. 
Since $\ra{31}{65}$ has a fully universal representation, $\NSP(\ra{31}{65}) = \NCP(\ra{31}{65})$ is contained in $\NP$. We show its $\NP$-hardness by a reduction from the variant of SAT where each clause is of the form $(x \lor y \lor z)$ or of the form $(\neg x \lor \neg y)$. Let $\varphi$ be an instance of this problem and let $V$ be the set of variables occurring in $\varphi$. We will construct an instance $(V^\prime, f)$ of $\NCP(\ra{31}{65})$ such that $(V^\prime, f)$ is satisfiable if and only if $\varphi$ is satisfiable.

Consider the $\ra{31}{65}$-networks $E(x_0, x_1, y_0, y_1)$, $S(p_1, p_2, p_3, p_4, q_1, q_2, q_3)$, and $T(p_1, p_2, p_3)$, which are depicted in Figure \ref{fig:gadget-equality-31_65}, Figure \ref{fig:31_65_R}, and Figure \ref{fig:31_65_T}, respectively.

For every variable $x \in V$, we add two fresh variables $x_0$ and $x_1$ to $V'$.

For each clause of the form $(x \lor y \lor z)$, we add seven fresh variables $p_1, p_2, p_3, p_4, q_1, q_2, q_3$ to $V'$ and impose the constraints of $E(x_0, x_1, p_1, q_1), E(y_0, y_1, p_2, q_2), E(z_0, z_1, p_3, q_3)$ and $S(p_1, p_2, p_3, p_4, q_1, q_2, q_3)$ on $f$.

For each clause of the form $(\neg x \lor \neg y)$, we add three fresh variables $p_1, p_2, p_3$ to $V'$ and impose the constraints of $E(x_0, x_1, p_1, p_2), E(y_0, y_1, p_2, p_3)$ and $T(p_1, p_2, p_3)$ on $f$.

It follows immediately from the observations about the gadget networks above that if a function $g$ is a solution to $(V', f)$, then the function $s\colon V \rightarrow \{0, 1\}$ defined by $s(x) = 0$ if $g(x_0, x_1) = b$ and $s(x) = 1$ if $g(x_0, x_1) = c$ is a solution to $\varphi$. Conversely, if $s\colon V \rightarrow \{0, 1\}$ is a solution to $\varphi$, then it is easy to see that the partial function $g$ defined by $g(x_0, x_1) = b$ if $s(x) = 0$ and $g(x_0, x_1) = c$ if $s(x) = 1$ can be extended to a solution to $(V', f)$.
\end{proof}

\subsection{Hardness from Promise Constraint Satisfaction Problems}
\label{sect:PCSPs}
In this section, we introduce a new technique for showing $\NP$-hardness of the NSP based on a recent hardness result from the area of promise constraint satisfaction problems~\cite{PCSP}. 

\begin{prop}\label{prop:pcsp-trick}
    Let $\bA$ be a representable finite relation algebra with two symmetric atoms $p, q$ such that $(p, p, p)$ and $(q, q, q)$ are forbidden and $(p, q, q)$ is an allowed triple. Then $\NSP(\bA)$ is $\NP$-hard.
\end{prop}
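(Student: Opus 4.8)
The plan is to reduce the promise problem $\CSP(K_3,K_5)$ --- which is $\NP$-hard by~\cite{PCSP} --- to $\NSP(\bA)$. Recall that in this promise problem one is given a graph $G$ together with the promise that either $G \to K_3$ (i.e.\ $G$ is $3$-colourable) or $G \not\to K_5$ (i.e.\ $G$ is not $5$-colourable), and the task is to decide which. It therefore suffices to construct, in polynomial time, an $\bA$-network $N_G$ such that (a) if $G \to K_3$ then $N_G$ is satisfiable, and (b) if $N_G$ is satisfiable then $G \to K_5$; any decision procedure for $\NSP(\bA)$ would then separate the two promise cases. I would take $N_G = (V(G), f)$ with $f(x,x) = \id$, with $f(x,y) = p \cup q$ whenever $xy \in E(G)$, and with $f(x,y) = p \cup q \cup \id$ whenever $x \neq y$ and $xy \notin E(G)$.

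The combinatorial heart of direction (b) is a Ramsey argument. First I would observe that in every representation $\fB$ of $\bA$ the graphs $p^\fB$ and $q^\fB$ are triangle-free: a triangle in $p^\fB$ would give $p \cap (p \circ p) \neq 0$, hence $p \le p \circ p$ since $p$ is an atom, i.e.\ $(p,p,p) \in \Cy(\bA)$, contradicting the hypothesis (and symmetrically for $q$). Now suppose $N_G$ is satisfiable, witnessed by a representation $\fB$ and a map $s \colon V(G) \to B$. If $s(x) \ne s(y)$ then $(s(x),s(y)) \notin \id^\fB$, so by the definition of $f$ we get $(s(x),s(y)) \in (p \cup q)^\fB$; thus the set of distinct image points is a clique of $(p\cup q)^\fB$ whose edges are $2$-coloured by $p^\fB$ versus $q^\fB$ with no monochromatic triangle. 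Since $R(3,3)=6$, this clique has at most $5$ vertices, so $s$, viewed as a colouring by image points, is a proper colouring of $G$ using at most $5$ colours; hence $G \to K_5$.

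For direction (a), I would use the hypothesis $(p,q,q) \in \Cy(\bA)$, which by the cycle law (Proposition~\ref{prop:cyclelaw}) is equivalent to $p \le q \circ q$. Fixing a representation $\fB$ in which $p$ is nonempty, any pair $(u_1,u_3) \in p^\fB \subseteq (q\circ q)^\fB$ has a midpoint $u_2$ with $(u_1,u_2),(u_2,u_3) \in q^\fB$, so $\{u_1,u_2,u_3\}$ is a $(p\cup q)^\fB$-triangle of type $(q,q,p)$. Given a proper $3$-colouring $c \colon V(G) \to \{1,2,3\}$, I would set $s(v) := u_{c(v)}$; then edges of $G$ map into $(p\cup q)^\fB$ and all remaining pairs map into $(p\cup q\cup \id)^\fB$, so $s$ satisfies $N_G$ in $\fB$.

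The main obstacle is not the verification above, which is short, but two supporting points. The first is importing the $\NP$-hardness of $\CSP(K_3,K_5)$ as a black box from~\cite{PCSP} and checking that the sandwich (a)+(b) is exactly what transfers hardness to the (non-promise) problem $\NSP(\bA)$. The second, more technical, point is guaranteeing a representation of $\bA$ in which the atom $p$ --- equivalently, the triple $(q,q,p)$ --- is actually realised; I would obtain this from the decomposition into square components of simple quotients (Lemma~\ref{lem:decomp}), since $p \neq 0$ must be realised in some component. Finally, I expect the bound $R(3,3)=6$ to be the single external combinatorial fact doing the real work: it is precisely this bound that pins the reduction to $K_5$ rather than to some larger complete graph.
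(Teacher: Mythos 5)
Your proposal is correct and follows essentially the same route as the paper: the identical network $(V(G),f)$ with $\id$, $p\cup q$, and $p\cup q\cup\id$ labels, satisfiability from a $3$-colouring via a triangle witnessing the allowed triple $(p,q,q)$, and unsatisfiability via $R(3,3)=6$ applied to the $\{p,q\}$-coloured clique of image points. The differences are cosmetic: you phrase the Ramsey step contrapositively, spell out why $p^\fB$ and $q^\fB$ are triangle-free, and add the (harmless, and under the standard faithful notion of representation unnecessary) precaution that $p$ is realised in some representation.
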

\begin{proof}
    We will reduce PCSP($K_3, K_5$), which is known to be $\NP$-hard~\cite[Theorem 6.5]{PCSP}, to $\NSP(\bA)$. Its instances are undirected graphs which are either 3-colorable or not even 5-colorable, and the problem is to decide which of these two cases apply. So let $G = (V; E)$ be an instance of this problem. We construct the following $\bA$-network $(V, f)\colon$
    \begin{itemize}
        \item For every $x \in V$, we set $f(x, x) = \id$.
        \item For every edge $(x, y) \in E$, we set $f(x, y) = p \cup q$.
        \item For all remaining pairs of distinct vertices $(x, y)$, we set $f(x, y) = p \cup q \cup \id$.
    \end{itemize}
    Let $\fB$ be a representation of $\bA$. Since $(p, q, q)$ is an allowed triple, there are elements $w_1, w_2, w_3 \in \fB$ such that $(w_1, w_2) \in p^\fB, (w_2, w_3) \in q^\fB$ and $(w_3, w_1) \in q^\fB$.
    If $G$ is 3-colorable, then there is a mapping $c\colon V \rightarrow \{1, 2, 3\}$ witnessing its 3-colorability. It is easy to see that $s\colon x \mapsto w_{c(x)}$ satisfies $(V, f)$ in $\fB$.
    If $G$ is not even 5-colorable, then any solution to $(V, f)$ has at least six elements. Since $\mathrm{R}(3, 3) = 6$, this solution has to contain either the triple $(p, p, p)$ or the triple $(q, q, q)$, which is a contradiction, so $(V, f)$ is not satisfiable in this case.
\end{proof}

Recall that both $\ra{51}{65}$ and $\ra{56}{65}$ are representable, but do not have a fully universal representation (Sections \ref{sect:51_65} and \ref{sect:56_65}). 

\begin{cor}\label{cor:56_65-and-others-np-hard}
    $\NSP(\ra{51}{65})$ is $\NP$-complete and $\NSP(\ra{56}{65})$ is $\NP$-hard.
\end{cor}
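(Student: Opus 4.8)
The plan is to apply Proposition~\ref{prop:pcsp-trick} to both algebras, and then upgrade the $\ra{51}{65}$ case to $\NP$-completeness using the universal representation already constructed. The main work is simply to check that each algebra satisfies the hypotheses of Proposition~\ref{prop:pcsp-trick}: namely, that there exist two symmetric atoms $p,q$ with $(p,p,p)$ and $(q,q,q)$ forbidden and $(p,q,q)$ allowed, and that the algebra is representable.

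First I would recall the defining data of the two algebras. Both $\ra{51}{65}$ and $\ra{56}{65}$ are symmetric integral relation algebras with the three diversity atoms $a,b,c$ (plus $\id$), so every atom is symmetric, and representability was established in Proposition~\ref{prop:51rep} and Proposition~\ref{prop:56rep} respectively. For $\ra{51}{65}$, the forbidden triples include $(a,a,a)$ and $(c,c,c)$ (these are exactly the conditions stated at the start of Section~\ref{sect:51_65}), so I would set $p := a$ and $q := c$; it remains to confirm that $(a,c,c)$ is an allowed triple, which follows from inspecting the composition table of $\fB$ in Proposition~\ref{prop:51rep}, where $a\circ c = a\cup b\cup c$ contains $c$. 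Thus $\ra{51}{65}$ meets all the hypotheses and Proposition~\ref{prop:pcsp-trick} yields $\NP$-hardness. For $\ra{56}{65}$, the algebra has \emph{almost} the same forbidden triples as $\ra{51}{65}$ (as noted at the start of Section~\ref{sect:56_65}), so again $(a,a,a)$ and $(c,c,c)$ are forbidden; using the representation of Proposition~\ref{prop:56rep} one checks that $(a,c,c)$ is still allowed, so again $p:=a$, $q:=c$ works and Proposition~\ref{prop:pcsp-trick} gives $\NP$-hardness of $\NSP(\ra{56}{65})$.

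For the stronger claim that $\NSP(\ra{51}{65})$ is $\NP$-\emph{complete}, I would invoke the finitely bounded universal representation constructed in Proposition~\ref{prop:51rep}: since $\ra{51}{65}$ has a finitely bounded universal representation, Lemma~\ref{lem:fin_bounded_np} gives containment of $\NSP(\ra{51}{65})$ in $\NP$. Combined with the $\NP$-hardness just obtained, this yields $\NP$-completeness. For $\ra{56}{65}$ I would only claim $\NP$-hardness, since (as flagged in Section~\ref{sect:56_65} and the open question there) membership in $\NP$ is not established in this paper.

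I expect the only genuine obstacle to be the verification that $(a,c,c) \in \Cy(\bA)$ for each algebra — that is, locating the allowed triple $(p,q,q)$ in the composition data. This is a routine table lookup rather than a conceptual difficulty, but it is the one place where the abstract hypothesis of Proposition~\ref{prop:pcsp-trick} must be matched against the concrete definitions; by the cycle law (Proposition~\ref{prop:cyclelaw}) it suffices to check a single representative of $[a,c,c]$. Everything else (symmetry of all atoms, representability, containment in $\NP$ for the $\ra{51}{65}$ case) is already available from earlier results in the paper.

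\begin{proof}
    Both $\ra{51}{65}$ and $\ra{56}{65}$ are symmetric and representable (Proposition~\ref{prop:51rep} and Proposition~\ref{prop:56rep}). In each algebra the atoms $a$ and $c$ are symmetric, the triples $(a,a,a)$ and $(c,c,c)$ are forbidden, and $(a,c,c)$ is an allowed triple, as can be read off from the respective composition tables. Hence Proposition~\ref{prop:pcsp-trick}, applied with $p := a$ and $q := c$, shows that $\NSP(\ra{51}{65})$ and $\NSP(\ra{56}{65})$ are $\NP$-hard.

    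For $\ra{51}{65}$, Proposition~\ref{prop:51rep} provides a finitely bounded universal representation, so $\NSP(\ra{51}{65})$ is in $\NP$ by Lemma~\ref{lem:fin_bounded_np}. Together with $\NP$-hardness this gives $\NP$-completeness.
\end{proof}
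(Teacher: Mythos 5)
Your proof is correct and follows essentially the same route as the paper: both apply Proposition~\ref{prop:pcsp-trick} with $p := a$ and $q := c$ (using representability from Propositions~\ref{prop:51rep} and~\ref{prop:56rep}) for $\NP$-hardness, and then use the finitely bounded universal representation of $\ra{51}{65}$ together with Lemma~\ref{lem:fin_bounded_np} for containment in $\NP$. Your explicit verification of the forbidden triples $(a,a,a)$, $(c,c,c)$ and the allowed triple $(a,c,c)$ is accurate and merely spells out what the paper leaves as an implicit check of the hypotheses.
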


\begin{proof}
    Both $\ra{51}{65}$ and $\ra{56}{65}$ are representable (Proposition~\ref{prop:51rep} and Proposition~\ref{prop:56rep}) and the two atoms $p := a$ and $q := c$ satisfy the condition of Proposition~\ref{prop:pcsp-trick}. It follows that both problems are $\NP$-hard.

    By Proposition~\ref{prop:51rep}, the algebra $\ra{51}{65}$ has a finitely bounded universal representation and thus, by Lemma~\ref{lem:fin_bounded_np}, $\NSP(\ra{51}{65})$ is in $\NP$.
\end{proof}

\begin{remark}
Proposition~\ref{prop:pcsp-trick} also shows that the 
NSP for each of the algebras $\ra{1}{7}$, $\ra{5}{7}$, $\ra{1}{65}$, $\ra{2}{65}$, $\ra{5}{65}$, $\ra{10}{65}$, $\ra{12}{65}$, $\ra{15}{65}$, $\ra{16}{65}$, $\ra{17}{65}$, $\ra{39}{65}$, $\ra{55}{65}$, $\ra{62}{65}$, and $\ra{63}{65}$ is $\NP$-hard
(which was already established earlier), since each of them is representable and has two atoms satisfying the condition of Proposition~\ref{prop:pcsp-trick}.
\end{remark}

\section{Tractable Algebras with at most 4 Atoms}
\label{sect:poly-time-tractable-ras}
In this section we present polynomial-time algorithms for the network satisfaction problem for all the remaining integral relation algebras with at most four atoms. 

\subsection{Tractability from the Atom Structure}
\label{subsec:siggers}
A polynomial-time reduction of $\NSP(\bA)$ to the CSP of the atom structure (Definition~\ref{def:atom-struct}) has been presented in Proposition~\ref{prop:reductionAtomStructure}; we can then use the complexity classification of the CSPs of finite conservative structures (Theorem~\ref{thm:dichotomyConservative}). 
In this section we apply this technique to concrete relation algebras; we proceed in the order of increasing difficulty.
The polynomial-time tractability result from Theorem~\ref{thm:dichotomyConservative} will be used to study the NSP mostly via the following lemma.  

\begin{lemma}\label{lem:2or3-sym}
    Let $\bA$ be a relation algebra with a fully universal representation and atom structure $\fAo$. If $\fA_0$ has a polymorphism $f$ 
    which is binary symmetric or a ternary weak near unanimity operation, then $\NSP(\bA)$ is in $\Ptime$.
\end{lemma}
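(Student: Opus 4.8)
The plan is to combine the reduction to the atom structure with Bulatov's dichotomy for conservative structures. First I would invoke Proposition~\ref{prop:reductionAtomStructure}: since $\bA$ has a fully universal representation, there is a polynomial-time reduction from $\NSP(\bA)$ to $\CSP(\fAo)$, so it suffices to show that $\CSP(\fAo)$ is in $\Ptime$. As observed right after Definition~\ref{def:atom-struct}, the finite structure $\fAo$ is conservative, so I can apply Theorem~\ref{thm:dichotomyConservative}: it remains to verify that for every pair $a, b \in A_0$ there is a polymorphism of $\fAo$ whose restriction to $\{a,b\}$ is a binary symmetric, a majority, or a minority operation.

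If $f$ is binary symmetric, this is immediate and uniform in the pair: fixing $a,b$, conservativity of $f$ gives $f(a,a) = a$ and $f(b,b) = b$, while global symmetry gives $f(a,b) = f(b,a)$, so $f|_{\{a,b\}}$ is already a binary symmetric operation, and the same $f$ witnesses the condition for all pairs.

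The interesting case is when $f$ is a ternary weak near unanimity operation. Fix $a,b$ and put $g := f|_{\{a,b\}^3}$; by conservativity $g$ takes values in $\{a,b\}$ and still satisfies the WNU identities, so the two values $c_1 := g(a,a,b) = g(a,b,a) = g(b,a,a)$ and $c_2 := g(b,b,a) = g(b,a,b) = g(a,b,b)$ are well-defined elements of $\{a,b\}$, with $g(a,a,a)=a$ and $g(b,b,b)=b$. I would then distinguish the four possibilities for $(c_1,c_2)$. When $(c_1,c_2) = (a,b)$ the operation $g$ is exactly a majority, and when $(c_1,c_2) = (b,a)$ it is exactly a minority. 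The subtle point, which I expect to be the main obstacle precisely because it is easy to overlook, is that a ternary WNU restricted to a two-element set need \emph{not} be a majority or a minority: in the two remaining cases $(c_1,c_2) \in \{(a,a),(b,b)\}$ it is neither. Here I would instead pass to the binary operation $h(x,y) := f(x,y,y)$, which lies in $\Pol(\fAo)$ since it is obtained from $f$ by identifying its last two coordinates (a composition of $f$ with projections, hence a member of the clone). Because $h(a,b) = g(a,b,b) = c_2$ and $h(b,a) = g(b,a,a) = c_1$, and since $c_1 = c_2$ in exactly these two cases, the restriction $h|_{\{a,b\}}$ is binary symmetric (with $h(a,a)=a$, $h(b,b)=b$).

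In every case we therefore exhibit a polymorphism of $\fAo$ whose restriction to $\{a,b\}$ is binary symmetric, a majority, or a minority. As this holds for all pairs $a,b \in A_0$, Theorem~\ref{thm:dichotomyConservative} yields $\CSP(\fAo) \in \Ptime$, and the reduction of Proposition~\ref{prop:reductionAtomStructure} then gives $\NSP(\bA) \in \Ptime$.
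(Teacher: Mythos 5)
Your proposal is correct and follows the same skeleton as the paper's proof: reduce $\NSP(\bA)$ to $\CSP(\fAo)$ via Proposition~\ref{prop:reductionAtomStructure}, observe that $\fAo$ is conservative, and invoke Theorem~\ref{thm:dichotomyConservative}. The difference lies in how the WNU case is discharged, and here your version is strictly more careful than the paper's. The paper's proof asserts that, by conservativity, every ternary WNU polymorphism restricted to a two-element set is a majority or a minority; as you correctly point out, this is false: writing $c_1 := f(a,a,b)$ and $c_2 := f(a,b,b)$, the two cases with $c_1 = c_2$ give restrictions that satisfy the WNU identities but are neither majorities nor minorities (they are the ternary minimum, respectively maximum, with respect to an ordering of $\{a,b\}$). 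This is not a hypothetical worry: the WNU polymorphisms the paper itself constructs later (e.g., in Proposition~\ref{prop:5_37}, where $f(a,a,r) = f(a,r,r) = r$) restrict to exactly such semilattice-type operations on some pairs of atoms. Your repair --- passing to the identification minor $h(x,y) := f(x,y,y)$, which is again a polymorphism and whose restriction to $\{a,b\}$ is binary symmetric precisely in those degenerate cases --- is exactly what is needed to satisfy the hypothesis of Theorem~\ref{thm:dichotomyConservative} for every pair. So your argument reaches the same conclusion by the same route, but closes a genuine (if easily repaired) gap in the paper's one-line justification.
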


\begin{proof}
    Since $\fAo$ is conservative, every ternary WNU polymorphism restricted to two elements is either a majority or a minority operation. Hence, Theorem~\ref{thm:dichotomyConservative} 
    implies that $\CSP(\fAo)$ is in $\Ptime$. Proposition~\ref{prop:reductionAtomStructure} implies that 
    $\NSP(\bA)$ is in $\Ptime$ as well.
\end{proof}

\begin{figure}
\begin{center}
    \begin{tabular}{|R|LL|}\hline
         \ra{1}{2},\ra{2}{2} & \id & a \\ \hline
         \id & \id & a \\
         a & a & a \\ \hline
    \end{tabular}\quad
    \begin{tabular}{|R|LLL|}\hline
         \ra{4}{7},\ra{7}{7} & \id & a & b \\ \hline
         \id & \id & a & b \\
         a & a & a & b \\ 
         b & b & b & b \\ \hline
    \end{tabular}\\ \vspace{1em}
    \begin{tabular}{|R|LLLL|}\hline
         \ra{8}{65}, \ra{14}{65} & \id & a & b & c\\ \hline
         \id & \id & a & b & c \\
         a & a & a & b & c \\
         b & b & b & b & c \\
         c & c & c & c & c \\ \hline
    \end{tabular}\quad
    \begin{tabular}{|R|LLLL|}\hline
         \ra{20}{65} & \id & a & b & c\\ \hline
         \id & \id & a & b & c \\
         a & a & a & a & a \\
         b & b & a & b & c \\
         c & c & a & c & c \\ \hline
    \end{tabular}\quad
    \begin{tabular}{|R|LLLL|}\hline
         \ra{53}{65}, \ra{61}{65}, \ra{65}{65} & \id & a & b & c\\ \hline
         \id & \id & a & b & c \\
         a & a & a & a & a \\
         b & b & a & b & c \\
         c & c & a & c & c \\ \hline
    \end{tabular}
\end{center}
\caption{Binary symmetric polymorphisms of the atom structures for the relation algebras 
$\ra{1}{2}, \ra{2}{2}, \ra{4}{7}, \ra{7}{7}, \ra{8}{65}, \ra{14}{65}, \ra{20}{65}, \ra{53}{65}, \ra{61}{65}$, and $\ra{65}{65}$.}
\label{fig:pols}
\end{figure} 

\begin{prop}
\label{prop:can-bin-sym}
    Let $\bA \in \{\ra{1}{2}, \ra{2}{2}, \ra{4}{7}, \ra{7}{7}, \ra{8}{65}, \ra{14}{65}, \ra{20}{65}, \ra{53}{65}, \ra{61}{65}, \ra{65}{65}\}$. Then $\NSP(\bA)$ is in $\Ptime$.
\end{prop}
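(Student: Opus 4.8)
The plan is to apply Lemma~\ref{lem:2or3-sym}, which reduces the task to exhibiting, for each of the listed algebras, a binary symmetric polymorphism of its atom structure $\fAo$ (Definition~\ref{def:atom-struct}). Two of the three ingredients needed are immediate. First, every algebra $\bA$ in the list is symmetric and appears in~\eqref{eq:norm_all}, so it has a normal---in particular, a fully universal---representation, which is exactly the hypothesis of Lemma~\ref{lem:2or3-sym}. Second, candidate operations are already tabulated in Figure~\ref{fig:pols}. It therefore remains only to check that each tabulated operation really is a polymorphism of the corresponding $\fAo$, and then Lemma~\ref{lem:2or3-sym} (via Proposition~\ref{prop:reductionAtomStructure} and Theorem~\ref{thm:dichotomyConservative}) gives that $\NSP(\bA)$ is in $\Ptime$.

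So for a fixed $\bA$ and its operation $f$ from Figure~\ref{fig:pols}, I would verify three things. Symmetry of $f$ can be read directly off the (symmetric) table. Preservation of the unary relations $b^{\fAo}$ is automatic, since $\fAo$ is conservative (as noted after Definition~\ref{def:atom-struct}), so $f(a_1,a_2)\in\{a_1,a_2\}$ always holds. Preservation of $E^{\fAo}$ is trivial because each algebra in the list is symmetric, so $\breve a=a$ for every atom and $E^{\fAo}$ is just the diagonal of $A_0^2$, which any operation preserves. The only substantive condition is that $f$ preserve the ternary relation $R^{\fAo}=\Cy(\bA)$: for all allowed triples $(x_1,y_1,z_1),(x_2,y_2,z_2)\in\Cy(\bA)$, the triple $(f(x_1,x_2),f(y_1,y_2),f(z_1,z_2))$ must again lie in $\Cy(\bA)$.

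This last check is the main obstacle; it is finite but tedious, so I would organize it using the algebraic shape of the operations. Most entries of Figure~\ref{fig:pols} are semilattice operations: for $\ra{1}{2},\ra{2}{2},\ra{4}{7},\ra{7}{7},\ra{8}{65},\ra{14}{65}$ the operation $f$ is the maximum with respect to a linear order in which $\id$ is least, while for $\ra{20}{65},\ra{53}{65},\ra{61}{65},\ra{65}{65}$ it is the minimum of the linear order $a<c<b<\id$. In every case $f$ is idempotent, commutative, and associative. Using idempotence together with the cycle law (Proposition~\ref{prop:cyclelaw}) to keep only one representative triple per orbit $[x,y,z]$, I would reduce the preservation of $\Cy(\bA)$ to a direct case analysis over the finitely many pairs of allowed triples; this is routine to carry out by hand for the small algebras and by a short computer check in general, matching the computational verification already used elsewhere in the paper. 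Once preservation of $\Cy(\bA)$ is confirmed for each algebra, the hypotheses of Lemma~\ref{lem:2or3-sym} are met and the claim follows.
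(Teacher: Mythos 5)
Your proposal is correct and follows essentially the same route as the paper: the paper's proof simply exhibits the binary symmetric operations of Figure~\ref{fig:pols} and invokes Lemma~\ref{lem:2or3-sym}. Your additional observations (conservativity handles the unary relations, symmetry of the algebras trivializes $E^{\fAo}$, and only preservation of $\Cy(\bA)$ requires a finite check, which the semilattice structure of the tabulated operations makes routine) are sound elaborations of the same argument.
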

\begin{proof}
    In Figure~\ref{fig:pols} we present binary symmetric polymorphisms of
    $\fA_0$. 
    The statement therefore follows from Lemma~\ref{lem:2or3-sym}.
\end{proof}

The relation algebras $\ra{3}{3}$ and $\ra{37}{37}$ are trivial in the sense that they do not have any forbidden triples of non-identity atoms; all of their non-identity atoms are flexible. It follows that all atomic $\ra{3}{3}$- and $\ra{37}{37}$-networks which do not contain a triple of the form $(\id, \id, x)$ or $(\id, x, y)$ for $x, y \in A_0\backslash \{\id\}$ and $x \neq y$ are satisfiable. This can be checked in polynomial-time, e.g., by the \hyperref[alg:pc]{Path Consistency Algorithm}. For completeness reasons, we nevertheless give ternary weak near unanimity polymorphisms of their respective atom structures:
\begin{prop}
\label{prop:can-bin-sym2}
    Let $\bA \in \{\ra{3}{3}, \ra{37}{37}\}$. Then $\NSP(\bA)$ is $\Ptime$.
\end{prop}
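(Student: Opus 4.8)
The plan is to sidestep the atom-structure criterion of Lemma~\ref{lem:2or3-sym} and argue directly through consistency. Both $\ra{3}{3}$ and $\ra{37}{37}$ appear in~\eqref{eq:norm_all}, so each has a normal---in particular fully universal---representation $\fB$; by Lemma~\ref{lem:nsp-ncp} this gives $\NSP(\bA)=\NCP(\bA)$. Hence it suffices to show that $\NCP(\bA)$ lies in $\Ptime$, and I would do this by proving that the Path Consistency Algorithm (Algorithm~\ref{alg:pc}) decides it. Polynomiality is clear and soundness is Remark~\ref{rem:PC-sound}, so the entire content is \emph{completeness}: every network on which the algorithm does not report \say{unsolvable} is in fact satisfiable.

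For completeness I would exploit that neither algebra has a forbidden triple consisting of diversity atoms (every non-identity atom is flexible, cf.\ Figure~\ref{fig:flex}). Let $(V,f^\ast)$ be the path-consistent network returned by the algorithm, with $f^\ast(x,y)\neq\varnothing$ for all $x,y$. First contract every edge whose label has been reduced to $\{\id\}$: path consistency makes this well defined, since $f^\ast(x,y)=\{\id\}$ forces $f^\ast(x,z)=f^\ast(y,z)$ for all $z$ (because $f^\ast(x,z)\le f^\ast(x,y)\circ f^\ast(z,y)$ and symmetrically). Iterating to a fixpoint yields a network in which every off-diagonal label contains a diversity atom. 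Choosing one such atom on each off-diagonal edge and $\id$ on the diagonal produces a \emph{reduced} atomic network; every triple of diversity atoms is allowed, and the triples involving $\id$ are consistent by construction, so this atomic network is consistent. Full universality of $\fB$ then makes it satisfiable, and assigning all vertices of a contracted class the same element satisfies the original network.

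The step I expect to be the genuine obstacle is exactly this completeness claim, i.e.\ verifying that \emph{no} nontrivial constraint among the diversity atoms can ever be produced by propagation; this is precisely where the flexibility of all diversity atoms (equivalently, the absence of forbidden diversity triples) is essential, and it is what makes the greedy choice above never fail. I would also record, in the spirit of the surrounding material, the alternative route through Lemma~\ref{lem:2or3-sym}: one would try to exhibit a binary symmetric or ternary weak near unanimity polymorphism of the atom structure $\fA_0$ (Definition~\ref{def:atom-struct}). Here the anticipated difficulty is the identity atom together with the reversal relation $E$: since $\ra{3}{3}$ and $\ra{37}{37}$ are asymmetric and $(r,r,\id)$, $(\breve r,\breve r,\id)$ are forbidden while $\breve r\neq r$, a \emph{conservative} such operation is heavily constrained on inputs involving $\id$, so one is led to apply the criterion only to the diversity part of $\fA_0$, on which every triple is allowed and a majority operation commuting with $E$ suffices. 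The path-consistency argument above avoids this bookkeeping entirely.
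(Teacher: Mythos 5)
Your reduction of $\NSP(\bA)$ to $\NCP(\bA)$ via the normal (hence fully universal) representation is fine, and your overall route is genuinely different from the paper's formal proof, which exhibits a conservative ternary weak near unanimity polymorphism of the atom structure and invokes Lemma~\ref{lem:2or3-sym} (the paper only gestures at the path-consistency route in the surrounding text). However, your completeness claim for Algorithm~\ref{alg:pc} is false, and it fails exactly at the point you flagged and then glossed over. The paper's procedure never compares $f(x,y)$ with $f(y,x)$: the only interaction between the two is the diagonal update $f(x,x) \gets f(x,x) \cap \bigl(f(x,y)\circ f(y,x)\bigr)$. On the other hand, a consistent \emph{atomic} network must satisfy $s(y,x)=\breve{s(x,y)}$, because the consistency condition at the triple $(x,x,y)$ reads $\id = s(x,x) \leq s(x,y)\circ s(y,x)$, and for atoms $p,q$ one has $\id \leq p \circ q$ if and only if $q = \breve{p}$. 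Your greedy step ignores this. Concretely, consider the $\ra{3}{3}$-network on $V=\{x,y,z\}$ with $f(u,u)=\id$, $f(x,y)=f(y,x)=f(y,z)=f(z,y)=\id\cup r$, $f(x,z)=r\cup\breve{r}$, and $f(z,x)=1$. In $\ra{3}{3}$ one has $r\circ r = r\cup\breve{r}$ and $r\circ\breve{r}=\breve{r}\circ r=1$, hence $(\id\cup r)\circ(\id\cup r)=1$, $(r\cup\breve{r})\circ(\id\cup r)=1$, $(\id\cup r)\circ(r\cup\breve{r})=1$, and every composition involving $1$ equals $1$; checking all triples shows this network is path consistent with no empty label, so Algorithm~\ref{alg:pc} does not report unsolvability. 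Yet it has no solution: any consistent atomic refinement forces $s(x,y)=\id$ and $s(y,z)=\id$ (since $\breve{r}\not\leq \id\cup r$, the atom $r$ cannot be chosen coherently), hence $s(x,z)\leq \id\circ\id=\id$, contradicting $s(x,z)\leq r\cup\breve{r}$. So path consistency, as formulated in the paper, does \emph{not} solve $\NSP(\ra{3}{3})$; on this instance your construction would pick $s(x,y)=s(y,x)=r$ and wrongly declare the resulting (inconsistent) atomic network to be a solution, since $\id \leq r \circ r$ fails. Your contraction step has the same defect in miniature: from $f^\ast(x,y)=\id$ one only gets $f^\ast(x,z)\leq f^\ast(y,z)$; the reverse inclusion needs $f^\ast(y,x)\leq\id$, which Algorithm~\ref{alg:pc} does not guarantee.

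The gap is repairable, but the repair is the real content of the proof: you must enforce converse-coherence, e.g.\ preprocess $f(x,y)\gets f(x,y)\cap\breve{f(y,x)}$ (sound, as every solution satisfies $s(y,x)=\breve{s(x,y)}$) and preserve this invariant during propagation. For converse-coherent path-consistent networks your plan does go through: the pairs with label $\id$ then form an equivalence relation whose contraction is well defined; after contraction every label contains a diversity atom $p$, and coherence gives $\breve{p}$ in the reverse label automatically, so one can choose atoms per unordered pair; since every triple of diversity atoms is allowed in $\ra{3}{3}$ and $\ra{37}{37}$, the resulting atomic network is consistent. As written, however, the proof certifies unsatisfiable instances as satisfiable. (Your side remark about applying Lemma~\ref{lem:2or3-sym} ``only to the diversity part'' of $\fAo$ would need similar care: the polymorphism must preserve the whole atom structure, including the relations involving $\id$ and the converse relation $E$, which is precisely what the paper's explicitly constructed WNU operation is designed to do.)
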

\begin{proof}
    Let $f \colon A_0^3 \to A_0$ be the operation\footnote{For $\ra{3}{3}$, this construction is from~\cite[Proposition 5.10]{KnaeuerMaster}.}
defined by 
    \begin{itemize}
        \item $f(\id, \id, x) = f(\id, x, \id) = f(x, \id, \id) = x$ for all $x \in A_0$,
        \item $f(x, x, y) = f(x, y, x) = f(y, x, x) = x$ for all $x, y \in A_0\backslash\{\id\}$,
        \item $f(\id, x, y) = f(y, \id, x) = f(x, y, \id) = x$ for all $x, y \in A_0\backslash \{\id\}$ and,
        \item in the case of $\ra{37}{37}$, $f(r, \breve{r}, a) = f(a, r, \breve{r}) = f(\breve{r}, a, r) = f(\breve{r}, r, a) = f(r, a, \breve{r}) = f(a, \breve{r}, r) = a$.
    \end{itemize}
    It is then easy to verify that $f$ preserves all the relations of the atom structure; by definition, it is a weak near unanimity operation  and hence the statement follows from Lemma~\ref{lem:2or3-sym}.
\end{proof}

The normal representation $\fB$ of $\ra{3}{7}$ has the property
that $a^\fB \cup \id^\fB$ is an equivalence relation with infinitely many classes of size two. 

\begin{prop}[\cite{BMPP16}]\label{prop:3_7} 
    Let $\fB$ be the normal representation of $\ra{3}{7}$. 
    Then 
    the atom structure $\fA_0$ has a ternary polymorphism 
    $h$ which satisfies 
    $h(b, \cdot, \cdot) = h(\cdot, b, \cdot) = h(\cdot, \cdot, b) = b$ and which satisfies $h(x,x,y)=h(x,y,x)=h(y,x,x) = y$ for all $x,y \in \{a,\id\}$. 
\end{prop}

\begin{cor}\label{cor:3_7}
    $\NSP(\ra{3}{7})$ is in $\Ptime$.  
\end{cor}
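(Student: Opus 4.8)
The plan is to invoke Lemma~\ref{lem:2or3-sym}, whose two hypotheses for $\bA = \ra{3}{7}$ are that $\ra{3}{7}$ has a fully universal representation and that its atom structure $\fA_0$ admits a polymorphism that is binary symmetric or a ternary weak near unanimity operation. The first hypothesis is immediate: $\ra{3}{7}$ appears in the list \eqref{eq:norm_all}, so it has a normal representation $\fB$, and by definition every normal representation is fully universal. For the second hypothesis I would take the polymorphism $h$ of $\fA_0$ supplied by Theorem~\ref{thm:3_7}, so that the only thing left to check is that this $h$ is a ternary weak near unanimity operation.

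The verification amounts to confirming that $h(y,x,x) = h(x,y,x) = h(x,x,y)$ holds for all $x,y \in A_0 = \{\id,a,b\}$, which I would do by cases according to whether $b$ occurs among the arguments. If $x,y \in \{a,\id\}$, then the three expressions all equal $y$ by the minority identities $h(x,x,y)=h(x,y,x)=h(y,x,x)=y$ guaranteed by Theorem~\ref{thm:3_7}, so they agree. In every remaining case at least one argument in each of the three expressions equals $b$, so by the absorption property $h(b,\cdot,\cdot)=h(\cdot,b,\cdot)=h(\cdot,\cdot,b)=b$ all three expressions equal $b$ and again agree. I would also note that $h$ is in fact totally defined on $A_0^3$: every triple in $\{a,\id\}^3$ has at most two distinct entries and is therefore covered by the minority identities, while every other triple contains a $b$ and is covered by the absorption property. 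Hence $h$ is a well-defined weak near unanimity operation.

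With both hypotheses in hand, Lemma~\ref{lem:2or3-sym} immediately yields that $\NSP(\ra{3}{7})$ is in $\Ptime$. I do not expect any genuine obstacle here: all the combinatorial substance is packaged into Theorem~\ref{thm:3_7} (imported from \cite{BMPP16}), and the residual work is the routine case analysis above confirming that the imported operation satisfies the weak near unanimity identities, after which the reduction to $\CSP(\fA_0)$ via Proposition~\ref{prop:reductionAtomStructure} and Theorem~\ref{thm:dichotomyConservative} (both already bundled inside Lemma~\ref{lem:2or3-sym}) finishes the argument.
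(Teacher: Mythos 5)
Your proposal is correct and follows exactly the paper's own route: the paper's proof of Corollary~\ref{cor:3_7} likewise observes that the operation $h$ from Theorem~\ref{thm:3_7} satisfies the hypotheses of Lemma~\ref{lem:2or3-sym}. The only difference is that you spell out the routine case analysis (minority behaviour on $\{a,\id\}$, absorption by $b$) showing $h$ is a weak near unanimity operation, and the fully-universal hypothesis via \eqref{eq:norm_all}, which the paper leaves implicit.
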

\begin{proof}
    Note that the operation $h$ specified in Proposition~\ref{prop:3_7} satisfies the conditions from Lemma~\ref{lem:2or3-sym} which shows that $\NSP(\ra{3}{7})$ is in $\Ptime$.
\end{proof}

The normal representation $\fB$ of $\ra{7}{65}$ has the property that $a^\fB \cup \id^\fB$ is an equivalence relation with infinitely many classes of size two. 
Moreover, $a^{\fB} \cup b^{\fB} \cup \id^\fB$ is an 
equivalence relation with infinitely many classes of infinite size.

\begin{prop}\label{prop:7_65}
    $\NSP(\ra{7}{65})$ is in $\Ptime$. 
\end{prop}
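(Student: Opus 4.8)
The plan is to apply Lemma~\ref{lem:2or3-sym}, so the whole task reduces to exhibiting a suitable polymorphism of the atom structure $\fAo$. First I would record that $\ra{7}{65} = \ra{3}{7}[\ra{2}{2}]$ (Figure~\ref{fig:2-prod-4atom}); since both factors have a normal representation, $\ra{7}{65}$ has a normal (hence fully universal) representation by Lemma~\ref{lem:2-cycle-properties}(iii), consistent with its appearance in~\eqref{eq:norm_all}. Thus it suffices to produce a ternary weak near unanimity polymorphism of $\fAo$, because $\fAo$ is conservative and so any such operation restricts on each two-element set to a majority or a minority, placing us in the tractable case of Theorem~\ref{thm:dichotomyConservative}. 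Writing the diversity atoms as $a,b,c$ (so that $a\cup\id$ is the size-two equivalence, $a\cup b\cup\id = 1_{\ra{3}{7}}$ the big equivalence, and $c$ the atom of $\ra{2}{2}$), the composition of $\ra{7}{65}$ is determined by $a\circ a=\id$, $a\circ b=b$, $a\circ c = b\circ c = c$, $b\circ b=\id\cup a\cup b$, and $c\circ c = 1$; this I would read off from Definition~\ref{def:2cycl} and the cycle law (Proposition~\ref{prop:cyclelaw}).

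The key structural observation I would isolate is that \emph{every allowed triple of $\ra{7}{65}$ that contains $c$ contains it in at least two positions}. Indeed, if $x,y\in\{\id,a,b\}$ then $x\circ y\subseteq\{\id,a,b\}$, so $c\not\le x\circ y$; applying the cycle-law orbit $[x,y,z]$ then rules out any allowed triple with exactly one occurrence of $c$. Using this I would define the candidate operation $h\colon A_0^3\to A_0$ by setting $h(x,y,z)=c$ whenever $c\in\{x,y,z\}$, and $h(x,y,z)=h_0(x,y,z)$ otherwise, where $h_0$ is the ternary polymorphism of the atom structure of $\ra{3}{7}$ supplied by Theorem~\ref{thm:3_7} (absorbing $b$ and acting as a minority on $\{\id,a\}$). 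One checks routinely that $h$ is conservative and a weak near unanimity operation: tuples containing $c$ are sent to $c$ in a manifestly symmetric way, and tuples over $\{\id,a,b\}$ inherit the WNU identities from $h_0$.

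To verify that $h$ preserves the ternary relation $R=\Cy(\ra{7}{65})$, I would argue columnwise. Given $T_1,T_2,T_3\in R$, call a coordinate $j$ a \emph{$c$-column} if some $T_i$ has $c$ in position $j$; then $h$ outputs $c$ exactly on the $c$-columns and applies $h_0$ on the others. By the observation above, if there is one $c$-column there are at least two, so the output either has no $c$ (and then all $T_i$ lie in $\Cy(\ra{3}{7})\subseteq R$, where $h_0$ preserves $R$) or has $c$ in $\ge 2$ positions, and every atom triple with at least two $c$'s is allowed because $c\circ c = 1$ and $r\circ c = c$ for $r\in\{\id,a,b\}$. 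Hence $h$ is a polymorphism of $\fAo$ and Lemma~\ref{lem:2or3-sym} gives $\NSP(\ra{7}{65})\in\Ptime$. The main obstacle is the bookkeeping of the allowed-triple set together with the $\ge 2$-occurrence lemma for $c$; once that is nailed down the columnwise verification and the reuse of $h_0$ make the rest essentially automatic.
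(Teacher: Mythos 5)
Your proof is correct and takes essentially the same approach as the paper: the paper defines exactly the same operation $h$ (outputting $c$ whenever some argument is $c$, and agreeing with the Theorem~\ref{thm:3_7} polymorphism on $\{\id,a,b\}$) and likewise concludes via Lemma~\ref{lem:2or3-sym}. Your ``every allowed triple containing $c$ has at least two occurrences of $c$'' lemma and the columnwise check just spell out the case analysis that the paper dismisses as ``easy to see by considering all cases.''
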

\begin{proof} 
Let $h$ be the ternary operation on the atom structure $\mathfrak A_0$ of $\ra{7}{65}$ which satisfies $h(c, \cdot, \cdot) = h(\cdot, c, \cdot) = h(\cdot, \cdot, c) = c$ and fulfils on $A_0\backslash\{c\}$ the condition specified in Proposition~\ref{prop:3_7}. By considering all cases, it is easy to see that $h$ is a near unanimity polymorphism of the atom structure of $\ra{7}{65}$. Thus, Lemma~\ref{lem:2or3-sym} shows that $\NSP(\ra{7}{65})$ is in $\Ptime$.
\end{proof}

The relation algebra $\ra{5}{37}$ has a normal representation, namely $({\mathbb T};r)[K^a_2]$. 

\begin{prop}\label{prop:5_37}
    $\NSP(\ra{5}{37})$ is in $\Ptime$.
\end{prop}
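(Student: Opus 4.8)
The plan is to apply Lemma~\ref{lem:2or3-sym}. Since $\ra{5}{37}=\ra{1}{2}[\ra{3}{3}]$ is a $2$-cycle product of two algebras with normal representations, Lemma~\ref{lem:2-cycle-properties}(iii) guarantees that $\ra{5}{37}$ has a normal, hence fully universal, representation, so it suffices to exhibit a single ternary weak near unanimity polymorphism of the atom structure $\fAo$. A binary symmetric one cannot exist: preserving the converse relation $E$ forces $m(r,\breve r)$ to be self-converse, i.e.\ to lie in $\{\id,a\}$, contradicting conservativity. So I would aim for a ternary WNU obtained by combining the two factor operations along the cycle decomposition.

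Write $e:=a\cup\id=1_{\ra{1}{2}}$ for the equivalence relation of $\ra{5}{37}$ (Remark~\ref{rem:equ_relation}) and let $\pi\colon A_0\to\{\id,r,\breve r\}$ be the atom-level collapse of the fiber, $\pi(\id)=\pi(a)=\id$, $\pi(r)=r$, $\pi(\breve r)=\breve r$. Let $g$ be the WNU of the atom structure of $\ra{3}{3}$ from Proposition~\ref{prop:can-bin-sym2} (a majority on $\{r,\breve r\}$) and let $h$ be the minority on $\{\id,a\}$, i.e.\ the affine operation modulo $2$, which preserves $\Cy(\ra{1}{2})$ and is the tractability witness underlying Proposition~\ref{prop:can-bin-sym}. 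I define
\[
f(u_1,u_2,u_3):=\begin{cases} g(\pi u_1,\pi u_2,\pi u_3) & \text{if } g(\pi u_1,\pi u_2,\pi u_3)\in\{r,\breve r\},\\ h(u_1,u_2,u_3) & \text{otherwise.}\end{cases}
\]
Because $g$ is conservative and returns $\id$ only when all its arguments are $\id$, the second clause applies exactly when $u_1,u_2,u_3\in\{\id,a\}$, so $f$ is well defined and conservative; that $f$ is a WNU and preserves $E$ follows from the corresponding properties of $g$ and $h$ together with $\pi(\breve u)=\breve{\pi u}$.

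The heart of the argument, and the step I expect to be most delicate, is that $f$ preserves $\Cy(\ra{5}{37})$. The clean way to organize this is to observe that $\pi$ maps allowed triples of $\ra{5}{37}$ to allowed triples of $\ra{3}{3}$ (it is the atom-level part of contracting the equivalence $e$; a direct check on the three families generating $\Cy(\ra{5}{37})$ in Definition~\ref{def:2cycl} confirms this), and that by construction $\pi(f(u_\bullet))=g(\pi u_\bullet)$ in each coordinate. Hence for allowed $T_1,T_2,T_3$ the $\pi$-image of the output triple is $g$ applied coordinatewise to the allowed $\pi$-images, which is allowed in $\ra{3}{3}$ since $g$ is a polymorphism. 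It then remains to check the finite case distinction that an allowed $\ra{3}{3}$-triple never lifts to a forbidden $\ra{5}{37}$-triple in the output: every triple of $\ra{5}{37}$ that contains an occurrence of $r$ or $\breve r$ and whose $\pi$-image is allowed is itself allowed, so the only forbidden liftings are triples lying entirely inside a fiber, i.e.\ inside $\{\id,a\}$. In that remaining case all output coordinates lie in $\{\id,a\}$, which by the previous observation forces all inputs into $\{\id,a\}$ as well, so $f$ acts coordinatewise as $h$, and the output is allowed because $h$ preserves $\Cy(\ra{1}{2})$. With $f$ established as a ternary WNU polymorphism of $\fAo$, Lemma~\ref{lem:2or3-sym} yields that $\NSP(\ra{5}{37})$ is in $\Ptime$.
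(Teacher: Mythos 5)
Your proof is correct. Both you and the paper establish the result the same way at the top level, namely by invoking Lemma~\ref{lem:2or3-sym} and exhibiting a ternary weak near unanimity polymorphism of the atom structure $\fAo$; the difference lies entirely in how that polymorphism is produced and verified. The paper writes down an ad hoc operation (a minority on both $\{a,\id\}$ and $\{r,\breve{r}\}$, with absorption rules for the mixed patterns) and leaves the check as ``one can confirm''; you instead exploit the decomposition $\ra{5}{37}=\ra{1}{2}[\ra{3}{3}]$, projecting along $\pi$, applying the $\ra{3}{3}$-polymorphism $g$ of Proposition~\ref{prop:can-bin-sym2}, and falling back to the fiber minority exactly when the projection collapses to $\id$. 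The resulting operations genuinely differ (yours restricts to a majority on $\{r,\breve{r}\}$, the paper's to a minority there), and your correctness argument — $\pi$ sends allowed triples to allowed triples, your $f$ commutes with $\pi$, and the only forbidden lifts of allowed $\ra{3}{3}$-triples lie inside the fiber $\{\id,a\}$, where $f$ acts as the parity-preserving minority — is a real proof of the polymorphism property rather than a deferred verification; I checked the lifting claim by enumerating the cycle classes of $\Cy(\ra{5}{37})$ and it holds. What your route buys is modularity: the same scheme works for any 2-cycle product whose factors have compatible conservative WNU polymorphisms of their atom structures, which is the pattern the paper uses only implicitly in Propositions~\ref{prop:1_37}, \ref{prop:8_37}, \ref{prop:12_37}, and~\ref{prop:19_65}; what the paper's route buys is brevity. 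One small inaccuracy, harmless to the argument: the minority $h$ is not the witness underlying Proposition~\ref{prop:can-bin-sym} (Figure~\ref{fig:pols} lists \emph{binary symmetric} operations there); but since $\Cy(\ra{1}{2})$ is exactly the mod-$2$ parity relation on $\{\id,a\}$, the fact that the minority preserves it is immediate and needs no citation.
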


\begin{proof}
    Let $f$ be the following ternary operation on the atom structure $\fA_0$ of $\ra{5}{37}\colon$
    \begin{itemize}
        \item $f$ is a minority operation on the edges $\{a,\id\}, \{r,\breve{r}\}$,
        \item $f(x,y,z) = r$ if $r \in \{x,y,z\} \subseteq \{a,r,\id\}$,
        \item $f(x,y,z) = \breve{r}$ if $\breve{r} \in \{x,y,z\} \subseteq \{a,\breve{r},\id\}$, and
        \item $f(\breve{d}, d, z) = f(\breve{d}, z, d) = f(z, \breve{d}, d) = d$ for $d \in \{r, \breve{r}\}$ and $z \in \{a, \id\}$.
    \end{itemize}
    One can confirm that $f$ is a weak near unanimity polymorphism of $\fA_0$. Thus, Lemma \ref{lem:2or3-sym} shows that $\NSP(\ra{5}{37})$ is in $\Ptime$.
\end{proof}

The relation algebra $\ra{6}{37}$ has the normal representation $(\T;r)[K^a_\omega]$. The relation algebra $\ra{22}{37}$ has the equivalence relation $a$. The consistent atomic $\ra{22}{37}$-networks can be seen as oriented graphs that do not embed the oriented graph $(\{0, 1, 2\}; \{(0, 1)\})$; this algebra has a normal representation too. The polymorphism given in the proof of the next proposition is similar to the one above.

\begin{prop}\label{prop:6_37-22_37}
    $\NSP(\ra{6}{37})$ and $\NSP(\ra{22}{37})$ are in $\Ptime$.
\end{prop}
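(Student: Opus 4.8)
The plan is to reuse the template of Proposition~\ref{prop:5_37}. Both $\ra{6}{37}$ and $\ra{22}{37}$ have normal, hence fully universal, representations (they occur in~\eqref{eq:norm_all}), so by Lemma~\ref{lem:2or3-sym} it suffices to produce, in each case, a ternary weak near unanimity polymorphism of the atom structure $\fAo$; then Theorem~\ref{thm:dichotomyConservative} and Proposition~\ref{prop:reductionAtomStructure} give $\NSP(\bA) \in \Ptime$.

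First I would record the products of the diversity atoms $a, r, \breve r$, since these determine $\Cy(\bA)$ and hence the constraints on the polymorphism. For $\ra{6}{37} = \ra{2}{2}[\ra{3}{3}]$ (Figure~\ref{fig:2-prod-4atom}) we have $a \circ a = a \cup \id$, $a \circ r = r$, $a \circ \breve r = \breve r$, $r \circ r = \breve r \circ \breve r = r \cup \breve r$, and $r \circ \breve r = 1$; so $a \cup \id$ is an equivalence relation and the representation is the blow-up $(\T;r)[K^a_\omega]$ of the tournament by infinite independent sets. For $\ra{22}{37}$ the same equivalence relation $a \cup \id$ is present, but the edges between two of its classes need not be oriented uniformly: here $a \circ r = a \circ \breve r = r \cup \breve r$, $r \circ r = \breve r \circ \breve r = a \cup r \cup \breve r$, and $r \circ \breve r = 1$. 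The only combinatorial differences relevant to the verification are that $(r,r,a)$ and $(a,r,\breve r)$ are forbidden in $\ra{6}{37}$ but allowed in $\ra{22}{37}$; on the set $\{r,\breve r\}$ alone all triples are allowed in both algebras.

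The polymorphism I would write down is the analogue of the one in Proposition~\ref{prop:5_37}, with one essential modification dictated by the tables above. In $\ra{5}{37}$ the pair $\{a,\id\}$ satisfies $a \circ a = \id$ and one uses a \emph{minority} there; here $a \circ a = a \cup \id$, and a minority is no longer a polymorphism (for instance it maps the allowed columns $(a,a,\id)$, $(a,a,a)$, $(\id,a,a)$ to the forbidden triple $(\id,a,\id)$). Instead I would let the operation behave as the idempotent \emph{semilattice} with $a$ absorbing $\id$ on $\{a,\id\}$, that is, the ternary operation obtained by iterating the binary symmetric polymorphism of the $\ra{2}{2}$-atom structure from Figure~\ref{fig:pols}. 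On the tournament part $\{r,\breve r\}$ I would reuse the weak near unanimity behaviour of the $\ra{3}{3}$-polymorphism of Proposition~\ref{prop:can-bin-sym2}, and on the genuinely mixed triples I would assign values exactly as in the $\ra{5}{37}$ recipe (giving priority to $r$ on triples with values in $\{a,r,\id\}$ and to $\breve r$ on triples with values in $\{a,\breve r,\id\}$, and setting $f(\breve d, d, z) = d$ for $d \in \{r,\breve r\}$ and $z \in \{a,\id\}$), choosing the behaviour on $\{a,r,\breve r\}$-triples according to whether $(r,r,a)$ is allowed. By construction $f$ is a weak near unanimity operation and it is self-conjugate, i.e.\ $\breve{f(x,y,z)} = f(\breve x, \breve y, \breve z)$, so it preserves $E^{\fAo}$; preservation of the unary relations $b^{\fAo}$ is automatic from conservativity.

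The main obstacle is purely the verification that $f$ preserves $R^{\fAo} = \Cy(\bA)$, i.e.\ that applying $f$ coordinatewise to any three allowed triples again yields an allowed triple. This is a finite check, but it must be carried out separately for the two algebras, because their tables for $a \circ r$ and $r \circ r$ differ and these are precisely the products controlling the dangerous outputs $(a,a,r)$, $(a,r,a)$, $(r,r,\id)$, and (for $\ra{6}{37}$ only) $(r,r,a)$ and $(a,r,\breve r)$. I expect the delicate cases to be those in which the three input columns mix an $a$-coordinate with an $r$- or $\breve r$-coordinate, since there the semilattice part and the tournament part interact; once the value table is fixed these reduce to a bounded enumeration. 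With preservation established we obtain $f \in \Pol(\fAo)$, and Lemma~\ref{lem:2or3-sym} yields that $\NSP(\ra{6}{37})$ and $\NSP(\ra{22}{37})$ are in $\Ptime$.
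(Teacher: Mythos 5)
Your proposal is correct and takes essentially the same route as the paper: the paper's proof also reuses the operation from Proposition~\ref{prop:5_37}, changing only the values on $(a,a,\id)$, $(a,\id,a)$, $(\id,a,a)$ to $a$ (i.e.\ the $a$-absorbing semilattice behaviour on $\{a,\id\}$ that you propose), and then invokes Lemma~\ref{lem:2or3-sym}. Your one deviation — a majority rather than the paper's minority on $\{r,\breve{r}\}$ — is immaterial: since every allowed triple of $\ra{6}{37}$ and $\ra{22}{37}$ has zero, two, or three entries in $\{r,\breve{r}\}$, any conservative, self-conjugate WNU behaviour on that pair combines with your remaining clauses to give a polymorphism, so both choices pass the finite verification (and the same operation works for both algebras, so your hedge about $(r,r,a)$ is unnecessary).
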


\begin{proof}
    Let $f$ be the function defined as in Proposition~\ref{prop:5_37} with the exception that $(a,a,\id), (a,\id,a)$, and $(\id,a,a)$ are mapped to $a$ instead of $\id$.
    One can confirm that $f$ is a weak near unanimity polymorphism of the atom structures of both $\ra{6}{37}$ and $\ra{22}{37}$. Thus, Lemma~\ref{lem:2or3-sym} shows that $\NSP(\ra{6}{37})$ and $\NSP(\ra{22}{37})$ are in $\Ptime$.
\end{proof}

The relation algebra $\ra{12}{37}$ has a normal representation, namely $K^a_\omega[{\mathbb T}]$. 

\begin{prop}\label{prop:12_37}
   $\NSP(\ra{12}{37})$ is in $\Ptime$.
\end{prop}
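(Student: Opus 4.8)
The plan is to apply Lemma~\ref{lem:2or3-sym}: since $\ra{12}{37}$ has a normal (hence fully universal) representation, namely $K^a_\omega[\mathbb{T}]$, it suffices to exhibit a ternary weak near unanimity polymorphism of its atom structure $\fAo$. Here $\fAo$ has the four atoms $\id, r, \breve{r}, a$, where $r, \breve{r}$ are the asymmetric tournament atoms living \emph{inside} the classes of the equivalence relation $r \cup \breve{r} \cup \id$ (the copies of $\mathbb{T}$), while $a$ is the symmetric atom relating points in \emph{distinct} classes. This reflects the identity $\ra{12}{37} = \ra{3}{3}[\ra{2}{2}]$, so the natural candidate is a combination of the tournament operation on $\{\id, r, \breve{r}\}$ with the symmetric operation of $\ra{2}{2}$ on $\{\id, a\}$.

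First I would record the decisive structural feature of $\Cy(\ra{12}{37})$: the atom $a$ occurs either zero times or at least twice in every allowed triple. This is immediate from $\id \circ a = r \circ a = \breve{r} \circ a = a$ (so any triple with $a$ in one of the first two coordinates and a non-$a$ third coordinate is forbidden) together with $s \circ t \subseteq \{\id, r, \breve{r}\}$ for all $s,t \in \{\id, r, \breve{r}\}$ (so any triple $(s,t,a)$ with $s,t$ non-$a$ is forbidden). Conceptually this just says that ``same class'' is transitive, so among any three points the number of $a$-edges cannot be exactly one.

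Then I would define $f \colon A_0^3 \to A_0$ by making $a$ absorbing and otherwise falling back to the tournament operation: set $f(x_1,x_2,x_3) := a$ if $a \in \{x_1,x_2,x_3\}$, and $f(x_1,x_2,x_3) := g(x_1,x_2,x_3)$ otherwise, where $g$ is the ternary WNU polymorphism of the atom structure of $\ra{3}{3}$ from Proposition~\ref{prop:can-bin-sym2}. That $f$ is conservative, commutes with converse (as $\breve{a} = a$ and $g$ already commutes with converse), and is a WNU operation is routine: if $a \in \{x,y\}$ then $f(y,x,x) = f(x,y,x) = f(x,x,y) = a$, and otherwise the three values agree because $g$ is a WNU on $\{\id, r, \breve{r}\}$.

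The one substantive step, which I expect to be the main obstacle, is checking that $f$ preserves $R = \Cy(\ra{12}{37})$. Given a $3 \times 3$ matrix whose rows are allowed triples, let $S \subseteq \{1,2,3\}$ be the set of columns containing an $a$; then the output has $a$ exactly in the coordinates indexed by $S$. If $S = \varnothing$, the whole computation stays inside $\{\id, r, \breve{r}\}$ and preservation of $\Cy(\ra{3}{3})$ by $g$ finishes it; if $|S| = 3$ the output is $(a,a,a) \in R$; and if $|S| = 2$ the output is a permutation of $(a,a,\ast)$ with $\ast \in \{\id, r, \breve{r}\}$, all of which lie in $R$. The case $|S| = 1$ is exactly where the structural observation is used: a column containing $a$ arises from some row containing $a$, and by the observation that row contains a \emph{second} $a$, forcing another column into $S$; hence $|S| = 1$ cannot occur. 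This exhausts all cases, so $f$ is a polymorphism of $\fAo$, and Lemma~\ref{lem:2or3-sym} yields $\NSP(\ra{12}{37}) \in \Ptime$.
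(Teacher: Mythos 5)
Your proof is correct and follows essentially the same route as the paper: both exhibit a ternary weak near unanimity polymorphism of the atom structure $\fAo$ and then invoke Lemma~\ref{lem:2or3-sym}. The only difference is the concrete witness---the paper's operation is a minority on $\{r,\breve{r}\}$ with an ad hoc rule on the permutations of $\{\id,r,\breve{r}\}$, whereas yours makes $a$ absorbing and reuses the $\ra{3}{3}$ polymorphism from Proposition~\ref{prop:can-bin-sym2}---and both witnesses work; your verification via the observation that $a$ occurs zero or at least two times in every allowed triple is a valid (and more detailed) justification than the paper provides.
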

\begin{proof}
Let $f$ be the ternary operation on the atom structure $\mathfrak A_0$ of $\ra{12}{37}$ which satisfies
\begin{itemize}
    \item $f(\id, \id, \id) = \id$,
    \item $f(x, y, z) = a$ if $a \in \{x, y, z\}$,
    \item $f$ is a minority operation on the edge $\{r, \breve{r}\}$,
    \item $f(\breve{d}, d, \id) = f(\breve{d}, \id, d) = f(\id, \breve{d}, d) = d$ for $d \in \{r, \breve{r}\}$, and
    \item $f(x, y, z) = d$ if $\{x, y, z\} = \{d, \id\}$ for $d \in \{r, \breve{r}\}$.
\end{itemize}
Then $f$ is a weak near unanimity polymorphism of the atom structure of $\ra{12}{37}$. Thus, Lemma~\ref{lem:2or3-sym} shows that $\NSP(\ra{12}{37})$ is in $\Ptime$.
\end{proof}

The relation algebra $\ra{19}{65}$ has the equivalence relation $b \cup \id$ with infinitely many classes of size two. Since also the triple $(a,b,c)$ is forbidden, it is easy to see that the normal representation of $\ra{19}{65}$ is the cycle product representation ${\mathbb R}^{c,a}[K_2^b]$.  

\begin{prop}\label{prop:19_65}
    $\NSP(\ra{19}{65})$ is in $\Ptime$. 
\end{prop}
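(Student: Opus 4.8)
The plan is to follow the same route as the other propositions of this subsection: since $\ra{19}{65}$ has a normal, hence fully universal, representation (it appears in~\eqref{eq:norm_all}, with normal representation the cycle product $\R^{c,a}[K_2^b]$), it suffices by Lemma~\ref{lem:2or3-sym} to exhibit a \emph{single} ternary weak near unanimity polymorphism of the atom structure $\fAo$. Recall the atoms are $\id,a,b,c$, that $b\cup\id$ is an equivalence relation whose classes have size two, and that the only ``genuinely new'' forbidden triple is $(a,b,c)$. Writing the allowed triples up to the permutation symmetry of $\Cy(\ra{19}{65})$, the allowed diversity triples are exactly $\{a,a,a\},\{a,a,c\},\{a,c,c\},\{c,c,c\}$ (inherited from $\ra{7}{7}$) together with $\{a,a,b\},\{c,c,b\}$ (coming from $b\cup\id$ being an equivalence relation); in particular every forbidden triple contains $\id$ or $b$.

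I would then define $f\colon A_0^3\to A_0$ to depend only on the multiset of its arguments, by two cases: (i) if at least one argument lies in $\{a,c\}$, let $f$ output $a$ when the number of $a$'s among the arguments is at least the number of $c$'s, and $c$ otherwise; (ii) if all three arguments lie in $\{b,\id\}$, let $f$ be the minority operation there, outputting $b$ when the number of $b$'s is odd and $\id$ otherwise. Being invariant under permutations of its arguments, $f$ is automatically a WNU, and it is visibly conservative. On the pair $\{b,\id\}$ it is forced to be a minority, because that pair carries the affine relation of $K_2^b$; on $\{a,\id\}$ and $\{c,\id\}$ it returns $a$ (resp.\ $c$) unless all three arguments equal $\id$; and on $\{a,c\}$ it is a majority.

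The substantive step is to verify that $f$ preserves $\Cy(\ra{19}{65})$, i.e.\ that it never maps three allowed triples to a forbidden one. I would organise this by the number of output coordinates lying in $\{b,\id\}$, using that such a coordinate must arise from a column all of whose entries lie in $\{b,\id\}$. Two observations drive the argument: first, \emph{no} allowed triple has two coordinates in $\{b,\id\}$ and one in $\{a,c\}$, since the composition of two atoms of $b\cup\id$ again lies in $b\cup\id$; this excludes every forbidden triple with at least two $\{b,\id\}$-coordinates apart from the all-$\{b,\id\}$ ones, which in turn are ruled out by a parity count (each input row is an allowed triple over $\{b,\id\}$ and so has an even number of $b$'s, whereas the forbidden output patterns force an odd total). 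The remaining forbidden triples $\{\id,a,c\}$ and $(a,b,c)$ have exactly one $\{b,\id\}$-coordinate; fixing that coordinate, one checks that an allowed triple with a $\{b,\id\}$-entry there has its other two entries equal (or both in $\{b,\id\}$), so the $a$- and $c$-counts in the remaining two columns coincide and $f$ returns the same value in both — it cannot output $a$ in one and $c$ in the other. This last point, which is precisely where the forbidden triple $(a,b,c)$ is eliminated, is the main obstacle; as elsewhere in the paper, the finitely many cases can alternatively be discharged by a computer check. Once $f$ is verified to be a ternary WNU polymorphism of $\fAo$, Lemma~\ref{lem:2or3-sym} yields that $\NSP(\ra{19}{65})$ is in $\Ptime$.
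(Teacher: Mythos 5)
Your proposal is correct and follows essentially the same route as the paper: the paper also exhibits a single conservative ternary symmetric (hence WNU) polymorphism of the atom structure $\fAo$ and invokes Lemma~\ref{lem:2or3-sym}, with the paper's operation being ``output $a$ whenever $a$ appears, minority on $\{b,\id\}$, and $c$ otherwise''. Your operation differs from the paper's only on arguments of multiset type $\{a,c,c\}$ (you output $c$, the paper outputs $a$), and your case analysis by the number of output coordinates in $\{b,\id\}$ — parity for the all-$\{b,\id\}$ case, impossibility of exactly two, and equality of the two $\{a,c\}$-outputs in the one-coordinate case — is a valid (and more detailed) verification than the paper's unelaborated claim.
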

\begin{proof} 
Let $f$ be the ternary operation on the atom structure $\mathfrak A_0$ of $\ra{19}{65}$ such that
\begin{itemize}
    \item $f(x, y, z) = a$ if $a \in \{x, y, z\}$,
    \item $f$ is a minority operation on the edge $\{b, \id\}$, and
    \item $f(x, y, z) = c$ otherwise.
\end{itemize}
Then $f$ is a weak near unanimity polymorphism of the atom structure of $\ra{19}{65}$. Thus, Lemma~\ref{lem:2or3-sym} shows that $\NSP(\ra{19}{65})$ is in $\Ptime$.
\end{proof}

\begin{remark}
    We mention (without proof) that the $\CSP$ of the atom structure of every relation algebra with at most four atoms not mentioned in this section is $\NP$-complete. To check this we used a computer program to verify the conditions given in Theorem~\ref{thm:dichotomyConservative}. 
\end{remark}

\subsection{Tractability via Path Consistency} 
\label{sect:PCworks}
In this section we prove that the network satisfaction problem for the algebras $\ra{1}{37}$, $\ra{2}{37}$, and $\ra{8}{37}$ can be solved by the path consistency procedure (Section~\ref{sect:PC}), and hence are polynomial-time tractable. 

Let $N = (V,f)$ be an $\bA$-network. 
Recall from Remark~\ref{rem:PC-sound} that if the path consistency procedure returns `unsatisfiable', then $N$ is unsatisfiable. 
We may therefore suppose that $f(u,v) \neq 0$ for all $u,v \in V$ and that $N$ is path consistent.

\begin{remark}\label{rem:id}
Note that if $u, v \in V$ are such that $f(u, v) = \id$, then $f(u, w) = f(v, w)$ for every other $w \in V$; we may therefore replace all occurrences of $v$ by $u$ and remove $v$ from the instance. The resulting network is still path consistent. We may thus assume that $f(u,v) \neq \id$ for all $u,v$.
\end{remark}

We already discussed a certain equivalence relation which arises in 2-cycle products of relation algebras in Remark~\ref{rem:equ_relation}. 
It inspires the following lemma which provides useful structural information about networks which are path consistent.

\begin{lemma}\label{lem:path-equivalence}
Let $\bA$ and $\bB$ be two finite integral relation algebras such that $A_0 \cap B_0 = \{\id\}$. Then for every $\bA[\bB]$-network $N = (V, f)$ which is path consistent it holds that
    \[ u \sim v :\iff f(u,v) \leq 1_\bA \]
is an equivalence relation on $V^2$. Moreover, if $u,v \in C_1$ and $x,y \in C_2$ are in two distinct equivalence classes and $c \not\leq f(u,x)$ for some $\id \neq c \in B_0$, then $c\not\leq f(v,y)$. 
\end{lemma}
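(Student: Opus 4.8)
The plan is to reduce everything to two facts about the cycle product $\bC := \bA[\bB]$. The first is that $1_\bA$ is an equivalence relation of $\bC$ (Remark~\ref{rem:equ_relation}), so that $\id \leq 1_\bA$, $\breve{1_\bA} = 1_\bA$, and $1_\bA \circ 1_\bA \leq 1_\bA$. The second is a composition computation, namely that the only allowed triple of the form $(a,c,\cdot)$ with $a \in A_0$ and $c \in B_0 \setminus \{\id\}$ is $(a,c,c)$. Since $1_\bA = \bigcup_{a \in A_0} a$, the atoms below $1_\bA$ are exactly the atoms of $A_0$, and no diversity atom of $\bB$ lies below $1_\bA$.

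For the first assertion I would verify the three defining properties of an equivalence relation on $V$ directly from path consistency. \emph{Reflexivity} is immediate, since path consistency gives $f(u,u) \leq \id \leq 1_\bA$, so $u \sim u$. For \emph{transitivity}, if $f(u,v) \leq 1_\bA$ and $f(v,w) \leq 1_\bA$, then path consistency, monotonicity of $\circ$, and transitivity of $1_\bA$ together yield
\[ f(u,w) \leq f(u,v) \circ f(v,w) \leq 1_\bA \circ 1_\bA \leq 1_\bA. \]
\emph{Symmetry} follows from the convention that networks satisfy $f(v,u) = \breve{f(u,v)}$ together with $\breve{1_\bA} = 1_\bA$: if $f(u,v) \leq 1_\bA$, then $f(v,u) = \breve{f(u,v)} \leq \breve{1_\bA} = 1_\bA$. (This converse relation is genuinely needed: without it a two-vertex network can fail symmetry of $\sim$ while remaining path consistent.)

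The core of the argument is to establish that diversity atoms of $\bB$ are absorbing under composition with $1_\bA$, i.e.\ that $a \circ c = c \circ a = c$ for every $a \in A_0$ and every $c \in B_0 \setminus \{\id\}$. The case $a = \id$ is trivial; for $a \in A_0 \setminus \{\id\}$, Definition~\ref{def:2cycl} gives $(a,c,c) \in \Cy(\bC)$, hence $c \leq a \circ c$, while inspecting $\Cy(\bC)$ via the cycle law (Proposition~\ref{prop:cyclelaw}) shows that no triple $(a,c,z)$ with $z \neq c$ is allowed: there is no allowed triple of shape $(\text{$\bA$-atom},\text{$\bB$-atom},\text{$\bA$-atom})$ or $(\text{$\bA$-atom},\text{$\bB$-atom},\id)$, and the only mixed triple of shape $(\text{$\bA$-atom},\text{$\bB$-atom},\text{$\bB$-atom})$ has equal second and third entries. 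Thus $a \circ c = c$, and $c \circ a = c$ follows symmetrically. Consequently $1_\bA \circ c = c \circ 1_\bA = c$, and for an arbitrary element $d$ the element $1_\bA \circ d \circ 1_\bA$ has exactly the same $\bB$-diversity atoms below it as $d$: the part of $d$ lying below $1_\bA$ stays inside $1_\bA$ by transitivity and contributes no $\bB$-diversity atom, whereas each $\bB$-diversity atom $c \leq d$ contributes $1_\bA \circ c \circ 1_\bA = c$.

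With this in hand the \emph{moreover} clause is a short deduction. Take distinct classes $C_1, C_2$, vertices $u,v \in C_1$ and $x,y \in C_2$, and argue the contrapositive: assume $c \leq f(v,y)$ for some $c \in B_0 \setminus \{\id\}$. Two applications of path consistency give $f(v,y) \leq f(v,u) \circ f(u,x) \circ f(x,y)$, and since $u \sim v$ and $x \sim y$ we have $f(v,u), f(x,y) \leq 1_\bA$, whence $f(v,y) \leq 1_\bA \circ f(u,x) \circ 1_\bA$. As $c$ is a $\bB$-diversity atom, the preceding paragraph forces $c \leq f(u,x)$, which is exactly the claim $c \leq f(v,y) \Rightarrow c \leq f(u,x)$. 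The step I expect to be the most delicate is the composition computation $a \circ c = c$: this is where the specific shape of $\Cy(\bA[\bB])$ is needed to eliminate every spurious atom, and getting the cycle-law bookkeeping right is the crux, whereas the equivalence-relation verification and the final deduction are then routine.
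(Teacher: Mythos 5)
Your proposal is correct and takes essentially the same route as the paper's proof: both arguments rest on path consistency together with the absorption fact that a diversity atom of $\bB$ lies below a composition with elements of $\bA$ only if it lies below the $\bB$-part of the other factor (the paper states this as $c \leq \alpha \circ \beta \iff c \leq \beta$ for $\alpha \in A$, $\beta \in B$, and applies it in two one-sided steps, first passing from $f(u,x)$ to $f(u,y)$ and then to $f(v,y)$, whereas you package the same fact as the two-sided sandwich $f(v,y) \leq 1_\bA \circ f(u,x) \circ 1_\bA$ and argue the contrapositive). Your explicit treatment of the equivalence-relation part is a genuine, if minor, improvement over the paper's ``it is clear'': you rightly observe that symmetry of $\sim$ requires the convention $f(v,u) = \breve{f(u,v)}$, which the paper's definition of a network does not state but which the paper tacitly relies on both elsewhere (e.g.\ in the proof of Proposition~\ref{prop:1_37}) and in this very proof, where the step ``this is false by assumption'' converts the hypothesis $c \not\leq f(u,x)$ into $\breve{c} \not\leq f(x,u)$.
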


\begin{proof}
    It is clear that $\sim$ is an equivalence relation. We first show that $c\not\leq f(u,y)$. We point out that for all $\alpha \in A$ and $\beta \in B$ we have 
    $c \leq \alpha \circ \beta$
    if and only if $c \leq \beta$. By the path consistency algorithm we have 
    \begin{align*}
        f(y,u) \leq f(y,x) \circ f(x,u) 
        &\leq \bigcup_{a \in A_0}a \circ \left(\bigcup_{\substack{a \leq f(x,u) \\ a \in A_0}}a \cup \bigcup_{\substack{b \leq f(x,u) \\ b \in B_0}}b \right) \\
        &\leq \bigcup_{a \in A_0}a \cup \left( \bigcup_{a \in A_0}a \circ \bigcup_{\substack{b \leq f(x,u) \\ b \in B_0}}b \right).
    \end{align*}
    So we have $\breve{c} \leq f(y,u)$ if and only if
    \[ \breve{c} \leq \bigcup_{\substack{b \leq f(x,u) \\ b \in B_0}}b. \]
    But this is false by assumption. So $\breve{c} \not\leq f(y,u)$ and therefore $c \not\leq f(u,y)$. 
    Similarly, one can show that $c\not\leq f(v,y)$. 
\end{proof}

The relation algebra $\ra{1}{37}$ has a normal representation, namely ${\mathbb Q}[K_2^a]$.

\begin{prop}\label{prop:1_37}
   Path Consistency solves $\NSP(\ra{1}{37})$.
\end{prop}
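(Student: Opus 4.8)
The plan is to establish the completeness direction: every path-consistent $\ra{1}{37}$-network $N=(V,f)$ with no $0$-edge is satisfiable in the normal representation $\mathbb{Q}[K_2^a]$; combined with the soundness of path consistency (Remark~\ref{rem:PC-sound}) this gives that $N$ is satisfiable if and only if the algorithm does not return `unsolvable'. As preprocessing I would apply Remark~\ref{rem:id} to assume $f(u,v)\neq\id$ for all distinct $u,v$, and record that $\ra{1}{37}=\ra{1}{2}[\ra{1}{3}]$: its diversity atoms are the symmetric atom $a$ (from $\ra{1}{2}$) together with $r,\breve{r}$ (from the point algebra $\ra{1}{3}$), and $1_{\ra{1}{2}}=a\cup\id$ is the $2$-cycle-product equivalence relation.

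First I would invoke Lemma~\ref{lem:path-equivalence} with $\bA=\ra{1}{2}$ and $\bB=\ra{1}{3}$: the relation $u\sim v:\iff f(u,v)\le a\cup\id$ is an equivalence relation on $V$, and for two distinct $\sim$-classes the set of atoms from $\{r,\breve{r}\}$ lying below $f(u,v)$ does not depend on the chosen representatives. This splits $N$ into a within-class part (a $\ra{1}{2}$-problem in each class) and a between-class part (a point-algebra $\ra{1}{3}$-problem on the classes). For the within-class part I would note that the edges labelled exactly $a$ form a matching: if $u$ had two distinct exact-$a$ neighbours $v,w$ in its class, path consistency would force $f(v,w)\le a\circ a=\id$, hence $f(v,w)=0$ after reduction, a contradiction. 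I would therefore view each class as a disjoint union of matched pairs and isolated vertices, assign the two vertices of a pair the values $0$ and $1$ and every other vertex the value $0$, and check (using $a\circ a=\id$) that all same-class edges, labelled $a$ or $a\cup\id$, are satisfied inside a fibre $\{(p,0),(p,1)\}$ over a common rational $p$.

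For the between-class part I would build a digraph $D$ on the classes, with an arc $C\to C'$ exactly when the (representative-independent) $\ra{1}{3}$-label between them contains $r$ but not $\breve{r}$ (a forced `$<$'); arcs carrying both $r$ and $\breve{r}$ are left as a free `$\neq$'. The heart of the argument is that $D$ is acyclic. I would prove this by first computing $(a\cup\id\cup r)^2=a\cup\id\cup r$ in $\ra{1}{37}$, so that a forced-$<$ label, which lies below $a\cup\id\cup r$, is closed under composition: propagating a hypothetical cycle $C_0\to\cdots\to C_{m-1}\to C_0$ through path consistency yields $\breve{r}\not\le f(u_0,u_{m-1})$, whereas the closing arc $C_{m-1}\to C_0$ forces $r\le f(u_{m-1},u_0)$ and hence $\breve{r}\le f(u_0,u_{m-1})$, a contradiction. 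Given acyclicity I take a linear extension of $D$ and embed it into $\mathbb{Q}$ using distinct rationals; distinctness is both automatic and exactly what is required, since two different $\sim$-classes must occupy different $\ra{1}{3}$-coordinates in $\mathbb{Q}[K_2^a]$.

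Finally I would assemble $s(u):=(p_{[u]},t(u))$ and verify edge by edge that it satisfies $N$ in $\mathbb{Q}[K_2^a]$: same-class edges are handled by the matching colouring, while a between-class edge produces $r$ or $\breve{r}$ according to the order of the two rationals, which is $\le f(u,v)$ because the corresponding label was either forced in that direction or free. I expect the main obstacle to be the acyclicity of $D$, together with checking that the non-strict behaviour hidden in labels such as $\id\cup r$ really collapses to a strict `$<$' (forced by class-distinctness); the transitivity identity $(a\cup\id\cup r)^2=a\cup\id\cup r$ is the computational fact that makes path consistency strong enough to exclude the offending cycles, and the bulk of the remaining work is the routine verification of the assembled assignment.
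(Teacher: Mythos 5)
Your proposal is correct and follows essentially the same route as the paper's proof: the same equivalence-class decomposition coming from the $2$-cycle-product structure of $\ra{1}{37}$ (labels below $a \cup \id$, with Lemma~\ref{lem:path-equivalence} making the between-class labels representative-independent), the same key identity $(a \cup r \cup \id) \circ (a \cup r \cup \id) = a \cup r \cup \id$ used to get an acyclic/partial order on the classes, and an arbitrary linear extension to finish. The only differences are presentational: the paper produces a consistent atomic refinement and invokes full universality (citing that path consistency solves $\NSP(\ra{1}{2})$ within classes), whereas you build the satisfying assignment into $\Q[K_2^a]$ directly, with an explicit matching argument (via $a \circ a = \id$) inside each class.
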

\begin{proof} 
    Let $N = (V, f)$ be a path-consistent $\ra{1}{37}$-network such that $f(x,y) \neq 0$ for all $x,y \in V$.
    Since $\ra{1}{37}$ has a fully universal representation, it suffices to show that $N$ has a solution $(V,g)$ when
    viewed as an instance of $\NCP(\ra{1}{37})$.  
    By Remark~\ref{rem:id} we may assume that $f(x,y) \neq \id$ for all $x,y \in V$.
    
    First, observe the set of all pairs $(u,v) \in V^2$ such that $f(u,v) \leq a \cup \id$ is an equivalence relation, which we denote by $\sim$.
    For each equivalence class $C$, we consider the subinstance $(C, f|_{C^2})$, which can be viewed as an instance of $\NSP(\ra{1}{2})$.
    Since $\ra{1}{2}$ can be solved by PC, and $N$ is path consistent, each of these subinstances has a solution $g_C$.

    We want to extend these solutions to the whole network. Let $u,v \in V$ be from different classes. If $f(u,v) = 1$ for all such $u$ and $v$, then 
    
    we may pick any linear order on the components and define $g(u,v) := r$ if the component of $u$ is smaller than the component of $v$ with respect to this order.
    So assume there are $u$ and $v$ such that $f(u,v) \neq 1$. We set $g(u,v) := r$ for all $u,v$ such that $f(u,v) \leq a \cup r \cup \id$. Since $f(u,v) = f(v,u)\breve{\phantom{o}}$, the assignment is well-defined.
    By Lemma~\ref{lem:path-equivalence}, if $u,v \in C_1$ and $x,y \in C_2$ are in two distinct equivalence classes of $\sim$, then $f(u,x) = f(v,y)$. 

    We define a partial order on the classes of $\sim$ by setting $C_1 \leq C_2$ if there exists $u \in C_1$ and $v \in C_2$ such that $g(u,v) = r$. To see that this relation is transitive, suppose that $g(u,v) = r$
    and $g(v,w) = r$. Then, since the network is path consistent, we have
    \begin{align*}
        f(u,w) &\leq f(u,v) \circ f(v,w) \\
        &\leq (a \cup r \cup \id) \circ (a \cup r \cup \id) = a \cup r \cup \id;
    \end{align*}
    hence, we already assigned $g(u,w) = r$.
    
    For the remaining pairs $(u,v) \in V^2$ where $f(u,v)$ is not yet an atom, we have $r \cup  \breve{r} \leq f(u, v)$. Hence, we can arbitrarily extend the partial order on the classes to a total order, and set $g(u,v) := r$ if the component of $u$ is smaller than the component of $v$ with respect to this order.
    The resulting atomic network $(V,g)$ is a solution of $(V,f)$.
\end{proof}

The relation algebra $\ra{2}{37}$ has a normal representation, namely ${\mathbb Q}[K^a_{\omega}]$. 

\begin{prop}\label{prop:2_37}
   Path Consistency solves $\NSP(\ra{2}{37})$.
\end{prop}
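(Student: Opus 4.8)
The plan is to mirror the argument used for $\ra{1}{37}$ in Proposition~\ref{prop:1_37}, exploiting the fact that $\ra{2}{37} = \ra{2}{2}[\ra{1}{3}]$ differs from $\ra{1}{37} = \ra{1}{2}[\ra{1}{3}]$ only in the inner (within-class) factor: the outer order algebra $\ra{1}{3}$ is unchanged, and the inner factor passes from $\ra{1}{2}$ to $\ra{2}{2}$, so the normal representation is $\Q[K_\omega^a]$ instead of $\Q[K_2^a]$. Since $\ra{2}{37}$ has a normal (hence fully universal) representation, by Lemma~\ref{lem:nsp-ncp} it suffices to show that every path-consistent $\ra{2}{37}$-network $N = (V,f)$ with $f(x,y) \neq 0$ everywhere admits a solution viewed as an instance of $\NCP(\ra{2}{37})$. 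Using Remark~\ref{rem:id} I would first contract all $\id$-edges, so that $f(x,y) \neq \id$ for $x \neq y$.

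Next I would set $u \sim v$ iff $f(u,v) \leq a \cup \id = 1_{\ra{2}{2}}$; by Lemma~\ref{lem:path-equivalence} (applied with $\bA = \ra{2}{2}$, $\bB = \ra{1}{3}$) this is an equivalence relation on $V$, and the between-class behaviour is well-defined in the sense that if $u,v$ lie in one class and $x,y$ in another, then for each $c \in \{r,\breve{r}\}$ we have $c \leq f(u,x)$ iff $c \leq f(v,y)$. On each class $C$ the restriction $(C, f|_{C^2})$ is an $\ra{2}{2}$-network, and I would simply put $g(u,v) := a$ for all distinct $u,v \in C$ (the only within-class triples are $(a,a,a)$ and $(a,a,\id)$, both allowed since $a$ is flexible in $\ra{2}{2}$). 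For the between-class edges I would copy the $\ra{1}{37}$ recipe verbatim: assign $g(u,v) := r$ whenever $f(u,v) \leq a \cup r \cup \id$ (i.e.\ when $\breve{r}$ is unavailable), define a relation on the $\sim$-classes by $C_1 \leq C_2$ iff some cross-edge was assigned $r$, verify it is a partial order using path consistency (if $g(u,v)=g(v,w)=r$ then $f(u,w) \leq (a\cup r\cup\id)\circ(a\cup r\cup\id) = a\cup r\cup\id$, forcing $g(u,w)=r$), extend it to a total order, and orient all remaining cross-edges (those with $r \cup \breve{r} \leq f(u,v)$) according to this order.

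The step that genuinely requires care, and where this proof departs from the $\ra{1}{37}$ case, is checking consistency of the resulting atomic network $(V,g)$. In $\ra{1}{37}$ the triple $(a,a,a)$ is \emph{forbidden}, so $\sim$-classes never contain three mutually $a$-related vertices; here $a$ is flexible, so classes may be large and dense with $a$-edges, and I must confirm that assigning a single between-class atom to \emph{all} cross-edges between two fixed classes is compatible with this dense within-class structure. Concretely, the delicate triples are the mixed ones: for $u,u'\in C_1$ and $v\in C_2$ the triple $(u,u',v)$ reads $(a,r,r)$, while for $u\in C_1$ and $v,v'\in C_2$ the triple $(u,v,v')$ reads $(r,a,r)$. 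Both lie in $\Cy(\ra{2}{37})$ — the first is exactly a mixed triple $(a,b,b)$ of the $2$-cycle product from Definition~\ref{def:2cycl}, and the second follows from it by the cycle law (Proposition~\ref{prop:cyclelaw}, using $r \leq r \circ a \iff a \leq \breve{r} \circ r = 1$) — so no obstruction arises. The only other triples are purely within one class (reducing to allowedness of $(a,a,a)$) or spanning three distinct classes (reducing to transitivity of the linear order, i.e.\ allowedness of $(r,r,r)$). I would record these triple checks and conclude that $(V,g)$ is a consistent atomic refinement of $N$ with $g \leq f$, so $N$ is a positive instance of $\NCP(\ra{2}{37})$ and, by full universality, of $\NSP(\ra{2}{37})$; together with the soundness of path consistency (Remark~\ref{rem:PC-sound}) this shows that path consistency solves $\NSP(\ra{2}{37})$.
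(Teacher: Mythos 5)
Your proposal is correct and follows essentially the same route as the paper, which proves Proposition~\ref{prop:2_37} by repeating the argument of Proposition~\ref{prop:1_37} with the equivalence classes of $a \cup \id$ now viewed as instances of $\NSP(\ra{2}{2})$ rather than $\NSP(\ra{1}{2})$. Your only deviation is cosmetic: instead of invoking that path consistency solves $\NSP(\ra{2}{2})$ on each class, you observe directly that assigning $a$ to all within-class edges works (valid here since $(a,a,a)$ is allowed in $\ra{2}{37}$), and your explicit verification of the mixed triples $(a,r,r)$ and $(r,a,r)$ and of the composition $(a\cup r\cup\id)\circ(a\cup r\cup\id)=a\cup r\cup\id$ matches what the paper leaves implicit.
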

\begin{proof} 
    The proof is almost the same as for Proposition~\ref{prop:1_37}, with the only difference that we view the equivalence classes of $a \cup \id$ as instances of $\NSP(\ra{2}{2})$ instead of $\NSP(\ra{1}{2})$.
\end{proof} 

The relation algebra $\ra{8}{37}$ has a normal representation, namely $K^a_\omega[{\mathbb Q}]$. 

\begin{prop}\label{prop:8_37}
   Path Consistency solves $\NSP(\ra{8}{37})$.
\end{prop}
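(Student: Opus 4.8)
The plan is to follow the same scheme as the proofs of Proposition~\ref{prop:1_37} and Proposition~\ref{prop:2_37}, exploiting that $\ra{8}{37} = \ra{1}{3}[\ra{2}{2}]$, so that its normal representation is $K_\omega^a[\mathbb{Q}]$. Since $\ra{8}{37}$ has a normal, hence fully universal, representation, by Lemma~\ref{lem:nsp-ncp} it suffices to show that every path-consistent $\ra{8}{37}$-network $N=(V,f)$ with $f(x,y)\neq 0$ everywhere admits a solution as an instance of $\NCP(\ra{8}{37})$. Applying Remark~\ref{rem:id}, I may also assume $f(u,v)\neq\id$ for all distinct $u,v$. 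By Remark~\ref{rem:equ_relation} the element $e := 1_{\ra{1}{3}} = r\cup\breve{r}\cup\id$ is an equivalence relation of $\ra{8}{37}$, and by Lemma~\ref{lem:path-equivalence} (with $\bA=\ra{1}{3}$ and $\bB=\ra{2}{2}$) the relation $u\sim v :\iff f(u,v)\leq e$ is an equivalence relation on $V$. The essential difference from the two earlier algebras is that here the point-algebra part sits \emph{inside} the $\sim$-classes, while between classes only the atom $a$ can occur.

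Within a class $C$, every label satisfies $f(u,v)\subseteq\{r,\breve{r},\id\}$, so $(C,f|_{C^2})$ is a point-algebra network. I would build a solution on $C$ by declaring $u\prec v$ whenever $u\neq v$ and $f(u,v)\subseteq\{r,\id\}$. This $\prec$ is a strict partial order: antisymmetry uses that $u\prec v$ and $v\prec u$ would give $f(u,v)\subseteq\{r,\id\}$ and, taking reverses, $f(u,v)\subseteq\{\breve{r},\id\}$, forcing $f(u,v)=\id$, which was excluded; transitivity follows from path consistency together with the computation $\{r,\id\}\circ\{r,\id\}=\{r,\id\}$ in $\ra{8}{37}$. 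Extending $\prec$ to a strict linear order $<_C$ on the finite set $C$ and setting $g(u,v)=r$ if $u<_C v$ and $g(u,v)=\breve{r}$ otherwise yields a consistent (linearly ordered) $\ra{1}{3}$-network; a short inspection of the possible labels ($r$, $\breve{r}$, $r\cup\id$, $\breve{r}\cup\id$, $r\cup\breve{r}$, $r\cup\breve{r}\cup\id$) confirms $g(u,v)\leq f(u,v)$.

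For $u,v$ in distinct classes I have $f(u,v)\not\leq e$, hence $a\in f(u,v)$, and I set $g(u,v):=a$, which clearly refines $f$. It remains to verify that the glued atomic network $(V,g)$ is consistent, i.e.\ $g(x,y)\leq g(x,z)\circ g(z,y)$ for all triples. The triples lying entirely in one class reduce to consistency of the linear order, and those spread over three distinct classes reduce to $(a,a,a)\in\Cy(\ra{8}{37})$. The main point — and the step I expect to require the most care — is the mixed triangle with two vertices $x,y$ in one class and $z$ in another: there $g(x,y)\in\{r,\breve{r}\}$ while $g(x,z)=g(z,y)=a$, so I must check $r,\breve{r}\leq a\circ a$. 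This is exactly where the naive guess $a\circ a=a\cup\id$ fails; instead, the cross-triple $(r,a,a)$ of the cycle product (Definition~\ref{def:2cycl}) together with the cycle law (Proposition~\ref{prop:cyclelaw}) gives $(a,a,r)\in\Cy(\ra{8}{37})$, and likewise $(a,a,\breve{r})$, so that $a\circ a=\id\cup a\cup r\cup\breve{r}=1$. With $a\circ a=1$ in hand, every mixed triangle — in all orientations, using also $a\leq r\circ a$ and $a\leq a\circ r$ from the same orbit — is consistent, so $(V,g)$ is a solution and $N$ is satisfiable.
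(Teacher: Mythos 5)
Your proof is correct and follows essentially the same approach as the paper: decompose the network along the equivalence relation $r \cup \breve{r} \cup \id$, solve the point-algebra instance inside each class, and connect distinct classes by the atom $a$. The only difference is that you inline the argument that each class is solvable (via the linear-extension construction) instead of citing that path consistency solves $\NSP(\ra{1}{3})$, and you verify the consistency of the glued network (in particular $a \circ a = 1$) explicitly, which the paper leaves implicit.
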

\begin{proof}   
    Let $N = (V, f)$ be an $\ra{8}{37}$-network which is path consistent such that $f(x,y) \notin \{0,\id\}$.
    
    Observe that the set of all pairs $u,v$ such that $f(u,v) \leq r \cup \breve{r} \cup \id$ form an equivalence relation. 
    On each equivalence class $C$, we create an instance of $\NCP(\ra{1}{3})$ on the $\ra{1}{3}$-network $(C, f\vert_{C^2})$. If for some class $C$ the resulting instance of $\NCP(\ra{1}{3})$ is unsatisfiable, then $N$ is unsatisfiable as well. 
    Otherwise, for each equivalence class $C$ the subnetwork $(C, f\vert_{C^2})$ has a solution $g_C$. 
    For all $u,v \in V$ in different equivalence classes we have $a \leq f(u,v)$. 
    We define $g \colon V^2 \to V$ as the common extension  
    of all the functions $g_C$ such that 
     $g(u,v) := a$ if $u$ and $v$ are from distinct components. 
     Then $(V, g)$ is a solution for $N$. 
\end{proof}

\subsection{Divide and Conquer Algorithms}
\label{sect:fp}
In this section, we prove that $\NSP(\ra{24}{65})$
and $\NSP(\ra{17}{37})$ are in $\Ptime$. 
The algorithms we present are similar in spirit to a known algorithm for 
$\NSP(\ra{13}{37})$ (\cite{BodirskyKutz,BodirskyKutzAI}). 

\begin{remark}\label{rem:13_37}
    The network satisfaction problem for $\ra{13}{37}$ cannot be solved by Datalog (this can be seen from the techniques in~\cite[Section 8.6]{Book}), 
    and hence in particular not by the path consistency procedure, but it can be expressed in fixed point logic and hence is in $\Ptime$~\cite{Book}. It thus solves Problem 1 of Hirsch and Cristiani~\cite{HirschCristiani}: \emph{`Find a (finite) relation algebra $\bA$ with no universal representation\footnote{Hirsch and Cristiani~\cite{HirschCristiani} call a representation $\fB$ of a finite relation algebra $\bA$ \emph{universal} if every consistent $\bA$-network with $f(x,y) \neq 0$ for all vertices $x,y$ is satisfiable in $\fB$. Note that this definition differs from our usage.}
    but where the complexity of $\NSP(\bA)$ is polynomial.'}
    The existence of problems whose complement is closed under homomorphisms and that are in fixed point logic, but not in Datalog, is one of the main results of~\cite{DawarKreutzer08}). 
\end{remark} 

\subsubsection{The Relation Algebra \texorpdfstring{$\ra{24}{65}$}{24\textunderscore 65}}
\label{sect:24-alg}

We need the following facts about consistent atomic $\ra{24}{65}$-networks $(V,f)$. 
If $d \in \{a,b,c\}$, let $G_d$ be the graph 
$G_d := (V; \{(x,y) \mid f(x,y) \in \{d,\id\}\})$. 
The following is straightforward. 

\begin{lemma}\label{lem:1}
    Let $(V,f)$ be a consistent atomic $\ra{24}{65}$-network. 
    If $C_1$ and $C_2$ are different connected components of $G_d$, for some $d \in \{a,b,c\}$, then $f$ is constant on 
    all pairs $(x,y)$ for $x \in C_1$ and $y \in C_2$.  
\end{lemma}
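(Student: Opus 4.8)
The plan is to reduce the statement to a single-edge step and then propagate it along $d$-colored paths. Fix $d \in \{a,b,c\}$ and two distinct components $C_1, C_2$ of $G_d$. First I would observe that it suffices to prove the two one-sided statements: (i) for every fixed $y \in C_2$, the value $f(x,y)$ is independent of the choice of $x \in C_1$; and (ii) the analogous statement with the roles of $C_1$ and $C_2$ exchanged. Granting these, one fixes a reference pair $(x_0,y_0) \in C_1 \times C_2$ and computes, for arbitrary $(x,y) \in C_1 \times C_2$, that $f(x,y) = f(x_0,y)$ by (i) and $f(x_0,y) = f(x_0,y_0)$ by (ii), so $f$ is constant on $C_1 \times C_2$. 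Since all diversity atoms of $\ra{24}{65}$ are symmetric, we have $f(x,y) = f(y,x)$, and hence (ii) is literally (i) with $C_1$ and $C_2$ swapped, so only (i) needs an argument.

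The heart of the matter is the following single-edge step, which I would prove first. Let $u,v \in C_1$ be joined by an edge of $G_d$, and let $y \in C_2$. As $u \neq v$, the label is exactly $f(u,v) = d$ (not $\id$). Because $u$ and $y$ lie in different components of $G_d$, the pair $(u,y)$ is not an edge of $G_d$, so $f(u,y) \notin \{d,\id\}$; as $u \neq y$ this forces $f(u,y) \in \{a,b,c\} \setminus \{d\}$, and the same reasoning gives $f(v,y) \in \{a,b,c\} \setminus \{d\}$. Now I would look at the triangle on $\{u,v,y\}$: it carries $d$ on the edge $uv$ together with two colors drawn from the two-element set $\{a,b,c\} \setminus \{d\}$. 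Since $(V,f)$ is a consistent atomic $\ra{24}{65}$-network, this triangle is not a rainbow, i.e.\ it does not carry all three colors; as it already carries $d$, the two remaining labels cannot be distinct, whence $f(u,y) = f(v,y)$.

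To finish (i), I would take arbitrary $x,x' \in C_1$ and use a path $x = w_0, w_1, \dots, w_k = x'$ in $G_d$; after discarding repetitions we may assume each consecutive pair is a genuine $d$-edge, i.e.\ $f(w_i,w_{i+1}) = d$, and note that all $w_i$ lie in $C_1$, so $y \notin \{w_0,\dots,w_k\}$ and no degeneracy occurs. Applying the single-edge step to each consecutive pair yields $f(w_i,y) = f(w_{i+1},y)$, and transitivity of equality gives $f(x,y) = f(x',y)$, establishing (i).

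I do not expect any serious obstacle here; this is why the lemma is flagged as straightforward. The only points deserving a little care are the bookkeeping that guarantees every relevant cross-edge avoids the color $d$ (so that the forbidden two-element color set is genuinely forced), and the clean appeal to the symmetry of the atoms that delivers (ii) from (i) without a separate argument.
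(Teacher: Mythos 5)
The paper itself gives no proof of this lemma---it is dismissed as ``straightforward''---so there is no official argument to compare against. Your overall strategy is exactly the natural one: use symmetry of the atoms to reduce the two-sided statement to one-sided constancy, prove a single-edge step from rainbow-freeness of consistent atomic $\ra{24}{65}$-networks, and propagate along $G_d$-paths. All of these ingredients are sound, and in particular the key triangle argument (the edge $uv$ carries $d$, both cross-edges avoid $\{d,\id\}$, so distinctness of the cross-labels would create a rainbow) is correct.

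There is, however, one inaccurate claim that leaves a small hole. Twice you assert that a $G_d$-edge between \emph{distinct} vertices must carry the label $d$ ``(not $\id$)''. This does not follow: the lemma is stated for consistent atomic networks, not \emph{reduced} ones, and the paper introduces the notion ``reduced'' precisely because a consistent atomic network may have $f(u,v) = \id$ with $u \neq v$. Your rainbow argument genuinely uses $f(u,v)=d$, so as written the single-edge step (and likewise the ``genuine $d$-edge'' clean-up in your path argument) is unjustified in this degenerate case. The fix is one line: if $f(u,v) = \id$, then consistency gives $f(u,y) \leq f(u,v) \circ f(v,y) = f(v,y)$, and since both sides are atoms, $f(u,y) = f(v,y)$, which is exactly the conclusion of the single-edge step. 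With this case distinction inserted, your proof is complete; the remaining steps (cross-component pairs avoid $\{d,\id\}$, symmetry of $a,b,c$ yielding $f(x,y)=f(y,x)$, and transitivity along the path) are all correct as stated.
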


The following lemma is due to Sebastian Meyer (personal communication). 

\begin{lemma}\label{lem:2}
    Let $(V,f)$ be a consistent atomic $\ra{24}{65}$-network such that the image of $f$ is not $\{\id\}$. Then at least one of the graphs
    $G_a$, $G_b$, and $G_c$ is disconnected. 
\end{lemma}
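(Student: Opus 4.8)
The plan is to exploit the combinatorial description of $\ra{24}{65}$: a reduced consistent atomic $\ra{24}{65}$-network is precisely a $3$-edge-colouring of the complete graph on $V$ with colours $a,b,c$ in which no triangle is \emph{rainbow} (carries all three colours), i.e.\ a \emph{Gallai colouring}. Under this identification $G_a,G_b,G_c$ are exactly the three colour classes, augmented by the loops coming from $\id$, which are irrelevant for connectivity. The hypothesis that the image of $f$ is not $\{\id\}$ guarantees $|V|\ge 2$ and that at least one non-identity colour occurs. First I would dispose of the degenerate case: if some colour, say $c$, does not occur at all, then $G_c$ has no non-loop edges and is therefore disconnected (as $|V|\ge 2$). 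Hence I may assume that all three colours $a,b,c$ appear.

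Next I would record an easy reduction that settles most of the work. For a colour $d$, write $\overline{G_d}$ for the graph on $V$ whose edges are the pairs coloured by the two colours different from $d$ (the complement of $G_d$ within the complete graph). If $\overline{G_d}$ is disconnected, then no edge of the other two colours joins two of its components, so every crossing edge has colour $d$; consequently both remaining colour classes are confined to the components of $\overline{G_d}$ and are disconnected. Thus the only situation left to rule out is that $G_a,G_b,G_c$ are all connected \emph{and} all three graphs $\overline{G_a},\overline{G_b},\overline{G_c}$ are connected.

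To handle this remaining case I would invoke the structural theorem of Gallai on rainbow-triangle-free colourings: every Gallai colouring of a complete graph on at least two vertices admits a non-trivial partition $V=V_1\cup\dots\cup V_k$ with $k\ge 2$ (a \emph{Gallai partition}) such that each pair $V_i,V_j$ is joined by edges of a single colour, and the resulting \emph{reduced} colouring of the complete graph on the parts again contains no rainbow triangle and hence uses at most two of the three colours. Since we have exactly three colours and $k\ge 2$, some colour $d$ is absent from the reduced colouring, i.e.\ no edge between distinct parts has colour $d$. Then every $d$-edge lies inside a single part, so each connected component of $G_d$ is contained in some $V_i$; as $k\ge 2$, the graph $G_d$ is disconnected. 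Combined with the degenerate case, this proves the lemma.

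The main obstacle is precisely the last case, namely producing the Gallai partition, which is the genuine content of Gallai's theorem and is not reachable by the simple counting bounds (these only suffice for $|V|\le 5$, since three spanning connected subgraphs would need $\ge 3(|V|-1)$ edges, exceeding $\binom{|V|}{2}$ only for small $|V|$). I would therefore either cite Gallai's theorem directly, or give a self-contained proof by strong induction on $|V|$ in which the complement-disconnection reduction above provides the inductive step whenever some $\overline{G_d}$ is disconnected; the delicate point is then to show that one cannot have all of $G_a,G_b,G_c$ and all of $\overline{G_a},\overline{G_b},\overline{G_c}$ connected simultaneously. A convenient way to attack this directly is to fix a vertex $v$, partition the remaining vertices into its colour-neighbourhoods $A,B,C$ (each non-empty once every colour class is connected, since $v$ has a $d$-edge only into its $d$-neighbourhood), and use the no-rainbow condition to force the $A$--$B$ edges into $\{a,b\}$, the $B$--$C$ edges into $\{b,c\}$, and the $A$--$C$ edges into $\{a,c\}$; tracing these constraints yields a disconnected colour class. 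I expect the bookkeeping in this direct route to be the most technical part, which is why citing Gallai's theorem is the cleaner option.
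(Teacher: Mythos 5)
Your proposal is correct, but it takes a genuinely different route from the paper. The paper's proof is a short, self-contained induction on $|V|$: assuming all of $G_a,G_b,G_c$ are connected, it removes a vertex $x$, applies the inductive hypothesis to find some $G_d\setminus\{x\}$ disconnected, chooses an $a$-neighbour of $x$ in each component of $G_a\setminus\{x\}$ together with a $b$-neighbour and a $c$-neighbour of $x$, and uses Lemma~\ref{lem:1} (constancy of $f$ between distinct components) to manufacture a rainbow triangle. You instead pass to the reduced network, identify it with a Gallai colouring, and invoke Gallai's structure theorem: a Gallai partition into $k\geq 2$ parts whose reduced colouring uses at most two colours immediately yields a colour $d$ all of whose edges lie inside parts, so $G_d$ is disconnected. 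This is a valid and conceptually clean derivation---the lemma really is an immediate corollary of Gallai's theorem, and it situates the statement in known literature on rainbow-triangle-free colourings---but it outsources the actual combinatorial work to a classical theorem whose proof is at least as involved as the paper's induction; your fallback self-contained argument (fixing $v$, splitting the rest into colour neighbourhoods $A,B,C$, and ``tracing the constraints'') is precisely the opening of such a proof but is left unfinished, so on its own it would not suffice. Two minor points to tidy up if you keep the Gallai route: first, the lemma concerns arbitrary consistent atomic networks, in which $\id$ may label pairs of \emph{distinct} vertices, and such pairs are edges of all three graphs $G_a,G_b,G_c$, so they are not mere loops; you need the routine preliminary step of contracting the $\id$-classes (consistency makes $\id$ an equivalence relation with $f$ constant on classes, contraction preserves connectivity of each $G_d$ in both directions, and the hypothesis that the image of $f$ is not $\{\id\}$ guarantees the quotient has at least two vertices). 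Second, your complement-graph reduction is correct but becomes redundant once Gallai's theorem is invoked; it is only needed for the self-contained variant.
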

\begin{proof}
By induction on $|V|$. 
If $|V|=1$ then the image of $f$ must be $\{\id\}$, so suppose that $|V| \geq 2$. 
The case $|V|=2$ is clear. For the case that $|V| \geq 3$,
suppose for contradiction that all of $G_a$, $G_b$, and $G_c$ are connected. 
 Let $x \in V$. 
By the inductive assumption there exists $d \in \{a,b,c\}$ such that the graph $G_d \backslash \{x\}$ is disconnected. We may assume without loss of generality that $d=a$ and that $U_1,\dots,U_k$, for $k \geq 2$, are the connected components of $G_a \backslash \{x\}$. For each $U_i$, $i \in \{1,\dots,k\}$, we may choose $u_i \in U_i$
such that $f(x,u_i)=a$, because
$G_a$ is connected. 
Moreover, there exists $v_b \in V \setminus \{x\}$
such that $f(x,v_b)=b$, because
$G_b$ is connected;
similarly, there exists $v_c \in V \setminus \{x\}$ such that $f(x,v_c)=c$. 

There are the following two cases to consider: 
\begin{itemize}
\item $v_b,v_c \in U_i$, for some $i \in \{1,\dots,k\}$. Let $j \in \{1,\dots,k\} \setminus \{i\}$. If $f(v_b,u_j)=c$, then 
$\{x,v_b,u_j\}$ induces a rainbow. 
Otherwise, $f(v_b,u_j)=b$, 
and then $f(v_c,u_j)=b$ by Lemma~\ref{lem:1}. 
In this case, $\{x,v_c,u_j\}$ induces a rainbow. 
\item $v_b \in U_i$ and $v_c \in U_j$, for distinct $i,j \in \{1,\dots,k\}$. If $f(v_b,v_c) =b$, then $f(u_i,v_c)=b$ by Lemma~\ref{lem:1}. Then $\{x,u_i,v_c\}$ induces a rainbow. Otherwise, $f(v_b,v_c)=c$; this case can be treated similarly. 
\qedhere
\end{itemize} 
\end{proof}

Note that all relations of the relation algebra $\ra{24}{65}$ can be generated from the following relations: 
$\overline{\id} = a \cup b \cup c$ and $R_d := (d \cup \id)$ for $d \in \{a,b,c\}$. 
For example, $\overline{a} = R_b \circ R_c$. 
Similarly we can obtain $\overline{b}$ and $\overline{c}$. 
We can then obtain all others by appropriately intersecting the ones that we already have.
It therefore suffices to present an algorithm for the NSP restricted to
networks with the relations ${\mathcal R} := \{R_a,R_b,R_c,\overline{\id},\id,1\}$ (see Remark~\ref{rem:reduce}). 

\begin{definition}
    Let $(V,g)$ be a $\ra{24}{65}$-network such that 
    the image of $g$ is contained in $\mathcal R$ and let $d \in \{a,b,c\}$. 
    A \emph{$d$-cut of $(V,g)$} is a partition 
    of $V$ into $k \geq 2$ disjoint 
    sets $C_1,\dots,C_k$ 
    such that if $g(x,y) \in \{R_d,\id\}$, then $x$ and $y$ must lie in the same part. 
    A $d$-cut is \emph{valid} if 
    for all 
    distinct $i,j \in \{1,\dots,k\}$
    there exists 
    $d_{i,j} \in \{a,b,c\} \setminus \{d\}$ such that 
    $d_{i,j} \leq g(u,x)$ for all $u \in C_i$ and $x \in C_j$. 
\end{definition}


\begin{definition}\label{def:non-id-sol}
A \emph{non-identity solution} to an $\bA$-network $(V,g)$ is a solution $(V,f)$ such that there are $x,y\in V$ with $f(x,y)\neq \id$. 
\end{definition}

\begin{lemma}\label{lem:24unsat}
    Let $(V,g)$ be a $\ra{24}{65}$-network such that $(V,g)$ has a non-identity solution and
    the image of $g$ is contained in $\mathcal R$. Then there is a valid $d$-cut in $(V,g)$ for some $d \in \{a,b,c\}$. 
\end{lemma}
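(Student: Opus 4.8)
The plan is to connect the combinatorial statement of Lemma~\ref{lem:2} (that one of the graphs $G_a,G_b,G_c$ is disconnected) with the definition of a valid $d$-cut, while tracking the extra information provided by the existence of a non-identity solution. First I would take a non-identity solution $(V,f)$ to $(V,g)$; since the image of $f$ is not $\{\id\}$ and $(V,f)$ is a consistent atomic $\ra{24}{65}$-network, Lemma~\ref{lem:2} applies and yields some $d\in\{a,b,c\}$ for which the graph $G_d$ (built from $f$) is disconnected. I would then use the connected components $C_1,\dots,C_k$ of this $G_d$, with $k\ge 2$, as the candidate parts of the $d$-cut.

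Next I would verify that this partition is in fact a $d$-cut with respect to the \emph{original} network $g$, not just the solution $f$. The point is that whenever $g(x,y)\in\{R_d,\id\}$, the solution value $f(x,y)$ must lie below $g(x,y)\leq R_d = d\cup\id$, so $f(x,y)\in\{d,\id\}$, which means $x$ and $y$ are adjacent (or equal) in $G_d$ and hence lie in the same component $C_i$. This is exactly the defining condition of a $d$-cut. So the components of $G_d$ automatically respect all the $R_d$- and $\id$-constraints of $g$.

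The validity condition is where the work concentrates. For distinct $i,j$ I must produce a single colour $d_{i,j}\in\{a,b,c\}\setminus\{d\}$ with $d_{i,j}\le g(u,x)$ for all $u\in C_i,x\in C_j$. The key tool is Lemma~\ref{lem:1}: since $C_i$ and $C_j$ are different connected components of $G_d$, the solution $f$ is \emph{constant} on all cross pairs $(u,x)$ with $u\in C_i$, $x\in C_j$; call this constant value $d_{i,j}$. Because components are distinct, no cross edge is in $G_d$, so $d_{i,j}\neq d$ (and $d_{i,j}\neq\id$, as that would also force same-component); thus $d_{i,j}\in\{a,b,c\}\setminus\{d\}$. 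Finally, since $f$ is a solution of $g$, we have $f(u,x)\le g(u,x)$, i.e.\ $d_{i,j}\le g(u,x)$, for every such cross pair. This gives exactly the required uniform lower bound, so the cut is valid.

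I expect the main obstacle to be a bookkeeping subtlety rather than a deep difficulty: one must be careful that Lemma~\ref{lem:1} is stated for the solution network $f$ (a consistent atomic network) and gives constancy of $f$, whereas the conclusion we want is phrased in terms of the input $g$; bridging this requires the monotonicity $f(u,x)\le g(u,x)$ at each step, and checking that ``different connected component'' genuinely forbids both $d$ and $\id$ on cross pairs. It is also worth confirming that the non-identity hypothesis is what licenses the use of Lemma~\ref{lem:2} (ensuring the image of $f$ is not $\{\id\}$) and hence guarantees $k\ge 2$; without it the lemma could fail on an all-identity network. Once these points are pinned down, the proof is short and the construction of the valid $d$-cut is immediate from the components of $G_d$.
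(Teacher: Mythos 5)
Your proposal is correct and follows essentially the same route as the paper's proof: take a non-identity solution $(V,f)$, apply Lemma~\ref{lem:2} to get a disconnected $G_d$, use its connected components as the cut, and invoke Lemma~\ref{lem:1} for validity. The paper compresses the verification steps (the $f \le g$ monotonicity and the exclusion of $d$ and $\id$ on cross pairs) into a ``clearly,'' which you have simply spelled out.
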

\begin{proof}
    Let $(V,f)$ be a non-identity solution of $(V,g)$. 
    By Lemma~\ref{lem:2}, there exists $d \in \{a,b,c\}$ such that $G_d$ is disconnected. Let $C_1,\dots,C_k \subseteq V$ be the connected components of $G_d$. Clearly, it is a $d$-cut of $(V,g)$. Lemma~\ref{lem:1} implies that it is valid.
\end{proof}

Note that the condition from Lemma~\ref{lem:24unsat} can be checked in polynomial time: to check whether $(V,g)$ has a valid $d$-cut, we 
first compute the connected components of the graph
$(V; \{(x,y) \mid g(x,y) \in \{R_d,\id\}\}$ (e.g., via depth-first search). 
If the connected components provide a valid $d$-cut we are done. Otherwise, there exist two components $C_i$ and $C_j$ that witness that the cut is not valid. In this case, we merge $C_i$ and $C_j$ into one component. If the resulting $d$-cut is valid, we are done. Otherwise, we repeat contracting components. If the procedure ends up with a single component, then there does not exist a valid $d$-cut. 

\begin{prop}\label{prop:24_65}
    $\NSP(\ra{24}{65})$ is in $\Ptime$. 
\end{prop}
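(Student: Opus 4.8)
The plan is to give a recursive (divide-and-conquer) polynomial-time algorithm based on the structural results just established, and prove its correctness. Since $\ra{24}{65}$ has a fully universal representation (Proposition~\ref{prop:24_65-repr}), by Lemma~\ref{lem:nsp-ncp} we have $\NSP(\ra{24}{65}) = \NCP(\ra{24}{65})$, so it suffices to decide solvability. By Remark~\ref{rem:reduce} and the discussion preceding Lemma~\ref{lem:24unsat}, we may assume the input network $(V,g)$ has image contained in $\mathcal R = \{R_a,R_b,R_c,\overline{\id},\id,1\}$. The key engine is Lemma~\ref{lem:24unsat}: whenever $(V,g)$ admits a \emph{non-identity} solution, some $d\in\{a,b,c\}$ has a valid $d$-cut, and such a cut can be found in polynomial time by the contraction procedure described after the lemma.

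The recursive algorithm I would present proceeds as follows. First, propagate the forced identifications: whenever $g(x,y) = \id$, identify $x$ and $y$ (if this ever forces $g(x,y)=\id$ on a pair where $g$ excludes $\id$, reject). Then search for a valid $d$-cut for each $d\in\{a,b,c\}$ using the polynomial-time procedure sketched above. If no valid cut exists for any $d$, then by (the contrapositive of) Lemma~\ref{lem:24unsat} the only possible solution is the all-identity network, which is consistent exactly when $g(x,y)\geq\id$ for all $x,y$; accept or reject accordingly. If a valid $d$-cut $C_1,\dots,C_k$ is found, then recurse on each part $(C_i, g|_{C_i^2})$, and accept if and only if every recursive call accepts. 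The main task is to show that this recursion terminates in polynomially many steps with polynomial-size subinstances: each valid cut has $k\geq 2$ parts, so $|C_i| < |V|$, and the recursion depth is at most $|V|$; the total work per level is polynomial, giving an overall polynomial bound.

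The correctness argument splits into soundness and completeness. For soundness, I would show that if every $(C_i,g|_{C_i^2})$ is solvable, then $(V,g)$ is solvable: take solutions $(C_i,f_i)$, and glue them by assigning, for $x\in C_i$, $y\in C_j$ with $i\neq j$, the atom $d_{i,j}$ guaranteed by validity of the cut (and $d$ itself is \emph{not} used across parts, so no rainbow involving the $d$-direction arises). The check that the glued atomic network is consistent — i.e., contains no rainbow triangle — reduces to examining triangles with vertices in at most three distinct parts, using that the cross-part labels avoid $d$ together with Lemma~\ref{lem:1}. For completeness, I would argue that if $(V,g)$ has a solution in which not all labels are $\id$, then after the identification step the reduced instance has a non-identity solution, so Lemma~\ref{lem:24unsat} supplies a valid cut, and restricting that solution to each $C_i$ solves the subinstance; conversely a solution that is entirely $\id$ is detected directly.

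The step I expect to be the main obstacle is verifying the no-rainbow condition for the glued solution in the soundness direction, specifically handling triangles whose three vertices lie in three \emph{different} parts of the cut. For such a triangle the three edge-labels are the cross-part atoms $d_{i,j}, d_{j,\ell}, d_{i,\ell}$, each lying in $\{a,b,c\}\setminus\{d\}$, so at least two of the three labels coincide and the triangle cannot be rainbow — but I would need to argue carefully (via Lemma~\ref{lem:1}, applied to the components of $G_d$ in the combined solution) that these cross labels are globally well-defined and mutually compatible across all parts, rather than merely pairwise. Making this compatibility precise, so that the independently obtained solutions $f_i$ do not clash with the forced cross-part labels, is the delicate point; everything else is routine bookkeeping together with the polynomial-time bound on the recursion.
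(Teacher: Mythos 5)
Your proposal is correct and follows essentially the same route as the paper's proof: reduce to networks with labels in $\mathcal R$, use Lemma~\ref{lem:24unsat} to find a valid $d$-cut (or reject/handle the identity-only case), recurse on the parts, and glue solutions using the cross-part atoms $d_{i,j}$. The ``delicate point'' you flag is in fact already settled by your own observation and needs no appeal to Lemma~\ref{lem:1} or any global compatibility among the $d_{i,j}$: a triangle with two vertices in one part has both cross edges labelled by the same atom $d_{i,j}$, and a triangle meeting three parts has all labels in the two-element set $\{a,b,c\}\setminus\{d\}$, so in either case it cannot be a rainbow, which is the only consistency condition to check.
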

\begin{proof} 
Let $(V,g)$ be a given $\ra{24}{65}$-network. As we have explained earlier, we may assume without loss of generality that the image of $g$ is contained in $\mathcal R$. 

\begin{enumerate}
\item If there is no pair of variables labelled  $\overline{\id}$, then answer `satisfiable' 
(we may map all of $V$ to the same point in a representation of $\ra{24}{65}$).
\item Otherwise, if for every $d \in \{a,b,c\}$ there is no valid $d$-cut in $(V,g)$ return `unsatisfiable'
(there is no solution by Lemma~\ref{lem:24unsat}).
\item Otherwise, let $C_1,\dots,C_k$ be a valid $d$-cut in $(V,g)$, for some $d \in \{a,b,c\}$, and recursively solve the subinstances induced on each of $C_1,\dots,C_k$.  
\item If a recursive call returns `unsatisfiable', then return `unsatisfiable'.
\item 
Otherwise, return `satisfiable' (a consistent atomic network $(V,f)$ with $f(x,y) \leq g(x,y)$ for all $x,y \in V$ can be obtained from solutions for $C_1,\dots, C_k$ by setting $f(x,y) := d_{i,j}$ for every $x\in C_i$ and $y\in C_j$ with $i\neq j$).
\end{enumerate}
The proof that this algorithm is correct follows by induction on the number of variables of the instance. The running time is in $O(n^2)$ if $n$ is the size of the input.
\end{proof} 

\subsubsection{The Relation Algebra \texorpdfstring{$\ra{17}{37}$}{17\textunderscore 37}}
\label{sect:17-alg}
We now present a polynomial-time algorithm for 
$\NSP(\ra{17}{37})$ (note that $\ra{17}{37}$ has no fully universal representation by Lemma~\ref{lem:nfu-37}, and so this problem is different from $\NCP(\ra{17}{37})$). 
Recall 
from Section~\ref{sect:17_37-rep} 
that 
the consistent atomic $\ra{17}{37}$-networks can be represented as oriented graphs that do not contain $P_2 = (\{0,1,2\},\{(0,1),(1,2)\})$ as an induced subgraph. 

We need a structural result about such graphs proved by Bang-Jensen and Huang~\cite{BJJ95}; they obtain even more general results for directed graphs, but we only need and only state their result for oriented graphs (i.e., the special case of directed graphs without directed cycles of length two). 
If $G = (V; E)$ is an oriented graph, then $\widetilde G$ denotes the undirected graph
\begin{align*}
    \widetilde G := (V,\{(x,y) \mid (x,y) \notin E, (y,x) \notin E\}).
\end{align*}

\begin{thm}[{\cite[Theorem 3.5]{BJJ95}}]
\label{thm:17-1}
Let $G$ be an oriented graph with no induced $P_2$. 
If $G$ is not strongly connected, then it is obtained from a transitive oriented graph by substituting its vertices with strongly connected oriented graphs with no induced $P_2$. If $G$ is strongly connected, then it is obtained from a strongly connected tournament by substituting its vertices with not strongly connected oriented graphs with no induced $P_2$. 
\end{thm}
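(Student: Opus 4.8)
The plan is to reformulate the hypothesis and then run the argument through the substitution (modular) decomposition. First observe that an oriented graph $G$ with no induced $P_2$ is exactly a \emph{quasi-transitive} oriented graph: whenever there are arcs $u\to v$ and $v\to w$, the vertices $u$ and $w$ must be adjacent, since otherwise $u,v,w$ would induce $P_2$. I would use this reformulation throughout, together with the substitution operation $T[H_1,\dots,H_t]$ appearing in the statement (replace vertex $i$ of $T$ by $H_i$, inserting all arcs from $H_i$ to $H_j$ exactly when $i\to j$ in $T$). The first, easy ingredient is a \emph{domination lemma}: if $X$ and $Y$ are distinct strong components of $G$ and there is at least one arc between them, then all arcs go the same way and the domination is complete (every vertex of one dominates every vertex of the other). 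I would prove this by propagating a single arc $x\to y$ along paths inside the components: if $x'\to x$ inside $X$, then quasi-transitivity forces $x'\to y$ or $y\to x'$, and $y\to x'$ would place $y$ on a cycle through $X$, contradicting $X\neq Y$; iterating on both the $X$-side and the $Y$-side gives complete domination.

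For the non-strong case this lemma finishes the argument almost immediately. By the lemma the condensation $C(G)$ (vertices are the strong components, arcs are the complete dominations) is an oriented graph, it has at least two vertices because $G$ is not strong, and it inherits quasi-transitivity (a witness $u\to v\to w$ in $C(G)$ lifts to arcs in $G$ and the forced adjacency descends). Being acyclic and quasi-transitive, $C(G)$ is in fact \emph{transitive}, since acyclicity upgrades the forced adjacency to $u\to w$. Taking $T:=C(G)$ and letting $H_1,\dots,H_t$ be the strong components, each of which is an induced, hence quasi-transitive, strong subgraph, the domination lemma shows precisely that $G=T[H_1,\dots,H_t]$, as required.

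The strong case is the main obstacle, and here I would pass to the modular decomposition. A \emph{module} is a set $M$ all of whose vertices have the same external adjacency to each $w\notin M$. Quasi-transitivity is preserved both under passing to induced subgraphs and under forming the quotient by a modular partition, so any quotient $Q$ is again a quasi-transitive oriented graph. The key structural claim is that \emph{the connected components of the non-adjacency graph $\widetilde Q$ are modules of $Q$}: if some $w\notin M$ satisfied $w\to m_1$ and $m_2\to w$ for a $\widetilde Q$-edge $m_1m_2$ (a non-adjacent pair of $Q$), then $m_2\to w\to m_1$ would force $m_1,m_2$ adjacent, a contradiction; connectivity of $M$ in $\widetilde Q$ then spreads a uniform orientation of $w$ relative to all of $M$. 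I would apply this to the prime quotient obtained from the modular decomposition, using that the degenerate quotient types (edgeless, or a transitive tournament) both force $G$ to be non-strong, so for strong $G$ the quotient is prime; the module claim then forces $\widetilde Q$ to be edgeless, i.e. $Q$ is a tournament, and strongness of $G$ descends to $Q$. Finally, refining so that each factor is either a single vertex or non-strong (replacing a strong factor by its own condensation form from the previous case) yields $G=S[Q_1,\dots,Q_s]$ with $S$ a strong tournament on $s\geq 2$ vertices and each $Q_i$ a single vertex or a non-strong quasi-transitive oriented graph.

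The delicate points I expect to fight with are, first, verifying carefully that quasi-transitivity genuinely survives the quotient operation, and, above all, ruling out a prime \emph{strong} quasi-transitive quotient that still has a non-adjacent pair. In other words, upgrading the component-module claim to the clean statement \say{prime and strong implies tournament} is where the interaction between strong connectivity and the no-$P_2$ condition must be used most carefully: one has to show that a surviving non-adjacent pair in a strong prime quotient always produces a nontrivial module (via the $\widetilde Q$-component argument) and thereby contradicts primality. This is the crux, and I would isolate it as a separate lemma before assembling the two cases into the stated dichotomy.
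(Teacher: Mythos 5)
Your proposal must be judged as a self-contained argument, since the paper itself does not prove this statement: it is imported verbatim from \cite[Theorem~3.5]{BJJ95}, and its key ingredient, \cite[Lemma~3.4]{BJJ95}, is likewise imported without proof (it appears as Lemma~\ref{lem:17}). Measured that way, your non-strong case is complete and correct: the domination lemma (propagating a single arc between two distinct strong components to full one-way domination, using that any back-arc would merge the components) plus the observation that the condensation is acyclic and quasi-transitive, hence transitive, gives exactly the first half of the statement.

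The gap is in the strong case, and it sits exactly where all the difficulty of the theorem is concentrated. Your component-module claim --- every connected component of the non-adjacency graph $\widetilde{Q}$ is a module --- is correct, but it produces a \emph{nontrivial} module only when that component is a proper subset of the vertex set. If $G$ is strong, has a non-adjacent pair, and $\widetilde{G}$ is connected and spans all of $V(G)$, your claim yields only the trivial module $V(G)$ and contradicts nothing, prime quotient or not. So the statement you defer to a ``separate lemma'' (``prime and strong implies tournament'', equivalently ``strong on at least two vertices implies $\widetilde{G}$ disconnected'') is not a refinement of your module argument; it is the entire content of \cite[Lemma~3.4]{BJJ95}, and it needs a genuinely different proof --- in \cite{BJJ95} an induction exploiting path properties of quasi-transitive digraphs, e.g.\ that a shortest path between non-adjacent vertices has length exactly three and comes with back-arcs placing both endpoints on a common cycle. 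Since you explicitly leave this unproved, the proposal does not establish the theorem. Two secondary points: once that lemma is available, the modular-decomposition machinery is unnecessary overhead --- quotienting by the components of $\widetilde{G}$ directly yields a strong tournament, and each component is a single vertex or non-strong, since otherwise applying the lemma to that component would make its non-adjacency graph disconnected, contradicting that it \emph{is} a connected component of $\widetilde{G}$; and your closing ``refinement'' step is garbled as written, because the condensation of a strong factor is a single vertex, so ``replacing a strong factor by its own condensation form from the previous case'' does nothing --- what is needed is either the argument just sketched or an induction on $|V|$ in which inner strong tournaments are absorbed into the outer one.
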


\begin{lemma}[{\cite[Lemma 3.4]{BJJ95}}]
\label{lem:17}
Let $G$ be an oriented graph with no induced $P_2$. If $G$ is strongly connected then $\widetilde G$ is not connected, and between each pair of components of $\widetilde G$ all edges in $G$ go in the same direction.   
\end{lemma}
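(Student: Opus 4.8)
The plan is to work entirely with the reformulation of ``no induced $P_2$'': an oriented graph $G$ has no induced $P_2$ exactly when it is \emph{quasi-transitive}, i.e.\ whenever $x\to y$ and $y\to z$ are arcs (with $x\neq z$) there is an arc between $x$ and $z$, necessarily $x\to z$ or $z\to x$ since $G$ is oriented. Throughout I write $u\sim w$ for a \emph{non-arc} pair, so the edges of $\widetilde G$ are precisely the pairs $u\sim w$ and the components of $\widetilde G$ are the classes of the transitive closure of $\sim$. Two facts come for free: any two vertices lying in \emph{different} $\widetilde G$-components are joined by an arc (non-adjacency would put them in one component); and contracting each $\widetilde G$-component to a point sends $G$ onto a digraph $\mathcal T$ on the set of components. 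I would establish the ``uniform direction'' assertion first and then deduce disconnectedness.

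For the uniform-direction claim, fix two distinct components $C,D$ of $\widetilde G$. First fix a single $v\in D$ and show that all arcs between $C$ and $v$ point the same way. Every $u\in C$ is arc-adjacent to $v$, so $C$ splits into $C^{+}=\{u:u\to v\}$ and $C^{-}=\{u:v\to u\}$; if both were nonempty then a non-arc path inside $C$ (which exists because $C$ is $\widetilde G$-connected) joining a vertex of $C^{+}$ to a vertex of $C^{-}$ would contain consecutive vertices $w_i\sim w_{i+1}$ with $w_i\to v$ and $v\to w_{i+1}$ (or the mirror image). Then $w_i\to v\to w_{i+1}$ is a directed path of length two, so quasi-transitivity forces an arc between $w_i$ and $w_{i+1}$, contradicting $w_i\sim w_{i+1}$. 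Hence one of $C^{\pm}$ is empty and all arcs between $C$ and $v$ are co-directed. Running the same argument with the roles of $C$ and $D$ exchanged, along a non-arc path inside $D$, upgrades this to: all arcs between $C$ and $D$ point the same way. This shows $\mathcal T$ is a tournament and proves the second assertion.

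It then remains to show that $\widetilde G$ is disconnected when $G$ is strongly connected and $|V|\geq 2$, equivalently that $\mathcal T$ has at least two vertices. Since strong connectivity passes to the contraction, $\mathcal T$ is a strongly connected tournament; were $\widetilde G$ connected this tournament would be a single point, so everything reduces to the residual claim: \emph{a quasi-transitive oriented graph whose non-arc graph $\widetilde G$ is connected and which has at least two vertices is not strongly connected.} When $G$ is a tournament this is immediate, as then $\widetilde G$ is edgeless and already disconnected; the substance is the case where $G$ has at least one non-arc.

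I expect this residual claim to be the \textbf{main obstacle}. The naive induction ``delete a non-cut vertex of $\widetilde G$ and invoke the inductive hypothesis'' fails directly, because strong connectivity is not inherited by vertex-deleted subgraphs: a single vertex can reconnect a graph that is not strongly connected. My plan is therefore to induct on $|V|$ while tracking the tournament $\mathcal T'$ of $\widetilde G$-components of $G-v$: pick $v$ with a non-neighbour, analyse how the arcs incident to $v$ interact with $\mathcal T'$ using quasi-transitivity --- in particular the two facts that there are no arcs from $N^{+}(v)$ into $\widetilde N(v)$ and none from $\widetilde N(v)$ into $N^{-}(v)$, both of which follow from a single length-two path argument --- and show that these constraints force a proper nonempty set of vertices with no arcs leaving it, contradicting strong connectivity. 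The delicate part, and where the real work lies, is the bookkeeping in this final step: ruling out every way in which $v$ could route a directed path between the two parts.
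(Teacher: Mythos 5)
Your proof of the \emph{second} assertion (uniform direction between two components of $\widetilde G$) is correct and complete: the split of $C$ into $C^{+}$ and $C^{-}$ relative to a fixed $v\in D$, the use of $\widetilde G$-connectivity of $C$ to find consecutive non-adjacent vertices $w_i\sim w_{i+1}$ straddling the split, and the quasi-transitivity contradiction, followed by the second pass over $D$, is exactly the right argument. Note that the paper itself offers no proof to compare against --- it quotes the statement as \cite[Lemma 3.4]{BJJ95} --- so your proposal has to stand on its own.

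It does not, because the \emph{first} assertion --- that $\widetilde G$ is disconnected whenever $G$ is strongly connected (and has at least two vertices) --- is never proved. You correctly reduce it to the residual claim that a quasi-transitive oriented graph with $\widetilde G$ connected and $|V|\geq 2$ cannot be strong, but what follows is an announced plan (``I expect\dots'', ``the real work lies\dots''), not an argument, and the plan as described cannot close the gap. Concretely: the two local facts you intend to use (no arcs from $N^{+}(v)$ into $\widetilde N(v)$, none from $\widetilde N(v)$ into $N^{-}(v)$) also hold around every vertex of the strongly connected quasi-transitive graph on $\{x_1,x_2,x_3,x_4\}$ with arcs $x_1\to x_2\to x_3\to x_4$, $x_3\to x_1$, $x_4\to x_2$ and $x_1,x_4$ non-adjacent (the paper's induced $a$-path on four vertices), so no amount of bookkeeping with these facts alone can force a set with no outgoing arcs; any correct proof must exploit connectivity of $\widetilde G$ \emph{globally}, and your proposal never says how it enters. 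The proposed induction is also structurally unsound: neither strong connectivity of $G$ nor connectivity of $\widetilde G$ is inherited by $G-v$, so the inductive hypothesis does not apply to the graph you pass to it. Two standard ways to finish: (i) the route of \cite{BJJ95}, via their path lemma --- for non-adjacent $x,y$ with $y$ reachable from $x$, a shortest $(x,y)$-path has length exactly $3$, with the back-arcs $x_3\to x_1$ and $x_4\to x_2$ forced --- which is then combined with an induction; or (ii) the forcing (implication-class) argument of Gallai/Ghouila-Houri: if $a\to b$ and $c$ is adjacent to $a$ but not to $b$, then $a\to c$ is forced, and when both $UG(G)$ and $\widetilde G$ are connected this forcing propagates to all edges and shows $G$ is a transitive orientation, hence acyclic, hence not strong. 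Either way, the half you left open is precisely where the content of the lemma lies, so the proposal has a genuine gap.
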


The following is an easy consequence of these results. 

\begin{thm} 
\label{thm:17-2} 
Let $G$ be an oriented graph with no induced $P_2$ with at least two vertices.
\begin{itemize}
    \item If $G$ is not strongly connected then it is obtained from a transitive oriented graph  
    by substituting its vertices by strongly connected oriented graphs with no induced $P_2$. 
    \item If $G$ is strongly connected then 
    $\widetilde{G}$ is not connected, and $G$ is obtained from a (strongly connected) tournament by substituting its vertices by oriented graphs $C_1,\ldots, C_k$ such that each $C_k$ contains no induced $P_2$ and each $\widetilde{C}_k$ is connected.
\end{itemize}
\end{thm}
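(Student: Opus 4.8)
{\it Proof plan.}
The statement is an ``easy consequence'' of Theorem~\ref{thm:17-1} and Lemma~\ref{lem:17}, so the plan is essentially to restate the two cases of Theorem~\ref{thm:17-1} and then refine the structural description in each using Lemma~\ref{lem:17}. The first bullet of Theorem~\ref{thm:17-2} is almost verbatim the non-strongly-connected case of Theorem~\ref{thm:17-1}, so for that part I would simply invoke Theorem~\ref{thm:17-1} directly: if $G$ is not strongly connected, then it is obtained from a transitive oriented graph by substituting each vertex with a strongly connected oriented $P_2$-free graph. There is nothing to add here beyond quoting the earlier result.

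The work is in the strongly connected case, where Theorem~\ref{thm:17-1} gives a description in terms of substituting \emph{not} strongly connected $P_2$-free graphs into a strongly connected tournament, whereas the target statement wants the description phrased via the $C_i$ with each $\widetilde{C_i}$ \emph{connected}. First I would apply Lemma~\ref{lem:17} to conclude directly that $\widetilde G$ is not connected when $G$ is strongly connected, which gives the first clause of the second bullet. Then I would reconcile the two descriptions of the substitution blocks: Theorem~\ref{thm:17-1} says the blocks are not strongly connected, while the target says each block $C_i$ is $P_2$-free with $\widetilde{C_i}$ connected. The key observation is that these are equivalent characterizations of the blocks one obtains by taking the maximal pieces substituted into the tournament. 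Concretely, I would argue that the canonical way to write $G$ as a substitution into a tournament is to take the blocks to be the equivalence classes of the relation ``$x,y$ have identical adjacency to all other vertices,'' and then show that, for a strongly connected $P_2$-free $G$, each such block $C_i$ has $\widetilde{C_i}$ connected (using Lemma~\ref{lem:17}: the components of $\widetilde G$ interact via a single direction, so a block cannot split $\widetilde{C_i}$).

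The precise bridge is the following: Lemma~\ref{lem:17} says the components of $\widetilde G$ are linearly ordered by the common edge-direction between them, i.e.\ $G$ is a substitution into a transitive tournament whose blocks are exactly the components of $\widetilde G$. But this cannot be literally right for a strongly connected $G$ (a transitive tournament is acyclic). The resolution — and the main technical point I would have to get right — is to substitute into a strongly connected tournament, not a transitive one, and recover strong connectivity of $G$ from strong connectivity of the tournament, while the blocks remain the pieces with connected complement. I would therefore take the tournament $T$ to be the quotient of $G$ by the block relation, verify that $T$ is a (strongly connected) tournament since $G$ is strongly connected, and verify that each block $C_i$ satisfies $\widetilde{C_i}$ connected (otherwise $C_i$ could be split further, contradicting maximality, or one reapplies Lemma~\ref{lem:17} internally). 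Each $C_i$ inherits $P_2$-freeness as an induced subgraph of $G$.

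The main obstacle is reconciling the apparently conflicting shapes of the two cited results (``transitive'' in Lemma~\ref{lem:17} versus ``strongly connected tournament'' in Theorem~\ref{thm:17-1}) and pinning down exactly which decomposition into blocks makes both the ``$P_2$-free'' and the ``$\widetilde{C_i}$ connected'' conditions hold simultaneously; once the correct block relation is fixed, the verifications are routine. I expect the final write-up to be short, essentially:
\begin{itemize}
    \item quote Theorem~\ref{thm:17-1} for the non-strongly-connected case;
    \item for the strongly connected case, apply Lemma~\ref{lem:17} to get $\widetilde G$ disconnected, take the blocks of the canonical substitution, and check $P_2$-freeness and connectedness of each $\widetilde{C_i}$.
\end{itemize}
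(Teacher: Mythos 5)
Your first bullet is fine (it is a verbatim invocation of Theorem~\ref{thm:17-1}), and your instinct to obtain the tournament as a quotient of $G$ by a block partition is also right; the genuine gap is your choice of blocks. You take the blocks to be the ``equivalence classes'' of the relation ``$x,y$ have identical adjacency to all other vertices''. First, this relation is not an equivalence relation on oriented graphs: in the strongly connected $P_2$-free graph obtained from the directed triangle $1\to 2\to 3\to 1$ by substituting vertex $1$ with the transitive triangle $x\to y$, $y\to z$, $x\to z$, the pairs $x,y$ and $y,z$ are ``twins'' (they agree towards every third vertex), but $x,z$ are not (they disagree on $y$); so the classes you want to quotient by do not exist. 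Second, even when the classes are well defined they are the wrong blocks. Let $G$ have vertices $a,b,c,u,v$ with edges $a\to c$, $a\to u$, $b\to u$, $c\to u$, $u\to v$, $v\to a$, $v\to b$, $v\to c$, and non-edges $\{a,b\},\{b,c\}$. This $G$ is strongly connected and $P_2$-free, and its twin classes are $\{a,c\},\{b\},\{u\},\{v\}$. The quotient by these classes is not a tournament (the classes $\{a,c\}$ and $\{b\}$ are non-adjacent), and the block $\{a,c\}$ has disconnected complement (since $a$ and $c$ are adjacent in $G$), so both properties you need fail; the maximality argument you sketch cannot repair this, because $\{a,c\}$ genuinely is a class of your relation. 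Incidentally, the ``conflict'' you set out to resolve between a transitive shape in Lemma~\ref{lem:17} and a strongly connected tournament in Theorem~\ref{thm:17-1} does not exist: Lemma~\ref{lem:17} never says the components of $\widetilde G$ are linearly ordered, only that between any two of them all edges go the same way.

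The fix, and this is exactly what the paper does, is to take the blocks to be the connected components of $\widetilde G$ rather than twin classes (note that vertices in one such component need not be twins -- they only agree towards vertices \emph{outside} their component, which is all that substitution requires; your relation is too fine). With this choice everything is immediate from Lemma~\ref{lem:17}: between any two components $A,B$ of $\widetilde G$ either $A\times B\subseteq E(G)$ or $B\times A\subseteq E(G)$, so contracting the components yields a tournament; that tournament is strongly connected because $G$ is; each component induces a $P_2$-free oriented graph because $P_2$-freeness passes to induced subgraphs; and each $\widetilde{C_k}$ is connected by the very definition of connected component. In the five-vertex example above, the components of $\widetilde G$ are $\{a,b,c\},\{u\},\{v\}$ and the quotient is the directed triangle, as it should be.
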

\begin{proof} 
If $G = (V; E)$ is not strongly connected then this is exactly what Theorem~\ref{thm:17-1} tells us.  
If $G$ is strongly connected then $\widetilde G$ is not connected by Lemma~\ref{lem:17}. Let $A$ and $B$ be two distinct connected components of $\widetilde G$. By Lemma~\ref{lem:17}, $A \times B$ is contained in $E(G)$, 
or $B \times A$ is contained in $E(G)$. Consequently, after contracting each connected component of $\widetilde G$ (which correspond to $\widetilde{C}_k$) to a vertex, we obtain a tournament. Since $G$ is strongly connected, the tournament needs to be strongly connected as well.
\end{proof} 

By Lemma~\ref{lem:nfu-37}, the induced $a$-path on 4 vertices is not satisfiable. Consequently, the satisfiable atomic $\ra{17}{37}$-network do not contain long induced $a$-paths, which has strong structural consequences which we now explain.

Partially ordered sets (posets) $(V; E)$ that do not contain distinct elements $u,v,a,b$ such that $(u,v), (a,v), (a,b) \in E$ and all other pairs of distinct elements
from $\{u,v,a,b\}$ are not in $E$, are called \emph{$N$-free}. 
It is well-known that a poset is $N$-free if and only if it is \emph{series-parallel}, i.e., is contained in the smallest class ${\mathcal C}$ of posets that contains the one-element poset, is closed under isomorphism, and is closed under the following two operations~\cite{SP}: if $(V_1; E_1), (V_2; E_2) \in {\mathcal C}$
    are such that $V_1 \cap V_2 = \varnothing$, then ${\mathcal C}$ also contains
\begin{itemize}
    \item $(V_1 \cup V_2; E_1 \cup E_2 \cup V_1 \times V_2)$ (series composition), and 
    \item $(V_1 \cup V_2; E_1 \cup E_2)$ (parallel composition). 
\end{itemize}

Observe that $\ra{17}{37}$ is generated by the three relations $R_r := r \cup \id$, $R_a := a \cup \id$, and $\overline{\id}$.
It therefore suffices to present an algorithm for the NSP restricted to networks from $\mathcal R := \{R_r,R_a,\overline{\id},\id,1\}$ (see Remark~\ref{rem:reduce}). 
If $(V,f)$ is a $\ra{17}{37}$-network such that the image of $f$ is contained in $\mathcal R$, then we define the following two graphs:
\begin{itemize}
    \item $G := (V;\{(x,y) \mid f(x,y) \in \{R_r,\id\}\})$ and 
    \item $H := (V;\{(x,y),(y,x) \mid f(x,y) \in \{R_a,\id\}\})$. 
\end{itemize}

\begin{definition}\label{defn:17_37_acut}
    Let $(V,g)$ be a $\ra{17}{37}$-network such that the image of $g$ is contained in $\mathcal R$. 
    An \emph{$a$-cut} is a partition of $V$ into $k \geq 2$ disjoint sets $C_1,\dots,C_k$ such that
    \begin{itemize}
        \item each $C_i$, for $1\leq i \leq k$, is a union of connected components of $H$, and 
        \item for all distinct $i,j \in \{1,\dots,k\}$ there exists $d_{i,j} \in \{r,\breve{r}\}$ such that $d_{i,j} \leq g(x,y)$ for all $x \in C_i$ and $y \in C_j$.
    \end{itemize}
\end{definition}

Note that, given a $\ra{17}{37}$-network $(V,g)$  such that the image of $g$ is contained in $\mathcal R$, one can in polynomial time either find an $a$-cut, or correctly say that it does not exist. To do this, we will keep a partition $\mathcal C$ of $V$ such that each part is a union of connected components of $H$. We start by computing the connected components of $H$ and partitioning $V$ to these components. Then, while there are some distinct $C,C'\in \mathcal C$ with no $d \in \{r,\breve{r}\}$ such that $d \leq g(x,y)$ for all $x \in C$ and $y \in C'$, we merge $C$ and $C'$. This procedure either finds an $a$-cut, or ends up with $\mathcal C = \{V\}$, in which case it is easy to see by induction that all merges were necessary and thus there is no $a$-cut.

\begin{definition}\label{defn:17_37_spcut}
    Let $(V,g)$ be a $\ra{17}{37}$-network such that the image of $g$ is contained in $\mathcal R$. Let $C_1,C_2$ be a partition of $V$ into two non-empty disjoint sets such that for all $i\in \{1,2\}$ it holds that $C_i$ is a union of strongly connected components of $G$, and there is $d \in \{r,\breve{r},a\}$ such that $d \leq g(x,y)$ for all $x \in C_1$ and $y \in C_2$. We say that $C_1,C_2$ is an \emph{$s$-cut} if $d = r$, and it is a \emph{$p$-cut} if $d = a$.
\end{definition}

Similarly as above, there are polynomial-time algorithms which, given a $\ra{17}{37}$-network $(V,g)$ such that the image of $g$ is contained in $\mathcal R$, either find a $p$-cut or an $s$-cut, or correctly say that such a cut does not exist. Both of these algorithms start by computing the strongly connected components $D_1,\dots,D_k$ of $G$.

The algorithm for $p$-cuts then puts $C = D_1$, and while there are $x\in C$ and $y\in D_j$ for some $j$ such that $y\in V\setminus C$ and there is an edge between $x$ and $y$ in $G$, it adds $D_j$ to $C$. If, after the loop finishes, $C\neq V$, then the algorithm puts $C_1 = C$ and $C_2 = V\setminus C$. Otherwise, it returns that there is no $p$-cut. It is easy to see that both answers are correct. 

The algorithm for $s$-cuts first puts $C$ to be the union of all components $D_i$ for which there are no $x\in D_i$ and $y\in V\setminus D_i$ with $(y,x)$ being an edge of $G$ and then, while there are $x\in C$ and $y \in D_j$ for some $j$ such that $y\notin C$ and $(y,x)$ is an edge of $G$ or an edge of $H$, it adds $D_j$ to $C$. If, after the loop finishes, $C\neq V$, then the algorithm puts $C_1 = C$ and $C_2 = V\setminus C$. Otherwise, it returns that there is no $s$-cut. It is easy to see that both answers are correct.

\begin{lemma}\label{lem:nsp-17-unsat}
    Every $\ra{17}{37}$-network $(V,g)$ 
    with a satisfiable non-identity solution $(V,f)$ (Definition~\ref{def:non-id-sol}) such that the image of $g$ is contained in ${\mathcal R}$
    has an $a$-cut, a $p$-cut, or an $s$-cut.
\end{lemma}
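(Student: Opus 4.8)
The plan is to show that a satisfiable network has one of the three cuts by analyzing the structure of its non-identity solution $(V,f)$, viewed as an oriented graph $G_f$ with no induced $P_2$, and then transferring the resulting decomposition back to $(V,g)$. The key point is that $(V,f)$, being satisfiable, cannot contain an induced $a$-path on $4$ vertices (Lemma~\ref{lem:nfu-37} and its corollary); hence the non-edge graph $\widetilde{G_f}$, which records the $a$-labelled pairs, is $N$-free and therefore series-parallel. This is exactly the extra structural ingredient that makes the divide-and-conquer recursion terminate with one of the three cut types.

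First I would split on whether the oriented graph $G_f$ is strongly connected, applying Theorem~\ref{thm:17-2}. If $G_f$ is \emph{not} strongly connected, then it is obtained from a nontrivial transitive oriented graph by substituting strongly connected pieces; since the quotient is transitive and has at least two vertices, there is a nonempty proper down-set, giving a partition $C_1,C_2$ of $V$ into unions of strongly connected components of $G_f$ with all cross edges directed $C_1\to C_2$, i.e.\ $f(x,y)=r$ for $x\in C_1,y\in C_2$. Because $g(x,y)\geq f(x,y)=r$, this is an $s$-cut of $(V,g)$ provided $C_1,C_2$ are also unions of strongly connected components of the graph $G$ built from $g$; I would observe that $G\supseteq G_f$ on the relevant pairs, so a strong component of $G$ is a union of strong components of $G_f$, and adjust the partition by merging strong components of $G$ if necessary (this is harmless since all cross-$f$-edges between $C_1,C_2$ point the same way). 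If $G_f$ \emph{is} strongly connected, then by Theorem~\ref{thm:17-2} the non-edge graph $\widetilde{G_f}$ is disconnected with each cross-pair between components oriented uniformly; taking $C_1,\dots,C_k$ to be the components of $\widetilde{G_f}$ (or, matching Definition~\ref{defn:17_37_acut}, unions of connected components of $H$) yields an $a$-cut, since between distinct parts every $f$-label, and hence the $g$-label, is $r$ or $\breve r$ uniformly.

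The remaining case is when $G_f$ is a single strongly connected component whose $\widetilde{G_f}$ has only one component — but Lemma~\ref{lem:17} forbids $\widetilde{G_f}$ from being connected for a strongly connected $G_f$ with at least two vertices, so this does not occur, and the $p$-cut arises precisely when $G_f$ decomposes with an $a$-labelled cross-class (the substituted pieces $C_i$ in the non-strongly-connected case that are joined by $a$). I would reconcile the case analysis with the three cut definitions by noting: a transitive-quotient edge labelled $r$ gives an $s$-cut, a parallel (incomparable, $a$-labelled) split of the quotient gives a $p$-cut, and the strongly-connected-tournament decomposition gives an $a$-cut. Since the solution is a non-identity solution, $G_f$ has at least two vertices and at least one non-$\id$ label, so at least one genuine cut must appear.

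The main obstacle I anticipate is the bookkeeping of translating cuts phrased in terms of the \emph{solution} $(V,f)$ into cuts phrased in terms of the \emph{input} $(V,g)$: the cut definitions (Definitions~\ref{defn:17_37_acut} and~\ref{defn:17_37_spcut}) require the parts to be unions of connected components of $H$ or strongly connected components of $G$, which are defined from $g$, not from $f$. I would handle this by showing that each relevant component of $G$ (respectively $H$) is contained in a single part of the $f$-induced partition — because $g(x,y)\geq f(x,y)$ means any $R_r$- or $R_a$-edge of $g$ forces $x,y$ into the same $f$-structural block — so refining the $f$-partition to respect $g$-components keeps all cross-labels uniform. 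The other delicate point is verifying the uniform-direction claim for $p$-cuts and $s$-cuts via Lemma~\ref{lem:17}; I expect this to be routine once the series-parallel decomposition is in hand, but it requires care to ensure the single common label $d\in\{r,\breve r,a\}$ is forced on \emph{every} cross pair, which follows from Lemma~\ref{lem:1}-style constancy arguments applied to the $P_2$-free solution.
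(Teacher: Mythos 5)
Your high-level plan is the same as the paper's proof: pass to the solution $(V,f)$, apply Theorem~\ref{thm:17-2}, use satisfiability to rule out the induced $a$-path on $4$ vertices, and transfer the resulting partition back to $(V,g)$ via $f(x,y)\leq g(x,y)$; your strongly connected case (yielding an $a$-cut) is essentially correct. The genuine gap is in the non-strongly-connected case: you take an arbitrary nonempty proper down-set $C_1$ of the transitive quotient and claim this gives ``all cross edges directed $C_1\to C_2$, i.e.\ $f(x,y)=r$ for $x\in C_1$, $y\in C_2$.'' A down-set only guarantees that no edge goes from $C_2$ to $C_1$; cross pairs may be incomparable, i.e.\ labelled $a$, so the cross labels can be mixed, and then the partition is neither an $s$-cut (Definition~\ref{defn:17_37_spcut} requires $r\leq g(x,y)$ for \emph{every} cross pair) nor a $p$-cut (which requires $a\leq g(x,y)$ for every cross pair). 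Concretely, let $V=\{u_1,u_2,v_1,v_2\}$, $g(u_1,u_2)=g(v_1,v_2)=R_r$, $g(u_i,v_j)=R_a$, and let $f$ be the solution with $f(u_1,u_2)=f(v_1,v_2)=r$ and $f(u_i,v_j)=a$ (the parallel composition of two chains; it is satisfiable). The quotient is transitive and $\{u_1\}$ is a down-set, but the cross labels from $u_1$ are $r,a,a$, so $\{u_1\}$ versus the rest is not a cut of any kind; the only valid cut here is the parallel split $\{u_1,u_2\}$ versus $\{v_1,v_2\}$, a $p$-cut. So one cannot take an arbitrary down-set: one must choose the split coming from the series-parallel decomposition.

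That decomposition is exactly where satisfiability does its work, and it is the step your write-up announces (``the extra structural ingredient'') but never actually performs. The paper's argument is: since $(V,f)$ is satisfiable, it contains no induced $a$-path on $4$ vertices (Lemma~\ref{lem:nfu-37}), hence the transitive quotient $T$ is $N$-free and the associated poset is series-parallel; therefore $T$ splits either as a series composition (every cross pair of the quotient is an edge $T_1\to T_2$, yielding an $s$-cut) or as a parallel composition (no cross edges at all, so every cross pair is labelled $a$, yielding a $p$-cut). Substituting this series/parallel split for your down-set step repairs the proof; your third paragraph's ``reconciliation'' presupposes such a uniform split but does not derive it. Three smaller points: (i) you should first quotient $(V,f)$ by its $\id$-classes, since a non-identity solution may still contain pairs with $f(x,y)=\id$ and Theorem~\ref{thm:17-2} is about oriented graphs; (ii) $N$-freeness and series-parallelness are properties of the oriented (poset) structure, not of the undirected non-edge graph $\widetilde{G_f}$ as you write; (iii) no ``merging of strong components of $G$'' is needed in the transfer --- every strongly connected component of $G$ (resp.\ connected component of $H$) is automatically contained in a single strongly connected component of the quotient (resp.\ a single component of its non-edge graph), because an edge of $G$ (resp.\ of $H$) forces the $f$-label into $\{r,\id\}$ (resp.\ $\{a,\id\}$).
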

\begin{proof}
    Note that the set of pairs $x,y$ such that $f(x,y) = \id$ forms an equivalence relation. Let $V'$ be the set of its equivalence classes (we will use $[x]$ to denote the equivalence class of $x$).
    Then $G' = (V',\{([x],[y]) \mid f(x,y) = r\})$ is an oriented graph with at least two vertices (since the image of $f$ is not $\{\id\}$) and with no induced $P_2$ and no induced $a$-path on 4 vertices (since $(V,f)$ is satisfiable atomic). By Theorem~\ref{thm:17-2}, either $G'$ is not strongly connected, or $\widetilde G'$ is not connected.

    If $\widetilde G'$ is not connected, let $C'_1,\dots,C'_k$ be the connected components of $\widetilde{G}'$ and, for each $i\in \{1,\dots,k\}$, let $C_i$ consist of those $x\in V$ such that there is $[y]\in C_i'$ with 
    $f(x,y) = \id$.
    We will show that $C_1,\dots,C_k$ is an $a$-cut of $(V,g)$: First observe that if $x$ and $y$ are in the same connected component of $H$, then $[x]$ and $[y]$ are in the same connected component of $\widetilde{G}'$, and thus each $C_i$ is a union of connected components of $H$. Next, from Theorem~\ref{thm:17-2} we know that $G'$ is obtained from a tournament     
    $T$ on vertex set $\{1,\dots,k\}$ by substituting vertex $i$ by the elements of $C_i'$ for all $i\in\{1,\dots,k\}$ (Theorem~\ref{thm:17-2} does not explicitly promise that we are substituting exactly by the graphs $C_i'$, but it promises that the graphs we are substituting by are connected in the ``non-edge'' relation, and the only possibility is that these are the graphs $C_i'$).
    Put
    \begin{align*}
        d_{i,j} = \begin{cases}
        r &\text{ if $(i,j)$ is an edge of $T$, and}\\ 
        \breve{r} &\text{ otherwise}.
    \end{cases}
    \end{align*}
    Since $T$ is a tournament, this is well-defined. By definition of substitution we have $f(x,y) = d_{i,j}$ for all $x\in C_i$ and $y\in C_j$ for all $i\neq j\in \{1,\dots,k\}$. Since $f(x,y) \leq g(x,y)$ for all $x,y\in V$, and since $T$ is a tournament, it follows that $C_1,\dots,C_k$ is indeed an $a$-cut.
    
    So $G'$ is not strongly connected. Let $C'_1,\dots,C'_k$ be the strongly connected components of $G'$. By Theorem~\ref{thm:17-2} we know that there is a transitive oriented graph $T$ with vertex set $\{1,\dots,k\}$ such that $G'$ is obtained from $T$ by substituting vertex $i$ by $C_i'$ for every $i\in\{1,\dots,k\}$.  Moreover, since $(V,f)$ is satisfiable atomic, it contains no induced $a$-path on 4 vertices, and so $G'$, and thus also $T$, are $N$-free. 
    Hence, by the fact mentioned above, the poset obtained from $T$ by adding all the loops is series-parallel.
    Consequently, one can split $T$ into disjoint non-empty sets $T_1$, $T_2$ such that either there is an oriented edge from every member of $T_1$ into every member of $T_2$ (series composition), or there are no edges between $T_1$ and $T_2$ at all (parallel composition).

    Let $C_1$ be the set of all $x\in V$ for which there is $i\in T_1$ and $[y]\in C_i'$ with $f(x,y)=\id$, and put $C_2 = V\setminus C_1$. Observe that both $C_1$ and $C_2$ are non-empty and that they are unions of strongly connected components of $G$ (as if $x$ and $y$ are in the same strongly connected component of $G$, then $[x]$ and $[y]$ are in the same strongly connected component of $G'$). It is then easy to see that $C_1$, $C_2$ is an $s$- or $p$-cut (based on whether there are edges between $T_1$ and $T_2$).
\end{proof}

\begin{thm}\label{prop:17_37}
    $\NSP(\ra{17}{37})$ is in $\Ptime$. 
\end{thm}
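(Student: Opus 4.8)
The plan is to give a recursive (divide-and-conquer) algorithm in the same spirit as the one for $\ra{24}{65}$ (Proposition~\ref{prop:24_65}), exploiting the structural decomposition in Theorem~\ref{thm:17-2} and the cut-finding routines developed above (Definitions~\ref{defn:17_37_acut} and~\ref{defn:17_37_spcut}). By Remark~\ref{rem:reduce} we may assume the input network $(V,g)$ has its image contained in $\mathcal R = \{R_r, R_a, \overline{\id}, \id, 1\}$. First I would handle the degenerate bases: if $|V| \le 1$, or more generally if no pair of variables is labelled with a relation forcing two distinct points (i.e.\ every label is $\le \id$-compatible so the whole network can be collapsed to one point), the instance is trivially satisfiable (map everything to a single element of a representation). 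This corresponds to the ``identity solution'' case excluded by Definition~\ref{def:non-id-sol}.

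The main recursion proceeds as follows. Given $(V,g)$ with image in $\mathcal R$ and at least one pair forced to be distinct, I run the three polynomial-time cut-finding procedures described after Definitions~\ref{defn:17_37_acut} and~\ref{defn:17_37_spcut}, searching for an $a$-cut, an $s$-cut, or a $p$-cut. If none of the three exists, Lemma~\ref{lem:nsp-17-unsat} guarantees that $(V,g)$ has no satisfiable non-identity solution, so the algorithm returns \emph{unsatisfiable}. If a cut $C_1,\dots,C_k$ (for an $a$-cut) or $C_1, C_2$ (for an $s$- or $p$-cut) is found, I recursively solve the subinstances induced on each part; if any recursive call reports \emph{unsatisfiable}, the whole instance is reported \emph{unsatisfiable}, and otherwise the partial solutions are glued together using the directions $d_{i,j} \in \{r,\breve r\}$ (for an $a$-cut), the uniform direction $r$ (for an $s$-cut), or the atom $a$ (for a $p$-cut) prescribed by the relevant cut definition.

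The correctness argument is by induction on $|V|$. Soundness of the ``unsatisfiable'' answer is exactly the contrapositive of Lemma~\ref{lem:nsp-17-unsat}, together with Remark~\ref{rem:PC-sound}-style reasoning that the recursion preserves the existence of a non-identity solution. For the affirmative direction I must verify that gluing consistent atomic solutions of the parts along the prescribed directions yields a \emph{consistent atomic} $\ra{17}{37}$-network with $f(x,y)\le g(x,y)$ everywhere; the key point is that, because the cuts mirror the substitution decomposition of Theorem~\ref{thm:17-2} (series, parallel, and tournament substitutions), every triangle straddling the cut is automatically allowed and, crucially, no new induced $a$-path on $4$ vertices is created across a cut (this is where the $N$-freeness / series-parallel structure built into the $s$- and $p$-cuts is used, exactly as in the proof of Lemma~\ref{lem:nsp-17-unsat}). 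For the running time, each of the three cut-finding routines runs in polynomial time, each recursive call strictly shrinks the instance, and the parts of any cut are disjoint, so a standard polynomial recurrence (as in Proposition~\ref{prop:24_65}) bounds the total work.

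\textbf{Main obstacle.} The delicate step will be verifying that the reassembled network is genuinely consistent and $a$-path-free, i.e.\ that the cuts faithfully realize the substitution structure of Theorem~\ref{thm:17-2} rather than merely being compatible with \emph{some} orientation. In particular one must argue that when an $a$-cut is glued via a tournament on $\{1,\dots,k\}$, the transitivity needed to avoid forbidden triples $(x,y,z)$ across three different parts is inherited from the tournament structure, and that the interplay between the $H$-components (controlling $a$-edges) and the strongly-connected $G$-components (controlling $r$-edges) never forces an induced $P_2$ or an induced $a$-path of length $4$ spanning two parts. This is essentially a careful case analysis over which of the atoms $r, \breve r, a$ can appear on the cut edges, and it is the heart of the proof; the cut-existence machinery and the complexity bound are then routine.
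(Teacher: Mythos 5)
Your proposal is correct and follows essentially the same route as the paper's proof: reduce to labels in $\mathcal R$ via Remark~\ref{rem:reduce}, recursively search for an $a$-, $s$-, or $p$-cut, reject via Lemma~\ref{lem:nsp-17-unsat} when none exists, and glue the recursively obtained solutions along the atoms $d_{i,j}$ prescribed by the cut definitions, with correctness by induction on the recursion tree. The ``main obstacle'' you flag (that no forbidden triple or induced $a$-path on $4$ vertices can straddle a cut) is exactly the final verification in the paper, where it is dispatched by the short case analysis showing any such configuration must lie entirely within a single part.
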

\begin{proof}
Let $(V,g)$ be a given $\ra{17}{37}$-network. As we have explained earlier, we may assume without loss of generality that the image of $g$ is contained in ${\mathcal R}$. 
\begin{enumerate}
    \item \label{alg1} If there is no pair of variables labelled $\overline{\id}$, then answer `satisfiable' (we may map all of $V$ to the same point in a representation of $\ra{17}{37}$). 
    \item \label{alg2} Otherwise, if there is no $a$-cut, no $s$-cut, and no $p$-cut in $(V,g)$, return `unsatisfiable'. 
    \item \label{alg3} If there is an $e$-cut $C_1,\dots,C_k$, 
    for $e \in \{a,s,p\}$,     
    recursively solve the subinstances induced on each of $C_1,\dots,C_k$. 
    \item \label{alg4} If a recursive call returns `unsatisfiable', then return `unsatisfiable'. 
    \item \label{alg5} Otherwise, return `satisfiable'.
\end{enumerate}
We prove the correctness of the algorithm by induction over the recursion tree of the recursive procedure. Clearly, if the algorithm answers `satisfiable' in Step~\ref{alg1}, this answer is correct. 
If the algorithm returns unsatisfiable in Step~\ref{alg2} then
indeed there is no solution by Lemma~\ref{lem:nsp-17-unsat}. If the algorithm returns `unsatisfiable' in Step~\ref{alg4} then this is the correct answer by the inductive assumption. The interesting part is to prove that the answer `satisfiable' is correct in Step~\ref{alg5}. For this, we construct a solution $(V,f)$ for $(V,g)$ from the solutions $(C_i,f_i)$ of the subnetworks induced on $C_1,\dots,C_k$ as follows:
\begin{itemize}
    \item if $u,v \in C_i$ for some $i \in \{1,\dots,k\}$, then $f(u,v) := f_i(u,v)$,
    \item otherwise $u \in C_i$ and $v \in C_j$, for some distinct $i,j \in \{1,\dots,k\}$. In this case, put $f(u,v) := d_{i,j}$ (see Definitions~\ref{defn:17_37_acut} and~\ref{defn:17_37_spcut}). 
\end{itemize}

It is easy to see that if $(V,f)$ contains a copy of $P_2$ then this copy fully lies in $(C_i,f_i)$ for some $i\in \{1,\dots,k\}$. Hence, by induction, there is no copy of $P_2$ in $(V,f)$. Similarly, if $(V,f)$ contains an induced $a$-path on 4 vertices, then all 4 vertices must lie in $(C_i,f_i)$ for some $i\in \{1,\dots,k\}$. Hence, $(V,f)$ indeed witnesses that $(V,g)$ is satisfiable.

It remains to argue that the algorithm runs in polynomial time. By a standard recursion tree analysis, it is enough to verify that one stage without including the recursive calls runs in polynomial time, and this follows from the observations that one can find $a$-, $s$-, and $p$-cuts in polynomial time (or determine that they do not exist).
\end{proof} 

\section{Open Problems}

We view our complexity dichotomy for the network satisfaction problem for relation algebras with at most 16 elements as an encouraging basis for a more general classification result without any restriction on the number of elements, at least for the finite relation algebras with a normal representation.
The following other problems are also left open. 

\begin{question}
    Is the existence of a fully universal representation for a given finite relation algebra effectively decidable?
\end{question}

\begin{problem}
Obtain a similar classification for the network consistency problem. 
\end{problem}
For relation algebras with a fully universal representation, the NSP and the NCP have the same complexity. For most of the other representable algebras, the argument which we present here for the NSP can be easily adapted to the NCP, the exception being $\ra{17}{37}$. Finally, for algebras with no representation, additional efforts are required, since the NSP is trivial, whereas the NCP is generally not.

\begin{question}
    What is smallest relation algebra $\bA$ such that $\NSP(\bA)$ is not in $\NP$?
\end{question}

\begin{problem}
Obtain a similar classification for the variant of the network satisfaction problem where the inputs are restricted to networks $(V, f)$ such that the range of $f$ consists only of atoms and $1$.
\end{problem}

\pagebreak
\section{Summary and Tables}
In Tables~\ref{fig:2Atoms}--\ref{tab:overview_symmetric} we present Maddux's lists of integral relation algebras, together with references and comments about their representability and the computational complexity of their network satisfaction problem, thus summarizing the findings of our work. 
The first column is the numbering introduced by Maddux. The following columns are labelled by triples (omitting brackets and commas for space reasons).
An entry of the table contains a triple of the form $xyz$ if it is an allowed triple of the relation algebra, i.e., if $z \leq x \circ y$ (see Section \ref{sect:relationalgebras_def}); this uniquely describes the relation algebra thanks to the cycle law (Proposition~\ref{prop:cyclelaw}).

In the tables, we use the following abbreviations (definitions can be found in Section \ref{sect:prelims}): 
\begin{itemize}
    \item none: no representation exists 
    \item $\neg$f.u.: the relation algebra is representable, but there is no fully universal representation.
    \item f.u.: there exists a fully universal square representation, but no normal representation.
    \item normal (n): there exists a normal representation, but the algebra has no flexible atom. In particular, the normal representations in Table~\ref{tab:overview_asymmetric} correspond to homogeneous directed graphs~\cite{Cherlin,LachlanFiniteDigraphs}.  
    \item flex: there is a flexible atom (which implies the existence of a normal representation).
\end{itemize}

In cases where the representations admit a particularly simple description, we include them explicitly in the tables; formal definitions are given at the end of Section \ref{sect:normal}.

In~\cite{BodirskyKnaeRamics} it has been noted that 
in the case of $2_7$ and $6_7$ the claims about the complexity  of the NSP from~\cite{HirschCristiani} are incorrect, and we state the corrected results.

\begin{table}[H]
\begin{center}
\begin{tabular}{|L||L|L|L|} \hline
    \#_2 & aaa & \text{Representability} & \text{NSP Complexity} \\ \hline
    \ralabel{1}{2} & & \text{normal}, K^a_2 & \Ptime,~\text{\cite{HirschCristiani}, Prop.~\ref{prop:can-bin-sym}} \\
    \ralabel{2}{2} & aaa & \text{flex}, K^a_\omega & \Ptime,~\text{\cite{HirschCristiani}, Prop.~\ref{prop:can-bin-sym}} \\ \hline
    \end{tabular} 
\end{center}
    \caption{Integral relation algebras with two atoms.}
    \label{fig:2Atoms}
\end{table} 

\begin{table}[H]
\begin{center}
\begin{tabular}{|L||LL|L|L|} \hline
    \#_3 & rrr & rr\breve{r} & \text{Representability} & \text{NSP Complexity} \\ \hline
    \ralabel{1}{3} & rrr & & \text{normal}, \Q & \Ptime,~\text{\cite{PointAlgebra}} \\
    \ralabel{2}{3} & & rr\breve{r} & \text{normal}, C_3 & \NPc,~\cite{HirschCristiani},~\text{Lemma~\ref{lem:col}} \\
    \ralabel{3}{3} & rrr & rr\breve{r} & \text{flex}, \T & \Ptime,~\text{\cite{HirschCristiani}, Prop.~\ref{prop:can-bin-sym2}} \\ \hline
\end{tabular} 
\end{center}
\caption{Integral asymmetric relation algebras with three atoms.}
\label{fig:3AsyAtoms}
\end{table} 
 
\begin{table}[H]
\begin{center} 
\begin{tabular}{|L||LLLL|L|L|}\hline
    \#_7 & aaa & bbb & abb & baa & \text{Representability} &  \text{NSP Complexity} \\ \hline
    \ralabel{1}{7} & & & abb & & \text{normal}, K_2^b[K_2^a] & \NPc,~\cite{HirschCristiani},~\text{Lemma~\ref{lem:col}} \\
    \ralabel{2}{7} & aaa & & abb & & \text{normal}, K_2^b[K_\omega^a] & \NPc,~\cite{BodirskyKnaeRamics},~\text{Prop.~\ref{prop:normal_hard_finite}} \\
    \ralabel{3}{7} & & bbb & abb & & \text{normal}, K_\omega^b[K_2^a] & \Ptime,~\cite{HirschCristiani},~\text{Corollary~\ref{cor:3_7}} \\
    \ralabel{4}{7} & aaa & bbb & abb & & \text{normal}, K_\omega^b[K_\omega^a] & \Ptime,~\cite{HirschCristiani},~\text{Prop.~\ref{prop:can-bin-sym}} \\
    \ralabel{5}{7} & & & abb & baa & \neg\text{f.u.}, \Z_5^{a, b}, \text{Example \ref{example:5_7_not_fu}} & \NPc,~\cite{HirschCristiani},~\text{Lemma~\ref{lem:bounded}} \\
    \ralabel{6}{7} & aaa & & abb & baa & \text{flex}, \H^{b,a} & \NPc,~\cite{BodirskyKnaeRamics},~\text{Prop.~\ref{prop:prim}} \\
    \ralabel{7}{7} & aaa & bbb & abb & baa & \text{flex}, \R^{a,b} & \Ptime,~\text{\cite{HirschCristiani}, Prop.~\ref{prop:can-bin-sym}} \\ \hline
\end{tabular}
\end{center} 
\caption{Integral symmetric relation algebras with three atoms.}
\label{fig:3SymAtoms}
\end{table}


\begin{longtable}[H]{|L||LLLLLLL||L|L|}\hline
    \#_{37}   & aaa   & rrr   & rr\breve{r}   & arr   & rar   & raa   & rra   & \text{Representation} & \text{NSP Complexity} \\ \hline \hline    
    \ralabel{1}{37} &       & rrr   &               & arr   & rar   &       &       &                \text{normal}, \Q[K^a_2]  & \Ptime,~\text{Prop.~\ref{prop:1_37}}  \\  
    \ralabel{2}{37} &  aaa  & rrr   &               & arr   & rar   &       &       &                \text{normal}, \Q[K^a_{\omega}]  & \Ptime,~\text{Prop.~\ref{prop:2_37}}   \\  
    \ralabel{3}{37} &       &       & rr\breve{r}   & arr   & rar   &   &    &        \text{normal},C_3^r[K^a_2] & \NPc,~\text{Prop.~\ref{prop:normal_hard_finite}}  \\  
    \ralabel{4}{37} &  aaa  &       & rr\breve{r}   & arr   & rar   &       &      & \text{normal},C_3^r[K^a_\omega] & \NPc,~\text{Prop.~\ref{prop:normal_hard_finite}}  \\
    \ralabel{5}{37} &       & rrr   & rr\breve{r}   & arr   & rar   &       &       &            \text{normal}, \T[K^a_2]    &   \Ptime,~\text{Prop.~\ref{prop:5_37}}  \\  
    \ralabel{6}{37} & aaa   & rrr   & rr\breve{r}   & arr   & rar   &       &     &  \text{normal}, \T[K^a_{\omega}]  &                 \Ptime,~\text{Prop.~\ref{prop:6_37-22_37}}  \\  
    \ralabel{7}{37} &       & rrr   &               &       &       & raa   &  &    \text{normal}, K_2^a[\Q]  & \NPc,~\text{Prop.~\ref{prop:normal_hard_finite}}  \\
    \ralabel{8}{37} & aaa   & rrr   &               &       &       & raa   &     & \text{normal}, K_{\omega}^a[\Q]  &                \Ptime,~\text{Prop.~\ref{prop:8_37}}  \\  
    \ralabel{9}{37} &       &       & rr\breve{r}   &       &       & raa   &     & \text{normal},K^a_2[C_3^r] & \NPc,~\text{Prop.~\ref{prop:normal_hard_finite}}  \\
    \ralabel{10}{37}& aaa   &       & rr\breve{r}   &       &       & raa   &     &  \text{normal},K^a_\omega[C_3^r] & \NPc,~\text{Prop.~\ref{prop:2CycleProNPc}}  \\
    \ralabel{11}{37}&       & rrr   & rr\breve{r}   &       &       & raa   &   &   \text{normal}, K_2^a[\T]  & \NPc,~\text{Prop.~\ref{prop:normal_hard_finite}} \\
    \ralabel{12}{37}& aaa   & rrr   & rr\breve{r}   &       &       & raa   &     &   \text{normal}, K_{\omega}^a[\T] &              \Ptime,~\text{Prop.~\ref{prop:12_37}}   \\  
    \ralabel{13}{37}& aaa   & rrr   &               & arr   &       & raa   &       &  \text{f.u., Prop.~\ref{prop:13_37-repr}} & \Ptime,~\text{Rem.~\ref{rem:13_37},~\cite{BodirskyKutz}}   \\ 
    \ralabel{14}{37}&       & rrr   &               & arr   & rar   & raa   &      & \text{none} & \Ptime,~\text{Thm.~\ref{thm:non-repr}}  \\  
    \ralabel{15}{37}& aaa   & rrr   &               & arr   & rar   & raa   &  & \text{normal, $\mathbb P$} &  \NPc,~\text{Prop.~\ref{prop:15_37}},~\cite{BroxvallJonsson}              \\  
    \ralabel{16}{37}&       & rrr   & rr\breve{r}   & arr   & rar   & raa   &    &    \text{none} & \Ptime,~\text{Thm.~\ref{thm:non-repr}}  \\  
    \ralabel{17}{37}& aaa   & rrr   & rr\breve{r}   & arr   & rar   & raa   &       &     \neg\text{f.u., Prop.~\ref{lem:nfu-37}}         &    \Ptime,~\text{Prop.~\ref{prop:17_37}}    \\  
    \ralabel{18}{37}&       &       &               &       &       &       & rra  & \text{normal, $C_4$} & \NPc,~\text{Prop.~\ref{prop:normal_hard_finite}} \\
    \ralabel{19}{37}&       & rrr   & rr\breve{r}   &       &       &       & rra   &         \text{normal, $\hat{\mathbb T}$}  &       \NPc,~\text{Prop.~\ref{prop:19_37}}   \\  
    \ralabel{20}{37}& aaa   &       &               & arr   & rar   &       & rra   & \text{normal, $\mathbb D_2$} & \NPc,~\text{Prop.~\ref{prop:normal_hard_finite}}  \\
    \ralabel{21}{37}& aaa   & rrr   &               & arr   & rar   &       & rra   & \text{none} & \Ptime,~\text{Thm.~\ref{thm:non-repr}}  \\  
    \ralabel{22}{37}& aaa   & rrr   & rr\breve{r}   & arr   & rar   &       & rra   &         \text{normal, $\mathbb D_\omega$} &        \Ptime,~\text{Prop.~\ref{prop:6_37-22_37}}      \\  
    \ralabel{23}{37}&       & rrr   &               &       &       & raa   & rra   & \text{normal, $S(3)$} & \NPc,~\text{Prop.~\ref{prop:prim}}  \\  
    \ralabel{24}{37}& aaa   & rrr   &               &       &       & raa   & rra   & \text{none} & \Ptime,~\text{Thm.~\ref{thm:non-repr}} \\  
    \ralabel{25}{37}&       & rrr   & rr\breve{r}   &       &       & raa   & rra   & \text{none} & \Ptime,~\text{Thm.~\ref{thm:non-repr}} \\  
    \ralabel{26}{37}& aaa   & rrr   & rr\breve{r}   &       &       & raa   & rra   & \text{none} &  \Ptime,~\text{Thm.~\ref{thm:non-repr}} \\  
    \ralabel{27}{37}&       & rrr   &               & arr   &       & raa   & rra   & \text{none} & \Ptime,~\text{Thm.~\ref{thm:non-repr}} \\  
    \ralabel{28}{37}& aaa   & rrr   &               & arr   &       & raa   & rra  & \text{none} & \Ptime,~\text{Thm.~\ref{thm:non-repr}} \\  
    \ralabel{29}{37}&       & rrr   & rr\breve{r}   & arr   &       & raa   & rra   & \text{none} & \Ptime,~\text{Thm.~\ref{thm:non-repr}} \\  
    \ralabel{30}{37}& aaa   & rrr   & rr\breve{r}   & arr   &       & raa   & rra   &    \text{f.u., Prop.~\ref{prop:30_37-repr}} &            \NPc,~\text{Prop.~\ref{prop:30_37}}    \\  
    \ralabel{31}{37}& aaa   &       &               & arr   & rar   & raa   & rra   &         \text{flex} &       \NPc,~\text{Prop.~\ref{prop:31_37}}  \\  
    \ralabel{32}{37}&       & rrr   &               & arr   & rar   & raa   & rra   & \text{none} & \Ptime,~\text{Thm.~\ref{thm:non-repr}}  \\  
    \ralabel{33}{37}& aaa   & rrr   &               & arr   & rar   & raa   & rra   &      \text{flex} &       \NPc,~\text{Prop. \ref{prop:33_37}}   \\  
    \ralabel{34}{37}&       &       & rr\breve{r}   & arr   & rar   & raa   & rra   &  \text{none} & \Ptime,~\text{Thm.~\ref{thm:non-repr}} \\  
    \ralabel{35}{37}& aaa   &       & rr\breve{r}   & arr   & rar   & raa   & rra   &          \text{flex} & \NPc,~\text{Prop.~\ref{prop:35_37}}       \\  
    \ralabel{36}{37}&       & rrr   & rr\breve{r}   & arr   & rar   & raa   & rra   & \text{flex} & \NPc,~\text{Prop.~\ref{prop:prim}} \\  
    \ralabel{37}{37}& aaa   & rrr   & rr\breve{r}   & arr   & rar   & raa   & rra   &    \text{flex}  &   \Ptime,~\text{Prop.~\ref{prop:can-bin-sym2}}  \\ \hline
    \caption{\centering The relation algebras $1_{37}$--$37_{37}$ with the four atoms $a,r,\breve{r},\id$, where $a$ is symmetric and $r$ is non-symmetric,\linebreak
   \centering together with representability information and complexity of their NSP.}
    \label{tab:overview_asymmetric}
\end{longtable}

\begin{longtable}[H]{|L||LLLLLLLLLL||L|L|}\hline
    \#_{65} & aaa & bbb & ccc & abb & baa & acc & caa & bcc & cbb & abc & \text{Representation} & \text{NSP Complexity}  \\ \hline \hline
    \ralabel{1}{65}&  &  &  & abb & & acc & & bcc & & & \text{n}, K_2^c[K_2^b[K^a_2]] & \NPc,~\text{Prop.~\ref{prop:normal_hard_finite}} \\
    \ralabel{2}{65}& aaa & & & abb & & acc & & bcc & & & \text{n}, K_2^c[K_2^b[K^a_\omega]] & \NPc,~\text{Prop.~\ref{prop:normal_hard_finite}}\\
    \ralabel{3}{65}& & bbb & & abb & & acc & & bcc & & & \text{n}, K_2^c[K_\omega^b[K_2^a]] & \NPc,~\text{Prop.~\ref{prop:normal_hard_finite}}\\
    \ralabel{4}{65}& aaa & bbb & & abb & & acc & & bcc & & & \text{n}, K_2^c[K_\omega^b[K_\omega^a]] & \NPc,~\text{Prop.~\ref{prop:normal_hard_finite}}\\
    \ralabel{5}{65}& & & ccc & abb & & acc & & bcc & & & \text{n}, K_\omega^c[K_2^b[K_2^a]] & \NPc,~\text{Prop.~\ref{prop:2CycleProNPc}} \\
    \ralabel{6}{65}& aaa & & ccc & abb & & acc & & bcc & & & \text{n}, K_\omega^c[K_2^b[K_\omega^a]] & \NPc,~\text{Prop.~\ref{prop:2CycleProNPc}} \\
    \ralabel{7}{65}& & bbb & ccc & abb & & acc & & bcc & & & \text{n}, K_\omega^c[K_\omega^b[K_2^a]] & \Ptime,~\text{Prop.~\ref{prop:7_65}} \\
    \ralabel{8}{65}& aaa & bbb & ccc & abb & & acc & & bcc & & & \text{n}, K_\omega^c[K_\omega^b[K_\omega^a]] & \Ptime,~\text{Prop.~\ref{prop:can-bin-sym}} \\
    \ralabel{9}{65}& & & & abb & baa & acc & & bcc & & & \neg\text{f.u.}, K_2^c[\Z_5^{a,b}] & \NPc,~\text{Prop.~\ref{prop:2CycleProNPc}} \\
    \ralabel{10}{65}& aaa & & & abb & baa & acc & & bcc & & & \text{n}, K_2^c[\H^{b,a}] & \NPc,~\text{Prop.~\ref{prop:normal_hard_finite}} \\
    \ralabel{11}{65}& aaa & bbb & & abb & baa & acc & & bcc & & & \text{n}, K_2^c[\R^{a,b}] & \NPc,~\text{Prop.~\ref{prop:normal_hard_finite}} \\
    \ralabel{12}{65}& & & ccc & abb & baa & acc & & bcc & & & \neg\text{f.u.}, K_\omega^c[\Z_5^{a,b}] & \NPc,~\text{Prop.~\ref{prop:2CycleProNPc}} \\
    \ralabel{13}{65}& aaa & & ccc & abb & baa & acc & & bcc & & & \text{n}, K_\omega^c[\H^{b,a}] & \NPc,~\text{Prop.~\ref{prop:2CycleProNPc}} \\
    \ralabel{14}{65}& aaa & bbb & ccc & abb & baa & acc & & bcc & & & \text{n}, K_\omega^c[\R^{a,b}] & \Ptime,~\text{Prop.~\ref{prop:can-bin-sym}} \\
    \ralabel{15}{65}& & & & & baa & acc & caa & bcc & & & \neg\text{f.u.}, \Z_5^{a,c}[K_2^b] & \NPc,~\text{Prop.~\ref{prop:2CycleProNPc}} \\
    \ralabel{16}{65}& aaa & & & & baa & acc & caa & bcc & & & \text{n}, \H^{c,a}[K_2^b] & \NPc,~\text{Prop.~\ref{prop:2CycleProNPc}}\\
    \ralabel{17}{65}& & bbb & & & baa & acc & caa & bcc & & & \neg\text{f.u.}, \Z_5^{a,c}[K_\omega^b] & \NPc,~\text{Prop.~\ref{prop:2CycleProNPc}} \\ 
    \ralabel{18}{65}& aaa & bbb & & & baa & acc & caa & bcc & & & \text{n}, \H^{c,a}[K_\omega^b] & \NPc,~\text{Prop.~\ref{prop:2CycleProNPc}} \\
    \ralabel{19}{65}& aaa & & ccc & & baa & acc & caa & bcc & & & \text{n}, \R^{c,a}[K_2^b] & \Ptime,~\text{Prop.~\ref{prop:19_65}}\\
    \ralabel{20}{65}& aaa & bbb & ccc & & baa & acc & caa & bcc & & & \text{n}, \R^{c,a}[K_\omega^b] & \Ptime,~\text{Prop.~\ref{prop:can-bin-sym}} \\
    \ralabel{21}{65}& & & & abb & baa & acc & caa & bcc & cbb & & \text{none} & \Ptime,~\text{Thm.~\ref{thm:non-repr}}\\
    \ralabel{22}{65}& aaa & & & abb & baa & acc & caa & bcc & cbb & & \text{none} & \Ptime,~\text{Thm.~\ref{thm:non-repr}} \\
    \ralabel{23}{65}& aaa & bbb & & abb & baa & acc & caa & bcc & cbb & & \text{none} & \Ptime,~\text{Thm.~\ref{thm:non-repr}} \\
    \ralabel{24}{65}& aaa & bbb & ccc & abb & baa & acc & caa & bcc & cbb & & \text{f.u., Prop.~\ref{prop:24_65-repr}} & \Ptime,~\text{Prop.~\ref{prop:24_65}} \\
    \ralabel{25}{65}& & & & & & & & & & abc & \text{n}, K_{2}^c \boxtimes_a K_{2}^b & \NPc,~\text{Prop.~\ref{prop:normal_hard_finite}} \\
    \ralabel{26}{65}& aaa & & & abb & & & & & & abc & \text{normal} & \NPc,~\text{Prop.~\ref{prop:normal_hard_finite}} \\
    \ralabel{27}{65}& aaa & bbb & & abb & baa & & & & & abc & \text{normal} & \NPc,~\text{Prop.~\ref{prop:27_65}} \\
    \ralabel{28}{65}& aaa & & & abb & & acc & & & & abc & \text{normal} & \NPc,~\text{Prop.~\ref{prop:normal_hard_finite}} \\
    \ralabel{29}{65}& aaa & bbb & ccc & & baa & & caa & & & abc & \text{n, } K_{\omega}^c \boxtimes_a K_{\omega}^b  & \NPc,~\text{Prop.~\ref{prop:29_65}} \\
    \ralabel{30}{65}& aaa & & ccc & abb & baa & & caa & & & abc & \text{f.u., Prop.~\ref{prop:30_65-31_65-repr}} & \NPc,~\text{Prop.~\ref{prop:30_65}} \\
    \ralabel{31}{65}& aaa & bbb & ccc & abb & baa & & caa & & & abc & \text{f.u., Prop.~\ref{prop:30_65-31_65-repr}} & \NPc,~\text{Prop.~\ref{prop:31_65}} \\
    \ralabel{32}{65}& aaa & & & abb & baa & acc & caa & & & abc & \text{flex} & \NPc,~\text{Prop.~\ref{prop:flexNo1CycleNSP}} \\
    \ralabel{33}{65}& aaa & bbb & & abb & baa & acc & caa & & & abc & \text{flex} & \NPc,~\text{Prop.~\ref{prop:flexNo1CycleNSP}} \\
    \ralabel{34}{65}& aaa & bbb & ccc & abb & baa & acc & caa & & & abc & \text{flex} & \NPc,~\text{Prop.~\ref{prop:34_65}} \\
    \ralabel{35}{65}& aaa & bbb & & abb & & acc & & bcc & & abc & \text{none} & \Ptime,~\text{Thm.~\ref{thm:non-repr}} \\
    \ralabel{36}{65}& aaa & bbb & ccc & abb & & acc & & bcc & & abc & \text{none} & \Ptime,~\text{Thm.~\ref{thm:non-repr}} \\
    \ralabel{37}{65}& aaa & bbb & & abb & baa & acc & & bcc & & abc & \text{none} & \Ptime,~\text{Thm.~\ref{thm:non-repr}}\\
    \ralabel{38}{65}& aaa & bbb & ccc & abb & baa & acc & & bcc & & abc & \text{none} & \Ptime,~\text{Thm.~\ref{thm:non-repr}} \\
    \ralabel{39}{65}& & & & abb & & & caa & bcc & & abc & \neg\text{f.u., Prop.~\ref{prop:39_65-repr}} & \NPc,~\text{Prop.~\ref{prop:bounded-hard}} \\
    \ralabel{40}{65}& aaa & & & abb & & & caa & bcc & & abc & \text{none} & \Ptime,~\text{Thm.~\ref{thm:non-repr}}\\
    \ralabel{41}{65}& aaa & bbb & & abb & & & caa & bcc & & abc & \text{none} & \Ptime,~\text{Thm.~\ref{thm:non-repr}}\\
    \ralabel{42}{65}& aaa & bbb & ccc & abb & & & caa & bcc & & abc & \text{none} & \Ptime,~\text{Thm.~\ref{thm:non-repr}}\\
    \ralabel{43}{65}& & & & abb & baa & & caa & bcc & & abc & \text{none} & \Ptime,~\text{Thm.~\ref{thm:non-repr}}\\
    \ralabel{44}{65}& aaa & & & abb & baa & & caa & bcc & & abc & \text{none} & \Ptime,~\text{Thm.~\ref{thm:non-repr}} \\
    \ralabel{45}{65}& & bbb & & abb & baa & & caa & bcc & & abc & \text{none} & \Ptime,~\text{Thm.~\ref{thm:non-repr}} \\
    \ralabel{46}{65}& aaa & bbb & & abb & baa & & caa & bcc & & abc & \text{normal} & \NPc,~\text{Prop.~\ref{prop:prim}} \\
    \ralabel{47}{65}& & & ccc & abb & baa & & caa & bcc & & abc & \text{none} & \Ptime,~\text{Thm.~\ref{thm:non-repr}}\\
    \ralabel{48}{65}& aaa & & ccc & abb & baa & & caa & bcc & & abc & \text{none} & \Ptime,~\text{Thm.~\ref{thm:non-repr}} \\
    \ralabel{49}{65}& & bbb & ccc & abb & baa & & caa & bcc & & abc & \text{none} & \Ptime,~\text{Thm.~\ref{thm:non-repr}} \\
    \ralabel{50}{65}& aaa & bbb & ccc & abb & baa & & caa & bcc & & abc & \text{none} & \Ptime,~\text{Thm.~\ref{thm:non-repr}}\\
    \ralabel{51}{65}& & bbb & & & baa & acc & caa & bcc & & abc & \neg\text{f.u., Lem.~\ref{lem:5165_unsat}} & \NPc,~\text{Cor.~\ref{cor:56_65-and-others-np-hard}} \\
    \ralabel{52}{65}& aaa & bbb & & & baa & acc & caa & bcc & & abc & \text{normal} & \NPc,~\text{Prop.~\ref{prop:31_37}} \\
    \ralabel{53}{65}& aaa & bbb & ccc & & baa & acc & caa & bcc & & abc & \text{normal} & \Ptime,~\text{Prop.~\ref{prop:can-bin-sym}} \\
    \ralabel{54}{65}& & & & abb & baa & acc & caa & bcc & & abc & \text{none} & \Ptime,~\text{Thm.~\ref{thm:non-repr}}\\
    \ralabel{55}{65}& aaa & & & abb & baa & acc & caa & bcc & & abc & \text{flex} & \NPc,~\text{Prop.~\ref{prop:flexNo1CycleNSP}}\\
    \ralabel{56}{65}& & bbb & & abb & baa & acc & caa & bcc & & abc & \neg\text{f.u., Prop.~\ref{prop:56_65-not-fu}} & \multicolumn{1}{l}{\makebox[0pt][l]{\text{$\NPc$,~}\text{Cor.~\ref{cor:56_65-and-others-np-hard}, \cite{56paper}}}} \\
    \ralabel{57}{65}& aaa & bbb & & abb & baa & acc & caa & bcc & & abc & \text{flex} & \NPc,~\text{Prop.~\ref{prop:flexNo1CycleNSP}}\\
    \ralabel{58}{65}& & & ccc & abb & baa & acc & caa & bcc & & abc & \text{none} & \Ptime,~\text{Thm.~\ref{thm:non-repr}} \\
    \ralabel{59}{65}& aaa & & ccc & abb & baa & acc & caa & bcc & & abc & \text{flex} & \NPc,~\text{Prop.~\ref{prop:flexNo1CycleNSP}}\\
    \ralabel{60}{65}& & bbb & ccc & abb & baa & acc & caa & bcc & & abc & \text{none} & \Ptime,~\text{Thm.~\ref{thm:non-repr}} \\
    \ralabel{61}{65}& aaa & bbb & ccc & abb & baa & acc & caa & bcc & & abc & \text{flex} & \Ptime,~\text{Prop.~\ref{prop:can-bin-sym}} \\
    \ralabel{62}{65}& & & & abb & baa & acc & caa & bcc & cbb & abc & \neg\text{f.u., Prop.~\ref{prop:62_65-repr}} & \NPc,~\text{Prop.~\ref{prop:bounded-hard}} \\
    \ralabel{63}{65}& aaa & & & abb & baa & acc & caa & bcc & cbb & abc & \text{flex} & \NPc,~\text{Prop.~\ref{prop:flexNo1CycleNSP}}\\
    \ralabel{64}{65}& aaa & bbb & & abb & baa & acc & caa & bcc & cbb & abc & \text{flex} & \NPc,~\text{Prop.~\ref{prop:flexNo1CycleNSP}}\\
    \ralabel{65}{65}& aaa & bbb & ccc & abb & baa & acc & caa & bcc & cbb & abc & \text{flex} & \Ptime,~\text{Prop.~\ref{prop:can-bin-sym}} 
    \\
    \hline
   \caption{\centering The symmetric relation algebras $1_{65}$--$65_{65}$ with four atoms $a,b,c,\id$,\linebreak
   \centering together with representability information and complexity of their NSP.}
    \label{tab:overview_symmetric}
\end{longtable}

\bibliography{global}

@STRING{LICS = {Proceedings of the Annual Symposium on Logic in Computer Science (LICS)} }

@STRING{STOC = {Proceedings of the Annual Symposium on Theory of Computing (STOC)} }

@STRING{STACS = {Proceedings of  the Symposium on Theoretical Aspects of Computer Science (STACS) } }

@STRING{ICALP = {Proceedings of the International Colloquium on Automata, Languages and Programming (ICALP)} }

@preamble{"\def\cprime{$'$} "}

@article{TopoBirkhoff,
author = {Bodirsky, Manuel and Pinsker, Michael},
year = {2015},
pages = {2527--2549},
title = {Topological {Birkhoff}},
volume = {367},
journal = {Transactions of the American Mathematical Society},
doi = {10.1090/S0002-9947-2014-05975-8}
}

@InProceedings{ComerExtensionSchemes,
    author="Comer, Stephen D.",
    editor="Freese, Ralph S.
    and Garcia, Octavio C.",
    title="Extension of polygroups by polygroups and their representations using color schemes",
    booktitle="Universal Algebra and Lattice Theory",
    year="1983",
    publisher="Springer Berlin Heidelberg",
    address="Berlin, Heidelberg",
    pages="91--103",
    isbn="978-3-540-40954-0"
}

@article{ComerChromaticPolygroups,
    author = "Stephen D. Comer",
    title = "A Remark on Chromatic Polygroups",
    journal = "Congressus Numerantium",
    volume = {38},
    pages = {85-95}, 
    year = "1983"
}

@article{Tarski41,
author = {Tarski, Alfred},
year = {1941},
pages = {},
title = {On the Calculus of Relations},
volume = {6},
journal = {J. Symbolic Logic},
doi = {10.2307/2268577}
}

@article{TarskiJonnsonOperators,
 ISSN = {00029327, 10806377},
 URL = {http://www.jstor.org/stable/2372074},
 author = {Bjarni Jónsson and Alfred Tarski},
 journal = {American Journal of Mathematics},
 number = {1},
 pages = {127--162},
 publisher = {The Johns Hopkins University Press},
 title = {Boolean Algebras with Operators},
 urldate = {2024-12-24},
 volume = {74},
 year = {1952}
}

@article{BJJ95,
	author = {Bang-Jensen, J{\o}rgen and Huang, Jing},
	doi = {https://doi.org/10.1002/jgt.3190200205}     ,
	eprint = {https://onlinelibrary.wiley.com/doi/pdf/10.1002/jgt.3190200205}     ,
	journal = {Journal of Graph Theory},
	number = {2},
	pages = {141-161},
	title = {Quasi-transitive digraphs},
	url = {https://onlinelibrary.wiley.com/doi/abs/10.1002/jgt.3190200205}    ,
	volume = {20},
	year = {1995},
	bdsk-url-1 = {https://onlinelibrary.wiley.com/doi/abs/10.1002/jgt.3190200205}    ,
	bdsk-url-2 = {https://doi.org/10.1002/jgt.3190200205}    }

@article{TarskiRRA, 
author = {Alfred Tarski},
title={Contributions to the theory of models},
journal = {Koninklijke Nederlandse Akademie van Wetenschappen, Proceedings}, 
series = {A}, 
volume = {58},
year = {1955}, 
pages = {56–64}}

@article{ComerColorSchemes,
author = {Comer, Stephen},
year = {1983},
month = {01},
pages = {},
title = {Constructions of color schemes},
volume = {24},
journal = {Acta Universitatis Carolinae. Mathematica et Physica}
}

@article{Zhuk20,
  author    = {Dmitriy Zhuk},
  title     = {A Proof of the {CSP} Dichotomy Conjecture},
  journal   = {J. {ACM}},
  volume    = {67},
  number    = {5},
  pages     = {30:1--30:78},
  year      = {2020},
  url       = {https://doi.org/10.1145/3402029},
  doi       = {10.1145/3402029}
}

@inproceedings{ZhukFVConjecture,
  author    = {Dmitriy N. Zhuk},
  title     = {A Proof of {CSP} Dichotomy Conjecture},
  booktitle = {58th {IEEE} Annual Symposium on Foundations of Computer Science, {FOCS}
               2017, {B}erkeley, {CA}, {USA}, {O}ctober 15-17},
  pages     = {331-342},
  year      = {2017},
  note = {https://arxiv.org/abs/1704.01914.}
}

@inproceedings{BulatovFVConjecture,
  author    = {Andrei A. Bulatov},
  title     = {A Dichotomy Theorem for Nonuniform {CSP}s},
  booktitle = {58th {IEEE} Annual Symposium on Foundations of Computer Science, {FOCS}
               2017, {B}erkeley, {CA}, {USA}, {O}ctober 15-17},
  pages     = {319-330},
  year      = {2017}
}

@preamble{
   "\def\cprime{$'$} "
}

@inproceedings{BKOPP,
author={Libor Barto and Michael Kompatscher and Miroslav Ol\v{s}\'{a}k and Trung Van Pham and Michael Pinsker},
title={The equivalence of two dichotomy conjectures for infinite domain constraint satisfaction problems},
booktitle={Proceedings of the 32nd Annual {ACM/IEEE} Symposium on Logic in Computer Science -- LICS'17},
note={Preprint arXiv:1612.07551},
year=2017
}

@incollection{Pol,
  author    = {Libor Barto and
               Andrei A. Krokhin and
               Ross Willard},
  title     = {Polymorphisms, and How to Use Them},
  booktitle = {The Constraint Satisfaction Problem: Complexity and Approximability},
  publisher = {Schloss Dagstuhl - Leibniz-Zentrum fuer Informatik},
  pages     = {1-44},
  year      = {2017}
}

@InProceedings{BartoPinskerDichotomy,
author={Libor Barto and Michael Pinsker},
title={The algebraic dichotomy conjecture for infinite domain constraint satisfaction problems},
booktitle = {Proceedings of the 31st {A}nnual {IEEE} {S}ymposium on {L}ogic in {C}omputer {S}cience -- {LICS}'16}, 
note={Preprint arXiv:1602.04353}   ,
pages = {615-622}, 
year={2016}
}

@Article{wonderland,
author={Libor Barto and Jakub Opr\v{s}al and Michael Pinsker},
title={The wonderland of reflections},
journal = {Israel Journal of Mathematics},
volume=223,
number=1,
year=2018,
pages={363-398}
}

@Book{Hypergroups,
author = {Paul-Hermann Zieschang},
title = {Hypergroups},
publisher = {Springer}, 
year = {2023}}

@inproceedings{PCSP,
  author    = {Jakub Bul{\'{\i}}n and
               Andrei A. Krokhin and
               Jakub Opr\v{s}al},
  title     = {Algebraic approach to promise constraint satisfaction},
  booktitle = {Proceedings of the 51st Annual {ACM} {SIGACT} Symposium on Theory
               of Computing, {STOC} 2019, Phoenix, AZ, USA, June 23-26, 2019},
  pages     = {602-613},
  year      = {2019}
}

@article{SP,
	abstract = { We present a linear-time algorithm to recognize the class of vertex series-parallel (VSP) digraphs. Our method is based on the relationship between VSP digraphs and the class of edge series-parallel multidigraphs. As a byproduct of our analysis, we obtain efficient methods to compute the transitive closure and transitive reduction of VSP digraphs, and to test isomorphism of minimal VSP digraphs. },
	author = {Valdes, Jacobo and Tarjan, Robert E. and Lawler, Eugene L.},
	doi = {10.1137/0211023},
	eprint = {https://doi.org/10.1137/0211023},
	journal = {SIAM Journal on Computing},
	number = {2},
	pages = {298-313},
	title = {The Recognition of Series Parallel Digraphs},
	url = {https://doi.org/10.1137/0211023},
	volume = {11},
	year = {1982},
	bdsk-url-1 = {https://doi.org/10.1137/0211023}}

@article{LyndonRelationAlgebras,
author = {R. Lyndon},
title = {The representation of relational algebras},
journal = {Annals of Mathematics},
volume = {51},
number = {3},
pages = {707-729},
year = {1950}}

@article{Monk,
	author = {Donald Monk},
	doi = {10.1307/mmj/1028999131},
	journal = {Michigan Mathematical Journal},
	number = {3},
	pages = {207 -- 210},
	publisher = {University of Michigan, Department of Mathematics},
	title = {{On representable relation algebras.}},
	url = {https://doi.org/10.1307/mmj/1028999131},
	volume = {11},
	year = {1964},
	bdsk-url-1 = {https://doi.org/10.1307/mmj/1028999131}}

@Article{LachlanFiniteDigraphs,
author = {A. H. Lachlan},
title = {Finite homogeneous simple digraphs},
note = {J. Stern (Ed.), Logic Colloquium 1981},
journal = {Stud. Logic Found. Math.}, 
volume = {107}, 
publisher = {North-Holland, New York},
year = {1982}, 
pages = {189-208}
}

@book{BS,
  author = "Stanley N. Burris and Hanamantagouda P. Sankappanavar",
  title = "A Course in Universal Algebra",
  publisher = "Springer Verlag, Berlin",
  year = "1981"}

@Article{Duentsch,
author = {Ivo D{\"{u}}ntsch},
title = {Relation algebras and their application in temporal and spatial reasoning},
journal = {Artificial Intelligence Review},
volume = {23},
pages = {315-357},
year = {2005}}

@Article{Henson,
author = {C. Ward Henson},
title = {Countable homogeneous relational systems and categorical theories}, 
journal = {Journal of Symbolic Logic},
volume = {37},
year = {1972},
pages = {494-500}}

@Article{Hirsch,
author = {Robin Hirsch},
title = {Relation algebras of intervals},
journal = {Artificial Intelligence Journal},
volume = {83},
pages = {1-29}, 
year = {1996}}

@article{Hirsch-Undecidable,
  author    = {Robin Hirsch},
  title     = {A Finite Relation Algebra with Undecidable Network Satisfaction Problem},
  journal   = {Logic Journal of the {IGPL}},
  volume    = {7},
  number    = {4},
  pages     = {547-554},
  year      = {1999}
}

@misc{56paper,
      title={Circular Chromatic Numbers, Balanceability, Relation Algebras, and Network Satisfaction Problems}, 
      author={Manuel Bodirsky and
              Santiago Guzmán-Pro and
              Moritz Jahn and
              Mat{\v{e}}j Kone{\v{c}}n{\'y} and
              Paul Winkler},
      year={2025},
      eprint={2512.06878},
      archivePrefix={arXiv},
      primaryClass={math.LO},
note = {Preprint available under https://arxiv.org/abs/2512.06878}, 
      url={https://arxiv.org/abs/2512.06878}, 
}

@misc{HerFO,
      title={Hereditary First-Order Model Checking}, 
      author={Manuel Bodirsky and Santiago Guzmán-Pro},
      year={2024},
      eprint={2411.10860},
      archivePrefix={arXiv},
      primaryClass={math.LO},
note = {Preprint available under https://arxiv.org/abs/2411.10860}, 
      url={https://arxiv.org/abs/2411.10860}, 
}

@article{HirschHodkinsonRepresentability,
author = {R. Hirsch and I. Hodkinson},
 title = {Representability is not decidable for finite relation algebras},
journal = {Transactions of the American Mathematical Society}, 
volume = {353},
 number = {4}, 
 year = {2001}, 
 pages = {1387-1401}}

@Article{HirschCristiani,
author = {Matteo Cristiani and Robin Hirsch},
title = {The complexity of the constraint satisfaction problem for small relation algebras},
journal = {Artificial Intelligence Journal}, 
volume = {156}, 
pages = {177-196}, 
year = {2004}}

@BOOK{Hodges,
author = {Wilfrid Hodges},
title = {A shorter model theory},
publisher = {Cambridge University Press},
address = {Cambridge},
year = {1997}}

@Article{Cameron,
author = {Peter J. Cameron},
title = {The Random Graph},
journal = {R. L. Graham and J. Ne\v{s}et\v{r}il, Editors, The Mathematics of Paul Erd\"{o}s},
address = "Berlin",
publisher = {Springer-Verlag},
year = {1996}}

@Book{HirschHodkinson,
title = {Relation Algebras by Games},
author = {Robin Hirsch and Ian Hodkinson},
publisher = {North Holland},
pages = {710},
year = {2002}}

@Article{LachlanWoodrow,
author = {Alistair H. Lachlan and Robert E. Woodrow},
title = {Countable ultrahomogeneous undirected graphs},
journal = {Transactions of the AMS},
volume = {262},
number = {1},
pages = {51-94},
year = {1980}}

@Article{PointAlgebra,
author = {Marc Vilain and Henry Kautz and Peter van Beek},
year = {1989},
title = {Constraint propagation algorithms for temporal reasoning: A revised report},
journal = {Readings in Qualitative Reasoning about Physical Systems},
pages = {373-381},
publisher = {Morgan Kaufman}}

@Article{BroxvallJonsson,
author = {Mathias Broxvall and Peter Jonsson},
title = {Point algebras for temporal reasoning: Algorithms and complexity},
journal = {Artificial Intelligence},
number = 2,
volume = 149,
pages = {179-220},
year = {2003}}

@InProceedings{Conservative,
author = {Andrei A. Bulatov},
title = {Tractable conservative constraint satisfaction problems},
booktitle = {Proceedings of the Symposium on Logic in Computer Science {(LICS)}},
pages = {321-330},
address = {Ottawa, Canada},
year = {2003}}

@Article{Ladner,
author = {Richard E. Ladner},
title = {On the Structure of Polynomial Time Reducibility},
journal = {Journal of the ACM},
volume = {22},
number = {1},
pages = {155-171},
year = {1975}}

@Book{GareyJohnson, 
author = {Michael Garey and David Johnson},
title = {A guide to {NP}-completeness}, 
PUBLISHER = {CSLI Press},
address = {Stanford},
YEAR = {1978}}

@Article{Cherlin,
author = {Gregory L. Cherlin},
title = {The Classification of Countable Homogeneous Directed Graphs and Countable Homogeneous 
$n$-Tournaments}, 
journal = {AMS Memoir},
volume = {131},
number = {621},
month = {January},
year = {1998}, 
address = {Rhode Island}
}

@Misc{CherlinMetrically, 
author = {Gregory Cherlin},
title = {Homogeneous Ordered Graphs and Metrically Homogeneous Graphs},
note = {Preprint}, 
year = {2020}}

@Article{CherlinImPrim,
author = {Gregory Cherlin},
title = {Homogeneous digraphs {I}. {T}he imprimitive case}, 
journal = {Logic Colloquium 1985},
year = {1987}, 
address = {North Holland}
}

@article{Comer1984,
	doi = {10.1007/bf01182249},
	url = {https://doi.org/10.1007/bf01182249},
	year = {1984},
	month = feb,
	publisher = {Springer Science and Business Media {LLC}},
	volume = {18},
	number = {1},
	pages = {77-94},
	author = {Stephen D. Comer},
	title = {Combinatorial aspects of relations},
	journal = {Algebra Universalis}
}

@BOOK{Maddux2006-dp,
	title     = "Relation Algebras: Volume 150",
	author    = "Maddux, Roger Duncan",
	publisher = "Elsevier Science",
	series    = "Studies in logic and the foundations of mathematics",
	month     =  may,
	year      =  2006,
	address   = "London, England",
	language  = "en"
}

@inproceedings{Maddux2006,
  author    = {Roger D. Maddux},
  editor    = {Renate A. Schmidt},
  title     = {Finite Symmetric Integral Relation Algebras with No 3-Cycles},
  booktitle = {Relations and Kleene Algebra in Computer Science, 9th International
               Conference on Relational Methods in Computer Science and 4th International
               Workshop on Applications of Kleene Algebra, RelMiCS/AKA 2006, Manchester,
               UK, August 29-September 2, 2006, Proceedings},
  series    = {Lecture Notes in Computer Science},
  volume    = {4136},
  pages     = {2-29},
  publisher = {Springer},
  year      = {2006}
}

@Article{AndrekaMaddux,
  author    = {Hajnal Andr{\'{e}}ka and
               Roger D. Maddux},
  title     = {Representations for Small Relation Algebras},
  journal   = {Notre Dame Journal of Formal Logic},
  volume    = {35},
  number    = {4},
  pages     = {550-562},
  year      = {1994}
}

@inproceedings{DawarKreutzer08,
  author       = {Anuj Dawar and
                  Stephan Kreutzer},
  editor       = {Luca Aceto and
                  Ivan Damg{\aa}rd and
                  Leslie Ann Goldberg and
                  Magn{\'{u}}s M. Halld{\'{o}}rsson and
                  Anna Ing{\'{o}}lfsd{\'{o}}ttir and
                  Igor Walukiewicz},
  title        = {On Datalog vs. {LFP}},
  booktitle    = {Automata, Languages and Programming, 35th International Colloquium,
                  {ICALP} 2008, Reykjavik, Iceland, July 7-11, 2008, Proceedings, Part
                  {II} - Track {B:} Logic, Semantics, and Theory of Programming {\&}
                  Track {C:} Security and Cryptography Foundations},
  series       = {Lecture Notes in Computer Science},
  volume       = {5126},
  pages        = {160--171},
  publisher    = {Springer},
  year         = {2008},
  url          = {https://doi.org/10.1007/978-3-540-70583-3\_14} ,
  doi          = {10.1007/978-3-540-70583-3\_14},
  timestamp    = {Fri, 27 Mar 2020 09:02:59 +0100},
  biburl       = {https://dblp.org/rec/conf/icalp/DawarK08.bib},
  bibsource    = {dblp computer science bibliography, https://dblp.org}
}

@inproceedings{BodirskyKnaeuerDatalog23,
  author       = {Manuel Bodirsky and
                  Simon Kn{\"{a}}uer},
  title        = {Network Satisfaction Problems Solved by $k$-Consistency},
  booktitle    = {50th International Colloquium on Automata, Languages, and Programming,
                  {ICALP} 2023, July 10-14, 2023, Paderborn, Germany},
  pages        = {116:1--116:20},
  year         = {2023},
  doi          = {10.4230/LIPICS.ICALP.2023.116}
}

@article{BodirskyKnaeJAIR,
  author    = {Manuel Bodirsky and
               Simon Kn{\"{a}}uer},
  title     = {The Complexity of Network Satisfaction Problems for Symmetric Relation Algebras with a Flexible Atom},
               journal = {Journal of Artificial Intelligence Research}, 
  year      = {2022},
  volume = {75}, 
  date = {December 30},
  doi = {https://doi.org/10.1613/jair.1.14195}
}

@article{BMPP16,
	Author = {Manuel Bodirsky and Barnaby Martin and Michael Pinsker and Andr{\'{a}}s Pongr{\'{a}}cz},
	Title = {Constraint Satisfaction Problems for Reducts of Homogeneous Graphs},
	Journal = {SIAM Journal on Computing},
	volume=48,
	number=4, 
	pages="1224-1264",
	year=2019,
 doi       = {10.1137/16M1082974},
 	Note = {A conference version appeared in the Proceedings of the 43rd International Colloquium on Automata, Languages, and Programming, {ICALP} 2016, pages 119:1-119:14}
}

@article{42,
author = {Manuel Bodirsky and Michael Pinsker and Andr\'{a}s Pongr\'acz},
title = {The 42 reducts of the random ordered graph},
journal={Proceedings of the LMS},
volume = {111},
doi = {10.1112/plms/pdv037},
number = {3},
pages = {591-632}, 
note = {Preprint available from arXiv:1309.2165},
year=2015
}

@article{BodPin-Schaefer-both,
  author = {Manuel Bodirsky and Michael Pinsker},
title = {Schaefer's theorem for graphs},  
journal = {Journal of the ACM},
volume=62,
number=3,
  doi       = {10.1145/2764899},
year = {2015}, 
pages={52 pages (article number 19)},
note={A conference version appeared in the Proceedings of STOC 2011, pages 655-664}
}

@inproceedings{BodirskyRamics,
  author    = {Manuel Bodirsky},
  title     = {Finite Relation Algebras with Normal Representations},
  booktitle = {Relational and Algebraic Methods in Computer Science - 17th International
               Conference, RAMiCS 2018, Groningen, The Netherlands, October 29 -
               November 1, 2018, Proceedings},
  pages     = {3-17},
  year      = {2018}
}

@article{BP-reductsRamsey,
  author = {Manuel Bodirsky and Michael Pinsker},
  title = {Reducts of {R}amsey Structures},
  journal = {AMS Contemporary Mathematics (Model Theoretic Methods in
Finite Combinatorics)},
volume = {558}, 
  publisher = {American Mathematical Society},
 pages = {489-519},
  year = {2011}
}

@Article{tcsps-journal,
author = {Manuel Bodirsky and Jan K\'ara},
title = {The Complexity of Temporal Constraint Satisfaction Problems},
journal = {Journal of the ACM},
volume = {57},
number = {2},
pages = {1-41},
  doi       = {10.1145/1667053.1667058},
note = {An extended abstract appeared in the Proceedings of the Symposium on Theory of Computing (STOC)}, 
year = {2009}}

@Article{BodDalJournal, 
author = {Manuel Bodirsky and V\'ictor Dalmau},
title  = {Datalog and Constraint Satisfaction with Infinite Templates}, 
year   = {2013}, 
journal = {Journal on Computer and System Sciences},
volume = {79},
pages = {79-100},
note = {A preliminary version appeared in the proceedings of the Symposium on Theoretical Aspects of Computer Science (STACS'05)}}

@InProceedings{BodirskyKutz,
author = {Manuel Bodirsky and Martin Kutz},
title  = {Pure Dominance Constraints},
year   = {2002},
pages   = {287-298}, 
booktitle = STACS}

@Article{BodirskyKutzAI,
author = {Manuel Bodirsky and Martin Kutz},
journal = {Artificial Intelligence},
pages = {185-196},
volume = {171},
year = {2007},
title  = {Determining the Consistency of Partial Tree Descriptions}}

@Book{Book,
author = {Manuel Bodirsky},
title  = {Complexity of Infinite-Domain Constraint Satisfaction},
year   = {2021},
doi = {10.1017/9781107337534}, 
address = {Cambridge, United Kingdom; New York, NY}, 
publisher = {Cambridge University Press},
series = {Lecture Notes in Logic (52)}}

@article{BPP-projective-homomorphisms,
author={Manuel Bodirsky and Michael Pinsker and Andr\'{a}s Pongr\'acz},
title={Projective clone homomorphisms},
journal={Journal of Symbolic Logic},
volume= 86,
number=1,
pages={148-161},
year= 2021
}

@Misc{KnaeuerMaster,
author={Simon Kn{\"a}uer},
title={Constraint Satisfaction over the Random Tournament},
  year      = {2018},
note={Master Thesis at the Institute of Algebra, TU Dresden}
}

@incollection{BodirskyKnaeRamics,
	doi = {10.1007/978-3-030-43520-2_3},
	url = {https://doi.org/10.1007/978-3-030-43520-2_3},  
	year = {2020},
	publisher = {Springer International Publishing},
	pages = {31-46},
	author = {Manuel Bodirsky and Simon Kn\"{a}uer},
	title = {Hardness of Network Satisfaction for Relation Algebras with Normal Representations},
	booktitle = {Relational and Algebraic Methods in Computer Science}
}

@article{Schaefer1978,
    author = {Schaefer, Thomas J.},
    title = {The Complexity of Satisfiability Problems},
    journal = {Proceedings of the Tenth Annual ACM Symposium on Theory of Computing, STOC 1978, San Diego, California, USA},
    year = {1978},
    pages = {216-226},
    url = {https://doi.org/10.1145/800133.804350}    
}

@book{Post1941,
	address = {London},
	author = {Emil Leon Post},
	editor = {},
	publisher = {H. Milford, Oxford university press},
	title = {The Two-Valued Iterative Systems of Mathematical Logic},
	year = {1941}
}

@article{GG1955,
    title={Combinatorial Relations and Chromatic Graphs},
    volume={7},
    DOI={10.4153/CJM-1955-001-4},
    journal={Canadian Journal of Mathematics},
    author={Greenwood, R. E. and Gleason, A. M.},
    year={1955},
    pages={1–7}
}

@article{BGP2025Circular,
author = {Bodirsky, Manuel and Guzmán-Pro, Santiago},
title = {The {G}eneric {C}ircular {T}riangle-{F}ree {G}raph},
journal = {Journal of {G}raph {T}heory},
volume = {109},
number = {4},
pages = {426-445},
doi = {https://doi.org/10.1002/jgt.23235},
url = {https://onlinelibrary.wiley.com/doi/abs/10.1002/jgt.23235},
eprint = {https://onlinelibrary.wiley.com/doi/pdf/10.1002/jgt.23235},
year = {2025}
}
\bibliographystyle{alpha}

\end{document}